\newtheorem{prop}{Proposition}
\newtheorem*{rep@theorem}{\rep@title}
\newcommand{\newreptheorem}[2]{%
\newenvironment{rep#1}[1]{%
 \def\rep@title{#2 \ref{##1}}%
 \begin{rep@theorem}}%
 {\end{rep@theorem}}}
\newtheorem{theorem}{Theorem}[section]
\newtheorem{lemma}{Lemma}[section]
\newtheorem{pro}{Proposition}[section]
\theoremstyle{definition}
\newtheorem{mydef}{Definition}[section]
\newtheorem{algo}{Algorithm}[section]
\newtheorem{rem}{Remark}[section]
\newtheorem{exa}{Example}[section]
\newtheorem{cor}{Corollary}[section]
\title{
	{Tropicalizing abelian covers of algebraic curves}\\
}
	\author{Paul Alexander Helminck}
\begin{document}

\includepdf[noautoscale]{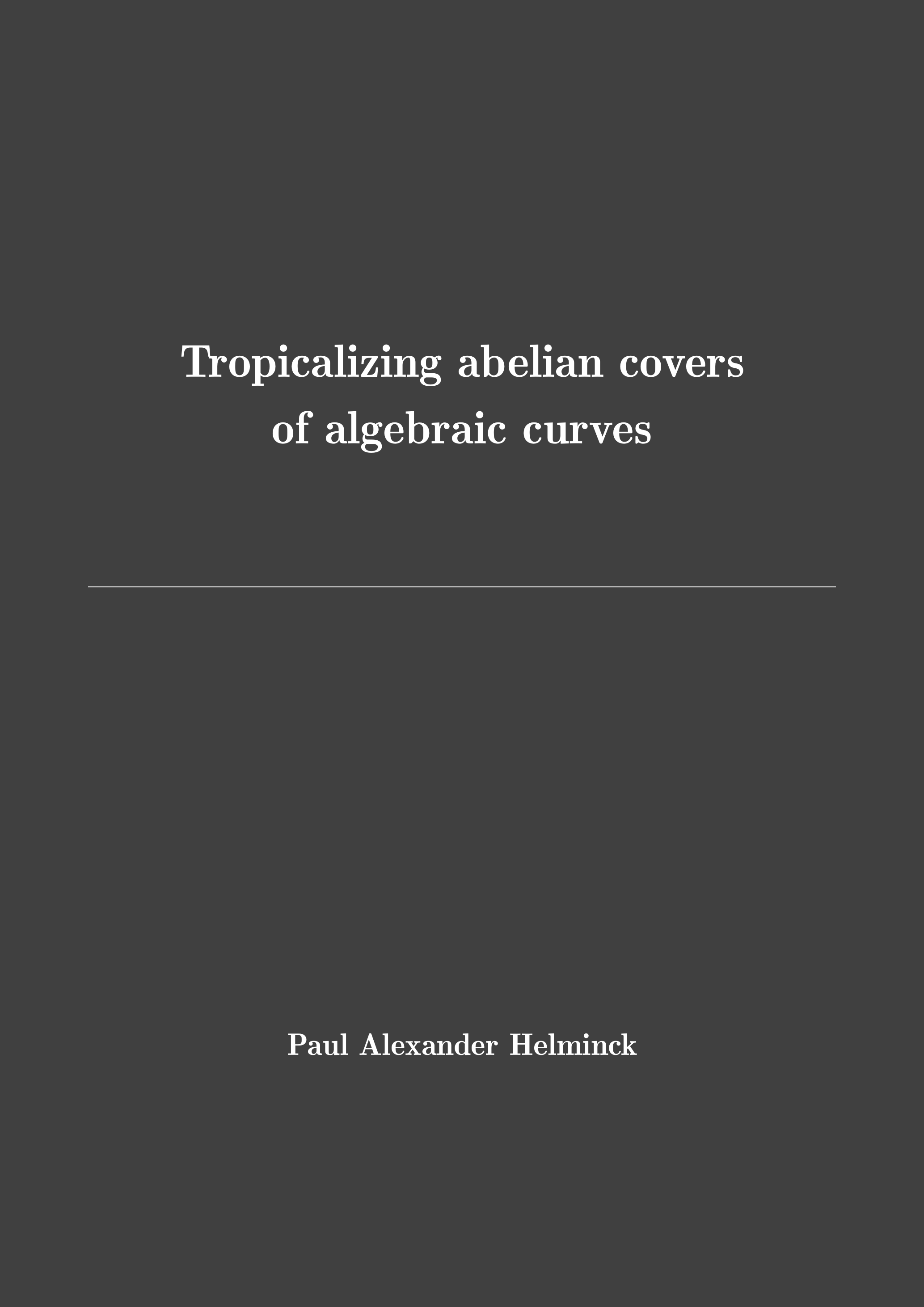}

\newpage\null\thispagestyle{empty}\newpage

\begin{titlepage}
    \begin{center}
        \vspace*{1cm}
        
        \Huge
        \textbf{Tropicalizing abelian covers of algebraic curves}
        
        \vspace{0.5cm}
        \LARGE
      
        \vspace{1.5cm}
        
        \textbf{Paul Alexander Helminck}

        \vfill
        
                 \includegraphics[width=0.4\textwidth]{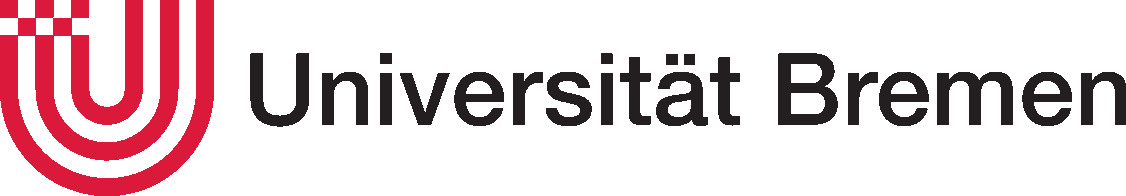}
        
        \vspace{0.5cm}
        
        Dissertation
        
        \vspace{0.8cm}
        
       Zur Erlangung des akademischen Grades\\
        Doktor der Naturwissenschaften\\
         (Dr. rer. nat.)

        \vspace{0.8cm}

        \Large
       Universit\"{a}t Bremen\\
       Fachbereich 3\\
        Deutschland\\
        11-12-2017\\
                        \includegraphics[width=0.7\textwidth]{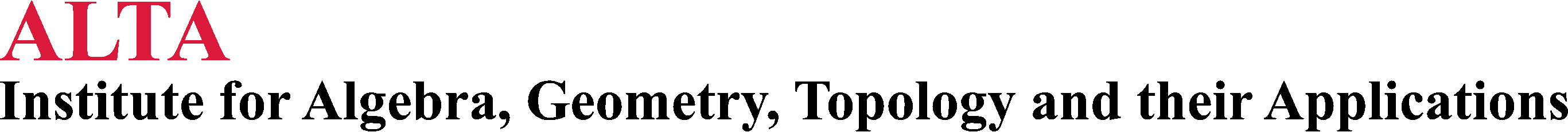}

    \end{center}
\end{titlepage}

\chapter*{Referees}
\begin{enumerate}
\item First referee: Professor Eva Maria Feichtner, University of Bremen.
\item Second referee: Associate Professor Joseph Rabinoff, Georgia Institute of Technology.
\end{enumerate}



\chapter*{Abstract}
In this thesis, we study the Berkovich skeleton of an algebraic curve over a discretely valued field $K$. We do this using coverings $C\rightarrow{\mathbb{P}^{1}}$ of the projective line. To study these coverings, we take the Galois closure of the corresponding injection of function fields $K(\mathbb{P}^{1})\rightarrow{K(C)}$, giving a Galois morphism $\overline{C}\rightarrow{\mathbb{P}^{1}}$. A theorem by Liu and Lorenzini tells us how to associate to this morphism a Galois morphism of semistable models $\mathcal{C}\rightarrow{\mathcal{D}}$. That is, we make the branch locus disjoint in the special fiber of $\mathcal{D}$ and remove any vertical ramification on the components of $\mathcal{D}_{s}$.  
This morphism $\mathcal{C}\rightarrow{\mathcal{D}}$ then gives rise to a morphism of intersection graphs $\Sigma(\mathcal{C})\rightarrow{\Sigma(\mathcal{D})}$. Our goal is to reconstruct $\Sigma(\mathcal{C})$ from $\Sigma(\mathcal{D})$ and 
we will do this by giving a set of covering and twisting data. 
These then give algorithms for finding the Berkovich skeleton of a curve $C$ whenever that curve has a morphism $\overline{C}\rightarrow{\mathbb{P}^{1}}$ with a solvable Galois group. In particular, this gives an algorithm for finding the Berkovich skeleton of any genus three curve. These coverings also give a new proof of a classical result on the semistable reduction type of an elliptic curve, saying that an elliptic curve has potential good reduction if and only if the valuation of the $j$-invariant is positive.   




\chapter*{Acknowledgements}
The author would like to thank his supervisor Professor Eva Maria Feichtner and second referee Associate Professor Joseph Rabinoff for making this thesis possible. Furthermore, the author would like to thank Professor Bernd Sturmfels for his enthusiasm and encouragement throughout the project. The author would also like to thank Madeline Brandt, dr. Martin Ulirsch and Professor Matt Baker for reading through early versions of this thesis and for their helpful remarks. 

This thesis could not have been made without the help of the author's family and friends during this project, for which the author is extremely grateful. The author would also like to thank the Max-Planck Institute for Mathematics in the Sciences in Leipzig for their hospitality during the summer, where he wrote a paper with Madeline Brandt about tropical superelliptic curves. Furthermore, the author would like to thank the Technische Universit\"{a}t Berlin and the Universi\"{a}t Regensburg for their hospitality during two conferences on tropical and non-archimedean geometry, where the author gave two talks on this thesis. 


\begingroup
\color{black}
\tableofcontents
\endgroup


\chapter{Introduction}\label{Introduction}

In this thesis, we will be studying the Berkovich skeleton of an algebraic curve $C$ over a discretely valued field $K$ with uniformizer $\pi$ and residue field $k$. Informally speaking, we view $C/K$ as a family of curves (where the uniformizer is the parameter) and assign a 
combinatorial limit object $\Sigma(C)$ (a weighted metric graph) that retains information about the original family $C/K$. The goal of this thesis is to explicitly find $\Sigma(C)$ for a given $C/K$. The idea is to start with a morphism $C\rightarrow{\mathbb{P}^{1}}$ and then to consider the Galois closure $\overline{\phi}:\overline{C}\rightarrow{}\mathbb{P}^{1}$ of this morphism. We then find the Berkovich skeleton of the Galois closure and take an appropriate quotient to obtain the Berkovich skeleton of $C$.

Throughout this thesis, we will be using the language of semistable models to find the Berkovich skeleton of a curve. An important theorem in this area is the {\it{semistable reduction theorem}} by Mumford and Deligne in \cite{Deligne1969}, which says that over a finite extension $K'$ of $K$, every curve admits a semistable model. In proving this theorem, they used a result by Grothendieck saying that abelian varieties become stable after a finite extension. This leads to an algorithm for finding semistable models, at least in principle. One considers the Jacobian $J(C)$ of a curve $C$ and its $\ell$-torsion $J(C)[\ell]$ for a prime $\ell$. Taking $\ell\geq{3}$ and coprime to the characteristic of the residue field, one extends the base field $K$ so that the $\ell$-torsion becomes rational. Taking the minimal desingularization of $C$ over this field then yields a semistable model.

A lot of the steps given above are hard to perform in practice. We first have to find the Jacobian as an embedded variety in some $\mathbb{P}^{n}$, write down the addition formulas and then find the equations for the $\ell$-torsion. We then take any model $\mathcal{C}$ of $C$ over $K'$ and then desingularize this model. This desingularization process is a fairly easy local computation, consisting of several blow-ups and normalizations. A problem with this approach is that it doesn't tell us what happens to the Berkovich skeleton if we change the curve slightly. In this case, we would have to restart the entire process of finding the Jacobian, adding the torsion points and desingularizing our models. 

The reason that we're interested in varying the curve is as follows. Let $E$ be an elliptic curve over $K$. We then have the following result:
\begin{equation}\label{EllCurvesResult}
E\text{ has potential good reduction if and only if }v(j)>0.
\end{equation}
Here $j$ is the $j$-invariant of the elliptic curve. For more on this, see \cite[Chapter VII]{Silv1}. What this says is that the intersection graph $\Sigma(\mathcal{E})$ of a semistable model $\mathcal{E}$ of $E$ over an extension of $K$ contains a cycle if and only if the valuation of the $j$-invariant is negative. In fact, the length of this cycle is then equal to $-v(j)$. The idea of the proof is as follows. We assume that $\text{char}(k)\neq{2}$. In this case, we can find a Legendre equation for $E$
\begin{equation}
y^2=x(x-1)(x-\lambda)
\end{equation} 
over some finite extension $K'$ of $K$, where $\lambda\in{K'}$. One then explicitly finds the reduction type in terms of the branch points $\{0,1,\lambda,\infty\}$ of the natural morphism given on affines by $(x,y)\mapsto{x}$. In this last step, it is important to know how the reduction type of $E$ changes when we vary $\lambda$.  For more general morphisms, a theorem by Liu and Lorenzini (Theorem \ref{MaintheoremSemSta}) tells us that there is a connection between the branch locus of a morphism $\phi: C\rightarrow{\mathbb{P}^{1}}$ and the Berkovich skeleton of $C$. 
 To obtain results similar to the one in Equation \ref{EllCurvesResult}, we then see that we need to have a good idea of how the branch points of $\phi$ contribute to the reduction type of $C$. 


A similar result for curves of genus two was obtained in \cite{liu} using Igusa invariants. There are six reduction types in this case and the criteria given there are in terms of the valuations of these Igusa invariants. These were cast into a tropical form in \cite{Igusa}. We note that in this case, the criteria for curves over a field with residue characteristic greater than $3$ are different from the criteria for characteristic $2$ and $3$, in contrast to the elliptic curve case.

We are now quite naturally led to the following case: curves of genus three. We then immediately encounter a problem that was not present in the previous two cases. There are curves of genus three that do \emph{not} admit a hyperelliptic covering to the projective line, that is, a degree two morphism $C\rightarrow{\mathbb{P}^{1}}$. Using the canonical embedding, one then finds that the curve can be given as a quartic in $\mathbb{P}^{2}$. Projecting onto a point $P\in{C(K)}$ (which certainly exists after a finite extension), we then obtain a degree three morphism to the projective line.

These degree three morphisms are quite often not Galois, in the sense that the extension of function fields $K(\mathbb{P}^{1})\rightarrow{K(C)}$ is not normal. If it is normal, then $\phi$ is an abelian covering of order three, which we will study in Chapter \ref{Abelian}. Now suppose that $\phi$ is not Galois. We take the Galois closure of this morphism to obtain a degree six Galois covering $\overline{C}\rightarrow{\mathbb{P}^{1}}$. If $\overline{C}$ is not geometrically irreducible, then $\phi$ becomes an abelian morphism after a degree two extension of $K$. We now assume that $\overline{C}$ is geometrically irreducible. Then $\overline{\phi}$ is Galois with Galois group $S_{3}$. Note that this group is solvable, with subnormal series $(1)\vartriangleleft{H}\vartriangleleft{S_{3}}$, where $H$ is the normal subgroup of order three. We can then use our techniques for solvable Galois coverings to find the Berkovich skeleton of $\overline{C}$, see Chapter \ref{Solvable}.  
 

Throughout this thesis, we'll be using a result by Q. Liu and D. Lorenzini in \cite{liu1} for Galois coverings $C\rightarrow{D}$ such that the order of the Galois group is coprime to the characteristic of the residue field. These are also known as tame coverings. This result says that if we add the coordinates of the branch points to our base field $K$, take some model $\mathcal{D}$ (whose construction will be explained in Chapter \ref{Appendix2}) where the branch points reduce to disjoint smooth points on the special fiber and then extend the base field to eliminate any vertical ramification, we obtain a morphism of semistable models
\begin{equation}
\mathcal{C}\rightarrow{\mathcal{D}}.
\end{equation}
This will be the main ingredient in this thesis. A similar result was also obtained analytically in \cite{ABBR1}. 

To reconstruct the Berkovich skeleton of the Galois closure $\overline{C}$, we will use two concepts: \emph{covering data} and \emph{twisting data}. We start with a canonical semistable model of $\mathbb{P}^{1}$ for the morphism $\phi$. Its intersection graph is also known as the tropical separating tree. For every edge and vertex in this graph, we will give a formula for the number of elements in the pre-image of this edge or vertex. This is what we call the covering data. The formulas and proofs will be given in Chapter \ref{Inertiagroups} and the algorithm for the covering data will be given in Chapter \ref{Coveringdata}. We note that the algorithm for the covering data works for general Galois coverings, not just solvable ones.

For many small examples, this covering data is sufficient to determine the Berkovich skeleton of $\overline{C}$. In general, some additional data is needed to completely determine the covering graph. For a cyclic abelian covering $C\rightarrow{D}$, this will be given as a $2$-cocycle on $\Sigma(\mathcal{D})$ in terms of graph cohomology. This will tell us how to glue together any edges and vertices we obtain from the covering data. We will call this additional data \emph{twisting data}. It will be given in Chapter \ref{Twistingdata}. This is the part of the thesis where we use the assumption that the Galois group is abelian.  

\section{Notation}

We will use the following standard notation throughout this thesis:

\begin{itemize}
\item $K$ is a discretely valued complete field of characteristic zero with valuation $v:K^{*}\rightarrow{\mathbb{Z}}$,
\item $R=\{x\in{K}:v(x)\geq{0}\}$ is the valuation ring of $K$,
\item $R^{*}=\{x\in{K}:v(x)=0\}$ is the unit group,
\item $\mathfrak{m}=\{x\in{K}:v(x)>0\}$ is the unique maximal ideal in $R$,
\item $\pi$ is a uniformizer for $v$, i.e. $\pi{R}=\mathfrak{m}$,
\item $k:=R/\mathfrak{m}$ is the residue field of $R$.
\end{itemize}
We will assume that $v$ is normalized so that $v(\pi)=1$. For simplicity, we will also assume that the residue field $k$ is algebraically closed. In practice, it will be sufficient to assume that the residue field is large enough to contain the coordinates of all the branch and ramification points. 
For any finite extension $K'$ of $K$, we let $R'$ be a discrete valuation ring in $K'$ dominating $R$.

For a scheme $X$, we let $\mathcal{O}_{X}$ be its structure sheaf. For any point $x\in{X}$ we let $\mathcal{O}_{X,x}$ be the stalk of $\mathcal{O}_{X}$ at $x$. It is a local ring with maximal ideal $\mathfrak{m}_{x}$. A \emph{generic point} of an irreducible component $\Gamma$ is a point $y\in{X}$ such that $\overline{\{y\}}=\Gamma$. A point $x$ is a \emph{specialization} of a point $y$ if $x\in\overline{\{y\}}$.  
 For any Noetherian local ring $A$ with maximal ideal $\mathfrak{m}_{A}$, we let $\hat{A}$ be its $\mathfrak{m}_{A}$-adic completion, as in \cite[Section 1.3]{liu2}.
 
 For graphs, we use the definition found in \cite[Section 2.1]{Serre1980}. A graph $\Sigma$ consists of a set $E$ and a set $V$, together with two maps 
 \begin{align*}
E&\rightarrow{V\times{V}}           &  e &\mapsto{}(o(e),t(e))              &  {}&{}\\
E&\rightarrow{E}         &  e&\mapsto{\overline{e}}   &  {}&{},
\end{align*}
 which satisfy the following condition: for every $e$ in $E$, we have $\overline{\overline{e}}=e$, $\overline{e}\neq{e}$ and $o(e)=t(\overline{e})$. The set $E$ is known as the edge set, the set $V$ as the vertex set, $o(e)$ as the outgoing vertex of $e$ and $t(e)$ as the target vertex of $e$. An orientation of the graph is a subset $Y_{+}$ of $Y$ such that $Y$ is the disjoint union of $Y_{+}$ and $\overline{Y}_{+}$. When we're not interested in the orientation, we will refer to the set $\{e,\overline{e}\}$ as one edge. 


\section{Curves and fibered surfaces}

An algebraic variety over $K$ is a scheme of finite type over $\text{Spec }{K}$ and a curve over $K$ is an algebraic variety whose irreducible components are of dimension 1. For integral algebraic varieties $X$ over $K$, we denote their function fields by $K(X)$. We then say that $X$ is geometrically irreducible if the base change of $X$ to the algebraic closure of $K$ is irreducible.
\begin{exa}
Let $X$ be given by $\text{Spec}(\mathbb{Q}_{2}[x,y]/(x^2-2y^2))$, where $\mathbb{Q}_{2}$ is the field of $2$-adic numbers. Then $X$ is irreducible, but not geometrically irreducible, since the base change to $\mathbb{Q}_{2}(\sqrt{2})$ gives two irreducible components with generic points $(x\pm\sqrt{2}y)$. 
\end{exa}
\begin{lemma}\label{GeometricallyIrreducible}
An integral algebraic variety over $K$ with function field $K(X)$ is geometrically irreducible if and only if $K(X)\cap{K^{s}}=K$, where $K^{s}$ is the separable closure of $K$.
\end{lemma}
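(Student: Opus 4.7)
The plan is to reduce the geometric statement about irreducibility of $X_{\bar K}$ to an algebraic statement about the tensor product $K(X)\otimes_{K}\bar K$, and then analyze this ring by approximating $\bar K$ by finite separable subextensions. Since $X$ is integral with unique generic point $\eta$ and the base change morphism $X_{\bar K}\to X$ is faithfully flat, every irreducible component of $X_{\bar K}$ dominates $X$. Hence the irreducible components of $X_{\bar K}$ are in bijection with those of the generic fiber $\mathrm{Spec}(K(X)\otimes_{K}\bar K)$, i.e.\ with the minimal primes of $K(X)\otimes_{K}\bar K$. It therefore suffices to show that this ring has a unique minimal prime if and only if $K(X)\cap K^{s}=K$.

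For the forward direction I would argue by contrapositive: suppose there is $\alpha\in K(X)\cap K^{s}$ with $\alpha\notin K$, and set $M=K(\alpha)\subset K(X)$, a finite separable extension of some degree $n\geq 2$ with minimal polynomial $f(T)=\prod_{i=1}^{n}(T-\alpha_{i})$ having distinct roots in $\bar K$. By the Chinese Remainder Theorem,
\[
M\otimes_{K}\bar K\;\cong\;\bar K[T]/(f(T))\;\cong\;\bar K^{n},
\]
which contains $n$ nonzero orthogonal idempotents $e_{1},\dots,e_{n}$ summing to $1$. Since $\bar K$ is flat over $K$ and $M\hookrightarrow K(X)$, these idempotents remain nonzero in $K(X)\otimes_{K}\bar K$, producing a nontrivial product decomposition and hence at least $n\geq 2$ connected components of $\mathrm{Spec}(K(X)\otimes_{K}\bar K)$, contradicting irreducibility of $X_{\bar K}$.

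For the reverse direction, assume $F\cap K^{s}=K$, where $F=K(X)$; using the standing assumption $\mathrm{char}(K)=0$ we have $K^{s}=\bar K$, so this says $K$ is algebraically closed in $F$. The key step is to show that for every finite separable extension $M=K[T]/(f)$, the polynomial $f$ remains irreducible over $F$. Indeed, if $f=gh$ with $g,h\in F[T]$ monic of positive degree, then the coefficients of $g$ and $h$ are elementary symmetric functions in roots of $f$ and so lie in $K^{s}\cap F=K$, contradicting irreducibility of $f$ over $K$. Therefore $F\otimes_{K}M=F[T]/(f)$ is a domain, and being a finite $F$-algebra it is a field. Taking the direct limit over all finite separable $M\subset\bar K$ presents $F\otimes_{K}\bar K$ as a filtered union of fields, hence itself a field, so it has a unique (zero) minimal prime and $X_{\bar K}$ is irreducible.

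The main obstacle is the careful passage between the geometric irreducibility of $X_{\bar K}$ and the algebra of $K(X)\otimes_{K}\bar K$, in particular verifying that the idempotents coming from $M\otimes_{K}\bar K$ do not collapse when pushed into $K(X)\otimes_{K}\bar K$; this uses flatness of $\bar K/K$ together with injectivity of $M\hookrightarrow K(X)$. A secondary delicate point is ensuring in the $(\Leftarrow)$ direction that the roots of an arbitrary monic factor $g\in F[T]$ of the separable polynomial $f$ really do lie in $K^{s}$, which is immediate once one notes that any root of $g$ is also a root of $f$ and hence separable algebraic over $K$.
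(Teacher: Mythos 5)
Your argument is correct, and since the paper proves this lemma only by citing \cite[Chapter 3, Corollary 2.14]{liu2}, there is no in-text proof to compare against; your write-up gives the standard direct argument that the textbook reference also rests on. The reduction to minimal primes of $K(X)\otimes_{K}\bar K$ via flatness of the base change is the right move, the idempotent argument for the forward direction is clean, and the key step in the reverse direction (that a monic factor $g\in F[T]$ of a separable $f\in K[T]$ has coefficients in $\bar K\cap F$ because its roots are roots of $f$, hence lie in $K^{s}$, so the coefficients land in $K^{s}\cap F=K$) is exactly where the hypothesis enters and you have stated it correctly.

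Two small points worth being aware of, neither of which is a gap in the setting of this thesis. First, your reverse direction silently invokes the standing assumption $\mathrm{char}(K)=0$ to identify $K^{s}$ with $\bar K$; the statement as phrased in the lemma is about $K^{s}$, and in positive characteristic one would still conclude that $F\otimes_{K}K^{s}$ is a domain by the same argument, and then pass to $\bar K$ by observing that $\bar K/K^{s}$ is purely inseparable so the further base change does not create new minimal primes. Second, in the forward direction it is worth making explicit that having $n\geq 2$ nonzero orthogonal idempotents gives a nontrivial product decomposition and hence at least two connected components, each of which contains at least one minimal prime, so the scheme has at least two irreducible components; you state the conclusion but the one-line bridge from "disconnected" to "reducible" via the component count is what closes the argument.
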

\begin{proof}
See \cite[Chapter 3, Corollary 2.14]{liu2}. 
\end{proof}
We say that an algebraic variety over $K$ is smooth at a point $x\in{X}$ if the points of $X_{\overline{K}}$ lying above $x$ are regular points of $X_{\overline{K}}$. We then say that $X$ is smooth over $K$ if it is smooth at all of its points. 
We now define the arithmetic genus of a projective curve. We start with the definition of the Euler-Poincar\'{e} characteristic of a coherent sheaf of a projective variety over a field. So let $X$ be a projective variety over a field $K$ and let $\mathcal{F}$ be a coherent sheaf. We then define
\begin{equation}
\chi_{K}(\mathcal{F})=\sum_{i\geq{0}}(-1)^{i}\text{dim}_{K}H^{i}(X,\mathcal{F}),
\end{equation} 
where the $H^{i}(X,\mathcal{F})$ are the \v{C}ech cohomology groups of $X$. 
We have that $H^{i}(X,\mathcal{F})=0$ for $i>\text{dim}(X)$, so the above sum is finite. We now define the \emph{arithmetic genus} of a curve $X$ over a field to be
\begin{equation}
p_{a}(X):=1-\chi_{K}(\mathcal{O}_{X}).
\end{equation} 
We will also refer to this integer as the genus of the curve $X$, where we sometimes write $g(X):=p_{a}(X)$.

We now move from algebraic varieties over a field $K$ to schemes over the discrete valuation ring $R$. We will mostly follow Chapters 8,9 and 10 in \cite{liu2}.
   A {\bf{fibered surface}} over $S:=\text{Spec }R$ (in short: over $R$) is an integral, projective, flat $R$-scheme $\tau:\mathcal{C}\longrightarrow{S}$ of dimension 2. The generic fiber of $\mathcal{C}$ will be denoted by $\mathcal{C}_{\eta}$ and the special fiber by $\mathcal{C}_{s}$. An {\bf{arithmetic surface}} is a fibered surface over $S$ that is regular.  A {\bf{model}} of a curve $C$ over $K$ is a normal fibered surface $\mathcal{C}\longrightarrow{S}$ together with an isomorphism $f:\mathcal{C}_{\eta}\simeq{C}$.
   Let $z$ be a closed point in $\mathcal{C}$. We say that $z$ is an {\bf{ordinary double point}} if
   \begin{equation}
   \hat{\mathcal{O}}_{\mathcal{C},z}\simeq{R[[x,y]]/(xy-\pi^{n})}
   \end{equation}
   for some $n\in\mathbb{N}$. We call the integer $n$ the \emph{thickness} or \emph{length} of $z$. 
  A model $\mathcal{C}$ is said to be {\bf{semistable}} if the special fiber $\mathcal{C}_{s}$ is reduced and has only ordinary double points as its singularities. We will adopt the terminology of \cite{baker} and say that the model $\mathcal{C}$ is {\bf{strongly semistable}} if in addition to semistability the irreducible components of $\mathcal{C}_{s}$ are all smooth. 

\begin{exa}\label{ExampleChapter1}
We illustrate some of the local properties in the above definitions.
Let $A:=R[x,y]/I$, with $I$ specified below.  
We assume that $\text{char}(k)\neq{2}$. 
\begin{enumerate}
\item ({\it{Flatness}}) Let $I=(\pi(y^2-x^3-1))$. Then the generic fiber is an elliptic curve and the special fiber is $k[x,y]$. 
The ring $A$ is not flat over $R$, since it contains torsion. 
\item ({\it{Ordinary double point with a non-smooth component}}) 
Take $I=(y^2-x^3-x^2-\pi)$. The special fiber is then given by $y^2=x^3+x^2$, which is not smooth, since the point $\mathfrak{p}=(x,y)$ is not regular. 
\item \label{ExaS2} ({\it{Ordinary double point with two smooth components}}) Take $I=(y^2-f)$, where 
\begin{equation*}
f=x(x-\pi)(x+1)(x+1-\pi)(x+2)(x+2-\pi).
\end{equation*} 
The special fiber then consists of two irreducible components, given by $y=\pm{x(x+1)(x+2)}$. These intersect each other transversally in the three points $(0),(-1),(-2)$.
\end{enumerate} 
\end{exa}

Let us first define some properties of morphisms of curves over $\text{Spec}(K)$. 
 Let $\phi:C\rightarrow{D}$ be a finite morphism of smooth, projective, geometrically irreducible curves over $K$. We say that $\phi$ is Galois with Galois group $G$ if the corresponding injection of function fields $K(D)\rightarrow{K(C)}$ is Galois with Galois group $G$. We say that $\phi$ is separable if the corresponding injection of function fields is separable. The degree of $\phi$ is defined to be the degree of the field extension $K(D)\subseteq{K(C)}$. 


For a morphism of curves as defined above, we then have the following version of the {\it{Riemann-Hurwitz}} formula. 
\begin{theorem}{\bf{[Riemann-Hurwitz formula]}}\label{RiemannHurwitz}
Let $\phi:C\rightarrow{D}$ be a finite, separable morphism of smooth projective curves over $K$. Then
\begin{equation}
2p_{a}(C)-2=\text{deg}(\phi)(2p_{a}(D)-2)+\sum_{P\in{C}}(e_{P}-1).
\end{equation}
Here $e_{P}$ is the ramification index of $\phi$ at $P$.
\end{theorem}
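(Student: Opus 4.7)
The plan is to prove this via the comparison of canonical divisors on $C$ and $D$ using the sheaves of relative K\"ahler differentials. Concretely, I would first exhibit the natural $\mathcal{O}_{C}$-linear map $\phi^{*}\Omega^{1}_{D/K}\to \Omega^{1}_{C/K}$. Since $C$ and $D$ are smooth projective curves and $\phi$ is finite and separable, both sheaves are invertible and this map is injective, so the cokernel $\mathcal{R}$ is a torsion sheaf supported on the ramification locus. This gives the short exact sequence
\begin{equation*}
0 \longrightarrow \phi^{*}\Omega^{1}_{D/K} \longrightarrow \Omega^{1}_{C/K} \longrightarrow \mathcal{R} \longrightarrow 0,
\end{equation*}
and passing to divisor classes yields $K_{C}=\phi^{*}K_{D}+R$ in $\mathrm{Pic}(C)$, where $R$ is the (effective) ramification divisor.

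Next I would carry out the local computation at each $P\in C$ to identify the multiplicity of $R$ at $P$. Working with the completed local rings and choosing uniformizers $t\in \mathcal{O}_{C,P}$ and $s\in \mathcal{O}_{D,\phi(P)}$, one can write $\phi^{*}(s)=u\cdot t^{e_{P}}$ with $u$ a unit. Differentiating gives
\begin{equation*}
\phi^{*}(ds) \;=\; \bigl(e_{P}\, u\, t^{e_{P}-1} + t^{e_{P}}\cdot u'\bigr)\,dt.
\end{equation*}
Because $K$ has characteristic zero, $e_{P}$ is invertible in $k$, so the leading term does not vanish and $\phi^{*}(ds)$ vanishes to order exactly $e_{P}-1$ at $P$. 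Hence $R=\sum_{P\in C}(e_{P}-1)\,P$.

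Finally, taking degrees of both sides of $K_{C}=\phi^{*}K_{D}+R$ and using that $\deg\phi^{*}K_{D}=\deg(\phi)\cdot \deg K_{D}$, together with the identity $\deg K_{X}=2p_{a}(X)-2$ valid for any smooth, projective, geometrically irreducible curve (a consequence of Serre duality together with the definition of $p_{a}$), one obtains exactly
\begin{equation*}
2p_{a}(C)-2 \;=\; \deg(\phi)\bigl(2p_{a}(D)-2\bigr) \;+\; \sum_{P\in C}(e_{P}-1).
\end{equation*}

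The main obstacle is really the local ramification calculation: one must verify that precisely the factor $e_{P}-1$ (and not a larger contribution coming from the different) appears in the cokernel. This is where the characteristic-zero hypothesis is essential, as it rules out wild ramification and guarantees that $e_{P}$ survives as a nonzero scalar after reduction. Everything else (injectivity of the pullback on differentials, additivity of degree under pullback, Serre duality) is formal once the local contribution has been pinned down.
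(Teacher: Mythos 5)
The paper does not prove this theorem; it simply cites Liu's textbook. Your proposal is the standard proof via the sheaf of relative K\"ahler differentials and the canonical divisor comparison $K_{C}=\phi^{*}K_{D}+R$, which is the proof Liu gives, and it is correct.

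One wording slip is worth flagging. You write that ``$e_{P}$ is invertible in $k$''. The field in which $e_{P}$ must be a nonzero scalar is the residue field $k(P)$ of the closed point $P$ on the curve $C$, which is a finite extension of $K$ --- not $k$, the residue field of the valuation ring $R$. In this thesis $K$ has characteristic zero while $k$ typically has positive characteristic, so these are genuinely different fields. Your conclusion still stands because $\text{char}(K)=0$ forces $\text{char}(k(P))=0$, hence wild ramification cannot occur for the morphism $C\rightarrow D$ of $K$-curves; but the distinction matters, since so much of the rest of the thesis concerns reductions modulo $\mathfrak{m}$ to curves over $k$, and there the tameness of the covering is an honest extra hypothesis rather than something automatic.
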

\begin{proof}
See \cite[Chapter 7, Theorem 4.16]{liu2}. 
\end{proof}

Now let $\mathcal{C}$ and $\mathcal{D}$ be models for $C$ and $D$ respectively. A \emph{finite morphism of models for} $\phi$ is a finite morphism $\phi_{\mathcal{C}}:\mathcal{C}\rightarrow{\mathcal{D}}$ over $\text{Spec}(R)$ such that the base change to $\text{Spec}(K)$ gives $\phi:C\rightarrow{D}$.

\section{Intersection graphs and Berkovich skeleta}\label{IntersectionGraphIntro}

Let $\mathcal{C}$ be a strongly semistable model. In this section, we define the intersection graph of $\mathcal{C}$. We furthermore relate these graphs to the main object in this thesis: the Berkovich skeleton.

\begin{mydef}({\bf{Dual Intersection Graph}})
Let $\mathcal{C}$ be a strongly semistable model for a curve $C$ over $K$. Let $\{\Gamma_{1},...,\Gamma_{r}\}$ be the set of irreducible components. We define the dual intersection graph $\Sigma(\mathcal{C})$ of $\mathcal{C}_{}$ to be the finite graph whose vertices $v_{i}$ correspond to the irreducible components $\Gamma_{i}$ of $\mathcal{C}_{s}$ and whose edges correspond to intersections between components. The latter means that we have one edge for every point of intersection. 
We write $V(\Sigma(\mathcal{C}))$ for the vertex set and $E(\Sigma(\mathcal{C}))$ for the edge set of $\Sigma(\mathcal{C})$. 
\end{mydef}

\begin{exa}
In Example \ref{ExampleChapter1}.\ref{ExaS2},  the intersection graph consists of two vertices with three edges between them. One can find the graph in Figure \ref{Eersteechteplaatje}. The morphism $C\rightarrow{\mathbb{P}^{1}}$ collapses the three edges to smooth points on the only component of the semistable model $\mathbb{P}_{R}^{1}$. 
\begin{figure}[h!]
\centering
\includegraphics[scale=0.6]{{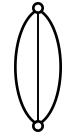}}
\caption{\label{Eersteechteplaatje} The intersection graph in Example \ref{ExampleChapter1}.\ref{ExaS2}.} 
\end{figure}
\end{exa}

We will also want to keep track of the genera of the components. We will do this by assigning to every vertex in the dual intersection graph its associated genus. We define 
\begin{equation*}
w(v_{i}):=g(\Gamma_{i}).
\end{equation*} 
Whenever we draw the graph of a certain curve, we will write the genera next to the components in question. Whenever the component has genus 0, we will omit the zero. This function
\begin{equation}
w:V(\Sigma)\rightarrow{\mathbb{N}}
\end{equation}
now turns the intersection graph into a \emph{weighted graph}. 
We have the following
\begin{theorem}\label{TheoremGenus}
Let $\mathcal{C}$ be a strongly semistable model for a smooth curve $C$ over $K$ with intersection graph $G$. Let $\beta(G)$ be the Betti number of $G$ and let $p_{a}(\mathcal{C}_{s})$ be the arithmetic genus of $\mathcal{C}_{s}$. We then have
\begin{equation*}
p_{a}(\mathcal{C}_{s})=\beta(G)+\sum_{1\leq{i}\leq{r}}w(v_{i}).
\end{equation*}
\end{theorem}
\begin{proof}
See \cite[Page 511]{liu2}.
\end{proof}
Let us now define the notion of a {\it{weighted metric graph}}. 
\begin{mydef}
A {\bf{weighted metric graph}} is a triple $(\Sigma,w(\cdot{}),l(\cdot{}))$, where 
\begin{itemize}
\item $\Sigma$ is a finite graph, 
\item $w(\cdot{})$ a function $w:V(\Sigma)\rightarrow{\mathbb{N}}$,
\item $l(\cdot{})$ a function $l: E(\Sigma)\rightarrow{\mathbb{N}}$. 
\end{itemize}
We refer to $w(\cdot{})$ as the {\it{weight}} function and $l(\cdot{})$ as the {\it{length}} function associated to $\Sigma$. 
\end{mydef}
We now turn our weighted intersection graph $(\Sigma(\mathcal{C}),w)$ into a {\it{weighted metric graph}}.
To do this, we need to assign a notion of length to our edges. Let $e$ be an edge in $\Sigma(\mathcal{C})$, corresponding to an intersection point $z\in\mathcal{C}$. Recall that we have the following isomorphism for the completed local ring of $z$:
\begin{equation}
 \hat{\mathcal{O}}_{\mathcal{C},z}\simeq{R[[x,y]]/(xy-\pi^{n})}.
\end{equation} 
We then define the {\bf{length function}} $l:E(\Sigma(\mathcal{C}))\rightarrow{\mathbb{N}}$ by
\begin{equation}
l(e)=n.
\end{equation}
Different semistable models can give rise to subdivisions of our graph $\Sigma(\mathcal{C})$, so we need to define the notion of refinements. To obtain the \emph{minimal} Berkovich skeleton, we also need to do some pruning and delete the leaves. 

\begin{mydef}\label{Refinement}
 A {\bf{refinement}} of $\Sigma(\mathcal{C})$ is a graph obtained from $\Sigma(\mathcal{C})$ by subdividing the edges of $\Sigma(\mathcal{C})$ in a length-preserving fashion. Here we only allow subdivisions where the edges have integer lengths and the new vertices have weight zero.
  We say that two weighted metric graphs $\Sigma$ and $\Sigma'$ are \emph{equivalent} if they admit a common refinement $\tilde{\Sigma}$. We write $\Sigma\sim\Sigma'$. 
 \end{mydef}
 \begin{rem}
 Every weighted metric graph as defined above now has a "maximal refinement", in the sense that we can subdivide any edge of length $n$ with vertices $v_{1}$ and $v_{n+1}$ into a graph with $n+1$ vertices $\{v_{1},v_{2},...,v_{n+1}\}$ and edges $e_{i,i+1}$ of length $1$. 
 \end{rem}


\begin{mydef}
Let $\Sigma(\mathcal{C})$ be as above. 
A {\bf{leaf}} of $\Sigma(\mathcal{C})$ is a subgraph $L$ of $\Sigma(\mathcal{C})$ with vertex set $\{v\}$ and edge set $\{e\}$, where $v\in\Sigma(\mathcal{C})$ has valency one, genus zero and $e$ is the edge connected to $v$. A weighted metric graph without leaves is called leafless. 
\end{mydef}

\begin{mydef}
Let $\Sigma(\mathcal{C})$ be as above. Consider the subgraph $\Sigma(C)$ obtained from $\Sigma(\mathcal{C})$ by deleting all the leaves. The equivalence class of this graph $\Sigma(C)$ under refinements 
 of leafless weighted metric graphs is the {\bf{Berkovich skeleton}} of $C$. 
\end{mydef}

\begin{rem}
This graph can also be obtained algebraically: we take the semistable model $\mathcal{C}$ and contract all the \emph{exceptional} divisors $E$ which have self-intersection $-1$, see Chapter \ref{IntersectionTheory} and \cite[Chapter 9.3.1]{liu2}. 
The desingularization of this model is then the minimal regular model for curves of genus $\geq{1}$. The intersection graph of this minimal regular model is then exactly the leafless maximal refinement.  
\end{rem}

\begin{rem}
This definition makes no reference to Berkovich spaces, but it gives the same skeleton as defined in that context, see \cite{berkovich2012}. In terms of semistable vertex sets (see \cite{BPRa1}), this skeleton is known as the \emph{minimal} Berkovich skeleton. 
\end{rem}

\begin{rem}
In Section \ref{Metrizedcomplex}, we will enhance the weighted metric graph $\Sigma$ with additional data in the form of an explicit curve $C_{v}/k$ for every vertex $v\in\Sigma$. This will turn $\Sigma$ into a metrized complex of $k$-curves. 
\end{rem}

\section{Main problems}

We now give a summary of the main problems we wish to address in this thesis. They are as follows:

\begin{enumerate}
\item There exist criteria for the Berkovich skeleta of elliptic curves and genus two curves in terms of coordinates on their coarse moduli spaces, see \cite[Chapter VII]{Silv1} and \cite{Igusa}. Can they be generalized to curves of higher genus? 
\item Is there a fast algorithm for finding the Berkovich skeleton of a genus three curve?
\item Are there fast algorithms for finding the Berkovich skeleton of other types of curves?  
\end{enumerate}

We will answer these questions in Chapter \ref{Conclusion}. 

To answer these questions, we used the analogy between coverings of curves and finite extensions of number fields as our motivation.
It is in this theory of number fields that one quite quickly sees that it is better to consider the fully symmetric version, the Galois closure, of a finite extension of number fields to study the decomposition of primes. This then also yields the decomposition for the subfields by taking an appropriate quotient. The idea in this thesis is to view the vertices and edges of a Berkovich skeleton as the primes in a number field and then to find the decomposition groups of these primes. 
This then locally gives the Berkovich skeleton of the curve lying above it and in order to give the full skeleton some additional data has to be added. We call this the twisting data of the covering. Something similar happens for number fields: knowing the decomposition of primes for a covering $L\supset{K}$ doesn't directly give any \emph{global} information like the class number $h_{L}$ of the number field $L$. We will view the Berkovich skeleton of a curve $C$ as an analogue of the class group/class number in number theory.  

\chapter{Divisors on curves and graphs}\label{Divisors}

The problem we wish to address here is as follows: we wish to {\it{transport}} divisors from a curve $C$ to a strongly semistable regular model $\mathcal{C}$ and then to its intersection graph $\Sigma(\mathcal{C})$. This will require some notions from graph theory and intersection theory. In each of the three settings we have a notion of a {\it{principal}} divisor. This will then give us the notion of a "Jacobian" in each scenario.\\
We will start with intersection graphs and Jacobians on these intersection graphs. Here we will introduce the Laplacian operator. We will then move to intersection theory on $\mathcal{C}$, where we will show how to move from divisors on $\mathcal{C}$ to divisors on the intersection graph. Lastly, we will study how the N\'{e}ron model of the Jacobian of $C$ fits into all of this and how we can make sense of the identity component of that N\'{e}ron model in terms of graph cohomology. 
\section{Divisors on graphs and Laplacians}\label{DivisorsLaplacians}
So let $G$ be a graph, which we will assume to be finite, connected and without loop edges. Let $V(G)$ be its vertices and $E(G)$ its edges. We define $\text{Div}(G)$ to be the free abelian group on the vertices $V(G)$ of $G$. 
Writing $D\in{\text{Div}(G)}$ as $D=\sum_{v\in{V(G)}}c_{v}(v)$, we define the degree map as $\text{deg}(D)=\sum_{v\in{V(G)}}c_{v}$. We let $\text{Div}^{0}(G)$ be the group of divisors of degree zero on $G$.\\ Now let $\mathcal{M}(G)$ be the group of $\mathbb{Z}$-valued functions on $V(G)$. Define the {\bf{Laplacian operator}} $\Delta:\mathcal{M}(G)\longrightarrow\text{Div}^{0}(G)$ by 
\begin{equation*}
\Delta(\phi)=\sum_{v\in{V(G)}}\sum_{e=vw\in{E(G)}}(\phi(v)-\phi(w))(v).
\end{equation*}   
We then define the group of principal divisors to be the image of the Laplacian operator:
\begin{equation*}
\text{Prin}(G):=\Delta(\mathcal{M}(G)).
\end{equation*}


\begin{mydef}[{\bf{Tropical Jacobians}}]
We define the {\it{tropical Jacobian}} of $G$ to be the group
\begin{equation}
J(G):=\text{Div}^{0}(G)/\text{Prin}(G).
\end{equation}
\end{mydef}

\begin{exa}\label{FirstExa}
Suppose we take $\text{Proj }R[X,Y,W]/(XY-\pi{W}^2)$ with its usual grading. As before, we have two components intersecting each other in one point. It is now quite easy to see that every divisor of degree zero is in fact principal. Take any $D$ of degree zero. Then $D(\Gamma_{1})=-D(\Gamma_{2})$. Let us define
\begin{eqnarray*}
\phi(\Gamma_{1})&=&0,\\
\phi(\Gamma_{2})&=&D(\Gamma_{2}).
\end{eqnarray*}
Then $\phi$ has the right divisor and as such every divisor is principal.
\end{exa}
\begin{exa}\label{SecExa2}
\begin{figure}[h!]
\centering
\includegraphics[scale=0.44]{{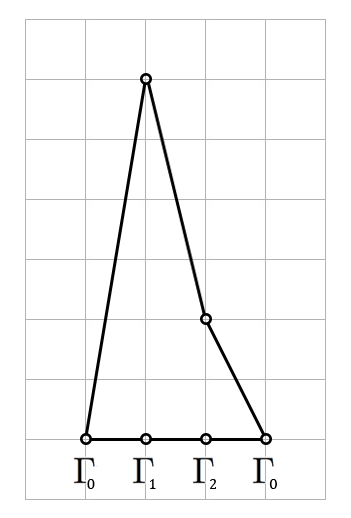}}
\caption{\label{16eplaatjeExtra} {\it{The graph of the function $\phi$ considered in Example \ref{SecExa2}.}}}
\end{figure}
Throughout this thesis, we will connect the values of $\phi$ by the unique line between them. 
An example of a Laplacian can be found in Figure \ref{16eplaatjeExtra}. The graph in question is given in Figure \ref{27eplaatje}.
\begin{figure}[h!]
\centering
\includegraphics[scale=0.4]{{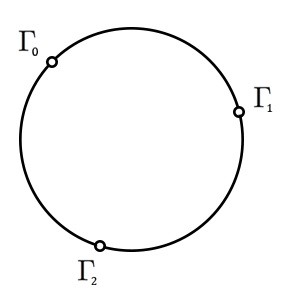}}
\caption{\label{27eplaatje} {\it{The graph considered in Example \ref{SecExa2}.}}}
\end{figure}
 The divisor of the Laplacian in this figure is 
\begin{equation*}
\Delta(\phi)=-8(\Gamma_{0})+10(\Gamma_{1})-2(\Gamma_{2}).
\end{equation*}
\end{exa}

We would like to quickly mention a connection between this tropical Jacobian and a well-known theorem on spanning trees in a graph: {\bf{Kirchhoff's Theorem}}.
\begin{theorem}
Let $G$ be a finite connected graph. Then the order of the tropical Jacobian of $G$ is equal to the number of maximal spanning trees in $G$.
\end{theorem}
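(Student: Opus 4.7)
The plan is to identify the tropical Jacobian with the cokernel of a reduced Laplacian matrix, and then appeal to the classical Matrix-Tree Theorem of Kirchhoff. Fix an ordering $v_{1},\dots,v_{n}$ of the vertices of $G$, and use the delta functions $\{\mathbf{1}_{v_i}\}$ as a basis to identify $\mathcal{M}(G)\simeq\mathbb{Z}^{n}$ and $\mathrm{Div}(G)\simeq\mathbb{Z}^{n}$. Under these identifications, the operator $\Delta$ is represented by the Laplacian matrix $L=D-A$, where $D$ is the diagonal matrix of vertex degrees and $A$ is the adjacency matrix (with multiplicities when there are parallel edges). The subgroup $\mathrm{Div}^{0}(G)$ is the kernel of the degree map, which is a free abelian group of rank $n-1$.

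The first step I would carry out is to describe the kernel and image of $\Delta$. Since $G$ is connected, the kernel of $\Delta$ consists precisely of the constant functions (this is a short argument using the maximum principle: a function on $V(G)$ where $\Delta(\phi)(v)=0$ for all $v$ must be constant on connected components). Hence $\Delta$ descends to an injection
\begin{equation*}
\overline{\Delta}: \mathcal{M}(G)/\mathbb{Z}\cdot\mathbf{1} \longhookrightarrow \mathrm{Div}^{0}(G),
\end{equation*}
and the tropical Jacobian $J(G)$ is the cokernel of this injection. In the chosen basis, $\overline{\Delta}$ is represented by the \emph{reduced Laplacian} $\widetilde{L}$, obtained from $L$ by deleting, say, the last row and the last column (quotienting $\mathcal{M}(G)$ by constants kills one row, and using $\mathrm{Div}^{0}(G)$ kills one column).

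The second step is to compute the order of the cokernel. Since $\overline{\Delta}$ is a homomorphism between two free abelian groups of rank $n-1$, the structure theorem for finitely generated modules over $\mathbb{Z}$ gives
\begin{equation*}
|J(G)| \;=\; \bigl|\mathrm{Div}^{0}(G)/\mathrm{im}(\overline{\Delta})\bigr| \;=\; |\det(\widetilde{L})|,
\end{equation*}
provided this determinant is nonzero. Nonvanishing follows from connectedness: $L$ has rank exactly $n-1$, so any principal $(n-1)\times(n-1)$ minor is nonsingular. In fact one should also check that the answer is independent of which vertex one deletes, which follows because all cofactors of $L$ are equal (a consequence of $L$ having each row and column summing to zero).

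The last step is to invoke Kirchhoff's Matrix-Tree Theorem, which states exactly that $\det(\widetilde{L})$ equals the number of spanning trees of $G$. Combined with the identification above, this proves $|J(G)|=\#\{\text{spanning trees of }G\}$. The main obstacle is the identification of $\mathrm{Prin}(G)$ with the lattice spanned by the rows of $\widetilde{L}$ inside $\mathrm{Div}^{0}(G)$; once this is set up carefully (together with the rank argument that $\ker\Delta=\mathbb{Z}\cdot\mathbf{1}$), the result is a direct consequence of a classical theorem, which one can cite rather than reprove.
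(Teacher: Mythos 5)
Your proof is correct and is the standard derivation: identify $J(G)$ with the cokernel of the reduced Laplacian and invoke the Matrix-Tree theorem. The paper itself does not prove this statement — it simply records Kirchhoff's theorem as a classical fact without argument — so there is no approach to compare against; your sketch supplies the proof that the paper omits. One small point worth making explicit: when you pass to the reduced Laplacian $\widetilde{L}$ you are choosing coordinates $\phi\mapsto(\phi(v_1),\dots,\phi(v_{n-1}))$ on $\mathcal{M}(G)/\mathbb{Z}\cdot\mathbf{1}$ and $D\mapsto(D(v_1),\dots,D(v_{n-1}))$ on $\mathrm{Div}^0(G)$, and it is under this pair of identifications (not the naive one on $\mathrm{Div}(G)$) that $\overline{\Delta}$ is represented by deleting the $n$-th row and column of $L$; your appeal to Smith normal form then correctly gives $|J(G)|=|\det\widetilde{L}|$.
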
 

\begin{exa}
Let us take the graph from Example (\ref{ExampleChapter1}.\ref{ExaS2}).
 Then there are three maximal spanning trees, and so the order of tropical Jacobian is three. This of course also means that $\text{Jac}(G)\simeq{\mathbb{Z}/3\mathbb{Z}}$. 
\end{exa}

\begin{rem}
 We will later see that the tropical Jacobian is canonically isomorphic to the component group of the N\'{e}ron model of the Jacobian of $C$, see Section \ref{JacNer} or \cite[Page 24]{baker}. Using Kirchhoff's theorem we can say that the order of this component group is then equal to the number of maximal spanning trees. 
\end{rem}

\begin{rem}
As in the case of algebraic curves, one has multiple ways of constructing a "{\it{Tropical Jacobian}}". In \cite[Page 203]{MZ1},
 a tropical Jacobian is constructed using differential forms: one takes the dual $\Omega(C)^{*}=\text{Hom}(\Omega(C),\mathbb{R})$ of the space of holomorphic differentials $\Omega(C)$, where $C$ is a tropical curve. By integration, one obtains a lattice $H_{1}(\Gamma,\mathbb{Z})$ in this vector space and one then sets
\begin{equation*}
J(C):=\Omega(C)^{*}/H_{1}(\Gamma,\mathbb{Z}).
\end{equation*}
After chosing a basis of $\Omega(C)^{*}$, one then obtains a noncanonical isomorphism $J(C)\simeq{}(\mathbb{R}/\mathbb{Z})^{g}$.
This Jacobian can then be described entirely in terms of the associated intersection graph, as in \cite[Page 35, Section 5]{BBC1}.
 This is already much closer to our approach. \\One obvious difference between this approach and our approach is that our tropical Jacobian is {\it{finite}}. As noted in \cite[Remark A.11]{baker}, we can get somewhat closer by considering the limit over finite extensions $K'\supset{K}$ to obtain a $\mathbb{Q}$-rational tropical Jacobian $J_{\mathbb{Q}}(\Gamma)$ which is noncanonically isomorphic to $(\mathbb{Q}/\mathbb{Z})^{g}$.\\
Let us describe these phenomena in a particular case: an elliptic curve $E$ with multiplicative reduction. Over a discretely valued field $K$ with $v(\pi)=1$, one then obtains an isomorphism with the Tate curve $E(K)\simeq{}K^{*}/<q>$ for some $q$ with positive valuation equal to $-v(j)$. One can then define a "naive" tropicalization map
\begin{eqnarray*}
\text{trop}:(K)^{*}/<q>&\longrightarrow&{\mathbb{Z}/v(q)\mathbb{Z}},\\
\lbrack{x}\rbrack{}&\longmapsto&\lbrack{v(x)}\rbrack{}.
\end{eqnarray*}
This is easily seen to be well-defined. To study the passage to finite extensions of $K$, let us consider the easy example of a ramified extensions of degree $n$ given by $K\subset{}K':=K(\pi^{1/n})$. We extend the valuation on $K$ by $v(\pi^{1/n})=1/n$. As before, one has an isomorphism $E(K')={(K')}^{*}/<q>$. See [\cite{Silv2}, Chapter V] for this. If we take a similar naive tropicalization as before, one obtains $\text{trop}(E(K'))=(\dfrac{1}{n}\mathbb{Z})/(v(q)\mathbb{Z})\simeq{\mathbb{Z}/(n\cdot{}v(q)\mathbb{Z})}$. Taking this argument further to an algebraic closure of $K$, we then easily obtain $\text{trop}(E(\bar{K}))=\mathbb{Q}/\mathbb{Z}$. 

\end{rem}

\section{Intersection theory on $\mathcal{C}$}\label{IntersectionTheory}

Here we will start transporting divisors. Let us suppose now that we have a strongly semistable {\it{regular}} model $\mathcal{C}$. As before we will consider its intersection graph $\Sigma(\mathcal{C})$ and the irreducible components $\{\Gamma_{1},...,\Gamma_{n}\}$. Let $\text{Div}(C)$ (resp. $\text{Div}(\mathcal{C})$) be the group of Cartier divisors on $C$ (resp. $\mathcal{C}$). Since both $C$ and $\mathcal{C}$ are {\it{regular and integral}}, we have by \cite[Page 271]{liu2} that these Cartier divisors correspond to Weil divisors. Similarly, we will let $\text{Prin}(C)$ (resp. $\text{Prin}(\mathcal{C})$) be the group of principal Cartier divisors on $C$ (resp. $\mathcal{C}$). Note also that we have that $\mathcal{C}$ is {\it{normal}} (because $\mathcal{C}$ is regular or because $\mathcal{C}_{s}$ is reduced and $\mathcal{C}_{\eta}$ is normal), so we can talk about valuations at codimension one primes.

The intersection theory that we now need is described in \cite[Page 381]{liu2} and \cite[Page 7]{baker}. We will give a quick summary and refer the reader to the aforementioned sources for the details. Let $\text{Div}_{s}(\mathcal{C})$ be the set of Cartier divisors on $\mathcal{C}$ with support in $\mathcal{C}_{s}$. These are known as the {\bf{vertical divisors}}. This group has the $\Gamma_{i}$ as a $\mathbb{Z}$-basis. (We will later also define the {\it{horizontal divisors}}). At any rate, there exists a bilinear map (the intersection map)
\begin{equation*}
\text{Div}(\mathcal{C})\times{\text{Div}_{s}(\mathcal{C})}\longrightarrow{\mathbb{Z}},
\end{equation*} 
which we will write as $\mathcal{D}\cdot{E}$ for Cartier divisors $\mathcal{D}$ and $E$, where $E\subseteq\mathcal{C}_{s}$. This can then be computed as 
\begin{equation*}
\mathcal{D}\cdot{E}=\text{deg }\mathcal{O}_{X}(\mathcal{D})|_{E}.
\end{equation*}
One special case that needs attention is the {\it{self-intersection}} of elements of $\text{Div}_{s}(\mathcal{C})$. Suppose we have $E\subseteq{\mathcal{C}_{s}}$. The number $E\cdot{E}$ is called the {\it{self-intersection}} of $E$ and is denoted by $E^2$. We then have the following proposition that gives us the self-intersection numbers:

\begin{pro}
Let $\mathcal{C}\longrightarrow{S}$ be as above. The following properties are then true.
\begin{enumerate}
\item For any $E\in\text{Div}_{s}(\mathcal{C})$, we have $\mathcal{C}_{s}\cdot{E}=0$.
\item Let $\Gamma_{i}$ be the irreducible components of $\mathcal{C}_{s}$. Then for any $i\leq{r}$, we have
\begin{equation*}
\Gamma_{i}^{2}=-\sum_{j\neq{i}}\Gamma_{i}\cdot{\Gamma_{j}}.
\end{equation*}  
\end{enumerate} 
\end{pro}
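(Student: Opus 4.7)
The plan is to reduce both statements to the fact that the special fiber $\mathcal{C}_s$ is the principal Cartier divisor cut out by the uniformizer $\pi$, and then to exploit bilinearity of the intersection pairing.

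First I would verify that, as a Cartier divisor on $\mathcal{C}$, one has $\mathcal{C}_s = \mathrm{div}(\pi)$. Since $\pi \in R$ pulls back to a global section of $\mathcal{O}_\mathcal{C}$ via the structure morphism $\tau\colon \mathcal{C} \to S$, it defines a principal Cartier divisor on $\mathcal{C}$. Its support is exactly $\mathcal{C}_s$, and because $\mathcal{C} \to S$ is flat and $\mathcal{C}_s$ is reduced (by the strong semistability assumption), the multiplicity of each irreducible component $\Gamma_i$ in $\mathrm{div}(\pi)$ is equal to the valuation of $\pi$ at the generic point of $\Gamma_i$, which is $1$. Thus
\begin{equation*}
\mathcal{C}_s = \mathrm{div}(\pi) = \sum_{i=1}^{r} \Gamma_i
\end{equation*}
as Cartier divisors.

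For part (1), the associated line bundle is $\mathcal{O}_\mathcal{C}(\mathcal{C}_s) = \mathcal{O}_\mathcal{C}(\mathrm{div}(\pi)) \cong \mathcal{O}_\mathcal{C}$, since principal divisors yield trivial line bundles. Restricting to any vertical divisor $E \in \mathrm{Div}_s(\mathcal{C})$, which we may write as $E = \sum n_j \Gamma_j$, the intersection number is computed componentwise as $\mathcal{C}_s \cdot E = \sum n_j \deg(\mathcal{O}_\mathcal{C}(\mathcal{C}_s)|_{\Gamma_j}) = \sum n_j \deg(\mathcal{O}_{\Gamma_j}) = 0$, since each $\Gamma_j$ is proper over $k$ and the trivial line bundle has degree zero on a smooth projective curve. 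This gives $\mathcal{C}_s \cdot E = 0$.

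Part (2) then follows immediately by applying part (1) with $E = \Gamma_i$ and expanding using bilinearity together with the decomposition $\mathcal{C}_s = \sum_j \Gamma_j$:
\begin{equation*}
0 \;=\; \mathcal{C}_s \cdot \Gamma_i \;=\; \sum_{j=1}^{r} \Gamma_j \cdot \Gamma_i \;=\; \Gamma_i^2 + \sum_{j \neq i} \Gamma_i \cdot \Gamma_j,
\end{equation*}
from which the stated formula for $\Gamma_i^2$ is immediate. The main subtlety, and really the only non-formal step, is the identification $\mathcal{C}_s = \mathrm{div}(\pi)$: it is where both flatness of $\tau$ and reducedness of $\mathcal{C}_s$ (hence the strong semistability hypothesis) enter in an essential way. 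Once that identification is granted, the rest is bookkeeping with the degree of the trivial bundle and bilinearity of the pairing already introduced in the excerpt.
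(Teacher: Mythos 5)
Your proof is correct and is essentially the argument the paper implicitly appeals to: the paper simply cites Liu (Chapter 9, Proposition 1.21) with the remark that all multiplicities are $1$ in the semistable case, and that is exactly the step you supply by observing that reducedness of $\mathcal{C}_s$ forces $\mathcal{C}_s = \mathrm{div}(\pi) = \sum_i \Gamma_i$. The rest — triviality of $\mathcal{O}_\mathcal{C}(\mathcal{C}_s)$ giving part (1), and bilinearity giving part (2) — matches the standard proof the paper defers to. (One small remark: reducedness of $\mathcal{C}_s$ is already part of ordinary semistability; strong semistability is not needed for this point.)
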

\begin{proof}
This is Chapter 9, Proposition 1.21 in \cite{liu2}. Note that the multiplicities in our case are all 1, so the formula simplifies. 
\end{proof}

\begin{rem}
In the semistable case, all intersections will be {\it{transversal}}, meaning that 
\begin{equation*}
\Gamma_{i}\cdot\Gamma_{j}=\#\{\text{intersection points of } \Gamma_{i}\text{ and }\Gamma_{j}\}.  
\end{equation*}
This means that the self-intersection number of any $\Gamma_{i}$ is just the total number of intersections with other components.
\end{rem}

\begin{exa}
\begin{enumerate}
\item Let us take $\mathcal{C}=\text{Proj}R[X,Y,W]/(XY-\pi{}W^2)$ with affine chart 
\begin{equation*}
A=R[x,y]/(xy-\pi),
\end{equation*} where $x=\dfrac{X}{W}$ and $y=\dfrac{Y}{W}$. Then $\Gamma_{1}=\overline{(x)}$ and $\Gamma_{2}=\overline{(y)}$. Then $\Gamma_{1}\cdot\Gamma_{2}=1$ and $\Gamma_{i}^2=-1$.  
\item Let us consider Example \ref{ExampleChapter1}.\ref{ExaS2} 
again. We have two components $\Gamma_{1}$ and $\Gamma_{2}$. Then $\Gamma_{1}\cdot\Gamma_{2}=3$ and as such we have $\Gamma_{i}^2=-3$.
\end{enumerate}
\end{exa}

\subsection{From $\text{Div}(\mathcal{C})$ to $\text{Div}(\Sigma(\mathcal{C}))$}

Using the intersection theory above, we can now transport our divisors from $\mathcal{C}$ to $\Sigma(\mathcal{C})$. We define a homomorphism $\rho: \text{Div}(\mathcal{C})\longrightarrow{\text{Div}(\Sigma(\mathcal{C}))}$ with
\begin{equation*}
\rho(\mathcal{D})=\sum_{v_{i}\in{\Sigma(\mathcal{C})}}({\mathcal{D}}\cdot\Gamma_{i})(v_{i}).
\end{equation*}
We call this map the {\it{specialization map}}.

\begin{exa}
Suppose we take $\mathcal{C}=\text{Proj}R[X,Y,W]/(XY-\pi{}W^2)$ again. Then 
\begin{equation*}
\rho(\Gamma_{1})=\Gamma_{1}^2(v_{1})+(\Gamma_{1}\cdot\Gamma_{2})(v_{2})=-1\cdot{(v_{1})}+1\cdot(v_{2}).
\end{equation*}
\end{exa}
\begin{exa}
Let us consider Example \ref{ExampleChapter1}.\ref{ExaS2} again. 
We then see that
\begin{equation*}
\rho(\Gamma_{1})=-3\cdot(v_{1})+3\cdot(v_{2}).
\end{equation*}
Note that this divisor is actually trivial in the tropical Jacobian. We have that the negative of the characteristic function of the vertex $v_1$ has divisor equal to $-3(v_{1})+3(v_{2})$, so $\rho(\Gamma_{1})$ is in the image of $\Delta$ (the Laplacian). This happens in general: a multiple of the negative of the characteristic function at a vertex $v_{i}$ is equal to $\rho(\Gamma_{i})$, see Lemma \ref{VertInd1}.  

Thus the image of any vertical divisor in the tropical Jacobian is {\it{zero}}. If we want nontrivial examples of elements of $J(\Sigma(\mathcal{C}))$, we have to look elsewhere. This is given by the {\it{horizontal divisors}}, which we will discuss in Section \ref{Horizontaldivisors}.
\end{exa}


\subsection{The intersection matrix}

We will now associate a matrix $A$ to the restriction of the intersection pairing $i_{s}(\cdot{},\cdot{})$ to the special fiber, known as the {\it{intersection matrix}}. A good reference for the material below is \cite[Chapter 9, page 272]{Bosch1990}. The finitely generated, torsion-free $\mathbb{Z}$-module $\text{Div}_{s}(\mathcal{C})$ has the irreducible components $\Gamma_{i}$ as a basis. We then construct the intersection matrix by
\begin{equation}
a_{i,j}:=(\Gamma_{i}\cdot{\Gamma_{j}}).
\end{equation} 
We can view it as a linear map $\mathbb{Z}^{n}\rightarrow{\mathbb{Z}^{n}}$, where $n$ is the number of irreducible components $\Gamma_{i}$ in $\mathcal{C}_{s}$. We let $e_{i}$ be the standard basis of $\mathbb{Z}^{n}$, so that $e_{i}$ corresponds to $\Gamma_{i}$. Note that the intersection pairing $i_{s}(\cdot{},\cdot{})$ is now given by the bilinear form $<v,w>=v^{T}\cdot{(Aw)}$ induced by $A$.
We would now like to know the rank of $A$. It is given by
\begin{pro}\label{RankIntersect}
The rank of $A$ is $n-1$. Its kernel is generated by the element $c:=(1,1,...,1)$, corresponding to $\sum_{i}\Gamma_{i}=\mathcal{C}_{s}$.
\end{pro}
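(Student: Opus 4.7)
The plan is to prove the two statements separately: first the easy inclusion $\langle c\rangle \subseteq \ker A$, and then the harder dimension count showing that the kernel has rank exactly $1$.

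For the easy direction, I would use the previous proposition: for any $E\in \text{Div}_s(\mathcal{C})$, we have $\mathcal{C}_s\cdot E = 0$. Since $\mathcal{C}$ is semistable, the special fiber is reduced, so $\mathcal{C}_s = \sum_i \Gamma_i$ with multiplicity one. Applying the intersection pairing with each $\Gamma_j$ gives $0 = \mathcal{C}_s\cdot \Gamma_j = \sum_i a_{ij}$, which is exactly $Ac = 0$. Hence $c\in \ker A$.

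For the reverse inclusion, suppose $v = (a_1,\dots,a_n)\in \ker A$ and let $D = \sum_i a_i\Gamma_i \in \text{Div}_s(\mathcal{C})$. Then $D\cdot \Gamma_j = 0$ for every $j$, so $D\cdot D = \sum_j a_j (D\cdot \Gamma_j) = 0$. The central computation is to rewrite this self-intersection as a sum of squares. Using $\Gamma_i^2 = -\sum_{j\neq i}(\Gamma_i\cdot \Gamma_j)$ from the previous proposition, I would compute
\begin{equation*}
D\cdot D = \sum_i a_i^2 \Gamma_i^2 + 2\sum_{i<j} a_i a_j (\Gamma_i\cdot \Gamma_j) = -\sum_{i<j}(a_i - a_j)^2(\Gamma_i\cdot \Gamma_j).
\end{equation*}
Since $\Gamma_i\cdot \Gamma_j \geq 0$ for $i\neq j$ and every square is nonnegative, the equation $D\cdot D = 0$ forces $a_i = a_j$ whenever $\Gamma_i\cdot \Gamma_j > 0$, i.e.\ whenever the components $\Gamma_i$ and $\Gamma_j$ share an intersection point in $\mathcal{C}_s$.

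The last step is to conclude that all the $a_i$ are equal, which is where connectedness enters. The intersection graph $\Sigma(\mathcal{C})$ is connected because $\mathcal{C}_s$ is connected; this in turn follows from the fact that $\mathcal{C}\to \text{Spec}(R)$ is proper and flat with geometrically connected generic fiber $C$ (so by Zariski's connectedness theorem over the Henselian local ring $R$, the special fiber is connected). Connectedness of $\Sigma(\mathcal{C})$ together with the equality $a_i = a_j$ along every edge forces $a_i = a$ for a single constant $a$, so $v = a\cdot c$. Therefore $\ker A = \langle c\rangle$ and the rank of $A$ is $n-1$.

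The main obstacle I expect is the positivity/negativity bookkeeping in the quadratic form step: one has to correctly rearrange the double sum and verify that all cross terms of mixed sign cancel to leave the clean expression $-\sum_{i<j}(a_i-a_j)^2(\Gamma_i\cdot \Gamma_j)$. Invoking connectedness of $\mathcal{C}_s$ is standard but must be flagged explicitly, since otherwise one only gets that $v$ is constant on each connected component of $\Sigma(\mathcal{C})$.
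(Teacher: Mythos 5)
Your proof is correct, but it takes a more self-contained route than the paper's. For the inclusion $\ker A\subseteq\langle c\rangle$, the paper merely observes $v^{T}Av=0$ and then cites \cite[Chapter 9, Theorem 1.23]{liu2} (the negative-semidefiniteness of the intersection matrix with kernel exactly $\mathbb{R}c$) as a black box, whereas you unwind that citation and re-derive its essential content: the sum-of-squares identity
\begin{equation*}
D\cdot D=-\sum_{i<j}(a_{i}-a_{j})^{2}\,(\Gamma_{i}\cdot\Gamma_{j})
\end{equation*}
followed by connectedness of $\Sigma(\mathcal{C})$. Your algebra checks out (the rearrangement $\sum_{i}a_{i}^{2}\sum_{j\neq i}\Gamma_{i}\cdot\Gamma_{j}=\sum_{i<j}(a_{i}^{2}+a_{j}^{2})\Gamma_{i}\cdot\Gamma_{j}$ using symmetry of the pairing, and then completing the square, is exactly right), and you are right to flag explicitly that connectedness of the special fiber — coming from properness and flatness over the Henselian base together with geometric connectedness of the generic fiber — is what upgrades ``locally constant on $\Sigma(\mathcal{C})$'' to ``constant.'' The easy inclusion $Ac=0$ matches the paper's argument verbatim. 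The trade-off is the usual one: the paper's proof is shorter but opaque without the reference; yours is longer but exposes why the result is true and makes the hypotheses (reducedness of $\mathcal{C}_{s}$, nonnegativity of off-diagonal intersection numbers, connectedness) visibly load-bearing.
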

\begin{proof}
Suppose that $v\in\text{Ker}(A)$. In particular, we then have $v^{T}Av=0$. By \cite[Chapter 9, Theorem 1.23]{liu2}, we then find that $v\in{c\cdot{\mathbb{R}}}$. This then easily implies that $v=n\cdot{c}$ for some $n\in\mathbb{Z}$. 

Conversely, consider the vector $Ac$. For every basis vector $e_{i}$, we calculate
\begin{equation}
{e_{i}}^{T}\cdot{Ac}=i_{s}(\Gamma_{i},\mathcal{C}_{s})=0,
\end{equation}
where the last equality can be found in \cite[Chapter 9, Proposition 1.21]{liu2}. This then implies that $Ac=0$, as desired. 
\end{proof}

\begin{cor}\label{KernelRho}
Consider the restriction $\rho_{\mathcal{C}_{s}}$ of $\rho$ to the divisors with support in the special fiber $\mathcal{C}_{s}$. Then 
\begin{equation}
\text{Ker}(\rho_{\mathcal{C}_{s}})=<\mathcal{C}_{s}>.
\end{equation} 
\end{cor}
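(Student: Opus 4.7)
The plan is to reduce the statement directly to Proposition \ref{RankIntersect} by recognizing that, once we choose the obvious bases on both sides, $\rho_{\mathcal{C}_s}$ is nothing other than the linear map defined by the intersection matrix $A$.

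First, I would fix the identifications. On the source, $\text{Div}_s(\mathcal{C})$ has $\mathbb{Z}$-basis $\{\Gamma_1,\ldots,\Gamma_n\}$, so I identify it with $\mathbb{Z}^n$ via $\sum a_i \Gamma_i \leftrightarrow (a_1,\ldots,a_n)$. On the target, $\text{Div}(\Sigma(\mathcal{C}))$ has $\mathbb{Z}$-basis $\{v_1,\ldots,v_n\}$, and I identify it with $\mathbb{Z}^n$ via $\sum b_j v_j \leftrightarrow (b_1,\ldots,b_n)$. Then, unwinding the definition of $\rho$, the image of $\sum_i a_i \Gamma_i$ has $v_j$-coefficient equal to $\sum_i a_i (\Gamma_i\cdot \Gamma_j) = \sum_i a_i\, a_{ij}$. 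Since $A$ is symmetric (the intersection pairing is symmetric), this is exactly the $j$-th coordinate of $Av$. Hence $\rho_{\mathcal{C}_s}$ is represented by the intersection matrix $A$.

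Next, I apply Proposition \ref{RankIntersect}: the kernel of $A$ is generated by the vector $c=(1,1,\ldots,1)$, which under our identification corresponds to the divisor $\sum_i \Gamma_i = \mathcal{C}_s$. Therefore
\begin{equation*}
\text{Ker}(\rho_{\mathcal{C}_s}) = \mathbb{Z}\cdot \mathcal{C}_s = \langle \mathcal{C}_s\rangle,
\end{equation*}
which is the claimed equality.

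There is essentially no obstacle here; the only step requiring any care is matching up the two $\mathbb{Z}^n$ identifications correctly, in particular noting that the symmetry of the intersection pairing is what lets us write the map as $v\mapsto Av$ rather than $v\mapsto A^{T}v$ (though of course $A=A^{T}$ makes this moot). Everything else is an immediate consequence of the previous proposition.
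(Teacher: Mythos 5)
Your proof is correct and is precisely the argument the paper intends: the corollary is stated without proof immediately after Proposition \ref{RankIntersect}, and the intended deduction is exactly that $\rho_{\mathcal{C}_s}$ is represented by the intersection matrix $A$ in the obvious bases, so the kernel statement transfers directly. Your extra remark about the symmetry of $A$ is a correct and careful observation, though as you note it is not strictly needed since $A=A^{T}$.
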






\section{Transporting divisors from $C$ to $\mathcal{C}$}\label{Horizontaldivisors}
Now we would like to transport divisors from $\text{Div}(C)$ to $\text{Div}(\mathcal{C})$. Suppose we have any divisor $D\in\text{Div}(C)$. We can now take the closure $\mathcal{D}$ of $D$ inside $\mathcal{C}$. This naturally gives a Cartier divisor of $\mathcal{C}$. These are known as the {\bf{horizontal divisors}}. We will associate a function to the above transportation. Define $\psi:\text{Div}(C)\longrightarrow{\text{Div}(\mathcal{C})}$ by
\begin{equation*}
\psi(D)=\overline{D},
\end{equation*} where the closure is in $\mathcal{C}$.
We will make ths process a bit more explicit using the \emph{reduction map}. 
 \begin{mydef}\label{ReductionMap11}
Let $S$ be the spectrum of a Henselian discrete valuation ring $R$. Let $\mathcal{X}\rightarrow{}S$ be surjective and proper, with generic fiber $X$. Let $X^{0}$ denote the set of closed points of $X$. We define the map $r_{\mathcal{X}}:X^{0}\rightarrow{\mathcal{X}_{s}}$ by
\begin{equation}
r_{\mathcal{X}}(x)=\overline{\{x\}}\cap{\mathcal{X}_{s}}.
\end{equation}
We call $r_{\mathcal{X}}$ the reduction map associated to $\mathcal{X}$. We then say that $x$ reduces to $r_{\mathcal{X}}(x)$. 
 \end{mydef}
 \begin{rem}
We note first that $r_{\mathcal{X}}$ is surjective by \cite[Proposition 1.36, Page 468]{liu2}. Note also that in the definition of the reduction map, one needs the ring $R$ to be Henselian
because otherwise there could be multiple reduction points. One can consider the example
\begin{equation*}
\mathcal{X}=\text{Spec}(\mathbb{Z}_{(5)}[x]/(x^2+1))\longrightarrow{\text{Spec}(\mathbb{Z}_{(5)}[x]/(x^2+1))},
\end{equation*}
where $\mathbb{Z}_{(5)}$ is the localization of $\mathbb{Z}$ at $(5)$. Consider the closed point $(0)$ of the generic fiber.  
There are then two possible reductions: $(x-1,5)$ and $(x-2,5)$. Note that if we instead take the $5$-adic ring in the above example, then $\mathcal{X}$ has two connected components. 
\end{rem}
\begin{mydef}
Let $\mathcal{X}\longrightarrow{\text{Spec}(R)}$ be irreducible, surjective and proper.  Let $\tilde{x}$ be a closed point of $\mathcal{X}_{s}$. Define
\begin{equation*}
X_{+}(\tilde{x}):=r_{\mathcal{X}}^{-1}(\tilde{x}).
\end{equation*}
This is known as the {\it{formal fiber}} of $\tilde{x}$.
\end{mydef}
\begin{rem}
For semistable models, these formal fibers are naturally isomorphic to open annuli and spheres, where one takes an absolute value corresponding to the valuation on $R$. These notions play an important role in analytic theories of semistability, to name a few: Rigid geometry, Formal $R$-schemes and Berkovich spaces. In the Berkovich theory one also has formal fibers for points that are not necessarily closed in $\mathcal{X}_{s}$: for instance a generic point of a component. These are known as the type $2$ points for curves. 
\end{rem}
\begin{exa}
Let $\mathcal{C}=\text{Proj}R[X,T,W]/(XT-\pi^{n}{W^2})$ with open affine $U=\text{Spec}(R[x,t]/(xt-\pi^{n})$ where $x=\dfrac{X}{W}$ and $t=\dfrac{T}{W}$. Let $C$ be its generic fiber. Let $\tilde{x}=(x,t,\pi)$. Note that $\tilde{x}$ is not a regular point. We then have that
\begin{equation*}
C_{+}(\tilde{x})(K)=\{a\in{K}:\,|\pi|^{n}<|a|<1\}.
\end{equation*}
That is, it is an open annulus.
See \cite[Page 471]{liu2} for the details.
\end{exa}
 
Let us return to our transportation morphism $\psi:\text{Div}(C)\rightarrow{\text{Div}(\mathcal{C})}$. Consider the divisor $D=P$, where $P$ is some point in $C(K)$. Then $P$ specializes to a well-defined point $r_{\mathcal{C}}(P)$ that lies in the smooth locus of $\mathcal{C}_{s}$, see \cite[Corollary 9.1.32]{liu2}. Note that we use the regularity of $\mathcal{C}$ here, see Example \ref{regnec} below for a simple counterexample. At any rate, the point $P$ reduces to a smooth point and as such it reduces to a unique irreducible component of $\mathcal{C}_{s}$. We will denote this component by $c(P)$. We then have $\overline{\{P\}}=\{P,r_{\mathcal{C}}(P)\}$ and $\rho(\mathcal{D})=c(P)$.

\begin{exa} Consider the affine scheme defined by $R[x,y]/(xy-\pi)$. It has generic fiber $K[x,y]/(xy-\pi)$ and special fiber $k[x,y]/(xy)$. Consider the point defined by the prime ideal $\mathfrak{p}=(x-\pi,y-1)$. This corresponds to the point on the generic fiber $"(\pi,1)"$. There is exactly one maximal ideal lying above $\mathfrak{p}$, namely $\mathfrak{m}=(x-\pi,y-1,\pi)$ (which corresponds to $"(0,1)"$ on the special fiber). The closure of the prime ideal $\mathfrak{p}$ is then $\{\mathfrak{p},\mathfrak{m}\}$. The point $P$ reduces to a {\it{unique}} component, namely the one defined by the prime ideal $(x)$. 
\end{exa}

\begin{exa} ({\it{Regularity}})\label{regnec}
Suppose we now have the affine scheme defined by 
\begin{equation*}
A:=R[x,y]/(xy-\pi^2).
\end{equation*} It has generic fiber $K[x,y]/(xy-\pi^2)$ and special fiber $k[x,y]/(xy)$. This scheme is however not regular: the tangent space at $\mathfrak{m}=(x,y,\pi)$ is 3-dimensional, which is strictly higher than the dimension of the ring $A$ (which is 2). \\
Consider the prime ideal defined by $\mathfrak{p}=(x-\pi,y-\pi)$. This corresponds to the point $"(\pi,\pi)"$ on the generic fiber. There is exactly one maximal ideal above it (this holds for any proper morphism of schemes $\mathcal{X}\longrightarrow{S}$ where $S$ is the spectrum of a complete d.v.r.), but there is no {\it{unique}} component that it reduces to. Here $\mathfrak{p}\subseteq{\mathfrak{m}}=(x,y,\pi)$, which corresponds to the origin of the coordinate axes. We have that both $\Gamma_{1}:=\overline{(x)}$ and $\Gamma_{2}:=\overline{(y)}$ contain this point.
\end{exa}
\begin{rem}
To actually define a reduction for the point in the last example, one can blow-up the point $\mathfrak{m}$ to obtain a regular model. This works in general, see for instance \cite[Page 404]{liu2}. We will see many examples of this phenomenon later on. 
\end{rem}

\begin{rem}
[{\bf{Conventions on divisors}}]
As noted earlier, since both $C$ and $\mathcal{C}$ are {\it{regular and integral}}, we have by \cite[Page 271]{liu2} that the Cartier divisors correspond to Weil divisors. We will therefore write every Cartier divisor as a {\it{Weil divisor}}, i.e. as finite sums of irreducible closed subsets of codimension 1.\\
Let us give one more notational device regarding principal divisors. Let $K(\mathcal{C})$ be the function field of $\mathcal{C}$. It is equal to the function field of $C$. If we have an element $f\in{K(\mathcal{C})}$, we can consider its divisor in both $C$ and in $\mathcal{C}$. To avoid any ambiguity, we will write $\text{div}(f)$ or $(f)$ for the divisor in $\mathcal{C}$ and $\text{div}_{\eta}(f)$ or $(f)_{\eta}$ for the divisor in $C$.
\end{rem}

We will now consider the {\bf{principal divisors}} of $C$ and we will see what happens to them under this map $\psi$. Unfortunately, if we take a principal divisor $(f)$ and consider its closure in $\mathcal{C}$, then the resulting divisor in $\text{Div}(\mathcal{C})$ can be nonprincipal.  Let us see why this happens.

\begin{exa}
Suppose we take $\mathcal{C}=\text{Proj}R[X,Y,W]/(XY-\pi{}W^2)$ again with affine patch 
\begin{equation*}
A_{1}=R[x,y]/(xy-\pi).
\end{equation*} It has generic fiber $K[x,y]/(xy-\pi)$. Let us take $x$ in the function field of $C$. Then
\begin{equation*}
\text{div}_{\eta}(x)=(0)-(\infty).
\end{equation*}
Note that these points actually don't lie in the affine patch $A_{1}$; they lie in the other patches determined by $D^{+}(X)$ and $D^{+}(Y)$ (where the current patch $A_{1}$ corresponds to $D^{+}(W)$).\\
The function $x$ can also be considered as an element of the function field of $\mathcal{C}$ (they are the same after all). To determine this divisor in $\mathcal{C}$, we have to know at which codimension 1 primes $x$ has nonzero valuation. Consider $\mathfrak{p}_{1}=(x,\pi)=(x)$. The local ring $A_{1,\mathfrak{p}_{1}}$ is a discrete valuation ring with generator $x$. Thus $x$ has valuation 1 here. For $\Gamma_{2}$ we have the local ring $A_{1,\mathfrak{p}_{2}}$ where $\mathfrak{p}_{2}=(y,\pi)$. The element $x$ is invertible in this ring, so it has zero valuation. We then in fact have that
\begin{equation*}
\text{div}(x)=\overline{\{P\}}-\overline{\{\infty\}}+(\Gamma_{1}).
\end{equation*}
Note that the closure of $\text{div}_{\eta}(x)$ in $\mathcal{C}$ only contains the first two. In general, for any nonzero element $f$ of the function field of $\mathcal{C}$ we can write
\begin{equation*}
\text{div}(f)=\overline{\text{div}_{\eta}(f)}+V,
\end{equation*}
where $V$ is a vertical divisor (that is defined by the valuations of $f$ at the vertical divisors).\\
In fact, if we now have any divisor $D\in\text{Div}(C)$ of the form $D=\sum_{P\in{C(K)}}n_{P}(P)$, then we can take the closure $\mathcal{D}$ of $D$ in $\mathcal{C}$ and obtain a divisor there. We have
\begin{equation*}
\mathcal{D}=\sum_{P\in{C(K)}}n_{P}\overline{(P)}+\sum_{i}c_{i}(\Gamma_{i}),
\end{equation*}
where $c_{i}$ is the valuation of $\mathcal{D}$ at $\Gamma_{i}$.

\end{exa}

Luckily we have the following proposition, which tells us that principal divisors on $C$ map down to principal divisors on $\Sigma(\mathcal{C})$. 
\begin{pro}\label{Principaldivisors}
The specialization map $\rho$ induces a map
\begin{equation}
\text{Prin}({C})\rightarrow{\text{Prin}(\Sigma(\mathcal{C}))}.
\end{equation}
\end{pro}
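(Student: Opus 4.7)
The plan is to represent $\rho$ of a principal divisor on $C$ as the graph Laplacian of an explicit function on $V(\Sigma(\mathcal{C}))$, namely the function recording the valuation of $f$ along each vertical component.

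First, pick $f \in K(C)^* = K(\mathcal{C})^*$ and consider the Cartier divisor $(f)$ on $\mathcal{C}$. As recalled in the excerpt, one has the decomposition
\begin{equation*}
(f) = \overline{(f)_{\eta}} + V, \qquad V = \sum_{i} n_{i}\,\Gamma_{i}, \quad n_{i} := v_{\Gamma_{i}}(f),
\end{equation*}
where $v_{\Gamma_{i}}$ is the valuation at the codimension-one prime associated to $\Gamma_{i}$ (this makes sense because $\mathcal{C}$ is normal). The key input is that $(f)$ is the divisor of a nonzero rational function, so the line bundle $\mathcal{O}_{\mathcal{C}}((f))$ is trivial; consequently, for every component $\Gamma_{j}$,
\begin{equation*}
(f)\cdot\Gamma_{j} \;=\; \deg\bigl(\mathcal{O}_{\mathcal{C}}((f))|_{\Gamma_{j}}\bigr) \;=\; \deg(\mathcal{O}_{\Gamma_{j}}) \;=\; 0.
\end{equation*}
Applied to every $j$, this says $\rho\bigl((f)\bigr) = 0$ in $\mathrm{Div}(\Sigma(\mathcal{C}))$, and therefore $\rho\bigl(\overline{(f)_{\eta}}\bigr) = -\rho(V)$. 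So it suffices to show that $\rho(V)$ lies in $\mathrm{Prin}(\Sigma(\mathcal{C}))$.

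Next, I would expand $\rho(V)$ directly from the definition and use the self-intersection formula $\Gamma_{j}^{2} = -\sum_{i\neq j}(\Gamma_{i}\cdot\Gamma_{j})$ from Proposition~\ref{RankIntersect} (or rather the preceding proposition on self-intersections). At the vertex $v_{j}$ we obtain
\begin{equation*}
\rho(V)(v_{j}) \;=\; \sum_{i} n_{i}\,(\Gamma_{i}\cdot\Gamma_{j}) \;=\; n_{j}\Gamma_{j}^{2} + \sum_{i\neq j} n_{i}(\Gamma_{i}\cdot\Gamma_{j}) \;=\; \sum_{i\neq j}(n_{i}-n_{j})(\Gamma_{i}\cdot\Gamma_{j}).
\end{equation*}
In the strongly semistable (hence transversal) case, the intersection number $\Gamma_{i}\cdot\Gamma_{j}$ for $i\neq j$ equals the number of edges of $\Sigma(\mathcal{C})$ joining $v_{i}$ and $v_{j}$. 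Therefore, defining $\varphi:V(\Sigma(\mathcal{C}))\to\mathbb{Z}$ by $\varphi(v_{i}) := -n_{i} = -v_{\Gamma_{i}}(f)$, the right-hand side is precisely $\Delta(\varphi)(v_{j})$ as defined in Section~\ref{DivisorsLaplacians}. Hence
\begin{equation*}
\rho\bigl(\overline{(f)_{\eta}}\bigr) \;=\; -\rho(V) \;=\; -\Delta(\varphi) \;=\; \Delta(-\varphi) \;\in\; \mathrm{Prin}(\Sigma(\mathcal{C})),
\end{equation*}
so the map factors through principal divisors on both sides.

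The main potential obstacle is the bookkeeping: one must verify that loop edges really do not appear (consistent with the standing assumption that $\Sigma(\mathcal{C})$ has no loops, which follows from strong semistability since the components $\Gamma_{i}$ are smooth) and that the bilinearity and sign conventions for the intersection pairing line up with the edge-counting convention used in $\Delta$. Apart from that, the argument is essentially a direct calculation once the two inputs — triviality of $\mathcal{O}_{\mathcal{C}}((f))$ and the self-intersection identity — are in hand.
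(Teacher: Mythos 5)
Your proof is correct, and it is in essence the same argument as the cited reference: the paper's proof is just a pointer to Baker's Lemma~2.1, and what you have written out is precisely that argument — decompose $\operatorname{div}_{\mathcal{C}}(f)$ into horizontal and vertical parts, use that a principal Cartier divisor on $\mathcal{C}$ has zero intersection with every vertical component, and then recognize $-\rho(V)$ as the Laplacian of $v_i\mapsto v_{\Gamma_i}(f)$ via the self-intersection identity and the fact that $\Gamma_i\cdot\Gamma_j$ counts edges in a strongly semistable model. The computation you carry out by hand is exactly the content of the paper's later Lemma~\ref{VertInd1}, $\Delta(1_{v_i})=-\rho(\Gamma_i)$, applied linearly to $V=\sum n_i\Gamma_i$; since that lemma has not yet appeared at this point in the text, doing it directly as you did is appropriate. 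Your concern about sign and loop-edge conventions is well placed but harmless: strong semistability forces smooth components and hence a loopless dual graph, and your sign check matches the paper's Laplacian convention, so the bookkeeping closes up.
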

\begin{proof}
See \cite[Lemma 2.1]{baker}. 
\end{proof}
We will use Proposition \ref{Principaldivisors} in Section \ref{JacNer} to construct a map from the Jacobian of $C$ to the tropical Jacobian of $\mathcal{C}$.

\section{Jacobians and N\'{e}ron models}\label{JacNer}

In this section we take the two transporting maps from $\text{Div}(C)$ to $\text{Div}(\mathcal{C})$ and from $\text{Div}(\mathcal{C})$ to $\text{Div}(\Sigma(\mathcal{C}))$ and consider the maps on the Jacobians. There is a description of this map in terms of the N\'{e}ron model of the Jacobian of $C$, which we will present here.\\
Let $\text{Div}(C)$ and $\text{Div}(\mathcal{C})$ be as before. Let $\text{Div}^{0}(C)$ be the subgroup of Cartier divisors of degree zero on $C$. We further define $\text{Div}^{(0)}(\mathcal{C})$ to be the subgroup of $\text{Div}(\mathcal{C})$ consisting of the Cartier divisors such that the restriction of the associated line bundle $\mathcal{O}_{\mathcal{C}}(\mathcal{D})$ to each irreducible component of $\mathcal{C}_{s}$ has degree zero. This last condition can be translated to
\begin{equation*}
\text{deg}(\mathcal{O}_{\mathcal{C}}(\mathcal{D})|_{\Gamma_{i}})=0
\end{equation*} 
for every $\Gamma_{i}$. Using our specialization map $\rho$ from before, we can write
\begin{equation*}
\text{Div}^{(0)}(\mathcal{C})=\text{Ker}(\rho).
\end{equation*}
We now let 
\begin{equation*}
\text{Div}^{(0)}(C)=\{D\in\text{Div}^{0}(C):\psi(D)\in\text{Ker}(\rho)\}.
\end{equation*}
As such, it is the inverse image of $\text{Ker}(\rho)$ under $\psi$.\\
Let us consider the associated Jacobians. Let $J(C)$ be the Jacobian of $C$ over $K$, that is: $\text{Div}^{0}(C)/\text{Prin}(C)$. Now let $\mathcal{J}$ be its N\'{e}ron model over $\text{Spec}(R)$. We direct the reader unfamiliar with N\'{e}ron models to \cite{liu2}, \cite{Silv2} and \cite{Bosch1990} 
for introductions to the subject. We let $\mathcal{J}^{0}$ be the connected component of the identity in $\mathcal{J}$. We denote by $\Psi=\mathcal{J}_{s}/{\mathcal{J}_{s}}^{0}$ the group of connected components of the special fiber $\mathcal{J}_{s}$ of $\mathcal{J}$. This is in fact a finite group that is isomorphic to the tropical Jacobian we defined earlier.  See \cite[Page 24]{baker} for the details.
\begin{exa}
Let us take an elliptic curve $E$ with split multiplicative reduction. Its reduction type is thus $I_{n}$ and we have that the intersection graph is just a cycle with $n$ vertices, where $n=-v(j)$, where $j$ is the j-invariant of $E$. We have that $E$ is canonically isomorphic to its own Jacobian. The N\'{e}ron model of $E$ in this case is obtained as follows: one takes the minimal regular model $\mathcal{C}$. 
One then considers the closed subscheme $S$ consisting of all the intersection points of $\mathcal{C}_{s}$. We give it the reduced induced subscheme structure. The open subscheme $\mathcal{E}:=\mathcal{C}\backslash{S}$ is then the N\'{e}ron model of $E$.  
It is a $\text{Spec}(R)$-scheme that is {\it{not proper}}, but it is a group scheme over $\text{Spec}(R)$. Its component group is then equal to $\mathbb{Z}/n\mathbb{Z}$. The details can be found in \cite[Page 492]{liu2}. 

The corresponding analytic version might be useful to have in mind as well. We will follow \cite[Chapter V]{Silv2}. 
Since $E$ has split multiplicative reduction, we have an analytic isomorphism 
\begin{equation*}
E(K)\simeq{K^{*}/(q)}
\end{equation*} 
for some $q\in{K^{*}}$ with $\text{val}(q)=n$.
We have a natural map
\begin{equation*}
i:R^{*}\longrightarrow{K^{*}/(q)},
\end{equation*}
where the image of $R^{*}$ in $E(K)$ is equal to the $R$-points of the connected component of the identity $\mathcal{E}^{0}$:
\begin{equation*}
i(R^{*})=\mathcal{E}^{0}(R).
\end{equation*}
We then quite easily see that
\begin{equation*}
\Psi=(K^{*}/(q))/(i(R^{*}))\simeq{\mathbb{Z}/n\mathbb{Z}}.
\end{equation*}
\end{exa}
Let us now return to the more general case of Jacobians and their N\'{e}ron models. We can ask for a concrete description of the $R$-points of the connected component of the identity and this is given by the following isomorphism:
\begin{equation}\label{RaynaudIsom}
J^{0}(K):=\mathcal{J}^{0}(R)\simeq{\text{Div}^{(0)}(C)/\text{Prin}^{(0)}(C)},
\end{equation}
where 
\begin{equation*}
\text{Prin}^{(0)}(C):=\text{Div}^{(0)}(C)\cap{\text{Prin(C)}}.
\end{equation*}
In other words, if we let $j$ be the injection $\text{Prin}(C)\longrightarrow{\text{Div}(C)}$, then
\begin{equation*}
\text{Prin}^{(0)}(C)=(\psi\circ{j})^{-1}(\text{Ker}(\rho)).
\end{equation*}

The isomorphism in Equation \ref{RaynaudIsom} comes from a theorem by Raynaud, which states that $\mathcal{J}^{0}=\text{Pic}_{\mathcal{C}/R}^{0}\label{Pick}$ represents the functor of "isomorphism classes of line bundles whose restriction to each element of $\mathcal{C}$ has degree zero". A quick sidenote to clarify this functorial approach: the entities above are considered to be functors from $(\text{Sch})\longrightarrow(\text{Sets})$. This identity of functors then means for instance that if we plug in the spectrum of the residue field $k$ as a scheme, we obtain the identity
\begin{equation}
\mathcal{J}^{0}(k)=\text{Pic}^{0}(\mathcal{C}_{s})(k).
\end{equation}
We will study the entity on the right hand side in the next section.

We note that we now have a natural map from the Jacobian of a curve $J(C)$ to the tropical Jacobian $J(\Sigma(\mathcal{C}))$. Let $P\in{J(C)(K)}$ and let $D\in\text{Div}^{0}(C)$ be any representative of $P$. We then define $\tilde{\psi}(D)=\rho(\psi(D))$. By Proposition \ref{Principaldivisors}, we then see that this is well-defined and from 
\cite[Diagram A.6, Page 25]{baker} we see that the kernel of this map is in fact $J^{0}(K)$.  

Let us now review some facts about the {\bf{torsion}} in the Jacobian of a curve. 
\begin{theorem}
Let $C$ be a smooth, connected, projective curve of genus $g$ over an algebraically closed field $K$ . Let $n\in\mathbb{N}$ be non-zero.
\begin{enumerate}
\item If $(n,\text{char}(K))=1$, then $J(C)[n]\simeq{(\mathbb{Z}/n\mathbb{Z})^{2g}}$.
\item If $\text{char}(K)=p$, then there exists an $0\leq{h}\leq{g}$ such that for any $n=p^{m}$ we have $J(C)[n]\simeq{(\mathbb{Z}/n\mathbb{Z})^{h}}$.
\end{enumerate}
\end{theorem}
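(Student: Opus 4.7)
The plan is to reduce the statement to the general theory of abelian varieties applied to $J(C)$, which by the Abel--Jacobi theorem is an abelian variety of dimension $g$ over the algebraically closed field $K$. Let $[n]: J(C) \to J(C)$ denote the multiplication-by-$n$ map; it is an isogeny, and the torsion subgroup $J(C)[n]$ is by definition its kernel. The first key input I would establish is $\deg[n] = n^{2g}$: pick a symmetric ample line bundle $L$ on $J(C)$ (for instance one coming from the theta divisor, whose existence uses the principal polarization of the Jacobian), use $[n]^{*}L \cong L^{\otimes n^{2}}$, and compare self-intersections to deduce that the degree of $[n]$ is $n^{2g}$.

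For part (1), assume $(n, \mathrm{char}(K)) = 1$. The differential of $[n]$ at the origin is multiplication by $n$ on the tangent space $T_{0}J(C)$, which is invertible, so $[n]$ is \'etale. Consequently $|J(C)[n](K)| = n^{2g}$. To upgrade this count to the precise isomorphism class, I would pass to prime powers via the Chinese remainder theorem. For a prime $\ell$ coprime to $\mathrm{char}(K)$, $J(C)[\ell]$ is a finite abelian group of order $\ell^{2g}$ killed by $\ell$, hence an $\mathbb{F}_{\ell}$-vector space of dimension $2g$. Then I would induct on $m$: since $J(C)$ is divisible as an abstract group (because $[\ell]$ is surjective on a projective variety over an algebraically closed field), the sequence
\begin{equation*}
0 \to J(C)[\ell] \to J(C)[\ell^{m}] \xrightarrow{[\ell]} J(C)[\ell^{m-1}] \to 0
\end{equation*}
is exact, and a size count together with the structure theorem for finitely generated modules over $\mathbb{Z}/\ell^{m}\mathbb{Z}$ forces $J(C)[\ell^{m}] \cong (\mathbb{Z}/\ell^{m}\mathbb{Z})^{2g}$. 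Combining primes via CRT gives the general coprime $n$.

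For part (2), assume $\mathrm{char}(K) = p$ and $n = p^{m}$. Now $d[p] = 0$ at the origin, so $[p]$ is inseparable; write $\deg[p] = \deg_{s}[p] \cdot \deg_{i}[p]$. Since $\deg_{i}[p] = p^{e}$ for some $e \geq g$ (the inseparability contribution comes from the $g$-dimensional cotangent space), one obtains $\deg_{s}[p] = p^{h}$ with $0 \leq h \leq g$. This $h$ is the $p$-rank. Then $|J(C)[p](K)| = \deg_{s}[p] = p^{h}$, so $J(C)[p] \cong (\mathbb{F}_{p})^{h}$. The same divisibility-plus-induction argument as in part (1), now applied with $\ell$ replaced by $p$ and using that $[p]$ remains surjective on $K$-points, promotes this to $J(C)[p^{m}] \cong (\mathbb{Z}/p^{m}\mathbb{Z})^{h}$.

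The main obstacle I anticipate is the clean execution of the inductive step on prime powers, particularly in the wild case of part (2): one must carefully distinguish the full (scheme-theoretic) kernel of $[p^{m}]$ from its geometric points, since the former has order $p^{2gm}$ while only the latter contribute $(\mathbb{Z}/p^{m}\mathbb{Z})^{h}$. The surjectivity of $[p]$ on $K$-points (which follows from projectivity of $J(C)$ and algebraic closure of $K$) is what keeps the inductive argument running without introducing extra complications from the connected part of the $p$-divisible group.
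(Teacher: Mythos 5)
Your argument is correct, and it supplies a self-contained proof where the paper simply cites references (Liu's \emph{Algebraic Geometry and Arithmetic Curves}, Katz--Mazur, and Silverman for the elliptic case). You take the standard abelian-variety route: $J(C)$ is a $g$-dimensional abelian variety, $\deg[n] = n^{2g}$ via $[n]^{*}L \cong L^{\otimes n^{2}}$ for a symmetric ample $L$ and a self-intersection count, \'etaleness of $[n]$ when $(n,p)=1$ to equate the geometric kernel with the full degree, and, in characteristic $p$, the factorization of $[p]$ through the relative Frobenius $F \colon J \to J^{(p)}$. That last point is the precise version of your somewhat vague remark about the cotangent space: $d[p] \equiv 0$ forces $[p]$ to factor through $F$, which is purely inseparable of degree $p^{g}$, whence $\deg_{i}[p] \geq p^{g}$ and $h \leq g$. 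The prime-power induction is clean as you set it up: surjectivity of $[\ell]$ (resp.\ $[p]$) on $K$-points gives the short exact sequence, $J[\ell^{m}]$ is killed by $\ell^{m}$, and the number of cyclic factors equals $\dim_{\mathbb{F}_{\ell}} J[\ell]$, which you have already computed; combined with the order count this forces all factors to be $\mathbb{Z}/\ell^{m}\mathbb{Z}$ (resp.\ $\mathbb{Z}/p^{m}\mathbb{Z}$). This is essentially the argument in the cited sources, so there is nothing to reconcile with the paper.
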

\begin{proof}
This can be found in \cite[Theorem 4.38, Page 299]{liu2} 
or \cite[Corollary 2.3.2]{KatzMazur} and \cite[Chapter 3, Corollary 6.4]{Silv1} for elliptic curves. 
\end{proof}
In the rest of the thesis, we will mainly be dealing with the first case of the theorem. 

\section{Decomposition of $\mathcal{J}^{0}(k)$} 

In this section we will further study the $\mathcal{J}^{0}(k)$ introduced in the previous section. In fact, we will only study the group $\text{Pic}^{0}(X_{k})$ for a curve (not necessarily irreducible) over $k$ (reminder: this is the residue field of $R$, which we assume is algebraically closed). We have a natural identification
\begin{equation*}
\mathcal{J}^{0}(k)=\text{Pic}^{0}(\mathcal{C}_{s})(k).
\end{equation*}
from Section \ref{Pick} and as such we have a description of $\mathcal{J}^{0}(k)$.\\
So consider a connected projective curve $X$ over $k$ with {\it{smooth}} irreducible components $X_{1},...,X_{r}$. We will follow \cite[Chapter 7, Section 5]{liu2} with some extra assumptions for the scenario we're interested in. Let us suppose that $X$ is reduced and that it only has ordinary double points as its singularities (which is the case we're most interested in, the {\it{semistable}} case). Let $X':=\coprod_{1\leq{i}\leq{r}}X_{i}$ be the {\it{normalization}} of $X$. We have a surjective integral morphism $\pi:X'\longrightarrow{X}$.
\begin{mydef}
$\text{Pic}^{0}(X)$ is the set of isomorphism classes of invertible sheaves $\mathcal{L}$ such that $\text{deg}(\mathcal{L}|_{X_{i}})=0$ for every $1\leq{i}\leq{r}$.
\end{mydef}
Let $G$ be the intersection graph of $X$, as in \cite[Chapter 10, Definition 1.48]{liu2}. 
The structure of $\text{Pic}^{0}(X)$ is given by the following theorem. 

\begin{theorem}\label{ToricPic}
Let $X$ be as above (i.e., semistable). Let $t=\beta(G)$ be the Betti number of $G$. The following properties are then true.
\begin{enumerate}
\item[a)] The morphism $\pi$ induces a canonical surjective homomorphism
\begin{equation}\label{CohPic1}
\pi_{\text{Pic}^{0}}:\text{Pic}^{0}(X)\longrightarrow{\prod_{1\leq{i}\leq{r}}\text{Pic}^{0}(X_{i})}.
\end{equation}
\item[b)] Let $L=\text{Ker}(\pi_{\text{Pic}}^{0})$. Then $L\simeq{(k^{*})^{t}}$.
\end{enumerate}
\end{theorem}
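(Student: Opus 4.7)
The plan is to extract both statements from the long exact sequence associated to a short exact sequence of sheaves of units on $X$ relating $\mathcal{O}_X^*$ to $\pi_*\mathcal{O}_{X'}^*$. Concretely, I would first set up
\begin{equation*}
0 \longrightarrow \mathcal{O}_X^{*} \longrightarrow \pi_{*}\mathcal{O}_{X'}^{*} \longrightarrow \mathcal{Q} \longrightarrow 0,
\end{equation*}
where $\mathcal{Q}$ is the quotient sheaf. Since $\pi$ is an isomorphism away from the singular locus of $X$, the sheaf $\mathcal{Q}$ is a skyscraper supported at the $s$ nodes of $X$ (where $s = \#E(G)$). At an ordinary double point $p$ the local picture is $\mathcal{O}_{X,p}\simeq k[[x,y]]/(xy)$ with normalization $k[[x]]\times k[[y]]$, and comparing units shows that the stalk of $\mathcal{Q}$ at $p$ is canonically $k^{*}$.

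Next I would take the long exact sequence of sheaf cohomology on $X$. Because $\pi$ is finite (integral), $H^{i}(X, \pi_{*}\mathcal{O}_{X'}^{*}) = H^{i}(X', \mathcal{O}_{X'}^{*})$, and because $\mathcal{Q}$ is a skyscraper $H^{1}(X,\mathcal{Q}) = 0$. Using that $X$ is connected and projective and that $X'$ is the disjoint union of the $r$ smooth components $X_{i}$, the sequence reads
\begin{equation*}
0 \longrightarrow k^{*} \longrightarrow (k^{*})^{r} \stackrel{\delta}{\longrightarrow} (k^{*})^{s} \longrightarrow \mathrm{Pic}(X) \stackrel{\pi^{*}}{\longrightarrow} \prod_{i=1}^{r} \mathrm{Pic}(X_{i}) \longrightarrow 0,
\end{equation*}
where the map $\delta$ sends a tuple of constants on the components to the ratios assigned to the two branches at each node. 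This is precisely the coboundary $C^{0}(G, k^{*}) \to C^{1}(G, k^{*})$ in the simplicial cohomology of the intersection graph $G$ with coefficients in $k^{*}$, once we fix an orientation; its kernel is $H^{0}(G, k^{*}) = k^{*}$ (since $G$ is connected) and its cokernel is $H^{1}(G, k^{*}) \simeq (k^{*})^{\beta(G)} = (k^{*})^{t}$.

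From the sequence above I would then read off part (a) by restricting the surjection $\pi^{*}$ to $\mathrm{Pic}^{0}(X)$: its image lies in $\prod_{i} \mathrm{Pic}^{0}(X_{i})$ by definition of $\mathrm{Pic}^{0}(X)$, and surjectivity onto $\prod_{i} \mathrm{Pic}^{0}(X_{i})$ follows because the full map $\pi^{*}$ is surjective and a preimage of a degree-zero tuple is automatically of total degree zero on every component. For part (b), the kernel of $\pi^{*}$ on all of $\mathrm{Pic}(X)$ is the cokernel of $\delta$, hence isomorphic to $(k^{*})^{t}$; and any element of this kernel corresponds to a line bundle whose restriction to each $X_{i}$ is trivial, so it automatically lies in $\mathrm{Pic}^{0}(X)$. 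Therefore $L = \ker(\pi_{\mathrm{Pic}^{0}}) = (k^{*})^{t}$.

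The only real subtlety I anticipate is making the identification $\mathrm{coker}(\delta) \simeq (k^{*})^{t}$ canonical and orientation-independent, which amounts to checking that the map really is the graph coboundary and that the resulting isomorphism with $H^{1}(G, k^{*})$ is intrinsic. Once this is spelled out, both assertions of the theorem follow from the single five-term exact sequence displayed above.
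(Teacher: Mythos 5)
Your proposal is correct and follows essentially the same route as the paper's sketch (which in turn refers to Liu's book): the short exact sequence of unit sheaves $0 \to \mathcal{O}_X^* \to \pi_*\mathcal{O}_{X'}^* \to \mathcal{Q} \to 0$ with $\mathcal{Q}$ a skyscraper of $k^*$-stalks at the nodes, followed by the long exact cohomology sequence, with the cokernel of $\delta: (k^*)^r \to (k^*)^s$ identified as $H^1(G,k^*) \simeq (k^*)^{\beta(G)}$. The only cosmetic difference is that you make the graph-cohomology identification of $\mathrm{coker}(\delta)$ explicit, whereas the paper treats it as part of the "exactness gives (b)" observation; both arguments also implicitly use $H^2(X,\mathcal{Q})=0$ to get surjectivity of $\mathrm{Pic}(X)\to\mathrm{Pic}(X')$.
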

\begin{proof}
(See \cite[Page 313]{liu2}, the following is a sketch)  Consider the exact sequence of sheaves of abelian groups
\begin{equation}\label{CohPic2}
0\longrightarrow{\mathcal{O}^{*}_{X}}\longrightarrow{\pi_{*}\mathcal{O}^{*}_{X'}}\longrightarrow{\mathcal{G}}\longrightarrow{0},
\end{equation}
where $\mathcal{G}$ is a skyscraper sheaf concentrated at the intersection points of the components of $X$. Let $S:=\{\text{the intersection points of components of $X$}\}$. For any intersection point $x\in{S}$ we have the identity on stalks
\begin{equation*}
\mathcal{G}_{x}=(\pi_{*}\mathcal{O}^{*}_{X'})_{x}/\mathcal{O}^{*}_{X,x}\simeq{k^{*}}
\end{equation*}
(\cite[Lemma 5.12, Page 309]{liu2}). We can take Cech cohomology of sequence (\ref{CohPic2}) to obtain the exact sequence
\begin{equation}\label{CohPic3}
0\longrightarrow{k^{*}}\longrightarrow{(k^{*})^{r}}\longrightarrow{\prod_{x\in{S}}k^{*}}\longrightarrow{\text{Pic}(X)}\longrightarrow{\text{Pic}(X')},
\end{equation}
where we used the identification $H^{1}(X,\mathcal{O}^{*}_{X})=\text{Pic}(X)$ (which is in \cite[Exercise 5.1.2.7]{liu2}). The last homomorphism in (\ref{CohPic3}) coincides with the usual homomorphism $\pi_{\text{Pic}}:\text{Pic}(X)\longrightarrow{\text{Pic}(X')}$, which takes $[\mathcal{L}]$ to $[\pi_{*}\mathcal{L}]$. The theorem now follows from the following observations: 
\begin{enumerate}
\item $\pi_{\text{Pic}}$ is surjective,
\item $[\mathcal{L}]\in\text{Pic}^{0}(X)$ if and only if $[\pi_{*}\mathcal{L}]\in\text{Pic}^{0}(X')$ (this with the previous statement gives (a)),
\item Exactness of the cohomology sequence (\ref{CohPic3}) (which gives (b)).
\end{enumerate}
\end{proof}
\begin{rem}
We will refer to the kernel of $\pi_{\text{Pic}^{0}}$ as the {\bf{toric part}} of $\mathcal{J}^{0}$. It will be denoted by
\begin{equation*}
\mathcal{J}^{0}_{T}:=\text{ker}(\pi_{\text{Pic}^{0}}).
\end{equation*}
The elements of $\mathcal{J}^{0}$ reducing to nontrivial elements under the map $\pi_{\text{Pic}^{0}}$ will be said to belong to the {\bf{abelian part}} of $\mathcal{J}^{0}$. 
\end{rem}
\section{Graph cohomology and the toric part of $\mathcal{J}^{0}(k)$}
\label{TorExtJac1}

From Theorem \ref{ToricPic}, we see that the degree zero line bundles consist of an abelian part and a toric part. We will now give a very explicit way to think about these line bundles that come from the toric part in terms of graphs. The reader that is interested in more of this is directed to \cite{Ulm}. 
 We will mostly follow her presentation of the material, albeit in an algebraic way.
 
So let $G(V,E)$ be a finite connected graph with vertex set $V$ and edge set $E$. We will review \v{C}ech cohomology for this graph with values in an abelian group $A$ (which for us will be $k^{*}$). 

\begin{mydef}
A graph $G(V',E')$ with $V'\subset{V}$ and $E'\subset{E}$, where every edge of $E'$ has source or target in $V'$ is called a subgraph of $G(V,E)$. A subgraph is called complete, if $E'$ contains all edges of $E$ with source and target in $V'$.
\end{mydef}
We can now define a topology on $G$ as follows: the open sets are the complete subgraphs of $G$. With this topology we can now define \v{C}ech cohomology for graphs. Let $e$ be any edge of $G$ and let $G_{e}$ be the (complete) subgraph of $G$ consisting of the edge and the two vertices it joins. We then have the open covering of $G$
\begin{equation*}
\mathfrak{B}=\{G_{e}:e\in{E(G)}\}.
\end{equation*}
As with normal \v{C}ech cohomology, we now define
\begin{equation*}
\check{C}^{q}(\mathfrak{B},A)=\prod_{(e_{0},...,e_{q})\in{E(G)^{q+1}}}A(G_{e_{0}}\cap{\cdot\cdot\cdot}\cap{G_{e_{q}}})
\end{equation*} 
and
\begin{equation*}
d_{q}:\check{C}^{q}\longrightarrow{\check{C}^{q+1}};\,\, \alpha\mapsto{(\prod_{k=0}^{p+1}(-1)^{k}\alpha_{i_{0},...,i_{k-1},i_{k+1},...,i_{q+1}})_{i_{0},...,i_{p+1}}}
\end{equation*}
We then have cohomology groups
\begin{equation*}
\check{H}^{q}(G,A)=\text{ker }d_{q}/\text{im }d_{q-1},
\end{equation*}
which are trivial for $q\geq{2}$ (since we're working with graphs). Let us describe $\check{H}^{1}(G,A)$. The elements of $\text{ker }d_{1}$ are the elements of $C^{1}$ that satisfy the cocycle relations 
\begin{equation*}
\alpha_{e_{i},e_{j}}=\alpha_{e_{i},e_{k}}\cdot{\alpha_{e_{k},e_{j}}}
\end{equation*} for three edges sharing a vertex $v$. The coboundaries of $\text{im }d_{0}$ can then be described by
\begin{equation*}
\alpha_{e_{i},e_{j}}=\beta_{e_{j}}\beta^{-1}_{e_{i}}
\end{equation*}
for a 0-cocycle $(\beta_{e})_{e\in{E}}$. 
For all the proofs involved, the reader is directed to \cite{Ulm}. 

We will now say that an edge {\it{ends in a vertex}}, if said vertex is either target or source of the edge.
\begin{mydef}
Let $e$ be an arbitrary edge with target vertex $v$ and an element $a\in{A}$, we define the weighted cocycle $\alpha(e,a)=(\alpha_{e_{i},e_{j}})_{e_{i},e_{j}\in{E^{2}}}$ by setting
\begin{equation*}
\alpha_{e_{i},e_{j}}=\begin{cases}
a & \text{if }e_{i}=e,e_{j}\neq{e}\text{ and }e_{j}\text{ ends in }v,\\
a^{-1} & \text{if }e_{j}=e,e_{i}\neq{e}\text{ and }e_{i}\text{ ends in }v,\\
1 & \text{otherwise}.
\end{cases}
\end{equation*}
\end{mydef}

That concludes our short review of graph \v{C}ech cohomology on graphs. Let us now return to the scenario of Theorem \ref{ToricPic}. So consider the surjective homomorphism
\begin{equation*}
\pi_{\text{Pic}^{0}}:\text{Pic}^{0}(X)\longrightarrow{\text{Pic}^{0}(X')}=\prod_{1\leq{i}\leq{r}}\text{Pic}^{0}(X_{i})
\end{equation*}
where the $X_{i}$ are the irreducible components of $X$. This homomorphism can be made quite explicit: one takes a divisor class $[D]$ on $X$ and restricts it to all its components:
\begin{equation*}
[D]\longmapsto ([D|_{X_{i}}])_{i}
\end{equation*}
If we now have a divisor class in the kernel of this map, then this means that for every component $X_{i}$, we can write 
\begin{equation*}
D|_{X_{i}}=(f_{i})
\end{equation*}
where $f_{i}\in{k(X_{i})}$, the function field of $X_{i}$. Suppose now that we have two intersection points $x_{j}$ and $x_{k}$ on the same component $X_{i}$ of $X$. Let the corresponding edges in the intersection graph be given by $e_{j}$ and $e_{k}$. We define
\begin{equation*}
\alpha_{e_{j}}=f_{i}(x_{j})	
\end{equation*}
and
\begin{equation*}
\alpha_{e_{j},e_{k}}=\alpha_{e_{j}}/\alpha_{e_{k}}
\end{equation*}
Evaluating this for all edges (or: intersection points) gives a {\it{weighted cocycle}} on the intersection graph that corresponds to the element of $\check{H}^{1}(G,k^{*})=H^{1}(X,\mathcal{O}^{*}_{X})=\text{Pic}^{0}(X)$ (the first equality follows from 
\cite[Proposition 4.2.5]{Ulm}). 

\begin{rem}
In Section \ref{Twistingdata}, we will see a modified version of this $2$-cocycle. It will be used for coverings that are unramified on a subgraph of $\Sigma(\mathcal{C})$. 
\end{rem}


\chapter{The Poincar\'{e}-Lelong formula}\label{Poincare}

In this chapter, we will give an algebraic proof of the {\it{Poincar\'{e}-Lelong formula}}. This formula tells us that the order of a reduced function $f$ at an edge $e$ is given by the slope of the Laplacian $\phi_{f}$ on that edge. In Chapter \ref{Inertiagroups}, we will use this formula to give the order of the inertia group $I_{e}$ for an edge $e\in\Sigma(\mathcal{C})$ and a disjointly branched morphism $\phi_{\mathcal{C}}:\mathcal{C}\rightarrow{\mathcal{D}}$.

Pierre Lelong first studied the {\it{"Poincar\'{e}-Lelong"}} differential equation in 1964 in \cite{Lelong1964}. There, it appeared in the form
\begin{equation}\label{OriginalEquation}
2id_{z}d_{\overline{z}}(V)=\theta,
\end{equation}
where $d_{z}$ and $d_{\overline{z}}$ are complex differentials, $\theta$ is some given entire function (called {\it{"courante"}}), $i$ is the imaginary unit and $V$ is the sought-for function. We will be interested in the {discrete}, nonarchimedean variant of this differential equation on graphs, which is given in its simplest form by
\begin{equation}\label{OurPoincare}
\Delta(\phi)=D,
\end{equation}
where $\phi$ is a $\mathbb{Z}$-valued function on the vertices $V(G)$ of a graph $G$, $\Delta$ the Laplacian operator on $\mathcal{M}(G)$ and $D$ a divisor of degree zero on $V(G)$.
Note that we already met this equation in Section \ref{DivisorsLaplacians}. 
 An introduction to this non-archimedean variant can be found in \cite{bakerfaber} and in \cite{baker}.  

The Poincar\'{e}-Lelong formula that we have in mind can be found in \cite[Theorem 5.15, part 5]{BPRa1}, where it is called the {\it{Slope formula}}. We state it here for the convenience of the reader. Let $X$ be a smooth, proper, connected algebraic curve over a valued field $K$, as in \cite{BPRa1}. 
\begin{theorem}\label{PoincareOriginal}{\bf{[Poincar\'{e}-Lelong formula, analytic version]}}
Let $f$ be an algebraic function on $X$ with no zeros or poles and let
\begin{equation}
F = -\text{log } |f| : X_{an} \rightarrow \mathbb{R}.
\end{equation} Let V be a semistable vertex set of X and let $\Sigma=\Sigma(X,V)$. If $x$ is a type-$2$ point of $X_{an}$ and $v\in{T_{x}}$, then 
\begin{equation}
d_{v}(F(x))=ord_{v}(\tilde{f}_{x}),
\end{equation}
where $\tilde{f}$ is the reduction of $c^{-1}f$ to the residue field of $x$, for $c$ an element in $K$ such that $|f(x)|=c$. 
\end{theorem}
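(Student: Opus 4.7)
The plan is to prove the formula algebraically by choosing a strongly semistable model in which the type-$2$ point $x$ corresponds to an irreducible component $\Gamma$ of the special fiber, and reducing both sides of the identity to a local computation at the closed point of $\Gamma$ picked out by the tangent direction $v$.

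First I would enlarge the semistable vertex set $V$ if necessary so that $x$ is a vertex of $\Sigma$. By the correspondence between semistable vertex sets and strongly semistable models, there is a strongly semistable model $\mathcal{X}$ of $X$ whose intersection graph refines $\Sigma$ and whose set of type-$2$ points includes $x$; the point $x$ is then identified with a component $\Gamma\subseteq\mathcal{X}_{s}$. Tangent directions $v\in T_{x}$ correspond to closed points of $\Gamma$: nodes of $\mathcal{X}_{s}$ lying on $\Gamma$ give the edges of $\Sigma$ emanating from $x$, while smooth closed points of $\Gamma$ give the open formal-fibre directions, as in Section \ref{Horizontaldivisors}.

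Since $f$ has no zeros or poles on $X$, its Cartier divisor on $\mathcal{X}$ is purely vertical, so $\mathrm{div}(f)=\sum_{i}n_{i}\Gamma_{i}$ with $n_{i}=\mathrm{ord}_{\Gamma_{i}}(f)$. At the type-$2$ point $x_{i}$ attached to $\Gamma_{i}$ one has $|f(x_{i})|=|\pi|^{n_{i}}$, hence $F(x_{i})=n_{i}$. Taking $c=\pi^{n}$ with $n=n_{\Gamma}$, the reduction $\widetilde{f}_{x}=\overline{\pi^{-n}f}$ is a well-defined non-zero rational function on $\Gamma$. Next I would carry out the local computation along $v$. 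If $v$ corresponds to a node $z$ joining $\Gamma$ to $\Gamma'$ of thickness $\ell$, then by semistability
\begin{equation*}
\widehat{\mathcal{O}}_{\mathcal{X},z}\;\simeq\;R[[u,w]]/(uw-\pi^{\ell}),
\end{equation*}
with $\Gamma=V(u)$ and $\Gamma'=V(w)$. The valuations on the function field satisfy $\mathrm{ord}_{\Gamma}(\pi)=1$, $\mathrm{ord}_{\Gamma}(u)=\ell$ and $\mathrm{ord}_{\Gamma}(w)=0$, with the roles of $u$ and $w$ interchanged for $\Gamma'$. Factoring $f=u^{a}w^{b}\pi^{c}U$ with $U\in\widehat{\mathcal{O}}_{\mathcal{X},z}^{\times}$ yields $n=a\ell+c$ and $n'=b\ell+c$. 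Using $uw=\pi^{\ell}$ to rewrite $\pi^{-n}f=(u/\pi^{\ell})^{a}w^{b}U=w^{b-a}U$, one identifies $\widetilde{f}|_{\Gamma}=w^{b-a}\overline{U}$ as a rational function on $\Gamma$, parameterised near $z$ by $w$. Therefore
\begin{equation*}
\mathrm{ord}_{z}(\widetilde{f}|_{\Gamma})=b-a=\frac{n'-n}{\ell},
\end{equation*}
which is precisely the outgoing slope $d_{v}F(x)$ along the edge of length $\ell$ joining $x$ to the vertex attached to $\Gamma'$. If instead $v$ corresponds to a smooth closed point $P$ of $\Gamma$, then the formal fibre at $P$ is an open disc parameterised by a local coordinate $u$ lifting a uniformiser of $\mathcal{O}_{\Gamma,P}$; writing $f=u^{k}U$ in $\widehat{\mathcal{O}}_{\mathcal{X},P}$ with $U$ a unit and running the same argument gives $d_{v}F(x)=k=\mathrm{ord}_{P}(\widetilde{f}|_{\Gamma})$.

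The principal obstacle is the bookkeeping in the node case: one must verify that the unit factor $U$ reduces to an invertible element of $\mathcal{O}_{\Gamma,z}$ (so that it contributes zero to $\mathrm{ord}_{z}$), and that the normalisations $\mathrm{ord}_{\Gamma}(\pi)=\mathrm{ord}_{\Gamma'}(\pi)=1$ combine correctly with the non-regular local structure of $R[[u,w]]/(uw-\pi^{\ell})$ when $\ell\geq 2$. Once this is settled, the two cases above exhaust all $v\in T_{x}$ and together establish $d_{v}F(x)=\mathrm{ord}_{v}(\widetilde{f}_{x})$, as required.
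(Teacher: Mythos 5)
Note first that the paper does not itself prove Theorem \ref{PoincareOriginal}: it states it for reference, cites \cite[Theorem 5.15]{BPRa1}, and then proves the algebraic analogue Theorem \ref{ValCor1} on a \emph{regular} strongly semistable model via Proposition \ref{RedDiv2} and Theorem \ref{MainThmVert}. Your proposal is close in spirit to that algebraic proof, but whereas the paper arranges to work on a regular model (so every node has thickness one), you compute directly at a node of arbitrary thickness $\ell$. That choice is where the genuine gap sits. You assert a factorization $f=u^{a}w^{b}\pi^{c}U$ with $U\in\widehat{\mathcal{O}}_{\mathcal{X},z}^{\times}$, but for $\ell\geq 2$ the ring $R[[u,w]]/(uw-\pi^{\ell})$ is \emph{not} a unique factorization domain, and such a monomial need not exist. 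From your own valuation table, $\text{div}(u^{a}w^{b}\pi^{c})=(a\ell+c)\Gamma+(b\ell+c)\Gamma'$, so the system $a\ell+c=n$, $b\ell+c=n'$ has an integer solution only when $n\equiv n'\pmod\ell$. That congruence is in fact forced here: the local divisor class group of the node is $\mathbb{Z}/\ell\mathbb{Z}$ with $[\Gamma']=-[\Gamma]$, and $n\Gamma+n'\Gamma'$ is locally principal because it equals $\text{div}(f)$. But you neither observe that the congruence is needed nor prove it. The "principal obstacle" you flag at the end is not the real one: a surjection of local rings takes units to units, so $U$ reduces to a unit of $\mathcal{O}_{\Gamma,z}$ automatically, and your bookkeeping with $\text{ord}_{\Gamma}(\pi)=1$ and $\text{ord}_{\Gamma}(u)=\ell$ is already correct.

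The cleanest repair is the paper's own device: enlarge the vertex set so that every edge has length one, i.e.\ replace $\mathcal{X}$ by a regular subdivision. Each node then has completed local ring $R[[u,w]]/(uw-\pi)$, a regular local ring and hence a UFD, where the monomial factorization is automatic; and since $f$ has no horizontal divisor, $F$ is affine along each original edge of the skeleton, so the slope computed at any one of the new nodes determines the slope along the whole edge. If you prefer to keep the thick node, you must supply the Cartier-versus-Weil argument above to justify the factorization. One further small remark: in the smooth-closed-point case your exponent $k$ is forced to be zero by the hypothesis that $f$ has no zeros or poles (the divisor of $f$ on $\mathcal{X}$ is purely vertical, so locally $f=\pi^{n}\cdot(\text{unit})$), and the identity there is the tautology $0=0$; your phrasing suggests a nontrivial exponent which cannot in fact occur under the stated hypotheses.
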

This function $\text{log }|f|$ then satisfies a variant of Equation \ref{OriginalEquation}, namely
\begin{equation}\label{OriginalThuillier}
dd^{c}(\text{log }|f|)=\delta_{\text{div}(f)}.
\end{equation}
Here $dd^{c}$ is a nonarchimedean analogue of the usual {\it{Laplacian}} operator for Riemann surfaces (see Equation \ref{OriginalEquation}), which is defined in \cite[Proposition 3.3.15]{thuil1}. Moreover, $\delta_{\text{div}(f)}$ is the discrete distribution associated to $f$, as in \cite[Section 1.2.5, page 12]{thuil1} (it is called $\mu_{d}(f)$ there). 

We will prove a purely algebraic version of Theorem \ref{PoincareOriginal} using intersection theory on strongly semistable regular models $\mathcal{D}$. 
Our version is then as follows. 

\begin{reptheorem}{ValCor1}
{\bf[Poincar\'{e}-Lelong formula, algebraic version]}
Let $\mathcal{D}$ be a strongly semistable regular model of a curve $D$ with $f\in{K(\mathcal{D})}$.
 Let $\tilde{x}$ be an intersection point of an irreducible component $\Gamma$ with another irreducible component $\Gamma'$. Then
\begin{equation}
v_{\tilde{x}}(\overline{f^{\Gamma}})=\phi(v')-\phi(v),
\end{equation}
where $\phi$ is the Laplacian associated to $f$. 
\end{reptheorem} 


\section{Reducing Cartier divisors}
Let $X$ be a locally Noetherian scheme and $D$ be a Cartier divisor on $X$. We first give some background for studying the reduction of a Cartier divisor. 
\begin{mydef}
The {\it{support}} of $D$, denoted by $\text{Supp }D$, is the set of points $x\in{X}$ such that $D_{x}\neq{1}$. The set $\text{Supp }D$ is then a closed subset of $X$. 
\end{mydef}
\begin{rem}
Recall that the group of Cartier divisors is defined to be $H^{0}(X,\mathcal{K}^{*}_{X}/\mathcal{O}^{*}_{X})$, so $D_{x}$ is the image of $D$ in the stalk of the quotient sheaf $\mathcal{K}^{*}_{X}/\mathcal{O}^{*}_{X}$ in the point $x$.
\end{rem}

\begin{exa}\label{ExamplePoincare1}
Let $\mathcal{C}=\text{Proj}R[X,Y,W]/(XY-\pi{}W^2)$ with $\Gamma_{1}=\overline{\{(x)\}}$ and $\Gamma_{2}=\overline{\{(y)\}}$ as before. Consider the Cartier divisor defined by the element $x$. As before, we have that 
\begin{equation*}
\text{div}(x)=\overline{\{P\}}-\overline{\{\infty\}}+(\Gamma_{1}).
\end{equation*}
We then have
\begin{equation*}
\text{Supp}(\text{div}(x))=\overline{\{P\}}\cup\overline{\{\infty\}}\cup\Gamma_{1}.
\end{equation*}
\end{exa}

Recall that for a locally Noetherian scheme $X$, we have a notion of {\it{associated primes}}. These are defined by 
\begin{equation*}
\text{Ass}(\mathcal{O}_{X}):=\{x\in{X}:\mathfrak{m}_{x}\in\text{Ass}_{\mathcal{O}_{X,x}}(\mathcal{O}_{X,x})\}.
\end{equation*} 
\begin{theorem}\label{RedCar}
Let $X$ be a closed subscheme of a locally Noetherian scheme $Y$. Let $i:X\longrightarrow{Y}$ be the canonical injection.
\begin{enumerate}
\item The set $G_{X/Y}$ of Cartier divisors $E$ on $Y$ such that 
\begin{equation*}
(\text{Supp}(E))\cap{\text{Ass}(\mathcal{O}_{X})}=\emptyset
\end{equation*}
is a subgroup of $\text{Div}(Y)$.
\item There exists a natural homomorphism $G_{X/Y}\longrightarrow{\text{Div}(X)}$, denoted by $E\longmapsto{E|_{X}}$, compatible with the homomorphism $\mathcal{O}_{Y}\longrightarrow{i_{*}\mathcal{O}_{X}}$. Moreover, we have a canonical isomorphism
\begin{equation*}
\mathcal{O}_{Y}(E)|_{X}\simeq{\mathcal{O}_{X}(E|_{X})}
\end{equation*}
and
\begin{equation*}
\text{Supp}(E|_{X})=\text{Supp}(E)\cap{X}.
\end{equation*}
If $E>0$, then $E|_{X}\geq{0}$. The image of a principal divisor is a principal divisor.
\end{enumerate}
\end{theorem}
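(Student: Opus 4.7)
The plan is to handle the two parts essentially separately, with part (1) being almost a formality and part (2) requiring a careful construction of the restriction map via local data.

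For part (1), I would argue that the defining condition on $G_{X/Y}$ is stable under the group operations on $\text{Div}(Y)$. Since $\text{Supp}(-E)=\text{Supp}(E)$ and $\text{Supp}(E_{1}+E_{2})\subseteq\text{Supp}(E_{1})\cup\text{Supp}(E_{2})$, disjointness from $\text{Ass}(\mathcal{O}_{X})$ propagates through sums and inverses, so $G_{X/Y}$ is a subgroup of $\text{Div}(Y)$.

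For part (2), I would exploit the local description $\text{Div}(Y)=H^{0}(Y,\mathcal{K}_{Y}^{*}/\mathcal{O}_{Y}^{*})$. Given $E\in G_{X/Y}$, represent $E$ locally by a datum $(U_{\alpha},f_{\alpha})$ with $f_{\alpha}\in\Gamma(U_{\alpha},\mathcal{K}_{Y}^{*})$ and transition units $f_{\alpha}/f_{\beta}\in\Gamma(U_{\alpha}\cap U_{\beta},\mathcal{O}_{Y}^{*})$. The plan is to push these $f_{\alpha}$ through $\mathcal{O}_{Y}\to i_{*}\mathcal{O}_{X}$ to obtain local data $(U_{\alpha}\cap X,\overline{f_{\alpha}})$ defining a Cartier divisor on $X$, which I would then call $E|_{X}$. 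To do this I have to check that, at every point $x\in X\cap U_{\alpha}$, writing $f_{\alpha}=a/b$ with $a,b$ non-zero-divisors in $\mathcal{O}_{Y,x}$, the images $\overline{a},\overline{b}$ remain non-zero-divisors in $\mathcal{O}_{X,x}$; only then does $\overline{f_{\alpha}}$ live in $\mathcal{K}_{X}^{*}$, and the quotients $\overline{f_{\alpha}/f_{\beta}}$ automatically remain units.

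The main obstacle, and the only nontrivial point, is this non-zero-divisor check, and I would handle it with the standard characterization in Noetherian rings: an element of $\mathcal{O}_{X,x}$ is a zero-divisor if and only if it lies in some associated prime. Concretely, if $\overline{a}$ were a zero-divisor in $\mathcal{O}_{X,x}$, then $\overline{a}\in\mathfrak{p}$ for some $\mathfrak{p}\in\text{Ass}(\mathcal{O}_{X,x})$; the corresponding point $\xi$ would lie in $\text{Ass}(\mathcal{O}_{X})$, would specialize to $x$, and would satisfy $\xi\in V(\overline{a})\subseteq\text{Supp}(E)$. This contradicts the defining hypothesis of $G_{X/Y}$. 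The same argument for $b$ finishes the verification that $\overline{f_{\alpha}}\in\Gamma(U_{\alpha}\cap X,\mathcal{K}_{X}^{*})$, and the construction is clearly independent of the chosen representative up to $\mathcal{O}_{X}^{*}$, giving a well-defined homomorphism $G_{X/Y}\to\text{Div}(X)$.

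All remaining assertions then follow from the local picture. The line bundle $\mathcal{O}_{Y}(E)$ is, on $U_{\alpha}$, the free $\mathcal{O}_{Y}$-module generated by $f_{\alpha}^{-1}$, so its pullback along $i$ is generated by $\overline{f_{\alpha}}^{-1}$, which is precisely $\mathcal{O}_{X}(E|_{X})$ on $U_{\alpha}\cap X$; gluing yields the canonical isomorphism. The support identity $\text{Supp}(E|_{X})=\text{Supp}(E)\cap X$ is immediate from the fact that $\overline{f_{\alpha}}$ is a unit at $x\in X$ exactly when $f_{\alpha}$ is, under the associated-primes condition. Effectivity is inherited since $f_{\alpha}\in\mathcal{O}_{Y}(U_{\alpha})$ maps to $\overline{f_{\alpha}}\in\mathcal{O}_{X}(U_{\alpha}\cap X)$, and we have just seen this image is a non-zero-divisor. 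Finally, a principal divisor $(f)$ with $f\in K(Y)^{*}\cap G_{X/Y}$ is globally represented by the single section $f$, so its restriction is globally represented by $\overline{f}\in K(X)^{*}$, which is the principal divisor $(\overline{f})$ on $X$.
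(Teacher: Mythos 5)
Your overall plan matches the one the paper (deferring to \cite{liu2}) uses, but the crucial non-zero-divisor check has a gap. You fix a representation $f_{\alpha}=a/b$ at $x$ with $a,b$ regular in $\mathcal{O}_{Y,x}$ and claim that if $\overline{a}$ were a zero-divisor, then its associated point $\xi$ would satisfy $\xi\in V(\overline{a})\subseteq\text{Supp}(E)$, contradicting the hypothesis. The inclusion $V(\overline{a})\subseteq\text{Supp}(E)$ is unjustified and in fact false: that $a$ vanishes at $\xi$ says nothing about whether $f_{\alpha}$ fails to be a unit there, because $b$ may vanish at $\xi$ as well. Concretely, take $Y=\text{Spec}\,k[s,t]$, $X=V(s)$, $E=\text{div}(t)$; then $\text{Ass}(\mathcal{O}_{X})=\{(s)\}$ avoids $\text{Supp}(E)=V(t)$, yet at the origin one may write $t=(st)/s$ with $a=st$, $b=s$ regular in $\mathcal{O}_{Y,x}$, and both $\overline{a}$ and $\overline{b}$ are zero, while $V(\overline{a})=X\not\subseteq V(t)$. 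As written, your claim is quantified over all representations $a/b$, and that is simply false.

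What the lemma actually requires is that a \emph{good} representation exists, and producing one takes a real argument. One route: let $J=\{b\in\mathcal{O}_{Y,x}:bf_{\alpha}\in\mathcal{O}_{Y,x}\}$ be the ideal of denominators. Since $J$ contains a regular element, $J\not\subseteq\mathfrak{q}$ for any $\mathfrak{q}\in\text{Ass}(\mathcal{O}_{Y,x})$; and for each of the finitely many primes $\mathfrak{q}\subseteq\mathcal{O}_{Y,x}$ pulling back from $\text{Ass}(\mathcal{O}_{X,x})$, the hypothesis gives $f_{\alpha}\in\mathcal{O}_{Y,\mathfrak{q}}^{*}$, whence $J\not\subseteq\mathfrak{q}$ as well. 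Prime avoidance over this finite set of primes then produces $b\in J$ such that both $b$ and $\overline{b}$ are regular, and $a:=bf_{\alpha}$ inherits the same property since $f_{\alpha}$ is a unit at each such $\mathfrak{q}$. That existence lemma is the missing piece. A final caveat, not about your argument alone: the support identity should read $\text{Supp}(E|_{X})\subseteq\text{Supp}(E)\cap X$ as in \cite{liu2} — equality can fail (with the same $Y,X$ as above, $E=\text{div}(t+s)-\text{div}(t)$ satisfies the hypothesis and restricts to $0$ on $X$, yet the origin lies in $\text{Supp}(E)\cap X$) — and your one-line justification of equality, that $\overline{f_{\alpha}}$ is a unit exactly when $f_{\alpha}$ is, only goes in one direction.
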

\begin{proof}
The details can be found in \cite[Page 261]{liu2}. We will outline the construction of the divisor $E|_{X}$. Let $E$ be represented by $\{U_{i},f_{i}\}$, where the $U_{i}$ are open in $Y$, and $f_{i}\in\mathcal{K}^{*}_{Y}(U_{i})$. Let
\begin{equation*}
\overline{U}_{i}=X\cap{U_{i}}.
\end{equation*}
From the surjective morphism
\begin{equation*}
\mathcal{O}_{Y}\longrightarrow{\pi_{*}(\mathcal{O}_{X})},
\end{equation*}
we obtain a surjective morphism
\begin{equation*}
\mathcal{O}_{Y}(U_{i})\longrightarrow{\mathcal{O}_{X}(\overline{U_{}}_{i})},
\end{equation*}
which we denote on the element $f_{i}$ as $\overline{f}_{i}$. One can now show that $\overline{f}_{i}$ is actually an element of $\mathcal{K}^{*}_{X}(\overline{U}_{i})$, see \cite[Page 261]{liu2} for the details. This then gives a Cartier divisor represented by $\{(\overline{U}_{i},\overline{f}_{i})\}_{i\in{I}}$.
\end{proof}

\section{Reducing Cartier divisors on regular semistable models}

We now specialize to the case of arithmetic surfaces.  Recall that an arithmetic surface is by definition a regular fibered surface. A regular surface is automatically normal (by \cite[Chapter 4, Theorem 2.16]{liu2}), so we have two notions ready: valuations at codimension 1 primes 
and intersection theory (see Section \ref{IntersectionTheory}). 

We would now like to reduce principal divisors to components of the special fiber. Let $f\in{K(\mathcal{D})}$ be an element of the function field of $\mathcal{D}$. 
 As we saw in Example \ref{ExamplePoincare1},  we cannot always restrict the divisor of this element to an irreducible component of the special fiber, since the restricted element might be completely contained in the vanishing set of that component (or in other words, there is a nonempty intersection of the divisor of $f$ with the associated primes of $\mathcal{D}_{s}$).\\
We will therefore modify our $f$ for various irreducible components $\Gamma\subset{\mathcal{D}_{s}}$. Let $y$ be a generic point for $\Gamma$. 
The local ring $\mathcal{O}_{\mathcal{D},y}$ is then a discrete valuation ring. Indeed, it is normal and it has dimension one 
by the fact that $\mathcal{D}_{s}$ is equidimensional of dimension $1$, see \cite[Chapter 4, Proposition 4.16]{liu2}. Here, equidimensional means that all irreducible components have the same dimension. We denote the corresponding valuation by $v(\cdot{})$ in this section.  The uniformizer $\pi$ of $R$ in fact has valuation $v(\pi)=1$, since $\mathcal{D}_{s}$ is assumed to be reduced and $\Gamma$ is contained in the special fiber. Suppose that $v(f)=k$.
\begin{mydef}
The $\Gamma$-modified form of $f$ is defined to be
\begin{equation*}
f^{\Gamma}:=\dfrac{f}{\pi^{k}}.
\end{equation*}
\end{mydef}
By definition, we then have $v(f^{\Gamma})=0$. If we then consider the natural map
\begin{equation*}
\mathcal{O}_{\mathcal{D},y}\longrightarrow{\mathcal{O}_{\mathcal{D},y}/\mathfrak{m}_{y}\mathcal{O}_{\mathcal{D},y}},
\end{equation*}
we see that $f^{\Gamma}$ naturally gives a nonzero element in the residue field, which we denote by $\overline{f^{\Gamma}}$. Note that the residue field at $y$ is the {\it{function field}} of the component $\Gamma$. 

\begin{lemma}\label{RedDiv1}
Let $f$ and $f^{\Gamma}$ be as above. We have
\begin{equation*}
\text{div}_{\Gamma}(\overline{f^{\Gamma}})=(\text{div}_{\mathcal{Y}}(f^{\Gamma}))|_{\Gamma}.
\end{equation*}
\end{lemma}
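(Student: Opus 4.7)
The plan is to apply Theorem \ref{RedCar} with $Y=\mathcal{D}$, $X=\Gamma$ and $E=\text{div}_{\mathcal{D}}(f^{\Gamma})$, and then to verify that the restriction $E|_{\Gamma}$ provided by that theorem coincides with the principal divisor $\text{div}_{\Gamma}(\overline{f^{\Gamma}})$ on $\Gamma$.

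First I would check that the hypothesis of Theorem \ref{RedCar} is satisfied. By the very definition of the $\Gamma$-modified form, $v_{y}(f^{\Gamma})=0$, where $y$ denotes the generic point of $\Gamma$, so $f^{\Gamma}$ is a unit in the discrete valuation ring $\mathcal{O}_{\mathcal{D},y}$ and in particular $y\notin\text{Supp}(E)$. Because $\mathcal{D}$ is strongly semistable, $\Gamma$ is smooth, hence reduced and irreducible, so $\text{Ass}(\mathcal{O}_{\Gamma})=\{y\}$. Therefore $\text{Supp}(E)\cap\text{Ass}(\mathcal{O}_{\Gamma})=\emptyset$ and $E|_{\Gamma}$ is a well-defined Cartier divisor on $\Gamma$. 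Next I would unwind the explicit construction given in the proof of Theorem \ref{RedCar}: choose an open $U\subset\mathcal{D}$ with $y\in U$ on which $f^{\Gamma}$ belongs to $\mathcal{K}_{\mathcal{D}}^{*}(U)$ and represents $E$; such a $U$ exists since $f^{\Gamma}$ is a unit at $y$. Then $E|_{\Gamma}$ is represented on $\overline{U}:=U\cap\Gamma$ by the image of $f^{\Gamma}$ under the natural map $\mathcal{K}_{\mathcal{D}}^{*}(U)\to\mathcal{K}_{\Gamma}^{*}(\overline{U})$, which is precisely the restriction of $\overline{f^{\Gamma}}$ to $\overline{U}$. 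Since these local reductions glue to the globally defined rational function $\overline{f^{\Gamma}}\in K(\Gamma)^{*}$, the Cartier divisor $E|_{\Gamma}$ equals the principal divisor associated to $\overline{f^{\Gamma}}$, which is the desired identity.

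The main (and essentially only) obstacle is the bookkeeping step showing that the construction of Theorem \ref{RedCar}, when applied to a representative coming from a global rational function on $\mathcal{D}$, produces precisely the principal divisor of its reduction on $\Gamma$. This is built into the compatibility of the surjection $\mathcal{O}_{\mathcal{D}}\to i_{*}\mathcal{O}_{\Gamma}$ with localization at $y$, so once the hypothesis of Theorem \ref{RedCar} has been checked the proof reduces to unwinding definitions.
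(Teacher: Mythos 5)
Your proof is correct and follows essentially the same route as the paper's: both apply the reduction-of-Cartier-divisors result (Theorem \ref{RedCar}) to the single global representative $\{\mathcal{D},f^{\Gamma}\}$ and observe that the reduced representative is $\{\Gamma,\overline{f^{\Gamma}}\}$, the principal divisor of $\overline{f^{\Gamma}}$. The paper's two-line proof takes for granted the hypothesis $\text{Supp}(\text{div}_{\mathcal{D}}(f^{\Gamma}))\cap\text{Ass}(\mathcal{O}_{\Gamma})=\emptyset$, which you correctly verify by noting that the $\Gamma$-modification forces $v_{y}(f^{\Gamma})=0$ at the generic point $y$ of $\Gamma$ and that $\text{Ass}(\mathcal{O}_{\Gamma})=\{y\}$ since $\Gamma$ is integral; your write-up is thus a slightly more careful version of the same argument.
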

\begin{proof}

We have that the divisor is represented by $\{\mathcal{D},f^{\Gamma}\}$, which is then reduced to 
\begin{equation*}
\{\mathcal{D}\cap{\Gamma},\overline{f^{\Gamma}}\}=\{\Gamma,\overline{f^{\Gamma}}\}.
\end{equation*} This is exactly the Cartier divisor 
$\text{div}_{\Gamma}(\overline{f^{\Gamma}})$, as desired. 

\end{proof}
 
Let $V_{f}$ and $V_{f^{\Gamma}}$ be the vertical divisors of $f$ and $f^{\Gamma}$ respectively. We have that $V_{f^{\Gamma}}=V_{f}-k\cdot{\mathcal{D}_{s}}$. Let $D^{0}$ be the closed points of the generic fiber. Recall that we have a natural reduction map
\begin{equation*}
r_{\mathcal{D}}:D^{0}\longrightarrow{\mathcal{D}_{s}},
\end{equation*} 
which associates to every closed point $x$ in $D$ the point $\overline{\{x\}}\cap{\mathcal{D}_{s}}$, see Definition \ref{ReductionMap11}.  We now have
\begin{pro}\label{RedDiv2}
Consider the divisor $\text{div}_{\eta}(f)=\sum_{P}n_{P}(P)$ with corresponding $\Gamma$-modified surface divisor
\begin{equation*}
\text{div}(f^{\Gamma})=\sum_{P}n_{P}\overline{\{P\}}+V_{f^{\Gamma}}.
\end{equation*}
For $\tilde{x}$ in the nonsingular locus of $\mathcal{D}_{s}$, consider the formal fiber $D_{+}(\tilde{x})$. Then
\begin{equation*}
v_{\tilde{x}}(\overline{f^{\Gamma}})=\sum_{P\in{D_{+}(\tilde{x})}}n_{P}.
\end{equation*}
For $\tilde{x}$ an intersection point of $\Gamma$ and $\Gamma'$, we have
\begin{equation*}
v_{\tilde{x}}(\overline{f^{\Gamma}})=v_{\Gamma'}(f^{\Gamma}).
\end{equation*}

\end{pro}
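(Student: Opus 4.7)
The plan is to apply Lemma \ref{RedDiv1} and split the analysis according to whether $\tilde{x}$ is a smooth point or an intersection point. By Lemma \ref{RedDiv1}, we have $\text{div}_{\Gamma}(\overline{f^{\Gamma}}) = (\text{div}(f^{\Gamma}))|_{\Gamma}$, so the valuation $v_{\tilde{x}}(\overline{f^{\Gamma}})$ equals the multiplicity at $\tilde{x}$ of the pullback of $\text{div}(f^{\Gamma})$ to $\Gamma$. Because of the normalization $v_{\Gamma}(f^{\Gamma})=0$, the component $\Gamma$ itself appears with coefficient zero in $V_{f^{\Gamma}}$, so $\Gamma$ is not contained in $\text{Supp}(\text{div}(f^{\Gamma}))$ and the restriction map of Theorem \ref{RedCar} is indeed defined on this divisor.

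For the first formula, assume $\tilde{x}$ lies in the smooth locus of $\mathcal{D}_{s}$, so $\tilde{x}\in\Gamma$ but $\tilde{x}\notin\Gamma_{i}$ for any other component $\Gamma_{i}$. Write
\begin{equation*}
\text{div}(f^{\Gamma}) = \sum_{P} n_{P}\overline{\{P\}} + \sum_{\Gamma_{i}\neq\Gamma} c_{i}\Gamma_{i},
\end{equation*}
and restrict term by term. The vertical piece has support contained in $\bigcup_{i\neq\Gamma}\Gamma_{i}$, which does not meet $\tilde{x}$; hence this piece contributes $0$ to $v_{\tilde{x}}$. For a horizontal divisor $\overline{\{P\}}$, note that since $\mathcal{D}$ is regular, $P$ specializes to a unique smooth point $r_{\mathcal{D}}(P)$ and $\overline{\{P\}}$ meets $\mathcal{D}_{s}$ transversally there; so $\overline{\{P\}}|_{\Gamma}$ contributes multiplicity $1$ at $\tilde{x}$ precisely when $r_{\mathcal{D}}(P)=\tilde{x}$, that is, when $P\in D_{+}(\tilde{x})$, and $0$ otherwise. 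Summing with multiplicities yields
\begin{equation*}
v_{\tilde{x}}(\overline{f^{\Gamma}}) = \sum_{P\in D_{+}(\tilde{x})} n_{P}.
\end{equation*}

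For the second formula, assume $\tilde{x}$ is an ordinary double point at the intersection of $\Gamma$ and $\Gamma'$. By \cite[Corollary 9.1.32]{liu2}, no horizontal divisor $\overline{\{P\}}$ passes through $\tilde{x}$, so the horizontal summand contributes $0$ to $v_{\tilde{x}}$. Among the vertical components, only $\Gamma'$ (besides $\Gamma$ itself, whose coefficient is already $0$) passes through $\tilde{x}$, so only the term $c_{\Gamma'}\Gamma'$ with $c_{\Gamma'}=v_{\Gamma'}(f^{\Gamma})$ contributes. It then suffices to check that $(\Gamma')|_{\Gamma}$ has multiplicity $1$ at $\tilde{x}$: using the regularity of $\mathcal{D}$, the thickness at $\tilde{x}$ is $1$, i.e.\ $\hat{\mathcal{O}}_{\mathcal{D},\tilde{x}}\simeq R[[x,y]]/(xy-\pi)$ with $\Gamma$ cut out by $x$ and $\Gamma'$ cut out by $y$; restricting the local equation $y$ of $\Gamma'$ to $\Gamma$ gives the uniformizer of the local ring of $\Gamma$ at $\tilde{x}$, hence multiplicity $1$. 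Combining, $v_{\tilde{x}}(\overline{f^{\Gamma}}) = v_{\Gamma'}(f^{\Gamma})$.

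The main subtlety I expect is bookkeeping the Cartier restriction in Theorem \ref{RedCar}: one must verify that each piece of the decomposition lies in the subgroup $G_{\Gamma/\mathcal{D}}$, which is exactly where the normalization $v_{\Gamma}(f^{\Gamma})=0$ and the fact that $\tilde{x}$ does not lie in the generic point of $\Gamma$ enter. The geometric content of the proof, however, is cleanly localized in two local computations: the transversality of horizontal divisors at their smooth specializations, and the regular-semistable local model $xy=\pi$ at intersection points.
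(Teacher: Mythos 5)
Your proof is correct and follows essentially the same route as the paper's: both start from Lemma \ref{RedDiv1}, decompose $\text{div}(f^{\Gamma})$ into its horizontal and vertical parts, and compute the local contribution at $\tilde{x}$. The paper phrases the key step as showing $v_{\tilde{x}}(\overline{f^{\Gamma}})=(\Gamma\cdot\text{div}(f^{\Gamma}))_{\tilde{x}}$ via a length computation, while you restrict the divisor term by term using Theorem \ref{RedCar}; these are the same local intersection computation presented slightly differently, and your version is, if anything, more explicit about the transversality at smooth specializations and the $xy=\pi$ model at double points.
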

\begin{proof}
The idea of the proof is to write out the equality in Lemma \ref{RedDiv1} in terms of valuations. 
For $\tilde{x}$ where $\overline{f^{\Gamma}}$ has positive valuation, the valuation can be found by
\begin{equation*}
v_{\tilde{x}}(\overline{f^{\Gamma}})=\text{length}(\mathcal{O}_{\Gamma,\tilde{x}}/(\overline{f_{\tilde{x}}^{\Gamma}}))
\end{equation*}
(the case with negative valuation is similar). Let $t$ be a local uniformizer of $\Gamma$, so that
\begin{equation*}
\mathcal{O}_{\Gamma,\tilde{x}}=\mathcal{O}_{\mathcal{D},\tilde{x}}/t\mathcal{O}_{\mathcal{D},\tilde{x}}.
\end{equation*}

We have the equality 
\begin{equation*}
\mathcal{O}_{\Gamma,\tilde{x}}/(\overline{f_{\tilde{x}}^{\Gamma}})=\mathcal{O}_{\mathcal{D},\tilde{x}}/(t\mathcal{O}_{\mathcal{D},\tilde{x}}+f^{\Gamma}_{\tilde{x}}\mathcal{O}_{\mathcal{D},\tilde{x}}).
\end{equation*}
But the length of this last ring is exactly the local intersection number, so that
\begin{equation*}
v_{\tilde{x}}(\overline{f^{\Gamma}})=(\Gamma\cdot{\text{div}(f^{\Gamma})})_{\tilde{x}}.
\end{equation*}
Writing this condition in terms of the horizontal and the vertical divisors gives us both statements of the proposition. 
\end{proof}

Proposition \ref{RedDiv2} allows us to calculate the reduced divisor of $f$ directly in terms of the horizontal  and the vertical divisor of $f$. 

\section{Vertical divisors, Laplacians and the Poincar\'{e}-Lelong formula}

In the last section we saw that to know the reduced divisors for a given element $f\in{K(\mathcal{D})}$, we have to know the horizontal divisor and the vertical divisor of $f$. In this section we will give a way of determining the vertical divisor using the divisors on the intersection graph. To do this, we'll explain in more detail the connection between principal divisors on the intersection graph and vertical divisors.

Suppose we are given an element $f$ of the function field $K(\mathcal{D})$. We have two options: we can consider its divisor in $D$ and in $\mathcal{D}$. The divisor $\text{div}_{\eta}(f)$ is well-defined up to a scaling factor of $K^{*}$ and the divisor $\text{div}(f)$ is well-defined up to a scaling factor of $R^{*}$. Namely, for every element of $K^{*}$ with nonzero valuation we get a shift in the vertical divisor and for every element of $R^{*}$ we obtain the same divisor.

Thus in general it is impossible to reconstruct $\text{div}(f)$ from just the generic divisor $\text{div}_{\eta}(f)$. If we however take the {\it{$\Gamma$-modified}} form $f^{\Gamma}$, we already know that $v_{\Gamma}(f^{\Gamma})=0$. There is then a {\it{unique}} solution $V_{f}=\sum_{i}c_{i}\Gamma_{i}$ such that $V_{f}$ is the vertical divisor corresponding to $\text{div}_{\eta}(f)$ with $c(\Gamma)=0$, by Corollary \ref{KernelRho} for instance. The good news is that we can explicitly give this vertical divisor in terms of the Laplacian operator.
\begin{theorem}\label{MainThmVert}
Let $\rho(\text{div}_{\eta}(f))$ be the induced principal divisor of $f$ on the intersection graph $G$ of $\mathcal{D}$. 
Write
\begin{equation*}
\Delta(\phi)=\rho(\text{div}(f))
\end{equation*}
for some $\phi:\mathbb{Z}^{V}\longrightarrow{\mathbb{Z}}$, where $\Delta$ is the Laplacian operator. Choose $\phi$ such that $\phi(\Gamma)=0$. Then the unique vertical divisor corresponding to $\text{div}_{\eta}(f)$ with $V_{f^{\Gamma}}(\Gamma)=0$ is given by
\begin{equation}\label{ExplVert2}
V_{f^{\Gamma}}=\sum_{i}\phi(\Gamma_{i})\cdot{\Gamma_{i}}.
\end{equation} 
\end{theorem}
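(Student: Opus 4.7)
The plan is to exploit the uniqueness provided by Corollary \ref{KernelRho}: any vertical divisor $V$ is determined, up to a multiple of $\mathcal{D}_{s}$, by its image $\rho(V)$ on the intersection graph. Since both $V_{f^{\Gamma}}$ and the candidate $W:=\sum_{i}\phi(\Gamma_{i})\cdot\Gamma_{i}$ are required to have zero coefficient at the component $\Gamma$, it will suffice to show $\rho(V_{f^{\Gamma}})=\rho(W)$, because then $V_{f^{\Gamma}}-W\in\ker\rho=\langle\mathcal{D}_{s}\rangle$, and the coefficient at $\Gamma$ forces the multiple to be zero.

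The heart of the argument is a purely combinatorial identity: for any vertical divisor $V=\sum_{i}c_{i}\Gamma_{i}$, its image under $\rho$ is (up to sign) the Laplacian of the coefficient function. Explicitly, at the vertex $v_{j}$ one has
\begin{equation*}
\rho(V)(v_{j})=\sum_{i}c_{i}(\Gamma_{i}\cdot\Gamma_{j})=c_{j}\Gamma_{j}^{2}+\sum_{k\neq j}c_{k}(\Gamma_{k}\cdot\Gamma_{j}).
\end{equation*}
Using the relation $\Gamma_{j}^{2}=-\sum_{k\neq j}(\Gamma_{j}\cdot\Gamma_{k})$ from Proposition \ref{RankIntersect} and rearranging, this becomes
\begin{equation*}
\rho(V)(v_{j})=-\sum_{k\neq j}(\Gamma_{j}\cdot\Gamma_{k})\bigl(c_{j}-c_{k}\bigr)=-\Delta(\tilde{c})(v_{j}),
\end{equation*}
where $\tilde{c}$ is the function $v_{i}\mapsto c_{i}$ and we use that the multiplicity of the edge between $v_{j}$ and $v_{k}$ in $\Sigma(\mathcal{D})$ equals $\Gamma_{j}\cdot\Gamma_{k}$ by the semistability assumption. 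In particular $\rho(W)=-\Delta(\phi)$.

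To finish, I would invoke that $\text{div}(f^{\Gamma})=\overline{\text{div}_{\eta}(f)}+V_{f^{\Gamma}}$ is a principal Cartier divisor on $\mathcal{D}$, hence has intersection zero with every vertical component $\Gamma_{i}$ (since $\mathcal{O}_{\mathcal{D}}(\text{div}(f^{\Gamma}))$ is trivial). This yields $\rho(\text{div}(f^{\Gamma}))=0$, so
\begin{equation*}
\rho(V_{f^{\Gamma}})=-\rho(\overline{\text{div}_{\eta}(f)})=-\Delta(\phi),
\end{equation*}
by the hypothesis on $\phi$. Combining with the computation above gives $\rho(V_{f^{\Gamma}})=\rho(W)$, and the uniqueness argument outlined in the first paragraph closes the proof.

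The main obstacle is the bookkeeping in the combinatorial identity $\rho(V)=-\Delta(\tilde{c})$: one needs to be careful that the edges of $\Sigma(\mathcal{D})$ incident to $v_{j}$ (counted with multiplicity) match the intersection numbers $\Gamma_{j}\cdot\Gamma_{k}$, which uses the transversality of intersections in the strongly semistable case. The rest is essentially formal, relying on the fact that principal divisors pair trivially with vertical components and on the uniqueness statement already established in Corollary \ref{KernelRho}.
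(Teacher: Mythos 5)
Your proposal is correct and follows essentially the same route as the paper's own proof: reduce to comparing $\rho$-images via Corollary \ref{KernelRho}, use that $\rho(\text{div}(f^{\Gamma}))=0$ for the principal divisor, and convert between vertical divisors and Laplacians. The only cosmetic difference is that you re-derive the identity $\rho(V)=-\Delta(\tilde{c})$ directly from the self-intersection formula in Proposition \ref{RankIntersect}, whereas the paper packages exactly this calculation (on indicator functions) as Lemma \ref{VertInd1} and then appeals to linearity.
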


Before we start the proof of this theorem, let us point out why Theorem \ref{MainThmVert} implies the Poincar\'{e}-Lelong formula. 
\begin{theorem}\label{ValCor1}{\bf[Poincar\'{e}-Lelong formula]}
Let $f$ and $f^{\Gamma}$ be as before. Let $\tilde{x}$ be an intersection point of $\Gamma$ with another component $\Gamma'$. Then
\begin{equation}
v_{\tilde{x}}(\overline{f^{\Gamma}})=\phi(v')-\phi(v).
\end{equation}
\end{theorem}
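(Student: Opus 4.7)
The plan is to derive this Poincar\'{e}--Lelong formula as a direct consequence of Proposition \ref{RedDiv2} together with Theorem \ref{MainThmVert}, since essentially all of the substantive work has been done in those two results. The key observation is that the local intersection-theoretic valuation at the intersection point $\tilde{x}$ is controlled by the vertical coefficient at the transverse component $\Gamma'$, and that coefficient is in turn precisely what the Laplacian reads off.

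More concretely, I would proceed in three steps. First, apply Proposition \ref{RedDiv2} in the case where $\tilde{x}$ is an intersection point of $\Gamma$ and $\Gamma'$ to obtain
\begin{equation*}
v_{\tilde{x}}(\overline{f^{\Gamma}}) = v_{\Gamma'}(f^{\Gamma}).
\end{equation*}
Second, note that the only contribution to the valuation $v_{\Gamma'}(f^{\Gamma})$ comes from the vertical part $V_{f^{\Gamma}}$ of the divisor $\mathrm{div}(f^{\Gamma}) = \overline{\mathrm{div}_{\eta}(f)} + V_{f^{\Gamma}}$, because the horizontal divisor $\overline{\mathrm{div}_{\eta}(f)}$ has no component equal to $\Gamma'$. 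By Theorem \ref{MainThmVert}, with the normalization $\phi(\Gamma) = 0$, the vertical divisor is $V_{f^{\Gamma}} = \sum_{i} \phi(\Gamma_{i}) \cdot \Gamma_{i}$, so
\begin{equation*}
v_{\Gamma'}(f^{\Gamma}) = \phi(\Gamma').
\end{equation*}

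Third, I would translate this into the difference form stated in the theorem. Since $\phi$ was normalized so that $\phi(v) = \phi(\Gamma) = 0$, we have $\phi(\Gamma') = \phi(v') - \phi(v)$, giving the claim for this particular $\phi$. To show the formula holds for any Laplacian representative $\phi$ of $\rho(\mathrm{div}_{\eta}(f))$, I would remark that two such representatives differ by a locally constant function on the (connected) graph $\Sigma(\mathcal{D})$, hence by a global constant, which cancels in the difference $\phi(v') - \phi(v)$; equivalently, replacing $f$ by $\pi^{k} f$ modifies $V_{f}$ by $k \cdot \mathcal{D}_{s}$ and shifts $\phi$ by the constant $k$, leaving $\phi(v') - \phi(v)$ and the reduced local valuation $v_{\tilde{x}}(\overline{f^{\Gamma}})$ both unchanged.

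There is no real obstacle here beyond correctly bookkeeping the normalization: the content of the Poincar\'{e}--Lelong formula is already packaged inside Theorem \ref{MainThmVert}, and Proposition \ref{RedDiv2} supplies the bridge between the valuation of the reduction $\overline{f^{\Gamma}}$ at an ordinary double point and the vertical multiplicity of $f^{\Gamma}$ along the neighbouring component. The only mild subtlety worth pointing out explicitly is the well-definedness of $\phi(v') - \phi(v)$, which justifies why the statement is intrinsic and does not require the ad hoc choice $\phi(\Gamma) = 0$ that is convenient for the proof.
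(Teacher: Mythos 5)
Your proof is correct and follows essentially the same route as the paper's: apply Proposition \ref{RedDiv2} to reduce to computing $v_{\Gamma'}(f^{\Gamma})$, then read that off from Theorem \ref{MainThmVert}. The only addition is your closing remark on why $\phi(v')-\phi(v)$ is independent of the choice of Laplacian potential, which the paper leaves implicit but which is a worthwhile clarification.
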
 
\begin{proof}
By Proposition \ref{RedDiv2}, we find that the valuation of $\overline{f^{\Gamma}}$ at $\tilde{x}$ is equal to the valuation of $f^{\Gamma}$ at $\Gamma'$. By Theorem \ref{MainThmVert} we see that this is the slope of $\phi$ in the direction of $\Gamma$, as desired.
\end{proof} 
Let us start by considering the divisors of the simplest functions.
\begin{lemma}\label{VertInd1}
For any component $\Gamma_{i}$ with corresponding vertex $v_{i}$, we have
\begin{equation*}
\Delta(1_{v_{i}})=-\rho(\Gamma_{i}).
\end{equation*}
\end{lemma}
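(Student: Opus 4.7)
The plan is to prove this identity by a direct computation, comparing the two sides vertex by vertex. Both sides are divisors on $\Sigma(\mathcal{D})$ (or $\Sigma(\mathcal{C})$), so it suffices to check that the coefficient of $v_j$ in $\Delta(1_{v_i})$ equals minus the coefficient of $v_j$ in $\rho(\Gamma_i)$ for every $j$.

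First I would unwind the definitions. By the definition of the specialization map, the coefficient of $v_j$ in $\rho(\Gamma_i)$ is the intersection number $\Gamma_i \cdot \Gamma_j$. For $j \neq i$, this equals the number of edges in $\Sigma$ connecting $v_i$ to $v_j$ (the intersections are transversal in the semistable case, as noted in the remark following the self-intersection proposition). For $j = i$, it equals the self-intersection number $\Gamma_i^2$, which by Proposition~3.2.2 (self-intersection formula) equals $-\sum_{k \neq i} \Gamma_i \cdot \Gamma_k$.

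Next I would compute the Laplacian $\Delta(1_{v_i})$ directly from its definition
$$\Delta(\phi)=\sum_{v\in V(G)}\sum_{e=vw\in E(G)}(\phi(v)-\phi(w))(v).$$
Since $G$ has no loop edges and $1_{v_i}$ is $1$ at $v_i$ and $0$ elsewhere, the coefficient of $v_j$ for $j\neq i$ is $-\#\{e\in E(G):e=v_jw,\;w=v_i\}=-(\Gamma_i\cdot\Gamma_j)$, and the coefficient of $v_i$ is $\sum_{k\neq i}(\Gamma_i\cdot\Gamma_k)$.

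Combining these two computations, the coefficient of $v_j$ ($j \neq i$) in $\Delta(1_{v_i})$ is $-(\Gamma_i \cdot \Gamma_j)$, matching $-\rho(\Gamma_i)$. The coefficient of $v_i$ in $\Delta(1_{v_i})$ is $\sum_{k \neq i}(\Gamma_i \cdot \Gamma_k) = -\Gamma_i^2$, again matching $-\rho(\Gamma_i)$. There is no real obstacle here; the only subtlety is invoking the self-intersection formula $\Gamma_i^2 = -\sum_{k \neq i}\Gamma_i\cdot\Gamma_k$ to handle the diagonal term, but this is already available as Proposition~3.2.2.
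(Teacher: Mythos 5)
Your proof is correct and follows essentially the same route as the paper's: a direct coefficient-by-coefficient comparison of $\Delta(1_{v_i})$ and $\rho(\Gamma_i)$, with the self-intersection formula $\Gamma_i^2=-\sum_{k\neq i}\Gamma_i\cdot\Gamma_k$ handling the diagonal term. If anything, you are slightly more careful than the paper, which writes the coefficient at $v_i$ as $b=\#\{j\neq i:\Gamma_i\cap\Gamma_j\neq\emptyset\}$ rather than the full valence $\sum_{k\neq i}\Gamma_i\cdot\Gamma_k$; these agree only when every pair of meeting components intersects transversally in a single point, so your phrasing in terms of intersection multiplicities is the safer one.
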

\begin{proof}
We calculate both sides. Define
\begin{equation*}
B:=\{j:\Gamma_{i}\cap\Gamma_{j}\neq\emptyset\}\backslash{\{i\}}.
\end{equation*}
Let $b=\#{B}$. We have
\begin{equation*}
\Delta(1_{v_{i}})=(\sum_{j\in{B}}-1\cdot(v_{j}))+b\cdot{v_{i}}.
\end{equation*}
 We also have
\begin{equation*}
\rho(\Gamma_{i})=(\sum_{j\in{B}}(\Gamma_{j}\cdot{\Gamma_{i}})(v_{j}))-b\cdot{v_{i}}=(\sum_{j\neq{i}:\Gamma_{i}\cap\Gamma_{j}}(1)(v_{j}))-b\cdot{v_{i}}.
\end{equation*}
We thus see that
\begin{equation*}
\Delta(1_{v_{i}})=-\rho(\Gamma_{i}),
\end{equation*}
as desired.
\end{proof}

\begin{proof}
(of Theorem \ref{MainThmVert}) Let $\phi$ be such that
\begin{equation*}
\Delta(\phi)=\rho(\text{div}_{\eta}(f))
\end{equation*}
and $\phi(\Gamma)=0$. We write
\begin{equation*}
\phi=\sum_{i}\phi(\Gamma_{i})\cdot{}1_{v_{i}}.
\end{equation*}
Taking the Laplacian operator of $\phi$ and using Lemma \ref{VertInd1}, we see that
\begin{equation*}
\Delta(\phi)=-\rho(\sum_{i}\phi(\Gamma_{i})\cdot({\Gamma_{i}})).
\end{equation*}
Writing 
\begin{equation*}
\text{div}(f^{\Gamma})=\text{div}_{\eta}(f^{\Gamma})+\sum_{i}c_{i}(\Gamma_{i})
\end{equation*}
and using that $\rho(\text{div}(f^{\Gamma}))=0$, we see that
\begin{equation*}
\rho(\sum_{i}c_{i}(\Gamma_{i}))=\rho(\sum_{i}\phi(\Gamma_{i})(\Gamma_{i})).
\end{equation*}
Since $c(\Gamma)=0$ and $\phi(\Gamma)=0$, we must have that these vertical divisors are equal by Corollary \ref{KernelRho}. This gives the theorem.
\end{proof}

\begin{exa}
Let us consider the projective line $\mathbb{P}^{1}$ with the function $f=x(x-\pi)$. We take the semistable model
\begin{equation*}
\text{Proj}R[X,T,W]/(XT-\pi{W}^2)
\end{equation*}
with open affine
\begin{equation*}
\text{Spec}R[x,t]/(xt-\pi).
\end{equation*}
We label the components as $\Gamma=Z(x)$ and $\Gamma'=Z(t)$. We see that 
\begin{equation*}
\text{div}_{\eta}(f)=(0)+(\pi)-2(\infty)
\end{equation*}
and that
\begin{equation*}
\rho(\text{div}_{\eta}(f))=2(\Gamma)-2(\Gamma').
\end{equation*}
\begin{figure}[h!]
\centering
\includegraphics[scale=0.6]{{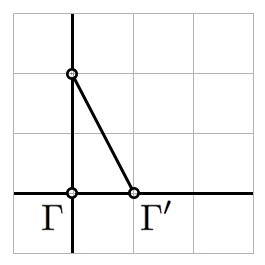}}
\caption{\label{Tweedeplaatje} {\it{The Laplacian of the function $f=x(x-\pi)$.}}} 
\end{figure}
The Laplacian thus has slope $-2$ from $\Gamma$ to $\Gamma'$, as in Figure \ref{Tweedeplaatje}. This means that if we take the $\Gamma$-modified form of $f$, it will have a pole of order 2 at the intersection point. Furthermore, we see that $f^{\Gamma}$ has a zero of order one at $(0)$ and $(\pi)$. This determines $f^{\Gamma}$ up to a constant in $k$.\\
We can also just calculate the modified form. By writing $f=x^2(1-t)$, we easily see that $v_{\Gamma}(f)=2$. Then the $\Gamma$-modified form of $f$ is equal to
\begin{equation*}
f^{\Gamma}=\dfrac{1-t}{t^2}.
\end{equation*}
As expected, this has a pole of order 2 at $t=0$ and a zero of order $1$ at $(\pi)$. Furthermore, it has a zero of order one at $t=\infty$, which corresponds to the point $(0)$, as expected.\\
Let us now determine the $\Gamma'$-modified form of $f$. We have that the Laplacian has slope 2 and thus that $f^{\Gamma'}$ has a zero of order two at the intersection point. Furthermore, we see that $f^{\Gamma'}$ has a pole of order two at $(\infty)$. We calculate the $\Gamma'$-form. Since $v_{\Gamma'}(f)=0$, we can just substitute $t=0$. We then obtain
\begin{equation*}
f^{\Gamma'}=\overline{x}^{2}
\end{equation*} 
which has a zero of order 2 at $x=0$ (which corresponds to the intersection point) and a pole of order two at $\infty$. 
\end{exa}



\chapter{Semistable models and Galois actions}\label{Quotients}

In this chapter we will study a specific type of semistable coverings $\mathcal{C}\rightarrow{\mathcal{D}}$ that we call \emph{disjointly branched morphisms}. We will be particularly interested in the case where the associated morphism of curves is Galois with Galois group $G$. In this case, we obtain a quotient of schemes $\mathcal{C}\rightarrow{\mathcal{C}/G=\mathcal{D}}$. If the order of the Galois group is coprime to the characteristic of the residue field (we say that the covering is \emph{tame}), then we also obtain a quotient on the special fiber $\mathcal{C}_{s}\rightarrow{\mathcal{C}_{s}/G=\mathcal{D}_{s}}$, as we will see in Section \ref{QuotientSpecial}. After defining the notion of a metrized complex of curves, we will see that we also obtain a quotient of metrized complexes in Section \ref{MetrizedQuotient}. These metrized complexes are  enhanced versions of our earlier weighted metric graphs, where every vertex now has an explicit curve attached to it.

We will also define the notions of decomposition and inertia groups here for general quotients of schemes under a finite group action. We will see how this relates to the usual theory of Galois extensions for Dedekind domains, which we use to study our coverings at the codimension one primes. In Chapter \ref{Inertiagroups}, we will study these decomposition and inertia groups for \emph{disjointly branched morphisms}, which are used to give the \emph{covering data} for the Galois coverings of graphs obtained in this section.

\section{Disjointly branched morphisms}\label{DisjointBran11}

Throughout this thesis, we'll be making great use of a classical theorem on coverings of semistable curves. This theorem also gives a practical way of explicitly calculating a lot of semistable reduction graphs. The theorem states that the normalization of a semistable model $\mathcal{D}$ of a curve $D$ that separates the branch points (in the special fiber of $\mathcal{D}$) of a finite cover $C\longrightarrow{D}$, will yield a semistable model $\mathcal{C}$ of $C$ after some finite extensions. This already gives some intuition why tropical geometry comes into play here: 
points that reduce to the same point on the special fiber will have a relative distance with strictly positive valuation, which is a tropical condition.
The theorem is as follows. 
\begin{theorem}\label{MaintheoremSemSta}
\begin{flushleft}
{\bf[{Obtaining semistable models from coverings}]}
\end{flushleft}

 Let $f:C\longrightarrow{D}$ be a finite morphism of smooth, projective geometrically connected curves over $K$. Suppose that $f$ is Galois with group $G$ of order prime to $\text{char}(k)$ and that $D$ admits a semistable model $\mathcal{D}_{0}$ over $R$.	
 Then the potential stable reduction of $C$ can be obtained by following the steps below:
 \begin{enumerate}
 \item {\bf{(Including branch points)}} Let $B\subset{D}$ be the branch locus of $f$. Take a finite separable extension $M/K$ to make the points of $B$ rational over $M$. Replace $\mathcal{D}_{0}$ by $\mathcal{D}_{0}\times_{\text{Spec}(R)}{\text{Spec}(R')}$, where $R'$ is a discrete valuation ring that dominates $R'$ and has field of fractions $M$.  
 \item {\bf{(Separation)}} Let $\mathcal{B}_{0}$ be the closure of $B$ in $\mathcal{D}_{0}$. Perform blow-ups at the closed points of $\mathcal{B}_{0}$ to obtain a birational morphism $\phi:\mathcal{D}\longrightarrow{\mathcal{D}_{0}}$ with $\mathcal{D}$ semistable such that the closure $\mathcal{B}$ of $B$ in $\mathcal{D}$ is a disjoint union of sections contained in the smooth locus of $\mathcal{D}$. 
 \item  {\bf{(Normalization)}} Let $\mathcal{C}_{0}\longrightarrow{D}$ be the normalization of $\mathcal{D}$ in $K(C_{M})$. Let
 \begin{equation*}
 \mathcal{F}=\{\Delta\,:\,\Delta\subseteq{\mathcal{D}_{s}}\text{ such that either }p_{a}(\Delta)\geq{1},\text{ or }\Delta\text{ contains at least three points of }\mathcal{B}\cup{(\mathcal{D}_{s})_\text{sing}}\}.
 \end{equation*}
 Let $e_{\Delta}$ denote the ramification index $e_{\Gamma/\Delta}$ for an irreducible component $\Gamma$ of $(\mathcal{C}_{0})_{s}$ lying above $\Delta$. Set $e=\text{lcm}\{e_{\Delta}\,|\,\Delta\in\mathcal{F}\}$ and $e=1$ if $\mathcal{F}=\emptyset$.
 \end{enumerate}
 Then for any extension of discrete valuation rings $R''\supseteq{R'}$ with $L:=\text{Quot}(R'')$ of ramification index divisible by $e$, the normalization $\mathcal{C}$ of $\mathcal{D}_{R''}$ in $K(C_{L})$ is a semistable model of $C_{L}$.
\end{theorem}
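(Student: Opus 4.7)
The plan is to verify semistability locally at each closed point of the normalization $\mathcal{C}$, after first arranging the geometry of $\mathcal{D}$ favorably via steps (1) and (2). For the preliminary setup I would observe that blowing up a closed point of a semistable arithmetic surface produces again a semistable scheme, introducing one new rational component at a smooth point and expanding a node into a chain of $\mathbb{P}^1$'s at a singular point. Applying this repeatedly to the closed points of $\mathcal{B}_0$ separates the horizontal branch divisor, so that after finitely many blow-ups the closure $\mathcal{B}$ of $B$ in $\mathcal{D}$ consists of disjoint smooth sections, each meeting $\mathcal{D}_s$ transversally at a single smooth point.

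I would then analyze $\mathcal{C}_0 \to \mathcal{D}$ using completed local rings at closed points of $\mathcal{D}$, splitting into three cases. At a smooth point of $\mathcal{D}_s$ lying off $\mathcal{B}$, the morphism is étale on the generic fiber near the point, so the only possible ramification is vertical along the unique component $\Delta$ containing it, and is controlled by $e_\Delta$. At a smooth point lying on a section of $\mathcal{B}$ the completed local ring of $\mathcal{D}$ is $R'[[t]]$ with $t$ defining the section; tameness and Kummer theory (applicable because $|G|$ is prime to $\text{char}(k)$) give that the completed local ring of $\mathcal{C}_0$ above takes the regular form $R'[[u]]$ with $u^{e}=t$, smooth with reduced special fiber. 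At a node $z$ of $\mathcal{D}_s$ the completed local ring is $R'[[x,y]]/(xy-\pi^n)$, and the tame form of Abhyankar's lemma describes the normalization above $z$ in terms of the ramification indices along the two branches at $z$.

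The third step is the base change to $R''$: at a node $z$ with branch ramification indices $e_1, e_2$, Abhyankar's lemma ensures that once the ramification index of $R''/R'$ is divisible by $\text{lcm}(e_1,e_2)$ the completed local ring of $\mathcal{C}$ at any point above $z$ becomes $R''[[u,v]]/(uv-\pi''^{m})$, again an ordinary double point. For a component $\Delta$ with vertical ramification $e_\Delta$, passing to a base extension of ramification divisible by $e_\Delta$ forces the preimages above $\Delta$ to appear with multiplicity one in the special fiber $(\mathcal{C})_s$, so that the special fiber is reduced there.

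The delicate point, and the main obstacle in carrying out the proof, is to justify why it suffices to cancel $e_\Delta$ only for $\Delta \in \mathcal{F}$. Components of $\mathcal{D}_s$ \emph{not} in $\mathcal{F}$ are rational with at most two special points; their preimages in $\mathcal{C}_0$ are, via Riemann--Hurwitz applied componentwise together with the fact that a tame cover of $\mathbb{P}^1$ branched at at most two points is itself a disjoint union of $\mathbb{P}^1$'s, chains of smooth rational curves connecting the \emph{essential} components. A careful local analysis at the nodes along such chains, using Abhyankar once more, shows that any residual vertical ramification on these bridging components is automatically absorbed into the nodal structure at their endpoints, provided the $\mathcal{F}$-components have themselves been tamed. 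Assembling all the local pictures then yields that $\mathcal{C}_s$ is reduced with only ordinary double points, proving that $\mathcal{C}$ is a semistable model of $C_L$.
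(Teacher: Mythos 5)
The paper itself does not give a proof of this theorem: it simply points to \cite[Chapter 10, Proposition 4.30]{liu2} and to \cite[Theorem 2.3]{liu1}. Your proposal is therefore necessarily ``a different route'' from the paper, which is fine, and your overall architecture (separate the branch locus by blow-ups; do a completed-local-ring analysis in three cases; invoke the tame structure / Abhyankar at nodes; handle vertical ramification by base change) is the standard and correct skeleton.

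The gap is in the last paragraph, and it is not a small one. You correctly identify that the delicate point is explaining why it is enough to take $e=\operatorname{lcm}\{e_\Delta\,:\,\Delta\in\mathcal{F}\}$ even though components $\Delta\notin\mathcal{F}$ may also carry vertical ramification. But your resolution, that ``residual vertical ramification on these bridging components is automatically absorbed into the nodal structure at their endpoints,'' is not a mathematical statement. If, after the base change to $R''$, some component $\Delta\notin\mathcal{F}$ still has a pre-image appearing with multiplicity $e_\Delta>1$ in the special fiber of $\mathcal{C}$, then $\mathcal{C}_s$ is non-reduced there and $\mathcal{C}$ is simply not semistable; nothing is ``absorbed.'' What actually has to be proved is the divisibility $e_\Delta \mid e$ for every component $\Delta$ of $\mathcal{D}_s$, including those not in $\mathcal{F}$. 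Your cited facts (Riemann--Hurwitz applied to $\Gamma'\to\Delta$, and that a tame cover of $\mathbb{P}^1$ branched at $\le 2$ points is a disjoint union of $\mathbb{P}^1$'s) concern the \emph{horizontal} structure of the cover on the special fiber, i.e.\ what $\Gamma'$ looks like as a curve; they say nothing about the \emph{vertical} ramification index $e_\Delta$, which is the multiplicity of $\pi$ at the generic point of $\Gamma'$. Conflating these two kinds of ramification is exactly where your argument loses contact with the claim.

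To close the gap you would need a lemma to the effect that for a non-essential $\Delta$ (a bridge or a tail), the index $e_\Delta$ is controlled by the indices at the neighbouring essential components; in the notation of the thesis this is precisely the kind of ``continuity of inertia along chains'' packaged in Theorem \ref{InertProp2} and Proposition \ref{Inertiagroup1}, or, in the cyclic case, the Newton-polygon/Laplacian computation of Proposition \ref{UnrAbelExt1}. Note also that some hypothesis ensuring the passage to the \emph{stable} model is legitimate (e.g.\ $g(C)\ge 2$, implicit in ``potential stable reduction'') is genuinely needed: for a degenerate cover such as $z^3=\pi x$ of $\mathbb{P}^1$, one has $\mathcal{F}=\emptyset$ and $e=1$, yet the normalization of $\mathbb{P}^1_R$ is already non-reduced, so the conclusion fails outside the stated setting. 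Your sketch makes no use of the stability hypothesis, which is another sign that the last step is not yet a proof.
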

\begin{proof}
See \cite[Chapter 10, Proposition 4.30]{liu2} for the theorem as stated above. A proof can be found in \cite[Theorem 2.3]{liu1}, for instance.
\end{proof}

In other words, if we find a semistable model $\mathcal{D}$ such that the closure of the branch locus consists of disjoint smooth sections over $\text{Spec}(R)$, then the normalization $\mathcal{C}$ of $\mathcal{D}$ in $K(C)$ (see \cite[Section 4.1.2]{liu2} for more background on normalizations) gives a morphism $\mathcal{C}\rightarrow{\mathcal{D}}$ that might be vertically ramified. Removing this vertical ramification by taking a finite extension $K\subset{K'}$ then gives a morphism of semistable models $\mathcal{C}'\rightarrow{\mathcal{D}'}$. We'll call these morphisms "disjointly branched". 

\begin{mydef}\label{disbran}
Let $\phi:C\rightarrow{D}$ be a finite, Galois morphism of curves over $K$ with Galois group $G$.
Let $\phi_{\mathcal{C}}:\mathcal{C}\rightarrow{\mathcal{D}}$ be a finite morphism of models for $\phi$. We say $\phi_{\mathcal{C}}$ is {\bf{disjointly branched}} if the following hold:
\begin{enumerate}
\item The closure of the branch locus in $\mathcal{D}$ consists of disjoint, smooth sections over $\text{Spec}(R)$.
\item Let $y$ be a generic point of an irreducible component in the special fiber of $\mathcal{C}$. Then the induced morphism $\mathcal{O}_{\mathcal{D},\phi(y)}\rightarrow{\mathcal{O}_{\mathcal{C},y}}$ is \'{e}tale. 
\item $\mathcal{D}$ is strongly semistable, meaning that $\mathcal{D}$ is semistable and that the irreducible components in the special fiber are all smooth. 
\end{enumerate}
\end{mydef} 

\begin{cor}
Let $\phi_{\mathcal{C}}:\mathcal{C}\rightarrow{\mathcal{D}}$ be a disjointly branched morphism for $\phi:C\rightarrow{D}$. Then $\mathcal{C}$ is semistable.
\end{cor}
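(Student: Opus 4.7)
The plan is to invoke Theorem \ref{MaintheoremSemSta} directly, verifying that the three conditions of Definition \ref{disbran} place us precisely in the favorable case of that theorem where no further base extension is needed.

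First I would observe that condition (1) of Definition \ref{disbran}, together with condition (3) that $\mathcal{D}$ is strongly semistable, means we are already in the situation obtained after carrying out steps 1 (including branch points) and 2 (separation) of Theorem \ref{MaintheoremSemSta}: the base field contains the coordinates of the branch points, $\mathcal{D}$ is semistable, and the closure $\mathcal{B}$ of the branch locus consists of disjoint sections in the smooth locus of $\mathcal{D}$. Next, since $\mathcal{C}$ is by definition a model of $C$, it is a normal fibered surface, and being finite over $\mathcal{D}$ with generic fiber isomorphic to $C$ it must coincide with the normalization $\mathcal{C}_{0}$ of $\mathcal{D}$ in $K(C)$ constructed in step 3 of the theorem.

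The crux is then to use condition (2) of Definition \ref{disbran} to show that the integer $e = \mathrm{lcm}\{e_{\Delta} : \Delta \in \mathcal{F}\}$ appearing in the theorem equals $1$. The ramification index $e_{\Delta}$ is by definition $e_{\Gamma/\Delta}$ where $\Gamma$ is an irreducible component of $(\mathcal{C}_{0})_{s}$ lying above $\Delta$, and hence equals the ramification index of the local ring extension $\mathcal{O}_{\mathcal{D},\phi(y)} \to \mathcal{O}_{\mathcal{C},y}$ at the generic point $y$ of $\Gamma$. Condition (2) asserts that this extension is étale, which in particular forces $e_{\Gamma/\Delta} = 1$. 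Thus every $e_{\Delta}$ is $1$, so $e = 1$.

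With $e = 1$, the final statement of Theorem \ref{MaintheoremSemSta} applies with $R'' = R'$ (trivial extension), and yields that $\mathcal{C}$ is a semistable model of $C$. There is no real obstacle in the proof; the work is entirely in matching the definitions, and the subtle point one must check is simply that condition (2) is genuinely the étaleness condition needed to kill the vertical ramification indices $e_{\Delta}$ — this uses that for a finite extension of discrete valuation rings, étaleness is equivalent to being unramified (ramification index $1$) together with a separable residue extension, the latter being automatic since $k$ is algebraically closed.
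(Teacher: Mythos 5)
Your proposal is correct and follows the same route as the paper's own (one-line) proof, which simply says that $\phi_{\mathcal{C}}$ satisfies the hypotheses of Theorem \ref{MaintheoremSemSta}; you spell out that matching of hypotheses explicitly, and the matching is accurate. One cosmetic slip: your final parenthetical justifies separability of the residue extension by appealing to $k$ being algebraically closed, but the residue fields of $\mathcal{O}_{\mathcal{D},\phi(y)}$ and $\mathcal{O}_{\mathcal{C},y}$ are the function fields of the components (not $k$); this is harmless, since you only need the forward implication \'etale $\Rightarrow$ unramified, which holds regardless.
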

\begin{proof}
The morphism $\phi_{\mathcal{C}}$ satisfies all the properties of Theorem \ref{MaintheoremSemSta}, so we directly find that $\mathcal{C}$ is semistable. 
\end{proof}
 Since we want to use this theorem for intersection graphs, we would like to prove that we can find a morphism of {\it{strongly}} semistable models.
 \begin{pro}\label{PropStrongSemSta1}
 Let $\phi:\mathcal{C}\longrightarrow{\mathcal{D}}$ be as in Theorem \ref{MaintheoremSemSta}. If $\mathcal{D}$ is {\it{strongly semistable}}, then $\mathcal{C}$ is also strongly semistable.
 \end{pro}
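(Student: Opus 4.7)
By Theorem \ref{MaintheoremSemSta} the model $\mathcal{C}$ is already known to be semistable, so the only thing to verify is that every irreducible component $\Gamma\subset\mathcal{C}_{s}$ is smooth as an abstract curve. A component $\Gamma$ fails to be smooth only at an ordinary double point $z$ of $\mathcal{C}_{s}$ whose two local branches both lie on $\Gamma$ (a self-intersection), since by semistability the singular locus of $\mathcal{C}_{s}$ consists only of ODPs. So the whole plan reduces to ruling out such self-intersections.

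My approach is to run a case analysis on the image $w=\phi_{\mathcal{C}}(z)\in\mathcal{D}_{s}$ of an ODP $z\in\mathcal{C}_{s}$, using the explicit completed local rings and the tameness built into the disjointly-branched hypothesis. First I would show that $w$ must itself be an ODP of $\mathcal{D}_{s}$. If instead $w$ were a smooth point of $\mathcal{D}_{s}$, then $\hat{\mathcal{O}}_{\mathcal{D},w}\cong R[[t]]$. Away from the closure of the branch locus, $\phi_{\mathcal{C}}$ is étale (disjointly branched gives no vertical ramification at generic points of components, and on the generic fiber the branch locus is the horizontal sections), so $\hat{\mathcal{O}}_{\mathcal{C},z}$ would be étale over a regular ring, hence regular, contradicting that $z$ is an ODP. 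If $w$ lies on a branch section, tameness plus Abhyankar's lemma gives a local Kummer description $\hat{\mathcal{O}}_{\mathcal{C},z}\cong R[[u]]$ with $u^{e}$ being a uniformizer for the section, which is again regular. Either way $z$ cannot be an ODP, so $w$ must be an ODP of $\mathcal{D}_{s}$.

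Now assume $w$ is an ODP of $\mathcal{D}_{s}$, so $\hat{\mathcal{O}}_{\mathcal{D},w}\cong R[[u,v]]/(uv-\pi^{n})$ with two local branches $V(u),V(v)$. Since $\mathcal{D}$ is strongly semistable, these two branches lie on two \emph{distinct} components $\Delta_{1},\Delta_{2}$ of $\mathcal{D}_{s}$. Writing $\hat{\mathcal{O}}_{\mathcal{C},z}\cong R[[x,y]]/(xy-\pi^{m})$, the two local branches of $\mathcal{C}_{s}$ at $z$ are $V(x)$ and $V(y)$, each mapping into either $\Delta_{1}$ or $\Delta_{2}$. I want to show that $V(x)$ and $V(y)$ map into different $\Delta_{i}$'s. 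If both mapped into $\Delta_{1}=V(u)$, then $\phi_{\mathcal{C}}^{*}(v)$ would vanish on both $V(x)$ and $V(y)$, hence lie in $(x)\cap(y)=(xy)=(\pi^{m})$; combined with $\phi_{\mathcal{C}}^{*}(u)\phi_{\mathcal{C}}^{*}(v)=\pi^{n}$ this forces $\phi_{\mathcal{C}}^{*}(u)$ to be a unit times a power of $\pi$, which contradicts the fact that $\phi_{\mathcal{C}}$ is étale at the generic point of the component $\Gamma$ of $\mathcal{C}_{s}$ through $V(x)$ (since such a $\phi_{\mathcal{C}}^{*}(u)$ forces the ramification index of $\Gamma$ over $\Delta_{1}$ to differ from what étaleness permits). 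Hence $V(x)$ and $V(y)$ must lie over two distinct components $\Delta_{1},\Delta_{2}$, so they themselves lie on two distinct components of $\mathcal{C}_{s}$, and $\Gamma$ has no self-intersection at $z$.

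The main obstacle I anticipate is the local-ring bookkeeping in the last step: making sure that the "forced" shape of $\phi_{\mathcal{C}}^{*}(u),\phi_{\mathcal{C}}^{*}(v)$ in the completed local ring really contradicts étaleness at the generic point of $\Gamma$. This should come down to an Abhyankar-style normal form for tame Galois extensions of $R[[u,v]]/(uv-\pi^{n})$, where one can essentially diagonalize the action so that $\phi_{\mathcal{C}}^{*}(u)=x^{a}\cdot(\text{unit})$ and $\phi_{\mathcal{C}}^{*}(v)=y^{b}\cdot(\text{unit})$ (after a coordinate change), making the distinct-component conclusion transparent. Tameness is used essentially here; in wild characteristic the statement would fail.
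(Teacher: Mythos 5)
Your overall skeleton matches the paper's: reduce to ruling out self-intersections, show the image of an ordinary double point is again an ODP, then show its two local branches upstairs map to \emph{distinct} components downstairs. The paper gets the first step by citing Lemma~\ref{LemmaSmooth2} from \cite{liu1} (you re-prove it via purity, which is fine but heavier than needed), and then executes the second step entirely at the affine level: strong semistability of $\mathcal{D}$ gives two distinct height-one primes $\mathfrak{p}_1,\mathfrak{p}_2\subset\mathfrak{m}$ through $\phi(x')$, and the going-down theorem (the paper says ``going-up'', but it is going-down that is needed, and normality of $A$ is exactly what makes it applicable) produces distinct $\mathfrak{q}_i\subset\mathfrak{m}'$ over $\mathfrak{p}_i$. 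Your alternative, analyzing the completed local rings $R[[u,v]]/(uv-\pi^n)\to R[[x,y]]/(xy-\pi^m)$, is a genuinely different route but as written it does not close.

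Concretely: if both branches $V(x),V(y)$ map into $\Delta_1=V(u)$ (the vanishing locus of $u$), it is $\phi^*(u)$, not $\phi^*(v)$, that vanishes on both; and the relevant intersection of branch ideals is $(x,\pi)\cap(y,\pi)=(\pi)$, not $(xy)=(\pi^m)$ (for $m>1$ these differ, and $(x)$, $(y)$ alone are not even the branch primes). More importantly, from $\phi^*(u)\in(\pi)$ and $\phi^*(u)\phi^*(v)=\pi^n$ you cannot yet conclude that $\phi^*(u)$ is a unit times a power of $\pi$: you only get $g\,\phi^*(v)=\pi^{n-1}$ for some $g$, and you would need the disjointly-branched \'etaleness at the generic points of the two branches to pin $v_{\xi_i}(\phi^*(v))=0$, then iterate to force $\phi^*(v)$ to be a unit --- at which point the contradiction is with $\phi^*$ being a \emph{local} homomorphism ($v\in\mathfrak{m}_w$ cannot map to a unit), not with the ramification index at the generic point, which is perfectly consistent with $v_{\xi_i}(\phi^*(u))=n$. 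A cleaner repair: $\phi^*(u)\in\pi\hat{\mathcal{O}}_{\mathcal{C},z}$ together with normality of $\hat{\mathcal{O}}_{\mathcal{D},w}$ gives $u\in\pi\hat{\mathcal{O}}_{\mathcal{D},w}$, a contradiction, because reduction mod $\pi$ is injective on the subring. Finally, the ``Abhyankar normal form'' you lean on is not a direct application of Lemma~\ref{ramstruct2}, which requires a \emph{regular} strictly Henselian base; for $n>1$ the ring $R[[u,v]]/(uv-\pi^n)$ is singular, and the paper must work around exactly this via regular subdivisions (Theorem~\ref{InertProp2}). So the normal form would need its own proof. The paper's going-down argument avoids all of this bookkeeping.
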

 \begin{proof}
The proof is mostly based on the following Lemma:
\begin{lemma}\label{LemmaSmooth2}
Let $\phi:\mathcal{C}\longrightarrow{\mathcal{D}}$ be a disjointly branched morphism. Then the pre-image of a smooth point consists of smooth points. 
\end{lemma}
\begin{proof}
This is the last statement of \cite[Theorem 2.3, page 69]{liu1}.
 \end{proof}
 Lemma \ref{LemmaSmooth2} then implies that
\begin{cor}\label{CorSmooth2}
For every ordinary double point  $x$ of $\mathcal{C}$, 
the image $\phi(x)$ is an ordinary double point of $\mathcal{D}$.
\end{cor}
\begin{proof}
Indeed, if $\pi(x)$ is smooth, then there exists a non-smooth point in the pre-image of $\pi(x)$ (namely $x$). This contradicts Lemma \ref{LemmaSmooth2}. Thus $\pi(x)$ is non-smooth. Since $\mathcal{D}$ is semistable, it must be an ordinary double point, as desired.
\end{proof}
Now for the rest of the proof.  Suppose that $x'$ is an ordinary double point in $\mathcal{C}$. Let $\mathfrak{m'}$ be the corresponding maximal ideal on some open affine $A'$, which is the integral closure of $A$ corresponding to an open affine of $\mathcal{D}$. Then $\phi(x)$ is also an ordinary double point by Corollary \ref{CorSmooth2}. Let $\mathfrak{m}$ be the corresponding maximal ideal. Since $\mathcal{D}$ is assumed to be strongly semistable, we can find two distinct prime ideals $\mathfrak{p}_{1}$ and $\mathfrak{p}_{2}$ (corresponding to two components intersecting each other in $\phi(x)$) in the special fiber such that
\begin{equation*}
\mathfrak{p}_{i}\subset\mathfrak{m}
\end{equation*}
for both $i$. By the going-up theorem (which is applicable because $A\subseteq{A}'$ is integral), we can find $\mathfrak{q}_{i}\subset{\mathfrak{m}'}$ such that $\mathfrak{q}_{i}\cap{A}=\mathfrak{p}_{i}$ (note that they are in the special fiber by this condition). But then $\mathfrak{m'}$ is an intersection point of $\mathfrak{q}_{1}$ and $\mathfrak{q}_{2}$. This proves that $\mathcal{C}$ is strongly semistable, as desired.
 \end{proof}

Let us now make some remarks about Theorem \ref{MaintheoremSemSta} and Proposition \ref{PropStrongSemSta1} that should be kept in mind throughout the thesis.

\begin{rem}[{\it{About the branch points}}]
Since we assumed the residue field $k$ to be algebraically closed, we only have to take {\it{ramified extensions}} of $K$ here. 
For the extensions in the third step, we find that the extensions are in fact {\it{tame}}. Indeed, the ramification index of any component has to divide the order of the Galois group, which is coprime to the characteristic of $k$. 
This then also means that the extensions in the third step are obtained by $K\subseteq{K(\pi^{1/n})}$ for some $n$ with $(n,\text{char}(k))=1$. 
\end{rem}

\begin{rem}[{\it{About the closure of $B$}}]
The closure of $B$ in $\mathcal{D}$ can be computed using the {\it{reduction}} map. See Definition \ref{ReductionMap11}.  For any closed point $x$ of the generic fiber, we have that
\begin{equation*}
\overline{\{x\}}=\{x,r_{\mathcal{D}}(x)\}.
\end{equation*}
That is, we take the point $x$ together with its reduction. The condition here is that the reductions of the branch points are {\it{disjoint}} and that they reduce to nonsingular points.
\end{rem}
\begin{rem}[{\it{Caveat about the characteristic}}]
The condition on the characteristic of the residue field is to avoid issues of separability in the special fiber. 
There is a way to address these cases as well using Artin-Schreier equations, see \cite{KArz1}. We will also encounter these problems in Section \ref{QuotientSpecial}, where we study quotients on the special fiber. 
\end{rem}

\begin{rem}[{\it{A separating semistable model}}]
For Galois coverings $C\rightarrow{\mathbb{P}^{1}}$, there is an explicit semistable model of $\mathbb{P}^{1}$ that separates the branch locus. We will explicitly give this model in Chapter \ref{Appendix2}. The corresponding intersection graph is known as the {\bf{tropical separating tree}}, see \cite{supertrop} and \cite{tropicalbook} for more on the tropical point of view of this graph. 
\end{rem}

\begin{rem}[{\it{Galois action}}]
In Theorem \ref{MaintheoremSemSta}, we take the normalization of a certain normal integral scheme in a Galois extension. The resulting scheme $\mathcal{C}$ then actually has a natural Galois action such that $\mathcal{C}/G=\mathcal{D}$. These Galois actions will be reviewed in Section \ref{GaloisQuotientsSchemes}. 
\end{rem}

\section{Galois quotients for schemes}\label{GaloisQuotientsSchemes}

In Section \ref{DisjointBran11} we defined disjointly branched morphisms. The corresponding semistable models 
have a 
 natural Galois action on them. In this section, we will review some facts about quotient schemes for finite Galois groups that act on a scheme $X$. We will quickly specialize to the strongly semistable case, where we consider the problem of Galois actions on graphs. 
We will follow \cite{SGA1} and \cite[Exercises 2.14, 2.3.21 and 3.3.23]{liu2}. Let $X$ be a scheme with a finite group $G$ acting on $X$. This means that we have a group homomorphism
\begin{equation*}
G\longrightarrow{\text{Aut}(X)}.
\end{equation*}
The {\it{quotient scheme}} is then defined by a universal property that we will repeat here. The quotient scheme of $X$ under $G$ is a scheme $Y$ with the following properties:
\begin{enumerate}
\item There is a morphism $p:X\longrightarrow{Y}$.
\item We have $p=p\circ{\sigma}$ for every $\sigma\in{G}$.
\item Any morphism of schemes $f:X\longrightarrow{Z}$ satisfying $f=f\circ{\sigma}$ for every $\sigma$ factors in a unique way through $p$. This means that there exists a unique morphism $\tilde{f}:Y\longrightarrow{Z}$ such that $f=\tilde{f}\circ{p}$.
\end{enumerate}
In other words, $Y$ represents the functor $Z\mapsto{\text{Hom}(X,Z)^{G}}$, see \cite[Page 87]{SGA1} for this point of view.  
Let us consider the affine case first. The following can be found in \cite[Proposition 1.1 and Corollary 1.2]{SGA1}. 
\begin{pro}\label{QuotientLemma10}{\bf{[Affine quotients]}}
Let $A$ be a ring with a finite group action $G\longrightarrow{\text{Aut}(A)}$. Let $B=A^{G}$ be the invariants, $X=\text{Spec}(A)$, $Y=\text{Spec}(B)$ and $p:X\longrightarrow{Y}$ the canonical morphism. Then
\begin{enumerate}
\item $A$ is integral over $B$ (and the morphism $p$ is thus integral).
\item The morphism $p$ is surjective, its fibers are the orbits under $G$, the topology of $Y$ is the quotient of the topology on $X$.
\item Let $x\in{X}$, $y=p(x)$, $G_{x}$ the stabilizer of $x$. Then $k(x)$ (the residue field of $x$) is a normal algebraic extension of $k(y)$ and the homomorphism $G_{x}\longrightarrow{\text{Gal}(k(x)/k(y))}$ is surjective.
\item $(Y,p)$ is the quotient scheme of $X$ by $G$.
\item The natural morphism
\begin{equation*}
\mathcal{O}_{Y}\longrightarrow{p_{*}(\mathcal{O}_{X})^{G}}
\end{equation*}
of sheaves is an isomorphism.
\end{enumerate}
\end{pro}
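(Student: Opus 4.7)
The plan is to prove the five assertions in the order (1), (2), (5), (4), (3), since each builds on the preceding and the residue-field statement (3) is by far the subtlest.

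For (1), given $a \in A$, the polynomial $P_a(T) := \prod_{\sigma \in G}(T-\sigma(a))$ has $G$-invariant coefficients and so lies in $B[T]$; since $P_a(a)=0$, every $a \in A$ is integral over $B$, whence $p$ is integral and in particular closed. For (2), integrality yields surjectivity of $p$ by going-up, and closedness plus surjectivity identify the Zariski topology on $Y$ with the quotient topology on $|X|/G$. To see that fibers are exactly the $G$-orbits I would take $\mathfrak p_1,\mathfrak p_2$ both lying over $\mathfrak q \in \mathrm{Spec}(B)$; if $\mathfrak p_1 \neq \sigma(\mathfrak p_2)$ for every $\sigma \in G$, prime avoidance produces $a \in \mathfrak p_1$ with $a \notin \sigma(\mathfrak p_2)$ for all $\sigma$, yet $\prod_{\sigma}\sigma(a) \in B \cap \mathfrak p_1 = \mathfrak q \subseteq \mathfrak p_2$ forces $\sigma(a) \in \mathfrak p_2$ for some $\sigma$, a contradiction.

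For (5), I would check that $G$-invariants commute with localization at elements of $B$: if $a/s^n \in A_s$ is $G$-invariant then $s^m\sigma(a)=s^m a$ for some $m$ and every $\sigma$, so $s^m a \in A^G = B$ and $a/s^n \in B_s$. Sheafifying yields $(p_*\mathcal O_X)^G = \widetilde{A^G} = \widetilde B = \mathcal O_Y$. The universal property (4) then follows formally: given a $G$-invariant morphism $f \colon X \to Z$, part (2) provides a unique continuous factoring of $|f|$ through $|Y|$, and the sheaf map $f^\# \colon \mathcal O_Z \to f_*\mathcal O_X$ lands in $(f_*\mathcal O_X)^G$, which by (5) is the pushforward of $\mathcal O_Y$; gluing produces the unique $\tilde f$.

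The remaining statement (3) is the main obstacle. Normality of $k(x)/k(y)$ is the easier half: for $\bar a \in k(x)$ I would lift to $a \in A$ and reduce $P_a(T)$ mod $\mathfrak q_y$; its roots are the $\overline{\sigma(a)}$, which by (2) lie in the residue fields $k(\sigma(\mathfrak p_x))$, each identified with $k(x)$ via $\sigma$, so the minimal polynomial of $\bar a$ over $k(y)$ splits in $k(x)$. For surjectivity of $G_x \to \mathrm{Gal}(k(x)/k(y))$, my plan is, given $\tau$, to pick a primitive element $\bar\alpha$ of the maximal separable subextension of $k(x)/k(y)$ and, via the Chinese Remainder Theorem applied to the finite set of conjugate primes, to choose a lift $\alpha \in A$ with $\alpha \bmod \mathfrak p_x = \bar\alpha$ but $\alpha \equiv 0 \pmod{\sigma(\mathfrak p_x)}$ for every $\sigma \notin G_x$. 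Then the reduction of $P_\alpha$ mod $\mathfrak q_y$ factors as $T^N \prod_{\sigma \in G_x}(T - \overline{\sigma(\alpha)})$, so the full Galois orbit of $\bar\alpha$ appears among the $\overline{\sigma(\alpha)}$ with $\sigma \in G_x$, and picking $\sigma$ with $\overline{\sigma(\alpha)} = \tau(\bar\alpha)$ realizes $\tau$. The main technical care will be needed in handling the purely inseparable part of $k(x)/k(y)$ and in checking that the CRT lift can indeed be arranged so that $\alpha \bmod \mathfrak p_x = \bar\alpha$ --- here one uses that $\mathfrak p_x$ is genuinely distinct from each $\sigma(\mathfrak p_x)$ with $\sigma \notin G_x$, a fact guaranteed by (2).
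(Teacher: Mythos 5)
The paper does not actually prove this proposition; it cites SGA1, Exposé V, Propositions 1.1 and 1.3 verbatim, so your write-up is a genuine addition rather than a competing route. What you sketch is essentially the SGA1 argument, and parts (1), (2), (4), (5) together with the normality half of (3) are sound as outlined.

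There is, however, a real gap in the Chinese Remainder step of (3), for two intertwined reasons. First, $\bar\alpha$ lives in $k(x)=\operatorname{Frac}(A/\mathfrak p_x)$, not in $A/\mathfrak p_x$ itself, so ``lift $\bar a$ to $a\in A$'' (used both in the normality argument and in the surjectivity argument) is not available in general. Second, distinct primes of $A$ lying over $\mathfrak q_y$ need not be comaximal: with $A=k[s,t]$ and $\sigma\colon t\mapsto -t$, the primes $\mathfrak p=(t-s)$ and $\sigma(\mathfrak p)=(t+s)$ both lie over $(t^2-s^2)\subset A^G=k[s,t^2]$, yet $\mathfrak p+\sigma(\mathfrak p)=(s,t)\neq A$, so CRT cannot be invoked. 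Both defects disappear once you first localize $A$ at the multiplicative set $B\setminus\mathfrak q_y$ (which lies in $B$, hence is $G$-stable); the computation you already carry out for part (5) shows that invariants commute with this localization, so residue fields, decomposition groups and inertia are all unchanged, and in the localized ring every $\sigma(\mathfrak p_x)$ becomes maximal --- hence the primes are pairwise comaximal and $A/\mathfrak p_x=k(x)$, so every residue element lifts. You should state this reduction explicitly before the CRT step. Two smaller points: you invoke a primitive element of the maximal separable subextension $L_s\subseteq k(x)$ over $k(y)$, which presupposes $[L_s:k(y)]<\infty$; this does hold, because your factored form of $\overline{P_\alpha}$ bounds the number of $k(y)$-conjugates of any separable $\bar\alpha$ by $|G_x|$, but it deserves a sentence. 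And $\mathfrak p_x\neq\sigma(\mathfrak p_x)$ for $\sigma\notin G_x$ is the \emph{definition} of $G_x$, not a consequence of (2); what (2) actually supplies is that the $\sigma(\mathfrak p_x)$ exhaust the fiber over $\mathfrak q_y$.
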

\begin{proof}
This is almost a word-by-word translation of \cite[Expos\'{e} V, Proposition 1.1 and Corollary 1.2]{SGA1}.  
\end{proof}

Let us now generalize a little bit. We will consider what Grothendieck calls "admissible actions".

\begin{mydef}
Let $G$ be a finite group acting on a scheme $X$, $p:X\longrightarrow{Y}$ an {\it{affine}} invariant morphism such that
\begin{equation*}
\mathcal{O}_{Y}\longrightarrow{p_{*}(\mathcal{O}_{X})^{G}}
\end{equation*} 
is an isomorphism. 
This action is then called an {\bf{admissible action}}. 
\end{mydef}
\begin{pro}\label{PropositionQuotient1}
Let $G$ give an admissible action on $X$. Then the conclusions from Proposition \ref{QuotientLemma10} are still valid. In particular, we have $Y=X/G$. 
\end{pro}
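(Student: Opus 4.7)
The plan is to reduce Proposition \ref{PropositionQuotient1} to the affine case treated in Proposition \ref{QuotientLemma10}, by exploiting the assumption that $p:X\to Y$ is affine. The admissibility condition is precisely what is needed for this reduction to work, so the argument is essentially a gluing argument; no fundamentally new input is required beyond the affine case.

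First I would cover $Y$ by affine open subschemes $V_{i}=\mathrm{Spec}(B_{i})$. Since $p$ is affine, the preimages $U_{i}:=p^{-1}(V_{i})=\mathrm{Spec}(A_{i})$ are affine. Because $p\circ\sigma=p$ for every $\sigma\in G$, each $U_{i}$ is $G$-stable, and we get an induced action $G\to\mathrm{Aut}(A_{i})$. Evaluating the isomorphism $\mathcal{O}_{Y}\xrightarrow{\sim}p_{\ast}(\mathcal{O}_{X})^{G}$ on $V_{i}$ yields $B_{i}\simeq A_{i}^{G}$, so the restriction $p|_{U_{i}}:U_{i}\to V_{i}$ is exactly the situation handled by Proposition \ref{QuotientLemma10}.

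With this in hand, statements (1)--(3) of Proposition \ref{QuotientLemma10} transfer immediately. Integrality of $p$ is local on the target, so the affine statement $B_{i}\subseteq A_{i}$ is integral gives it globally; surjectivity, the identification of fibers with $G$-orbits, and the fact that $Y$ carries the quotient topology are all local conditions on $Y$ that follow from the affine version applied to the $V_{i}$'s. The residue-field/stabilizer assertion (3) is purely local at a point $x\in X$: pick any affine $V_{i}$ containing $p(x)$ and apply the affine statement there. Statement (5) is tautological, being built into the definition of admissibility.

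The only real content is the universal property (4). Given a $G$-invariant morphism $f:X\to Z$, the affine case produces, for each $i$, a unique morphism $\tilde{f}_{i}:V_{i}\to Z$ with $\tilde{f}_{i}\circ p|_{U_{i}}=f|_{U_{i}}$. The hard part, and where I expect to spend the most care, is to glue the $\tilde{f}_{i}$ into a global morphism $\tilde{f}:Y\to Z$. For this I would cover each overlap $V_{i}\cap V_{j}$ by affine opens $W\subseteq V_{i}\cap V_{j}$; then $p^{-1}(W)$ is affine and $G$-stable with invariants $\mathcal{O}(W)$ by admissibility, so the affine quotient property on $W$ gives a unique morphism $W\to Z$ compatible with $f$. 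By uniqueness this morphism must agree with both $\tilde{f}_{i}|_{W}$ and $\tilde{f}_{j}|_{W}$, so the local factorizations agree on overlaps and glue to a global $\tilde{f}:Y\to Z$. Uniqueness of $\tilde{f}$ follows the same way. This completes the identification $Y=X/G$.
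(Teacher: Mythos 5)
Your proof is correct, and it is essentially the argument in the source the paper cites: the paper's own "proof" is just a reference to \cite[Expos\'{e} V, Proposition 1.3]{SGA1}, and the SGA~1 proof is exactly this affine-local reduction and gluing. Nothing to add.
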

\begin{proof}
This is \cite[Expos\'{e} V, Proposition 1.3]{SGA1}.
\end{proof}
\begin{cor}\label{CorOpenQuotient}
Let $G$ give an admissible action on $X$. Then for any open set $U\subset{Y}$, we have that $U$ is the quotient of $p^{-1}(U)$ under $G$.
\end{cor}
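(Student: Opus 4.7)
The plan is to verify that the restricted morphism $p|_{p^{-1}(U)} : p^{-1}(U) \to U$ satisfies the defining conditions of an admissible action, and then invoke Proposition \ref{PropositionQuotient1} to conclude that $U$ is precisely the quotient $p^{-1}(U)/G$. So there are really only a few things to check.

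First I would observe that $p^{-1}(U)$ is $G$-stable: for any $x \in p^{-1}(U)$ and any $\sigma \in G$, invariance of $p$ under the $G$-action gives $p(\sigma(x)) = p(x) \in U$, so $\sigma(x) \in p^{-1}(U)$. Hence $G$ acts on $p^{-1}(U)$ and the restriction $p|_{p^{-1}(U)}$ is $G$-invariant in the sense required.

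Next I would check that $p|_{p^{-1}(U)}$ is affine and that the sheaf condition $\mathcal{O}_U \xrightarrow{\sim} (p|_{p^{-1}(U)})_*(\mathcal{O}_{p^{-1}(U)})^G$ holds. Affineness is inherited: since being affine is local on the target and $p$ is affine, so is its restriction over any open subset of $Y$. For the sheaf condition, note that for every open $W \subseteq U$ one has
\begin{equation*}
(p|_{p^{-1}(U)})_*(\mathcal{O}_{p^{-1}(U)})(W) = \mathcal{O}_X(p^{-1}(W)) = p_*(\mathcal{O}_X)(W),
\end{equation*}
so $(p|_{p^{-1}(U)})_*(\mathcal{O}_{p^{-1}(U)}) = p_*(\mathcal{O}_X)|_U$ as sheaves on $U$. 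Taking $G$-invariants commutes with restriction to an open set, hence
\begin{equation*}
\bigl((p|_{p^{-1}(U)})_*(\mathcal{O}_{p^{-1}(U)})\bigr)^G = \bigl(p_*(\mathcal{O}_X)^G\bigr)\big|_U = \mathcal{O}_Y|_U = \mathcal{O}_U,
\end{equation*}
where the middle equality uses the admissibility of the original action.

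With these two verifications the restricted action is admissible, so Proposition \ref{PropositionQuotient1} applies directly and yields $U = p^{-1}(U)/G$. I do not expect any real obstacle here: the argument is essentially a check that the two conditions defining \emph{admissible action} are local on the base, which falls out of the fact that taking $G$-invariants of a sheaf commutes with restriction to open subschemes. The only point that deserves a sentence of justification is precisely this compatibility of $(-)^G$ with restriction, which is immediate because $G$ is finite and the $G$-action on sections is defined sectionwise.
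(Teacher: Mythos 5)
Your proof is correct and supplies in full the argument that the paper delegates to the cited corollary in SGA1: one checks that admissibility is local on the base by noting that being affine is local on the target and that forming the $G$-invariant subsheaf of $p_*\mathcal{O}_X$ commutes with restriction to opens, and then applies Proposition \ref{PropositionQuotient1} to the restricted morphism. No gaps.
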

\begin{proof}
See \cite[Expos\'{e} V, Corollary 1.4]{SGA1}. 
\end{proof}

\begin{pro}
Let $G$ be a finite group acting on a scheme $X$. Then $G$ gives an admissible action if and only if there is an open affine cover $\{U_{i}\}$ of $X$ such that each $U_{i}$ is invariant under $G$.
\end{pro}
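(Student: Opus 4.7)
The forward implication is essentially a pullback construction. Assuming $G$ acts admissibly on $X$, let $p \colon X \to Y$ be the given affine $G$-invariant morphism with $\mathcal{O}_{Y} \cong p_{*}(\mathcal{O}_{X})^{G}$. Choose any open affine cover $\{V_{\alpha}\}$ of $Y$ and set $U_{\alpha} = p^{-1}(V_{\alpha})$. Since $p$ is affine, each $U_{\alpha}$ is affine; since $p$ is $G$-invariant in the sense that $p \circ \sigma = p$ for every $\sigma \in G$, each $U_{\alpha}$ is stable under $G$. This gives the required invariant affine cover.

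For the reverse implication, the plan is to construct $Y$ by gluing the affine quotients of the $U_{i}$. Write $U_{i} = \operatorname{Spec}(A_{i})$ and form $V_{i} := \operatorname{Spec}(A_{i}^{G})$ with the canonical morphism $p_{i} \colon U_{i} \to V_{i}$. By Proposition \ref{QuotientLemma10} each $p_{i}$ realises $V_{i}$ as the scheme-theoretic quotient $U_{i}/G$, is integral (hence affine), and satisfies $\mathcal{O}_{V_{i}} \cong (p_{i})_{*}(\mathcal{O}_{U_{i}})^{G}$. To glue, observe that $U_{i} \cap U_{j}$ is open in $U_{i}$ and, being the intersection of two $G$-invariant sets, is itself $G$-invariant. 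Because $p_{i}$ is a topological quotient whose fibres are $G$-orbits, the image $W_{ij} := p_{i}(U_{i} \cap U_{j})$ satisfies $p_{i}^{-1}(W_{ij}) = U_{i} \cap U_{j}$ and is therefore open in $V_{i}$. By Corollary \ref{CorOpenQuotient}, the restriction $p_{i}|_{U_{i} \cap U_{j}} \colon U_{i} \cap U_{j} \to W_{ij}$ is again a categorical quotient, and the same holds symmetrically for $W_{ji} \subset V_{j}$. The universal property of quotients then yields a canonical isomorphism $\varphi_{ji} \colon W_{ij} \xrightarrow{\sim} W_{ji}$ intertwining $p_{i}$ and $p_{j}$.

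The cocycle condition $\varphi_{ki} = \varphi_{kj} \circ \varphi_{ji}$ over triple overlaps $U_{i} \cap U_{j} \cap U_{k}$ is immediate from the uniqueness clause in the universal property applied to the common quotient $(U_{i} \cap U_{j} \cap U_{k})/G$. Hence the $V_{i}$ glue to a scheme $Y$ with a morphism $p \colon X \to Y$ that restricts to $p_{i}$ on each $U_{i}$. Affineness of $p$ and the sheaf identification $\mathcal{O}_{Y} \cong p_{*}(\mathcal{O}_{X})^{G}$ are both local conditions on $Y$ and hold on each $V_{i}$, so they hold globally; this shows the action is admissible.

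The only genuinely delicate step is the openness and compatibility of the subsets $W_{ij}$ together with the verification that the restricted maps are again quotients, and this is precisely what Corollary \ref{CorOpenQuotient} was set up to provide. Note that it is important not to require $U_{i} \cap U_{j}$ to be affine — it generally is not — but to treat it only as an open $G$-invariant subscheme of $U_{i}$ and of $U_{j}$, whose two quotients are canonically identified. Once this identification is available, the construction of $Y$ and the verification of the admissibility axioms are formal.
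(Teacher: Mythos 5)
Your proof is correct and takes the expected route: the forward direction pulls back an affine cover of $Y$ along the affine morphism $p$, and the converse glues the affine quotients $\operatorname{Spec}(A_{i}^{G})$ by identifying the open images $W_{ij}$ via the universal property, with Proposition \ref{QuotientLemma10} and Corollary \ref{CorOpenQuotient} supplying exactly the topological and sheaf-theoretic facts needed on the overlaps. The paper itself gives no proof beyond the citation of \cite[Expos\'{e} V, Proposition 1.8]{SGA1}, and your write-out is the standard gluing argument one finds there.
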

\begin{proof}
See \cite[Expos\'{e} V, Proposition 1.8]{SGA1}. 
\end{proof}
Let us now focus on 
{\it{normal integral}} schemes.
\begin{pro} \label{GalExtNormInt}
Let $Y$ and $X$ be normal integral schemes. Suppose that we have a finite surjective integral morphism $f:X\longrightarrow{Y}$ such that $K(X)/K(Y)$ is a finite Galois extension with Galois group $G$. Then $X$ is the normalization of $Y$ in $K(X)$. We have an action of $G$ on $X$ with $X/G=Y$. Furthermore, this action on $X$ is transitive. 
\end{pro}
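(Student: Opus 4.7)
The plan is to reduce everything to an affine cover and then invoke the quotient machinery of Proposition \ref{QuotientLemma10} and Proposition \ref{PropositionQuotient1}. Fix an affine open $U = \mathrm{Spec}(A) \subseteq Y$; since $f$ is finite (hence affine), the preimage $f^{-1}(U) = \mathrm{Spec}(B)$ is affine with $B$ a finite $A$-algebra. I would first identify $B$ with the integral closure $A'$ of $A$ in $K(X)$: because $X$ is normal integral, $B$ is a normal domain with fraction field $K(X)$, so $B$ is integrally closed in $K(X)$; combined with $A \subseteq B$ and the integrality of $B$ over $A$, this forces $B = A'$. Gluing over an affine cover of $Y$ then realizes $X$ as the normalization of $Y$ in $K(X)$.

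Next I would construct the $G$-action. Each $\sigma \in G$ is an automorphism of $K(X)$ fixing $K(Y)$, so it preserves the integral closure $A' = B$ of $A$ inside $K(X)$. These compatible affine automorphisms glue to an action of $G$ on $X$ over $Y$. To conclude $X/G = Y$, note that the cover $\{f^{-1}(U)\}_{U \subseteq Y}$ consists of $G$-invariant affine opens, so the action is admissible and Proposition \ref{PropositionQuotient1} reduces the claim to the ring-theoretic statement $B^G = A$. The inclusion $A \subseteq B^G$ is tautological; conversely any $b \in B^G$ lies in $K(X)^G = K(Y)$ and is integral over $A$, and since $A$ is normal (as $Y$ is normal integral) this gives $b \in A$.

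For transitivity of the $G$-action on fibers, I would run the classical norm argument. Suppose points $x_1, x_2 \in X$ map to the same $y \in Y$ but lie in distinct $G$-orbits; pass to an affine $\mathrm{Spec}(B)$ containing both, and write $\mathfrak{p}_1,\ldots,\mathfrak{p}_r$ for the orbit of the prime corresponding to $x_1$ and $\mathfrak{q}$ for the prime of $x_2$, each lying over a prime $\mathfrak{p} \subseteq A$. By prime avoidance pick $b \in \mathfrak{q}$ with $b \notin \mathfrak{p}_i$ for all $i$, and form the norm
\begin{equation*}
N(b) = \prod_{\sigma \in G} \sigma(b) \in B^G = A.
\end{equation*}
The factor $\sigma = \mathrm{id}$ shows $N(b) \in \mathfrak{q}$, hence $N(b) \in \mathfrak{q} \cap A = \mathfrak{p} \subseteq \mathfrak{p}_1$. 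Primality of $\mathfrak{p}_1$ forces $\sigma(b) \in \mathfrak{p}_1$ for some $\sigma$, i.e. $b \in \sigma^{-1}(\mathfrak{p}_1)$, which is one of the $\mathfrak{p}_i$, contradicting the choice of $b$.

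The main obstacle I expect is the gluing step for the quotient: the local identification $B^G = A$ does not by itself yield $X/G = Y$ as a scheme, and one needs the admissibility criterion — verified via the $G$-invariance of the cover $\{f^{-1}(U)\}$ — to invoke Proposition \ref{PropositionQuotient1}. Everything else (normality-based identifications and the norm argument) is formal once the identification $B = A'$ is in place, since that identification guarantees $\sigma(B) \subseteq B$ for every $\sigma \in G$, which is exactly what is needed for the norm to land in $B^G$.
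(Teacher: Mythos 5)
Your argument is correct and follows the same route as the paper: identify $X$ as the normalization, observe $G$ preserves the integral closure on affines, prove $(A')^G = A$ via normality of $A$, and invoke Proposition \ref{PropositionQuotient1}. The only difference is that where the paper cites \cite[Proposition 1.22, p.\ 120]{liu2} for the normalization identification and \cite[Lemma 4.34, p.\ 546]{liu2} for transitivity, you unpack both citations directly (the second via the standard prime-avoidance-plus-norm argument), making the proof self-contained but not materially changing the approach.
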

\begin{proof}
The fact that $X$ is the normalization follows from \cite[Page 120, Proposition 1.22]{liu2}. The normalization naturally comes with a group action, stemming from the fact that on affines we have that if $a'\in{A'}$ is integral over $A$, then $\sigma(a')$ is also integral over $A$ for any $\sigma\in{G}$.  Now consider the chain
\begin{equation*}
A'\supseteq{(A')^{G}}\supseteq{A}.
\end{equation*}
We have that $(A')^{G}=A$. Indeed, if $a'\in{(A')^{G}}$, then $a'\in{K(Y)}$ and $a'$ is integral over $A$. Since $A$ is integrally closed in $K(Y)$, we have that $a'\in{A}$, as desired. This then yields $\mathcal{O}_{Y}=p_{*}(\mathcal{O}_{X})$ and thus $Y=X/G$ by Proposition \ref{PropositionQuotient1}. For transitivity, see \cite[Page 546, Lemma 4.34]{liu2}.
\end{proof}

\section{Decomposition and inertia groups}\label{DefinitionInertia}

Let $Y$ be a normal integral locally Noetherian scheme with function field $K(Y)$ and let $L\supset{K(Y)}$ be a Galois extension with Galois group $G$. We let $X$ be the normalization of $Y$ in $L$. We then have $X/G=Y$. Indeed, the morphism $\phi:X\rightarrow{Y}$ is finite surjective and integral, so we can apply Proposition \ref{GalExtNormInt}. We will now define decomposition and inertia groups for these coverings. 


For any point $x$ of $X$, we define the {\bf{decomposition group}} to be $D_{x,X}:=\{\sigma \in G:\sigma(x)=x\}$, the stabilizer of $x$. 
Every element $\sigma\in{D_{x,X}}$ naturally acts on $\mathcal{O}_{X,x}$ and the residue field $k(x)$, see also Proposition \ref{QuotientLemma10}.
We then define the {\bf{inertia group}} $I_{x,X}$ of $x$ to be the elements of $D_{x}$ that reduce to the identity on $k(x)$. In other words, $\sigma \in I_{x,X}$ if and only if for every $z \in {\mathcal{O}_{X,x}}$, we have $\sigma z \equiv z \mod m_x$, where $m_{x}$ is the unique maximal ideal of $\mathcal{O}_{X,x}$.
We will quite often omit the scheme $X$ in $I_{x,X}$ and $D_{x,X}$ and just write $I_{x}$ and $D_{x}$. 

We again consider the affine case. Suppose we have a normal integral domain $A$ with field of fractions $K(A)$ and a Galois extension $L\supset{K(A)}$. Let $B$ be the normalization of $A$ in $L$. For every subgroup $H$ of $\text{Gal}(K(B)/K(A))$, we can then consider the chain
\begin{equation}
B\supset{B^{H}}\supset{A}.
\end{equation}
Note that $B^{H}$ is again integrally closed in $L^{H}$, so we find that it is the integral closure of $A$ in $L^{H}$. We will now take $H=I_{\mathfrak{q}}$ for some $\mathfrak{q}\in\text{Spec}(B)$ with image $\mathfrak{p}=\mathfrak{q}\cap{A}$. In the sense of the following Lemma, the subextension $L^{I_{\mathfrak{q}}}$ is the largest extension such that $B^{I_{\mathfrak{q}}}\supseteq{A}$ is \'{e}tale at $\mathfrak{q}\cap{B^{I_{\mathfrak{q}}}}$. It is therefore also known as the {\it{"maximal unramified extension"}} of $\mathfrak{p}$. 
\begin{lemma}\label{RamificationInertiaLemma}
Let $B\supset{B^{I_{\mathfrak{q}}}}\supset{A}$ be as before. Then the following hold:
\begin{enumerate}
\item The prime $\mathfrak{q}$ is the only prime in $\text{Spec}(B)$ lying above $\mathfrak{q}\cap{B^{I_{\mathfrak{q}}}}$.
\item Let $k(\mathfrak{q})^{\text{sep}}$ be the separable closure of $k(\mathfrak{q}\cap{A})$ in $k(\mathfrak{q})$. 
Then $(k(\mathfrak{q}))^{\text{sep}}=k(\mathfrak{q}\cap{B^{I_{\mathfrak{q}}}})$. 
\item Suppose that $\text{char}(k(\mathfrak{q}))\nmid{|G|}$. Then $k(\mathfrak{q})=k(\mathfrak{q}\cap{B^{I_{\mathfrak{q}}}})$.
\item $B^{H}\supset{A}$ is \'{e}tale at $\mathfrak{q}\cap{B^{H}}$ if and only if $H\supseteq{I_{\mathfrak{q}}}$. 
\end{enumerate}
\end{lemma}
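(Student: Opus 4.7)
The plan is to work through the four parts in sequence, using the inertia/decomposition dictionary of Proposition \ref{QuotientLemma10} applied to a tower of sub-extensions. Set $\mathfrak{q}_{1}=\mathfrak{q}\cap B^{I_{\mathfrak{q}}}$ throughout.

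For (1), I would observe that $B/B^{I_{\mathfrak{q}}}$ is again a Galois extension of normal integral schemes with Galois group $I_{\mathfrak{q}}$ (here $B^{I_{\mathfrak{q}}}$ is the integral closure of $A$ in $L^{I_{\mathfrak{q}}}$ by Proposition \ref{GalExtNormInt}), and $I_{\mathfrak{q}}$ acts transitively on the fiber over $\mathfrak{q}_{1}$ by Proposition \ref{QuotientLemma10}(2). Since $I_{\mathfrak{q}}$ fixes $\mathfrak{q}$ by definition, this fiber is $\{\mathfrak{q}\}$. For (2), the decomposition group of $\mathfrak{q}$ in the sub-extension $B/B^{I_{\mathfrak{q}}}$ equals all of $I_{\mathfrak{q}}$ by (1), and its inertia there is still $I_{\mathfrak{q}}$, so $D/I$ for this sub-extension is trivial; Proposition \ref{QuotientLemma10}(3) then forces the normal extension $k(\mathfrak{q})/k(\mathfrak{q}_{1})$ to have trivial automorphism group and hence be purely inseparable. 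Separability of $k(\mathfrak{q}_{1})/k(\mathfrak{p})$ follows by noting that $I_{\mathfrak{q}}$ is normal in $D_{\mathfrak{q}}$, so $D_{\mathfrak{q}}$ acts on $B^{I_{\mathfrak{q}}}$ and the induced action of $D_{\mathfrak{q}}/I_{\mathfrak{q}}$ on $k(\mathfrak{q}_{1})$ has fixed subfield $k(\mathfrak{p})$ by Proposition \ref{PropositionQuotient1}, making $k(\mathfrak{q}_{1})/k(\mathfrak{p})$ Galois. Stacking the two layers yields $k(\mathfrak{q}_{1})=k(\mathfrak{q})^{\mathrm{sep}}$.

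For (3), the tame hypothesis $\mathrm{char}(k(\mathfrak{q}))\nmid|G|$ makes $|I_{\mathfrak{q}}|$ coprime to the residue characteristic. The standard $efg$-type bookkeeping $|I_{\mathfrak{q}}|=e(\mathfrak{q}/\mathfrak{p})\cdot[k(\mathfrak{q}):k(\mathfrak{q})^{\mathrm{sep}}]$, combined with the fact that the inseparable degree is a power of $\mathrm{char}(k(\mathfrak{q}))$, forces $k(\mathfrak{q})=k(\mathfrak{q})^{\mathrm{sep}}$, which together with (2) gives $k(\mathfrak{q})=k(\mathfrak{q}_{1})$. For (4), I use the Galois correspondence $H\supseteq I_{\mathfrak{q}}\Leftrightarrow B^{H}\subseteq B^{I_{\mathfrak{q}}}$. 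If $H\supseteq I_{\mathfrak{q}}$, then $B^{I_{\mathfrak{q}}}/A$ is étale at $\mathfrak{q}_{1}$ (unramifiedness from (1), residue separability from (3), flatness from normality of the domains involved), and the same argument applied to $B^{H}\subseteq B^{I_{\mathfrak{q}}}$ yields étaleness at $\mathfrak{q}\cap B^{H}$. Conversely, if $B^{H}/A$ is étale at $\mathfrak{q}\cap B^{H}$, then $e(\mathfrak{q}\cap B^{H}/\mathfrak{p})=1$, so all of the ramification of $\mathfrak{q}$ over $\mathfrak{p}$ lives inside the Galois extension $B/B^{H}$ whose inertia at $\mathfrak{q}$ is $I_{\mathfrak{q}}\cap H$; multiplicativity of ramification indices gives $|I_{\mathfrak{q}}|=|I_{\mathfrak{q}}\cap H|$, forcing $I_{\mathfrak{q}}\subseteq H$.

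The main difficulty will be making the residue-field argument and the $efg$-formula rigorous in this general setting, where the base is only a normal integral locally Noetherian scheme rather than a Dedekind domain. In practice I would localize at $\mathfrak{p}$ and pass to completions (or replace $A$ by the localization of $B^{I_{\mathfrak{q}}}$ at $\mathfrak{q}_{1}$) so that the classical one-dimensional ramification theory applies, and I would invoke Proposition \ref{PropositionQuotient1} to justify that the restricted $D_{\mathfrak{q}}/I_{\mathfrak{q}}$-action on $B^{I_{\mathfrak{q}}}$ really does yield the quotient on residue fields, closing the separability step in (2) and thereby unlocking (3) and (4).
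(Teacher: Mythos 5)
The paper itself does not prove this lemma: the proof environment consists of a single sentence referring the reader to Lemma 3.4, Lemma 3.5, Corollary 3.6 and Proposition 3.7 of \cite{supertrop}, so there is no internal argument to compare yours against. Your part (1) is fine. The main gap is in part (2). You assert that the induced $D_{\mathfrak{q}}/I_{\mathfrak{q}}$-action on $k(\mathfrak{q}_{1})$ has fixed subfield $k(\mathfrak{p})$ ``by Proposition \ref{PropositionQuotient1}.'' But the invariant subring of $B^{I_{\mathfrak{q}}}$ under $D_{\mathfrak{q}}/I_{\mathfrak{q}}$ is $B^{D_{\mathfrak{q}}}$, not $A$. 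What Proposition \ref{QuotientLemma10}(3) gives after quotienting is that $k(\mathfrak{q}_{1})$ is normal over the residue field $k(\mathfrak{q}\cap B^{D_{\mathfrak{q}}})$ of the \emph{decomposition} ring, and that $D_{\mathfrak{q}}/I_{\mathfrak{q}}$ surjects onto its automorphism group. You still need the separate input that the decomposition ring has trivial residue extension, $k(\mathfrak{q}\cap B^{D_{\mathfrak{q}}})=k(\mathfrak{p})$. In the classical Dedekind situation this is an $efg=n$ consequence, but the lemma is stated for a general normal Noetherian integral $A$ with $\mathfrak{p}$ of arbitrary codimension, where no $efg$-formula is available; a priori $k(\mathfrak{q}\cap B^{D_{\mathfrak{q}}})/k(\mathfrak{p})$ is only known to be purely inseparable. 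Your closing remark about localizing and completing does not close this: $A_{\mathfrak{p}}$ need not be a DVR when $\mathfrak{p}$ has codimension greater than one, and completion of a normal local domain need not remain a normal domain.

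Part (4) also has problems. ``Flatness from normality of the domains involved'' is simply false: a finite extension of Noetherian normal domains need not be flat (the inclusion $k[x^{2},xy,y^{2}]\subset k[x,y]$ fails flatness at the origin). One needs something like miracle flatness (finite, equidimensional, Cohen--Macaulay over a regular target), and regularity of $A$ at $\mathfrak{p}$ is not among the hypotheses. Moreover, knowing that $B^{I_{\mathfrak{q}}}/A$ and $B^{I_{\mathfrak{q}}}/B^{H}$ are \'{e}tale at the relevant points does not immediately yield \'{e}taleness of $B^{H}/A$ at $\mathfrak{q}\cap B^{H}$; that implication is a faithfully-flat descent statement, not a formal consequence of the exact sequence of differentials. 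For the converse of (4) you invoke multiplicativity of ramification indices, which again has no meaning outside codimension one. Part (3) as you wrote it leans on the same $efg$-bookkeeping, but here there is a clean fix: since $k(\mathfrak{q})/k(\mathfrak{q}_{1})$ is purely inseparable by (2), and is generated by residues of elements of $B$ whose minimal polynomials over $L^{I_{\mathfrak{q}}}$ have degree dividing $|I_{\mathfrak{q}}|$, tameness (so $|I_{\mathfrak{q}}|$ is coprime to the residue characteristic) forces every such residue to have degree one, giving $k(\mathfrak{q})=k(\mathfrak{q}_{1})$ without any appeal to $efg$.
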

\begin{proof}
These are given in \cite{supertrop} as: Lemma 3.4, Lemma 3.5, Corollary 3.6 and Proposition 3.7 respectively. We refer the reader to that paper.
\end{proof}

\begin{lemma}\label{Orbitstabilizer}
Let $D_{\mathfrak{q}}$ be the decomposition group of $\mathfrak{q}\in\text{Spec}(B)$. Let $$G(\mathfrak{q})=\{\mathfrak{q}'\in\text{Spec}(B):\mathfrak{q}'=\sigma(\mathfrak{q})\text{ for some }\sigma\in{G}\}.$$ Then 
\begin{equation}
|G(\mathfrak{q})|=\dfrac{|G|}{|D_{\mathfrak{q}}|}.
\end{equation}
\end{lemma}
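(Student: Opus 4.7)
The plan is to recognize this as a direct application of the orbit--stabilizer theorem to the natural action of $G$ on $\mathrm{Spec}(B)$. By Proposition \ref{GalExtNormInt}, the group $G = \mathrm{Gal}(L/K(A))$ acts on $X = \mathrm{Spec}(B)$ through its action on $B$ by ring automorphisms, and this action permutes the primes of $B$. Under this action, the set $G(\mathfrak{q})$ is, by definition, the orbit of $\mathfrak{q}$, and the decomposition group $D_{\mathfrak{q}} = \{\sigma \in G : \sigma(\mathfrak{q}) = \mathfrak{q}\}$ is, by definition, the stabilizer of $\mathfrak{q}$.

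Given this identification, I would invoke the standard orbit--stabilizer bijection: the map $\sigma D_{\mathfrak{q}} \mapsto \sigma(\mathfrak{q})$ is a well-defined bijection between the set $G/D_{\mathfrak{q}}$ of left cosets and the orbit $G(\mathfrak{q})$. Well-definedness follows because if $\sigma' = \sigma \tau$ with $\tau \in D_{\mathfrak{q}}$, then $\sigma'(\mathfrak{q}) = \sigma(\tau(\mathfrak{q})) = \sigma(\mathfrak{q})$; injectivity follows because $\sigma(\mathfrak{q}) = \sigma'(\mathfrak{q})$ implies $\sigma^{-1}\sigma' \in D_{\mathfrak{q}}$; and surjectivity is immediate from the definition of the orbit. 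Since $G$ is finite, this yields
\begin{equation*}
|G(\mathfrak{q})| = [G : D_{\mathfrak{q}}] = \frac{|G|}{|D_{\mathfrak{q}}|},
\end{equation*}
as claimed.

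There is no real obstacle here; the only thing worth double-checking is that $\sigma(\mathfrak{q})$ is again a prime of $B$ (clear, since $\sigma$ is a ring automorphism of $B$) and that $D_{\mathfrak{q}}$ really coincides with the stabilizer in the sense required, which is exactly the definition given in Section \ref{DefinitionInertia}. Thus the proof is genuinely just a one-line appeal to orbit--stabilizer, and the lemma could equivalently be stated as saying that the orbit of $\mathfrak{q}$ under $G$ has size $[G:D_{\mathfrak{q}}]$.
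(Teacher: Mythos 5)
Your proof is correct and follows exactly the same route as the paper, which simply cites the orbit--stabilizer theorem; you have merely spelled out the bijection $\sigma D_{\mathfrak{q}} \mapsto \sigma(\mathfrak{q})$ that the paper leaves implicit.
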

\begin{proof}
We apply the orbit-stabilizer theorem from group theory and directly obtain the lemma.
\end{proof}

By Proposition \ref{QuotientLemma10}, the action of $G$ is transitive, so for any $\mathfrak{p}\in\text{Spec}(A)$ and $\mathfrak{q}\in\text{Spec}(B)$, we find that the number of primes lying above $\mathfrak{p}$ is equal to $\dfrac{|G|}{|D_{\mathfrak{q}}|}$. In other words, if we know the decomposition group of any prime $\mathfrak{q}$ lying above $\mathfrak{p}$, then we know how many primes there are in the pre-image of $\mathfrak{p}$ for the morphism $\text{Spec}(B)\rightarrow{\text{Spec}(A)}$. 

\subsection{Finite extensions of Dedekind domains}

In this section, we give a summary of the theory of finite extensions of Dedekind domains. There will necessarily be some overlap with the previous sections and we invite the reader to compare definitions and results. 

We first describe how to pass from the ring extension $B\supset{A}$ of Noetherian normal integral domains of the previous section to an extension of Dedekind domains. Let $\mathfrak{p}\in\text{Spec}(A)$ be any prime. 
Recall that $B\supset{A}$ is finite by \cite[Chapter 4, Proposition 1.25]{liu2}. 
We can take the base extension
\begin{equation*}
A_{\mathfrak{p}}\longrightarrow{A_{\mathfrak{p}}\otimes{B}}=B_{\mathfrak{p}}
\end{equation*}
of the map $A\longrightarrow{A_{\mathfrak{p}}}$. Since finiteness is preserved under base extensions, we have that this ring extension is again finite. Taking another base extension, this time corresponding to $A_{\mathfrak{p}}\longrightarrow{A_{\mathfrak{p}}/\mathfrak{p}}$, we obtain that the ring extension
\begin{equation*}
A_{\mathfrak{p}}/\mathfrak{p}\longrightarrow{B_{\mathfrak{p}}/\mathfrak{p}}
\end{equation*}
is again finite. This just means that $B_{\mathfrak{p}}/\mathfrak{p}$ 
 is a finite vector space over the field $A_{\mathfrak{p}}/\mathfrak{p}$. We thus have that it's an Artinian ring, meaning that there are only finitely many prime ideals. We are just expressing the fact that a finite morphism of schemes is quasi-finite of course. These prime ideals of $B_{\mathfrak{p}}/\mathfrak{p}$ now correspond exactly to the prime ideals of $B$ above the prime ideal $\mathfrak{p}$.  A small reminder: localization commutes with normalization, so we can write 
 \begin{equation*}
 B_{\mathfrak{p}}=(A_{\mathfrak{p}})'.
 \end{equation*}
Let us now restrict ourselves to the $\mathfrak{p}$ such that $\text{dim }A_{\mathfrak{p}}=1$. These are known as the primes of codimension one. In this case, we have that $A_{\mathfrak{p}}$ is a normal Noetherian integral domain of dimension 1. Or in other words, we have that $A_{\mathfrak{p}}$ is a {\it{Dedekind domain}}. Since $B_{\mathfrak{p}}$ is finite over $A_{\mathfrak{p}}$, we have that $1=\text{dim }(A_{\mathfrak{p}})=\text{dim }(B_{\mathfrak{p}})$. Since $B$ is normal, any localization of $B$ is also normal. Thus $B_{\mathfrak{p}}$ is also a Dedekind domain. This puts us back into the usual framework of algebraic number theory: finite ring extensions of Dedekind domains. One major difference of course with the number field theory is that the residue fields involved can be nonperfect. Note also that we are dealing with finite extensions of \emph{local} Dedekind domains which have a trivial Picard group, so some of the global aspects of the usual theory are lost. 

We recall some definitions and theorems from algebraic number theory. For the rest of the section, $A$ and $B$ are Dedekind. We say that $\mathfrak{q}\in\text{Spec}(B)$ {\it{divides}} $\mathfrak{p}\in\text{Spec}(A)$ if $\mathfrak{q}\cap{A}=\mathfrak{p}$. We define the ramification index of $\mathfrak{q}$ over $\mathfrak{p}$ by
\begin{equation*}
e_{\mathfrak{q}/\mathfrak{p}}=\text{dim}_{A/\mathfrak{p}}(B_{\mathfrak{q}}/\mathfrak{p}B_{\mathfrak{q}}).
\end{equation*}

We will also write this as $e_{\mathfrak{q}}$. We then have a decomposition of prime ideals
\begin{equation}
\mathfrak{p}=\prod_{i=1}^{r}\mathfrak{q_{i}}^{e_{{\mathfrak{q}_{i}}}}.
\end{equation}
For each $\mathfrak{q}$ dividing $\mathfrak{p}$ we have a finite extension of residue fields
\begin{equation*}
A_{\mathfrak{p}}/\mathfrak{p}\longrightarrow{B_{\mathfrak{q}}/\mathfrak{q}}
\end{equation*}
with finite degree
\begin{equation*}
f_{\mathfrak{q}/\mathfrak{p}}=[B_{\mathfrak{q}}/\mathfrak{q}:A_{\mathfrak{p}}/\mathfrak{p}].
\end{equation*}
We will also let $g_{\mathfrak{p}}$ be the number of primes in $\text{Spec}(B)$ dividing a given $\mathfrak{p}$. For a {\it{Galois}} extension, we then have the following
\begin{pro}\label{SerLocFields}
Suppose that the extension $A\subseteq{B}$ with fraction fields $K(A)\subset{K(B)}$ is Galois with Galois group $G$. Let $n$ be its order. Then the integers $e_{\mathfrak{q}}$ and $f_{\mathfrak{q}}$ depend only on $\mathfrak{p}$. If we denote them by $e_{\mathfrak{p}}$ and $f_{\mathfrak{p}}$, then 
\begin{equation}\label{FundGalEq1}
n=e_{\mathfrak{p}}f_{\mathfrak{p}}g_{\mathfrak{p}}.
\end{equation}
\end{pro}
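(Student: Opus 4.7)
The plan is to combine the transitivity of the Galois action on fibers (already established in Proposition \ref{GalExtNormInt} and recalled via Lemma \ref{Orbitstabilizer}) with a length computation on $B_\mathfrak{p}/\mathfrak{p} B_\mathfrak{p}$ via the Chinese remainder theorem.

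First I would show that $e$ and $f$ depend only on $\mathfrak{p}$. Let $\mathfrak{q}$ and $\mathfrak{q}'$ be two primes of $B$ above $\mathfrak{p}$. By transitivity (Proposition \ref{GalExtNormInt}) there exists $\sigma \in G$ with $\sigma(\mathfrak{q}) = \mathfrak{q}'$. This $\sigma$ fixes $A$ and therefore induces an $A_\mathfrak{p}$-algebra isomorphism $B_\mathfrak{q} \xrightarrow{\sim} B_{\mathfrak{q}'}$ carrying $\mathfrak{p} B_\mathfrak{q}$ to $\mathfrak{p} B_{\mathfrak{q}'}$ and $\mathfrak{q}$ to $\mathfrak{q}'$. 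Comparing the definitions of $e$ and $f$ immediately gives $e_\mathfrak{q}=e_{\mathfrak{q}'}$ and $f_\mathfrak{q}=f_{\mathfrak{q}'}$, so we may denote these common values by $e_\mathfrak{p}$ and $f_\mathfrak{p}$.

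Next I would compute $\dim_{A/\mathfrak{p}} \bigl( B_\mathfrak{p}/\mathfrak{p} B_\mathfrak{p}\bigr)$ in two different ways. On the one hand, $A_\mathfrak{p}$ is a local Dedekind domain, hence a PID, and $B_\mathfrak{p}$ is a finitely generated torsion-free $A_\mathfrak{p}$-module (it is finite over $A_\mathfrak{p}$ by \cite[Chapter 4, Proposition 1.25]{liu2} and a submodule of the $n$-dimensional $K(A)$-vector space $K(B)$). Therefore $B_\mathfrak{p}$ is free of rank $n$ over $A_\mathfrak{p}$, which gives
\begin{equation*}
\dim_{A/\mathfrak{p}}\bigl(B_\mathfrak{p}/\mathfrak{p} B_\mathfrak{p}\bigr)=n.
\end{equation*}
On the other hand, the primes of $B_\mathfrak{p}$ are exactly the $\mathfrak{q}_i B_\mathfrak{p}$ for $i=1,\dots,g_\mathfrak{p}$, and these are pairwise coprime maximal ideals in the semilocal Dedekind domain $B_\mathfrak{p}$. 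By the Chinese remainder theorem,
\begin{equation*}
B_\mathfrak{p}/\mathfrak{p} B_\mathfrak{p} \;\simeq\; \prod_{i=1}^{g_\mathfrak{p}} B_{\mathfrak{q}_i}/\mathfrak{p} B_{\mathfrak{q}_i}.
\end{equation*}

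Finally I would compute each factor. Since $\mathfrak{p} B_{\mathfrak{q}_i} = \mathfrak{q}_i^{e_\mathfrak{p}} B_{\mathfrak{q}_i}$ in the discrete valuation ring $B_{\mathfrak{q}_i}$, the quotient $B_{\mathfrak{q}_i}/\mathfrak{p} B_{\mathfrak{q}_i}$ has a composition series of length $e_\mathfrak{p}$ whose successive quotients are all isomorphic to the residue field $B_{\mathfrak{q}_i}/\mathfrak{q}_i$, which has dimension $f_\mathfrak{p}$ over $A/\mathfrak{p}$. Hence $\dim_{A/\mathfrak{p}}\bigl(B_{\mathfrak{q}_i}/\mathfrak{p} B_{\mathfrak{q}_i}\bigr)=e_\mathfrak{p} f_\mathfrak{p}$, and summing over $i$ yields $n=e_\mathfrak{p} f_\mathfrak{p} g_\mathfrak{p}$.

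The only subtle point is verifying that $B_\mathfrak{p}$ is genuinely $A_\mathfrak{p}$-free of rank $n$; once that is in hand the rest is bookkeeping. I expect this to be the main obstacle only insofar as one must remember that $B/A$ need not be globally free but that passing to a local PID rescues the argument.
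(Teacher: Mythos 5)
Your proof is correct, and it is precisely the argument in Serre's \emph{Local Fields} to which the paper defers (the paper gives no proof of its own, only the citation): first use Galois transitivity to show $e$ and $f$ are constant on the fiber over $\mathfrak{p}$, then compute $\dim_{A/\mathfrak{p}}\bigl(B_{\mathfrak{p}}/\mathfrak{p}B_{\mathfrak{p}}\bigr)$ once as the free rank $n$ over the DVR $A_{\mathfrak{p}}$ and once via the Chinese remainder theorem as $e_{\mathfrak{p}}f_{\mathfrak{p}}g_{\mathfrak{p}}$. The only minor remark is that you quietly use the usual definition of $e_{\mathfrak{q}/\mathfrak{p}}$ as the exponent in $\mathfrak{p}B_{\mathfrak{q}}=\mathfrak{q}^{e}B_{\mathfrak{q}}$, which is what the statement really intends; the paper's displayed formula $e_{\mathfrak{q}/\mathfrak{p}}=\dim_{A/\mathfrak{p}}(B_{\mathfrak{q}}/\mathfrak{p}B_{\mathfrak{q}})$ actually computes $e_{\mathfrak{q}}f_{\mathfrak{q}}$ and is a misprint for the length over $B_{\mathfrak{q}}$.
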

\begin{proof}
See \cite[Page 20]{Ser1}.
\end{proof}
Let $D=D_{\mathfrak{q}}$ be the decomposition group of $\mathfrak{q}$ and $I=I_{\mathfrak{q}}$, as defined earlier. By the orbit-stabilizer theorem, we have that the index of $D$ in $G$ is equal to the number $g_{\mathfrak{p}}$. Let us write
\begin{eqnarray*}
\overline{L}&=&B_{\mathfrak{q}}/\mathfrak{q},\\
\overline{K}&=&A_{\mathfrak{p}}/\mathfrak{p}.
\end{eqnarray*}

For any $\sigma\in{D}$, we a natural $\overline{K}$-automorphism $\overline{\sigma}$ of $\overline{L}$ by passing to the quotient. This gives a homomorphism
\begin{equation*}
\rho:D\longrightarrow{G(\overline{L}/\overline{K})}
\end{equation*}
whose kernel is by definition the {{inertia group}} of $\mathfrak{q}$. As we saw in Section \ref{DefinitionInertia}, this extension $\overline{L}\supset{\overline{K}}$ is not always Galois. 
\begin{pro}\label{SerExtGal1}The following properties are true.
\begin{enumerate}
\item The residue extension $\overline{L}/\overline{K}$ is normal and the homomorphism
\begin{equation*}
\rho:D\longrightarrow{\text{G}(\overline{L}/\overline{K})}
\end{equation*}
defines an isomorphism $D/I\simeq{\text{G}(\overline{L}/\overline{K})}$.
\item If $\overline{L}/\overline{K}$ is separable, then it is a Galois extension with Galois group $D/I$. We then have $[L:L^{I}] =e$, $[L^{I}:L^{D}]=f$ and $[L^{D}:K]=g$. Here $L^{H}$ for any subgroup $H$ of $G$ means the invariant field under $H$.
\end{enumerate} 
\end{pro}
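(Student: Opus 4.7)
The plan is to deduce both statements almost entirely from results already in the excerpt, chiefly Proposition \ref{QuotientLemma10} applied to the local situation $A_{\mathfrak{p}} \subseteq B_{\mathfrak{p}}$, together with the orbit-stabilizer statement of Lemma \ref{Orbitstabilizer}, the transitivity of Proposition \ref{GalExtNormInt}, and the fundamental equation $n = efg$ of Proposition \ref{SerLocFields}.

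For part (1), I would set $X = \operatorname{Spec}(B)$, $Y = \operatorname{Spec}(A)$, and take the point $x = \mathfrak{q}$ with image $y = \mathfrak{p}$. The stabilizer $G_x$ is then by definition the decomposition group $D = D_{\mathfrak{q}}$. Proposition \ref{QuotientLemma10} applies (after localizing at $\mathfrak{p}$, using that localization commutes with normalization so that $B_{\mathfrak{p}}$ is still the normalization of $A_{\mathfrak{p}}$ in $L$) and directly gives that the residue extension $\overline{L}/\overline{K}$ is normal and that the reduction homomorphism $\rho \colon D \to \operatorname{Gal}(\overline{L}/\overline{K})$ is surjective. By the very definition of the inertia group, $\ker \rho = I$, so the first isomorphism theorem gives $D/I \simeq \operatorname{Gal}(\overline{L}/\overline{K})$.

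For part (2), normality from (1) together with the separability hypothesis makes $\overline{L}/\overline{K}$ Galois with group $D/I$. The three index identities then fall out of the Galois correspondence for $L/K$: writing $[L:L^H] = |H|$ and $[L^H:K] = [G:H]$ for any subgroup $H$, we get $[L^D:K] = [G:D]$, $[L^I:L^D] = [D:I]$, and $[L:L^I] = |I|$. To identify these with $g$, $f$, and $e$ respectively, I would argue as follows. The $G$-orbit of $\mathfrak{q}$ consists, by the transitivity of Proposition \ref{GalExtNormInt}, of exactly the $g$ primes of $B$ above $\mathfrak{p}$, so Lemma \ref{Orbitstabilizer} gives $[G:D] = g$, hence $[L^D:K] = g$. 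From (1), $|D/I| = |\operatorname{Gal}(\overline{L}/\overline{K})| = f$, so $[L^I:L^D] = f$. Finally, applying the fundamental equation $n = efg$ from Proposition \ref{SerLocFields} together with $|G| = n$, $|D| = ef$ forces $|I| = e$, so $[L:L^I] = e$.

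There is no serious obstacle: the only point that deserves care is checking that Proposition \ref{QuotientLemma10} indeed applies to our local Dedekind setup, which amounts to verifying that the $G$-action on $\operatorname{Spec}(B)$ restricts to an admissible action on $\operatorname{Spec}(B_{\mathfrak{p}})$ with quotient $\operatorname{Spec}(A_{\mathfrak{p}})$. This is immediate from the fact that localization commutes with taking invariants and with normalization. Once this bookkeeping is in place, the proof is a direct translation between the Galois-theoretic indices on the extension $L/K$ and the arithmetic invariants $e$, $f$, $g$.
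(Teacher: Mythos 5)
Your proof is correct. The paper gives no proof of this proposition at all --- it simply cites Serre (\cite[Page 23]{Ser1}) --- so your argument is a genuine addition: you reassemble the statement from results already recorded in the thesis. Part (1) falls out of Proposition \ref{QuotientLemma10}(3) applied to $\operatorname{Spec}(B)\to\operatorname{Spec}(A)$ (legitimate since Proposition \ref{GalExtNormInt} gives $B^{G}=A$), because $\ker\rho=I$ is the very definition of the inertia group; part (2) then follows from the Galois correspondence once $[G:D]=g$ (transitivity plus Lemma \ref{Orbitstabilizer}) and $|D/I|=f$ (from part (1) plus separability) are combined with $n=efg$ from Proposition \ref{SerLocFields} to force $|I|=e$. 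Two small notes. First, the localization you invoke in part (1) is unnecessary: Proposition \ref{QuotientLemma10}(3) already speaks about an arbitrary point of $\operatorname{Spec}(B)$ and its image, so it applies directly to $\mathfrak{q}$ and $\mathfrak{p}$ without passing to $B_{\mathfrak{p}}$. Second, your route makes the fundamental identity $n=efg$ load-bearing; this is fine because Proposition \ref{SerLocFields} is cited independently and precedes the inertia-group material in Serre, so there is no circularity, but it does mean the argument is only as self-contained as that identity. The payoff is a short, clean derivation entirely inside the paper's own framework, whereas the paper leaves the reader to unpack the external reference.
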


\begin{proof}
See \cite[Page 23]{Ser1}. 
\end{proof}
\begin{rem}
The residue extension $\overline{L}/\overline{K}$ is separable in the following cases:
\begin{enumerate}
\item $\overline{K}$ is perfect (which will generally not be the case for us, unless the residue field $k$ has characteristic zero).
\item The order of the inertia group $I$ is prime to the characteristic $p$ of the residue field $\overline{K}$.
\end{enumerate}
In the case of a disjointly branched Galois morphism we naturally have case $(2)$. This implies that $\overline{L}/\overline{K}$ is separable and thus we can use Proposition \ref{SerExtGal1}.  
\end{rem}

\begin{rem}
Let $\mathcal{C}\rightarrow{\mathcal{D}}$ be a disjointly branched morphism. By definition, we then have $e_{x/y}=1$ for $x$ and $y$ the generic points of components $\Gamma_{x}$ and $\Gamma_{y}$ in $\mathcal{C}_{s}$ and $\mathcal{D}_{s}$ respectively. 
This is because $\pi$ is a uniformizer in both. We therefore have
\begin{equation*}
|G|=f_{\mathfrak{q}/\mathfrak{p}}\cdot{g_{\mathfrak{q}/\mathfrak{p}}}.
\end{equation*}
\end{rem}

\begin{rem}
We would like to point out to the reader that we now have two notations that are very similar. 
On the one hand for an edge $e$ with corresponding prime $\mathfrak{p}$ of $\mathcal{D}$, we have the "splitting" indices 
\begin{equation*}
g_{\mathfrak{q}/\mathfrak{p}}.
\end{equation*}
On the other hand, in a very natural way we have that our primes are curves. We thus also have the genus
\begin{equation*}
g(\mathfrak{p})
\end{equation*} 
to our disposal. We will sometimes write $a(\mathfrak{p})$ (as in: abelian rank) or $g(\Gamma)$ for the arithmetic genus. Whenever we mean the splitting indices, we will write them as a subscript. 
\end{rem}

\section{Kummer extensions}\label{Kummerextensions}

We quickly recall some Kummer Theory. 
Suppose we have a finite cyclic abelian extension $K\subseteq{L}$ of degree $n$ coprime to the characteristic $p$ of $K$. We will also suppose that $K$ contains a primitive $n$-th root of unity $\zeta_{n}$. If $n$ is not coprime to the characteristic, one has to consider so-called Artin-Schreier type extensions. We will not pursue this path here. For our case, the abelian extension takes a very simple form. 
\begin{pro}\label{AbelExt1} (Kummer Theory) Let $K\subseteq{L}$ be a finite cyclic Galois extension of degree $n$ with $n$ coprime to the characteristic $p$ of $K$. Suppose that $K$ contains a primitive $n$-th root of unity. We then have
\begin{equation*}
L=K[X]/(X^{n}-f)
\end{equation*}
for some $f\in{K}$.
\end{pro}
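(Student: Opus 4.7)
The plan is to use the classical Hilbert 90 argument. Let $\sigma$ be a generator of $\mathrm{Gal}(L/K)$, which is cyclic of order $n$. The idea is to produce an element $\alpha \in L^*$ on which $\sigma$ acts by multiplication by a root of unity, so that $\alpha^n$ lies in $K$.

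First, I would compute the norm of $\zeta_n$. Since $\zeta_n \in K$ and $[L:K]=n$, we have
\begin{equation*}
N_{L/K}(\zeta_n) = \prod_{i=0}^{n-1}\sigma^i(\zeta_n) = \zeta_n^n = 1.
\end{equation*}
By Hilbert's Theorem 90 (multiplicative form) applied to the cyclic extension $L/K$, there exists $\alpha \in L^*$ such that
\begin{equation*}
\zeta_n = \frac{\alpha}{\sigma(\alpha)}, \qquad \text{equivalently} \qquad \sigma(\alpha) = \zeta_n^{-1}\alpha.
\end{equation*}

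Next I would set $f := \alpha^n$ and verify that $f \in K$: indeed, $\sigma(\alpha^n) = (\zeta_n^{-1}\alpha)^n = \zeta_n^{-n}\alpha^n = \alpha^n$, so $\alpha^n$ is fixed by the generator $\sigma$ of the Galois group, hence lies in $K$. Then I would show $L = K(\alpha)$ by observing that the conjugates $\sigma^i(\alpha) = \zeta_n^{-i}\alpha$ for $i = 0,1,\ldots,n-1$ are pairwise distinct (because $\zeta_n$ is a primitive $n$-th root of unity), so $\alpha$ has at least $n$ Galois conjugates, giving $[K(\alpha):K] \geq n = [L:K]$ and hence equality.

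Finally, since $\alpha$ is a root of $X^n - f \in K[X]$ and $[K(\alpha):K] = n = \deg(X^n - f)$, the polynomial $X^n - f$ must be the minimal polynomial of $\alpha$ over $K$, giving the isomorphism $L \simeq K[X]/(X^n - f)$. The only nontrivial input here is Hilbert 90; the remaining steps are straightforward consequences of the cyclic Galois action together with the hypothesis that $\zeta_n \in K$. The coprimality assumption on $n$ and $\mathrm{char}(K)$ is used implicitly to ensure that $\zeta_n$ has order exactly $n$ (so the conjugates of $\alpha$ really are distinct) and that $X^n - f$ is separable.
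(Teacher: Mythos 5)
Your proof is correct and is exactly the standard Hilbert 90 argument for cyclic Kummer extensions. The paper does not give its own proof but simply cites Neukirch (Proposition 3.2), and that reference's proof is the same computation you give: show $N_{L/K}(\zeta_n)=1$, invoke Hilbert 90 to find $\alpha$ with $\sigma(\alpha)=\zeta_n^{-1}\alpha$, verify $\alpha^n\in K$, and count conjugates to get $L=K(\alpha)$.
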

\begin{proof}
See \cite[Proposition 3.2]{neu}. 
\end{proof}
Suppose now that we have a finite extension of Dedekind domains $A\rightarrow{A'}$ such that the morphism of quotient fields $K\rightarrow{L}$ is cyclic abelian. Let $f$ be as in Proposition \ref{AbelExt1}. Write $f=\dfrac{a}{b}$ for $a,b\in{A}$ and let $\alpha\in{L}$ be any root of $X^{n}-f$. We then see that 
$b\cdot{\alpha}$ satisfies the \emph{integral} equation $X^{n}-b^{n-1}a$. In other words, we can now assume that $f\in{A}$. 
We consider the chain of algebras 
\begin{equation*}
A\subseteq{A[X]/(X^{n}-f)}\subseteq{A'}.
\end{equation*}
We do not always have equality, as the extension given by a certain $f$ might be {\it{nonnormal}}.
We will now look at the local case. So assume that $A$ is local and Dedekind, in other words a discrete valuation ring. Let $\mathfrak{p}$  be the maximal ideal of $A$ and $\pi$ the uniformizer. 
We can then write $f=\pi^{m}u$ where $u$ is a unit and $m\geq{0}$. 
\begin{pro}\label{UnrAbelExt1}
Let $A\subseteq{A[X]/(X^{n}-f)}\subseteq{A'}$ be as above with $f=\pi^{m}u$. Let $\mathfrak{q}$ be any prime of $A'$ lying above $\mathfrak{p}$. We then have that
\begin{equation}
|I_{\mathfrak{q}}|=e_{\mathfrak{q}/\mathfrak{p}}=\dfrac{n}{\text{gcd}(m,n)}.
\end{equation} 
\end{pro}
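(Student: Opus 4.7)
The plan is to reduce the computation of $e_{\mathfrak{q}/\mathfrak{p}}$ to two elementary pieces by inserting a canonical intermediate field. The identity $|I_{\mathfrak{q}}| = e_{\mathfrak{q}/\mathfrak{p}}$ is free: since $n$ is coprime to $\mathrm{char}(k)$, the residue extension is separable, so Proposition \ref{SerExtGal1} gives $[L:L^{I_\mathfrak{q}}] = e_{\mathfrak{q}/\mathfrak{p}}$, which equals $|I_\mathfrak{q}|$. Hence the content of the proposition is the equality $e_{\mathfrak{q}/\mathfrak{p}} = n/\gcd(m,n)$.

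Write $d = \gcd(m,n)$, $m = da$, $n = db$, so $\gcd(a,b)=1$. Inside $L = K(\alpha)$ I would consider the subfield $M = K(\alpha^{b})$. Since $\mathrm{Gal}(L/K) = \langle \sigma\rangle$ with $\sigma(\alpha) = \zeta_n\alpha$, a direct calculation shows $[M:K] = d$. Setting $\gamma := \alpha^{b}$, I get $\gamma^{d} = \alpha^{n} = \pi^{da}u$, so $\delta := \gamma/\pi^{a}$ lies in $M$ and satisfies $\delta^{d} = u$. Thus $M = K(\delta)$ is generated by a root of the polynomial $X^{d} - u$, whose reduction modulo $\mathfrak{p}$ has unit discriminant (since $\gcd(d,\mathrm{char}(k))=1$ and $u$ is a unit) and is therefore separable. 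This forces $M/K$ to be unramified at $\mathfrak{p}$, so if $\mathfrak{q}_{M} := \mathfrak{q}\cap M$, then $e_{\mathfrak{q}_{M}/\mathfrak{p}} = 1$.

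It then remains to show the intermediate extension $L/M$ is totally ramified of degree $b$. By Bezout, pick integers $s,t$ with $sa + tb = 1$ and set $\eta := \alpha^{s}\pi^{t}$. A direct computation using $\alpha^{b} = \pi^{a}\delta$ yields
\begin{equation*}
\eta^{b} = \alpha^{sb}\pi^{tb} = (\pi^{a}\delta)^{s}\pi^{tb} = \pi^{sa+tb}\delta^{s} = \pi\,\delta^{s}.
\end{equation*}
Since $\delta^{s}$ is a unit in the ring of integers of $M$, the polynomial $X^{b} - \pi\delta^{s}$ is Eisenstein over $M$, hence irreducible and generating a totally ramified extension of degree $b$. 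Comparing degrees, $[M(\eta):K] = d\cdot b = n = [L:K]$, so $L = M(\eta)$. Multiplicativity of ramification gives $e_{\mathfrak{q}/\mathfrak{p}} = e_{\mathfrak{q}/\mathfrak{q}_{M}}\cdot e_{\mathfrak{q}_{M}/\mathfrak{p}} = b\cdot 1 = n/\gcd(m,n)$, as required.

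The only delicate point is the unramifiedness of $M/K$: one must be sure that the polynomial $X^{d}-u$ is actually the minimal polynomial of $\delta$ (so that $M = K(\delta)$ has degree exactly $d$) and that unit discriminant of a generating polynomial really implies unramifiedness for the whole integral closure, rather than only for the order $A[\delta]$. Both issues are handled simultaneously by the degree count above plus the standard fact that if $A[\delta]$ has unit discriminant then it is already integrally closed locally at $\mathfrak{p}$; this is where the hypothesis $\gcd(n,\mathrm{char}(k))=1$ is genuinely used.
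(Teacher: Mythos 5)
Your proof is correct, but it takes a genuinely different route from the paper. The paper dispatches this with a Newton polygon computation: the polygon of $X^{n}-\pi^{m}u$ is a single segment of slope $-m/n$, and since the denominator of that slope in lowest terms is the ramification index, one reads off $n/\gcd(m,n)$ directly. You instead construct the tower $K\subseteq M\subseteq L$ explicitly, where $M=K(\delta)$ with $\delta^{d}=u$ is the maximal unramified subextension of degree $d=\gcd(m,n)$, and then exhibit an Eisenstein generator $\eta$ (with $\eta^{b}=\pi\delta^{s}$, $b=n/d$) for the totally ramified top. Your B\'ezout manipulation $\eta=\alpha^{s}\pi^{t}$ is the key step that produces a uniformizer of $L$ from $\alpha$ and $\pi$. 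The paper's argument is shorter and requires nothing beyond the Newton-polygon--to--ramification-index dictionary; yours has the advantage of making the intermediate unramified field and a uniformizer of $L$ explicit, which is genuinely more information (and is closer to what one needs if one later wants to identify the inertia subgroup itself, not just its order). Both rely in the same essential way on tameness: the paper via the integrality of the Newton slope criterion, you via unit discriminant of $X^{d}-u$ and the Eisenstein criterion. Your closing caveat about $A[\delta]$ being integrally closed at $\mathfrak{p}$ is well-placed and is indeed resolved by unit discriminant; no gap there.
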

\begin{proof}
We consider the Newton polygon of $X^{n}-\pi^{m}u$, which is given by a single line segment of slope $-\dfrac{m}{n}$. Clearing the denominator and the numerator, we obtain $n/\text{gcd}(n,m)$ 
 in the denominator. This slope is exactly the ramification index of the extension, so we obtain the Proposition.  
\end{proof}

\begin{cor}\label{AbelExt2}
For any cyclic abelian extension $A\subseteq{A'}$ of Dedekind domains with corresponding extension of fraction fields $K\subseteq{L}=K[x]/(X^{n}-f)$, we have that the extension is unramified above $\mathfrak{p}$ if and only if $v_{\mathfrak{p}}(f)\equiv{0}\mod{n}$. 
\end{cor}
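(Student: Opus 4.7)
The plan is to deduce this directly from Proposition \ref{UnrAbelExt1} by localizing. First I would pass to the local situation by localizing $A$ at $\mathfrak{p}$, obtaining a discrete valuation ring $A_{\mathfrak{p}}$, and correspondingly replacing $A'$ by its localization $A'_{\mathfrak{p}} := A' \otimes_A A_{\mathfrak{p}}$, which is the integral closure of $A_{\mathfrak{p}}$ in $L$. Ramification at a prime $\mathfrak{q}$ of $A'$ lying above $\mathfrak{p}$ is preserved under this localization, since $e_{\mathfrak{q}/\mathfrak{p}}$ is computed locally. Before applying the proposition I may need to replace $f$ by $f \cdot b^{n}$ for a suitable $b \in K^{*}$ so that $f$ becomes an element of $A_{\mathfrak{p}}$; this neither changes the extension $L = K[X]/(X^{n}-f)$ nor the class of $v_{\mathfrak{p}}(f)$ modulo $n$.

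Once we are in the local setup, write $f = \pi^{m} u$ with $u \in A_{\mathfrak{p}}^{*}$ and $m = v_{\mathfrak{p}}(f)$. By Proposition \ref{UnrAbelExt1}, every prime $\mathfrak{q}$ of $A'_{\mathfrak{p}}$ above $\mathfrak{p}$ satisfies
\begin{equation}
e_{\mathfrak{q}/\mathfrak{p}} = \frac{n}{\gcd(m,n)}.
\end{equation}
The extension is unramified above $\mathfrak{p}$ precisely when $e_{\mathfrak{q}/\mathfrak{p}} = 1$ for every such $\mathfrak{q}$, which happens if and only if $\gcd(m,n) = n$, i.e.\ $n \mid m$. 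This is exactly the condition $v_{\mathfrak{p}}(f) \equiv 0 \bmod n$, which gives the corollary.

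There is essentially no obstacle here, since Proposition \ref{UnrAbelExt1} does the real work; the only minor care needed is the reduction to the local case and the observation that multiplying $f$ by an $n$-th power of an element of $K^{*}$ preserves both the Kummer extension and the residue class of $v_{\mathfrak{p}}(f)$ modulo $n$.
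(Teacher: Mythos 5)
Your argument is correct and matches what the paper intends: the corollary is stated with no proof because it is meant to follow immediately from Proposition \ref{UnrAbelExt1} by localizing at $\mathfrak{p}$, exactly as you do. Your remark about scaling $f$ by an $n$-th power to land in $A_{\mathfrak{p}}$ is a reasonable point of care, and it in fact mirrors the paper's own earlier observation (just before Proposition \ref{UnrAbelExt1}) that one may always assume $f\in A$ after such a scaling.
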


\begin{rem}
We will sometimes abbreviate "cyclic abelian coverings" to "abelian coverings" in this thesis. 
\end{rem}


\section{Quotients on the special fiber}\label{QuotientSpecial}

In Proposition \ref{GalExtNormInt}, 
we obtained a quotient $\mathcal{C}/G=\mathcal{D}$ for any finite Galois morphism of normal integral schemes $\mathcal{C}\rightarrow{\mathcal{D}}$. Suppose now that $\mathcal{D}$ is a scheme over a discrete valuation ring $R$. The composition $\mathcal{C}\rightarrow{\mathcal{D}}\rightarrow{\text{Spec}(R)}$ then also endows $\mathcal{C}$ with the structure of an $R$-scheme. The morphism $\mathcal{C}\rightarrow{\mathcal{D}}$ is then a morphism of $R$-schemes. We then have
\begin{lemma}\label{InjectiveGaloisAction}
\begin{equation}
G\hookrightarrow{\text{Aut}_{R}(\mathcal{C})}.
\end{equation}
\end{lemma}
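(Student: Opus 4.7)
The plan is to unpack the map $G \to \text{Aut}_R(\mathcal{C})$ into two parts: first produce the homomorphism to $\text{Aut}(\mathcal{C})$, then verify both that it lands in $\text{Aut}_R(\mathcal{C})$ and that it is injective.

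First, Proposition \ref{GalExtNormInt} already gives a natural action $G \to \text{Aut}(\mathcal{C})$ arising from the normalization of $\mathcal{D}$ in the Galois extension $K(\mathcal{D}) \subseteq K(\mathcal{C})$: each $\sigma \in G$ acts on every affine patch $\text{Spec}(A')$ of $\mathcal{C}$ lying above an affine $\text{Spec}(A)$ of $\mathcal{D}$ by the automorphism of $A'$ that restricts from the field automorphism of $K(\mathcal{C})$, and these patch-wise automorphisms glue to an automorphism of $\mathcal{C}$ because $A' = (A')^\sigma$ setwise.

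Next I would show that this action factors through $\text{Aut}_R(\mathcal{C})$. By construction, the structure map $\mathcal{C} \to \text{Spec}(R)$ factors as $\mathcal{C} \xrightarrow{\phi} \mathcal{D} \to \text{Spec}(R)$. From the definition of the $G$-action (or equivalently from $\mathcal{D} = \mathcal{C}/G$ in Proposition \ref{GalExtNormInt}) we have $\phi \circ \sigma = \phi$ for every $\sigma \in G$. Composing with the structure morphism $\mathcal{D} \to \text{Spec}(R)$ gives that the structure morphism of $\mathcal{C}$ is also invariant under $\sigma$, i.e.\ $\sigma$ is an $R$-scheme automorphism.

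For injectivity, suppose $\sigma \in G$ acts as the identity on $\mathcal{C}$. Then $\sigma$ acts as the identity on the stalk at the generic point, which is the function field $K(\mathcal{C})$. Since $G = \text{Gal}(K(\mathcal{C})/K(\mathcal{D}))$ acts faithfully on $K(\mathcal{C})$ by definition, this forces $\sigma = 1$. Hence the map $G \to \text{Aut}_R(\mathcal{C})$ is injective.

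The whole argument is essentially bookkeeping, and there is no real obstacle; the only point that deserves care is the compatibility of the abstract Galois action on $K(\mathcal{C})$ with a scheme-theoretic action on $\mathcal{C}$, which is handled by the normalization description in Proposition \ref{GalExtNormInt} together with the $G$-invariance of the subring $A = (A')^G$ in each affine chart.
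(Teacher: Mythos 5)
Your proposal is correct and follows essentially the same route as the paper: the $R$-linearity comes from the structure morphism of $\mathcal{C}$ being defined as the composition through $\mathcal{D}$, and injectivity is deduced from the faithfulness of the Galois action on the generic fiber (equivalently, on $K(\mathcal{C})$). The paper phrases the injectivity step as the composite embedding $\text{Aut}_R(\mathcal{C}) \hookrightarrow \text{Aut}_K(\mathcal{C}_\eta)$ and $G \hookrightarrow \text{Aut}_K(\mathcal{C}_\eta)$, but this is the same observation you make by restricting to the stalk at the generic point.
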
 
\begin{proof}
The fact that every $\sigma\in{G}$ gives an $R$-automorphism of $\mathcal{C}$ follows from our definition of the structural morphism $\mathcal{C}\rightarrow{\text{Spec}(R)}$. 
Since $\text{Aut}_{R}(\mathcal{C})\hookrightarrow{\text{Aut}_{K}(\mathcal{C}_{\eta})}$ and $G\hookrightarrow{\text{Aut}_{K}(\mathcal{C}_{\eta})}$, we obtain the statement of the lemma.
\end{proof}
\begin{rem}
If we start the other way around, say with an $R$-scheme structure on $\mathcal{C}$, then this does not descend to an $R$-scheme structure on the quotient $\mathcal{D}$ in general. For instance, let 
Let $K=\mathbb{Q}_{3}(\sqrt{3})$, $R'=\mathbb{Z}_{3}[\sqrt{3}]$, $\pi=\sqrt{3}$ and $R=\mathbb{Z}_{3}$, where $\mathbb{Z}_{3}$ and $\mathbb{Q}_{3}$ are the ring and field of $3$-adic numbers respectively.  Let $X:=\text{Spec}(A)$, where $A=R'[x,y]/(xy-\pi)$ and let $\sigma$ be the automorphism of $A$ given by
\begin{align*}
\sigma(x)&=x,\\
\sigma(y)&=-y,\\
\sigma(\pi)&=-\pi.
\end{align*}
This is an automorphism of order two. The invariant ring $A^{G}$ is then given by $R[x,z]/(x^2\cdot{z}-3)$, which embeds into $A$ by $z\mapsto{y^2}$. We see that the $R'$-scheme structure on $\text{Spec}(A)$ does not descend to an $R'$-scheme structure on $A^{G}$. We do have an $R$-scheme structure on both schemes and by Lemma \ref{InjectiveGaloisAction} we obtain an 
injective homomorphism $G\hookrightarrow{\text{Aut}_{R}(\text{Spec}(A))}$.
The morphism on the special fiber is then given by
\begin{equation}
\mathbb{F}_{3}[x,z]/(x^2\cdot{z})\rightarrow{(\mathbb{F}_{3}[x,y]/(xy))[w]/(w^2)},
\end{equation}
where the nilpotent $w$ comes from the totally ramified extension $\mathbb{Z}_{3}\rightarrow{\mathbb{Z}_{3}[w]/(w^2-3)}\simeq{\mathbb{Z}_{3}[\sqrt{3}]}$. 
\end{rem}



Let us now study the special fiber of $\mathcal{C}$. We have the exact sequence of sheaves
\begin{equation}\label{Firstexactsequence}
0\rightarrow\mathcal{I}\rightarrow{\mathcal{O}_{\mathcal{C}}}\rightarrow{\mathcal{O}_{\mathcal{C}_{s}}}\rightarrow{0},
\end{equation} 
where $\mathcal{I}$ is the ideal sheaf of $\mathcal{C}_{s}$. That is, $\mathcal{I}=\pi{\mathcal{O}_{\mathcal{C}}}$. These sheaves have a natural action by $G$ on them. Taking the invariants under this action then yields the exact sequence
\begin{equation}\label{ExactQuotientSpecial}
0\longrightarrow{\pi{\mathcal{O}_{\mathcal{D}}}}\rightarrow{\mathcal{O}_{\mathcal{D}}}\rightarrow{(\mathcal{O}_{\mathcal{C}_{s}})^{G}}
\end{equation} 
of sheaves on $\mathcal{D}$. 
Note however that the righthand map is not necessarily surjective.
\begin{exa}
Consider the Galois covering given generically by $K(x)\rightarrow{K(x)[z](z^{p}-x)}$, where $K=\mathbb{Q}_{p}(\zeta_{p})$ with uniformizer $\pi=1-\zeta_{p}$. We then have the exact sequence
\begin{equation}
0\rightarrow{}(\pi)\rightarrow{R[z]}\rightarrow{\mathbb{F}_{p}[z]}\rightarrow{0}.
\end{equation}
Note that the Galois action is given by $z\rightarrow{\zeta_{p}\cdot{}z}$ on the generic fiber. The reduction of $\zeta_{p}$ is equal to $1$ however, so the induced action on $\mathbb{F}_{p}[z]$ is trivial. Taking the invariants then yields
\begin{equation}
0\rightarrow{}(\pi)\rightarrow{R[z^{p}]}\rightarrow{\mathbb{F}_{p}[z]}.
\end{equation}
The last map $R[z^{p}]\rightarrow{\mathbb{F}_{p}[z]}$ is not surjective, since there is no element of $R[z^{p}]$ that maps to $z$. 
\end{exa}

When the order of the Galois group is coprime to the characteristic of the residue field, we can prove that the last morphism in Equation \ref{ExactQuotientSpecial} \emph{is} surjective.
\begin{pro}
Suppose that $\text{char}(k)\nmid{|G|}$. Then 
\begin{equation}
0\longrightarrow{\pi{\mathcal{O}_{\mathcal{D}}}}\rightarrow{\mathcal{O}_{\mathcal{D}}}\rightarrow{(\mathcal{O}_{\mathcal{C}_{s}})^{G}}\rightarrow{0}
\end{equation}
is exact.
\end{pro}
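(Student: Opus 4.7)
The plan is to reduce the question to a statement about affine $G$-stable pieces of $\mathcal{C}$ and then exploit the invertibility of $|G|$ via a Reynolds averaging operator. By the discussion preceding Equation \ref{ExactQuotientSpecial}, left-exactness is automatic (taking invariants is left exact), so I only need to verify surjectivity of $\mathcal{O}_{\mathcal{D}} \to (\mathcal{O}_{\mathcal{C}_{s}})^{G}$.

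First I would observe that surjectivity is local on $\mathcal{D}$, and that by Corollary \ref{CorOpenQuotient} together with Proposition \ref{PropositionQuotient1} (the Galois action of $G$ on $\mathcal{C}$ is admissible, $\mathcal{C}$ being a normal integral scheme over $R$), the scheme $\mathcal{D}$ is covered by open affines $U = \mathrm{Spec}(A)$ whose pre-images $\phi^{-1}(U) = \mathrm{Spec}(B)$ are $G$-stable affines of $\mathcal{C}$ with $A = B^{G}$. Thus it suffices to show that for such $B$ with $G$-action fixing $A = B^{G}$, the induced map
\[
B^{G}/\pi B^{G} \longrightarrow (B/\pi B)^{G}
\]
is surjective.

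Next I would introduce the Reynolds operator. Since $K$ has characteristic zero and $\mathrm{char}(k) \nmid |G|$, the integer $|G|$ has valuation zero in $R$, so $|G| \in R^{*}$. Define
\[
e_{G} : B \longrightarrow B^{G}, \qquad b \longmapsto \frac{1}{|G|}\sum_{\sigma \in G} \sigma(b).
\]
Given $\bar{b} \in (B/\pi B)^{G}$, surjectivity of the reduction $B \twoheadrightarrow B/\pi B$ provides a lift $b \in B$. The element $e_{G}(b) \in B^{G}$ then reduces modulo $\pi$ to
\[
\overline{e_{G}(b)} \;=\; \frac{1}{|G|}\sum_{\sigma \in G} \overline{\sigma(b)} \;=\; \frac{1}{|G|}\sum_{\sigma \in G} \sigma(\bar{b}) \;=\; \bar{b},
\]
where the last equality uses that $\bar b$ is $G$-invariant. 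This produces the required preimage in $B^{G}/\pi B^{G}$, and hence in $\mathcal{O}_{\mathcal{D}}$.

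The only real obstacle is having the averaging operator at all: the entire argument hinges on $|G|$ being a unit, and this is precisely where the tameness hypothesis $\mathrm{char}(k)\nmid|G|$ enters. The example just before the proposition (the Kummer cover $z^{p}=x$ in residue characteristic $p$) shows that without this hypothesis the map really can fail to be surjective, so no more clever argument could avoid the assumption.
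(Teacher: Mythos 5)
Your proof is correct, and it takes a slightly different but closely related route from the paper. The paper localizes to stalks and then applies the long exact sequence of group cohomology, citing that $H^{1}(G,M)=0$ whenever the order $|G|$ is invertible on $M$; you localize instead to a $G$-stable affine cover (justified via the admissibility material, Propositions \ref{PropositionQuotient1} and \ref{GalExtNormInt}) and construct the preimage explicitly via the Reynolds operator $e_{G}=\frac{1}{|G|}\sum_{\sigma}\sigma$. These are really the same mechanism seen from two angles: the standard proof of the cohomological vanishing the paper cites \emph{is} the Reynolds averaging (one exhibits a contracting homotopy on cochains by dividing by $|G|$). Your version has the merit of being self-contained — no appeal to group cohomology — and of producing a concrete lift, at the small cost of having to set up the $G$-stable affine cover rather than just passing to stalks. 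Both arguments depend on the tameness hypothesis in exactly the same place, namely $|G|\in R^{*}$, and your closing observation that the Kummer example $z^{p}=x$ in residue characteristic $p$ shows the hypothesis is genuinely needed is the same point the paper makes just before the proposition.
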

\begin{proof}
It suffices to show that the sequence is exact on the stalks. Let $y\in{D}$ and let $x\in\mathcal{C}$ be any point such that $\phi_{\mathcal{C}}(x)=y$. From Equation \ref{Firstexactsequence}, we obtain the exact sequence of abelian groups
\begin{equation}
0\rightarrow\mathcal{I}_{x}\rightarrow{\mathcal{O}_{\mathcal{C},x}}\rightarrow{\mathcal{O}_{\mathcal{C}_{s,x}}}\rightarrow{0}.
\end{equation}
Taking the long exact sequence of group cohomology (see \cite[Appendix B, Proposition 2.3]{Silv1} or \cite{Ser1}), we then obtain 
\begin{equation}
0\longrightarrow{(\pi{\mathcal{O}_{\mathcal{D}}})_{y}}\rightarrow{\mathcal{O}_{\mathcal{D},y}}\rightarrow{(\mathcal{O}_{\mathcal{C}_{s,y}})^{G}}\rightarrow{H^{1}((\pi{\mathcal{O}_{\mathcal{D}}})_{y},G)}.
\end{equation}
Using \cite[Chapter VIII, Section 2, Corollary 1]{Ser1}, we then see that this last cohomology group is zero (here we use the condition on the order of the Galois group). This finishes the proof. 


\end{proof}

\begin{cor}\label{QuotientSpecialFiber}
\begin{equation}
\mathcal{C}_{s}/G=\mathcal{D}_{s}.
\end{equation}
\end{cor}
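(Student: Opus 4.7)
The plan is to derive the scheme-theoretic identity $\mathcal{C}_s/G=\mathcal{D}_s$ from the exact sequence
\[
0\longrightarrow \pi\mathcal{O}_{\mathcal{D}}\longrightarrow \mathcal{O}_{\mathcal{D}}\longrightarrow \bigl((\phi_{\mathcal{C}})_{*}\mathcal{O}_{\mathcal{C}_{s}}\bigr)^{G}\longrightarrow 0
\]
just established in the preceding proposition, together with the admissibility criterion recorded in Proposition \ref{PropositionQuotient1}. The key observation is that the cokernel of multiplication by $\pi$ on $\mathcal{O}_{\mathcal{D}}$ is, by definition, $\mathcal{O}_{\mathcal{D}_{s}}$, so the sequence identifies $\mathcal{O}_{\mathcal{D}_{s}}$ canonically with $\bigl((\phi_{\mathcal{C}})_{*}\mathcal{O}_{\mathcal{C}_{s}}\bigr)^{G}$ as sheaves on $\mathcal{D}$.

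First I would set up the restricted morphism $\phi_{\mathcal{C}_{s}}:\mathcal{C}_{s}\to\mathcal{D}_{s}$. Since $\phi_{\mathcal{C}}$ is finite (it comes from a Galois extension of normal integral schemes, see Proposition \ref{GalExtNormInt}), its base change $\phi_{\mathcal{C}_{s}}$ along $\mathcal{D}_{s}\hookrightarrow\mathcal{D}$ is again finite, hence affine. The $G$-action on $\mathcal{C}$ is by $R$-automorphisms (Lemma \ref{InjectiveGaloisAction}), and therefore preserves the closed subscheme $\mathcal{C}_{s}=\mathcal{C}\times_{\mathrm{Spec}(R)}\mathrm{Spec}(k)$; this gives a $G$-action on $\mathcal{C}_{s}$ for which $\phi_{\mathcal{C}_{s}}$ is invariant. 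Since $\phi_{\mathcal{C}_{s}}$ is affine and invariant, the $G$-action on $\mathcal{C}_{s}$ is admissible in the sense of Grothendieck.

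Next, because $(\phi_{\mathcal{C}})_{*}\mathcal{O}_{\mathcal{C}_{s}}$ is supported on $\mathcal{D}_{s}$, it agrees (as a sheaf on $\mathcal{D}$ concentrated on $\mathcal{D}_{s}$) with the pushforward $(\phi_{\mathcal{C}_{s}})_{*}\mathcal{O}_{\mathcal{C}_{s}}$. Combining this identification with the exactness proved above yields a canonical isomorphism
\[
\mathcal{O}_{\mathcal{D}_{s}}\;\xrightarrow{\;\sim\;}\;\bigl((\phi_{\mathcal{C}_{s}})_{*}\mathcal{O}_{\mathcal{C}_{s}}\bigr)^{G}
\]
of sheaves on $\mathcal{D}_{s}$. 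Together with the affineness and invariance of $\phi_{\mathcal{C}_{s}}$, this is exactly the hypothesis of Proposition \ref{PropositionQuotient1}, so $(\mathcal{D}_{s},\phi_{\mathcal{C}_{s}})$ represents the quotient of $\mathcal{C}_{s}$ by $G$.

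The only subtle point is the bookkeeping between sheaves on $\mathcal{C}$, on $\mathcal{D}$ and on $\mathcal{D}_{s}$: one must check that forming $G$-invariants commutes with identifying $(\phi_{\mathcal{C}})_{*}\mathcal{O}_{\mathcal{C}_{s}}$ with $(\phi_{\mathcal{C}_{s}})_{*}\mathcal{O}_{\mathcal{C}_{s}}$, but this is immediate once one works locally on an affine open $U\subseteq\mathcal{D}$: for $A=\mathcal{O}_{\mathcal{D}}(U)$ and $B=\mathcal{O}_{\mathcal{C}}(\phi_{\mathcal{C}}^{-1}(U))$ one simply applies the isomorphism $A/\pi A\cong (B/\pi B)^{G}$ coming from the exact sequence, which is precisely the affine statement $\mathcal{O}_{\mathcal{D}_{s}}(U\cap\mathcal{D}_{s})=\mathcal{O}_{\mathcal{C}_{s}}(\phi_{\mathcal{C}_{s}}^{-1}(U\cap\mathcal{D}_{s}))^{G}$ needed to invoke Proposition \ref{PropositionQuotient1}. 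No further work is required, so the corollary follows.
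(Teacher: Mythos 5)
Your argument is correct and is essentially the same route the paper has in mind: the paper states the corollary as an immediate consequence of the exact sequence $0\to\pi\mathcal{O}_{\mathcal{D}}\to\mathcal{O}_{\mathcal{D}}\to(\mathcal{O}_{\mathcal{C}_s})^G\to 0$ together with Proposition \ref{PropositionQuotient1}, and gives no further proof; you have merely filled in the bookkeeping (restriction of the $G$-action to $\mathcal{C}_s$, finiteness/affineness of $\phi_{\mathcal{C}_s}$, and the identification of $(\phi_{\mathcal{C}})_*\mathcal{O}_{\mathcal{C}_s}$ with $(\phi_{\mathcal{C}_s})_*\mathcal{O}_{\mathcal{C}_s}$) that the paper leaves implicit. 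One small wording point: the sentence ``Since $\phi_{\mathcal{C}_s}$ is affine and invariant, the $G$-action on $\mathcal{C}_s$ is admissible'' is premature as phrased, because the paper's definition of admissibility also requires the structure-sheaf isomorphism $\mathcal{O}_{\mathcal{D}_s}\xrightarrow{\sim}\bigl(p_*\mathcal{O}_{\mathcal{C}_s}\bigr)^G$, which you only establish in the following paragraph; either move that sentence after the isomorphism, or instead invoke the paper's criterion that admissibility is equivalent to having a $G$-invariant affine open cover (which follows from $\phi_{\mathcal{C}_s}$ being affine by pulling back an affine cover of $\mathcal{D}_s$).
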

We note that the above argument can also partially be found in \cite[Remark 1.7]{liu1}.

\section{Galois actions on intersection graphs}\label{MetrizedQuotient}


In this section, we will show that a disjointly branched morphism $\mathcal{C}\rightarrow{\mathcal{D}}$ with Galois group $G$ gives rise to a Galois action on the intersection graph $\Sigma(\mathcal{C})$. In fact, we will see that $\Sigma(\mathcal{C})/G=\Sigma(\mathcal{D})$. We will also define the notions of decomposition and inertia groups for these graphs.

So let us consider a disjointly branched morphism $\phi:\mathcal{C}\longrightarrow{\mathcal{D}}$ with Galois group $G$, as defined in Section \ref{DisjointBran11}. 
\begin{lemma}\label{LemmaGaloisAction}
For any $\sigma\in{G}$ and $x\in\mathcal{C}_{s}$ an intersection point of $\Gamma_{1}$ and $\Gamma_{2}$, we have that $\sigma(x)$ is an intersection point of $\sigma(\Gamma_{1})$ and $\sigma(\Gamma_{2})$.
\end{lemma}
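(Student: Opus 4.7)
The plan is to unpack what it means for $\sigma$ to act on $\mathcal{C}$ and then simply chase the point $x$ through the action, checking that all the relevant structure is preserved. By Lemma \ref{InjectiveGaloisAction}, every $\sigma \in G$ is an $R$-automorphism of $\mathcal{C}$, so in particular it preserves the structural morphism $\mathcal{C} \to \operatorname{Spec}(R)$. Consequently, $\sigma$ restricts to an automorphism of the special fiber $\mathcal{C}_s$. So the first step is to record that $\sigma(x) \in \mathcal{C}_s$ whenever $x \in \mathcal{C}_s$.

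Next, I would note that automorphisms of schemes send irreducible components to irreducible components (they are homeomorphisms of the underlying space, so they preserve irreducibility and closedness of the closures of generic points). Thus $\sigma(\Gamma_1)$ and $\sigma(\Gamma_2)$ are irreducible components of $\mathcal{C}_s$, and they are distinct because $\sigma$ acts bijectively on the set of components (so $\sigma(\Gamma_1) = \sigma(\Gamma_2)$ would force $\Gamma_1 = \Gamma_2$). By set-theoretic functoriality, $x \in \Gamma_1 \cap \Gamma_2$ implies $\sigma(x) \in \sigma(\Gamma_1) \cap \sigma(\Gamma_2)$.

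Finally, I need to verify that $\sigma(x)$ is an \emph{intersection point}, i.e.\ a singular point where two distinct components meet, not merely a point lying on both images. Since $\mathcal{C}$ is semistable (by the corollary following Definition \ref{disbran}) and $\sigma$ is an automorphism of $\mathcal{C}$, it sends singular points of $\mathcal{C}_s$ to singular points of $\mathcal{C}_s$; combined with the previous paragraph, this means $\sigma(x)$ lies in the intersection of the two distinct components $\sigma(\Gamma_1)$ and $\sigma(\Gamma_2)$, which is precisely the definition of an intersection point in the semistable setting.

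I do not anticipate any real obstacle here; the only subtlety is the verbal one of distinguishing ``$\sigma(x)$ is a point of $\sigma(\Gamma_1)\cap \sigma(\Gamma_2)$'' from ``$\sigma(x)$ is an intersection point of these components,'' and this is resolved by invoking semistability together with the fact that $\sigma$ is an automorphism (hence preserves the locus of ordinary double points). The same argument will also be convenient to reuse later when checking that $\sigma$ induces a well-defined automorphism of the graph $\Sigma(\mathcal{C})$.
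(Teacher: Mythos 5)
Your proof is correct and follows essentially the same route as the paper: chase $x$ through $\sigma$, using that $\sigma$ permutes the irreducible components and preserves closure relations. The paper encodes this purely as the ideal containments $\mathfrak{m}\supseteq\mathfrak{p}_{i}$ being carried to $\sigma(\mathfrak{m})\supseteq\sigma(\mathfrak{p}_{i})$, leaving implicit the observation you make explicit at the end (that lying on two distinct components of a semistable special fiber is the same as being an ordinary double point, so no separate check of the singular locus is even needed).
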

\begin{proof}
Let $\mathfrak{m}$ be the maximal ideal corresponding to $x$ on some open affine $U$. Correspondingly, let $\mathfrak{p}_{i}$ be the primes corresponding to $\Gamma_{i}$. 
The fact that $x$ is an intersection point of $\Gamma_{1}$ and $\Gamma_{2}$ can be paraphrased by
\begin{equation*}
\mathfrak{m}\supseteq{\mathfrak{p}_{i}}
\end{equation*}
for both $i$. Letting $\sigma$ act on the above, we obtain
\begin{equation*}
\sigma(\mathfrak{m})\supseteq{\sigma(\mathfrak{p}_{i})}
\end{equation*}
for both $i$, meaning that $\sigma(x)$ is an intersection point of $\sigma(\Gamma_{1})$ and $\sigma(\Gamma_{2})$. 
\end{proof}
We thus see that $\sigma$ acts as an automorphism of graphs: if $v$ and $v'$ are joined by an edge $e$, then $\sigma(v)$ and $\sigma(v')$ are joined by the edge $\sigma(e)$ by the above lemma. We therefore see that we have a homomorphism
\begin{equation*}
G\longrightarrow{\text{Aut}(\Sigma(\mathcal{C}))}.
\end{equation*}
We can now define {\it{decomposition groups}} and {\it{inertia groups}} for elements of our graph.
\begin{mydef}
[{\bf{Decomposition and inertia groups for graphs}}]
Let $v$ and $e$ be a vertex and edge respectively of the intersection graph $\Sigma(\mathcal{C})$. Let the corresponding points in $\mathcal{C}$ be given by $x_{v}$ and $x_{e}$. We define the {\it{decomposition group of }}$v$ and $e$ to be $D_{x_{v}}$ and $D_{x_{e}}$ respectively. Similarly, we define the {\it{inertia groups}} of $v$ and $e$ to be $I_{x_{v}}$ and $I_{x_{e}}$. 
\end{mydef}
Recall now that we have a homomorphism $G\longrightarrow{\text{Aut}(\Sigma(\mathcal{C}))}$. We can therefore consider the quotient of graphs
\begin{equation*}
\Sigma(\mathcal{C})\longrightarrow{\Sigma(\mathcal{C})/G}.
\end{equation*}

\begin{rem}\label{NonQuotGraph1}
Unfortunately, we can have that $\Sigma(\mathcal{C})/G\neq\Sigma(\mathcal{D})$ for semistable models $\mathcal{C}$ and $\mathcal{D}$. Indeed, consider the semistable model given by the equation
\begin{equation*}
y^2=x(x-\pi)(x+1)(x+1-\pi)(x+2)(x+2-\pi),
\end{equation*}
which gives an intersection graph with two vertices and three edges between them. We have a natural Galois action on this model, given by
\begin{equation*}
y\longmapsto{-y}
\end{equation*}
on the coordinate rings.
Note that the edges of the intersection graph are invariant under the action, giving
\begin{equation*}
I_{e_{i}}=\mathbb{Z}/2\mathbb{Z}.
\end{equation*}
For the components however, we have that
\begin{equation*}
D_{v_{i}}=(1).
\end{equation*}
We thus see that the quotient graph $\Sigma(\mathcal{C})/G$ consists of one vertex with three loops. The intersection graph of the quotient $\mathcal{C}/G$ consists of just one vertex however.
\end{rem}


We would now like to prove the following theorem:
\begin{theorem}\label{MainQuotientTheorem1}
Let $\phi:\mathcal{C}\longrightarrow{\mathcal{D}}$ be a disjointly branched Galois morphism with Galois group $G$. Then
\begin{equation*}
\Sigma(\mathcal{C})/G=\Sigma(\mathcal{D}).
\end{equation*}	
\end{theorem}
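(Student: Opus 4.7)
The plan is to exhibit the canonical graph morphism $\Psi: \Sigma(\mathcal{C})/G \to \Sigma(\mathcal{D})$ induced by $\phi_s: \mathcal{C}_s \to \mathcal{D}_s$, and to verify that it is a bijection on vertices, a bijection on edges, and compatible with incidence. For vertices, the key ingredients are that $\mathcal{C}_s/G = \mathcal{D}_s$ by Corollary \ref{QuotientSpecialFiber} and that $G$ acts transitively on the set of components of $\mathcal{C}_s$ lying above any given component of $\mathcal{D}_s$ (by Proposition \ref{QuotientLemma10} applied at generic points). Hence $G$-orbits of components of $\mathcal{C}_s$ are in canonical bijection with components of $\mathcal{D}_s$, which gives the vertex part of $\Psi$.

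For edges, I need the $G$-orbits of ordinary double points of $\mathcal{C}_s$ to biject with the ordinary double points of $\mathcal{D}_s$. One direction is given by Corollary \ref{CorSmooth2}: nodes go to nodes. For the converse, I would show that a smooth point $x \in \mathcal{C}_s$ cannot map to a node of $\mathcal{D}_s$. After completion we have $\widehat{\mathcal{O}}_{\mathcal{D}_s,\phi_s(x)} \simeq (\widehat{\mathcal{O}}_{\mathcal{C}_s,x})^{D_x}$ by the quotient description of Propositions \ref{QuotientLemma10}--\ref{PropositionQuotient1}; since $x$ is smooth we have $\widehat{\mathcal{O}}_{\mathcal{C}_s,x} \simeq k[[u]]$; and tameness of the $D_x$-action (since $\text{char}(k) \nmid |G|$) allows one to choose $u$ on which $D_x$ acts through a character, so the invariants form the regular ring $k[[u^n]]$ and $\phi_s(x)$ is smooth too. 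Since $G$ preserves the smooth locus and acts transitively on the fibers $\phi_s^{-1}(y)$, the preimage of a node is then a full orbit of nodes, which yields the desired bijection on edges.

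For incidence, let $e$ correspond to a node $x$ with branches on components $\Gamma_1 \neq \Gamma_2$ of $\mathcal{C}_s$; then $\Psi(e)$ corresponds to the node $\phi_s(x)$ with branches on $\phi_s(\Gamma_1)$ and $\phi_s(\Gamma_2)$. Because $\mathcal{D}$ is strongly semistable, each component of $\mathcal{D}_s$ is smooth and so cannot self-intersect, forcing $\phi_s(\Gamma_1) \neq \phi_s(\Gamma_2)$ and hence placing $\Gamma_1, \Gamma_2$ in distinct $G$-orbits. The image of $e$ in $\Sigma(\mathcal{C})/G$ therefore has endpoints matching those of $\Psi(e)$ under the vertex bijection, completing the verification.

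The hard part will be the preimage-of-node claim in the edge step: the disjointness of the branch locus is needed in order to have Corollary \ref{CorSmooth2} (via Lemma \ref{LemmaSmooth2}) available, and the tameness hypothesis is essential for the quotient of a smooth local ring to remain smooth. Remark \ref{NonQuotGraph1} exhibits precisely the pathology (nodes of $\mathcal{C}_s$ collapsing onto smooth points of $\mathcal{D}_s$) that this step is designed to rule out in the disjointly branched setting.
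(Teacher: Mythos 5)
Your proposal is correct and follows the same high-level route as the paper: use the scheme-theoretic quotient $\mathcal{C}/G=\mathcal{D}$ (and $\mathcal{C}_s/G=\mathcal{D}_s$), the node-to-node statement of Corollary \ref{CorSmooth2}, and the strong semistability of $\mathcal{D}$ to identify the vertex, edge, and incidence data of $\Sigma(\mathcal{C})/G$ with that of $\Sigma(\mathcal{D})$. The genuine added value of your write-up is that you explicitly verify the direction the paper leaves implicit, namely that a smooth point of $\mathcal{C}_s$ cannot lie over a node of $\mathcal{D}_s$ (equivalently, that the fibre over every node consists entirely of nodes). The paper's proof only establishes that nodes go to nodes and then asserts the theorem, but without your converse one cannot yet conclude that every edge of $\Sigma(\mathcal{D})$ is actually in the image, which is needed for the bijection. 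Your Cartan-linearization argument for the converse is sound under the tameness hypothesis, though the identification $\widehat{\mathcal{O}}_{\mathcal{D}_s,\phi_s(x)}\simeq(\widehat{\mathcal{O}}_{\mathcal{C}_s,x})^{D_x}$ deserves a line of justification beyond a bare citation of Propositions \ref{QuotientLemma10}--\ref{PropositionQuotient1}: those give the sheaf-level identity $\mathcal{O}_{\mathcal{D}_s}=(\phi_{s*}\mathcal{O}_{\mathcal{C}_s})^G$, and one must still pass to completed stalks and isolate the $D_x$-factor of the semilocal ring of the fibre (standard when $|G|$ is invertible, but worth a word).

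Two remarks on alternative routes. First, the converse can be obtained more cheaply, entirely within the paper's existing toolbox and without any local linearization: since $\mathcal{C}\to\mathcal{D}$ is a finite morphism of integral normal schemes, going-down holds, so the two distinct components of $\mathcal{D}_s$ through a node $y$ lift to two distinct components of $\mathcal{C}_s$ through any chosen $x\in\phi^{-1}(y)$, forcing $x$ to be a node. This is the mirror of the going-up argument the paper already runs in the proof of Proposition \ref{PropStrongSemSta1}. Second, the paper does in effect invoke exactly your linearization fact two lemmas later (via \cite[Proposition 1.6]{liu1}, that the image of a smooth point under the quotient by a tame finite group is smooth), so your argument aligns with what the author would plug in; you have simply unpacked that black box. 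Your incidence argument via strong semistability of $\mathcal{D}$ (so no component self-intersects, forcing $\phi_s(\Gamma_1)\neq\phi_s(\Gamma_2)$ and hence distinct $G$-orbits) is correct and matches the role Proposition \ref{InjectionDecomposition} plays elsewhere in the text.
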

\begin{proof}
The proof relies mostly on Lemma \ref{LemmaSmooth2}, which we will restate here.
\begin{lemma}\label{LemmaSmooth1}
Let $\phi:\mathcal{C}\longrightarrow{\mathcal{D}}$ be a disjointly branched Galois morphism. Then the pre-image of a smooth point consists of smooth points.
\end{lemma}
Let us also restate Corollary \ref{CorSmooth2}:
\begin{cor}\label{CorSmooth1}
For every ordinary double point  $x$ of $\mathcal{C}$, we have that the image $\phi(x)$ is an ordinary double point of $\mathcal{D}$.
\end{cor}
To finish the proof of Theorem \ref{MainQuotientTheorem1}, first note that we already have
\begin{equation*}
\mathcal{C}/G=\mathcal{D},
\end{equation*}
a quotient of schemes. We thus only have to show that vertices are mapped to vertices and edges to edges. The first follows from the fact that $\phi:\mathcal{C}\longrightarrow{\mathcal{D}}$ 
maps a codimension one prime in the special fiber $\mathcal{C}_{s}$ to another codimension one prime in $\mathcal{D}_{s}$ (this follows from \cite[Chapter 4.3, Proposition 3.12]{liu2} for instance). 
The second follows from Corollary \ref{CorSmooth1}. This gives the theorem.
\end{proof}

Let us now consider the following problem. Suppose that we again have a disjointly branched morphism 
\begin{equation*}
\phi:\mathcal{C}\longrightarrow{\mathcal{D}}
\end{equation*}
with Galois group $G$. Let $H$ be any subgroup of $G$. We can consider the subquotient
\begin{equation*}
\mathcal{C}\longrightarrow{\mathcal{C}/H}\longrightarrow{\mathcal{D}},
\end{equation*}
where we define $\psi:\mathcal{C}\longrightarrow{\mathcal{C}/H}$ to be the quotient morphism. Note that this quotient is again semistable, by \cite[Proposition 3.48, Page 526]{liu2}. 
\begin{lemma}
The image $\psi(x)$ of an ordinary double point $x\in\mathcal{C}_{s}$ is an ordinary double point of $\mathcal{C}/H$.
\end{lemma}
\begin{proof}
Suppose that $\psi(x)$ is smooth. Then the image of $\psi(x)$ in $\mathcal{D}$ is also smooth, by \cite[Proposition 1.6., page 16]{liu1}.  This contradicts Corollary \ref{CorSmooth1}, concluding the proof.  
\end{proof}
\begin{lemma}\label{MainQuotientLemma1}
Let $\mathcal{C}$, $\mathcal{D}$ and $H$ be as above. Then
\begin{equation*}
\Sigma(\mathcal{C})/H=\Sigma(\mathcal{C}/H).
\end{equation*}
\end{lemma}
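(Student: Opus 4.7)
The strategy is to mirror the proof of Theorem \ref{MainQuotientTheorem1}, replacing $\mathcal{D}$ by $\mathcal{C}/H$ and $G$ by $H$. The morphism $\psi:\mathcal{C}\to\mathcal{C}/H$ is Galois with group $H$ by Proposition \ref{GalExtNormInt}, since $\mathcal{C}$ is normal integral and the function field extension $K(\mathcal{C})/K(\mathcal{C})^{H}$ is Galois with group $H$. As noted in the paragraph preceding the statement, $\mathcal{C}/H$ is semistable. Note also that since $\psi$ is the scheme-theoretic quotient, its fibers are exactly the $H$-orbits on $\mathcal{C}$.

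First I would check that $\psi$ sends vertices of $\Sigma(\mathcal{C})$ to vertices of $\Sigma(\mathcal{C}/H)$. If $y\in\mathcal{C}_{s}$ is a generic point of an irreducible component, then $\psi(y)$ is a codimension one point of $(\mathcal{C}/H)_{s}$: indeed $\psi$ is finite, hence preserves codimension by \cite[Chapter 4.3, Proposition 3.12]{liu2}, and $\psi(y)$ lies in $(\mathcal{C}/H)_{s}$ because $\psi$ is a morphism of $R$-schemes. Combined with the fact that the fibers of $\psi$ are $H$-orbits, this yields a canonical bijection $V(\Sigma(\mathcal{C}))/H \;\stackrel{\sim}{\longrightarrow}\; V(\Sigma(\mathcal{C}/H))$. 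The verification that $\psi$ sends edges to edges, i.e.\ ordinary double points of $\mathcal{C}_{s}$ to ordinary double points of $(\mathcal{C}/H)_{s}$, is precisely the lemma stated just above Lemma \ref{MainQuotientLemma1}. Combining this with the incidence-preservation statement of Lemma \ref{LemmaGaloisAction} (applied to elements of $H\subseteq G$), we also obtain a bijection $E(\Sigma(\mathcal{C}))/H \;\stackrel{\sim}{\longrightarrow}\; E(\Sigma(\mathcal{C}/H))$ compatible with the bijection on vertices.

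Putting these two bijections together gives the desired identification $\Sigma(\mathcal{C})/H=\Sigma(\mathcal{C}/H)$. The main obstacle, which is already handled by the preceding lemma, is ruling out that $\psi$ could collapse an edge to a vertex by sending a node of $\mathcal{C}_{s}$ to a smooth point of $(\mathcal{C}/H)_{s}$; this is exactly why we need $\mathcal{C}\to\mathcal{D}$ to be disjointly branched, rather than merely requiring $H$ to act on an arbitrary semistable model. Remark \ref{NonQuotGraph1} shows that without such a hypothesis the conclusion can genuinely fail.
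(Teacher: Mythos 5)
Your proof is correct and follows essentially the same route as the paper: it invokes the lemma immediately preceding (that $\psi$ sends ordinary double points to ordinary double points) together with the codimension-preserving property of finite morphisms, exactly as the paper's proof does by referring back to the argument of Theorem \ref{MainQuotientTheorem1}. You simply spell out the vertex/edge bijections more explicitly, which is a fair elaboration of what the paper leaves implicit.
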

\begin{proof}
As in Theorem \ref{MainQuotientTheorem1}, we have a Galois quotient 
\begin{equation*}
\mathcal{C}\longrightarrow{\mathcal{C}/H}
\end{equation*}
with Galois group $H$. As in that theorem, we see that vertices are mapped to vertices and edges to edges, so the lemma follows.  
\end{proof}
The above lemma allows us to find the intersection graph of quotients by just taking the quotient of the intersection graphs. We will use this quite often in the examples to come. 

\begin{mydef}
Let $\phi:\mathcal{C}\longrightarrow{\mathcal{D}}$ be a disjointly branched morphism. We define $\phi_{\Sigma}:\Sigma(\mathcal{C})\longrightarrow{\Sigma(\mathcal{D})}$ to be the induced morphism on intersection graphs. Note that this is well-defined by Theorem \ref{MainQuotientTheorem1}.
\end{mydef}

\section{Metrized complexes of curves}\label{Metrizedcomplex}

In Section \ref{MetrizedQuotient}, we saw that a disjointly branched morphism $\phi_{\mathcal{C}}:\mathcal{C}\rightarrow{\mathcal{D}}$ gives rise to a morphism $\Sigma(\mathcal{C})\rightarrow{\Sigma(\mathcal{D})}$. In this section, we will see that  $\phi_{\mathcal{}}$ also gives rise to a quotient of {\it{metrized complexes of curves}}. Briefly speaking, these metrized complexes consist of a weighted metric graph with the additional data of an algebraic curve $C_{v}$ for every vertex $v$, where the edges adjacent to $v$ are identified with closed points in $C_{v}$.

We will follow the presentation in \cite{ABBR1} for metrized complexes of $k$-curves. 
\begin{mydef}
A metrized complex of $k$-curves consists of the following data:
\begin{enumerate}
\item A weighted metric graph $(\Sigma, w(\cdot{}), l(\cdot{}))$,
\item A smooth, irreducible projective curve $C_{v}/k$ for every vertex $v\in{V(\Sigma)}$ such that $w(v)=g(C_{v})$.
\item An injective function $\text{red}_{v}:T_{v}\rightarrow{C_{v}(k)}$, where $T_{v}$ is the set of edges connected to $v$.
\end{enumerate}
The metrized complex $k$-curves corresponding to this data will be denoted by $(\Sigma,w(\cdot{}),l(\cdot{}),\text{red}_{v}(\cdot{}))$, or just $\Sigma$ if no confusion can arise.
\end{mydef}

\begin{exa}\label{MetrizedExample}
Let $\mathcal{C}$ be a strongly semistable model over $R$. The weighted metric graph $\Sigma(\mathcal{C})$ can be turned into a metrized complex of $k$-curves as follows:
\begin{enumerate}
\item A vertex $v$ in $\Sigma(\mathcal{C})$ naturally corresponds to an irreducible component $\Gamma/k$. We assign the curve $\Gamma$ to $v$ and see that the weight assigned to $v$ is the genus of this curve.
\item Every edge $e$ corresponds to an intersection point $\tilde{x}$ of $\Gamma$ with another component $\Gamma'$. We assign the point $\tilde{x}$ to the edge $e$. 
\end{enumerate}
\end{exa}

We would now like to define the notion of a morphism of metrized complexes. To do that, we first define what a morphism of weighted metric graphs is. After that, we define a {harmonic morphism} of weighted metric graphs and then proceed to the definition of a harmonic morphism of metrized complexes. The underlying idea in defining these notions is that they should be some sort of discrete variant of the usual notion of a morphism of algebraic curves. In particular, we should have some kind of notion of a \emph{ramification index} and \emph{degree}. 

Let $\Sigma'$ and $\Sigma$ be two weighted metric graphs as defined in Section \ref{IntersectionGraphIntro}. A morphism $\psi:\Sigma'\rightarrow{\Sigma}$ is just a morphism of graphs $\psi:\Sigma'\rightarrow{\Sigma}$. For any edge $e'\in{E(\Sigma')}$, we define the \emph{dilatation factor}
\begin{equation}
d_{e'}(\psi)=\dfrac{l(\phi(e'))}{l(e')}.
\end{equation}
This is the analogue of the ramification index for edges. 
For any edge $e\in\Sigma$, we now define the \emph{total degree} of $\psi$ above $e$ to be
\begin{equation}
\text{deg}_{e}(\psi)=\sum_{e'\in\psi^{-1}(e)}d_{e'}(\psi).
\end{equation}
\begin{rem}
We will see in Proposition \ref{InertiagroupIntersectionPoint1} that for a disjointly branched Galois morphism $\mathcal{C}\rightarrow{\mathcal{D}}$ and corresponding  morphism of weighted metric graphs $\Sigma(\mathcal{C})\rightarrow{\Sigma(\mathcal{D})}$, this dilatation factor is equal to the order of the inertia group $I_{e}$. The total degree for every edge will then just be the order of the Galois group, by Lemma \ref{Orbitstabilizer} and the fact that the residue field is algebraically closed. 
\end{rem}

\begin{mydef}\label{HarmonicWeighted}{\bf{[Harmonic morphisms weighted metric graphs]}}
A \emph{harmonic} morphism of weighted metric graphs is a morphism of weighted metric graphs $\psi:\Sigma'\rightarrow{\Sigma}$ such that the total degree $\text{deg}_{e}(\psi)$ is the same for every edge $e\in\Sigma$. 
\end{mydef}

We are now ready to define {\it{harmonic morphisms of metrized complexes}}. 
\begin{mydef}
Let  $\Sigma$ and $\Sigma'$ be two metrized complexes of $k$-curves. A harmonic morphism from $\Sigma'$ to $\Sigma$ is then given by the following data:
\begin{itemize}
\item A harmonic morphism $\Sigma'\rightarrow{\Sigma}$ of weighted metric graphs.
\item A finite separable morphism of algebraic curves $\phi_{v'}:C_{v'}\rightarrow{C_{\phi_{v'}}}$ for every $v'\in{\Sigma'}$.
\end{itemize}
They are required to satisfy the following compatibility data:
\begin{enumerate}
\item Let $v'$ be any vertex in $\Sigma'$. For every $e'\in{T_{v}}$, we have $\text{red}_{v}(\phi(e'))=\phi_{v}(\text{red}_{\phi(v)}(e'))$.
\item Let $v'$ be any vertex in $\Sigma'$ and $e'\in{T_{v'}}$. Then the ramification index of $\text{red}(e')$ for $\phi_{v}$ is equal to $l(e')/l(e)$.
\item Let $v$ be any vertex in $\Sigma'$ and $e\in{T_{\phi(v)}}$. Then for every $x'\in\phi^{-1}_{v}(\text{red}_{\phi(v)}(e))$, there exists an $e'$ in $T_{v}$ such that $\text{red}_{v}(e')=x'$. 
\item For every $v'\in\Sigma'$, we have that $\text{deg}(\phi_{v})$ is equal to the degree on every edge.
\end{enumerate}
\end{mydef}
As noted in \cite{ABBR1}, the last property actually follows from the other properties, since the sum of the ramification indices is equal to the degree of $\phi_{v}$ (see Proposition \ref{SerLocFields} or Lemma \ref{Orbitstabilizer}).

\begin{pro}
A disjointly branched morphism $\mathcal{C}\rightarrow{\mathcal{D}}$ with Galois group $G$ gives rise to a morphism of metrized complexes $\Sigma(\mathcal{C})\rightarrow{\Sigma(\mathcal{D})}$.  
\end{pro}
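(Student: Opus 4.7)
The plan is to assemble the data of a morphism of metrized complexes by combining results already established in the paper, then verify the four compatibility axioms in order. By Example \ref{MetrizedExample}, both $\Sigma(\mathcal{C})$ and $\Sigma(\mathcal{D})$ carry the structure of a metrized complex of $k$-curves: vertices are decorated with the smooth components $\Gamma$ of the special fiber, edges with the corresponding intersection points, and weights with the geometric genera of the components. The underlying morphism of graphs is supplied by Theorem \ref{MainQuotientTheorem1}, which gives $\Sigma(\mathcal{C})/G = \Sigma(\mathcal{D})$ and in particular a surjection $\phi_\Sigma : \Sigma(\mathcal{C}) \to \Sigma(\mathcal{D})$ that sends vertices to vertices and edges to edges.

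For each vertex $v' \in \Sigma(\mathcal{C})$ corresponding to a component $\Gamma' \subseteq \mathcal{C}_s$, I would set $\phi_{v'}$ to be the restriction of $\phi_\mathcal{C}$ to $\Gamma'$, landing in $\Gamma = \phi_\mathcal{C}(\Gamma')$. This is finite because $\phi_\mathcal{C}$ is, and by Corollary \ref{QuotientSpecialFiber} together with Proposition \ref{GalExtNormInt} applied to the decomposition group $D_{v'}$, it realizes the quotient $\Gamma'/D_{v'} = \Gamma$. Separability follows because the morphism $\mathcal{O}_{\mathcal{D},\phi(y)} \to \mathcal{O}_{\mathcal{C},y}$ at the generic point $y$ of $\Gamma'$ is \'etale by Definition \ref{disbran}, so the induced extension of function fields $k(\Gamma) \subseteq k(\Gamma')$ is separable.

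Next I would verify the four compatibility axioms. Axiom (1), commutativity of reduction with $\phi$, follows directly from Lemma \ref{LemmaGaloisAction} and Corollary \ref{CorSmooth2}: an intersection point $z$ of $\Gamma'$ with another component maps to an intersection point $\phi(z)$ of $\Gamma$ with $\phi$ of that other component, which is exactly the reduction of $\phi_\Sigma(e')$. Axiom (3), essential surjectivity on fibers, is also immediate from Corollary \ref{CorSmooth2}: every preimage of an ordinary double point is itself an ordinary double point, hence corresponds to an edge adjacent to $v'$. Axiom (4), that $\deg(\phi_{v'})$ equals the degree on every edge, follows from the other axioms as noted after the definition (it is the identity $\deg = \sum e_{x'}$ applied to the fiber of $\phi_{v'}$ over $\text{red}(e)$).

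The main obstacle is Axiom (2), the identification of the ramification index of $\phi_{v'}$ at $\text{red}(e')$ with the ratio $l(e')/l(e)$, and the related harmonicity of $\phi_\Sigma$ as a morphism of weighted metric graphs. For an edge $e'$ at $v'$ corresponding to $z \in \mathcal{C}$ with $\hat{\mathcal{O}}_{\mathcal{C},z} \simeq R[[x,y]]/(xy - \pi^{n'})$ mapping to $z' \in \mathcal{D}$ with $\hat{\mathcal{O}}_{\mathcal{D},z'} \simeq R[[u,v]]/(uv - \pi^{n})$, the induced map of completed local rings must send $u \mapsto \alpha x^a$ and $v \mapsto \beta y^b$ (up to units and after choosing branches compatibly, since the two branches at $z$ map to the two branches at $z'$) with $a + b$ determined by matching the relation $uv = \pi^n$ against $(\alpha x^a)(\beta y^b) = \alpha\beta \pi^{n'\min(a,b)}\cdot(\text{unit})$. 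Forcing compatibility with $v(\pi)=1$ and using that $\phi_\mathcal{C}$ is \'etale at the generic points of the two branches forces $a = b = n'/n$, which is the ramification index of $\phi_{v'}$ at $z$ along the branch $\Gamma'$. Summing over the fiber, Lemma \ref{Orbitstabilizer} gives $|\phi_\Sigma^{-1}(e)| = |G|/|D_{e'}|$, and combining with $|D_{e'}| = |I_{e'}|$ (since $f_{e'} = 1$ because $k$ is algebraically closed) shows that the total degree equals $|G|$ on every edge, proving harmonicity and completing the construction.
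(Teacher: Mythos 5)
Your overall structure mirrors the paper's: Theorem \ref{MainQuotientTheorem1} supplies the underlying morphism of graphs, Lemma \ref{Orbitstabilizer} together with Proposition \ref{InertiagroupIntersectionPoint1} gives harmonicity, the morphisms of curves $C_{v'}\rightarrow C_{v}$ are the restrictions of $\phi_{\mathcal{C}}$ to components, and the compatibility axioms are checked one by one with axiom (2) identified as the only substantive one. The paper disposes of axiom (2) by citing Proposition \ref{ramind2}, which identifies the inertia group $I_{x,\mathcal{C}}$ of an intersection point with the inertia group $I_{r_{\mathcal{C}}(x),\Gamma'}$ of its reduction for the Galois covering $\Gamma'\rightarrow\Gamma$ on the special fiber; together with Proposition \ref{InertiagroupIntersectionPoint1} this gives the ramification index at once.

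Your direct argument for axiom (2) on completed local rings has a real gap. You assert that the map $R[[u,v]]/(uv-\pi^{n})\rightarrow R[[x,y]]/(xy-\pi^{n'})$ ``must send $u\mapsto\alpha x^{a}$ and $v\mapsto\beta y^{b}$'' for units $\alpha,\beta$. For $n'>1$ the target ring is not regular: the height-one prime $(x,\pi)$ of the component $V(x)$ is not principal, so an element that vanishes only along that component need not be of the form $(\text{unit})\cdot x^{a}$. Establishing this normal form is exactly the local content supplied by Abhyankar's Lemma (Lemma \ref{ramstruct2}) and the analysis behind \cite[Proposition 3.48]{liu2}; it is precisely what Propositions \ref{InertiagroupIntersectionPoint1} and \ref{ramind2} package. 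There is also an arithmetic error: $(\alpha x^{a})(\beta y^{b})=\alpha\beta\pi^{n'\min(a,b)}\cdot(\text{unit})$ holds only if $a=b$ (otherwise a leftover $x^{a-b}$ or $y^{b-a}$ is not a unit), and once $a=b$ the relation $\alpha\beta\pi^{n'a}=\pi^{n}$ forces $a=n/n'$, not $n'/n$; the ramification index at the node equals the dilatation factor $l(\phi(e'))/l(e')$, not its reciprocal. A smaller point: for axiom (3) you invoke Corollary \ref{CorSmooth2}, but that result states that images of double points are double points, which is the opposite direction from what is needed (that a point of $\Gamma_{v'}$ lying over an intersection point of $\Gamma_{v}$ is itself an intersection point); this instead follows from $\Sigma(\mathcal{C})/G=\Sigma(\mathcal{D})$ together with $G$-equivariance of the singular locus.
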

\begin{proof}
By Proposition \ref{MainQuotientTheorem1}, we already have a morphism of graphs $\phi_{\mathcal{C}}:\Sigma(\mathcal{C})\rightarrow{\Sigma(\mathcal{D})}$. We have to check that this is a harmonic morphism. This follows from the following properties:
\begin{enumerate}
\item $I_{e'}=d_{e'}(\phi_{\mathcal{C}})$, see Proposition \ref{InertiagroupIntersectionPoint1}.
\item $\sum_{e'\in\phi^{-1}_{\mathcal{C}}(e)}d_{e'}(\phi_{\mathcal{C}})=\sum_{e'}|I_{e'}|=|G|$, see Lemma \ref{Orbitstabilizer}. 
\end{enumerate}

The morphisms $C_{v'}\rightarrow{C_{v}}$ are now given by considering the image of $\Gamma_{v'}$ in $\mathcal{D}$ for the composite $\Gamma_{v'}\rightarrow\mathcal{C}\rightarrow{\mathcal{D}}$. 
 The only nontrivial part in the compatibility data that we have to check is the persistence of the ramification indices, which is criterium number two. We will defer this to Chapter \ref{Inertiagroups}, where it will be proved in Proposition \ref{ramind2}. 

\end{proof}

Let us now define what a Galois quotient is for a metrized complex of $k$-curves. 
\begin{mydef}
A {\bf{Galois action}} of a finite group $G$ on a metrized complex of $k$-curves $\Sigma$ consists of the following data.
\begin{itemize}
\item A homomorphism $G\rightarrow{\text{Aut}(\Sigma)}$.
\item A homomorphism $D_{v}\rightarrow{\text{Aut}(K(C_{v})/K(C_{\phi(v)}))}$ for every $v$.
\end{itemize}
We then say that a morphism $\phi$ of metrized complexes $\Sigma'\rightarrow{\Sigma}$ is a {\bf{Galois quotient}} if
\begin{itemize}
\item $\Sigma'/G=\Sigma$. 
\item $C_{v}/D_{v}=C_{\phi(v)}$.
\end{itemize}
In this case, we write $\Sigma'/G=\Sigma$. 
\end{mydef}

Note that for a disjointly branched morphism $\mathcal{C}\rightarrow{\mathcal{D}}$, we have a homomorphism $G\rightarrow{\text{Aut}(\Sigma(\mathcal{C}))}$ by Lemma \ref{LemmaGaloisAction}. The homomorphism of the decomposition groups is then given by Proposition \ref{QuotientLemma10} (part 3) or Proposition \ref{SerExtGal1} (note that the components define codimension one points in $\mathcal{C}$ and $\mathcal{D}$). We thus see that we have a Galois action of metrized complexes on $\Sigma(\mathcal{C})$. 

\begin{pro}
Let $\mathcal{C}\rightarrow{\mathcal{D}}$ be a disjointly branched morphism. Then $\Sigma(\mathcal{C})/G=\Sigma(\mathcal{D})$ as a quotient of metrized complexes. 
\end{pro}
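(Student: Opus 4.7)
The plan is to combine the weighted metric graph quotient from Theorem \ref{MainQuotientTheorem1} with a componentwise Galois quotient statement on the algebraic curves attached to the vertices. The underlying Galois action on $\Sigma(\mathcal{C})$ as a metrized complex is already in place: on graphs we use $G\to \mathrm{Aut}(\Sigma(\mathcal{C}))$ from Lemma \ref{LemmaGaloisAction}, and on each curve $\Gamma_{v'}$ attached to a vertex $v'$ we use the homomorphism $D_{v'}\to \mathrm{Aut}(K(\Gamma_{v'})/K(\Gamma_{\phi(v')}))$ coming from Proposition \ref{QuotientLemma10}(3) / Proposition \ref{SerExtGal1}. Theorem \ref{MainQuotientTheorem1} already gives $\Sigma(\mathcal{C})/G=\Sigma(\mathcal{D})$ as a weighted metric graph, so the only thing left is the identity $\Gamma_{v'}/D_{v'}=\Gamma_{\phi(v')}$ for every vertex $v'$, together with the compatibility of the reduction maps $\mathrm{red}_{v'}$ with the morphisms $\phi_{v'}$.

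For the componentwise statement I would argue as follows. Let $v'$ be a vertex of $\Sigma(\mathcal{C})$, let $\Gamma_{v'}\subseteq \mathcal{C}_s$ be the corresponding irreducible component with generic point $x_{v'}$, and let $v=\phi(v')$ with component $\Gamma_v$ and generic point $y=\phi(x_{v'})$. Since $\mathcal{C}\to\mathcal{D}$ is disjointly branched, the extension $\mathcal{O}_{\mathcal{D},y}\to\mathcal{O}_{\mathcal{C},x_{v'}}$ is \'etale, so the inertia group $I_{v'}=I_{x_{v'}}$ is trivial and $D_{v'}/I_{v'}=D_{v'}$ acts as the full Galois group of the residue-field extension $K(\Gamma_{v'})/K(\Gamma_v)$ by Proposition \ref{SerExtGal1}. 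Because $\mathcal{D}$ is strongly semistable and (by Proposition \ref{PropStrongSemSta1}) so is $\mathcal{C}$, both $\Gamma_{v'}$ and $\Gamma_v$ are smooth projective curves over $k$; in particular they are normal integral. Proposition \ref{GalExtNormInt} applied to $\Gamma_{v'}\to\Gamma_v$ with Galois group $D_{v'}$ then yields $\Gamma_{v'}/D_{v'}=\Gamma_v$.

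Alternatively, one can give a local version of this that also clarifies the compatibility with the reduction maps. Choose an affine open $U=\mathrm{Spec}(A)\subseteq \mathcal{D}$ that is invariant under $G$ and meets $\Gamma_v$; then $\phi^{-1}(U)=\mathrm{Spec}(B)$ with $B^G=A$. By Corollary \ref{QuotientSpecialFiber}, reducing modulo $\pi$ gives $(B/\pi)^G=A/\pi$, so on every $G$-invariant open of the special fiber we still have an admissible quotient in the sense of Proposition \ref{PropositionQuotient1}. Restricting to the prime of $B/\pi$ defining $\Gamma_{v'}$ and using that the $D_{v'}$-action preserves this prime (and, since the residue field extension is generically separable of degree $|D_{v'}|$, that $(B/\pi)_{\Gamma_{v'}}^{D_{v'}}=(A/\pi)_{\Gamma_v}$), one recovers $\Gamma_{v'}/D_{v'}=\Gamma_v$ locally and hence globally. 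The same local picture shows that the closed points of $\Gamma_{v'}$ at which adjacent edges reduce map under $\phi_{v'}$ to the reductions of the image edges in $\Gamma_v$: an intersection point $\tilde x$ of $\Gamma_{v'}$ with a neighbouring component $\Gamma_{w'}$ is sent by $\phi$ to an intersection point $\phi(\tilde x)$ of $\Gamma_v$ with $\phi(\Gamma_{w'})$ (Lemma \ref{LemmaGaloisAction} and Corollary \ref{CorSmooth2}), which is precisely the required commutativity $\mathrm{red}_v\circ\phi_\Sigma=\phi_{v'}\circ\mathrm{red}_{v'}$.

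The main obstacle, and the reason the argument is not completely formal, is the need to confirm that the local ring $(B/\pi)_{\Gamma_{v'}}$ behaves well under taking $D_{v'}$-invariants, i.e.\ that the passage from the scheme-theoretic quotient $\mathcal{C}_s/G=\mathcal{D}_s$ to the quotient of the single component $\Gamma_{v'}$ by its stabilizer $D_{v'}$ is the expected one. This is exactly the content of the transitivity statement in Proposition \ref{GalExtNormInt} applied to the smooth curves $\Gamma_{v'}$ and $\Gamma_v$: the $G$-orbit of $\Gamma_{v'}$ consists of all components of $\mathcal{C}_s$ mapping to $\Gamma_v$, and the stabilizer $D_{v'}$ of $\Gamma_{v'}$ acts on it in a way that descends to the Galois quotient on $K(\Gamma_{v'})/K(\Gamma_v)$. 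Once this is in place, the remaining verifications are bookkeeping.
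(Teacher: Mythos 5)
Your proof is correct and takes essentially the same route as the paper: the paper simply cites Theorem \ref{MainQuotientTheorem1} for the graph-level quotient and Corollary \ref{QuotientSpecialFiber} for the special-fiber quotient, and your write-up is a detailed unpacking of exactly those two ingredients (using triviality of $I_{v'}$ plus Propositions \ref{SerExtGal1} and \ref{GalExtNormInt} to pass from $\mathcal{C}_s/G=\mathcal{D}_s$ to the componentwise identity $\Gamma_{v'}/D_{v'}=\Gamma_v$). The extra discussion of reduction-map compatibility is sound but not strictly needed here, since the paper establishes that a disjointly branched morphism yields a harmonic morphism of metrized complexes in the preceding proposition; the present statement only asks for the two quotient identities.
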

\begin{proof}
This follows from Proposition \ref{MainQuotientTheorem1} and Corollary \ref{QuotientSpecialFiber}. 
\end{proof}

\chapter{Decomposition and inertia groups for disjointly branched morphisms}\label{Inertiagroups}


In this chapter, we will prove several theorems about decomposition and inertia groups that are specific to disjointly branched morphisms. We will start with a well-known theorem on the thickness of an ordinary double points under a quotient by a finite group. After that, we will see how decomposition and inertia groups on a global scheme $\mathcal{C}$ for a disjointly branched morphism $\mathcal{C}\rightarrow{\mathcal{D}}$ are related to those on the special fiber, see Proposition \ref{ramind2}. This will give us a formula for the order of the decomposition group of a vertex in the case where the underlying vertex has genus zero.

We then study the following procedure. For a disjointly branched morphism $\mathcal{C}\rightarrow{\mathcal{D}}$, we can subdivide an edge $e$ in $\mathcal{D}$ and obtain a regular desingularization $\mathcal{D}_{0}$ above $e$. If we then take the normalization of $\mathcal{D}_{0}$ in $\mathcal{D}$, some of the new vertical components might be ramified. We would like to know exactly how they are ramified and we will in fact give a formula for the ramification indices. These ramification indices are then related to the inertia groups of the original edge $e$, which shows that they can be used to calculate the decomposition group of an edge.




\section{The inertia group of an intersection point}

Let $\phi_{\mathcal{C}}:\mathcal{C}\rightarrow{\mathcal{D}}$ be a disjointly branched morphism and let $x\in\mathcal{C}$ be an intersection point. 
The completed ring $\hat{\mathcal{O}}_{\mathcal{C},x}$ is then isomorphic to $R[[x,y]]/(xy-\pi^{n})$ for some $n\in\mathbb{N}$ and the length of the corresponding point is then by definition $n$. By Corollary \ref{CorSmooth2}, we then find that $y:=\phi_{\mathcal{C}}(x)$ is also an ordinary double point. Let us denote their lengths by $l(x)$ and $l(y)$. We then have:
\begin{pro}\label{InertiagroupIntersectionPoint1}
\begin{equation}
|I_{x}|=l(y)/l(x).
\end{equation}
\end{pro}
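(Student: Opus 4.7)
The plan is to pass to completed local rings at $x$ and $y$ and analyse the tame action of the decomposition group explicitly.

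Since $k$ is algebraically closed, both residue fields $k(x)$ and $k(y)$ are equal to $k$, so the natural map $D_x \to \text{Aut}(k(x)/k(y))$ is trivial and $I_x = D_x$. It is therefore enough to prove $|D_x| = l(y)/l(x)$. Set $\hat{A} := \hat{\mathcal{O}}_{\mathcal{C},x} \simeq R[[u,v]]/(uv-\pi^{l(x)})$ and $\hat{B} := \hat{\mathcal{O}}_{\mathcal{D},y} \simeq R[[s,t]]/(st-\pi^{l(y)})$. By Proposition \ref{QuotientLemma10} applied to an affine neighbourhood of $y$, together with faithful flatness of completion and the tameness hypothesis (which makes formation of $D_x$-invariants commute with completion), one has $\hat{A}^{D_x} = \hat{B}$. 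This reduces the statement to a purely algebraic identification of exponents.

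The decisive structural step is that $D_x$ preserves each of the two branches of $x$, i.e.\ each of the minimal primes $(u,\pi)$ and $(v,\pi)$ of $\hat{A}$. If some $\sigma \in D_x$ interchanged the two branches, then the invariant ring under $\langle\sigma\rangle$ would be generated over $R$ by $u+v$ and $uv = \pi^{l(x)}$, hence equal to $R[[u+v]]$, a regular two-dimensional local ring. This regularity would then be inherited by $\hat{B}$, contradicting the fact that $y$ is an ordinary double point by Corollary \ref{CorSmooth2}. Once the branches are preserved, tameness lets me diagonalise the action: after a Hensel-type adjustment of the coordinates $u, v$ there is a character $\chi \colon D_x \to \mu_e(R)$, where $e = |D_x|$, with $\sigma(u) = \chi(\sigma)\,u$ and $\sigma(v) = \chi(\sigma)^{-1}v$ for every $\sigma \in D_x$. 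The second character is the inverse of the first because $\sigma$ fixes $\pi$, hence fixes the product $uv = \pi^{l(x)}$. Faithfulness of the $D_x$-action on $\hat{A}$ forces $\chi$ to be injective, so $D_x$ is cyclic of order $e$ and $\chi$ identifies it with $\mu_e$.

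The final step is an invariant calculation. A monomial $u^i v^j$ is $D_x$-invariant iff $\chi^{i-j} = 1$, i.e.\ iff $i \equiv j \pmod{e}$, and modulo the relation $uv = \pi^{l(x)}$ this shows $\hat{A}^{D_x}$ is generated as an $R$-algebra by $u^e$ and $v^e$, with single defining relation $u^e \cdot v^e = (uv)^e = \pi^{e \cdot l(x)}$. Matching this presentation of $\hat{A}^{D_x}$ with the presentation $\hat{B} = R[[s,t]]/(st - \pi^{l(y)})$ via $s \mapsto u^e$, $t \mapsto v^e$ yields $l(y) = e \cdot l(x)$, which is the desired equality $|I_x| = l(y)/l(x)$. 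The main obstacle will be the diagonalisation step: one has to show that an a priori arbitrary tame $D_x$-action on $R[[u,v]]/(uv-\pi^{l(x)})$ can be straightened to the diagonal form above. This is a Kummer-style linearisation argument that uses invertibility of $|D_x|$ in $R$, averaging over $D_x$ to construct eigencoordinates, together with the branch-preservation dichotomy; everything else is essentially mechanical.
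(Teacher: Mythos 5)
The paper disposes of this with a citation to Liu's Chapter~10, Proposition~3.48, together with Lemma~\ref{InjectiveGaloisAction} (since $G\hookrightarrow\mathrm{Aut}_R(\mathcal{C})$, the image of $I_x$ in $\mathrm{Aut}_R(\hat{\mathcal O}_{\mathcal C,x})$ has order exactly $|I_x|$). You are instead reproving the local content of Liu's proposition from scratch. Your reduction to completed local rings, the identification $\hat{A}^{D_x}=\hat B$ under tameness, the Kummer diagonalisation you correctly flag as needing a standard linearisation, and the closing monomial computation giving $u^ev^e=\pi^{e\,l(x)}$ are all the right moves, and the invariant calculation at the end is essentially what Liu's proof contains.

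The genuine gap is in the branch-preservation step, which you present as settled. You argue that a branch-swapping $\sigma\in D_x$ would give $\hat{A}^{\langle\sigma\rangle}\simeq R[[u+v]]$ and that ``this regularity would then be inherited by $\hat B$.'' That inference is not valid: $\hat B$ is a proper subring of $\hat{A}^{\langle\sigma\rangle}$ unless $D_x=\langle\sigma\rangle$, and regularity does not descend along finite extensions of normal local rings --- the quadric cone $k[[x^2,xy,y^2]]\subset k[[x,y]]$ is the standard counterexample. Even in the case $D_x=\langle\sigma\rangle$, ``regular'' is not the property that contradicts Corollary~\ref{CorSmooth2}: an ordinary double point of thickness one has $\hat{\mathcal O}\simeq R[[s,t]]/(st-\pi)$, which \emph{is} a regular local ring. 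What actually fails is that $R[[w]]$ has a smooth special fibre, while $R[[s,t]]/(st-\pi^m)$ has a nodal special fibre for every $m\ge 1$; the contradiction lives in the special fibre, not in regularity of the two-dimensional total ring. The clean repair is to filter through the normal subgroup $D_x^0=\ker(D_x\to S_2)$ of branch-preserving elements: your own monomial computation shows that $\hat{A}^{D_x^0}$ is again of the form $R[[u',v']]/(u'v'-\pi^m)$, and if the residual branch swap on this ring is nontrivial, the final quotient has a smooth special fibre at the image of $x$, contradicting Corollary~\ref{CorSmooth2}. A smaller issue of the same kind: writing $\hat{A}^{\langle\sigma\rangle}=R[[u+v]]$ already assumes $\sigma(u)=v$ literally, whereas $\sigma$ a priori only permutes the two minimal primes, so the linearisation you postpone to the end is also needed in this step.
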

\begin{proof}
We use \cite[Chapter 10, Proposition 3.48]{liu2}, which says that the length of $x$ is multiplied by the order of the image of the inertia group in $\text{Aut}_{R}(\mathcal{O}_{\mathcal{C},x})$. But by Lemma \ref{InjectiveGaloisAction}, we find that $G\hookrightarrow{\text{Aut}_{R}(\mathcal{C})}$, so this order is just the order of the inertia group $I_{x}$. This finishes the proof. 
\end{proof}
\section{Specialization of decomposition groups and inertia groups}
Let $\mathcal{C}\rightarrow{\mathcal{D}}$ be a disjointly branched morphism. Let $y$ be the generic point of an irreducible component $\Gamma'\subset{\mathcal{C}_{s}}$ and let $x$ be an intersection point lying on $\Gamma'$. 
 We can, in general, not find an injective morphism
\begin{equation*}
D_{{x}}\longrightarrow{D_{{y}}}
\end{equation*}
for general coverings of semistable models.
Indeed, we saw this in Remark \ref{NonQuotGraph1}. We will now show that we do obtain such an injection for disjointly branched morphisms. 
\begin{pro}\label{InjectionDecomposition}
Let $\phi$ be a disjointly branched morphism with $y$ a generic point of an irreducible component $\Gamma\subset{\mathcal{C}_{s}}$ and $x$ an intersection point lying on $\Gamma$. 
There is then a canonical injective morphism $D_{x}\longrightarrow{D_{y}}$.
\end{pro}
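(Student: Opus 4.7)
The plan is to define the canonical map $D_x \to D_y$ as the set-theoretic inclusion of subgroups of $G$; once this is shown to be a well-defined map of subgroups (i.e., $D_x \subseteq D_y$), injectivity is automatic. The real content is therefore to show that every $\sigma \in D_x$ fixes the component $\Gamma$ setwise, which is equivalent to $\sigma(y) = y$.

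First I would note that since $\mathcal{C}$ is strongly semistable (Proposition~\ref{PropStrongSemSta1}), the ordinary double point $x$ lies on exactly two distinct smooth irreducible components $\Gamma, \Gamma''$ of $\mathcal{C}_s$: self-intersection of a smooth component is ruled out. The element $\sigma \in D_x$ permutes these two branches, giving an action on the two-element set $\{\Gamma, \Gamma''\}$, and my task is to show this action is trivial. On the target, by Corollary~\ref{CorSmooth2} the image $\phi(x)$ is also an ordinary double point, and since $\mathcal{D}$ is strongly semistable (as part of the definition of a disjointly branched morphism), $\phi(x)$ lies on two \emph{distinct} components $\Delta, \Delta'$ of $\mathcal{D}_s$.

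To rule out a swap, I would pass to the completed local rings at $x$ and $\phi(x)$. By the local form of Proposition~\ref{GalExtNormInt} one has $\hat{\mathcal{O}}_{\mathcal{C},x}^{D_x} = \hat{\mathcal{O}}_{\mathcal{D},\phi(x)}$, and then the transitivity portion of Proposition~\ref{QuotientLemma10}(ii) applies to this $D_x$-action: the $D_x$-orbits on primes of $\hat{\mathcal{O}}_{\mathcal{C},x}$ lying over a given prime of $\hat{\mathcal{O}}_{\mathcal{D},\phi(x)}$ are transitive. Restricting to the height-one primes containing $\pi$, there are exactly two upstairs (corresponding to $\Gamma, \Gamma''$) and exactly two downstairs (corresponding to $\Delta, \Delta'$), so the two $D_x$-orbits upstairs must each be a singleton. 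Consequently every $\sigma \in D_x$ fixes the prime corresponding to $\Gamma$; unwinding the correspondence between branches at $x$ and global components then gives $\sigma(\Gamma) = \Gamma$, hence $\sigma \in D_y$.

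The main obstacle is making the invariant identity $\hat{\mathcal{O}}_{\mathcal{C},x}^{D_x} = \hat{\mathcal{O}}_{\mathcal{D},\phi(x)}$ rigorous at the level of completions rather than of the Zariski local rings: this requires excellence of $R$, so that completions of two-dimensional normal local rings remain normal, or an equivalent reduction via henselization (which coincides with completion here because $k$ is algebraically closed and the local rings in question are henselian after completion). Once this identity is in place, the remainder is a formal orbit-counting argument of the same flavour as the classical decomposition-group analysis of Proposition~\ref{SerExtGal1}.
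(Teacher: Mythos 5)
Your approach is genuinely different from the paper's. The paper argues by contradiction using Lemma~\ref{QuotientNons1} (extracted from Liu's description of quotients of ordinary double points): if some $\sigma\in D_{x}$ swapped the two branches at $x$, then since $\sigma\in I_{x}$ (the residue field being algebraically closed), the image of $x$ in $\mathcal{C}/I_{x}$ would be a \emph{smooth} point, contradicting Corollary~\ref{CorSmooth1}. Your proof replaces this by a prime-counting argument in the completed local ring at $x$, which has a more elementary, commutative-algebra flavour and makes the role of strong semistability (two distinct branches up and down) very transparent.

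The one soft spot — which you flag yourself — is the invocation of $\hat{\mathcal{O}}_{\mathcal{C},x}^{D_x}=\hat{\mathcal{O}}_{\mathcal{D},\phi(x)}$ together with the transitivity of $D_{x}$ on the upstairs primes over a fixed downstairs prime. This is a nontrivial statement in dimension two and your sketch (excellence, henselization) does not really close it. Fortunately, you do not need it. All that is required is the weaker fact that every $\sigma\in D_{x}$ acts on $\hat{\mathcal{O}}_{\mathcal{C},x}$ as an automorphism over $\hat{\mathcal{O}}_{\mathcal{D},\phi(x)}$, which is immediate because $\sigma$ is a $\mathcal{D}$-automorphism of $\mathcal{C}$ fixing $x$. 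Then contraction of height-one primes containing $\pi$ is $\sigma$-equivariant with trivial action downstairs, and lying-over for the finite (hence integral) extension $\hat{\mathcal{O}}_{\mathcal{D},\phi(x)}\hookrightarrow\hat{\mathcal{O}}_{\mathcal{C},x}$ forces the two upstairs branches to contract bijectively onto the two downstairs branches; $\sigma$ therefore cannot swap them. In short: you do not need to know that the invariants of $D_x$ are exactly the base ring, only that $D_x$ fixes the base ring pointwise. With that simplification your argument is complete and stands on its own as an alternative to the paper's proof.
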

\begin{proof}
Let us write down the condition that $x$ is an intersection point on affines. Let $A$ be an affine neighborhood of $x$. Then $A$ also contains $y$.  Let $\mathfrak{m}$ be the maximal ideal corresponding to $x$ and $\mathfrak{p}$ the prime ideal corresponding to $y$. We then have $\mathfrak{m}\supset{\mathfrak{p}}$.  We will show the following: if $\sigma$ fixes $\mathfrak{m}$, then it also fixes $\mathfrak{p}$. Suppose for a contradiction that it doesn't fix $\mathfrak{p}$. Then $\sigma(\mathfrak{p})$ corresponds to a different component. We have $\sigma(\mathfrak{m})=\mathfrak{m}$, so we find
\begin{equation*}
\mathfrak{m}\supset{\sigma(\mathfrak{p})}.
\end{equation*}
This just means (together with $\mathfrak{m}\supset{\mathfrak{p}}$) that $\mathfrak{m}$ is an intersection point of the components $\Gamma$ and $\sigma(\Gamma)$, which correspond to $\mathfrak{p}$ and $\sigma(\mathfrak{p})$.
We will now find a contradiction using
\begin{lemma}\label{QuotientNons1}
Let $G$ be a finite group acting on a semistable model $\mathcal{C}$. Let $x$ be an ordinary double point of $\mathcal{C}$, connecting two components $\Gamma$ and $\Gamma'$. Let $I$ be the inertia subgroup of $x$ and let
\begin{equation*}
\pi:\mathcal{C}\longrightarrow{\mathcal{C}/I}
\end{equation*}
be the corresponding quotient map. Then $\pi(x)$ is smooth in $\mathcal{C}/I$ if and only if there exists an element $\sigma\in{I}$ such that
\begin{eqnarray*}
\sigma(\Gamma)&=&\Gamma'.
\end{eqnarray*}
\end{lemma}
\begin{proof}
Let 
\begin{equation*}
I_{0}=\{\sigma\in{I}:\sigma(\Gamma)=\Gamma\}.
\end{equation*} 
By tracing through the proof of \cite[Page 527, Proposition 3.48]{liu2}, one finds that the case with $I_{0}\subsetneq{I}$ corresponds to $\pi(x)$ being smooth and the case $I_{0}=I$ to $\pi(x)$ being an ordinary double point. The Lemma then quickly follows. 
\end{proof}
The inclusion $\mathfrak{m}\supset{\sigma(\mathfrak{p})}$ will now give us the desired contradiction, which will conclude the proof of Proposition \ref{InjectionDecomposition}. Indeed, we see that $\sigma(\Gamma)=\Gamma'$ and $\sigma$ is an element of the inertia subgroup of $x$ (here we use that our residue field $k$ is algebraically closed). But then $\pi(x)$ is smooth by Lemma \ref{QuotientNons1}. This contradicts Corollary \ref{CorSmooth1}, as desired.   
\end{proof}

We note that for smooth points $x\in\mathcal{C}$ we also have an injection 
\begin{equation*}
D_{x}\longrightarrow{D_{y}}. 
\end{equation*}
This is much easier to prove however, since there is only one component that contains $x$.

\section{Reduction of inertia groups}

In this section, we will give a quick review of several results presented in \cite{supertrop}. Let $\mathcal{C}\rightarrow{\mathcal{D}}$ be a disjointly branched morphism and fix a component $\Gamma'\subset{\mathcal{C}_{s}}$ with image $\Gamma\subset{\mathcal{D}_{s}}$. The decomposition group $D_{\Gamma'}$ is then the Galois group of the covering $\Gamma'\rightarrow{\Gamma}$. 

Now let $x$ be an intersection point of $\Gamma'$ or a ramification point of the morphism $C\rightarrow{D}$ reducing to $\Gamma'$ under the natural reduction map $r_{\mathcal{C}}$ introduced in Definition \ref{ReductionMap11}. 
We let $r_{\mathcal{C}}(x)$ be the corresponding point in $\Gamma$. We now have two inertia groups: inertia groups for the covering $\mathcal{C}\rightarrow{\mathcal{D}}$ and inertia groups for the covering $\Gamma'\rightarrow{\Gamma}$. We would like to relate these two kinds of inertia groups. This is done by the following  

\begin{pro}\label{ramind2}
Let $x\in\mathcal{C}$ be a generic ramification point or an intersection point of a disjointly branched morphism $\phi_{\mathcal{C}}:\mathcal{C}\rightarrow{\mathcal{D}}$. Let $\Gamma$ be any component in the special fiber $\mathcal{C}_{s}$ containing $r_{\mathcal{C}}(x)$.
Then
$$
I_{x,\mathcal{C}}=I_{r_{\mathcal{C}}(x),\Gamma'}
$$
where the second inertia group is an inertia group of the Galois covering $\Gamma'\rightarrow{\Gamma}$ on the special fiber.
\end{pro}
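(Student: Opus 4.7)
The plan is to realize both $I_{x,\mathcal{C}}$ and $I_{r_{\mathcal{C}}(x),\Gamma'}$ as subgroups of a common ambient group and then compare them. Since $\phi_{\mathcal{C}}$ is \'etale at the generic point $y$ of $\Gamma'$ by the very definition of a disjointly branched morphism, we have $I_{y}=1$, so Proposition \ref{SerExtGal1} yields a canonical identification $D_{\Gamma'}=D_{y}\cong\text{Gal}(k(\Gamma')/k(\Gamma))$. The target inertia group $I_{r_{\mathcal{C}}(x),\Gamma'}$ is by definition a subgroup of $D_{\Gamma'}$, while $I_{x,\mathcal{C}}\subseteq D_{x}$ embeds into $D_{y}=D_{\Gamma'}$ via Proposition \ref{InjectionDecomposition}. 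This puts both groups inside $D_{\Gamma'}$ in a canonical way, and the strategy is to prove one inclusion directly and then match orders.

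The inclusion $I_{x,\mathcal{C}}\subseteq I_{r_{\mathcal{C}}(x),\Gamma'}$ I would prove as follows: take $\sigma\in I_{x,\mathcal{C}}$; then $\sigma(x)=x$, so by Galois-equivariance of specialization $\sigma(r_{\mathcal{C}}(x))=r_{\mathcal{C}}(x)$ as a point of $\Gamma'$. Since the residue field of $\Gamma'$ at $r_{\mathcal{C}}(x)$ is the algebraically closed field $k$, the stabilizer of $r_{\mathcal{C}}(x)$ in $D_{\Gamma'}$ coincides with its inertia group, so $\sigma\in I_{r_{\mathcal{C}}(x),\Gamma'}$.

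To upgrade this to equality I would compare the orders case by case using the local Kummer normal form. When $x$ is an intersection point, Proposition \ref{InertiagroupIntersectionPoint1} gives $|I_{x,\mathcal{C}}|=l(\phi(x))/l(x)=:e$. After completion, the morphism acquires a Kummer-type normal form
\begin{equation*}
R[[s,t]]/(st-\pi^{ne})\;\longrightarrow\;R[[u,v]]/(uv-\pi^{n}),\qquad s\mapsto u^{e},\;t\mapsto v^{e},
\end{equation*}
where the possibility of choosing such coordinates rests on the fact that $I_{x,\mathcal{C}}$ preserves each of the two branches through $x$ individually (otherwise Lemma \ref{QuotientNons1} would force $\phi(x)$ to be a smooth point of $\mathcal{D}_{s}$, contradicting Corollary \ref{CorSmooth1}), together with tameness of $|G|$ so that the inertia action diagonalises as $u\mapsto\zeta u$, $v\mapsto\zeta^{-1}v$. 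Setting $v=t=0$ yields the special-fibre model $k[[s]]\to k[[u]]$, $s\mapsto u^{e}$, so the ramification index of $\Gamma'\to\Gamma$ at $r_{\mathcal{C}}(x)=x$ is $e$, and hence $|I_{r_{\mathcal{C}}(x),\Gamma'}|=e$ by tameness. When $x$ is a generic ramification point, Lemma \ref{LemmaSmooth2} ensures $r_{\mathcal{C}}(x)$ is a smooth point of $\mathcal{C}_{s}$, and the same Kummer analysis in regular local coordinates $R[[s]]\to R[[u]]$, $s\mapsto u^{e}$ with $e=|I_{x,\mathcal{C}}|$ descends to $k[[s]]\to k[[u]]$ on the special fibre, yielding the matching order.

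The main obstacle will be producing the Kummer normal form at an intersection point rigorously, that is, exhibiting coordinates $(u,v)$ on $\hat{\mathcal{O}}_{\mathcal{C},x}$ and $(s,t)$ on $\hat{\mathcal{O}}_{\mathcal{D},\phi(x)}$ so that the morphism takes precisely the form displayed above. This step requires invoking the local classification used in the proof of Theorem \ref{MaintheoremSemSta} (cf.\ \cite[Chapter 10]{liu2}), choosing $(u,v)$ to diagonalise the cyclic action of $I_{x,\mathcal{C}}$, and then verifying that $s:=u^{e}$, $t:=v^{e}$ lie in the invariant subring and generate $\hat{\mathcal{O}}_{\mathcal{D},\phi(x)}$, which is where the disjoint-branching and tameness hypotheses really enter.
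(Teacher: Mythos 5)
Your proof is correct but follows a genuinely different route from the paper's. Both arguments dispose of the easy inclusion $I_{x,\mathcal{C}}\subseteq I_{r_{\mathcal{C}}(x),\Gamma'}$ (and its converse, in the intersection-point case, where both inclusions are in fact elementary once one has the injection $D_{x}\hookrightarrow D_{y}$ and uses that $k$ is algebraically closed). The real content of the proposition is the reverse inclusion when $x$ is a \emph{generic} ramification point, i.e.\ that an automorphism fixing $r_{\mathcal{C}}(x)$ must also fix $x$. For this the paper passes to the quotient $\mathcal{C}/I_{x}\to\mathcal{D}$ and argues by contradiction via \emph{purity of the branch locus} together with the disjointness of the branch sections; you instead compare cardinalities by writing out the Abhyankar/Kummer normal form of the completed local map and observing that its degree is simultaneously $|I_{r_{\mathcal{C}}(x),\mathcal{C}}|$ (it is Galois under $D_{r_{\mathcal{C}}(x),\mathcal{C}}$) and the horizontal ramification index $|I_{x,\mathcal{C}}|$. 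These approaches are close cousins (Abhyankar's Lemma is itself a refinement of purity in the tame normal-crossings setting), but yours is more computational and has the bonus of exhibiting the explicit local model $k[[s]]\to k[[u]]$, $s\mapsto u^{e}$, that witnesses the ramification on the special fiber. Two small points to tighten: the citation of Proposition \ref{InjectionDecomposition} to embed $D_{x}$ into $D_{y}$ when $x$ is a generic ramification point should really be the chain $D_{x}\hookrightarrow D_{r_{\mathcal{C}}(x)}\hookrightarrow D_{y}$ (the cited proposition concerns closed points of the special fiber), and the assertion $e=|I_{x,\mathcal{C}}|$ in the smooth case is precisely the identity you are proving, so it must be extracted from the normal form by noting that the degree of $R[[s]]\to R[[u]]$, $s\mapsto u^{e}$, equals the ramification index along the (unique, horizontal) branch divisor $(s)$, which equals $|I_{x,\mathcal{C}}|$ since $(s)$ is the completion of $\overline{\{\phi(x)\}}$.
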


\begin{proof}
We give a sketch of the proof and refer the reader to \cite[Proposition 3.9]{supertrop} for the details. For any closed point $x\in\mathcal{C}_{s}$, we have a natural injection $D_{x}\rightarrow{D_{y}}$, where $y$ is the generic point of an irreducible component $\Gamma'$. See Proposition \ref{InjectionDecomposition}. Under this morphism, we easily obtain the identification $I_{x,\mathcal{C}}=I_{r_{\mathcal{C}(x)},\Gamma'}$. We are thus left with the case where $x$ is a generic ramification point. One then considers the image $y$ of $x$ under $\phi$. Since $\phi_{\mathcal{C}}$ is disjointly branched, we have that $y$ is in the regular locus. We consider the morphism $\mathcal{C}/I_{x}\rightarrow{\mathcal{D}}$ and let $z$ be the image of $r_{\mathcal{C}}(x)$ in $\mathcal{C}/I_{x}$. Supposing that $\mathcal{C}/I_{x}\rightarrow{\mathcal{D}}$ is ramified at $z$, one then obtains a contradiction as follows. Let $z'$ be the image of $z$ in $\mathcal{D}$. It is  in the branch locus and it is not an ordinary double point. By Lemma \ref{LemmaSmooth2}, we find that it is smooth. By purity of the branch locus, there exists a codimension one point above which $\mathcal{C}/I_{x}\rightarrow{\mathcal{D}}$ is ramified. By our assumption on disjointly branched morphisms, this point must be $y$. But this contradicts the fact that $\mathcal{C}/I_{x}\rightarrow{\mathcal{D}}$ is unramified above $y$ (see Lemma \ref{RamificationInertiaLemma}), a contradiction.

\end{proof}

\section{The decomposition group of a vertex}

Let $\phi:\mathcal{C}\longrightarrow{\mathcal{D}}$ be a disjointly branched Galois morphism, with Galois group $G$. 
Let $\Gamma'$ be any irreducible component in the special fiber of $\mathcal{C}$ and let $\Gamma$ be its image in $\mathcal{D}$. 
\begin{theorem}\label{DecompVert}
Suppose that the genus of $\Gamma$ is zero. Then
\begin{equation}
D_{\Gamma'}=\prod_{P\in{\Gamma'(k)}}I_{P}.
\end{equation}
\end{theorem}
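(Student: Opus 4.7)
The plan is to identify $D_{\Gamma'}$ with the Galois group of the cover of smooth projective $k$-curves $\Gamma'\to\Gamma$, and then to exhibit the subgroup generated by the inertia groups at $k$-points as equal to all of $D_{\Gamma'}$ by cutting out an étale cover of $\Gamma\cong\mathbb{P}^{1}_{k}$ and appealing to the simple connectedness of the projective line.

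First, I would set up the algebraic picture. Since $\phi_{\mathcal{C}}$ is disjointly branched, it is tame, so Proposition \ref{SerExtGal1} applied at the codimension one point $\Gamma'\subset\mathcal{C}$ gives an isomorphism $D_{\Gamma'}\simeq\mathrm{Gal}(k(\Gamma')/k(\Gamma))$ (recall $I_{\Gamma'}=1$ as $\pi$ is a uniformizer in $\mathcal{O}_{\mathcal{C},\Gamma'}$ and $\mathcal{O}_{\mathcal{D},\Gamma}$). Combined with Corollary \ref{QuotientSpecialFiber}, this exhibits $\Gamma'\to\Gamma$ as a Galois cover of smooth projective $k$-curves with group $D_{\Gamma'}$. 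Moreover, by Proposition \ref{ramind2}, for each $P\in\Gamma'(k)$ the inertia subgroup $I_{P}\subseteq D_{\Gamma'}$ computed inside $\mathcal{C}$ coincides with the inertia of $P$ for the cover $\Gamma'\to\Gamma$, and is trivial whenever $P$ is neither an intersection point nor the reduction of a branch point of $\phi\colon C\to D$.

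Next, interpret the product $\prod_{P\in\Gamma'(k)}I_{P}$ as the subgroup
\begin{equation*}
H:=\bigl\langle I_{P}\,:\,P\in\Gamma'(k)\bigr\rangle\subseteq D_{\Gamma'}.
\end{equation*}
Consider the intermediate Galois cover $\pi\colon\Gamma'/H\to\Gamma$. I claim $\pi$ is étale at every closed point of $\Gamma$. Indeed, fix $Q\in\Gamma$ and lift to $P\in\Gamma'(k)$ with image $\overline{P}\in(\Gamma'/H)(k)$. By Lemma \ref{RamificationInertiaLemma}.(4), the local extension $(\mathcal{O}_{\Gamma'/H,\overline{P}}/\mathcal{O}_{\Gamma,Q})$ is étale iff $H\supseteq I_{P}$, which holds by construction. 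Hence $\pi$ is étale at every closed point, and since $\Gamma$ is a smooth curve, $\pi$ is everywhere étale.

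Finally, since $g(\Gamma)=0$ and $k$ is algebraically closed, $\Gamma\cong\mathbb{P}^{1}_{k}$, whose étale fundamental group is trivial; hence any connected finite étale cover of $\Gamma$ is isomorphic to $\Gamma$ itself. The quotient $\Gamma'/H$ is irreducible (being a quotient of the irreducible curve $\Gamma'$ by a finite group), so $\pi$ must be an isomorphism, giving $[D_{\Gamma'}:H]=\deg\pi=1$ and therefore $H=D_{\Gamma'}$, which is the claim. The only genuine input beyond the previously established framework is the triviality of $\pi_{1}^{\text{\'et}}(\mathbb{P}^{1}_{k})$, so the main conceptual point to flag is ensuring the reader understands $\prod_{P}I_{P}$ as the subgroup generated by the inertia groups rather than as a direct product (the latter would be false in general, since the $I_{P}$ need not commute or intersect trivially when $D_{\Gamma'}$ is nonabelian).
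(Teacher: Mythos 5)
Your proof is correct and follows essentially the same strategy as the paper's: one shows that $\prod_{P} I_{P}$ (interpreted as the subgroup generated by the $I_P$) sits inside $D_{\Gamma'}$, passes to the intermediate cover, observes that it is everywhere étale over the genus-zero base, and then invokes the nonexistence of nontrivial unramified covers of a genus-zero curve. The only cosmetic difference is that the paper phrases this at the level of the schemes over $R$ — factoring $\mathcal{C}\to\mathcal{C}/D_{\Gamma'}\to\mathcal{D}$ and considering $\mathcal{C}/\prod I_P\to\mathcal{C}/D_{\Gamma'}$ — while you descend immediately to the special fiber via Corollary \ref{QuotientSpecialFiber} and Proposition \ref{ramind2}; these are interchangeable. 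Your explicit caveat that $\prod_P I_P$ must mean the generated subgroup rather than a direct product is a genuine clarification the paper leaves implicit.
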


\begin{proof}
Let $y'$ be the generic point of $\Gamma'$. We factorize the morphism $\phi:\mathcal{C}\longrightarrow{\mathcal{D}}$ into $\mathcal{C}\longrightarrow{\mathcal{C}/D_{\Gamma'}}\longrightarrow{\mathcal{D}}$. Note that last morphism is {\it{"Nisnevich"}} at the image of $y$. That is, it is \'{e}tale and the induced map of residue fields is an isomorphism. In fact, $K(\mathcal{C}/D_{\Gamma'})$ is the largest among all such fields. Since the map on the residue fields is an isomorphism, we don't have any ramification and as such we find that $D_{\Gamma'}\supset{\prod_{P\in{\Gamma'(k)}}I_{P}}$. Note that this didn't use the condition on the genus of $\Gamma$. 

The induced morphism $\mathcal{C}/\prod_{P\in{\Gamma'(k)}}I_{P}\longrightarrow{\mathcal{C}/D_{\Gamma'}}$ is unramified above every point in the image of $\Gamma'$ in $\mathcal{C}/D_{\Gamma'}$. But this component has the same function field as $\Gamma$, which has genus zero. Since genus zero curves have no unramified coverings (by the Riemann-Hurwitz formula for instance), we find that $D_{\Gamma'}=\prod_{P\in{\Gamma'(k)}}I_{P}$, as desired. 
\end{proof}

\begin{rem}
Note that the condition on the genus of $\Gamma$ is indeed necessary. Take an elliptic curve $E$ with good reduction over $K$ and a corresponding model $\mathcal{E}$ with good reduction over $R$. Now take any unramified Galois covering of $E$ (which is in fact abelian, but we won't be needing this) with Galois group $G$ . Then the corresponding curve $E'$ again has genus $1$ by the Riemann-Hurwitz formula and the corresponding intersection graph consists of only one vertex with weight $1$. We therefore see that $D_{\Gamma'}=G$, even though $I_{P}=(1)$ for every $P$. 
\end{rem}

\section{Subdivisions and inertia groups for edges}

In this section, we prove a continuity result for inertia groups of a disjointly branched morphism, as defined in Section \ref{DisjointBran11}. 
More precisely, for a regular subdivision $\mathcal{D}_{0}$ of $\mathcal{D}$, we will give a formula for the inertia groups of the new components in $\mathcal{D}_{0}$ in terms of the inertia group of the corresponding edge in $\mathcal{D}$. This will allow us to determine the inertia group of an edge in terms of codimension one phenomena, namely the inertia groups of the generic points of these new components. 
We will also give a formula for the decomposition group of a vertex $v'\in\Sigma(\mathcal{C})$ lying above a vertex $v\in\Sigma(\mathcal{D})$, where the corresponding component $\Gamma_{v}$ has genus zero. 


Consider a disjointly branched Galois morphism $\phi:\mathcal{C}\rightarrow{\mathcal{D}}$ with $x\in\mathcal{C}$ an intersection point with length $n_{x}$ and $y$ its image in $\mathcal{D}$ with length $n_{y}$. We will denote the Galois group by $G$. From Proposition \ref{InertiagroupIntersectionPoint1}, 
 we then have the formula
\begin{equation}\label{InertiaFormula}
n_{y}=|I_{x/y}|\cdot{n_{x}}.
\end{equation}

Let $y$ be an intersection point in $\mathcal{D}$, with corresponding components $\Gamma_{0}$ and $\Gamma_{n}$. Here $n$ is the length of $y$.
We now take a regular subdivision $\mathcal{D}_{0}$ of $\mathcal{D}$ in $y$. That is, we have a model $\mathcal{D}_{0}$ with a morphism $\psi: \mathcal{D}_{0}\rightarrow{\mathcal{D}}$ that is an isomorphism outside $y$ and the pre-image $\psi^{-1}\{y\}$ of $y$ consists of $n-1$ projective lines $\Gamma_{i}$. Here, the projective lines are labeled such that $\Gamma_{i}$ intersects $\Gamma_{i+1}$ in one point: $y_{i,i+1}$. Furthermore, we have that $\Gamma_{1}$ intersects an isomorphic copy of the original component $\Gamma_{0}$ in $y_{0,1}$ and likewise $\Gamma_{n-1}$ intersects an isomorphic copy of the original component $\Gamma_{n}$ in $y_{n-1,n}$, see \cite[Chapter 8, Example 3.53. and Chapter 9, Lemma 3.21]{liu2} 
for the details. 

 We now take the normalization $\mathcal{C}_{0}$ of $\mathcal{D}_{0}$ in $K(\mathcal{C})$. By virtue of the universal property for normalizations, we have a natural morphism
\begin{equation}
\mathcal{C}_{0}\rightarrow{\mathcal{C}}
\end{equation}
that is an isomorphism outside $\phi^{-1}(y)$.

Taking the tamely ramified extension $K\subset{K'}$ of order $\text{lcm }(|I_{\Gamma_{i}}|)$, we obtain a new model $\mathcal{D}'_{0}=\mathcal{D}_{0}\times_{\text{Spec}(R)}{\text{Spec}(R')}$ over $R'$, which is the normalization of $\mathcal{D}_{0}$ in $K'(\mathcal{D})$.
 Taking the normalization $\mathcal{C}'_{0}$ of this model inside $K'(\mathcal{C})$, we then naturally obtain morphisms
\begin{equation}
\mathcal{C}'_{0}\rightarrow{\mathcal{C}_{0}}\rightarrow{\mathcal{C}}.
\end{equation}
Here the first morphism is finite and the second one is birational. Note that by \cite[Chapter 10, Proposition 4.30]{liu2}, we have that $\mathcal{C}'_{0}$ is again semistable and that $G$ naturally acts on $\mathcal{C}_{0}$ and  $\mathcal{C}'_{0}$ such that $\mathcal{C}_{0}/G=\mathcal{D}_{0}$ and $\mathcal{C}'_{0}/G=\mathcal{D}'_{0}$ (which follows from the fact that $G$ acts naturally on any normalization, see Proposition \ref{GalExtNormInt}). 

We now wish to study the inertia groups of the various points in $\mathcal{C}'_{0}$, $\mathcal{C}_{0}$ and $\mathcal{C}$. To do that, we will introduce the notion of a "\emph{chain}".

\begin{mydef}
Let $y_{i,i+1}$ and $y'_{i,i+1}$ be the intersection points in $\mathcal{D}_{0}$ and $\mathcal{D}'_{0}$ respectively that map to $y\in\mathcal{D}$ under the natural morphism. Similarly, let $y_{i}$ and $y'_{i}$ be the generic points of the components in $\mathcal{D}_{0}$ and $\mathcal{D}'_{0}$ that map to $y$. Here the generic points are labeled such that $y_{i,i+1}$ is a specialization of both $y_{i}$ and $y_{i+1}$. 
A {\bf{chain}} lying above these points is a collection of generic points $x_{i}$ in the special fiber of $\mathcal{C}_{0}$ or $\mathcal{C}'$ and closed points $x_{i,i+1}$ in $\mathcal{C}_{0}$ or $\mathcal{C}'_{0}$ such that:
\begin{enumerate}
\item $x_{i,i+1}$ is a specialization of both $x_{i}$ and $x_{i+1}$,
\item The $x_{i,i+1}$ map to $y_{i,i+1}$,
\item The $x_{i}$ map to $y_{i}$.
\end{enumerate}
For the remainder of this section, we will refer to these simply as a "{\it{chain}}".
\end{mydef}  

\begin{lemma}\label{LiftChains}
Let $\{x_{i,i+1}\}\cup{\{x_{i}\}}$ be a chain in $\mathcal{C}_{0}$. Then there exists a chain $\{x'_{i,i+1}\}\cup{\{x'_{i}\}}$ in $\mathcal{C}'_{0}$ mapping to $\{x_{i,i+1}\}\cup{\{x_{i}\}}$.
\end{lemma}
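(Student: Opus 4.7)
The strategy is to construct the lifted chain inductively, working one vertex at a time. The essential input is that the natural morphism $\pi:\mathcal{C}'_{0}\to\mathcal{C}_{0}$ (obtained by applying the universal property of normalization to the composite $\mathcal{C}'_{0}\to\mathcal{D}'_{0}\to\mathcal{D}_{0}$) is finite and surjective, hence closed and specialization-preserving, and that the morphism $\mathcal{C}'_{0}\to\mathcal{D}'_{0}$ is disjointly branched in the sense of Definition \ref{disbran}, so that Lemma \ref{LemmaSmooth2} and Corollary \ref{CorSmooth2} apply. As a base case, I would choose $x'_{0}$ to be the generic point of any irreducible component of $(\mathcal{C}'_{0})_{s}$ that lies over $\overline{\{x_{0}\}}$; such a component exists because a finite surjective morphism sends some component of the source's special fiber onto each component of the target's special fiber.

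\textbf{Inductive step.} Suppose $x'_{0}, x'_{0,1}, \ldots, x'_{i}$ have been constructed with the required properties. Set $\Gamma'_{i}=\overline{\{x'_{i}\}}$. The restriction $\pi|_{\Gamma'_{i}}\colon \Gamma'_{i}\to\overline{\{x_{i}\}}$ is finite surjective between integral $k$-schemes, so its fiber over the closed point $x_{i,i+1}$ is non-empty; pick any point in this fiber and call it $x'_{i,i+1}$. To verify that $x'_{i,i+1}$ is an ordinary double point of $\mathcal{C}'_{0}$ situated on $\Gamma'_{i}$ and a second component $\Gamma'_{i+1}$ lying above $\Gamma_{i+1}$ in $\mathcal{D}'_{0}$, observe that the image of $x'_{i,i+1}$ in $\mathcal{D}'_{0}$ must be $y'_{i,i+1}$, the unique lift of $y_{i,i+1}$ (the special fiber topology of $\mathcal{D}'_{0}$ coincides with that of $\mathcal{D}_{0}$ because $k$ is algebraically closed). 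Since $y'_{i,i+1}$ is an ordinary double point lying on two distinct components $\Gamma_{i}$ and $\Gamma_{i+1}$, a standard argument with the finite morphism $\mathcal{C}'_{0}\to\mathcal{D}'_{0}$ shows that $x'_{i,i+1}$ must lie on a component above $\Gamma_{i}$ and on one above $\Gamma_{i+1}$; as these are two distinct components, $x'_{i,i+1}$ is an ordinary double point of the semistable scheme $\mathcal{C}'_{0}$. One of these two components is $\Gamma'_{i}$; let $\Gamma'_{i+1}$ denote the other and let $x'_{i+1}$ be its generic point. Then $x'_{i,i+1}$ is, by construction, a specialization of both $x'_{i}$ and $x'_{i+1}$.

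\textbf{Main obstacle.} The delicate point is to confirm that $\pi(\Gamma'_{i+1})$ equals $\overline{\{x_{i+1}\}}$ rather than some other component of $(\mathcal{C}_{0})_{s}$ lying above $\Gamma_{i+1}$ and passing through $x_{i,i+1}$. A priori $\mathcal{C}_{0}$ is only guaranteed to be semistable after the tame base change to $R'$, so it could in principle have several components meeting at $x_{i,i+1}$. To close this gap, I would analyze the local structure at $y_{i,i+1}$: because the regular subdivision gives $\hat{\mathcal{O}}_{\mathcal{D}_{0},y_{i,i+1}}\simeq R[[u,v]]/(uv-\pi)$ and the cover $\mathcal{C}_{0}\to\mathcal{D}_{0}$ is tame (its order being prime to $\mathrm{char}(k)$), the normalization at any point above $y_{i,i+1}$ is again an ordinary double point, possibly of higher thickness. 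Hence $x_{i,i+1}$ lies on exactly two components of $\mathcal{C}_{0}$, necessarily $\overline{\{x_{i}\}}$ above $\Gamma_{i}$ and $\overline{\{x_{i+1}\}}$ above $\Gamma_{i+1}$; since $\pi(\Gamma'_{i+1})$ is a component above $\Gamma_{i+1}$ containing $x_{i,i+1}$, it is forced to equal $\overline{\{x_{i+1}\}}$. This completes the induction.
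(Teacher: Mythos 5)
Your argument is correct in substance, but it follows a genuinely different and more elaborate path than the paper, and one of its justifying claims is inaccurate as stated. The paper works entirely with the finite morphism $\mathcal{C}'_{0}\to\mathcal{C}_{0}$ of normal integral schemes and alternates going-up and going-down: pick $x'_0$ over $x_0$, lift $x_{0,1}$ as a specialization of $x'_0$ by going-up, then lift $x_1$ as a generization of $x'_{0,1}$ by going-down (valid since $\mathcal{C}_0$ is normal), and iterate; finiteness on the special fiber guarantees that lifts of generic points are again generic points of components. This entirely sidesteps the question you rightly isolate as the ``main obstacle'' --- identifying \emph{which} component of $(\mathcal{C}_0)_s$ through $x_{i,i+1}$ your lifted vertex covers --- since going-down simply hands you a generization over $x_{i+1}$ with no choice to be made.

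Your route instead factors through the maps to $\mathcal{D}'_0$ and $\mathcal{D}_0$ and must therefore pin down the local structure of $\mathcal{C}_0$ at $x_{i,i+1}$. The conclusion you need --- that exactly one component of $(\mathcal{C}_0)_s$ over each of $\Gamma_i$ and $\Gamma_{i+1}$ passes through $x_{i,i+1}$ --- is true, but your claim that $x_{i,i+1}$ is ``an ordinary double point, possibly of higher thickness'' is not: when $\mathcal{C}_0\to\mathcal{D}_0$ has vertical ramification, $\hat{\mathcal{O}}_{\mathcal{C}_0,x_{i,i+1}}$ is (a subring of invariants of) $R[[T_1,T_2]]/(T_1^{n_1}T_2^{n_2}-\pi)$, whose special fiber is generally non-reduced and hence not of the form $k[[x,y]]/(xy)$. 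What does hold, and what suffices for your purposes, is merely that this ring has exactly one minimal prime over each of the two branches $(u)$ and $(v)$ of $\mathcal{D}_{0,s}$: the Kummer cover itself has a unique such prime, and passing to the invariant subring cannot split it. With that replacement your proof goes through, but the paper's direct going-up/going-down argument is shorter and avoids the local structure theory altogether.
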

\begin{proof}
The idea of the proof is to apply the going-up and going-down theorems for integral extensions several times as follows.  Since $\mathcal{C}'_{0}\rightarrow{\mathcal{C}_{0}}$ is finite, the base change to the special fiber of $\mathcal{C}_{0}$ is also finite. This ensures that any lifts we obtain will be either closed points or generic points of components. We start with $x_{0}$ and $x_{0,1}$ and pick lifts $x'_{0}$ and $x'_{0,1}$ (which exist by the going-up theorem). 
Using the going-down theorem for $x_{0,1}$, $x_{1}$ and $x'_{0,1}$, we obtain a point $x'_{1}$ lying above $x_{1}$. 
Continuing in this fashion yields the lemma.  
\end{proof}

\begin{lemma}\label{ChainLemma}
For every intersection point $x\in\mathcal{C}$, there is only one chain in $\mathcal{C}_{0}$ and in $\mathcal{C}'_{0}$ lying above it.
\end{lemma}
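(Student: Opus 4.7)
The plan is to pass to formal completions at $x$ and $y = \phi(x)$, where the chain structure above $x$ becomes explicit. Using the identifications $\hat{\mathcal{O}}_{\mathcal{C},x} \simeq R[[U,V]]/(UV-\pi^{n_x})$ and $\hat{\mathcal{O}}_{\mathcal{D},y} \simeq R[[u,v]]/(uv-\pi^n)$, with $n = |I_x| \cdot n_x$ by Proposition \ref{InertiagroupIntersectionPoint1} and Equation \ref{InertiaFormula}, I would choose uniformizers so that the induced ring map factors as $u \mapsto U^{|I_x|}$, $v \mapsto V^{|I_x|}$ up to unit scalars. The subdivision $\mathcal{D}_0 \to \mathcal{D}$ is then, locally at $y$, the standard minimal regular resolution of the $A_{n-1}$-singularity, whose exceptional fiber is the linear chain $\Gamma_1,\ldots,\Gamma_{n-1}$ of $\mathbb{P}^1$s joining the strict transforms of $\Gamma_0$ and $\Gamma_n$.

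The key step is a direct chart-by-chart computation of the normalization of the pullback of this resolution along the ramified cover $u = U^{|I_x|}$, $v = V^{|I_x|}$. Using the toric description of each blow-up chart, one verifies that above every exceptional $\Gamma_i$ there is exactly one component of $\mathcal{C}_0$ lying above $x$, and that these components together with $\Gamma'_0$ and $\Gamma'_n$ (the two components of $\mathcal{C}_s$ meeting at $x$) form a single linear chain with no branching or cycles.

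Given this local picture, uniqueness follows formally. Any chain in $\mathcal{C}_0$ lying above $x$ must have $x_0 = \Gamma'_0$ and $x_n = \Gamma'_n$, since these are the only components of $\mathcal{C}_s$ through $x$ (and the map $\mathcal{C}_0 \to \mathcal{C}$ is an isomorphism away from $\phi^{-1}(y)$). The interior points $x_1,\ldots,x_{n-1}$ and intersection points $x_{i,i+1}$ must then all lie in the exceptional fiber of $\mathcal{C}_0 \to \mathcal{C}$ above $x$. Since this fiber supports only one linear chain joining $\Gamma'_0$ to $\Gamma'_n$, the chain is unique. (Equivalently, Zariski's main theorem ensures the pre-image of $x$ in $\mathcal{C}_0$ is connected, and two distinct chains with common endpoints would produce a cycle, contradicting the linearity established above.)

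For $\mathcal{C}'_0$ the identical formal-local argument applies after replacing $R$ by $R'$, since the tame base change $R \to R'$ only refines the chain structure in a controlled way. Alternatively, by Lemma \ref{LiftChains} any chain in $\mathcal{C}'_0$ above $x$ projects to a chain in $\mathcal{C}_0$ above $x$, which is unique by the first part; the chain in $\mathcal{C}'_0$ is then pinned down by the explicit (connected) structure of $\mathcal{C}'_0 \to \mathcal{C}_0$ above the chain. The main obstacle is the explicit local normalization computation confirming that the pullback of the chain along the ramified cover is itself a linear chain; once this is in hand the rest of the argument is essentially combinatorial.
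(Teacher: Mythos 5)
Your approach is genuinely different from the paper's, and the two arguments proceed in opposite directions. You attack $\mathcal{C}_0$ head-on via a formal-local (toric) computation at $x$ and $y$, and then handle $\mathcal{C}'_0$ by repeating that computation over $R'$, with a proposed fallback of projecting chains from $\mathcal{C}'_0$ down to $\mathcal{C}_0$. The paper reverses the order: it treats $\mathcal{C}'_0$ first, where uniqueness is nearly free because $\mathcal{C}'_0$ is semistable and the morphism from $\mathcal{C}'_0$ to $\mathcal{C}$ over $R'$ is consequently a morphism of semistable models whose fiber over an ordinary double point is automatically a single linear chain of projective lines. It then gets $\mathcal{C}_0$ by Lemma \ref{LiftChains}: two distinct chains in $\mathcal{C}_0$ would lift to two distinct chains in $\mathcal{C}'_0$, which is impossible. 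The paper's route sidesteps the local normalization calculation entirely; your route requires actually verifying that the normalization of a regular subdivision of a tame Kummer double point is a linear chain, including in the vertically ramified case where the special fiber of $\mathcal{C}_0$ is non-reduced. That computation does work (in the formal completion the exceptional fiber is a toric chain, one prime per subdivision ray, with only consecutive intersections), and you rightly flag it as the load-bearing step, but it costs more than the semistability argument the paper leans on.

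Your fallback argument for $\mathcal{C}'_0$ has a genuine gap. Projecting a chain in $\mathcal{C}'_0$ to a chain in $\mathcal{C}_0$ and invoking uniqueness there does not pin down the chain in $\mathcal{C}'_0$: the finite map $\mathcal{C}'_0\to\mathcal{C}_0$ is precisely the base change that removes vertical ramification, and it can split the exceptional component of $\mathcal{C}_0$ over a subdivision component $\Gamma_i$ into several components of $\mathcal{C}'_0$. Saying the chain is ``pinned down by the explicit (connected) structure'' is circular --- establishing that exactly one such component participates in a chain over the given $x$ is the very claim you are trying to prove, and it requires either the local computation or the paper's semistability observation. (Also note that Lemma \ref{LiftChains} lifts chains upward to $\mathcal{C}'_0$; projecting downward is just functoriality and does not use that lemma.) Your primary argument, repeating the formal-local computation over $R'$, is what actually closes the $\mathcal{C}'_0$ case; the projection fallback does not.
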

\begin{proof}
Let us prove this for $\mathcal{C}'_{0}$ first. Since $\mathcal{C}'_{0}$ is semistable, we know that the morphism $\mathcal{C}'_{0}\rightarrow{\mathcal{C}}$ is just a blow-up over $R'$ in the sense that the edge $x$ is subdivided into a chain of projective lines. This gives a one-to-one correspondence between chains in $\mathcal{C}'_{0}$ and edges $x\in\mathcal{C}$ lying above $y$. This then also gives the result for $\mathcal{C}_{0}$ as follows. Since every chain in $\mathcal{C}_{0}$ is liftable to a chain in $\mathcal{C}$ (by Lemma \ref{LiftChains}), it has to be unique. Indeed, if there exist two different chains in $\mathcal{C}_{0}$ mapping to $x$, then there would be two different chains in $\mathcal{C}'_{0}$ mapping to $x$, a contradiction. 
\end{proof}

\begin{lemma}
Let $x\in\mathcal{C}$ be an intersection point lying over $y$ 
 and let $x'_{i,i+1}$ and $x_{i,i+1}$ be closed points in $\mathcal{C}'_{0}$ and $\mathcal{C}_{0}$ respectively that map to $x$. 
Then
\begin{equation} I_{x}=I_{x'_{i,i+1}}=I_{x_{i,i+1}}.
\end{equation} 
\end{lemma}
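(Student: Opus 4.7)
The plan is to exploit the $G$-equivariance of the natural morphisms $\mathcal{C}_0 \to \mathcal{C}$ and $\mathcal{C}'_0 \to \mathcal{C}$, which arise from the universal property of the normalization (as in Proposition \ref{GalExtNormInt}), combined with the uniqueness of the chain over $x$ established in Lemma \ref{ChainLemma}. The essential geometric input is the observation, already extracted from the proof of Proposition \ref{InjectionDecomposition}, that every element of $I_x$ must fix both irreducible components of $\mathcal{C}_s$ passing through $x$, since swapping them would force $\pi(x)$ to be smooth in $\mathcal{C}/I_x$ via Lemma \ref{QuotientNons1}, contradicting Corollary \ref{CorSmooth1}.

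To show $I_x \subseteq I_{x_{i,i+1}}$, I would take $\sigma \in I_x$ and use $G$-equivariance of $\mathcal{C}_0 \to \mathcal{C}$ to see that $\sigma$ permutes the fiber over $x$. This fiber is precisely the chain $\{x_j\} \cup \{x_{j,j+1}\}$ by Lemma \ref{ChainLemma}, so $\sigma$ acts on it as a graph automorphism. The extremal points $x_0$ and $x_n$ are characterized as the two points of the chain mapping to the generic points of the (two) non-exceptional components $\Gamma_0$ and $\Gamma_n$ of $\mathcal{D}_{0,s}$ containing the chain above $y$; since $\sigma$ acts trivially on the quotient $\mathcal{C}_0/G = \mathcal{D}_0$ and fixes the two components at $x$ in $\mathcal{C}$, it must fix both $x_0$ and $x_n$ individually. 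A path admits no nontrivial automorphism fixing both endpoints, so $\sigma$ fixes every vertex and every edge-point of the chain, in particular $x_{i,i+1}$. Since $k$ is algebraically closed we have $k(x_{i,i+1}) = k$, hence $D_{x_{i,i+1}} = I_{x_{i,i+1}}$, and therefore $\sigma \in I_{x_{i,i+1}}$. The reverse inclusion is immediate: any $\sigma \in I_{x_{i,i+1}}$ fixes $x_{i,i+1}$ and acts trivially on its residue field, hence fixes its image $x$ in $\mathcal{C}$ with the same trivial action on $k(x) = k$.

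The identical argument applies to $\mathcal{C}'_0$, using the $G$-equivariant morphism $\mathcal{C}'_0 \to \mathcal{C}$ factoring through the normalization of the base change $\mathcal{C}_{R'}$, whose existence again follows from the universal property of $\mathcal{C}$ as the normalization of $\mathcal{D}$ in $K(\mathcal{C}) \subseteq K'(\mathcal{C})$, together with the analogue of Lemma \ref{ChainLemma} for $\mathcal{C}'_0$. The main obstacle I would expect is the bookkeeping needed to pin down the two endpoints $x_0, x_n$ (respectively $x'_0, x'_n$) as the unique points of the chain above $x$ mapping to the proper transforms of the two components at $y$, and to verify that the $G$-equivariant structure transports cleanly to $\mathcal{C}'_0$ even though this model is defined over $R'$; once these identifications are in place the proof reduces to the combinatorial triviality that a path has no nontrivial endpoint-fixing automorphism.
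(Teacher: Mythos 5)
Your argument is correct and rests on exactly the same two pillars the paper uses: the uniqueness of the chain over $x$ (Lemma \ref{ChainLemma}) and the $G$-equivariance of the birational morphism $\mathcal{C}_0\to\mathcal{C}$ coming from the universal property of normalization. The containment $I_{x_{i,i+1}}\subseteq I_x$ is, as you say, immediate from the $G$-equivariant map forward.

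Where you take a more roundabout route is in ruling out a chain reversal. You invoke the component-stability fact from the proof of Proposition \ref{InjectionDecomposition} (via Lemma \ref{QuotientNons1}) to pin down the endpoints of the chain, and then argue that a path with fixed endpoints admits no nontrivial automorphism. This works, but it is more than you need. Once you note that $\sigma$ acts trivially on the quotient $\mathcal{D}_0=\mathcal{C}_0/G$, the image $\sigma(x_{i,i+1})$ must again lie over the \emph{same} labelled point $y_{i,i+1}$; since by Lemma \ref{ChainLemma} the chain over $x$ is unique and contains exactly one element over each $y_{i,i+1}$, the identity $\sigma(x_{i,i+1})=x_{i,i+1}$ follows at once, with no reference to path automorphisms or to the two components of $\mathcal{C}_s$ at $x$. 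In other words, the $\mathcal{D}_0$-labelling already rigidifies the chain, so the appeal to Lemma \ref{QuotientNons1} is superfluous. A small imprecision in your write-up: as defined, the chain consists of the $x_i$ lying over the exceptional components $y_1,\dots,y_{n-1}$ together with the $x_{i,i+1}$; the points $x_0$ and $x_n$ over the proper transforms $\Gamma_0$ and $\Gamma_n$ are not formally part of it, so the "endpoints'' you should anchor on are $x_{0,1}$ and $x_{n-1,n}$ — but this is harmless, as those are rigidified by the same labelling argument.
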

\begin{proof}
We will prove that $D_{e'}=D_{x'_{i,i+1}}=D_{x_{i,i+1}}$. Since the residue field $k$ is algebraically closed by assumption, we have that they are equal to their inertia groups. For any chain $\{x_{i,i+1}\}\cup{\{x_{i}\}}$ (or $\{x'_{i,i+1}\}\cup{\{x'_{i}\}}$ for $\mathcal{C}'_{0}$) mapping to $x$ and $\sigma\in{G}$, we have the induced chain $\{\sigma(x_{i,i+1})\}\cup{\{\sigma(x_{i})\}}$, which maps down to $\sigma(x)$. Using this and Lemma \ref{ChainLemma}, we immediately obtain the desired result. 






\end{proof}

\begin{pro}
Let $x_{i}\in\mathcal{C}_{0}$ be as above. Then
\begin{equation}
I_{e}=I_{x_{i,i+1}}\subset{\prod_{i=0}^{n}I_{x_{i}}}.
\end{equation}  
\end{pro}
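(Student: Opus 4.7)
The plan is to combine the preceding lemma, which states that $I_{x_{i,i+1}}$ is independent of $i$, with the uniqueness of the chain above $x$ from Lemma \ref{ChainLemma} and the swap--preventing argument of Proposition \ref{InjectionDecomposition}.

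The equality $I_e = I_{x_{i,i+1}}$ is already established (and should be read as reaffirming the previous lemma), so only the containment requires argument, where I read $\prod_{i=0}^n I_{x_i}$ as the intersection of these subgroups inside $G$. First, I would fix $\sigma \in I_{x_{i,i+1}}$; the previous lemma, applied for every index simultaneously, yields $\sigma \in I_{x_{j,j+1}}$ for \emph{every} $j \in \{0,\dots,n-1\}$, so $\sigma$ stabilises every closed point of the chain at once.

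Next, I would reapply the argument of Proposition \ref{InjectionDecomposition} at each closed point $x_{j,j+1}$. If $\sigma$ fixed $x_{j,j+1}$ but swapped the two components meeting there, namely those with generic points $x_j$ and $x_{j+1}$, then Lemma \ref{QuotientNons1} applied to the cyclic group $\langle \sigma \rangle$ would force the image of $x_{j,j+1}$ in $\mathcal{C}_0/\langle \sigma\rangle$ to be smooth; descending further to $\mathcal{D}_0 = \mathcal{C}_0/G$ would then contradict Corollary \ref{CorSmooth1}, since $y_{j,j+1}$ is an ordinary double point of $\mathcal{D}_0$. Hence $\sigma$ preserves each component of the chain, i.e.\ $\sigma(x_j) = x_j$ for every $j$.

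The main obstacle will be upgrading the statement \textquotedblleft $\sigma$ fixes $x_j$ as a scheme-theoretic point\textquotedblright\ to the inertia-theoretic statement $\sigma \in I_{x_j}$. For this I would invoke Proposition \ref{ramind2} at a closed point $x_{j-1,j}$ (or $x_{j,j+1}$) lying on the component $\Gamma_{x_j}$: the proposition identifies the local inertia $I_{x_{j-1,j},\mathcal{C}_0}$ with the inertia of the corresponding point of the residue-field cover $\Gamma_{x_j} \to \Gamma_{y_j}$, which sits inside $\mathrm{Gal}(k(\Gamma_{x_j})/k(\Gamma_{y_j}))$. Tracing $\sigma$ through this identification, together with the equality of local and global inertia supplied by Proposition \ref{ramind2}, places $\sigma$ in $I_{x_j}$. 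Carrying out this last identification correctly is the delicate part of the argument, since it has to reconcile the \textquotedblleft closed point\textquotedblright\ inertia at $x_{j,j+1}$ with the \textquotedblleft generic point\textquotedblright\ inertia at $x_j$ across the whole chain.
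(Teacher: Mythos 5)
Your reading of $\prod_{i=0}^{n} I_{x_i}$ as an intersection is the root error: here it denotes the subgroup \emph{product}, i.e.\ the subgroup of $G$ generated by the $I_{x_i}$. Under the intersection reading the proposition is false. Indeed $x_0$ and $x_n$ are the generic points of the (copies of the) original components, so $I_{x_0}=I_{x_n}=(1)$ because $\mathcal{C}\rightarrow\mathcal{D}$ is disjointly branched, hence $\bigcap_i I_{x_i}=(1)$, and the claimed containment would force $I_e=(1)$ always. More generally, Theorem \ref{InertProp2} gives $|I_{x_i}|=|I_e|/\gcd(i,|I_e|)$, which is $<|I_e|$ whenever $\gcd(i,|I_e|)>1$, so $I_e\subset I_{x_i}$ generally fails. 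The product reading is also what makes the induction in Proposition \ref{Inertiagroup1} go through: the quotient map $\mathcal{C}_0/\prod_{i=1}^{j-1}I_{x_i}\rightarrow\mathcal{C}_0/\prod_{i=1}^{j}I_{x_i}$ needs the chain of subgroups to grow with $j$. The paper's proof of the product containment is a short purity-of-the-branch-locus argument: the morphism $\mathcal{C}_0/\prod I_{x_i}\rightarrow\mathcal{D}_0$ is \'etale above each $y_i$ by part 4 of Lemma \ref{RamificationInertiaLemma}, and purity then forbids ramification at the image of any $x_{i,i+1}$, whence $I_{x_{i,i+1}}\subset\prod I_{x_i}$.

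Independently of the misreading, your final step --- upgrading ``$\sigma$ fixes $x_j$'' to ``$\sigma\in I_{x_j}$'' --- does not hold and in fact runs the wrong way. Fixing $x_j$ only places $\sigma$ in $D_{x_j}$; the residue field at the generic point $x_j$ is the function field $k(\Gamma_{x_j})$, which is not algebraically closed, so $D_{x_j}\neq I_{x_j}$ whenever the component cover $\Gamma_{x_j}\rightarrow\Gamma_{y_j}$ is nontrivial. Proposition \ref{ramind2} identifies $I_{x_{j,j+1},\mathcal{C}_0}$ with the inertia of the closed point $r(x_{j,j+1})$ of the cover $\Gamma_{x_j}\rightarrow\Gamma_{y_j}$, a subgroup of $\text{Gal}(k(\Gamma_{x_j})/k(\Gamma_{y_j}))\simeq D_{x_j}/I_{x_j}$. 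Thus a nontrivial $\sigma\in I_{x_{j,j+1}}$ has nontrivial image in $D_{x_j}/I_{x_j}$ whenever that cover is ramified at the closed point, i.e.\ $\sigma\notin I_{x_j}$ --- the opposite of what you need.
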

\begin{proof}
Consider the morphism $\mathcal{C}_{0}/{\prod_{i=0}^{n}I_{x_{i}}}\rightarrow{\mathcal{D}_{0}}$ and  suppose that it is ramified at the image of some $x_{i,i+1}$. Then it has to ramify in codimension one by purity of the branch locus. But the only possible candidates for this are the images of the $x_{i}$, a contradiction. This gives the desired result.
\end{proof}

Let us quickly try the same argument to prove the other inclusion. Consider the morphism $\mathcal{C}_{0}/I_{e}\rightarrow{\mathcal{D}_{0}}$ and suppose that it is ramified at a vertical component $y_{i}$. From this point on, it is not directly evident how to predict the behavior of the corresponding connected edges. We will illustrate this in an example.

\begin{exa}
Let $A:=R[x,y]/(xy-\pi)$ and consider the covering given by the function field extension
\begin{equation}
K(x)\subset{K(x)[z]}/(z^2-\pi(x+1))=:L.
\end{equation}
The normalization of $A$ in $L$ is then vertically ramified at both components $\Gamma_{1}=Z(x)$ and $\Gamma_{2}=Z(y)$. Taking the tamely ramified extension $K\subset{K(\pi^{1/2})}$, we see that the normalization of $R'[x,y]/(xy-\pi)$ inside $L$ is now \'{e}tale above $(x,y,\pi^{1/2})$. We thus see that the inertia group can be unrelated to the inertia group of the edge after the extension.
\end{exa}

We will now prove that $I_{e}\supset{\prod_{i=0}^{n}I_{x_{i}}}$. To do this, we will use {\it{Abhyankar's Lemma}}. 

\begin{lemma}\label{ramstruct2}{\bf{[Abhyankar's Lemma]}}
Let $X$ be a strictly Henselian local regular scheme of residue characteristic $p$, $D=\sum_{i=1}^{r}\text{div}(f_{i})$ a divisor with normal crossings on $X$ and $U=X-D$. Then every connected finite \'{e}tale covering of $U$ which is tamely ramified along $D$ is a quotient of a (tamely ramified) covering of the form
\begin{equation}
U'=U[T_{1},...,T_{r}]/(T_{1}^{n_{1}}-f_{1},...,T_{r}^{n_{r}}-f_{r}),
\end{equation}
where the $n_{i}$ are natural numbers prime to $p$.
\end{lemma}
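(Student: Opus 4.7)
The plan is to reduce the statement to a statement about the tame fundamental group of $U$, and then to show that this tame fundamental group is generated by the Kummer characters associated to the divisors $D_i = \mathrm{div}(f_i)$. First I would pass to the Galois case: any connected finite \'{e}tale covering $V \to U$ is dominated by a Galois covering $\widetilde{V} \to U$ (its Galois closure), and since a quotient of a quotient is a quotient, it suffices to prove the statement for $\widetilde{V} \to U$. Call the resulting Galois group $G$, which acts on $\widetilde{V}$ with $\widetilde{V}/G = U$.

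Next I would analyze the local structure of the covering at each generic point $\eta_i$ of $D_i$. Because $X$ is regular and $D$ has normal crossings, the local ring $\mathcal{O}_{X,\eta_i}$ is a discrete valuation ring with uniformizer $f_i$, and the hypothesis of tame ramification along $D$ says that the order of any inertia subgroup at a point lying over $\eta_i$ is prime to $p$. Let $n_i$ be the ramification index at $\eta_i$. The strict henselization assumption means the residue field of $\eta_i$ is already separably closed, so by the standard classification of tamely ramified extensions of a strictly henselian DVR, the local extension at $\eta_i$ is (generated by) a Kummer extension of the form $T_i^{n_i} = u_i f_i$ with $u_i$ a unit; after absorbing $u_i^{1/n_i}$ (which exists by strict henselization) we may take $u_i = 1$.

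The key step is then to show that after base change to $U' = U[T_1,\dots,T_r]/(T_1^{n_1}-f_1,\dots,T_r^{n_r}-f_r)$, the covering becomes trivial. The cover $U' \to U$ is itself tamely ramified along $D$, and by Abhyankar's computation its ramification index along $D_i$ is exactly $n_i$, matching the ramification index of $\widetilde{V} \to U$. Hence the normalization of $U'$ in $\widetilde{V} \otimes_U U'$ is \'{e}tale in codimension one along the preimage of $D$; by purity of the branch locus (which applies because $X$, and hence $U'$ after normalization, is regular in codimension one; this is where regularity of $X$ plus normal crossings is used) this normalization is \'{e}tale over $U'$. Since the base scheme $X$ is strictly henselian local and $U'$ has connected \'{e}tale cover only the trivial one (the fundamental group of the complement of a normal crossings divisor modulo the tame Kummer characters is trivial in the strictly henselian setting), this \'{e}tale cover is a disjoint union of copies of $U'$. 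Translating this back gives a surjection $U' \twoheadrightarrow \widetilde{V}$ compatible with the $U$-structure, exhibiting $\widetilde{V}$ as a quotient of $U'$.

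The main obstacle I expect is the last step: verifying that after killing the tame inertia along each $D_i$ by the Kummer cover $U'$, no further \'{e}tale covers remain. In the formal/henselian local situation this is a statement about the tame fundamental group $\pi_1^{\mathrm{tame}}(U,D)$ being pro-generated by the Kummer characters $\sigma_i : T_i \mapsto \zeta_{n_i} T_i$; a clean way to handle this is to invoke the structure theorem for the tame fundamental group of the complement of a normal crossings divisor on a strictly local regular scheme, which identifies it with $\prod_i \widehat{\mathbb{Z}}^{(p')}(1)$. The rest of the argument (Galois closure, purity, matching of ramification indices) is formal once this input is in place.
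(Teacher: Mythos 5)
The paper does not give a proof of this lemma; it is a cited classical result, with references to \cite[Theorem 1.2]{tamearithmetic} for the formulation and \cite[Exp.\ XIII, 5.3]{SGA1} for the proof. So there is no in-paper argument to compare yours against; instead I will assess your sketch on its own terms.

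Your outline follows the standard SGA1 strategy: pass to the Galois closure, use the strict henselization and tameness to put the local extension at each generic point of $D_i$ into Kummer form $T_i^{n_i}=f_i$, base change along the explicit Kummer cover $U'$, and then invoke purity of the branch locus to conclude that the pulled-back cover extends to an \'{e}tale cover and is therefore trivial. The decomposition and the role of each hypothesis (regularity, normal crossings, strict henselization, tameness) are all correctly identified, and the reduction steps are sound.

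The one genuine weakness is in the final step. To conclude triviality, the correct move is: let $X'$ denote the normalization of $X$ in $U'$ (the full Kummer cover of $X$, not just of $U$). Since the $f_i$ form part of a regular system of parameters, $X'$ is again regular, and as a finite scheme over the strictly Henselian local $X$ it is a finite disjoint union of strictly Henselian local schemes. The normalization of $X'$ in $\widetilde{V}\times_U U'$ is \'{e}tale in codimension one (matching of ramification indices), so purity makes it \'{e}tale over $X'$; since $\pi_1^{\text{\'{e}t}}$ of a strictly Henselian local scheme is trivial, the cover is a disjoint union of copies of $X'$, and restricting to $U'$ gives the desired quotient map. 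What you instead appeal to — ``the fundamental group of the complement of a normal crossings divisor modulo the tame Kummer characters is trivial,'' or equivalently the structure theorem $\pi_1^{\mathrm{tame}}(U)\cong\prod_i\widehat{\mathbb{Z}}^{(p')}(1)$ — is essentially a reformulation of Abhyankar's lemma itself, so invoking it here is circular. You flag this step as the main obstacle, and you are right that it is; the fix is to route through $\pi_1(X')=1$ rather than any statement about $\pi_1(U')$.
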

\begin{proof}
See \cite[Theorem 1.2]{tamearithmetic} for the current formulation and \cite[Exp. XIII, 5.3., Page 316]{SGA1} for the proof.
\end{proof}

Let us consider this lemma for $y_{i,i+1}$ an intersection point in $\mathcal{D}_{0}$. Note that we have a natural morphism
\begin{equation}
\mathcal{O}_{\mathcal{D}_{0},y_{i,i+1}}\rightarrow{\mathcal{O}_{\mathcal{C}_{0},x_{i,i+1}}},
\end{equation}
giving rise to a morphism of completed rings
\begin{equation}
A:=\hat{\mathcal{O}}_{\mathcal{D}_{0},y_{i,i+1}}\rightarrow{\hat{\mathcal{O}}_{\mathcal{C}_{0},x_{i,i+1}}}.
\end{equation}
The ring $A$ is strictly Henselian, so we can apply Lemma \ref{ramstruct2}. We have
\begin{equation}
A\simeq{R[[u,v]]/(uv-\pi)}
\end{equation}
by assumption, and we thus obtain that $\hat{\mathcal{O}}_{\mathcal{D}_{0},y_{i,i+1}}\rightarrow{\hat{\mathcal{O}}_{\mathcal{C}_{0},x_{i,i+1}}}$ is a quotient of a Kummer covering of the form
\begin{equation}\label{Kummercov1}
A\rightarrow{}A[T_{1},T_{2}]/(T_{1}^{n_{1}}-u,T_{2}^{n_{2}}-v)
\end{equation}
for $n_{i}$ coprime to $p$.  

\begin{pro}\label{Inertiagroup1}
\begin{equation}
I_{e}={\prod_{i=0}^{n}I_{x_{i}}}.
\end{equation}
\end{pro}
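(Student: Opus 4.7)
The inclusion $I_e\subset\prod_{i=0}^n I_{x_i}$ is already known from the previous proposition, so I only need to establish the reverse containment. Because $I_e=I_{x_{i,i+1}}$ for every $i$ (by the lemma showing that the inertia groups along the chain all coincide with $I_e$), it suffices to prove the sharper local statement $I_{x_{i,i+1}}=I_{x_i}\cdot I_{x_{i+1}}$ at each intersection point. Iterating this identity along the chain $\Gamma_0,\Gamma_1,\ldots,\Gamma_n$ then reconstructs $\prod_{i=0}^n I_{x_i}$ as $I_e$ and completes the proof.

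To obtain the local statement I would pass to the completed local rings $A:=\hat{\mathcal{O}}_{\mathcal{D}_0,y_{i,i+1}}\simeq R[[u,v]]/(uv-\pi)$ and $B:=\hat{\mathcal{O}}_{\mathcal{C}_0,x_{i,i+1}}$. The ring $A$ is strictly Henselian regular local with $Z(u)\cup Z(v)$ a normal crossings divisor, and because the residue field $k$ is algebraically closed, $B/A$ is Galois with group equal to $D_{x_{i,i+1}}=I_{x_{i,i+1}}=I_e$. Abhyankar's Lemma (Lemma \ref{ramstruct2}) then realizes $B$ as a quotient of the explicit Kummer cover $C:=A[T_1,T_2]/(T_1^{n_1}-u,\,T_2^{n_2}-v)$ for suitable $n_1,n_2$ coprime to $\operatorname{char}(k)$, producing a surjection $\pi:\mathbb{Z}/n_1\times\mathbb{Z}/n_2\twoheadrightarrow I_e$ on Galois groups.

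In the Kummer cover $C$ the two coordinate subgroups $\mathbb{Z}/n_1\times 0$ and $0\times\mathbb{Z}/n_2$ are exactly the inertia groups at the codimension one primes $(T_1)$ and $(T_2)$ lying over $(u)$ and $(v)$. Taking the quotient by the kernel of $\pi$, the inertia at the two codimension one primes of $B$ lying over $(u)$ and $(v)$ descend to $\pi(\mathbb{Z}/n_1\times 0)$ and $\pi(0\times\mathbb{Z}/n_2)$ respectively. A comparison of ramification indices, using that the inertia at a codimension one prime is unchanged by passing to the completion at a closed point in its closure, identifies these local inertia groups with the global ones $I_{x_i}$ and $I_{x_{i+1}}$ inside $G$. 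Since $\mathbb{Z}/n_1\times 0$ and $0\times\mathbb{Z}/n_2$ generate $\mathbb{Z}/n_1\times\mathbb{Z}/n_2$, their images $I_{x_i}$ and $I_{x_{i+1}}$ generate $I_e$, and because $I_e$ is a quotient of an abelian group we may write this generation as a product $I_e=I_{x_i}\cdot I_{x_{i+1}}$.

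The hardest step, and the one requiring the most care, is the identification of the local inertia subgroups appearing inside the Kummer quotient with the global inertia subgroups $I_{x_i},I_{x_{i+1}}\subset G$; in particular one must justify that $I_{x_i}$ genuinely sits inside $I_e=D_{x_{i,i+1}}$ rather than merely sharing a subgroup of the same order. This is where one combines the specialization behavior of decomposition groups (Proposition \ref{InjectionDecomposition}, together with Lemma \ref{QuotientNons1} and Corollary \ref{CorSmooth1}, which force any $\sigma\in D_{x_{i,i+1}}$ to fix each of $\Gamma'_i,\Gamma'_{i+1}$ individually) with the invariance of tame ramification indices under completion at a closed specialization.
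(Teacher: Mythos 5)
Your route is genuinely different from the paper's, and I think it works. The paper argues iteratively: it shows $I_e\supseteq I_{x_j}$ one index at a time by a contradiction combining purity with a length computation at the completed local ring (if $\mathcal{C}_0/I_e\to\mathcal{D}_0$ were ramified along $y_j$, Abhyankar forces a single-variable Kummer local form, and the thickness would strictly drop, contradicting \'etaleness after the tame base change). You instead prove the sharper per-edge statement $I_{x_{i,i+1}}=I_{x_i}\cdot I_{x_{i+1}}$ by realizing $\hat{\mathcal{O}}_{\mathcal{C}_0,x_{i,i+1}}$ as a quotient of the two-variable Kummer cover, noting that the two coordinate inertia subgroups surject onto subgroups generating $I_e$, and then identifying those images with $I_{x_i}$ and $I_{x_{i+1}}$. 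This is cleaner and gives a structural fact the paper's proof does not make explicit; it also makes the consistency with Theorem \ref{InertProp2} visible, since $\gcd(i,|I_e|)$ and $\gcd(i+1,|I_e|)$ are coprime. You correctly flag the local-to-global identification as the delicate step.

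One caveat on the citations in that delicate step: Proposition \ref{InjectionDecomposition}, Lemma \ref{QuotientNons1}, and Corollary \ref{CorSmooth1} are proved for \emph{disjointly branched} morphisms, and $\mathcal{C}_0\to\mathcal{D}_0$ need not be disjointly branched, since the new vertical components introduced by the subdivision can be ramified (that is precisely why $\mathcal{C}'_0$ is introduced). So those results do not literally apply to $x_{i,i+1}\in\mathcal{C}_0$. Fortunately the inclusion $D_{x_{i,i+1}}\subseteq D_{x_i}\cap D_{x_{i+1}}$ holds here for a more elementary reason: the two codimension-one primes through $x_{i,i+1}$ map to the distinct $G$-fixed components $\Gamma_i\neq\Gamma_{i+1}$ of $\mathcal{D}_{0,s}$, so $\sigma\in D_{x_{i,i+1}}$ cannot swap them. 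With this replacement, together with the ramification-index comparison you already invoke (for tame covers, inertia is cyclic of order equal to the ramification index, which is preserved under completion at $x_{i,i+1}$ followed by localization at the branch of $\Gamma'_i$), the identification $\pi(\mathbb{Z}/n_1\times 0)=I_{x_i}$ goes through and your argument is complete.
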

\begin{proof}
We already proved that $I_{e}\subset{\prod_{i=0}^{n}I_{x_{i}}}$, so we will now prove the other inclusion. 
Let us first show that $I_{e}=I_{x_{1}}$. We first note that the natural morphism
\begin{equation}
\mathcal{C}_{0}/I_{x_{1}}\rightarrow{\mathcal{D}_{0}}
\end{equation}    
is \'{e}tale at the image of $x_{0,1}$. Indeed, if it were ramified, then it would be ramified in codimension one by purity of the branch locus. But it is already unramified at the image of both $x_{0}$ and $x_{1}$ (the first by the assumption on disjointly branched morphisms and the second by Proposition \ref{RamificationInertiaLemma}, part 4).  
We conclude that this is impossible.

We would now like to show that $\mathcal{C}_{0}/I_{x_{0,1}}\rightarrow{\mathcal{D}_{0}}$ is unramified at the image of $x_{1}$. Suppose that it is ramified. Since $x_{0}$ is unramified, we see that the associated morphism of completions from Equation \ref{Kummercov1} is of the form
\begin{equation}
A[T]/(T^{n}-v),
\end{equation}
where $v$ is a uniformizer for the local ring at $y_{1}$. But then a simple calculation shows that the length of the corresponding ring in $\mathcal{C}'_{0}/I_{x_{0,1}}$ would be strictly smaller. This contradicts the fact that $\mathcal{C}'_{0}/I_{x_{0,1}}=\mathcal{C}'_{0}/I_{x'_{0,1}}$ is \'{e}tale at the image of $x'_{0,1}$. We thus conclude that $I_{e}=I_{x_{1}}$.

We will now prove by rising induction that $I_{e}={\prod_{i=1}^{j}I_{x_{i}}}$ for every $j\leq{n}$. The case with $j=1$ was just treated. So assume that $I_{e}={\prod_{i=1}^{j-1}I_{x_{i}}}$. Consider the morphism
\begin{equation}
\mathcal{C}_{0}/{\prod_{i=1}^{j-1}I_{x_{i}}}\rightarrow{\mathcal{C}_{0}/{\prod_{i=1}^{j}I_{x_{i}}}}.
\end{equation}
Using the same reasoning as before, we see that $\mathcal{C}_{0}/{\prod_{i=1}^{j-1}I_{x_{i}}}\rightarrow{\mathcal{D}_{0}}$ is \'{e}tale at the image of $x_{j-1,j}$. The corresponding completed local ring 
in $\mathcal{C}_{0}/{\prod_{i=1}^{j-1}I_{x_{i}}}$ is thus regular. Using Equation \ref{Kummercov1}, we see that the corresponding covering again must be of the form
\begin{equation}
A[T]/(T^{n}-v).
\end{equation}
Indeed, $\mathcal{C}_{0}/{\prod_{i=1}^{j-1}I_{x_{i}}}\rightarrow{\mathcal{C}_{0}/{\prod_{i=1}^{j}I_{x_{i}}}}$ is unramified at the image of $x_{j-1}$, so there is no other option. But then the corresponding length again decreases and we obtain another contradiction as in the $j=1$ case. By induction, we then conclude that $I_{e}={\prod_{i=0}^{n}I_{x_{i}}}$. 

\end{proof}

We now set out to prove a formula for the inertia group $I_{x}$ in terms of the $I_{x_{i}}$. In the proof of Proposition \ref{Inertiagroup1}, we already saw that $I_{x}=I_{x_{1}}$. In general, the other inertia groups will be smaller. We first have the following
\begin{lemma}
Let $x$ be an intersection point in $\mathcal{C}$, $y$ its image in $\mathcal{D}$ and let $I_{x}$ be the corresponding inertia group. Then $I_{x}$ is cyclic.
\end{lemma}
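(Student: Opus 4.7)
The plan is to pass to the completion of the local ring at $x$ and analyze the action of $I_x$ on the two formal branches of the ordinary double point. Since $x$ is an ordinary double point we have $\hat{\mathcal{O}}_{\mathcal{C},x}\simeq R[[s,t]]/(st-\pi^n)$, and similarly $\hat{\mathcal{O}}_{\mathcal{D},y}\simeq R[[u,v]]/(uv-\pi^m)$, with $m=|I_x|\cdot n$ by Proposition \ref{InertiagroupIntersectionPoint1}. Completion commutes with taking invariants under a finite group acting on a local ring at a fixed closed point, so the inclusion $\hat{\mathcal{O}}_{\mathcal{D},y}\hookrightarrow \hat{\mathcal{O}}_{\mathcal{C},x}$ realizes $I_x$ as the Galois group of a tame cover of these two complete local rings.

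First I would show that no element of $I_x$ swaps the two formal branches $(s)$ and $(t)$ meeting at $x$. Let $\pi_{I_x}:\mathcal{C}\to \mathcal{C}/I_x$ be the quotient. By the completion identification above, the completed local ring of $\mathcal{C}/I_x$ at $\pi_{I_x}(x)$ coincides with $\hat{\mathcal{O}}_{\mathcal{D},y}$, which is itself an ordinary double point and in particular not smooth. Lemma \ref{QuotientNons1} then forces every $\sigma\in I_x$ to satisfy $\sigma(\Gamma_1)=\Gamma_1$ and $\sigma(\Gamma_2)=\Gamma_2$ for the two branches $\Gamma_1,\Gamma_2$ through $x$; equivalently, each $\sigma$ preserves the two height-one primes $(s)$ and $(t)$ of $\hat{\mathcal{O}}_{\mathcal{C},x}$.

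Consequently, each $\sigma\in I_x$ acts by $\sigma(s)=\alpha(\sigma)\cdot s$ and $\sigma(t)=\beta(\sigma)\cdot t$ in $\hat{\mathcal{O}}_{\mathcal{C},x}/\mathfrak{m}_x^2$, with $\alpha(\sigma),\beta(\sigma)\in k^*$; and because $\sigma$ is an $R$-automorphism fixing $\pi$, the relation $st=\pi^n$ forces $\alpha(\sigma)\beta(\sigma)=1$. Hence the induced representation of $I_x$ on the cotangent space $\mathfrak{m}_x/\mathfrak{m}_x^2$ factors through the single character $\chi:I_x\to k^*$, $\sigma\mapsto \alpha(\sigma)$. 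Since the image of $\chi$ is a finite subgroup of $k^*$, it is cyclic, so it suffices to show $\chi$ is injective.

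The hard part is the injectivity of $\chi$. This is where tameness enters: for a finite group of order coprime to $\mathrm{char}(k)$ acting faithfully on a Noetherian local ring with residue field $k$, the kernel of the induced action on $\mathfrak{m}/\mathfrak{m}^2$ is a pro-$p$-group, hence trivial under the tameness hypothesis (this is the standard linearization statement, see e.g.\ \cite[Exp.\ V]{SGA1}). Since $I_x\subseteq G$ has order coprime to $\mathrm{char}(k)$ by our running assumption on disjointly branched morphisms, $\chi$ is injective and $I_x$ embeds into a finite subgroup of $k^*$, which is cyclic. Alternatively, one could bypass this linearization step by invoking Abhyankar's Lemma \ref{ramstruct2} together with the fact, shown in the proof of Proposition \ref{Inertiagroup1}, that $I_x$ coincides with the codimension-one inertia group $I_{x_1}$ of a vertical component in a regular subdivision, which is automatically cyclic as a tame inertia group of a discrete valuation ring extension.
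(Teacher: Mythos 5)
Your proof is correct, but it takes a genuinely different route from the paper's. The paper's proof is a one-line reduction to the cyclicity of tame inertia groups of discrete valuation rings, via one of two already-established identities: $I_{x}=I_{x_{1}}$ from the proof of Proposition \ref{Inertiagroup1} (the inertia group of a vertical component in a regular subdivision), or $I_{x}=I_{\tilde{x}}$ from Proposition \ref{ramind2} (the inertia group of the intersection point as a closed point of the component $\Gamma'$); in either case one lands on a DVR and is done. You instead work directly with the completed local ring at the ordinary double point $x$: you use Lemma \ref{QuotientNons1} (applied to the quotient $\mathcal{C}\to\mathcal{C}/I_{x}$, whose image of $x$ is again a double point and hence non-smooth) to show that every $\sigma\in I_{x}$ preserves each of the two branches $(s)$ and $(t)$ individually, and then read off a representation on $\mathfrak{m}_{x}/\mathfrak{m}_{x}^{2}$ that factors through a single character $\chi:I_x\to k^{*}$ because $\sigma(\pi)=\pi$ forces $\alpha(\sigma)\beta(\sigma)=1$. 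This is a nice, self-contained local argument, and it is essentially the two-dimensional analogue of the classical tame-inertia computation; its one extra ingredient is the linearization/faithfulness fact that the kernel of $I_x\to\mathrm{GL}(\mathfrak{m}_x/\mathfrak{m}_x^{2})$ is a $p$-group, which does require the tameness hypothesis and whose attribution to \cite[Exp.\ V]{SGA1} you may wish to sharpen (it follows, for instance, because $\sigma-1$ strictly raises the $\mathfrak{m}$-adic filtration, so $\sigma^{d}=1$ with $d$ invertible forces $\sigma=1$ by Krull's intersection theorem). Your closing remark, reducing instead to the codimension-one inertia group $I_{x_{1}}$ via Proposition \ref{Inertiagroup1}, coincides with the paper's first proof. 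What the paper's route buys is brevity and reuse of already-proved machinery; what yours buys is a direct, coordinate-level explanation of where the cyclicity comes from at an ordinary double point, without passing through a subdivision.
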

\begin{proof}
This follows from $I_{x}=I_{x_{1}}$ and the fact that $I_{x_{1}}$ is cyclic (which is a result on tame Galois coverings of discrete valuation rings). For another proof, we note that
\begin{equation}
I_{x}=I_{\tilde{x}},
\end{equation}
where $\tilde{x}$ is the intersection point, considered as an element of a component $\Gamma'\subset{\mathcal{C}_{s}}$ and the inertia group is an inertia group for the induced Galois covering $\Gamma'\rightarrow{\Gamma}$. This equality follows from Proposition \ref{ramind2}. 
Since the local ring for $\tilde{x}$ in $\Gamma'$ is a discrete valuation ring, we again obtain that the inertia group is cyclic.  
\end{proof}

We now consider the cyclic abelian extension
\begin{equation}
\mathcal{C}_{0}\rightarrow{\mathcal{C}_{0}/I_{e}}.
\end{equation}

We note that $\mathcal{C}_{0}/I_{e}$ is again regular at the image of the chain induced by $e$. We now have

\begin{theorem}\label{InertProp2}
Let $I_{x_{i}}$ be as above. Then 
\begin{equation}
|I_{x_{i}}|=\dfrac{|I_{e}|}{\gcd(i,|I_{e}|)}.
\end{equation}
In particular, for $i$ such that $\gcd(i,|I_{e}|)=1$, we have that 
\begin{equation}
I_{x_{i}}=I_{e}.
\end{equation}
\end{theorem}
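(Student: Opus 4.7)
Plan: Set $n = |I_e|$, which is cyclic by the lemma preceding the theorem. My strategy is to reduce the problem to a single cyclic Kummer extension, apply the Poincar\'e--Lelong formula to its defining function, and read off the inertia orders from valuations via Proposition \ref{UnrAbelExt1}.

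First, I gather boundary data. The proof of Proposition \ref{Inertiagroup1} already established $I_{x_1} = I_e$, so $|I_{x_1}| = n$. The endpoints satisfy $|I_{x_0}| = |I_{x_n}| = 1$, because $\Gamma_0$ and $\Gamma_n$ descend to components of $\mathcal{D}$, above which the disjointly branched morphism $\mathcal{C}\to\mathcal{D}$ is \'etale at generic points of components by Definition \ref{disbran}(2). Since $I_e$ is cyclic of order $n$ coprime to $\mathrm{char}(k)$ and $\zeta_n \in K$, Kummer theory (Proposition \ref{AbelExt1}) presents the degree-$n$ subcovering $\mathcal{C}_0 \to \mathcal{C}_0/I_e$ as $z^n = f$ for some $f$ in the function field of $\mathcal{C}_0/I_e$. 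By the very definition of $I_e$, the intermediate morphism $\mathcal{C}_0/I_e \to \mathcal{D}_0$ is \'etale over the chain, so for each chain component $\Gamma_i \subset \mathcal{D}_0$ I fix one component $\Gamma_i' \subset \mathcal{C}_0/I_e$ above it; valuations may be computed interchangeably through this \'etale identification. Proposition \ref{UnrAbelExt1} then yields
\begin{equation*}
|I_{x_i}| \;=\; \frac{n}{\gcd(v_{\Gamma_i'}(f),\,n)}.
\end{equation*}

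The central step is to show that $v_{\Gamma_i'}(f)$ is an affine function of $i$. Applying Theorem \ref{MainThmVert} to $f$ gives a Laplacian $\phi$ on the intersection graph satisfying $\Delta(\phi) = \rho(\mathrm{div}_\eta(f))$, and I claim $\Delta(\phi)(\Gamma_i') = 0$ for $1 \le i \le n-1$. Indeed, horizontal ramification of $\mathcal{C}_0 \to \mathcal{C}_0/I_e$ arises only from the branch locus of the original cover $\mathcal{C}\to\mathcal{D}$, which by the blow-up construction lies disjoint from the exceptional chain in $\mathcal{D}_0$; therefore $\mathrm{div}_\eta(f)$ has no horizontal support specializing to the interior chain components. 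Each interior chain vertex has valency two, so the vanishing of the Laplacian reads $\phi(\Gamma_{i-1}') - 2\phi(\Gamma_i') + \phi(\Gamma_{i+1}') = 0$, forcing $\phi(\Gamma_i') = \phi(\Gamma_0') + im$ where $m := \phi(\Gamma_1')-\phi(\Gamma_0')$. Translating via the formula of Theorem \ref{MainThmVert} (or directly via Theorem \ref{ValCor1} along chain edges), this gives $v_{\Gamma_i'}(f) = v_{\Gamma_0'}(f) + im$.

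To finish, I feed in the boundary data. Since $|I_{x_0}| = 1$, Corollary \ref{AbelExt2} gives $v_{\Gamma_0'}(f) \equiv 0 \pmod{n}$; since $|I_{x_1}| = n$, Proposition \ref{UnrAbelExt1} forces $\gcd(v_{\Gamma_1'}(f),n) = 1$, hence $\gcd(m,n) = 1$. Combining, $\gcd(v_{\Gamma_i'}(f),n) = \gcd(im,n) = \gcd(i,n)$, and so $|I_{x_i}| = n/\gcd(i,n)$. When $\gcd(i,n)=1$ this gives $|I_{x_i}| = n$, and since $I_{x_i} \subseteq I_e$ (Proposition \ref{Inertiagroup1}), equality $I_{x_i} = I_e$ follows. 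The main subtlety to handle carefully is that $f$ naturally lives on $\mathcal{C}_0/I_e$ rather than on $\mathcal{D}_0$, so one must justify the Poincar\'e--Lelong invocation on a model that is strongly semistable regular near the chain; this is fine because the chain sits in a neighborhood of $\mathcal{C}_0/I_e$ that is \'etale over the regular part of $\mathcal{D}_0$, allowing both the Laplacian and the valuation computation to be carried out locally there.
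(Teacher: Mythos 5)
Your proposal is correct and follows essentially the same route as the paper's own proof: both pass to the Kummer presentation $z^{n}=f$ of the cyclic cover $\mathcal{C}_{0}\rightarrow\mathcal{C}_{0}/I_{e}$, use the Poincar\'{e}--Lelong machinery (Theorem \ref{MainThmVert} / Theorem \ref{ValCor1}) to exhibit the valuation of $f$ as an affine function of the chain index $i$, note that the slope is coprime to $n$ because $I_{e}$ is the full cyclic group, and then read off $|I_{x_{i}}|$ from Proposition \ref{UnrAbelExt1}. You spell out the harmonicity argument at interior chain vertices and derive the slope condition from $|I_{x_{1}}|=n$ rather than citing ``$\tilde{x}$ is completely ramified'' directly, but this is bookkeeping, not a different method. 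One small point worth tightening: you claim $\Delta(\phi)(\Gamma_{i}')=0$ at interior vertices because $\mathrm{div}_{\eta}(f)$ has no support specializing there, but a priori $f$ could have zeros or poles of order divisible by $n$ reducing into the chain (the Kummer $f$ is only determined up to $n$-th powers). Any such contribution is $\equiv 0\pmod{n}$, so $v_{\Gamma_{i}'}(f)\equiv v_{\Gamma_{0}'}(f)+im\pmod{n}$ still holds, which is all the final $\gcd$ computation uses; it would be cleaner to state the linearity only mod $n$.
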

 
\begin{proof}
The corresponding extension of function fields for $\mathcal{C}_{0}\rightarrow{\mathcal{C}_{0}/I_{e}}$ is cyclic abelian, so we find by Kummer theory that it is given by an extension of the form
\begin{equation}
z^n=f
\end{equation}
for some $f\in{}K(\mathcal{C}_{0}/I_{e})$. Since $\mathcal{C}_{0}\rightarrow{\mathcal{C}_{0}/I_{e}}$ is unramified at $x_{0}$, we can assume that $v_{\tilde{x}_{0}}(f)=0$. Here $\tilde{x}_{0}$ is the image of $x_{0}$ in $\mathcal{C}_{0}/I_{e}$. Let $\tilde{x}$ be the image of $x$ in $\mathcal{C}/I_{e}$.  By Proposition \ref{ValCor1}, 
we now find that 
\begin{equation}
v_{x_{i}}(f)=\delta_{\tilde{x}}(\psi))\cdot{i},
\end{equation}
where $\psi$ is the Laplacian of $f$ and $\delta_{\tilde{x}}(\psi)$ is the slope of $\psi$ along $\tilde{x}$ in $\mathcal{C}/I_{e}$.  Since $\tilde{x}$ is completely ramified in this extension, we find that $\text{gcd}( \delta_{\tilde{x}}(\psi),n)=1$. Using Proposition \ref{UnrAbelExt1}, we see that the order of the inertia group is as stated in the theorem. 
\end{proof}

\chapter{Tropical separating trees}\label{Appendix2}

In this section we will associate a tree to a set of elements $S$ of $\tilde{K}:=K\cup{\{\infty\}}$. 
This tree canonically gives a semistable model $\mathcal{D}_{S}$ of $\mathbb{P}^{1}$ such that the closure of these elements consists of disjoint, smooth sections. We will first give the construction in terms of blow-ups and then give the construction in terms of $\pi$-adic expansions.

This semistable model gives us our main way of obtaining disjointly branched models. For a Galois covering of curves $C\rightarrow{\mathbb{P}^{1}}$, we first determine the set of branch points: $S$. We then consider the associated separating model $\mathcal{D}_{S}$ constructed in this chapter and take a finite extension to eliminate the vertical ramification, giving a new model $\mathcal{D}_{S}\times{R'}$. The normalization $\mathcal{C}$ of this model in $K(C)$ is then semistable.

 \section{Construction in terms of blow-ups}\label{ConstructionBlowup}
Let 
\begin{equation*}
S:=\{\alpha_{1},...,\alpha_{r}\}\subset{\mathbb{P}^{1}(K)}. 
\end{equation*}
We will assume that no $\alpha_{i}$ is equal to another $\alpha_{j}$. 
Consider the standard model $\mathbb{P}^{1}_{R}$ given by glueing the rings $R[x]$ and $R[1/x]$. Let $\tilde{x}$ be any closed point of the special fiber $\mathbb{P}^{1}_{k}$.
 Let
\begin{equation*}
S_{\tilde{x}}:=\{\alpha\in{S}:r(\alpha)=\tilde{x}\},
\end{equation*}
where $r(\cdot)$ is the reduction map associated to $\mathbb{P}^{1}_{R}$.
This partitions the original set $S$ (because points have a unique reduction point by the fact that our ring $R$ is Henselian).\\
We label the points in $\mathbb{P}^{1}_{k}$ that the set $S$ reduces to by
\begin{equation*}
x_{1},x_{2},...,x_{l}.
\end{equation*}
We will then write $S_{i}$ for $S_{x_{i}}$. Let us consider the blow-up of $\mathbb{P}^{1}_{R}$ at all the $x_{i}$. We will later give an interpretation using $\pi$-adic expansions. We denote the blow-up by $P_{1}$. Let $z$ be any closed point in the exceptional divisor of $P_{1}$. Consider the set
\begin{equation*}
r_{P_{1}}^{-1}(z)=\{\alpha\in{S_{i}}:r_{P_{1}}(\alpha)=z\}.
\end{equation*}
By varying $z$ over the closed points of the exceptional divisor of $P_{1}$, we obtain a partition of $S_{i}$. We label the points in the exceptional divisor that the set $S_{i}$ reduces to by
\begin{equation*}
x_{i,1},x_{i,2},...,x_{i,l_{i}}.
\end{equation*}
As before, we then define $S_{i_{1},i_{2}}=r_{P_{1}}^{-1}(x_{i_{1},i_{2}})$. Continuing this process, we then obtain sequences of sets
\begin{equation*}
S_{i_{1},i_{2},...,i_{t}}.
\end{equation*}
and models $P_{i}$ of $\mathbb{P}^{1}$. 

\begin{lemma}\label{LemmaLargeT}
For $t$ large, we have $|S_{i_{1},i_{2},...,i_{t}}|=1$.
\end{lemma}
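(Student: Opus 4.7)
The plan is to show that each blow-up strictly decreases the $\pi$-adic ``distance'' between any two points in the same cluster, so that after finitely many steps all clusters become singletons. Concretely, I will prove that if $\alpha, \beta \in S$ are distinct with $v(\alpha - \beta) = N$, then they cannot lie in a common cluster $S_{i_{1}, \ldots, i_{t}}$ once $t \geq N+1$; since $S$ is finite and its elements are pairwise distinct, the maximum of $v(\alpha - \beta)$ over distinct pairs is a finite integer, so this suffices.

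First I would set up coordinates at each stage. By possibly applying the standard change $x \mapsto 1/x$ in the chart $R[1/x]$, I may assume any cluster under consideration lies inside $\mathrm{Spec}\,R[x]$ and reduces to a common closed point $\tilde{a}$ of $\mathbb{A}^{1}_{k}$. Translating by a lift $a \in R$ of this point, the cluster lies in the affine patch with coordinate $u = x - a$, and every $\alpha$ in the cluster satisfies $v(\alpha - a) \geq 1$. The blow-up at $\tilde{0}$ in these coordinates has, in the chart containing the generic point of the exceptional divisor, the new coordinate $u' = u/\pi$. Under this coordinate change, a point $\alpha$ with $v(\alpha - a) \geq 1$ is sent to $(\alpha - a)/\pi$, and for two such points $\alpha, \beta$ one has
\begin{equation*}
v\!\left(\frac{\alpha - a}{\pi} - \frac{\beta - a}{\pi}\right) = v(\alpha - \beta) - 1.
\end{equation*}

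Next I would iterate. Two points $\alpha, \beta$ both belong to $S_{i_{1}}$ iff they reduce to the same closed point of $\mathbb{P}^{1}_{k}$; after the coordinate setup above, this is the condition $v(\alpha - \beta) \geq 1$. For them to further lie in a common $S_{i_{1}, i_{2}}$, their transforms on the exceptional divisor must reduce to a common point, giving $v((\alpha - a)/\pi - (\beta - a)/\pi) \geq 1$, i.e.\ $v(\alpha - \beta) \geq 2$. By induction on $t$, if $\alpha, \beta \in S_{i_{1}, \ldots, i_{t}}$ then $v(\alpha - \beta) \geq t$.

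Finally I would conclude. Set
\begin{equation*}
N := \max\{ v(\alpha - \beta) : \alpha, \beta \in S, \ \alpha \neq \beta \},
\end{equation*}
which is a well-defined non-negative integer since $S$ is finite and its elements are distinct. For any $t > N$, the previous step forces every cluster $S_{i_{1}, \ldots, i_{t}}$ to contain at most one element, and since the clusters partition $S$ they each contain exactly one element, proving the lemma.

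The only delicate point is uniformly handling the chart changes when some point reduces to $\infty$ or when subsequent blow-ups force us to pass between affine patches of the exceptional divisors. This bookkeeping is routine but should be spelled out, and is exactly the bridge to the alternative description via truncated $\pi$-adic expansions that the next subsection develops.
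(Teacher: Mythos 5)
Your proof is correct and is essentially the paper's own argument: the paper defers to the $\pi$-adic-expansion section, where the same observation is made that the $t$-th blow-up tracks the $t$-th expansion coefficient and that distinct elements agree only to finite height, which in your phrasing is exactly the inductive bound $v(\alpha-\beta)\geq t$ on a shared cluster together with the finiteness of $\max v(\alpha-\beta)$.
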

\begin{proof}
The easiest way to see this is using $\pi$-adic expansions. We defer this to the next section. The basic idea is that the semistable model corresponding to $S_{i_{1},i_{2},...,i_{t}}$ separates certain $\pi$-adic expansions up to a certain height.
\end{proof}

We can now define the {\bf{tropical separating tree}} of $S$.
\begin{mydef}\label{TropSep}
Consider the set of all $S_{i_{1},i_{2},...,i_{t}}$ as constructed above. We consider the following inclusions:
\begin{equation*}
S_{i_{1},i_{2},..,i_{t-1},i_{t}}\subset{S_{i_{1},i_{2},...,i_{t-1}}}.
\end{equation*}
Consider the graph $\Sigma_{S}$ consisting of all  $S_{i_{1},i_{2},...,i_{t}}$ as vertices. The edge set consists of all pairs of vertices such that
\begin{equation*}
S_{i_{1},i_{2},..,i_{t-1},i_{t}}\subset{S_{i_{1},i_{2},...,i_{t-1}}}.
\end{equation*}
Furthermore, we create one vertex $S_{\emptyset}$ that is connected to all $S_{i}$.
The {\bf{tropical separating tree}} for $S$ is the finite complete subgraph consisting of the following vertex set:
\begin{itemize}
\item All vertices $S_{i_{1},i_{2},...,i_{t}}$ such that $|S_{i_{1},i_{2},..,i_{t}}|>1$. 
\item The vertex $S_{\emptyset}$.
\end{itemize}
\end{mydef}

\begin{rem}
From Lemma \ref{LemmaLargeT}, we see that the tropical separating tree is indeed finite. 
\end{rem}
\begin{mydef}
The semistable model $\mathcal{D}_{S}$ for $\mathbb{P}^{1}$ constructed before Definition \ref{TropSep} will be referred to as the {\it{separating semistable model}} for $S$. Its intersection graph is the same as $\Sigma_{S}$.
\end{mydef}
We will give some explicit equations for parts of this semistable model $\mathcal{D}_{S}$ in Section \ref{Appendix3}. 
\begin{exa}\label{Exa111}
Let us consider the 4 elements
\begin{equation*}
S=\{0,\pi,\pi+\pi^2, \pi+2\pi^2\}.
\end{equation*}
We see that they all reduce to the point $0$. We therefore consider the blow-up, given by
\begin{equation*}
t'\pi=x.
\end{equation*}
The corresponding $t'$-coordinates for the last three points are
\begin{equation*}
S'=\{1,{1+\pi},{1+2\pi}\}.
\end{equation*}
These reduce to the same point given by $(x,t'-1,\pi)$. We consider one patch of the blow-up in this point, given by
\begin{equation*}
R[x,t'][t'']/(t'\pi-x,t''\pi-(t'-1)).
\end{equation*} 
The last two points of $S'$ now have $t''$-coordinates given by
\begin{equation*}
S''=\{1,2\}.
\end{equation*}
We see that these points reduce to different points, meaning we have reached our endpoint.\\
The corresponding tropical separating tree is now given as follows: we have a tree consisting of three vertices. The first set is
\begin{equation*}
S_{0}=\{0,\pi,\pi+\pi^{2},\pi+2\pi^{2}\}.
\end{equation*}
On the blow-up, these reduce to two different points $x_{0,0}$ and $x_{0,1}$. We have
\begin{eqnarray*}
S_{0,0}&=&\{0\}, \\
S_{0,1}&=&\{\pi,\pi+\pi^{2},\pi+2\pi^{2}\}.
\end{eqnarray*}
The graph now has vertex set
\begin{equation*}
V_{\Sigma}=\{S_{\emptyset},S_{0},S_{0,1}\},
\end{equation*}
with edges between $S_{\emptyset}$ and $S_{0}$, and $S_{0}$ and $S_{0,1}$.
\end{exa}
\section{Construction using $\pi$-adic expansions}
Let $N\subset{R}$ be a set of representatives of $R/\mathfrak{m}$, where we assume that $0\in{N}$. 
Then every element $x$ of $R$ can be written {\it{uniquely}} as
\begin{equation*}
x=\sum_{i=0}^{\infty}a_{k}\pi^{k},
\end{equation*}
where $a_{k}\in{N}$. Suppose that we are given a finite set $S$ of elements in $R$. Write every element $\alpha_{i}$ as
\begin{equation*}
\alpha_{i}=\sum_{k=0}^{\infty}a_{i,k}\pi^{k}
\end{equation*}
as above. 

As an example, we now consider the points $\alpha_{i}$ that reduce to the point $\tilde{0}$, that is: $a_{i,0}=0$. We then have
\begin{equation*}
\alpha_{i}=\pi\cdot{f_{i,1}},
\end{equation*}
where 
\begin{equation*}
f_{i,1}={\sum_{k=0}^{\infty}a_{i,k+1}\pi^{k}.}
\end{equation*}
As indicated in the previous section, we create the separating tree for $S$ by considering blow-ups in the points where $S$ is not separated. The critical point for these $\alpha_{i}$ with $a_{i,0}=0$ is of course $P=(x,\pi)$. Let us consider a specific affine patch of the blow-up of $\text{Spec}{(R[x])}$ in $P$:
\begin{equation*}
R_{1}:=R[x,t']/(t'\pi-x).
\end{equation*}
We then easily see that the $t'$ coordinates of the elements of $S$ are given by
\begin{equation*}
t'(\alpha_{i})=f_{i,1}.
\end{equation*}
Considering the prime ideal $(x-\alpha_{i},t'-f_{i,1},\pi)$, we see that it gives a reduced coordinate $\overline{t'}=\overline{f_{i,1}}=\overline{a_{i,1}}$. \\ 
We state this observation separately:
\begin{itemize}
\item {\it{The extra coordinate $t'$ on the blow-up keeps track of the coefficient $a_{i,1}$ in the $\pi$-adic expansion}}.
\end{itemize}
That is, we have separated these coordinates up to their first ($k=1$) $\pi$-adic coefficient.
Now if, for instance, $a_{1,1}$ and $a_{2,1}$ are the same, we cannot distinguish between them on the special fiber of this blow-up. We therefore blow-up $\text{Spec}(R_{1})$ in the point $Q:=(x-\alpha_{i},t'-a_{1,1},\pi)$. This gives a new algebra $R_{2}:=R_{1}[t'']/(t''\pi-(t'-a_{1,1}))$. We can then write
\begin{eqnarray*}
f_{1,1}-a_{1,1}&=&\pi{f_{1,2}},\\
f_{2,1}-a_{1,1}&=&\pi{f_{2,2}},
\end{eqnarray*}
where
\begin{eqnarray*}
f_{1,2}&=&\sum_{k=0}^{\infty}a_{1,k+2}\pi^{k},\\
f_{2,2}&=&\sum_{k=0}^{\infty}a_{2,k+2}\pi^{k}.
\end{eqnarray*}
This means that the $t''$-coordinates of $\alpha_{1}$ and $\alpha_{2}$ are given by
\begin{eqnarray*}
t''(\alpha_{1})&=&f_{1,2},\\
t''(\alpha_{2})&=&f_{2,2}.
\end{eqnarray*}
As before, we have that their reduced coordinates are now respectively $\overline{t''}=\overline{f_{1,2}}=\overline{a_{1,2}}$ and $\overline{t''}=\overline{f_{2,2}}=\overline{a_{2,2}}$. These new coordinates can be the same of course and we can then continue the blow-up process. At some point however we must have that their $\pi$-adic coefficients are different (at least, if $\alpha_{1}\neq{\alpha_{2}}$). This happens exactly at the $k$-th $\pi$-adic coefficient, where $k=v(\alpha_{1}-\alpha_{2})$. Note that this also proves Lemma \ref{LemmaLargeT}, since in any finite set of distinct elements in $\mathbb{P}^{1}(K)$, elements agree only up to a certain finite height in their $\pi$-adic expansions. 
\begin{rem}
An important observation now is the following: every component $\Gamma$ in the semistable model $\mathcal{D}_{S}$ corresponds to a finite $\pi$-adic expansion
\begin{equation}
z_{\Gamma}=a_{0}+a_{1}\pi^{1}+...+a_{k}\pi^{k}.
\end{equation}
Points $z$ in $\mathbb{P}^{1}(K)$ that have this expansion up to height $k$ will reduce to this component $\Gamma$ if there are no further components $\Gamma'$ (with their own $\pi$-adic expansions) that agree with $z$ up to a higher power of $\pi$. 
\end{rem}
\subsection{An algorithm for separation}\label{AlgSep1}
Let us give this $\pi$-adic separation process in an algorithmic fashion. We suppose that we are given $n$ points $S:=\{\alpha_{i}\}$ that all reduce to finite points in $\mathbb{P}^{1}_{k}$. That is, $v(\alpha_{i})\geq{0}$. The infinite case is similar. 
\begin{mydef}
We say that $S$ is {\bf{separated up to height }}$k$ if the images of the $\alpha_{i}$ in the ring $R/(\pi)^{k}$ are all different.
\end{mydef}
For any finite set $S$, there exists a finite integer $k$ such that $S$ is separated up to height $k$. For instance, the roots of a single polynomial $f$ are separated up to $v(\Delta(f))$ (which is of course not always the smallest integer that has this property). At any rate, we are now ready to give the separating tree in terms of $\pi$-adic expansions. We will assume that the set $S$ is separated up to height $k$ for the next algorithm.
\begin{algo}
{\center{
{\bf{[Algorithm for separating trees]}}
\begin{flushleft}
Input: A finite subset $S\subset{\mathbb{P}^{1}(K)}$. 
\end{flushleft}
\begin{itemize}
\item Determine the separating height $k$ for the subset $S$.
\item Calculate for every $i$ the $\pi$-adic expansion $\alpha_{i}=u_{0,i}+u_{1,i}\pi+u_{2,i}\pi^2+...+u_{k,i}\pi^{k}+r_{i}$, where $r_{i}$ has valuation strictly greater than $k$.
\item First partition: partition $S$ into subsets $S_{j}$ that have the same zeroth order approximation $u_{0,i}$.
\item Second partition: partition every $S_{j_{1}}$ into subsets $S_{j_{1},j_{2}}$ that have the same first order approximation $u_{1,i}$.
\item Third partition: partition every $S_{j_{1},j_{2}}$ into subsets $S_{j_{1},j_{2},j_{3}}$ that have the same second order approximation $u_{2,i}$. 
\item (Iterate the partition process up to $k$).
\item  Construct the finite tropical separating tree $\Sigma_{S}$ according to Definition \ref{TropSep}. 
\end{itemize}}}
\begin{flushleft}
Output: The tropical separating tree $\Sigma_{S}$. 
\end{flushleft}
\end{algo}

\section{
Hyperelliptic coverings of the projective line}\label{Appendix3}

To illustrate how these separating semistable models are used, we consider the example of hyperelliptic coverings of the projective line. 
Let $\phi: D\rightarrow{\mathbb{P}^{1}}$ be a hyperelliptic covering, given generically by an equation of the form
\begin{equation}
y^2=f(x),
\end{equation}
where we assume that $f(x)$ is a squarefree polynomial. 
Let $S$ be a set in $\mathbb{P}^{1}(K)$ containing the branch locus of $\phi$. The previous section demonstrated a canonical semistable model $\mathcal{D}_{S}$ of $\mathbb{P}^{1}$ that separates $S$ in the special fiber. After a finite extension of $K$, we find that the normalization $\mathcal{D}$ of $\mathcal{D}_{S}$ in $K(D)$ gives a semistable model of $D$ over $R'$. In this section, we give an explicit representation of the residue field extension
\begin{equation}
k(\Gamma)\rightarrow{k(\Gamma')},
\end{equation} 
where $\Gamma\subset{\mathcal{D}_{S,s}}$ is an irreducible component in the special fiber. This representation is needed in the algorithm for the twisting data.

We apply the construction of Section \ref{ConstructionBlowup} to $S$ and find the separating tree $\Sigma(\mathcal{D}_{S})$. 
A component $\Gamma$ in this tropical separating tree now corresponds to a finite $\pi$-adic expansion
\begin{equation*}
z_{\Gamma}=a_{0}+a_{1}\pi^{1}+...+a_{k}\pi^{k}.
\end{equation*}
We now wish to obtain an expression of $x$ in terms of a local uniformizer (which is $\pi$) and a generator of the residue field of $\Gamma$. This is in fact not too hard: we consider the following chain of blow-ups
\begin{eqnarray*}
R_{0}&=&R[x], \\
R_{1}&=&R_{0}[t_{1}]/(t_{1}\pi-(x-a_{0})), \\
R_{2}&=&R_{1}[t_{2}]/(t_{2}\pi-(t_{1}-a_{1})), \\
\vdots&{}&\vdots\\
R_{k}&=&R_{k-1}[t_{k}]/(t_{k}\pi-(t_{k-1}-a_{k-1})).
\end{eqnarray*}
This then expresses $x$ in terms of $t_{k}$: $x=g(t_{k})$. We then find $f(x)=f(g(t_{k}))$. To obtain the normalization, we take the highest power of $\pi$ out:
\begin{equation*}
f(x)=f(g(t_{k}))=\pi^{r}h(t_{k}).
\end{equation*}
Note that this $h(t_{k})$ is indeed a polynomial in $t_{k}$, as can easily be seen from the above equations. The normalization is then given by
\begin{equation}\label{NormEq1}
y'^2=h(t_{k}),
\end{equation}
where $y'=\dfrac{y}{\pi^{r/2}}$.
Reducing the equation $\text{mod }{\pi}$ might result in some multiple factors in $\overline{h(t_{k})}$, or even worse: the equation might be reducible.
\begin{itemize}
\item If Equation \ref{NormEq1} is reducible, then the residue field extension is an isomorphism and we have $\overline{y}'=\tilde{h}$ for some ${\tilde{h}}$.
\item If Equation \ref{NormEq1} is irreducible and has multiple factors, we normalize to obtain a new equation. The residue field extension then has degree $2$.
\end{itemize} 

\begin{rem}
The unique components of $R_{i}$ and $R_{i-1}$ have a single intersection point $P$ on the semistable model $\mathcal{D}_{S}$. Note that this intersection point is not visible in these equations: it is given on the special fiber as "$t_{i}=\infty$". To illustrate this, consider the algebra $R[x,t_{1}]/(t_{1}\pi-(x-a_{0}))$. The natural missing algebra is then given by $R[x,t'_{1}]/((x-a_{0})t'_{1}-\pi)$. 
On the overlap of these affine charts, we find that $t_{1}$ and $t'_{1}$ satisfy $t_{1}\cdot{t'_{1}}=1$. 
It is clear from this equation why the intersection point with $t_{1}=0$ is not visible in the other chart.  
\end{rem}

\chapter{Covering data}\label{Coveringdata}
In this section, we will give an algorithm for finding the covering data of a disjointly branched morphism $\mathcal{C}\rightarrow{\mathcal{D}}$ with the associated morphism $\Sigma(\mathcal{C})\rightarrow{\Sigma(\mathcal{D})}$. This covering data consists of the number of edges and vertices lying above every edge and vertex of $\Sigma(\mathcal{D})$.

We will find these quantities using Lemma \ref{Orbitstabilizer}, which gives an expression in terms of the decomposition group of the edge or vertex in question.  
Since the decomposition group of an edge is equal to its inertia group, it suffices to find the inertia group. We will do this using Theorem \ref{InertProp2}. For a vertex, we use Theorem \ref{DecompVert}, which expresses the decomposition group of a vertex in terms of inertia groups. After that, we will explicitly give the covering data of an abelian covering $C\rightarrow{D}$ using Kummer extensions. 



\section{Algorithm for the covering data}

We now give an algorithm for finding the covering data of a Galois covering $C\rightarrow{\mathbb{P}^{1}}$ with Galois group $G$ and function field extension $K(x)\rightarrow{K(C)}$. The idea is as follows. Let $A$ be a discrete valuation ring in $K(x)$ with maximal ideal $\mathfrak{p}$. We consider the normalization $A'$ of $A$ in $K(C)$. This is quite easy to calculate, see Appendix \ref{Appendix1} for the case of $S_{3}$ coverings. Let $\mathfrak{q}$ be any prime of $A'$ lying above $\mathfrak{p}$. We then calculate the order of the inertia group of $\mathfrak{q}$: $|I_{\mathfrak{q}}|$. If we know these for sufficiently many valuations, then we know the covering data by Theorems \ref{DecompVert} and \ref{InertProp2}.

\begin{algo}\label{AlgorithmCoveringDataDisjointlyBranched}
\begin{center}
{\bf{[Algorithm for the covering data of a disjointly branched morphism]}}
\end{center}
\begin{flushleft}
Input: The Galois covering $C\rightarrow{\mathbb{P}^{1}}$ as a function field extension $K(x)\subset{K(C)}$.
\end{flushleft}
\begin{enumerate}
\item Determine the branch locus $S$ of $C\rightarrow{\mathbb{P}^{1}}$.
\item Construct the corresponding separating semistable model $\mathcal{D}_{S}$ using Chapter \ref{Appendix2}. 
\item For every irreducible component ${\Gamma}$ in the special fiber of $\mathcal{D}_{S}$ corresponding to a valuation $v_{\Gamma}$, determine the inertia group $I_{\Gamma}$.
\item Take a finite extension $K\subset{K'}$ of order $\text{lcm }(|I_{\Gamma}|)$.
\item Consider an edge $e\in\Sigma(\mathcal{D}_{S})$ and let $\Gamma_{1}$ be a component in a regular subdivision of $e$, with valuation $v_{\Gamma_{1}}$. Determine the order of an inertia group of $\Gamma_{1}$. 
This gives $|I_{e}|$ by Theorem \ref{InertProp2}.
\item The order of the decomposition group of a vertex is then determined by Theorem \ref{DecompVert}. 
\end{enumerate}
Output: The covering data $|D_{v}|$ and $|D_{e}|$ for every vertex and edge in $\Sigma(\mathcal{D}_{S})$. 
\end{algo}
\begin{proof}
The orders of $D_{v}$ and $D_{e}$ are correct by Theorems \ref{DecompVert} and \ref{InertProp2}. The algorithm terminates because there are only finitely many normalizations that have to be calculated: one for every branch point, one for every vertex of $\mathcal{D}_{S}$ and one for a component in a regular subdivision of an edge $e$.  
\end{proof}
\begin{rem}
The way we use this algorithm in practice is as follows. Instead of focusing on one covering, we will be slightly more ambitious and consider a \emph{family} of coverings that have the same Galois group and then specialize. For instance, in Chapter \ref{Abelian} we will consider abelian coverings of the projective line and in Chapter \ref{Solvable} we will consider $S_{3}$-coverings of the projective line. Calculating {\it{enough}} normalizations in these cases then gives us global statements on the covering data. The calculations for abelian coverings amount to normalizations in Kummer extensions, which we will give in the next section. The normalizations for $S_{3}$-coverings are a bit more work and are given in Appendix \ref{Appendix1}. The results of these calculations for $S_{3}$-coverings can be found in Proposition \ref{InertS3}.  

\end{rem}

\begin{rem}
Let us make a remark about the genus of any vertex $v'$ lying above a vertex $v$ in the tropical separating tree ${T_{S}}$. We automatically have a morphism $\Gamma_{v'}\rightarrow{\Gamma_{v}}$ and we know the ramification points of this morphism: they are the reductions of generic ramification points or the edges. Their decomposition groups were calculated in Algorithm \ref{AlgorithmCoveringDataDisjointlyBranched}. Using the {\emph{Riemann-Hurwitz formula}} given in Theorem \ref{RiemannHurwitz}, we then immediately know the genus of $\Gamma_{v'}$.
\end{rem}

\begin{rem}
Once the decomposition groups of all the edges are known, the lengths of these edges are also known. They are given by Proposition \ref{InertiagroupIntersectionPoint1}. 
\end{rem}


\section{Covering data for abelian coverings}

We will now consider the special case of abelian coverings. Let $\phi: C\rightarrow{D}$ be a cyclic abelian covering, with Galois group $\mathbb{Z}/n\mathbb{Z}$ for some $n\in\mathbb{N}_{>0}$. This induces a morphism of function fields
\begin{equation}
K(D)\rightarrow{K(C)},
\end{equation}
which we can explicitly describe using Kummer theory. Indeed, Proposition \ref{AbelExt1} tells us that 
\begin{equation}
K(C)\simeq{K(D)[z]/(z^{n}-f)}
\end{equation} 
for some $f\in{K(D)}$. We will give the covering data in terms of $f$ and its Laplacian $\phi_{f}$. To that end, let $\mathcal{C}\rightarrow{\mathcal{D}}$ be a disjointly branched morphism for $\phi$ and consider an edge $e\in\Sigma(\mathcal{D})$. Let $\delta_{e}(\phi_{f})$ be the slope of $\phi_{f}$ along $e$.

\begin{pro}\label{PropositionCoveringData}
Let $e'$ be any edge lying above $e$. Then
\begin{equation}
|I_{e'}|=\dfrac{n}{\text{gcd}(n,\delta_{e}(\phi_{f}))}.
\end{equation} 
\end{pro}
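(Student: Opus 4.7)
The plan is to reduce the computation of $|I_{e'}|$ to the inertia group at the generic point of a single vertical component in a regular desingularization above $e$, where the extension becomes Kummer over a discrete valuation ring and Proposition \ref{UnrAbelExt1} gives the answer directly.

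First I would subdivide $e$ to obtain a birational morphism $\mathcal{D}_{0}\to\mathcal{D}$ that is regular above $e$, producing a chain of projective lines $\Gamma_{1},\ldots,\Gamma_{l(e)-1}$ between the endpoint components $\Gamma_{0}$ and $\Gamma_{l(e)}$ of $e$. Normalizing $\mathcal{D}_{0}$ in $K(C)$ yields $\mathcal{C}_{0}$, together with a unique chain above $e'$ by Lemma \ref{ChainLemma}; let $x_{1}$ be the first generic point of this chain, with image $\Gamma_{1}$. Applying Theorem \ref{InertProp2} with $i=1$ gives $|I_{e'}| = |I_{x_{1}}|$, so the problem reduces to computing the inertia of the codimension-one point $x_{1}$ in a Kummer extension of discrete valuation rings.

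The second step is to express the relevant valuation as a slope of the Laplacian. Since $\mathcal{C}\to\mathcal{D}$ is disjointly branched, the morphism is unramified at the generic point of every component of $\mathcal{D}_{s}$, so Corollary \ref{AbelExt2} forces $n\mid v_{\Gamma_{0}}(f)$. Consequently $\pi^{v_{\Gamma_{0}}(f)}$ is an $n$-th power in $K(D)$, which means the $\Gamma_{0}$-modified form $f^{\Gamma_{0}} = f/\pi^{v_{\Gamma_{0}}(f)}$ defines the same Kummer extension as $f$. I would then invoke Theorem \ref{MainThmVert} (equivalently, the Poincar\'e--Lelong formula of Theorem \ref{ValCor1}) with $\phi_{f}$ normalized so that $\phi_{f}(\Gamma_{0})=0$, reading off
\[
v_{\Gamma_{1}}(f^{\Gamma_{0}}) \;=\; \phi_{f}(\Gamma_{1}) \;=\; \delta_{e}(\phi_{f}),
\]
because $\Gamma_{1}$ sits at unit distance from $\Gamma_{0}$ along $e$ in the regular subdivision and $\phi_{f}$ is linear with slope $\delta_{e}(\phi_{f})$ on $e$.

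Applying Proposition \ref{UnrAbelExt1} to the Kummer extension $z^{n} = f^{\Gamma_{0}}$ over the discrete valuation ring $\mathcal{O}_{\mathcal{D}_{0},\Gamma_{1}}$ then yields
\[
|I_{x_{1}}| \;=\; \frac{n}{\gcd(n,\, v_{\Gamma_{1}}(f^{\Gamma_{0}}))} \;=\; \frac{n}{\gcd(n,\, \delta_{e}(\phi_{f}))},
\]
which together with $|I_{e'}| = |I_{x_{1}}|$ proves the proposition. The main obstacle I anticipate is justifying the replacement of $f$ by $f^{\Gamma_{0}}$ at the level of Kummer extensions; this hinges precisely on the absence of vertical ramification (a defining property of disjointly branched morphisms) and on keeping the horizontal and vertical parts of $\mathrm{div}(f)$ carefully separated. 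Once this justification is in place, Theorems \ref{InertProp2} and \ref{MainThmVert} together with Proposition \ref{UnrAbelExt1} combine mechanically to give the stated formula.
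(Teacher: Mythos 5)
Your proposal is correct and coincides in structure with the paper's second proof of this proposition: reduce to the inertia group of the first vertical component $\Gamma_{1}$ in a regular subdivision of $e$ via Theorem \ref{InertProp2}, identify $v_{\Gamma_{1}}(f^{\Gamma_{0}})$ with the slope $\delta_{e}(\phi_{f})$ via Theorem \ref{MainThmVert}, and finish with the Newton polygon computation of Proposition \ref{UnrAbelExt1}. You go further than the paper in one place that is worth keeping: you spell out why replacing $f$ by $f^{\Gamma_{0}}$ is harmless — disjoint branching forces $n \mid v_{\Gamma_{0}}(f)$ by Corollary \ref{AbelExt2}, so $\pi^{v_{\Gamma_{0}}(f)}$ is an $n$-th power and the Kummer extension is unchanged — a step the paper leaves implicit. (The paper also records a shorter first proof that works directly on the special fiber, using the reduction statement Proposition \ref{ramind2} in place of the subdivision argument; your route is the more explicit of the two.)
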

\begin{proof}
We will give two proofs of this fact. The first one uses Proposition \ref{ramind2} and the second one uses Theorem \ref{InertS3}.

Let $f^{\Gamma}$ be the $\Gamma$-modified form of $f$. We then obtain an \'{e}tale algebra $k(\Gamma)[z]/(z^{n}-f^{\Gamma})$ that describes the function field extensions above $\Gamma$. We consider this function field extension above the intersection point $\tilde{x}_{e}$ corresponding to $e$. The valuation of $\overline{f^{\Gamma}}$ at this point is equal to the slope $\delta_{e}(\phi_{f})$ by the Poincar\'{e}-Lelong formula, see Theorem \ref{ValCor1}. If we consider the Newton polygon of $z^{n}-f^{\Gamma}$ at this point (that is, the valuation corresponding to $\tilde{x}_{e}$), then it consists of a single line segment with slope $-\delta_{e}(\phi_{f})/n$. Clearing the denominator and the numerator, we obtain $n/\text{gcd}(n,\delta_{e}(\phi_{f}))$ in the denominator. This denominator is the ramification index, which is then equal to the order $|I_{e'}|$ by Proposition \ref{ramind2}. This finishes the proof.

For the second proof, we consider a regular subdivision $\mathcal{D}_{0}$ of $e$. That is, the pre-image of $e$ consists of $l(e)-1$ projective lines. Consider the new component $\Gamma_{1}$ in $\mathcal{D}_{0,s}$ that intersects $\Gamma$. The valuation of $f^{\Gamma}$ at this component $\Gamma_{1}$ is then given by Theorem \ref{MainThmVert}: it is the slope $\delta_{e}(\phi_{f})$. The extension of discrete valuation rings $\mathcal{O}_{\mathcal{D}_{0},y_{1}}\rightarrow{\mathcal{O}_{\mathcal{D}_{0},y_{1}}[z]/(z^{n}-f^{\Gamma})}$ is then ramified of order $n/\text{gcd}(\delta_{e}(\phi_{f}))$ by a Newton polygon computation as in the first proof. Here $y_{1}$ is the generic point of $\Gamma_{1}$ in $\mathcal{D}_{0}$.  By Theorem \ref{InertProp2}, we find that this ramification index is equal to the order $|I_{e'}|$, as desired. 

\end{proof}
\begin{rem}
We note that the first proof of Proposition \ref{PropositionCoveringData} is almost the same as that of \cite[Proposition 4.1]{supertrop}. The main difference is that the coverings in \cite{supertrop} are all superelliptic, i.e. of the form $C\rightarrow{\mathbb{P}^{1}}$, whereas the coverings in this section are general cyclic abelian coverings $C\rightarrow{D}$. We will give plenty of examples of these general coverings soon. 

\end{rem}

We now give the decomposition group of a vertex for a cyclic abelian cover $C\rightarrow{D}$ with disjointly branched morphism $\mathcal{C}\rightarrow{\mathcal{D}}$. Let $v$ be a vertex in $\Sigma(\mathcal{D})$ and let $v'$ be any vertex in $\Sigma(\mathcal{C})$ lying above it. We will denote their corresponding components by $\Gamma_{v}$ and $\Gamma_{v'}$. There are two cases to consider: the case where $g(\Gamma_{v})=0$ and the case where $g(\Gamma_{v'})>0$. The first case is dealt with by Theorem \ref{DecompVert}, which we repeat for the reader's convenience.

\begin{reptheorem}{DecompVert}
Let $\mathcal{C}\rightarrow{\mathcal{D}}$ be a disjointly branched morphism with Galois group $G$ and let $v\in\Sigma(\mathcal{D})$ be a vertex with $g(\Gamma_{v})=0$. Let $v'$ be any vertex lying above $v$. Then
\begin{equation}
D_{v'}=\prod_{P\in\Gamma_{v'}}{I_{P}}.
\end{equation}
\end{reptheorem}

For $g(\Gamma_{v})>0$, the situation is different since we can obtain nontrivial unramified abelian coverings of $\Gamma_{v}$. We will study these in Chapter \ref{Twistingdata}. For now, we state the following

\begin{pro}\label{DecompositionVertexProposition}
Let $\mathcal{C}\rightarrow{\mathcal{D}}$ be a disjointly branched morphism with Galois group $\mathbb{Z}/n\mathbb{Z}$ and let $v\in\Sigma(\mathcal{D})$ be a vertex with $g(\Gamma_{v})>0$. Let the extension of function fields be given by $z^{n}=f$. Then $|D_{v'}|=r$, where $r$ is the smallest divisor of $n$ such that $\overline{f^{\Gamma}}=h^{n/r}$ for some $h\in{k(\Gamma)}$.  
\end{pro}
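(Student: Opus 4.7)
The plan is to work at the generic point $y$ of $\Gamma := \Gamma_v$ and to identify $|D_{v'}|$ with the residue degree of a Kummer extension of the local ring at $y$. Set $A := \mathcal{O}_{\mathcal{D},y}$; this is a DVR with uniformizer $\pi$ and residue field $k(\Gamma)$. Because $\mathcal{C}\to\mathcal{D}$ is disjointly branched, the integral closure $A'$ of $A$ in $K(C)$ is unramified over $A$, so Corollary \ref{AbelExt2} applied to $f$ forces $n\mid v(f)$. Writing $v(f)=nk'$ and replacing the Kummer generator $z$ by $z':=z/\pi^{k'}$, the same extension $K(C)/K(D)$ is presented by $z'^{\,n}=f^{\Gamma}$ with $f^{\Gamma}\in A^{*}$. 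Since $n$ is a unit in $k$ and $\overline{f^{\Gamma}}\neq 0$, the polynomial $z'^{\,n}-\overline{f^{\Gamma}}$ is separable over $k(\Gamma)$, so $A[z']/(z'^{\,n}-f^{\Gamma})$ is étale (in particular normal) over $A$ and therefore equals $A'$.

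Counting primes of $A'$ above $(\pi)$ with their residue degrees thus reduces to factoring $z'^{\,n}-\overline{f^{\Gamma}}$ over $k(\Gamma)$. Let $d$ be the order of the class of $a:=\overline{f^{\Gamma}}$ in $k(\Gamma)^{*}/(k(\Gamma)^{*})^{n}$. Since $k\subset k(\Gamma)$ and $k$ contains all $n$-th roots of unity, Kummer theory furnishes a factorization
\begin{equation*}
k(\Gamma)[z']/(z'^{\,n}-a)\;\cong\;L^{\,n/d},
\end{equation*}
where $L/k(\Gamma)$ is the cyclic degree-$d$ extension obtained by adjoining an $n$-th root of $a$. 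Consequently $A'$ has $n/d$ primes above $(\pi)$, each with residue degree $d$; since $e=1$, this gives $|D_{v'}|=d$, independent of the chosen $v'$ above $v$.

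It remains to show $d=r$. If $a=h^{n/r}$ with $r\mid n$ and $h\in k(\Gamma)^{*}$, then $a^{r}=h^{n}\in(k(\Gamma)^{*})^{n}$, so $d\mid r$, in particular $d\leq r$. Conversely, writing $a^{d}=u^{n}$ for some $u\in k(\Gamma)^{*}$, the element $\zeta:=a/u^{n/d}$ lies in $k(\Gamma)^{*}$ and satisfies $\zeta^{d}=1$, so $\zeta$ is a $d$-th root of unity in $k(\Gamma)$. As $k$ is algebraically closed, every root of $z^{d}-1$ in $k(\Gamma)$ already lies in $k$, so $\zeta\in k^{*}$; since $k$ is algebraically closed, $\zeta$ admits an $(n/d)$-th root $\eta\in k$, and then $a=\zeta\,u^{n/d}=(\eta u)^{n/d}$ exhibits $a$ as an $(n/d)$-th power in $k(\Gamma)^{*}$, forcing $r\mid d$. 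Thus $r=d$ and $|D_{v'}|=r$. The step I expect to need the most care is the very first one: using the disjoint-branching hypothesis together with Corollary \ref{AbelExt2} to replace $z^n=f$ by the unit-coefficient Kummer presentation $z'^{\,n}=f^{\Gamma}$, after which the argument is pure Kummer-theoretic bookkeeping enabled by $k$ being algebraically closed.
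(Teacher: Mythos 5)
Your proof is correct and follows the same route as the paper: work at the generic point of $\Gamma_{v}$, observe that the induced extension of the DVR $\mathcal{O}_{\mathcal{D},y}$ is \'etale, and reduce the count of primes above $y$ to factoring $z^{n}-\overline{f^{\Gamma}}$ over $k(\Gamma_{v})$. The only difference is in the last step: the paper factors $z^{n}-\overline{f^{\Gamma}}=\prod_{i}(z^{r}-\zeta^{i}h)$ and asserts irreducibility of the factors ``by assumption on $r$,'' whereas you introduce the order $d$ of $\overline{f^{\Gamma}}$ in $k(\Gamma)^{*}/(k(\Gamma)^{*})^{n}$ and prove $d=r$, which makes that irreducibility claim explicit (and your careful justification of passing from $z^{n}=f$ to the unit presentation $z'^{\,n}=f^{\Gamma}$ via disjoint branching and Corollary~\ref{AbelExt2} spells out a step the paper leaves implicit). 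One small wording slip: at the very end you write ``forcing $r\mid d$,'' but what the argument actually gives is $r\le d$; combined with the earlier $d\mid r$ this still yields $r=d$, so the conclusion stands.
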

\begin{proof}
Let $y$ be the generic point of $\Gamma_{v}$. The extension $\mathcal{O}_{\mathcal{D},y}\rightarrow{\mathcal{O}_{\mathcal{D},y}[z]/(z^{n}-f^{\Gamma})}$ is \'etale, so the splitting behaviour is determined by the factorization of $f^\Gamma$ in the residue field $k(\Gamma_{v})$. Let $r$ be the smallest divisor of $n$ such that $\overline{f^{\Gamma}}=h^{n/r}$. We can now write 
\begin{equation}
z^{n}-\overline{f^{\Gamma}}=\prod_{i=0}^{n/r-1}(z^{r}-\zeta_{n/r}^{i}h),
\end{equation}
where $\zeta_{n/r}$ is a primitive $n/r$-th root of unity. This now gives us the $n/r$ solutions $z^{r}=\zeta_{n/r}^{i}h$. Note that these do not factorize further by assumption on $r$. The order of the decomposition group is then $\dfrac{|G|}{n/r}=\dfrac{n}{(n/r)}=r$, as desired.
\end{proof}

\begin{exa}\label{Exa3Tors1}
Suppose that we are given an elliptic curve $E$ over $K$ with multiplicative reduction, with a $3$-torsion point $P$ reducing to the singular point. An explicit family of these curves can be found in \cite{Paul1}. 
This point $P$ then gives a point of order three in the component group of the N\'{e}ron model of $E$. By definition, there exists a function $f$ such that
\begin{equation*}
(f)=3(P)-3(\infty).
\end{equation*}
\begin{figure}[h!]
\centering
\includegraphics[scale=0.6]{{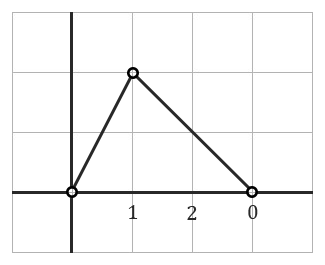}}
\caption{\label{333eplaatje} {\it{The Laplacian of $f$.}}} 
\end{figure}
Subdividing the reduction graph $\Sigma(E)$ into three equidistant parts, we see that $P$ must reduce to one third of the length of $\Sigma(E)$, whose component we denote by $\Gamma_{1}$. We thus have the Laplacian
\begin{equation*}
\rho(\text{div}_{\eta}(f))=3(\Gamma_{1})-3(\Gamma_{0}).
\end{equation*}
We take the solution $\phi$ with
\begin{eqnarray*}
\phi(0)=0,\\
\phi(1)=2,\\
\phi(2)=1,\\
\end{eqnarray*}
which has slope $2$ between $\Gamma_{0}$ and $\Gamma_{1}$ on the left side and slope $1$ between $\Gamma_{1}$ and $\Gamma_{0}$ on the right side, as in Figure \ref{333eplaatje}.

 If we consider the extension
\begin{equation*}
z^3=f,
\end{equation*}
then this gives a morphism $E'\longrightarrow{E}$, which ramifies twice at every component (namely at the intersection points), since the slope is not divisible by $3$. The reduction graph is thus the same and $E'$ is an elliptic curve with multiplicative reduction. This was to be expected from an isogeny of two elliptic curves where one has bad reduction, see \cite[Chapter VII, Corollary 7.2]{Silv1}. The covering of graphs can be found in Figure \ref{23eplaatje}. We note that the lengths of the edges in $\Sigma(E')$ are multiplied by three, i.e. $3\cdot{}l(e')=l(e)$ by 
Proposition \ref{InertiagroupIntersectionPoint1}.
\begin{figure}[h!]
\centering
\includegraphics[scale=0.7]{{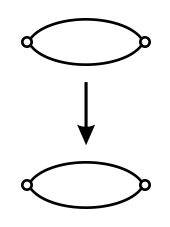}}
\caption{\label{23eplaatje} {\it{The covering of graphs in Example \ref{Exa3Tors1}.}}} 
\end{figure}
\end{exa}

\begin{exa}\label{Exa3torsgen2}
Suppose we take Example \ref{ExampleChapter1}.\ref{ExaS2} again, with the banana graph of genus 2, as in Figure \ref{52eplaatje}. The corresponding equation is
\begin{equation*}
y^2=x(x-\pi)(x+1)(x+1-\pi)(x+2)(x+2-\pi).
\end{equation*}
\begin{figure}[h!]
\centering
\includegraphics[scale=0.8]{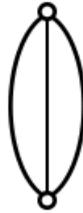}
\caption{\label{52eplaatje} {\it{The intersection graph of the genus 2 curve in Example \ref{Exa3torsgen2}.}}} 
\end{figure}We label the components by $\Gamma$ and $\Gamma'$. There is a natural $3$-torsion point $D'$ on this graph, namely
\begin{equation*}
D'=(\Gamma)-(\Gamma').
\end{equation*}
Suppose we have a divisor $D$ of order $3$ in the Jacobian of $C$ such that $\rho(D)=D'$. For some function $f$, we have that
\begin{equation*}
3D=\text{div}_{\eta}(f).
\end{equation*}
Then $\rho(\text{div}_{\eta}(f))=3D'$ and the corresponding Laplacian function up to scaling is just the indicator function of $\Gamma$, as in Figure \ref{4eplaatje}.
\begin{figure}[h!]
\centering
\includegraphics[scale=0.7]{{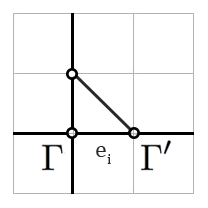}}
\caption{\label{4eplaatje} {\it{The Laplacian function $\phi$ of $f$, as in Example \ref{Exa3torsgen2}. The $e_{i}$ denote the three edges between the two vertices.}}} 
\end{figure}
We thus see that the morphism on the components is ramified at every vertex, with the vertices $\Gamma_{0}$ and $\Gamma_{1}$ having three ramification points and the ones elsewhere having only $2$. Using the Riemann-Hurwitz formula, we see that the primes dividing $\Gamma_{0}$ and $\Gamma_{1}$ have genus 1. 
The graph thus consists of two vertices with weights $1$ and three edges connecting them. This gives a genus 4 graph, as expected. The covering of graphs can be found in Figure \ref{25eplaatje}. Note that the lengths of the edges are again multiplied by three by Proposition \ref{InertiagroupIntersectionPoint1}.
\begin{figure}[h!]
\centering
\includegraphics[scale=0.65]{{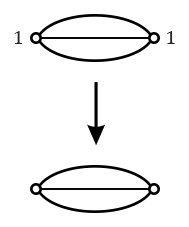}}
\caption{\label{25eplaatje} {\it{The covering of graphs in Example \ref{Exa3torsgen2}.}}} 
\end{figure}
\end{exa}

\chapter{Unramified abelian coverings and twisting data}\label{Twistingdata}

In this chapter, our main goal is to give an algorithm that reconstructs the Berkovich skeleton of a curve $C$ that admits an abelian covering $C\rightarrow{D}$ to a curve $D$ whose Berkovich skeleton we already know. In general, the covering data given in Chapter \ref{Coveringdata} are not sufficient to reconstruct the Berkovich skeleton of $C$ and we will quickly see why. The main reason for this is that the covering data only {\it{locally}} fix the covering graph and thus do not take into account any global twisting. This phenomenon also appears in the algebraic case in the form of {\it{unramified coverings}}. We will first study the algebraic case, where the Jacobian of the curve classifies all unramified abelian coverings. The decomposition of the torsion subgroups of the Jacobian of a curve induced by its N\'{e}ron model then gives rise to different coverings of graphs. 
We will be particularly interested in the coverings coming from the toric part of the identity component, which give rise to {\it{completely decomposable}} covering graphs (which are just covering spaces in terms of algebraic topology). 

After this, we will define the {\it{twisting data}} of a general abelian covering $C\rightarrow{D}$. It will be presented as a $2$-cocycle on the intersection graph $\Sigma(\mathcal{D})$ for a disjointly branched morphism $\mathcal{C}\rightarrow{\mathcal{D}}$. This $2$-cocycle, together with the covering data then give us an algorithm for finding the Berkovich skeleton of $C$. 




\section{Unramified abelian coverings and Jacobians}\label{Unramified}

In this section, we will recall some of the algebraic notions of unramified coverings of a curve. We will take a Galois theory point of view, which can be found in \cite[Chapter 6]{Lenstra} or in \cite[Page 113]{Milne1}. We will give the correspondence between finite abelian unramified coverings of a curve $D$ and torsion subgroups in the Jacobian $J(D)$. This correspondence is quite explicit and we will use it in various examples in this chapter.  

Let $D$ be a smooth, geometrically irreducible projective curve over $K$. Taking the base change to $\overline{K}$, we obtain the curve $D_{\overline{K}}$ with function field $\overline{K}(D)$. We will set $K=\overline{K}$ for this section. The finite, smooth coverings $C\rightarrow{D}$ are then classified by the Galois group of the algebraic closure $({K}(D))^{ac}\supset{{K}(D)}$. 
That is, there is a bijection between subgroups of finite index in $\text{Gal}(({K}(D))^{ac}/{K}(D))$ and finite smooth coverings $C\rightarrow{D}$. We now consider the field extensions ${K}(D)\rightarrow{{K}(C)}$ where $C\rightarrow{D}$ is {\it{unramified}} and we take the composite of these fields: $({K}(D))^{un}$. This field is then easily seen to be Galois over ${K}(D)$ 
and we can thus define the following: 
\begin{mydef}
The {\it{geometric fundamental group}} $\pi_{1}(D)$ of $D$ is the Galois group of the field extension $({K}(D))^{un}\supset{{K}(D)}$.
\end{mydef}
\begin{rem}
For a curve $D$ over $\mathbb{C}$, The group $\pi_{1}(D_{\mathbb{C}})$ is not the usual fundamental group defined in the analytic category, but rather the profinite completion of the analytic fundamental group, see \cite[Theorem 6.3.1]{serre2008topics}. 
\end{rem}
 We are now particularly interested in the abelian unramified coverings. That is, we consider the subfield $({K}(D))^{un,ab}$ of ${K}(D)^{un}$, which is the composite of all field extensions $K(C)\supset{{K}(D)}$ such that $C\rightarrow{D}$ is finite, Galois with abelian Galois group. This subfield is again Galois, since the composite of two abelian field extensions is abelian. The Galois group of this field extension is then the {\it{abelianization}} $\pi_{1,ab}(D)$ of $\pi_{1}(D)$.\footnote{Recall that the abelianization of a group $G$ is the quotient $G/[G,G]$, where $[G,G]$ is the group generated by all elements of the form $[g,h]:=g^{-1}h^{-1}gh$.}

We have the following theorem regarding the unramified abelian coverings of the curve $D$.

\begin{theorem}\label{UnramifiedAbelianCoverings}
Let $D$ be a smooth irreducible curve over an algebraically closed field $K$ as above. Then
\begin{equation}
\text{Hom}_{\text{cont}}(\pi_{1}(D),\mathbb{Z}/n\mathbb{Z})=\text{Hom}_{cont}(\pi_{1,ab}(D),\mathbb{Z}/n\mathbb{Z})\simeq{J(D)[n]}.
\end{equation}
Here the homomorphisms are continuous with respect to the profinite (Krull) topology on $\pi_{1}(D)$ and the discrete topology on $\mathbb{Z}/n\mathbb{Z}$. 
\end{theorem}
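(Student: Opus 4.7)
The plan is to separate the statement into the two asserted identifications and handle them independently. The equality
\[
\text{Hom}_{\text{cont}}(\pi_{1}(D),\mathbb{Z}/n\mathbb{Z}) = \text{Hom}_{\text{cont}}(\pi_{1,ab}(D),\mathbb{Z}/n\mathbb{Z})
\]
is purely formal: since $\mathbb{Z}/n\mathbb{Z}$ is abelian, any continuous homomorphism from $\pi_{1}(D)$ factors uniquely through its abelianization $\pi_{1,ab}(D)$ by the universal property of the abelianization. So the entire content lies in constructing a natural isomorphism with $J(D)[n]$, and my plan is to do this using Kummer theory combined with the characterization of unramified cyclic extensions already proved in Corollary \ref{AbelExt2}.

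Next, I would interpret $\text{Hom}_{\text{cont}}(\pi_{1,ab}(D),\mathbb{Z}/n\mathbb{Z})$ concretely via Galois theory. A continuous surjective homomorphism $\pi_{1}(D)\twoheadrightarrow H\subseteq\mathbb{Z}/n\mathbb{Z}$ corresponds, by the definition of $\pi_{1}(D)$ as the Galois group of $K(D)^{un}/K(D)$, to an unramified cyclic Galois cover $C\to D$ of degree dividing $n$ equipped with an embedding of $\text{Gal}(K(C)/K(D))$ into $\mathbb{Z}/n\mathbb{Z}$. Since $\text{char}(K)=0$ and $K$ is algebraically closed, $K$ contains all roots of unity, so Proposition \ref{AbelExt1} (Kummer theory) gives $K(C)=K(D)[z]/(z^{n}-f)$ for some $f\in K(D)^{*}$, with the covering depending only on the class of $f$ in $K(D)^{*}/(K(D)^{*})^{n}$. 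By Corollary \ref{AbelExt2}, such a covering is unramified at every point $P$ of $D$ if and only if $v_{P}(f)\equiv 0\pmod n$ for every place $P$ of $K(D)$.

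I would then construct the isomorphism with $J(D)[n]$ explicitly as follows. Given $f$ with $v_{P}(f)\in n\mathbb{Z}$ for every $P$, write $\text{div}(f)=nD'$ for a unique divisor $D'$ on $D$, and set $\Phi(f)=[D']\in\text{Pic}(D)$. Then $n[D']=[\text{div}(f)]=0$ in $\text{Pic}(D)$, so $[D']\in\text{Pic}(D)[n]$; and since $nD'$ has degree $0$, $[D']$ has degree $0$, so $[D']\in J(D)[n]$. The map $\Phi$ is clearly a group homomorphism. For the kernel: if $[D']=0$ then $D'=\text{div}(g)$, whence $f/g^{n}$ is a constant; since $K$ is algebraically closed, $K^{*}=(K^{*})^{n}$, so $f\in (K(D)^{*})^{n}$, giving the trivial cover. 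For surjectivity: for any $[D']\in J(D)[n]$, $nD'$ is principal, say $nD'=\text{div}(f)$, and then $f$ lies in the source. This yields the required isomorphism.

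The main obstacle, and the step that requires the most care, is the bookkeeping between genuine cyclic covers of degree exactly $d\mid n$ and $\mathbb{Z}/n\mathbb{Z}$-torsors (including the trivial one and the disconnected ones such as $n/d$ disjoint copies of a degree-$d$ cover): the homomorphisms classified by $\text{Hom}_{\text{cont}}(\pi_{1,ab}(D),\mathbb{Z}/n\mathbb{Z})$ are not required to be surjective, and this must be reflected on the Kummer side by allowing $f$ to be an $m$-th power for $m\mid n$. This matches the $J(D)[n]$ side perfectly because $[D']$ with $n[D']=0$ may actually have order $d<n$, but verifying the compatibility of the bijection with the group structures on all three sides is the delicate point. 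Once this is done, the statement follows.
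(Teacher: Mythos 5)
Your proposal follows the same route as the paper's own sketch: Kummer theory (Proposition \ref{AbelExt1}) to write the extension as $z^n=f$, and Corollary \ref{AbelExt2} to translate unramifiedness into $n\mid v_P(f)$ at every place $P$, from which the $n$-torsion divisor $D'$ with $nD'=\mathrm{div}(f)$ is extracted. The additional bookkeeping you supply — verifying the group-isomorphism structure, invoking $K^*=(K^*)^n$ for injectivity of $\Phi$, and flagging the correspondence between non-surjective homomorphisms and disconnected covers — fills in details the paper leaves implicit, but the underlying mechanism is identical.
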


\begin{proof}

We give a sketch of the proof. 
For every torsion point $D$ of order $n$ on the Jacobian, we obtain by definition the relation 
\begin{equation}
nD=\text{div}(f)
\end{equation}
for some $f\in{K}(D)$. We then take the normalization $C$ of $D$ in ${K}(D)[z]/(z^n-f)$. The induced morphism $C\rightarrow{D}$ is then a finite unramified abelian covering (since the valuation of $f$ at every point is divisible by $n$, see Corollary \ref{AbelExt2}). Conversely, given an unramified abelian cover $C\rightarrow{D}$, we obtain from Kummer theory (see Proposition \ref{AbelExt1}) an isomorphism ${K}(C)=K(D)[z]/(z^n-f)$ for some $f$ in $K(D)$. The divisor of $f$ is then divisible by $n$ for every point in its support, giving an $n$-torsion divisor. 
\end{proof}


\section{\'{E}tale abelian coverings of graphs}

In this section, we will define the corresponding notions of {\it{unramified coverings}} for metrized complexes of $k$-curves, as defined in Section \ref{Metrizedcomplex}. We note that an unramified covering might be ramified on the intersection graphs, see Example \ref{Exa3Tors1}. 

Suppose we have a disjointly branched covering $\mathcal{C}\longrightarrow{\mathcal{D}}$ with Galois group $\mathbb{Z}/n\mathbb{Z}$. Suppose that for every edge $e'$ of $\Sigma(\mathcal{C})$ dividing an edge $e$ in $\Sigma(\mathcal{D})$, we have that $D_{e'}=(1)$.   
 In other words, we know for every edge on the intersection graph $\Sigma(\mathcal{D})$ that there are exactly $n$ edges lying above them. If in addition the morphisms on components
\begin{equation*}
\Gamma_{v}\longrightarrow{\Gamma_{w}}
\end{equation*}
are all unramified, we will say that the induced morphism of graphs
\begin{equation*}
\Sigma({\mathcal{C}})\longrightarrow{\Sigma({\mathcal{D}})}
\end{equation*}
is "\'{e}tale". \begin{mydef}
Suppose we have a disjointly branched, abelian Galois morphism $\phi:\mathcal{C}\longrightarrow{\mathcal{D}}$ of degree $n$ such that for every edge $x$ in $\Sigma(\mathcal{D})$ there exist exactly $n$ edges dividing $x$. Suppose in addition that the morphisms
\begin{equation}
\Gamma_{v}\longrightarrow{\Gamma_{w}}
\end{equation}
on components are all unramified. Then this morphism $\phi$ with the corresponding morphism $\phi_{\Sigma}$ 
is then referred to as an {\bf{\'{e}tale morphism of graphs}}. 
\end{mydef}
Let us now see why 
this terminology of "\'{e}tale" abelian morphisms of graphs is appropriate.
\begin{lemma}
Let $\phi:\mathcal{C}\longrightarrow{\mathcal{D}}$ be disjointly branched of degree $n$. Then $\phi_{\Sigma}$ is an \'{e}tale morphism of graphs if and only if $\phi$ is \'{e}tale on the points corresponding to elements of the intersection graph $\Sigma(\mathcal{C})$ and the morphisms $\Gamma_{v}\longrightarrow{\Gamma_{w}}$ are \'{e}tale. 
\end{lemma}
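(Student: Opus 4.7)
The plan is to match the two numbered conditions in Definition of "\'{e}tale morphism of graphs" with the two clauses on the right-hand side, translating each graph-theoretic condition into an algebraic statement about inertia and decomposition groups using the machinery of Chapter~\ref{Inertiagroups}.

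First I would unpack the left-hand side. The points corresponding to elements of $\Sigma(\mathcal{C})$ are precisely the intersection points (for edges) and the generic points of irreducible components of $\mathcal{C}_{s}$ (for vertices). For an edge $e'\in\Sigma(\mathcal{C})$ lying above $e\in\Sigma(\mathcal{D})$, Lemma~\ref{Orbitstabilizer} gives $|G|/|D_{e'}|$ edges above $e$, so "$n$ edges above every $e$" is equivalent to $D_{e'}=(1)$. Since $k$ is algebraically closed, Proposition~\ref{SerExtGal1} collapses $D_{e'}=I_{e'}$, so condition~1 on the LHS is equivalent to $I_{e'}=(1)$ for every edge~$e'$.

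Next I would handle the right-hand side. By clause~2 of Definition~\ref{disbran} (disjointly branched), $\phi$ is automatically \'{e}tale at every generic point of an irreducible component of $\mathcal{C}_{s}$; thus the content of "$\phi$ is \'{e}tale at the points of $\mathcal{C}$ corresponding to $\Sigma(\mathcal{C})$" is exactly \'{e}taleness at the intersection points $x_{e'}$. At such a point, \'{e}taleness amounts to $I_{x_{e'}}=(1)$ together with separability of the residue extension; separability is automatic because $k$ is algebraically closed, and $I_{x_{e'}}$ is by definition the inertia group $I_{e'}$ of the edge. Hence the intersection-point clause of (a) is equivalent to $I_{e'}=(1)$ for every edge, matching condition~1 on the LHS.

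For the remaining clauses, I would invoke Proposition~\ref{ramind2}: for an intersection point $x$ on a component $\Gamma_{v}\subset\mathcal{C}_{s}$, the ramification index of $\Gamma_{v}\to\Gamma_{w}$ at the image $\tilde{x}=r_{\mathcal{C}}(x)$ equals $|I_{x,\mathcal{C}}|=|I_{e'}|$. The same proposition handles reductions of generic ramification points of $\phi\colon C\to D$. Consequently, "$\Gamma_{v}\to\Gamma_{w}$ is unramified" is equivalent to (i) trivial edge inertia at every intersection point of $\Gamma_{v}$, plus (ii) no generic ramification above $\Gamma_{w}$ on $\Gamma_{v}$. In the $(\Rightarrow)$ direction we obtain condition~(b) directly; in the $(\Leftarrow)$ direction, unramifiedness of $\Gamma_{v}\to\Gamma_{w}$ forces the edge inertia groups to vanish (via Proposition~\ref{ramind2} again), giving back condition~1. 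Assembling these equivalences in both directions yields the lemma.

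The main obstacle I foresee is purely bookkeeping: keeping clean track of which points are being considered (intersection points vs.\ generic points of components vs.\ reductions of branch points) and invoking Proposition~\ref{ramind2} in the right direction each time. The algebraic content is genuinely the collapse $D=I$ at intersection points (because $k=\bar k$) together with the identification $I_{x,\mathcal{C}}=I_{r_{\mathcal{C}}(x),\Gamma'}$ from Proposition~\ref{ramind2}; once these two identifications are in place, both implications are essentially formal.
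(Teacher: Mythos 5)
Your proposal is correct and takes essentially the same route as the paper: the paper's proof likewise observes that $\phi$ is automatically \'{e}tale at generic points of components (by the disjointly-branched hypothesis), and that \'{e}taleness at an intersection point is equivalent to having $n$ pre-images, i.e.\ to trivial inertia/decomposition groups. Your final paragraph invoking Proposition~\ref{ramind2} is sound but superfluous, since condition~(b) of the lemma is word-for-word the same as clause~2 of the definition of ``\'{e}tale morphism of graphs'' and your earlier equivalence between condition~(1) and condition~(a) already closes the argument.
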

\begin{proof}
First note that $\phi$ is always \'{e}tale at primes corresponding to components. Furthermore, we see that $\phi$ is \'{e}tale at an intersection point if and only if there are $n$ pre-images. These two conditions quickly give the lemma.
\end{proof} 
\begin{lemma}
Let $\phi:\mathcal{C}\longrightarrow{\mathcal{D}}$ be a disjointly branched morphism of degree $n$. Suppose that $\phi_{\Sigma}$ is \'{e}tale. 
Then $\phi_{\eta}$ is unramified.
\end{lemma}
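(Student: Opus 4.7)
The plan is to argue by contradiction: assume $\phi_\eta$ has a branch point $P \in C(K)$ with nontrivial inertia group $I_P$, and derive a contradiction from the hypothesis that $\phi_\Sigma$ is étale.

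First I would locate the reduction of such a hypothetical branch point. Since $\phi$ is disjointly branched, the closure of the branch locus in $\mathcal{D}$ consists of disjoint sections meeting the smooth locus of $\mathcal{D}_s$, so the image of $P$ reduces to a smooth point $\tilde{y}$ lying on a unique irreducible component $\Gamma \subset \mathcal{D}_s$. By Lemma \ref{LemmaSmooth2}, the pre-image of $\tilde{y}$ consists of smooth points; in particular, $r_{\mathcal{C}}(P)$ is a smooth point lying on a unique component $\Gamma' \subset \mathcal{C}_s$ with $\phi(\Gamma') = \Gamma$.

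The key step is then to invoke Proposition \ref{ramind2}, which transports inertia from $\mathcal{C}$ to the special fiber: for the generic ramification point $P$, we have
\begin{equation*}
I_{P,\mathcal{C}} = I_{r_{\mathcal{C}}(P),\Gamma'},
\end{equation*}
where the right-hand side is an inertia group for the induced Galois covering $\Gamma' \to \Gamma$ on the special fiber. Since $\phi_\Sigma$ is étale by hypothesis, the morphism $\Gamma' \to \Gamma$ is étale, so the right-hand inertia group is trivial. This forces $I_{P,\mathcal{C}} = (1)$, contradicting the assumption that $P$ was a branch point of $\phi_\eta$.

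I expect no serious obstacle: the proof is a direct application of Proposition \ref{ramind2} combined with the disjointly branched hypothesis (which guarantees the reduction of a branch point lands in the smooth locus, so that Proposition \ref{ramind2} applies and the single target component $\Gamma$ is well-defined). The only subtlety to be careful about is that the "étale" condition on $\phi_\Sigma$ as defined includes both the trivial-inertia condition on edges and the étaleness of $\Gamma' \to \Gamma$ on components; it is the latter that is actually used in the argument, while the former would be needed if one instead wanted $P$ to specialize to an intersection point (which is ruled out here by disjointness of the branch locus from the singular locus of $\mathcal{D}_s$).
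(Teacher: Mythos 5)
Your proof is correct and follows the same route as the paper: assume a generic ramification point exists, apply Proposition \ref{ramind2} to transfer the nontrivial inertia to the special fiber, and contradict the étaleness of the component morphism $\Gamma' \to \Gamma$. You spell out the reduction step more explicitly than the paper does, which is fine; the one terminological slip is calling $P \in C(K)$ a "branch point" rather than a ramification point, but the argument is the intended one.
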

\begin{proof}

Suppose that $\phi_{\eta}$ is ramified. Then there exists a branch point $Q\in{D}$. Let $P\in{C}$ be any point lying above $Q$. By Proposition \ref{ramind2}, we see that the morphism of components corresponding to $Q$ and $P$ is ramified, a contradiction.  
\end{proof}
\begin{rem}
Note that the converse is definitely not true, since we can have an unramified morphism $\phi:{C}\longrightarrow{{D}}$ with edges having $D_{e}=\mathbb{Z}/n\mathbb{Z}$. This happens for instance if we take an elliptic curve $E$ with multiplicative reduction with a $n$-torsion point that reduces to the singular point. The corresponding extension is unramified and yields the same reduction type as $E$. See Example \ref{Exa3Tors1} 
for instance. 
\end{rem}

 Let us now found out what unramified abelian extensions correspond to \'{e}tale morphisms of graphs.
 \begin{pro}\label{UnrGraph1}
Let $P\in{J(D)[q]}$ be a $n$-torsion point giving rise to an unramfied abelian morphism of degree $n$:
\begin{equation*}
\phi: C\longrightarrow{D}.
\end{equation*}
Then the induced morphism 
\begin{equation*}
\phi_{\Sigma}:\Sigma(\mathcal{C})\longrightarrow{\Sigma(\mathcal{D})}
\end{equation*}
is an \'{e}tale morphism of graphs if and only if $P\in\mathcal{J}^{0}(R)[n]$.
\end{pro}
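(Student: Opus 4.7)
The plan is to translate both conditions into statements about the slopes of a single Laplacian on $\Sigma(\mathcal{D})$, and then read off the equivalence. Since $P \in J(D)[n]$, by Kummer theory and the discussion in the proof of Theorem \ref{UnramifiedAbelianCoverings} we can choose a representative divisor $D_P \in \text{Div}^{0}(D)$ with $nD_P = \text{div}_{\eta}(f)$ for some $f \in K(D)^{*}$, and the cover $\phi: C \to D$ is given generically by $K(D)[z]/(z^{n}-f)$. Let $\phi_{f}$ denote the (integer-valued) Laplacian of $f$ on $\Sigma(\mathcal{D})$ furnished by Theorem \ref{MainThmVert}, so that $\Delta(\phi_{f}) = \rho(\text{div}_{\eta}(f)) = n\cdot\rho(\psi(D_P))$.

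First I would recall how the two defining conditions of étaleness translate into divisibility by $n$ of edge-slopes. By Proposition \ref{PropositionCoveringData}, for each edge $e \in \Sigma(\mathcal{D})$ and every edge $e'$ lying above it one has $|I_{e'}| = n/\gcd(n,\delta_{e}(\phi_{f}))$, so the edge-part of étaleness of $\phi_{\Sigma}$ is equivalent to $n \mid \delta_{e}(\phi_{f})$ for every edge $e$. For the component-part, I would use Proposition \ref{RedDiv2}: at a smooth point $\tilde x$ of $\mathcal{D}_{s}$ the valuation $v_{\tilde x}(\overline{f^{\Gamma}}) = \sum_{P \in D_{+}(\tilde x)} n_{P}$ is automatically divisible by $n$ since $nD_P=\text{div}_{\eta}(f)$, while at an intersection point $\tilde x$ corresponding to an edge $e$ one has $v_{\tilde x}(\overline{f^{\Gamma}}) = v_{\Gamma'}(f^{\Gamma}) = \delta_{e}(\phi_{f})$. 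Thus the covering $\Gamma_{v'} \to \Gamma_{v}$ defined by $\bar z^{n} = \overline{f^{\Gamma}}$ is unramified at all closed points exactly when $n \mid \delta_{e}(\phi_{f})$ for every edge $e$ incident to $\Gamma$. Combining these, $\phi_{\Sigma}$ is étale in the sense of this chapter if and only if all edge-slopes of $\phi_{f}$ are divisible by $n$.

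Next I would connect this divisibility to $\mathcal{J}^{0}(R)$. Under the identification $J(C)/J^{0}(K) = J(\Sigma(\mathcal{D}))$ from Section \ref{JacNer}, the class of $P$ lies in $\mathcal{J}^{0}(R)[n]$ if and only if $\tilde{\psi}(D_P) = \rho(\psi(D_P))$ is the zero class in the tropical Jacobian, i.e.\ there exists $\phi \in \mathcal{M}(\Sigma(\mathcal{D}))$ with $\Delta(\phi) = \rho(\psi(D_P))$. If such a $\phi$ exists, then $\Delta(n\phi - \phi_{f}) = 0$ and, since $\Sigma(\mathcal{D})$ is connected, $\phi_{f} = n\phi + c$ for some constant $c$; hence every edge-slope of $\phi_{f}$ is divisible by $n$, and by the previous paragraph $\phi_{\Sigma}$ is étale. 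Conversely, if every edge-slope of $\phi_{f}$ is divisible by $n$, I would construct $\phi \in \mathcal{M}(\Sigma(\mathcal{D}))$ by fixing a base vertex, setting $\phi$ to $0$ there, and propagating along paths using slopes $\delta_{e}(\phi_{f})/n$; this is well-defined precisely because $\Delta(\phi_{f}) = n\rho(\psi(D_P))$ is $n$-divisible as a divisor, so path-independence of $\phi$ reduces to a cycle condition that is automatic once every edge-slope is divisible by $n$. Then $\Delta(\phi) = \rho(\psi(D_P))$ gives $P \in \mathcal{J}^{0}(R)[n]$.

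The only delicate step is the construction of the integer-valued $\phi$ in the converse direction: one has to check that the cocycle of slopes $\delta_{e}(\phi_{f})/n$ is exact, not merely closed. This is where I expect the main bookkeeping to sit, and I would handle it by writing the vertical divisor of $f^{\Gamma}$ explicitly via Theorem \ref{MainThmVert} and using that $\rho(n\cdot\psi(D_{P}))=\Delta(\phi_{f})$ together with Corollary \ref{KernelRho} to conclude that dividing $\phi_{f}$ by $n$ yields an honest $\mathbb{Z}$-valued Laplacian modulo constants.
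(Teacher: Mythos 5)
Your proof is correct and follows essentially the same route as the paper: translate both conditions into divisibility of the edge-slopes of the Laplacian $\phi_f$ by $n$, and invoke the identification of $\mathcal{J}^0(R)$ with the kernel of $J(D)\to J(\Sigma(\mathcal{D}))$. The only genuine difference is that you spell out the vertex part of the étaleness condition via Proposition \ref{RedDiv2}, which the paper leaves implicit; and the ``delicate step'' you flag in the converse is actually automatic (path-independence is inherited from $\phi_f$ itself, and integrality of $(\phi_f-\phi_f(v_0))/n$ follows directly from $n\mid\delta_e(\phi_f)$ for all $e$), so no extra bookkeeping is needed there.
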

\begin{proof}
If $P\in\mathcal{J}^{0}(R)[n]$, then we see that the Laplacian corresponding to $nP$ is zero everywhere. For every edge in $\Sigma(\mathcal{D})$, we then have $n$ primes lying above it by Proposition \ref{PropositionCoveringData}, so we have an unramified morphism of graphs.

Conversely, suppose that we have an unramified morphism of graphs. Then for every edge we have that the Laplacian has slope divisible by $n$. We can then quite easily find a new function $\phi'$ such that $n\Delta(\phi{'})=\Delta(\phi)$. But then $\rho(P)=\Delta(\phi{'})$ and so the class of $P$ is in the identity component of $\mathcal{J}(D)$, as desired.  
\end{proof}

\begin{exa}\label{RedGraphGen2Ab1}
Suppose we take a genus 2 curve $D$ with reduction graph consisting of two vertices and two edges. We label the 2 corresponding components by $\Gamma_{0}$ and $\Gamma_{1}$. One of them must have genus one, so let that component be $\Gamma_{0}$. 
\begin{figure}[h!]
\centering
\includegraphics[scale=0.7]{{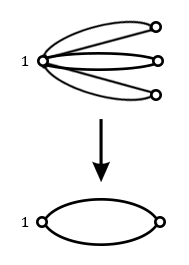}}
\caption{\label{555eplaatje} {\it{The covering in Example \ref{RedGraphGen2Ab1}.}}}
\end{figure}
 We now take a 3-torsion point in the Jacobian of $D$ that reduces entirely to $\Gamma_{0}$. That is, we take a 3-torsion point of the corresponding genus 1 curve. If we consider the extension defined by that 3-torsion point, we obtain the intersection graph consisting of 4 vertices, 3 lying above $\Gamma_{1}$ and $1$ above $\Gamma_{0}$ with 2 edges between each component $\Gamma_{1}'$ and $\Gamma_{0}'$ (so 6 in total). The resulting covering of intersection graphs is in Figure \ref{555eplaatje}. Note that the component $\Gamma_{0}'$ again has genus 1, since it is given as an unramified covering of a genus 1 curve. The Betti number of the graph is 3 and the total genus is $3+1=4$, as expected.\\
 Note that the covering of graphs in this case is {\it{unramified}}: for every edge there are exactly three pre-images. The Galois action then permutes these edges accordingly.
\end{exa}

\section{Completely decomposable coverings}\label{CompletelyDecomposable}

We will now continue our study of {\it{\'{e}tale morphisms of graphs}}. As we saw in the last section in Proposition \ref{UnrGraph1}, they arise from $n$-torsion points in the identity component $\mathcal{J}^{0}$ of the Jacobian $J(D)$. For these morphisms we know the decomposition groups of the edges: $D_{e}=(1)$ for every edge $e$ in $\Sigma(\mathcal{C})$. This does not fix the order of the decomposition groups of the vertices however. 
In this section we will study the completely reducible case, where $D_{v}=(1)$ for every vertex. 
\begin{mydef}
Suppose that we are given an unramified Galois cover of metrized complexes 
$\phi_{\Sigma}:\Sigma_{1}\longrightarrow{\Sigma_{2}}$ with Galois group $\mathbb{Z}/n\mathbb{Z}$. Suppose that $D_{v}=(1)$ for every vertex. Then $\phi_{\Sigma}$ is referred to as a {\bf{completely decomposable morphism of metrized complexes}}. 
\end{mydef}


\begin{exa}\label{Trampoline1}
Suppose we take an elliptic curve $E$ with multiplicative reduction and 
reduction graph consisting of two vertices with two edges between them. We now take a 2-torsion point $P$ reducing to a {\it{nonsingular}} point, in terms of reductions defined in \cite[Chapter VII]{Silv1}. 
The corresponding degree two morphism $E'\rightarrow{E}$ is completely reducible everywhere (we will in fact write down the equations explicitly soon, where it will be clear why this is true). We thus obtain four vertices with four edges between them. The graph has to be connected, so there is only option. Note that for any component $\Gamma$ of $\mathcal{E}$, the two primes lying above $\Gamma$ do not intersect. 
Since $\Sigma(\mathcal{E}')$ has Betti number 1, we find that $E'$ has multiplicative reduction. The covering of graphs can be found in Figure \ref{66eplaatje}.
\begin{figure}[h!]
\centering
\includegraphics[scale=0.6]{{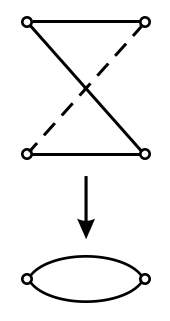}}
\caption{\label{66eplaatje} {\it{The covering in Example \ref{Trampoline1}.}}}
\end{figure}

\end{exa}

\begin{pro}\label{CritEtalCover1}
Let $P\in{J(D)[n]}$ be an $n$-torsion point giving rise to an unramified abelian morphism of degree $n$: 
\begin{equation*}
\phi: C\longrightarrow{D}.
\end{equation*}
Then $\phi_{\Sigma}:\Sigma(\mathcal{C})\longrightarrow\Sigma(\mathcal{D})$ is a completely decomposable morphism of graphs if and only if
\begin{equation*}
P\in{\mathcal{J}^{0}_{T}[n]}.
\end{equation*}
\end{pro}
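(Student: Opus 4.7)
The strategy is to combine Proposition \ref{UnrGraph1}, which already characterizes when \emph{all edges} lie trivially above $\Sigma(\mathcal{D})$, with Proposition \ref{DecompositionVertexProposition}, which controls the decomposition groups $D_{v'}$ of vertices. Since complete decomposability asks for $D_{e'}=(1)$ \emph{and} $D_{v'}=(1)$ for all edges $e'$ and vertices $v'$, by Proposition \ref{UnrGraph1} we may reduce to the following task: given $P\in\mathcal{J}^{0}(R)[n]$ (so the edge condition is automatic), show that $D_{v'}=(1)$ for every vertex $v'$ lying above $v$ if and only if the reduction of $P$ lies in $\mathcal{J}^{0}_{T}$.

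To set up notation, write $K(C)=K(D)[z]/(z^{n}-f)$ via Kummer theory (Proposition \ref{AbelExt1}), with $(f)=nD_{P}$ for some divisor $D_{P}$ representing $P$. Fix a component $\Gamma\subset\mathcal{D}_{s}$, and let $\overline{f^{\Gamma}}\in k(\Gamma)^{*}$ be the reduction of the $\Gamma$-modified form of $f$. Proposition \ref{DecompositionVertexProposition} gives
\begin{equation*}
|D_{v'}| \;=\; \min\bigl\{\,r\mid n \,:\, \overline{f^{\Gamma}}=h^{n/r}\text{ for some }h\in k(\Gamma)\,\bigr\},
\end{equation*}
so $D_{v'}=(1)$ if and only if $\overline{f^{\Gamma}}$ is an $n$-th power in $k(\Gamma)$. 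Since $k$ is algebraically closed, $\overline{f^{\Gamma}}$ is an $n$-th power if and only if the $n$-torsion class $[D_{\Gamma}]\in\mathrm{Pic}^{0}(\Gamma)[n]$ determined by $nD_{\Gamma}=\mathrm{div}(\overline{f^{\Gamma}})$ is trivial (one direction is obvious; conversely, if $[D_{\Gamma}]=0$ then $\mathrm{div}(\overline{f^{\Gamma}}/h^{n})=0$ for some $h$, giving a constant $c\in k^{*}$, and $c$ has an $n$-th root since $k=\bar{k}$).

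The heart of the argument is the identification of $[D_{\Gamma}]$ with the $\Gamma$-component of $\pi_{\mathrm{Pic}^{0}}([P])$, where $[P]\in\mathrm{Pic}^{0}(\mathcal{D}_{s})$ is the reduction of $P$ under the Raynaud isomorphism of Equation \ref{RaynaudIsom}. Intuitively, the relation $(f)=nD_{P}$ on the generic fiber can be propagated, using the hypothesis $P\in\mathcal{J}^{0}(R)[n]$ and Theorem \ref{MainThmVert}, to produce a model divisor $\mathcal{D}_{P}$ on $\mathcal{D}$ with $n\mathcal{D}_{P}=\mathrm{div}(f)+V$ for a vertical divisor $V$ compatible with the Laplacian; restricting the line bundle $\mathcal{O}(\mathcal{D}_{P})$ to $\Gamma$ gives precisely the class $[D_{\Gamma}]$ determined by $\overline{f^{\Gamma}}$. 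Equivalently, and more intrinsically, the Kummer trivialization $f$ of $\mathcal{L}^{\otimes n}\simeq\mathcal{O}$ (where $\mathcal{L}$ is the line bundle associated to $P$) restricts, after the normalization $f\rightsquigarrow f^{\Gamma}$, to the Kummer trivialization of $(\mathcal{L}|_{\Gamma})^{\otimes n}$; thus $\pi_{\mathrm{Pic}^{0}}([P])|_{\Gamma}=[\mathcal{L}|_{\Gamma}]=[D_{\Gamma}]$.

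Granting this identification, the proposition follows: by Theorem \ref{ToricPic}, $[P]\in\mathcal{J}^{0}_{T}$ if and only if $\pi_{\mathrm{Pic}^{0}}([P])=0$, i.e.\ if and only if the $\Gamma$-component $[D_{\Gamma}]$ vanishes for every irreducible component $\Gamma$, which by the previous paragraph is equivalent to $\overline{f^{\Gamma}}$ being an $n$-th power for every $\Gamma$, which in turn is equivalent to $D_{v'}=(1)$ for every $v'\in\Sigma(\mathcal{C})$. Combined with Proposition \ref{UnrGraph1}, this yields completely decomposability. The main obstacle is the naturality statement in the second paragraph---specifically, that the explicit recipe $\overline{f^{\Gamma}}\mapsto[D_{\Gamma}]$ agrees with the abstract restriction map $\pi_{\mathrm{Pic}^{0}}$ of Theorem \ref{ToricPic}. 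This is essentially a compatibility check between the Raynaud/Cartier-divisor description of $\mathcal{J}^{0}$ used in Section \ref{JacNer} and the \v{C}ech-cohomological description of $\mathrm{Pic}^{0}(\mathcal{D}_{s})$ recalled in Section \ref{TorExtJac1}, and is the only non-formal ingredient in the proof.
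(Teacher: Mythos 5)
Your proof follows the same route as the paper's: both directions reduce to Proposition \ref{DecompositionVertexProposition} (trivial vertex decomposition group $\Leftrightarrow$ $\overline{f^{\Gamma}}$ is an $n$-th power) combined with Theorem \ref{ToricPic}. The paper's version is much terser—it asserts the identification of the restricted divisor classes with $\pi_{\mathrm{Pic}^{0}}([P])|_{\Gamma}$ without comment—whereas you correctly flag this compatibility between the Raynaud description and the \v{C}ech/restriction map as the one non-formal step; you also make explicit the appeal to Proposition \ref{UnrGraph1} for the edge condition, which the paper leaves implicit in the definition of complete decomposability.
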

\begin{proof}
Suppose that 
\begin{equation*}
\Sigma(\mathcal{C})\longrightarrow{\Sigma(\mathcal{D})}
\end{equation*}
is completely decomposable. Then for every vertex $v$ in $\Sigma(\mathcal{D})$, we have that $g_{v}=n$. 
Using Proposition \ref{DecompositionVertexProposition}, we then see that the reduced divisor $(f)|_{v}$ is trivial in every
\begin{equation*}
\text{Pic}^{0}(\Gamma_{i}),
\end{equation*} so that $P$ is in fact a toric divisor by Theorem \ref{ToricPic}.

 Now suppose that $P$ is a toric divisor. We then see that the pulled back divisors are trivial for every component. Using Proposition \ref{DecompositionVertexProposition} 
 again, we see that the corresponding extensions have $g_{v}=n$, as desired. 
\end{proof}
\subsection{Explicit computations for completely decomposable morphisms}\label{IdentCompTors1}
We saw in the previous section that one can distinguish between torsion points in 
$\mathcal{J}$, $\mathcal{J}^{0}$ and $\mathcal{J}^{0}_{T}$ by considering their corresponding coverings of graphs. We would now like to distinguish between the different toric divisors and their corresponding extensions. To do this, we will take a strong hint from graph cohomology, as introduced in Section \ref{TorExtJac1} and apply it to our setting. This will use a result by Bosch and L\"{u}tkebohmert on the reduction of the divisors corresponding to these torsion points. 

So suppose we have a {\bf{toric}} $n$-torsion point $[P]$ in $\mathcal{J}^{0}(K)={\text{Div}^{(0)}(D)/\text{Prin}^{(0)}(D)}$ with representative $P\in{\text{Div}^{(0)}(D)}$.  If we restrict $P$ to a component $\Gamma$, then we can write
\begin{equation*}
P|_{\Gamma}=\text{div}(\overline{h_{\Gamma}})
\end{equation*}
for some $\overline{h_{\Gamma}}$ in the function field $k(\Gamma)$ of the component $(\Gamma)$, because $P$ is trivial. A local lift of $\overline{h_{\Gamma}}$ to $\mathcal{D}$ will be denoted by $h_{\Gamma}$. 
Since $P$ is an $n$-torsion point, we have that  
\begin{equation*}
n\cdot{P}=\text{div}_{\eta}(f)
\end{equation*}
for some $f\in\text{Prin}^{(0)}(D)$. If we restrict this equality, then locally we have that
\begin{equation*}
(\overline{f})_{\Gamma}=(\overline{h}^{n}_{\Gamma}),
\end{equation*}
meaning that $f$ is locally a $q$-th power. We {\it{assume}} here that we have scaled $h_{\Gamma}$ such that
\begin{equation*}
\overline{f}(x)=(\overline{h}_{\Gamma}(x))^{n}.
\end{equation*}
 There are exactly $n$ functions that satisfy $\overline{f}_{\Gamma}=\overline{h}^{n}_{\Gamma}$, so we will introduce notation for them. Let $\zeta:=\zeta_{n}$, a primitive $n$-th root of unity. We define
\begin{equation*}
h_{\Gamma,i}:=\zeta^{i}\cdot{h_{\Gamma}}.
\end{equation*}
for every component $\Gamma$ in $\mathcal{D}_{s}$. As we will see, these functions correspond to the components that lie above $\Gamma$.

Now let $x$ be an intersection point of two components $\Gamma$ and $\Gamma'$ in $\mathcal{D}_{s}$. We then have:
\begin{lemma}\label{BoschLemma}
\begin{equation}
\overline{f(x)}\in{k^{*}}.
\end{equation}
\end{lemma}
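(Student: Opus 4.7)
The plan is to show that the constraint $[P]\in\mathcal{J}^{0}(R)[n]$ forces the Laplacian of $f$ to vanish identically on $\Sigma(\mathcal{D})$, and then to extract the statement from Poincar\'{e}--Lelong together with the regularity of $\mathcal{D}$.

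First I would argue that $\phi_{f}$ is constant on $\Sigma(\mathcal{D})$. Since $P$ is a representative in $\text{Div}^{(0)}(D)$, the definition of $\text{Div}^{(0)}$ gives $\psi(P)\in\text{Ker}(\rho)$, so $\rho(\text{div}_{\eta}(f))=\rho(\psi(nP))=n\rho(\psi(P))=0$ as a divisor on the intersection graph. By definition of $\phi_{f}$ this yields $\Delta(\phi_{f})=0$, and since $\Sigma(\mathcal{D})$ is connected, $\phi_{f}$ is constant; I would normalize it to be identically zero.

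Second, I would leverage Theorem \ref{MainThmVert} to conclude that the vertical divisor $V_{f^{\Gamma}}$ is trivial. This translates to $v_{\Gamma_{i}}(f)=v_{\Gamma}(f)$ for every component $\Gamma_{i}$ of $\mathcal{D}_{s}$. Rescaling $f$ by an appropriate power of $\pi\in K^{*}$ (which does not affect $\text{div}_{\eta}(f)=nP$ and is consistent with the choice of $h_{\Gamma}$ made in the setup) I may assume $v_{\Gamma_{i}}(f)=0$ for all $i$, so that $f$ has no vertical zeros or poles whatsoever.

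Finally, the horizontal part of $\text{div}(f)$ is the closure of $nP$ in $\mathcal{D}$, and every point of $\text{Supp}(P)\subset D$ reduces to a smooth point of $\mathcal{D}_{s}$ by \cite[Corollary 9.1.32]{liu2}, using regularity of $\mathcal{D}$. Since the intersection point $x$ is an ordinary double point, it is not smooth, so no horizontal component of $\text{div}(f)$ passes through $x$. Combined with the vanishing of the vertical contributions, this means $f\in\mathcal{O}_{\mathcal{D},x}^{*}$, and therefore $\overline{f(x)}\in k^{*}$.

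The main obstacle is just the bookkeeping of normalizations: one must check that the constant choice of $\phi_{f}$, the freedom to rescale $f$ by a power of $\pi$, and the hypothesis $\overline{f}(x)=\overline{h}_{\Gamma}(x)^{n}$ fit together consistently. Once that is clear, the proof is a direct application of Theorem \ref{MainThmVert} together with the regularity of $\mathcal{D}$.
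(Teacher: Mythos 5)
The idea of your argument is an attractive algebraic substitute for the result of Bosch--L\"utkebohmert that the paper merely cites, and Steps 1 and 2 are correct: the constraint $P\in\text{Div}^{(0)}(D)$ forces the Laplacian of $f$ to be constant, and Theorem~\ref{MainThmVert} then lets you normalize $f$ so that $\text{div}(f)$ has no vertical components. The gap is in Step 3. You invoke \cite[Corollary 9.1.32]{liu2} to conclude that every point of $\text{Supp}(P)$ reduces to a smooth point of $\mathcal{D}_{s}$, but that corollary concerns $K$-\emph{rational} closed points: such a point extends to a section of $\mathcal{D}\to\text{Spec}(R)$, which lands in the smooth locus because $\mathcal{D}$ is regular. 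A degree-zero divisor $P$ on $D$ may, however, have support containing closed points whose residue fields are proper (hence, since $k=\bar{k}$, totally ramified) extensions of $K$, and for those the closure in $\mathcal{D}$ can pass right through a double point even though $\mathcal{D}$ is regular. A concrete illustration: in the affine chart $A=R[x,y]/(xy-\pi)$, the element $x-y$ generates a height-one horizontal prime of $A$, its generic point is the closed point of $D$ with $x$-coordinate $\sqrt{\pi}$ and residue field $K(\sqrt{\pi})$, and the closure $V(x-y)$ meets $\mathcal{D}_{s}$ exactly at the ordinary double point $(x,y,\pi)$. So without a rationality hypothesis on the chosen representative $P$, your Step~3 does not close.

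To finish one needs an extra ingredient, and there are at least two natural ones. One option is to show that any class in $\mathcal{J}^{0}(R)[n]$ admits a representative $P$ with $K$-rational support avoiding the formal fibers of double points; this is plausible using surjectivity of the reduction map $D(K)\to\mathcal{D}_{s}^{\text{sm}}(k)$ and a moving argument, but it is not automatic from the setup as written. The other option is to run the argument locally on the formal fiber of the double point, which is an open annulus: the vanishing slope of the Laplacian says precisely that the Newton polygon of $f$ on this annulus is a horizontal segment, so $f$ has no zeros or poles there; this is essentially the content of the Bosch--L\"utkebohmert reference, and it is insensitive to whether $\text{Supp}(P)$ is $K$-rational. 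Either addition would complete your proof, which is genuinely different from the paper's approach of citing the external result.
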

\begin{proof}
See \cite[Page 274]{Bosch1984}. 
\end{proof}
We now define for any $x$ an intersection point the following set:
\begin{equation*}
\mathcal{S}_{x}=\{\alpha\in{k^{*}}:\alpha^{n}=\overline{f(x)}\}.
\end{equation*}
We will see that these elements correspond exactly to the $n$ intersection points lying above $x$.

Now let us review the situation we are in. Let us consider $f$ as an element of the function field of $K(\mathcal{D})$. For every $x\in\mathcal{D}$ we have a natural injection
\begin{equation*}
\mathcal{O}_{\mathcal{D},x}\longrightarrow{K(\mathcal{D})}.
\end{equation*} We can then consider the following set:
\begin{equation*}
\mathcal{D}_{f}=\{x\in{\mathcal{D}}:f_{x}\in\mathcal{O}_{\mathcal{D},x}\},
\end{equation*}
where $\mathcal{O}_{\mathcal{D},x}$ is identified with its image in ${K(\mathcal{D})}$. 
\begin{lemma}
$\mathcal{D}_{f}$ is open.
\end{lemma}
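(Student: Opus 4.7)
The plan is to recognize this as a standard fact about rational functions on integral schemes: the domain of definition of a rational function is always open. Since $\mathcal{D}$ is integral (as a fibered surface) with function field $K(\mathcal{D})$, every stalk $\mathcal{O}_{\mathcal{D},x}$ injects into $K(\mathcal{D})$, and the condition $f_{x}\in \mathcal{O}_{\mathcal{D},x}$ is the usual condition that the rational function $f$ extends to a regular function near $x$. I would formulate the argument at this level of generality, independent of the particular $f$ coming from Kummer data.

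The concrete strategy is the following. Fix $x\in \mathcal{D}_{f}$. Because $\mathcal{O}_{\mathcal{D},x} = \varinjlim_{U\ni x}\mathcal{O}_{\mathcal{D}}(U)$, the hypothesis $f_{x}\in\mathcal{O}_{\mathcal{D},x}$ produces an open neighborhood $U$ of $x$ and a section $g\in\mathcal{O}_{\mathcal{D}}(U)$ whose germ at $x$ equals $f_{x}$. By integrality the restriction map $\mathcal{O}_{\mathcal{D}}(U)\hookrightarrow K(\mathcal{D})$ is injective, so $g$ and $f$ agree as elements of $K(\mathcal{D})$. For any $y\in U$ the germ $g_{y}\in\mathcal{O}_{\mathcal{D},y}$ then equals $f_{y}$ inside $K(\mathcal{D})$, which shows $y\in\mathcal{D}_{f}$. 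Hence $U\subseteq \mathcal{D}_{f}$, so $\mathcal{D}_{f}$ is open.

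An equivalent perspective, which might make the statement more transparent in the context of the chapter, is to note that $\mathcal{D}\setminus\mathcal{D}_{f}$ is the support of the pole divisor of $f$. Since $\mathcal{D}$ is normal, $f$ has a well-defined divisor $(f)=(f)_{0}-(f)_{\infty}$, and the closed subset $|(f)_{\infty}|$ coincides with the non-regularity locus of $f$ (by taking the negative part of the valuations at each codimension-one point and extending to its closure). Either viewpoint yields the lemma without difficulty, and I do not anticipate any real obstacle; the argument is purely formal and makes no use of the semistable or Galois-theoretic structure that the surrounding discussion will exploit.
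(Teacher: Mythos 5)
Your proof is correct and is essentially the same as the paper's: in both cases one produces, from $f_{x}\in\mathcal{O}_{\mathcal{D},x}$, an explicit open neighborhood of $x$ on which $f$ is regular (the paper writes $f|_{U}=g/h$ on an affine and passes to $D(h)$; you appeal to the colimit description of the stalk and integrality to identify $f$ with a regular section on some $U\ni x$). The two phrasings encode the same idea, and your alternative viewpoint via the pole divisor is a valid reformulation rather than a different argument.
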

\begin{proof}
Locally for every point $x\in\mathcal{D}_{f}$, we can write
\begin{equation*}
f|_{U}=g/h
\end{equation*}
for some open affine $U$. Here $h$ is not contained in the prime corresponding to $x$, by assumption on $f$. Let us consider the open subset $D(h)$ in $U$. Then for every $y\in{D(h)}$, we see that $f$ is contained in $\mathcal{O}_{\mathcal{D},y}$, as desired.
\end{proof}

By Lemma \ref{BoschLemma}, we see that any intersection point in the special fiber of $\mathcal{D}$ lies in $\mathcal{D}_{f}$ ($f$ is in fact invertible in $\mathcal{O}_{\mathcal{D},x}$ for an intersection point $x$). We then also see that any generic point $y$ lying under $x$ must also be an element of $\mathcal{D}_{f}$.\\
Since $\mathcal{D}_{f}$ is open, for every $x\in\mathcal{D}_{f}$ we can find an open affine $U=\text{Spec}(A)$ such that for every $\mathfrak{p}\in\text{Spec}(A)$ we have that $f\in{A}_{\mathfrak{p}}$. This then also means that $f\in{A}$.
The ring
\begin{equation*}
B=A[z]/(z^n-f)
\end{equation*}
is thus integral over $A$ and is thus contained in $A'$.
\begin{lemma}
The algebra $C=A[z][1/z]/(z^n-f)$ is standard \'{e}tale over $A$. 
\end{lemma}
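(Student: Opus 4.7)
The plan is to verify the definition of standard étale directly, since the algebra is already presented in essentially the right form. Recall that a ring $C$ is standard étale over $A$ if it can be written as $C \simeq (A[z]/g(z))[1/h]$ for some monic polynomial $g \in A[z]$ such that the image of the derivative $g'(z)$ is invertible in the localization. Here the presentation
\begin{equation*}
C = A[z][1/z]/(z^n - f) = (A[z]/(z^n - f))[1/z]
\end{equation*}
already exhibits $C$ in this form, with monic polynomial $g(z) = z^n - f$ of degree $n$ and localization element $h = z$.

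The only substantive thing to check is that $g'(z) = n z^{n-1}$ is a unit in $C$. First, $z^{n-1}$ is invertible in $C$ because we have inverted $z$. Second, the integer $n$ is invertible in $A$: throughout this chapter we work under the Kummer hypothesis that $n$ is coprime to the characteristic of the residue field $k$ (cf.\ Proposition \ref{AbelExt1}), and since $R$ is a discrete valuation ring with residue field of characteristic coprime to $n$, the integer $n$ is a unit in $R$ and hence in any $R$-algebra $A$ appearing as an open affine of $\mathcal{D}_{f}$. Combining these two observations, $g'(z) = n \cdot z^{n-1}$ is a product of two units in $C$, so it is a unit. This verifies the defining property and concludes the proof.

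Strictly speaking this is a definition-chase rather than a theorem, so there is no real obstacle; the only thing that could have gone wrong is the invertibility of $n$, but this is forced on us by the standing assumption on the characteristic of the residue field under which Kummer theory is being applied. The point of recording the lemma is that the étaleness of $C$ over $A$ is precisely what we will need in the next step in order to speak of the fiber of this covering at an intersection point $x$, to identify it with the set $\mathcal{S}_{x}$ of $n$-th roots of $\overline{f(x)}$, and to interpret the functions $h_{\Gamma,i} = \zeta^{i} h_{\Gamma}$ as labelling the $n$ components of $\mathcal{C}_{s}$ lying above $\Gamma$.
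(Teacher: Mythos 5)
Your proof is correct and takes essentially the same approach as the paper: both verify standard étaleness directly by noting that the derivative $nz^{n-1}$ is a unit, since $z$ is invertible by the localization and $n$ is invertible by the standing assumption on the characteristic of the residue field.
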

\begin{proof}
We only have to check that the derivative of $z^n-f$ with respect to $z$, $nz^{n-1}$, is invertible in $C$. 
Since $n$ is invertible by assumption on our rings and $z$ is invertible by the localization we applied, we see that $B$ is standard \'{e}tale.
\end{proof}

Let us recall that for a morphism of schemes $f:X\longrightarrow{Y}$ of finite type with $Y$ locally Noetherian, we have the following equivalent statements:
\begin{enumerate}
\item $f$ is \'{e}tale at $x\in{X}$.
\item $\hat{\mathcal{O}}_{X,x}$ is a free $\hat{\mathcal{O}}_{Y,y}$-module and $\hat{\mathcal{O}}_{X,x}/\mathfrak{m}_{y}\hat{\mathcal{O}}_{X,x}$ is a finite separable field extension of $k(y)$. Here $y=f(x)$.
\end{enumerate}

We furthermore have that if the induced map $k(y)\longrightarrow{k(x)}$ is an isomorphism, then we have an isomorphism $\hat{\mathcal{O}}_{X,x}=\hat{\mathcal{O}}_{Y,y}$. All of this is contained in \cite[Proposition 17.6.3]{EGA4}. 
Applying this to our situation, we have the following
\begin{lemma}\label{ClosedPointEtale1}
Let $\mathfrak{m}$ be a maximal ideal of $A$ in $D(f)$. Then the induced morphism
\begin{equation*}
\hat{A}_{\mathfrak{m}}\longrightarrow{\hat{B}_{\mathfrak{m}'}}
\end{equation*}
is an isomorphism. Here $\mathfrak{m}'$ is a maximal ideal of the algebra $B=A[z]/(z^n-f)$ lying above $\mathfrak{m}$.
\end{lemma}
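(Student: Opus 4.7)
The plan is to reduce the statement to the criterion quoted from \cite[Proposition 17.6.3]{EGA4}: a finite type morphism that is étale at a point and induces an isomorphism on residue fields at that point also induces an isomorphism of completed local rings. Thus two things must be checked at $\mathfrak{m}'$: that $B_{\mathfrak{m}'}$ is étale over $A_{\mathfrak{m}}$, and that the residue field extension $k(\mathfrak{m})\to k(\mathfrak{m}')$ is trivial.

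First I would show that $z$ is a unit of $B_{\mathfrak{m}'}$. Because $\mathfrak{m}\in D(f)$ and $f\in A$, the image $\overline{f}$ of $f$ in $k(\mathfrak{m})$ is nonzero (this is where the hypothesis that we work inside $\mathcal{D}_{f}$ and the content of Lemma \ref{BoschLemma} enter: on $\mathcal{D}_{f}$ the function $f$ is a unit on completions at the points we care about). The relation $z^{n}=f$ then forces $z\notin \mathfrak{m}'$, so that $\mathfrak{m}'$ is also a maximal ideal of the localization $C=B[1/z]=A[z][1/z]/(z^{n}-f)$. By the preceding lemma, $C$ is standard étale over $A$, and consequently $B_{\mathfrak{m}'}=C_{\mathfrak{m}'}$ is étale over $A_{\mathfrak{m}}$.

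Next I would identify the residue field $k(\mathfrak{m}')$. Writing $B_{\mathfrak{m}'}/\mathfrak{m} B_{\mathfrak{m}'}$ gives the fiber ring $k(\mathfrak{m})[z]/(z^{n}-\overline{f})$. By the standing hypothesis, $\overline{f}$ equals $\overline{h}_{\Gamma}^{n}$ locally, and by the previous step $\overline{h}_{\Gamma}(\mathfrak{m})\in k(\mathfrak{m})^{*}$. Hence $z^{n}-\overline{f}$ factors as $\prod_{i=0}^{n-1}(z-\zeta^{i}\overline{h}_{\Gamma}(\mathfrak{m}))$, with all $n$ roots lying in $k(\mathfrak{m})$. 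The maximal ideals of $B_{\mathfrak{m}}$ above $\mathfrak{m}$ therefore correspond bijectively to these $n$ roots, and at each such $\mathfrak{m}'$ the canonical map $k(\mathfrak{m})\to k(\mathfrak{m}')$ is an isomorphism.

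Putting the two ingredients together and invoking \cite[Proposition 17.6.3]{EGA4} yields the desired isomorphism $\hat{A}_{\mathfrak{m}}\simeq \hat{B}_{\mathfrak{m}'}$. The only real subtlety in this plan is the first step: one must make sure that the relevant maximal ideals $\mathfrak{m}$ of $A$ really do lie in $\mathcal{D}_{f}$, i.e.\ that $f$ is invertible in $\hat{A}_{\mathfrak{m}}$. For the intersection points this is Lemma \ref{BoschLemma}, and the shrinking of $U$ already performed above $\mathcal{D}_{f}$ extends this to all closed points we need. Once that is in place, the rest of the argument is essentially formal.
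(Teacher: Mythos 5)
Your proof is correct and follows the same route as the paper: reduce to standard étaleness via the invertibility of $z$ at $\mathfrak{m}'$, check the residue-field extension is trivial, and invoke \cite[Proposition 17.6.3]{EGA4}. The only (harmless) difference is that for the residue-field step the paper simply cites the standing assumption that $k$ is algebraically closed, whereas you rederive the splitting of $z^n-\overline{f}$ explicitly from $\overline{f}=\overline{h}_\Gamma^n$; also note that the hypothesis $\mathfrak{m}\in D(f)$ already gives $\overline{f}\neq 0$ directly, so Lemma~\ref{BoschLemma} is not needed inside this proof (it is used in the surrounding text to verify that intersection points lie in $\mathcal{D}_f$ before the lemma is applied).
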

\begin{proof}
We note that since $f\notin{\mathfrak{m}'}$ for any $\mathfrak{m}'$ lying above it, we have that $z\notin{\mathfrak{m}'}$ and that we thus have that the corresponding morphism of rings factors through the above standard \'{e}tale algebra $A[z][1/z]/(z^n-f)$. We thus see that $f$ is \'{e}tale at $\mathfrak{m}'$. Furthermore, the residue fields of all these points are assumed to be algebraically closed, so we obtain an isomorphism of completions by the above considerations.
\end{proof}

We will mainly use this lemma for the intersection points in the intersection graph. Let us now explicitly compute some prime ideals. Let $\mathfrak{p}\in\text{Spec}(A)$ correspond to a component in the special fiber and let $\mathfrak{m}\in\text{Spec}(A)$ be a closed point in that component (i.e. $\mathfrak{m}\supset{\mathfrak{p}}$).  

\begin{lemma}\label{Easylemma2}
The primes above $\mathfrak{p}$ are given by
\begin{equation*}
\mathfrak{p}_{i}=\mathfrak{p}+<z-h_{\Gamma,i}>.
\end{equation*}
The maximal ideals above $\mathfrak{m}$ are given by
\begin{equation*}
\mathfrak{m}'=\mathfrak{m}+<z-\alpha>,
\end{equation*}
where $\alpha\in\mathcal{S}_{x}=\{\alpha\in{k^{*}}:\alpha^{n}=\overline{f(x)}\}$.
\end{lemma}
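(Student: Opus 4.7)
The plan is to reduce both statements to factoring $z^n - f$ modulo the appropriate ideal and then invoking the standard correspondence between primes of $B$ above an ideal $I \subset A$ and primes of $B/IB$.

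For the first statement, I would begin by passing to the quotient $B/\mathfrak{p}B \simeq (A/\mathfrak{p})[z]/(z^n - \overline{f})$. By our normalization, $\overline{f}|_{\Gamma} = \overline{h}_{\Gamma}^{\,n}$ inside $k(\Gamma) = \text{Frac}(A/\mathfrak{p})$, and since $n$ is invertible in $k$ and $k$ contains a primitive $n$-th root of unity $\zeta$, the polynomial $z^n - \overline{f}$ splits as
\begin{equation*}
z^n - \overline{f} \;=\; \prod_{i=0}^{n-1} (z - \zeta^{i}\overline{h}_{\Gamma}) \;=\; \prod_{i=0}^{n-1} (z - \overline{h}_{\Gamma,i})
\end{equation*}
in $(A/\mathfrak{p})[z]$, with pairwise coprime factors (their pairwise differences are units since $\overline{h}_{\Gamma}$ is a unit in the residue field $k(\Gamma)$ by the fact that $\overline{f(y)} \in k^*$ at the generic point $y$ of $\Gamma$). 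Thus $B/\mathfrak{p}B$ is a product of $n$ integral domains, and lifting each factor yields exactly the claimed primes $\mathfrak{p}_i = \mathfrak{p} + \langle z - h_{\Gamma,i}\rangle$.

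The second statement is completely analogous but applied at a closed point. Passing to $B/\mathfrak{m}B \simeq k[z]/(z^n - \overline{f(x)})$ and invoking Lemma \ref{BoschLemma}, we have $\overline{f(x)} \in k^*$, so over the algebraically closed residue field $k$ the polynomial factors as $\prod_{\alpha \in \mathcal{S}_x}(z - \alpha)$ with $n$ distinct roots. Each factor gives a distinct maximal ideal of the form $\mathfrak{m} + \langle z - \alpha\rangle$, and these exhaust the primes above $\mathfrak{m}$.

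The main (mild) obstacle is checking that the $h_{\Gamma,i}$ really do lift to give genuinely distinct prime ideals of $B$ rather than just distinct factors modulo $\mathfrak{p}$; this is where we use that the factors in $(A/\mathfrak{p})[z]$ are coprime (so by the Chinese Remainder Theorem $B/\mathfrak{p}B$ is a product), which in turn rests on $\overline{h}_{\Gamma}$ being a unit at the generic point of $\Gamma$. One should also verify compatibility with the specializations $\mathfrak{p} \subset \mathfrak{m}$: indeed $\mathfrak{p}_i \subset \mathfrak{m}'$ precisely when $\overline{h}_{\Gamma,i}(x) = \alpha$, which matches the geometric picture from Lemma \ref{ClosedPointEtale1} that the \'{e}tale local structure pulls intersection points apart into their $n$ preimages indexed by $\mathcal{S}_x$.
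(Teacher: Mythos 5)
Your argument is correct and is essentially the fleshed-out version of the paper's terse proof, which reduces to factoring $z^n-\overline{f}$ in the fiber over $\mathfrak{p}$ (resp.\ $\mathfrak{m}$) and lifting the resulting factors back to primes. The one small improvement the paper makes is to pass directly to $k(\mathfrak{p})[z]/(z^n-\overline{f})$ (the fiber over the residue \emph{field}) rather than to $B/\mathfrak{p}B=(A/\mathfrak{p})[z]/(z^n-\overline{f})$, which makes the coprimality of the factors, and hence the splitting into a product, manifest since one is working over a field; in your version one should note that the factors $z-\overline{h}_{\Gamma,i}$ a priori live only in $k(\Gamma)[z]$, and the primes above $\mathfrak{p}$ are recovered by intersecting the factor primes of $k(\mathfrak{p})[z]/(z^n-\overline{f})$ back down to $B$.
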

\begin{proof}
The primes above $\mathfrak{p}$ correspond to primes of the ring
\begin{equation*}
k(\mathfrak{p})[z]/(z^n-f),
\end{equation*}
where $k(\mathfrak{p})$ is the residue field of $\mathfrak{p}$. 

Writing out this correspondence for 
$\mathfrak{p}$ yields the desired form as stated in the lemma. One similarly proceeds for $\mathfrak{m}$. 
\end{proof}

\begin{lemma}\label{LemmaDoublePoint2}
Let $\mathfrak{m}$ be an intersection point in $\mathcal{D}$ such that $\hat{\mathcal{O}}_{\mathcal{D},x}\simeq{R[[x,y]]/(xy-\pi^{m})}$. 
The completion of ${{(A[z]/(z^{n}-f))_{\mathfrak{m}'}}}$ with respect to $\mathfrak{m}'$ is then also isomorphic to $R[[x,y]]/(xy-\pi^{m})$. 
\end{lemma}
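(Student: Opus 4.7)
The plan is to deduce this lemma as an essentially immediate consequence of Lemma \ref{ClosedPointEtale1} combined with Lemma \ref{BoschLemma}, once the hypotheses of Lemma \ref{ClosedPointEtale1} have been verified at the intersection point $\mathfrak{m}$. The key observation is that, although $f$ may fail to be regular at $\mathfrak{m}$ \emph{a priori} (since $f$ is only defined on the generic fiber), the toric nature of $P$ forces $f$ to reduce to a nonzero constant at every intersection point.

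First I would invoke Lemma \ref{BoschLemma} to conclude that $\overline{f(\mathfrak{m})} \in k^*$, so that $f \in \mathcal{O}_{\mathcal{D},\mathfrak{m}}^{*}$, i.e.\ $\mathfrak{m}$ lies in the open set $\mathcal{D}_{f}$ introduced above and $f$ is in fact a unit in $A_{\mathfrak{m}}$ on a suitable affine neighborhood. Next, by Lemma \ref{Easylemma2}, any maximal ideal $\mathfrak{m}'$ of $B=A[z]/(z^{n}-f)$ lying above $\mathfrak{m}$ has the form $\mathfrak{m} + \langle z-\alpha\rangle$ with $\alpha \in \mathcal{S}_{\mathfrak{m}}\subset k^{*}$, in particular $z \notin \mathfrak{m}'$. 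Hence localizing $B$ at $\mathfrak{m}'$ is the same as first localizing at $z$ and then at $\mathfrak{m}'$; by the lemma preceding Lemma \ref{ClosedPointEtale1}, the ring $A[z][1/z]/(z^{n}-f)$ is standard \'{e}tale over $A$, and so $B_{\mathfrak{m}'}$ is \'{e}tale over $A_{\mathfrak{m}}$.

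Then I would apply Lemma \ref{ClosedPointEtale1} (which is the content of \cite[Proposition 17.6.3]{EGA4} specialized to the case of trivial residue field extension, available because $k$ is algebraically closed) to obtain the isomorphism
\begin{equation*}
\widehat{A}_{\mathfrak{m}} \xrightarrow{\ \sim\ } \widehat{B}_{\mathfrak{m}'}.
\end{equation*}
By hypothesis $\widehat{A}_{\mathfrak{m}} = \widehat{\mathcal{O}}_{\mathcal{D},x} \simeq R[[x,y]]/(xy-\pi^{m})$, which gives the desired conclusion.

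The only point that requires genuine input is the verification that $f$ is a unit at the intersection point, i.e.\ the use of Lemma \ref{BoschLemma}; after that, everything is a formal consequence of \'{e}taleness plus triviality of residue field extensions. I therefore expect the main (and only) substantive obstacle to be making sure that the cited Bosch--L\"{u}tkebohmert result is being applied to an appropriate representative $f$ of the torsion class, in particular that the normalization $\overline{f}(x) = \overline{h}_{\Gamma}(x)^{n}$ has been chosen consistently at both components meeting at $\mathfrak{m}$ so that $\overline{f}(\mathfrak{m})$ is a well-defined nonzero constant; once this compatibility is in place, the rest of the argument is purely formal.
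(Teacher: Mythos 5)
Your proof is correct and follows exactly the route the paper takes: the paper's own justification is the one-line appeal to Lemma \ref{ClosedPointEtale1}, whose applicability at $\mathfrak{m}$ is secured by the preceding discussion (via Lemma \ref{BoschLemma}) that intersection points lie in $\mathcal{D}_{f}$. You have simply unwound the implicit verifications into explicit steps.
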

\begin{proof}
This follows from Lemma \ref{ClosedPointEtale1}.
\end{proof}

\begin{cor}
${{(A[z]/(z^{n}-f))_{\mathfrak{m}'}}}$ is normal.
\end{cor}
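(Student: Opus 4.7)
The plan is to reduce normality of the local ring to normality of its completion, which was identified in Lemma \ref{LemmaDoublePoint2}. Concretely, I would argue as follows.

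First, recall the standard descent fact for Noetherian local rings: if $A$ is a Noetherian local ring and its $\mathfrak{m}_A$-adic completion $\hat{A}$ is a normal domain, then $A$ is itself a normal domain. This is because the map $A \to \hat{A}$ is faithfully flat, so integral closedness descends; alternatively, one can check Serre's criterion $(R_1)+(S_2)$ on $A$ via the flat map to $\hat{A}$, which is known to preserve and reflect these conditions. I would cite this from \cite{liu2} (or the standard reference) rather than reprove it.

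Second, I would show that $R[[x,y]]/(xy - \pi^m)$ is normal. The most efficient route is to observe that it is a one-dimensional-fiber complete intersection of dimension $2$, hence Cohen--Macaulay, so property $(S_2)$ is automatic. For $(R_1)$, the singular locus of $xy - \pi^m = 0$ is cut out by the Jacobian ideal $(x, y, m\pi^{m-1})$, whose zero set is the single maximal ideal $(x, y, \pi)$; this has codimension $2$, so the ring is regular in codimension one. Serre's criterion then gives normality. This step is essentially a restatement of the well-known fact that ordinary double points on semistable arithmetic surfaces are normal singularities.

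Finally, combining these two ingredients with Lemma \ref{LemmaDoublePoint2}, which provides the isomorphism
\[
\widehat{(A[z]/(z^n - f))_{\mathfrak{m}'}} \;\simeq\; R[[x,y]]/(xy - \pi^m),
\]
yields that the completion on the left is normal, and hence $(A[z]/(z^n - f))_{\mathfrak{m}'}$ is normal, as desired.

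The main obstacle here is really bookkeeping rather than mathematical difficulty: one must be careful that the local ring $(A[z]/(z^n - f))_{\mathfrak{m}'}$ is already Noetherian (which is clear since $A$ is Noetherian and the ring is module-finite over $A$), so that the descent-of-normality statement applies. There is no subtlety coming from the Galois action or from $f$; once Lemma \ref{LemmaDoublePoint2} is in hand the corollary is essentially a formal consequence.
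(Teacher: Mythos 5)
Your argument is correct, and it is a slightly leaner variant of what the paper does. Both proofs hinge on Lemma \ref{LemmaDoublePoint2} (the completion is $R[[x,y]]/(xy-\pi^m)$) together with the fact that this ring is normal, and both then descend normality from the completion to the local ring. The difference is in the descent step: the paper first establishes that $(A[z]/(z^n-f))_{\mathfrak{m}'}$ is \emph{excellent} (as a localization of a finite-type algebra over the excellent ring $R$) and then cites the ``normal iff completion normal'' equivalence for excellent local rings. You instead invoke only the unconditional direction --- that for \emph{any} Noetherian local ring, normality of $\hat{A}$ implies normality of $A$, by faithful flatness of $A\to\hat{A}$ together with $I\hat{A}\cap A = I$. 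Since the corollary only needs that direction, your version dispenses with the excellence hypothesis entirely, which is cleaner; the paper's excellence detour is overkill here (excellence is needed only for the converse implication, which plays no role). You also supply a justification of the normality of $R[[x,y]]/(xy-\pi^m)$ via Serre's criterion, which the paper takes for granted. One small caveat: the phrase ``Jacobian ideal $(x,y,m\pi^{m-1})$'' is loose over the base DVR (there is no formal derivative with respect to $\pi$), but the conclusion you draw --- that the non-regular locus is concentrated at the closed point $(x,y,\pi)$, which has codimension two --- is correct, and together with the hypersurface (hence Cohen--Macaulay) property this gives $(R_1)+(S_2)$ as claimed.
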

\begin{proof}
First of all, $A_{\mathfrak{m}}$ is an {\it{excellent}} ring. Furthermore, any finitely generated algebra over an excellent ring is again excellent and any localization of an excellent algebra is also excellent. We thus see that ${{(A[z]/(z^{n}-f))_{\mathfrak{m}'}}}$ is excellent. We then use the following:
\begin{lemma}
[{\it{Normality of excellent rings}}] Let $A$ be an excellent Noetherian local ring. Let $\hat{A}$ be its formal completion. Then $A$ is normal if and only if $\hat{A}$ is normal.
\end{lemma}
\begin{proof}
See \cite[Page 344, Proposition 2.41]{liu2}. 
\end{proof}
We thus see that ${{(A[z]/(z^{n}-f))_{\mathfrak{m}'}}}$ is normal if and only if its completion is normal. Its completion is isomorphic to $R[[x,y]]/(xy-\pi^{m})$ for some $m$ by Lemma \ref{LemmaDoublePoint2}, which is a normal ring. This gives the Corollary.
\end{proof}
We have thus identified the local rings of intersection points lying above an edge with maximal ideal $\mathfrak{m}$. We now link these maximal ideals to the components.
\begin{lemma}\label{UniqComponentIntersect2}
Let $\mathfrak{m}'$ be an intersection point lying above $\mathfrak{m}$. 
\begin{enumerate}
\item $\mathfrak{m}'$ uniquely corresponds to a solution $\alpha\in{k}^{*}$ of the equation $\alpha^{n}=\overline{f}(x)$.
\item There exists a unique component $\Gamma_{i}$ lying above $\Gamma$ such that $\mathfrak{m}'$ belongs to $\Gamma_{i}$.
\item For this component $\Gamma_{i}$, we have 
\begin{equation*}
\overline{h_{\Gamma,i}}(x)=\alpha.
\end{equation*}
\end{enumerate}
\end{lemma}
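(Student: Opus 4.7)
The plan is to derive all three parts directly from the local description of primes of $A[z]/(z^n-f)$ already given in Lemma \ref{Easylemma2}, together with the ordinary-double-point structure at intersection points established in Lemma \ref{LemmaDoublePoint2}.

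First I would handle part (1) as an immediate application of Lemma \ref{Easylemma2}: the maximal ideals of $A[z]/(z^n-f)$ lying above $\mathfrak{m}$ are precisely those of the form $\mathfrak{m}+\langle z-\alpha\rangle$ with $\alpha\in\mathcal{S}_x$. Since distinct $\alpha$ give distinct maximal ideals (the $\overline{f(x)}$ are nonzero units by Lemma \ref{BoschLemma}, so the polynomial $z^n-\overline{f(x)}$ is separable over $k$ and factors into $n$ distinct linear factors), this gives a bijection between $\mathfrak{m}'$ above $\mathfrak{m}$ and $\alpha\in\mathcal{S}_x$. This pins down the $\alpha$ associated to $\mathfrak{m}'$.

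For part (2), I would first note that by Lemma \ref{LemmaDoublePoint2} the point $\mathfrak{m}'$ is an ordinary double point of $\mathcal{C}$, and therefore lies on exactly two irreducible components $\Gamma_1',\Gamma_2'$ of $\mathcal{C}_s$. Each of these maps to a component through $\mathfrak{m}$ in $\mathcal{D}_s$, i.e.\ to $\Gamma$ or $\Gamma'$. The key step is to show that the two maps land in \emph{different} components. This is exactly the argument used inside the proof of Proposition \ref{InjectionDecomposition}: if both $\Gamma_1',\Gamma_2'$ mapped to $\Gamma$, then because the Galois group acts transitively on the fiber there would exist $\sigma\in D_{\mathfrak{m}'}=I_{\mathfrak{m}'}$ (residue field algebraically closed) with $\sigma(\Gamma_1')=\Gamma_2'$, and Lemma \ref{QuotientNons1} would then force the image $\phi(\mathfrak{m}')$ to be smooth in $\mathcal{D}$, contradicting Corollary \ref{CorSmooth1}. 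Hence exactly one of $\Gamma_1',\Gamma_2'$ lies above $\Gamma$; call it $\Gamma_i$.

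For part (3), I would combine the two descriptions of primes from Lemma \ref{Easylemma2}. The component $\Gamma_i$ identified in part (2) corresponds to a prime $\mathfrak{p}_i=\mathfrak{p}+\langle z-h_{\Gamma,i}\rangle$ with $\mathfrak{p}$ the prime of $\Gamma$, and containment $\Gamma_i\ni\mathfrak{m}'$ translates to $\mathfrak{p}_i\subseteq\mathfrak{m}'$. Since $\mathfrak{p}\subseteq\mathfrak{m}$ already, this containment is equivalent to $z-h_{\Gamma,i}\in\mathfrak{m}'=\mathfrak{m}+\langle z-\alpha\rangle$, i.e.\ $h_{\Gamma,i}\equiv\alpha\pmod{\mathfrak{m}'}$. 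Restricting to $\Gamma$ and reducing mod $\mathfrak{m}$ gives $\overline{h_{\Gamma,i}}(x)=\alpha$, as required. The only mild subtlety here is that among the $n$ candidates $h_{\Gamma,0},\dots,h_{\Gamma,n-1}$ exactly one satisfies this, because the values $\zeta^j\overline{h_\Gamma}(x)$ are the $n$ distinct $n$-th roots of $\overline{f(x)}$; this matches the uniqueness from part (2).

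The main obstacle is really part (2): everything else is bookkeeping in $A[z]/(z^n-f)$, but the uniqueness of the component above $\Gamma$ through $\mathfrak{m}'$ genuinely uses the disjointly branched hypothesis via Corollary \ref{CorSmooth1} and Lemma \ref{QuotientNons1}. Once that is secured, parts (1) and (3) are essentially formal consequences of the explicit description of primes already given.
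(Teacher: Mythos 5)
Parts (1) and (3) of your proposal match the paper's reasoning essentially word for word (the separability remark just makes explicit what Lemma \ref{Easylemma2} already gives). For part (2), however, you take a genuinely different route from the paper, and that route has a gap.

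The paper's argument for (2) is purely algebraic: it observes that $\mathfrak{m}'\supseteq\mathfrak{p}_j$ is equivalent to $\overline{h_{\Gamma,j}}(x)=\alpha$ (exactly your bookkeeping from part (3)), and since the $n$ values $\zeta^j\overline{h_\Gamma}(x)$ are distinct $n$-th roots of $\overline{f}(x)$, there is a unique such $j$. The uniqueness of the component falls out for free; no geometric input about double points or Galois actions is needed. Your approach instead uses Lemma \ref{LemmaDoublePoint2} to say $\mathfrak{m}'$ sits on exactly two components and then tries to separate them.

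The gap is the sentence ``because the Galois group acts transitively on the fiber there would exist $\sigma\in D_{\mathfrak{m}'}$ with $\sigma(\Gamma'_1)=\Gamma'_2$.'' Transitivity produces some $\sigma\in G$ with $\sigma(\Gamma'_1)=\Gamma'_2$, but it does not by itself place $\sigma$ in the stabilizer of $\mathfrak{m}'$: a priori $\sigma(\mathfrak{m}')$ could be a \emph{different} closed point of $\Gamma'_2$ above $\mathfrak{m}$. (And in fact in the completely decomposable setting of this section one has $D_{\mathfrak{m}'}=(1)$, so the $\sigma$ you posit can only be the identity; the contradiction you are after is real, but you have not shown it is reached.) To close the gap you would need to add that, because the covering is completely decomposable, $D_{\Gamma'_2}$ is trivial and hence the morphism $\Gamma'_2\rightarrow\Gamma$ is an isomorphism; then $\Gamma'_2$ has a unique point above $\mathfrak{m}$, which forces $\sigma(\mathfrak{m}')=\mathfrak{m}'$. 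Only at that point do Lemma \ref{QuotientNons1} and Corollary \ref{CorSmooth1} (or, more directly, triviality of $D_{\mathfrak{m}'}$) give the contradiction. A cleaner geometric alternative, if you want to avoid the Galois bookkeeping entirely, is to use Lemma \ref{ClosedPointEtale1} directly: the isomorphism $\hat{\mathcal{O}}_{\mathcal{D},\mathfrak{m}}\simeq\hat{\mathcal{O}}_{\mathcal{C},\mathfrak{m}'}$ carries the two formal branches at $\mathfrak{m}$ bijectively to the two at $\mathfrak{m}'$, so exactly one component through $\mathfrak{m}'$ lies above $\Gamma$; alternatively the going-down theorem for the normal integral extension $A\subset B$ already furnishes one component above $\Gamma$ and one above $\Gamma'$ through $\mathfrak{m}'$, which together with Lemma \ref{LemmaDoublePoint2} pins down the count.

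Also note that your citation of Proposition \ref{InjectionDecomposition} is slightly off: that proof starts from a given $\sigma\in D_{\mathfrak{m}'}$ and shows it must fix the components, whereas you are trying to go the other way and manufacture such a $\sigma$, which is the extra step that needs justification.
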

\begin{proof}
Part 1 is just Lemma \ref{Easylemma2}. 
We have that
\begin{equation*}
\overline{h_{\Gamma}}(x)^n=\overline{f}(x)=\alpha^{n},
\end{equation*}
so there exists a unique $i$ such that
\begin{equation*}
\overline{h_{\Gamma,i}}(x)=\alpha.
\end{equation*}
We then easily see that $\mathfrak{m}'$ belongs to $\Gamma_{i}$ and we automatically obtain part (3) of the Lemma. 
\end{proof}

For every intersection point $\mathfrak{m}'$ lying above $\mathfrak{m}$, we can now find a unique $i$ and a unique $j$ such that $\mathfrak{m}'$ lies in both $\Gamma_{i}$ and $\Gamma'_{j}$. We will therefore denote the maximal ideal $\mathfrak{m}'$ lying above $\mathfrak{m}$ by
\begin{equation*}
\mathfrak{m}_{i,j}:=\mathfrak{m}'.
\end{equation*}

\begin{cor}
$\Gamma_{i}$ and $\Gamma'_{j}$ intersect each other in $\mathfrak{m}_{i,j}$ with $\Gamma_{i}\cdot{\Gamma_{j}}=1$.
\end{cor}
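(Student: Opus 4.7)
The plan is to extract both claims directly from the structural results already established, with the main content being the uniqueness of the intersection point and the transversality of the crossing.

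First, I would verify that $\Gamma_i$ and $\Gamma'_j$ actually meet at $\mathfrak{m}_{i,j}$. This is immediate from Lemma \ref{UniqComponentIntersect2}: the maximal ideal $\mathfrak{m}_{i,j}$ was defined precisely as the unique point in the fiber over $\mathfrak{m}$ that lies on both $\Gamma_i$ (by the condition $\overline{h_{\Gamma,i}}(x)=\alpha$) and $\Gamma'_j$ (by the analogous condition for the other component). So $\mathfrak{m}_{i,j} \in \Gamma_i \cap \Gamma'_j$.

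Next I would show that $\mathfrak{m}_{i,j}$ is the only intersection point of $\Gamma_i$ and $\Gamma'_j$, which is where the hypothesis that $\Gamma$ and $\Gamma'$ meet only in $\mathfrak{m}$ enters. Any intersection point $\mathfrak{n}$ of $\Gamma_i$ and $\Gamma'_j$ maps under $\phi$ to an intersection point of $\Gamma$ and $\Gamma'$, hence to $\mathfrak{m}$. So $\mathfrak{n}$ lies in the fiber $\phi^{-1}(\mathfrak{m})$, which by Lemma \ref{Easylemma2} consists of the $n$ ideals $\mathfrak{m}+\langle z-\alpha\rangle$ indexed by $\alpha\in\mathcal{S}_x$. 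By part (3) of Lemma \ref{UniqComponentIntersect2}, $\mathfrak{n}\in\Gamma_i$ forces $\alpha=\overline{h_{\Gamma,i}}(x)$ and similarly $\mathfrak{n}\in\Gamma'_j$ forces $\alpha=\overline{h_{\Gamma',j}}(x)$, and each of these uniquely determines $\alpha$. Hence $\mathfrak{n}=\mathfrak{m}_{i,j}$.

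Finally I would compute the local intersection number at $\mathfrak{m}_{i,j}$. By Lemma \ref{LemmaDoublePoint2},
\begin{equation*}
\hat{\mathcal{O}}_{\mathcal{C},\mathfrak{m}_{i,j}} \simeq R[[x,y]]/(xy-\pi^m),
\end{equation*}
where $m$ is the thickness of $\mathfrak{m}$, and under this isomorphism the two components $\Gamma_i$ and $\Gamma'_j$ are cut out locally by $x=0$ and $y=0$ respectively (since they pull back from the components $\Gamma$ and $\Gamma'$ cut out by $x=0$ and $y=0$ on $\mathcal{D}$ via the étale isomorphism on completions). The local intersection number is therefore
\begin{equation*}
(\Gamma_i\cdot\Gamma'_j)_{\mathfrak{m}_{i,j}} = \mathrm{length}_R\bigl(\hat{\mathcal{O}}_{\mathcal{C},\mathfrak{m}_{i,j}}/(x,y)\bigr) = 1,
\end{equation*}
i.e.\ the crossing is transversal. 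Combined with the uniqueness above, this gives $\Gamma_i\cdot\Gamma'_j = 1$.

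The only delicate point in this plan is the second step, where one needs to rule out the a priori possibility that $\Gamma_i$ and $\Gamma'_j$ share additional intersection points outside the fiber $\phi^{-1}(\mathfrak{m})$; this is handled by combining Proposition \ref{MainQuotientTheorem1} (edges map to edges) with the fact that $\Gamma$ and $\Gamma'$ meet only at $\mathfrak{m}$ in $\mathcal{D}_s$. Everything else is a direct bookkeeping consequence of the explicit Kummer-form description of the maximal ideals given in Lemma \ref{Easylemma2} and the étale/completion identifications already proved.
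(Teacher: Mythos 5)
Your core approach matches the paper's, which is just to cite Lemma \ref{UniqComponentIntersect2} (so $\mathfrak{m}_{i,j}$ lies on both components) together with Lemma \ref{LemmaDoublePoint2} (ordinary double point structure). Two issues in the write-up, though.

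The length formula in your third step does not give $1$ as written. Since $\hat{\mathcal{O}}_{\mathcal{C},\mathfrak{m}_{i,j}}/(x,y) \simeq R[[x,y]]/(xy-\pi^m, x, y) \simeq R/(\pi^m)$, the $R$-length is $m$, not $1$. The model $\mathcal{D}$ is strongly semistable but not assumed regular, so $m$ can be $>1$, and in that case $\hat{\mathcal{O}}_{\mathcal{C},\mathfrak{m}_{i,j}}$ is not even a regular local ring, so the surface-theoretic length formula you wrote is not the right notion here. What you actually want is the special-fiber statement: Lemma \ref{LemmaDoublePoint2} tells you that $\hat{\mathcal{O}}_{\mathcal{C}_s,\mathfrak{m}_{i,j}}\simeq k[[x,y]]/(xy)$, so exactly two smooth branches meet transversally at $\mathfrak{m}_{i,j}$, and by Lemma \ref{UniqComponentIntersect2} those two branches must be $\Gamma_i$ and $\Gamma'_j$. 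That is all the corollary asserts: a single ordinary double point, hence one edge in the dual intersection graph, hence transversal crossing with local multiplicity $1$.

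Your second step is extraneous and quietly introduces a hypothesis the paper never makes, namely that $\Gamma$ and $\Gamma'$ meet only in $\mathfrak{m}$. In fact Lemma \ref{Easylemma3} immediately afterwards explicitly allows $\Gamma_i$ and $\Gamma'_j$ to meet above several different intersection points of $\Gamma$ and $\Gamma'$, which would make the global intersection number strictly larger than $1$. The corollary is a purely local statement about the fiber over the fixed $\mathfrak{m}$, so there is no global uniqueness to prove; inserting one under an invented hypothesis is a sign the statement was being misread.
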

\begin{proof}
This follows from Lemmas \ref{LemmaDoublePoint2} and \ref{UniqComponentIntersect2}. 
\end{proof}

Let us now fix a single $i$ and $j$ with components $\Gamma_{i}$ and $\Gamma'_{j}$. We would now like to give a criterion for their intersection points. 

\begin{lemma}\label{Easylemma3}
$\Gamma_{i}$ and $\Gamma'_{j}$ intersect each other if and only if there exists an intersection point $x$ of $\Gamma$ and $\Gamma'$ such that 
\begin{equation*}
\overline{h_{\Gamma,i}}(x)=\overline{h_{\Gamma',j}}(x).
\end{equation*}
\end{lemma}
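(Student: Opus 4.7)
The plan is to obtain this as a direct corollary of the identification given by Lemma \ref{UniqComponentIntersect2}, which already tells us the bijection between intersection points above $\mathfrak{m}$ and elements of $\mathcal{S}_x$, together with the formula $\overline{h_{\Gamma,i}}(x) = \alpha$ for the component containing the intersection point. So I do not expect there to be any serious obstacle: both directions follow by matching up the data that Lemma \ref{UniqComponentIntersect2} provides.

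For the forward direction, suppose $\Gamma_i$ and $\Gamma'_j$ intersect. Then by definition they share some closed point $\mathfrak{m}'$ lying above an intersection point $\mathfrak{m}$ of $\Gamma$ and $\Gamma'$, with $x$ the corresponding point in the special fiber. By part (3) of Lemma \ref{UniqComponentIntersect2} applied to both components, the unique element $\alpha \in \mathcal{S}_x$ associated to $\mathfrak{m}'$ satisfies $\overline{h_{\Gamma,i}}(x) = \alpha$ (from the $\Gamma$-side) and $\overline{h_{\Gamma',j}}(x) = \alpha$ (from the $\Gamma'$-side), hence the two reductions agree at $x$.

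For the reverse direction, suppose $x$ is an intersection point of $\Gamma$ and $\Gamma'$ with $\overline{h_{\Gamma,i}}(x) = \overline{h_{\Gamma',j}}(x) =: \alpha$. Then $\alpha^n = \overline{h_{\Gamma,i}}(x)^n = \overline{f}(x)$ (using that $h_{\Gamma,i} = \zeta^i h_\Gamma$ and our normalization $\overline{f} = \overline{h_\Gamma}^n$), so $\alpha \in \mathcal{S}_x$. By Lemma \ref{UniqComponentIntersect2}, $\alpha$ corresponds to a unique maximal ideal $\mathfrak{m}_{i',j'}$ above $\mathfrak{m}$, and the defining property $\overline{h_{\Gamma,i'}}(x) = \alpha$ together with the uniqueness of $i'$ in $\{0, \ldots, n-1\}$ with this property (since the $\zeta^i$ are distinct in $k^*$) forces $i' = i$. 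The symmetric argument on the $\Gamma'$-side gives $j' = j$. Hence $\mathfrak{m}_{i,j}$ is an intersection point of $\Gamma_i$ and $\Gamma'_j$.

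The only subtlety worth flagging is making sure the normalization used to define the $h_{\Gamma,i}$ and the normalization used to define $\mathcal{S}_x$ are compatible. This is exactly why we imposed $\overline{f}(x) = \overline{h_\Gamma}(x)^n$ (not merely up to a constant), and so in the reverse direction the identity $\alpha^n = \overline{f}(x)$ follows without any further scaling ambiguity, which makes the invocation of Lemma \ref{UniqComponentIntersect2} clean.
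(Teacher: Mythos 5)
Your proof is correct and follows essentially the same route as the paper: both reduce the statement to Lemma \ref{UniqComponentIntersect2}, which already encodes the bijection between intersection points above $\mathfrak{m}$, elements $\alpha\in\mathcal{S}_{x}$, and pairs $(i,j)$ via the evaluation of $\overline{h_{\Gamma,i}}$ and $\overline{h_{\Gamma',j}}$ at $x$. You merely unpack the two implications more explicitly than the paper's one-line invocation, which is fine.
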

\begin{proof}
The only intersections between $\Gamma_{i}$ and $\Gamma_{j}$ are those that are in the pre-image of an edge of $\Sigma(\mathcal{D})$. We thus see that all intersections must arise from maximal ideals of the algebras
\begin{equation*}
A_{\mathfrak{m}}[x]/(z^n-f),
\end{equation*} 
where $\mathfrak{m}$ corresponds to an intersection point $x$ of $\mathcal{D}_{s}$. By Lemma \ref{UniqComponentIntersect2}, we now see that these maximal ideals are exactly given by equations of the form
\begin{equation*}
\overline{h_{\Gamma,i}}(x)=\alpha=\overline{h_{\Gamma',j}}(x).
\end{equation*} 
This then yields the Lemma.   
\end{proof}
We thus see that we have a complete description of the intersection points lying above the intersection points of $\Gamma$ and $\Gamma'$. In a concrete example, one has to do the following:
\begin{algo}
\begin{center}
{\bf{[Algorithm for completely decomposable 
morphisms of graphs]}}
\end{center}
\begin{enumerate}
\item Determine local functions $h_{\Gamma}$ and $h_{\Gamma'}$ such that 
\begin{eqnarray*}
(\overline{h_{\Gamma}})^{n}&=&\overline{f},\\
(\overline{h_{\Gamma'}})^{n}&=&\overline{f}.
\end{eqnarray*}
\item Determine the values
\begin{equation*}
\overline{h_{\Gamma}}(x)
\end{equation*}
for all intersection points $x$ of $\Gamma$ and $\Gamma'$. One then pairs these values as in Lemma \ref{Easylemma3}.
\end{enumerate}
\end{algo}

\begin{exa}\label{EllCurveTramp1}
Suppose we take the elliptic curve $E$ defined by the equation
\begin{equation*}
y^2=x(x-\pi)(x+1).
\end{equation*}
\begin{figure}[h!]
\centering
\includegraphics[scale=0.5]{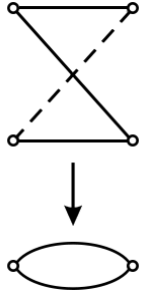}
\caption{\label{6eplaatje2} {\it{The trampoline covering in Example \ref{EllCurveTramp1}.}}}
\end{figure}
This has a 2-torsion point $P=(-1,0)$. Indeed, one can easily check that
\begin{equation*}
\text{div}_{\eta}(x+1)=2(P)-2(\infty).
\end{equation*}
The elliptic curve has the semistable model obtained by
\begin{equation*}
xt=\pi
\end{equation*}
and normalizing. The resulting equation is
\begin{equation*}
(\dfrac{y}{x})^2=(1-t)(x+1).
\end{equation*}
which is easily seen to have two vertices and two edges. We take $\Gamma=Z(x)$ and $\Gamma'=Z(t)$. We now take the normalization of this local model in the extension defined by
\begin{equation*}
z^2=x+1.
\end{equation*}
One easily finds that the divisor $x+1$ is locally a square in $E$. Indeed, for $t=0$ we find that 
\begin{equation*}
\overline{y}^2=\overline{x}+1
\end{equation*} 
and for $x=0$ we find that $\overline{x}+1=1$. In our adopted notation we now have
\begin{eqnarray*}
h_{\Gamma,0}&=&1,\\
h_{\Gamma,1}&=&-1,\\
h_{\Gamma',0}&=&\overline{y},\\
h_{\Gamma',1}&=&-\overline{y}.
\end{eqnarray*}
Let us consider the intersection point $\tilde{x}$ defined by $x=0=t$ and $\overline{y}=1$. We have
\begin{eqnarray*}
h_{\Gamma,0}(\tilde{x})&=&1,\\
h_{\Gamma,1}(\tilde{x})&=&-1,\\
h_{\Gamma',0}(\tilde{x})&=&1,\\
h_{\Gamma',1}(\tilde{x})&=&-1\\
\end{eqnarray*}
We thus see that we have two intersection points lying above $\tilde{x}$: for the value $1$ $\Gamma_{0}$ and $\Gamma'_{0}$ intersect, for the value $-1$ the components $\Gamma_{1}$ and $\Gamma'_{1}$ intersect. A similar computation gives the intersections lying above the other intersection point. We see that we obtain a {\it{trampoline}}-figure with 4 vertices, as obtained earlier. The covering can also be found in Figure \ref{6eplaatje2}. Since the obtained curve is again an elliptic curve, the reduction can also be obtained directly by calculating the reduction type.
\end{exa}

\begin{exa}\label{TripleTrampoline1}
Suppose we take the same elliptic curve $E$ with multiplicative reduction defined by
\begin{equation*}
y^2=x(x-\pi)(x+1)
\end{equation*} 
and suppose that we take a three torsion point $P$ that does not reduce to $(0,0)$ (the singular point). 
\begin{figure}[h!]
\centering
\includegraphics[scale=0.6]{{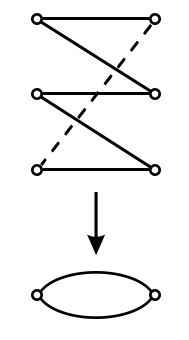}}
\caption{\label{771eplaatje} {\it{The covering in Example \ref{TripleTrampoline1}.}}}
\end{figure}
We first find a function $f$ such that
\begin{equation*}
(f)=3(P)-3(\infty).
\end{equation*}
If we label the components of $E$ as $\Gamma$ and $\Gamma'$ as before, we see that 
$f|_{\Gamma}$ is a constant and that $f|_{\Gamma'}$ is the cube of a nonconstant function $\overline{h_{\Gamma'}}$, which has a zero at $\tilde{P}$ and a pole at $\infty$.

We have the numbers
\begin{equation*}
\alpha_{h_{\Gamma',j}}(x)=\overline{h}_{\Gamma',j}(x)
\end{equation*}
for any intersection point. Note that for any $j$ and $x$ and $y$ distinct intersection points, we have that $\alpha_{h_{\Gamma',j}}(x)\neq{\alpha_{h_{\Gamma',j}}(y)}$ by the fact that $P$ is a nontrivial torsion point in the identity component.\\
We see that if we take the extension 
\begin{equation*}
z^3=f,
\end{equation*}
we obtain a reduction graph with 3 vertices above $\Gamma$ and 3 above $\Gamma'$. By earlier considerations, we see that if we take any component $\Gamma_{0}$ lying above $\Gamma$, it intersects {\it{two distinct}} other components lying above $\Gamma'$. The Galois group $\mathbb{Z}/3\mathbb{Z}$ then cycles these intersections naturally to give 6 edges. By the calculation
\begin{equation*}
e(E')-v(E')+1=6-6+1=1,
\end{equation*}
we find that this graph has Betti number one, as expected. The covering of graphs can be found in Figure \ref{771eplaatje}. We will an explicit covering in Example \ref{3Tors}.
\end{exa}

\begin{exa}\label{MixedTrampoline1}
Let us take the genus 2 curve $C$ defined by
\begin{equation*}
y^2=x(x-\pi)(x+1)(x+1-\pi)(x+2)(x+2-\pi),
\end{equation*}
which has the usual reduction graph consisting of two vertices with three edges between them. \\
We will however not take this model. As before, let 
\begin{equation*}
xt=\pi.
\end{equation*} 
Then the normalization of this model in $C$ is given by
\begin{equation*}
(y/x)^2=(1-t)(x+1)(x+1-\pi)(x+2)(x+2-\pi).
\end{equation*}
For $x=0$ (with corresponding component $\Gamma$), we obtain a single component, which we will call $\Gamma_{0}$. For $t=0$ (corresponding to $\Gamma'$), we obtain two components $\Gamma'_{0}$ and $\Gamma'_{1}$ intersecting each other in two points. We also have that $\Gamma_{0}$ intersects both $\Gamma'_{0}$ and $\Gamma'_{1}$ exactly once.  We see that this yields a {\it{subdivision}} of the original intersection graph given by just taking the special fiber. This can also be seen in Figure \ref{8eplaatje}.
\begin{figure}[h!]
\centering
\includegraphics[scale=0.7]{{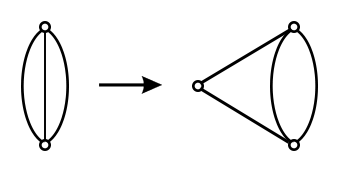}}
\caption{\label{8eplaatje} {\it{The subdivision of the original graph in Example \ref{MixedTrampoline1}.}}}
\end{figure}
  We now take the following divisor: $D=(\pi,0)-(0,0)$. This divisor cannot be principal because otherwise the curve would have genus 0. We have that
\begin{equation*}
2D=\text{div}_{\eta}(f),
\end{equation*}
where
\begin{equation*}
f=\dfrac{x-\pi}{x}=1-t.
\end{equation*}
It thus gives an element of $J(C)$ that is 2-torsion.\\
For $t=0$, we have that $f$ is constant, whereas for $x=0$, we have that $f$ is the square of a nonconstant function. Namely, we have that
\begin{equation*}
f|_{\Gamma_{0}}=(\overline{y/x})^2.
\end{equation*}
\begin{figure}[h!]
\centering
\includegraphics[scale=0.5]{{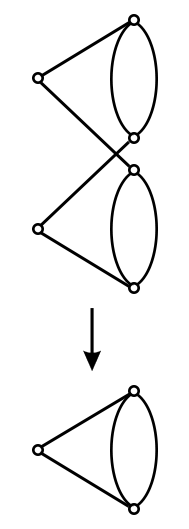}}
\caption{\label{9eplaatje} {\it{The covering in Example \ref{MixedTrampoline1}.}}}
\end{figure}
We are thus in a similar situation as in the previous 2-torsion example. If we consider the extension 
\begin{equation*}
z^2=f,
\end{equation*} 
we see that we obtain 2 vertices lying above $\Gamma_{0}$: $\Gamma_{0,0}$ and $\Gamma_{0,1}$. Similarly, we have 2 vertices lying above $\Gamma'_{0}$ ($\Gamma'_{0,0}$ and $\Gamma'_{0,1}$) and $\Gamma'_{1}$ ($\Gamma'_{1,0}$ and $\Gamma'_{1,1}$).\\
The intersection graph can now be found just as in the previous 2-torsion example. The only possible option up to relabeling components is: $\Gamma_{0,0}$ intersects $\Gamma'_{0,0}$ and $\Gamma'_{1,1}$, $\Gamma_{0,1}$ intersects $\Gamma'_{0,1}$ and $\Gamma'_{1,0}$. Furthermore, $\Gamma'_{0,0}$ intersects $\Gamma'_{0,1}$ twice and $\Gamma'_{1,0}$ intersects $\Gamma'_{1,1}$ twice. This covering of graphs can be found in Figure \ref{9eplaatje}.  This gives a graph with Betti number 
\begin{equation*}
e(\mathcal{C})-v(\mathcal{C})+1=6-4+1=3,
\end{equation*}
as was to be expected from an unramified degree 2 covering of a genus 2 curve.
\end{exa}

\begin{exa}\label{3Tors}
{\bf{[3-torsion on an elliptic curve]}}
Let us take $p=\alpha\cdot{}x$ and $q=\dfrac{1}{\sqrt{27}}(ax+b)$, where 
\begin{eqnarray*}
a&=&\dfrac{\pi-3}{2},\\
b&=&\dfrac{\pi-1}{2},\\
\alpha^3&=&1/4.
\end{eqnarray*}
Consider the curve $C$ defined by
\begin{equation*}
z^3+pz+q=0,
\end{equation*}
with discriminant $\Delta=4p^3+27q^2=x^3+(ax+b)^2.$
We now take the Galois closure of the morphism $K(x)\rightarrow{K(C)}$.  By Corollary \ref{GalClos1}, it is given by the chain
\begin{equation}
K\subset{K(y)}\subset{K(w)},
\end{equation}
where
\begin{equation}
w^3=y-\sqrt{27}q
\end{equation}
and
\begin{equation}
y^2=\Delta.
\end{equation}
We will study these $S_{3}$-coverings more closely in Chapter \ref{Solvable}. 

The intermediate curve given by
\begin{equation*}
D:y^2=\Delta
\end{equation*}
has genus 1, with a $3$-torsion point $P=(0,b)$ in its Jacobian.\footnote{The way we created this example is as follows. We took the family with $p(x)=x$ and $q(x)=ax+b$ linear and we imposed two conditions: that $x=-1$ be a zero of $\Delta$ and that $\Delta'(-1)=0$ (the derivative with respect to $x$). This then implies that the singular point is different from the $3$-torsion point. This can also be used to create examples of higher genus.}
That is, we have
\begin{equation*}
\text{div}(y-(ax+b))=3P-3(\infty).
\end{equation*}
We then easily see that $D$ has split multiplicative reduction and that $P$ doesn't reduce to the singular point. In other words, $P$ defines a $3$-torsion point in the \emph{toric} part of the Jacobian of $D$. 

\begin{figure}[h!]
\centering
\includegraphics[scale=0.27]{{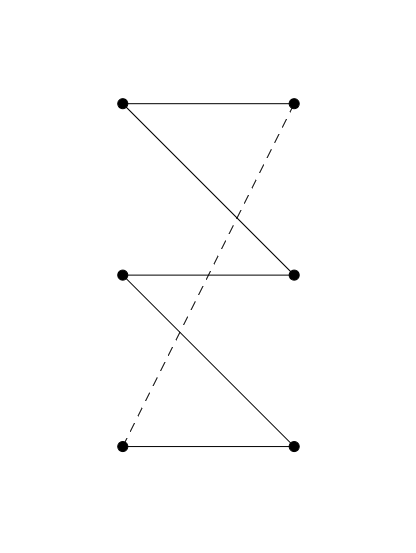}}
\caption{\label{7eplaatje} The intersection graph of the Galois closure in Example \ref{3Tors}.}
\end{figure}

\begin{figure}[h!]
\centering
\includegraphics[scale=0.18]{{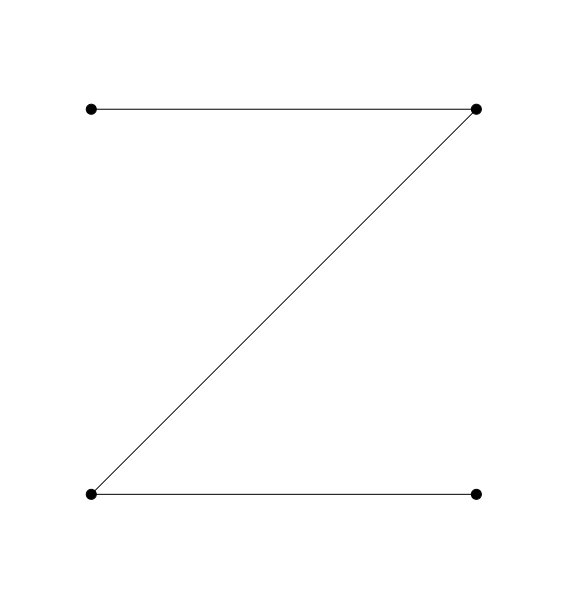}}
\caption{\label{8eplaatje} The intersection graph of the quotient of the Galois closure under a subgroup of order two in Example \ref{3Tors}.}
\end{figure}

At any rate, after a transformation $x\longmapsto{x+1}$ we obtain the equation
\begin{equation*}
y^2=(x-1)^3+(a(x-1)+b)^2=x^3+x^2(a^2-3)+\pi{x}.
\end{equation*}
Let $y'=\dfrac{y}{x}$. 
Taking the model defined by
\begin{equation*}
xt=\pi,
\end{equation*}
we obtain the equation
\begin{equation}\label{EquationExample5}
y'^2=x+a^2-3+t.
\end{equation}
We thus see that we have an intersection graph with two vertices and two edges, giving the multiplicative reduction.  
For $t=0$, we know that there exists a function $g$ such that $\overline{g^3}=\overline{y-\sqrt{27}q}$. We will find this function now.\\
Plugging in $t=0$ in Equation \ref{EquationExample5}, we obtain
\begin{equation*}
y'^2=x-3/4.
\end{equation*}
We thus see that $y'$ parametrizes the corresponding projective line. We write (without the reduction bar for $t=0$):
\begin{equation*}
y-\sqrt{27}q=xy'+3x/2-1=(y'^2+3/4)y'+3/2(y'^2+3/4)-1=(y'+1/2)^3.
\end{equation*}

Thus $g=y'+1/2$ solves the corresponding problem. We have 
\begin{eqnarray*}
g(x_{0})&=&-\zeta,\\
g(x_{1})&=&-\zeta^2,
\end{eqnarray*}
where $\zeta$ is a primitive third root of unity and $x_{0}$ and $x_{1}$ are the intersection points. The covering graph is now given by Figure \ref{7eplaatje} and the quotient graph under a subgroup of order two by Figure \ref{8eplaatje}. Note that all components in these figures have genus zero.

\end{exa}

\section{Twisting data for abelian coverings}\label{TwistingDataFinal}

In the last three sections, we saw that knowing the covering data for a disjointly branched morphism with morphism $\Sigma(\mathcal{C})\rightarrow\Sigma(\mathcal{D})$ is not enough, but we can recover $\Sigma(\mathcal{C})$ for unramified abelian coverings by adding the additional data of a $2$-cocycle on $\Sigma(\mathcal{D})$. In this section, we will continue this train of thought and define a $2$-cocycle for general abelian coverings $C\rightarrow{D}$. This will be a $2$-cocycle on the "unramified part" of $\Sigma(\mathcal{D})$. For simplicity, we will assume throughout this section that the covering is of prime degree $q$ with Galois group $\mathbb{Z}/q\mathbb{Z}$. The covering is then given by $z^{q}=f$ for some $f\in{K(D)}$. 




\subsection{Reconstructing $\Sigma(\mathcal{C})$}\label{ReconstructingTwisting}


Let $\mathcal{C}\rightarrow{\mathcal{D}}$ 
be disjointly branched morphisms associated to the morphism $C\rightarrow{D}$. 
Consider a vertex $v$ in $\Sigma(\mathcal{D})$ with corresponding component $\Gamma\subset{\mathcal{D}_{s}}$. Let $v'$ be any vertex in $\overline{\mathcal{C}}$ lying above $v$ and let $\Gamma'$ be its corresponding component. If $D_{v'/v}=\mathbb{Z}/q\mathbb{Z}$, then there are no options for the edges lying above $v$: they are connected to $v'$. 
We now remove these "ramified parts" of $\Sigma(\mathcal{D})$ and then consider the local \'{e}tale equations.
\begin{mydef}\label{UnramifiedPartGraph}
Let $\Sigma(\mathcal{C})\rightarrow{\Sigma(\mathcal{D})}$ be the degree $q$ abelian morphism of intersection graphs coming from a disjointly branched morphism $\phi_{\mathcal{C}}$. Let $U(\Sigma(\mathcal{D}))\subset{\Sigma(\mathcal{D})}$ be the (possibly disconnected and incomplete) subgraph of $\Sigma(\mathcal{D})$, consisting of all edges and vertices that have trivial decomposition groups. We call this graph the \emph{unramified} part of $\Sigma(\mathcal{D})$. 
\end{mydef}

Let $v$ be a vertex in $U(\Sigma(\mathcal{D}))$ with corresponding component $\Gamma=\Gamma_{1}$. 
The abelian covering is given on the level of function fields as
\begin{equation}
K(D)\rightarrow{K(D)[z]/(z^q-f)}.
\end{equation}
 We now consider the $\Gamma$-modified form of $f$. That is, we set $k=v_{\Gamma}(f)$ and consider
\begin{equation}
f^{\Gamma}=\dfrac{f}{\pi^{k}},
\end{equation}
so that $v_{\Gamma}(f^{\Gamma})=0$. This means that we can now safely consider the image of $f^{\Gamma}$ in the function field $k(\Gamma)$. We then have
\begin{lemma}\label{FactorizationVertexSplit}
\begin{equation}
\overline{f^{\Gamma}}=h_{v}^{q}
\end{equation}
for some $h_{v}\in{k(\Gamma)}$. In particular, the components lying above $\Gamma$ are given by the prime ideals
\begin{equation}
\mathfrak{q}_{i}=\mathfrak{p}+(w-\zeta^{i}h_{v}),
\end{equation}
where $\zeta$ is a primitive $q$-th root of unity and $i\in\{0,1,2...,q-1\}$.
\end{lemma}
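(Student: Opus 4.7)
The plan is to combine Proposition \ref{DecompositionVertexProposition} (which gives a numerical criterion for the order of the decomposition group of a vertex in terms of the factorization pattern of $\overline{f^{\Gamma}}$ in $k(\Gamma)$) with a standard Kummer-theoretic description of the normalization near the generic point of $\Gamma$. By Definition \ref{UnramifiedPartGraph}, the fact that $v \in U(\Sigma(\mathcal{D}))$ forces $D_{v'} = (1)$ for any vertex $v'$ above $v$. Inserting $|D_{v'}| = 1$ into Proposition \ref{DecompositionVertexProposition} immediately gives that the smallest divisor $r$ of $q$ with $\overline{f^{\Gamma}} = h^{q/r}$ is $r = 1$, so $\overline{f^{\Gamma}} = h_v^{q}$ for some $h_v \in k(\Gamma)$. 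This settles the first assertion.

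For the description of the prime ideals I would work locally at the generic point $y$ of $\Gamma$. Because $\phi_{\mathcal{C}}$ is disjointly branched, the map $\mathcal{O}_{\mathcal{D},y} \to \mathcal{O}_{\mathcal{C},y'}$ is \'etale for every $y'$ above $y$. For a Kummer extension this is equivalent, by Corollary \ref{AbelExt2}, to $v_y(f) \equiv 0 \pmod{q}$; writing $v_y(f) = mq$ and setting $w = z/\pi^{m}$ we obtain the equivalent local Kummer equation $w^{q} = f^{\Gamma}$, in which $f^{\Gamma}$ is now a unit in $\mathcal{O}_{\mathcal{D},y}$. The normalization of $\mathcal{O}_{\mathcal{D},y}$ inside the associated Galois extension is then the \'etale algebra $\mathcal{O}_{\mathcal{D},y}[w]/(w^{q} - f^{\Gamma})$, and its codimension-one primes above $\mathfrak{p}$ are in bijection with the irreducible factors of $w^{q} - \overline{f^{\Gamma}}$ in $k(\Gamma)[w]$.

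Using $\overline{f^{\Gamma}} = h_v^{q}$ and the fact that $k(\Gamma)$ contains a primitive $q$-th root of unity $\zeta$ (since $\zeta \in K$ and $q$ is coprime to the residue characteristic, so the reduction preserves its order), this factorization reads
\begin{equation*}
w^{q} - \overline{f^{\Gamma}} \;=\; \prod_{i=0}^{q-1}(w - \zeta^{i} h_v),
\end{equation*}
with $q$ pairwise distinct linear factors. Lifting each factor back to $\mathcal{O}_{\mathcal{D},y}[w]/(w^{q} - f^{\Gamma})$ produces exactly the $q$ prime ideals $\mathfrak{q}_{i} = \mathfrak{p} + (w - \zeta^{i} h_v)$, and by Lemma \ref{Orbitstabilizer} together with $|D_{v'}| = 1$ this exhausts the components lying above $\Gamma$.

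The main obstacle is essentially bookkeeping: the rescaling $w = z/\pi^{m}$ is forced on us by the disjointly branched hypothesis and must be carried out carefully so that $f^{\Gamma}$ becomes a unit in the local ring at $y$ before reduction. Once this reduction is in place and Proposition \ref{DecompositionVertexProposition} is invoked, the factorization of the residual polynomial $w^{q} - \overline{f^{\Gamma}}$ is elementary, and matching the factors to components proceeds through the standard \'etale-algebra correspondence.
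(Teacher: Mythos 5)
Your argument is correct, but it takes a genuinely different route from the paper. For the first assertion, you read the conclusion off directly from Proposition \ref{DecompositionVertexProposition}: since $v\in U(\Sigma(\mathcal{D}))$ forces $|D_{v'}|=1$, the smallest $r$ with $\overline{f^{\Gamma}}=h^{q/r}$ must be $1$, hence $\overline{f^{\Gamma}}$ is a $q$-th power. The paper instead gives a self-contained structural argument: it first deduces from the presence of $q$ vertices above $v$ that $\text{div}(\overline{f^{\Gamma}})=qD$ (else the residual covering of $\Gamma$ would be ramified and irreducible), and then rules out $D$ being a nontrivial $q$-torsion class in $\text{Pic}^0(\Gamma)$, because such a class would produce a \emph{connected} unramified cover and hence again a single vertex above $v$; with $D$ principal the factorization $\overline{f^{\Gamma}}=h_v^q$ follows. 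Your route is shorter and treats Proposition \ref{DecompositionVertexProposition} as a black box; the paper's route links the lemma explicitly to the classification of unramified abelian covers via $\text{Pic}^0(\Gamma)$, which is the thread this chapter is developing, so it arguably explains \emph{why} the decomposable case is controlled by the toric/trivial part of the Picard group rather than just \emph{that} the factorization holds. For the second assertion the paper offers no written proof, while your careful Kummer-theoretic rescaling, the \'{e}tale factorization $w^{q}-\overline{f^{\Gamma}}=\prod_{i}(w-\zeta^{i}h_v)$, and the appeal to Lemma \ref{Orbitstabilizer} fill in exactly the details that are left implicit; this part of your argument essentially repeats the proof of Proposition \ref{DecompositionVertexProposition} in the special case $r=1$, which is consistent but makes your use of that proposition somewhat circular in spirit (though not logically, since it is proved earlier). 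One small caveat: Proposition \ref{DecompositionVertexProposition} is stated under the hypothesis $g(\Gamma_v)>0$; its proof does not use this hypothesis, so your application is justified in substance, but a cleaner write-up would either note this or appeal to the proof rather than the statement when $g(\Gamma_v)=0$.
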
  
\begin{proof}
Since there are $q$ vertices lying above $v$, we know that 
\begin{equation}
\text{div}(\overline{f^{\Gamma}})=qD
\end{equation}
for some divisor $D$. Indeed, otherwise the extension $z^q=\overline{f^{\Gamma}}$ would be ramified at some point and thus there would only be one component lying above it. Suppose now that $D$ is not a principal divisor. Then $D$ is a $q$-torsion point in $\text{Pic}^{0}(\Gamma)$. The corresponding extension would then give a {\it{connected}} unramified covering of $\Gamma$, which contradicts the fact that there are $q$ vertices lying above $v$. This finishes the proof.
\end{proof}

We now \emph{desingularize} $\mathcal{D}$ to obtain a morphism $\mathcal{D}_{0}\rightarrow{\mathcal{D}}$ that is regular in the pre-image of every $e\in{U(\Sigma(\mathcal{D}))}$. We then have
\begin{lemma}
The normalization $N(\mathcal{D}_{0},K(\mathcal{C}))$ of $\mathcal{D}_{0}$ in $K(\mathcal{C})$ is semistable. 
\end{lemma}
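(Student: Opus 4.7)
My plan is to invoke the Liu--Lorenzini theorem (Theorem \ref{MaintheoremSemSta}) applied to the modified model $\mathcal{D}_0$ rather than $\mathcal{D}$. The key observation is that the three hypotheses of that theorem continue to hold for $\mathcal{D}_0$: strong semistability is preserved under regular subdivision of edges, the branch locus remains a disjoint union of smooth sections (the modification $\mathcal{D}_0\to\mathcal{D}$ is an isomorphism away from the intersection points of $\mathcal{D}_s$, and by the definition of a disjointly branched morphism the branch sections only meet $\mathcal{D}_s$ in its smooth locus), and $\mathcal{D}_0$ is still defined over $R$. What has to be checked is that after normalization there is no vertical ramification that forces a further base extension, i.e.\ that the set $\mathcal{F}$ of components requiring a ramified extension is empty.

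The components of $\mathcal{D}_{0,s}$ split into two kinds: the strict transforms of the original components of $\mathcal{D}_s$, and the new projective lines $\Gamma_1,\ldots,\Gamma_{l(e)-1}$ sitting above a desingularized edge $e\in U(\Sigma(\mathcal{D}))$. For the first kind, the normalization of $\mathcal{D}_0$ above them agrees birationally with $\mathcal{C}$, which is already semistable by assumption; in particular the vertical ramification indices are already the ones forced by $\mathcal{C}\to\mathcal{D}$ and require no further extension. For the new kind, we apply Theorem \ref{InertProp2}: since $e$ lies in the unramified subgraph $U(\Sigma(\mathcal{D}))$ of Definition \ref{UnramifiedPartGraph}, we have $I_e=(1)$, and the theorem gives $|I_{\Gamma_i}|=|I_e|/\gcd(i,|I_e|)=1$ for every $i$. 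Hence each $\Gamma_i$ is unramified in the normalization $N(\mathcal{D}_0,K(\mathcal{C}))$ as well.

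With both kinds of vertical ramification absorbed by the existing assumptions, the set $\mathcal{F}$ in Theorem \ref{MaintheoremSemSta} contributes no new ramification beyond what is already present in $\mathcal{C}$; equivalently, we may take $R''=R'$ in that theorem, and the conclusion is that $N(\mathcal{D}_0,K(\mathcal{C}))$ is semistable over $R'$. A small sanity check is that, since $\mathcal{D}_0\to\mathcal{D}$ is birational and an isomorphism outside the $U(\Sigma(\mathcal{D}))$-edges, the induced morphism $N(\mathcal{D}_0,K(\mathcal{C}))\to\mathcal{C}$ is also birational and is precisely the regular subdivision of the trivially-ramified edges of $\Sigma(\mathcal{C})$ lying above those edges of $\Sigma(\mathcal{D})$, so semistability is evident outside these new loci and the Liu--Lorenzini input is really only needed above them.

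The main obstacle I anticipate is the bookkeeping in step two: one must verify that Theorem \ref{InertProp2} is applicable to every new vertical component introduced by $\mathcal{D}_0\to\mathcal{D}$, including those lying above edges where only a partial regular subdivision was performed. This should be handled edge by edge, using that the desingularization is required only above edges in $U(\Sigma(\mathcal{D}))$, so for each such edge all the intermediate components are covered by the inductive formula of Theorem \ref{InertProp2} with $|I_e|=1$.
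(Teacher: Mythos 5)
Your proof is correct and reaches the same conclusion via a closely related but distinct invocation. The paper's own proof applies the Poincar\'e--Lelong formula (Theorem \ref{ValCor1}) directly: for an edge $e\in U(\Sigma(\mathcal{D}))$ the slope of the Laplacian $\phi_f$ is divisible by $q$ (this is what $|I_e|=1$ means for a Kummer covering), hence the valuation of $f^\Gamma$ at the new vertical components of $\mathcal{D}_0$ is divisible by $q$, so the normalization is \'etale above them; Theorem \ref{MaintheoremSemSta} then gives semistability. You instead package this through Theorem \ref{InertProp2}, the formula $|I_{x_i}|=|I_e|/\gcd(i,|I_e|)$, which with $|I_e|=1$ immediately yields trivial inertia at every new component. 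The two routes reduce to the same Laplacian computation (Theorem \ref{InertProp2} is itself proved from Theorem \ref{ValCor1}), so they are not fundamentally different, but your route is cleaner as a citation and has the advantage of not relying on the Kummer description $z^q=f$, so it would work verbatim for nonabelian coverings. Your explicit remark that the original (non-new) components are already \'etale by the second condition in Definition \ref{disbran}, and that $\mathcal{D}_0\to\mathcal{D}$ is an isomorphism over their generic points, is a useful sanity check that the paper leaves implicit. One small imprecision: you say ``the set $\mathcal{F}$ of components requiring a ramified extension is empty,'' but $\mathcal{F}$ in Theorem \ref{MaintheoremSemSta} is a fixed combinatorially-defined set of components; what you actually verify is that $e_\Delta=1$ for every $\Delta\in\mathcal{F}$, equivalently $e=1$, which you then correctly state as ``we may take $R''=R'$.''
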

\begin{proof}
We will show that there exists no further vertical ramification above the new components created in $\mathcal{D}_{0}$. This then implies that $N(\mathcal{D}_{0},K(\mathcal{C}))$ is semistable by Theorem \ref{MaintheoremSemSta}. 

Let $\Gamma$ be any component in $\mathcal{D}$ and consider its image in $\mathcal{D}_{0}$ (which we will still denote by $\Gamma$). For every edge $e\in{U(\Sigma(\mathcal{D}))}$ with length $l(e)>1$, there are new components in $\mathcal{D}_{0}$. The valuation of $f^{\Gamma}$ at these new components is divisible by $q$ by Theorem \ref{ValCor1}. In other words, the normalization is \'etale above these components. We thus see that there is no further vertical ramification and the Lemma follows. 
\end{proof}

We will now continue with this model $\mathcal{D}_{0}$ and its normalization $\mathcal{C}_{0}$ in $K(C)$. We will write $\mathcal{D}=\mathcal{D}_{0}$ from now on. 

To construct the twisting data, we first gather some standard facts about ordinary double points of length one that are probably familiar to the reader. Let $P\in\mathcal{D}$ be an intersection point and let $A:=\mathcal{O}_{\mathcal{D},P}$. We write $\mathfrak{p}_{1}$ and $\mathfrak{p}_{2}$ for the generic points of the components passing through $P$. Since $P$ is an ordinary double point with length one, we find that
\begin{equation}
\hat{A}=\hat{\mathcal{O}}_{\mathcal{D},P}\simeq{R[[x,y]]/(xy-\pi)}.
\end{equation}
We have the following 
\begin{lemma}\label{LabelOrdDouble}
Let $A$ be as above. Then
\begin{enumerate}
\item $A$ is a unique factorization domain.
\item There exist $x_{1}$ and $y_{1}$ in $A$ such that $v_{\mathfrak{p}_{1}}(x_{1})=0$, $v_{\mathfrak{p}_{2}}(x_{1})=1$, $v_{\mathfrak{p}_{1}}(y_{1})=1$ and $v_{\mathfrak{p}_{2}}(y_{1})=0$ and $x_{1}y_{1}=\pi$.
\item Every element $f\in{K(\mathcal{D})}$ can be written uniquely as
\begin{equation}
f=x^{i}_{1}\cdot{y^{j}_{1}}\cdot{u},
\end{equation}
where $(i,j)\in\mathbb{Z}^{2}$ and $u\in{A}^{*}$. 
\end{enumerate}
\end{lemma}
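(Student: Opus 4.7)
The plan is to extract all three statements from the fact that, since $P$ is an ordinary double point of \emph{length one}, the completion $\hat{A}$ is already a regular local ring, which then transfers UFD structure down to $A$ by Auslander–Buchsbaum. First I would verify regularity: the maximal ideal of $\hat{A}=R[[x,y]]/(xy-\pi)$ is $(x,y,\pi)=(x,y)$ because $\pi=xy$ is visibly in $(x,y)$, so $\hat{\mathfrak{m}}$ is generated by exactly $\dim\hat{A}=2$ elements. Since $\mathfrak{m}/\mathfrak{m}^{2}\simeq\hat{\mathfrak{m}}/\hat{\mathfrak{m}}^{2}$ and $\dim A=\dim\hat{A}$, the ring $A$ is itself a $2$-dimensional regular local ring, hence a UFD by Auslander–Buchsbaum. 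This gives part (1).

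For part (2), I would use UFD structure to factor the element $\pi\in A$. The height-one primes of $A$ that contain $\pi$ are precisely the generic points $\mathfrak{p}_{1}$ and $\mathfrak{p}_{2}$ of the two components through $P$, and the length-one hypothesis forces $v_{\mathfrak{p}_{i}}(\pi)=1$ for $i=1,2$ (this is exactly what "length $n=1$" means when combined with the shape of $\hat{A}$, by the general length formula in Section \ref{IntersectionGraphIntro}). In a UFD every height-one prime is principal, so $\mathfrak{p}_{1}=(\alpha_{1})$ and $\mathfrak{p}_{2}=(\alpha_{2})$ with $\alpha_{i}$ irreducible. Unique factorization then gives $\pi=c\,\alpha_{1}\alpha_{2}$ for some unit $c\in A^{*}$, and absorbing $c$ into one of the generators produces $x_{1},y_{1}\in A$ with $x_{1}y_{1}=\pi$ and the required valuations at $\mathfrak{p}_{1},\mathfrak{p}_{2}$.

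For part (3), I would set $i=v_{\mathfrak{p}_{2}}(f)$ and $j=v_{\mathfrak{p}_{1}}(f)$, which are forced on us: any decomposition $f=x_{1}^{i}y_{1}^{j}u$ with $u\in A^{*}$ must yield these valuations, because $v_{\mathfrak{p}_{1}}(x_{1})=0$, $v_{\mathfrak{p}_{2}}(x_{1})=1$, $v_{\mathfrak{p}_{1}}(y_{1})=1$, $v_{\mathfrak{p}_{2}}(y_{1})=0$ and $u$ is a unit. Uniqueness of $(i,j)$ is therefore automatic, and $u:=f\,x_{1}^{-i}y_{1}^{-j}$ is then uniquely determined.

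The main obstacle will be the last verification that this $u$ really lies in $A^{*}$ rather than merely in the fraction field with trivial valuation at the two components through $P$: a priori $u$ could pick up vanishing along some other height-one prime of $A$ or lie in $\mathfrak{m}$ while having zero valuation along $\mathfrak{p}_{1}$ and $\mathfrak{p}_{2}$. I would attack this by working in the completion $\hat{A}=R[[x,y]]/(xy-\pi)$, where every element of the total fraction ring decomposes as $x^{i}y^{j}u$ with $u$ of Weierstrass type (this is the content of the ``strip'' description of $\hat{A}$: eliminating mixed monomials via $xy=\pi$ gives a canonical form $\sum_{n\ge 0}a_{n}x^{n}+\sum_{n\ge 1}b_{n}y^{n}$ whose unit locus is exactly ``nonzero constant term'') and then descend the identity to $A$ via faithful flatness of $A\to\hat{A}$, using that the valuations at $\mathfrak{p}_{1},\mathfrak{p}_{2}$ agree with the valuations at the two height-one primes of $\hat{A}$. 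In practice the lemma is only invoked in Section \ref{ReconstructingTwisting} on elements $f$ whose divisor on $\mathcal{D}$ meets a neighborhood of $P$ only along the two branches through $P$, so this completion argument suffices.
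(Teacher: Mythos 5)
For parts (1) and (2) you take a genuinely different route from the paper. For (1) you argue via regularity of $\hat{A}$ plus Auslander--Buchsbaum; the paper's main proof instead cites a descent-of-UFD lemma but records your argument verbatim in a footnote, so this is equivalent. For (2) the paper invokes the approximation theorem for valuations to produce $x_{1}\in K(\mathcal{D})$ and then asserts $x_{1}\in A$ by normality; you instead factor $\pi$ in the UFD $A$, using that $\mathfrak{p}_{1},\mathfrak{p}_{2}$ are the only height-one primes of $A$ containing $\pi$ (the special fiber is reduced with exactly two branches at an ordinary double point). Your route is arguably the cleaner one: the approximation theorem only controls valuations at finitely many primes, whereas membership $x_{1}\in A$ in principle needs nonnegative valuation at \emph{all} height-one primes of $A$, a point your argument sidesteps by reading off the divisor of $\pi$ directly from the UFD structure already established in (1).

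For part (3) you have correctly identified the real difficulty -- as literally stated the decomposition cannot hold for every $f\in K(\mathcal{D})$, since $f$ may have zeros or poles along height-one primes of $A$ other than $\mathfrak{p}_{1},\mathfrak{p}_{2}$ (horizontal divisors through $P$) -- but the completion argument you propose to repair this does not work. The claim that every element of $\mathrm{Frac}(\hat{A})$ decomposes as $x^{i}y^{j}u$ with $u\in\hat{A}^{*}$ is false: take $f=x-y$, which has $v_{(x)}(f)=v_{(y)}(f)=0$, so the only candidate is $i=j=0$, $u=x-y$; but $x-y$ lies in the maximal ideal $(x,y)$ of $\hat{A}$ and is not a unit. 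The ``strip'' normal form gives a canonical power-series expansion, not the decomposition you want, and your own parenthetical (``unit locus is exactly nonzero constant term'') already concedes that $x-y$ is not a unit. Once you make the restriction at the end of your proposal explicit -- that the divisor of $f$ on $\mathrm{Spec}(A)$ is supported on $\{\mathfrak{p}_{1},\mathfrak{p}_{2}\}$, which is what actually holds in the application in Section \ref{ReconstructingTwisting} -- the conclusion follows immediately with no passage to $\hat{A}$: the element $u:=f\,x_{1}^{-i}y_{1}^{-j}$ then has trivial divisor on $\mathrm{Spec}(A)$, and in a Noetherian normal local domain an element with trivial divisor is already a unit. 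That restriction is load-bearing and should appear in the statement, not be treated as an afterthought; the paper's own one-line proof of (3) glosses over exactly the same point, so your identification of the issue is a genuine improvement even though your suggested repair is not correct as written.
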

\begin{proof}
We have that $\hat{A}$ is a unique factorization domain, so by \cite[Lemma 1.2]{samdomains} we find that $A$ is a unique factorization domain\footnote{One could also reason as follows. The ring $\hat{A}$ is regular, so $A$ is regular. A famous result by Auslander and Buchbaum then says that any regular local ring is a unique factorization domain. The lemma cited above is far easier to prove however, only needing Nakayama's Lemma and the Mittag-Leffler condition.}.

By the approximation theorem for valuations (\cite[Chapter 9, Lemma 1.9]{liu2}), we can find an element $x_{1}\in{K(\mathcal{D})}$ such that $v_{\mathfrak{p}_{1}}(x_{1})=0$ and $v_{\mathfrak{p}_{2}}(x_{1})=1$. Since a normal domain is the intersection of its localizations, we find that $x_{1}\in{A}$. The special fiber of $\mathcal{D}$ is reduced, so we find that $v_{\mathfrak{p}_{i}}(\pi)=1$ for both $i$. Now consider the element $y_{1}:=\dfrac{\pi}{x_{1}}$. We find that $v_{\mathfrak{p}_{1}}(y_{1})=1$ and $v_{\mathfrak{p}_{2}}(y_{1})=0$ and thus $y_{1}\in{A}$. The unique factorization as stated in the Lemma now directly follows. 
\end{proof}

We now return to the normalized form $f^{\Gamma}$, where we we take $\Gamma=\overline{\{\mathfrak{p}_{1}\}}$. As before, we focus on an intersection point $P$ that corresponds to an edge $e\in{}U(\Sigma(\mathcal{D}))$. We have that $v_{\mathfrak{p}_{1}}(f^{\Gamma})=0$, so that we can write
$f^{\Gamma}=x_{1}^{i}{f'}$ for some $f'$. In fact, by the Poincar\'{e}-Lelong formula, Theorem \ref{ValCor1}, 
we must have $i=qn$. We now consider the element $f'$. We then have $f'\in{A}$. In fact, we see that $v_{\mathfrak{p}_{i}}(f')=0$ for both $i$, so by Lemma \ref{LabelOrdDouble}, we find that $f'\in{A^{*}}$.

We now consider the equation $z^{q}=f$, which gives the extension $K(D)\rightarrow{K(C)}$ on the generic fiber.  Since $\mathcal{C}\rightarrow{\mathcal{D}}$ is disjointly branched, we have that the extension is unramified above $\Gamma$. In other words, $v_{\mathfrak{p}_{1}}(f)$ is divisible by $q$. We can then normalize this equation to obtain
\begin{equation}
z'^q=f^{\Gamma}.
\end{equation} 

We now consider the element $z''=\dfrac{z'}{x^{n}_{1}}$ in the function field of $K(\mathcal{C})$.

\begin{cor}
The algebra $A[z'']$ is finite \'{e}tale over $A=\mathcal{O}_{\mathcal{D},P}$.
\end{cor}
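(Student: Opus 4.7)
The plan is to reduce the claim to the statement that a Kummer extension of the form $A[T]/(T^q - u)$ with $u \in A^{*}$ is finite \'{e}tale, which is a standard Jacobian-criterion computation that uses our assumption $q \in A^{*}$ (the order of the Galois group is coprime to the residue characteristic).

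First, I would use Lemma~\ref{LabelOrdDouble} together with the Poincar\'{e}--Lelong formula (Theorem~\ref{ValCor1}) to put $f^{\Gamma}$ in a very explicit form. Writing $f^{\Gamma}=x_{1}^{i}\cdot y_{1}^{j}\cdot u$ uniquely with $u\in A^{*}$ and $(i,j)\in\mathbb{Z}^{2}$, the normalization condition $v_{\mathfrak{p}_{1}}(f^{\Gamma})=0$ forces $j=0$, and the edge $e$ lies in $U(\Sigma(\mathcal{D}))$, so by Proposition~\ref{PropositionCoveringData} the slope $v_{\mathfrak{p}_{2}}(f^{\Gamma})=i$ is divisible by $q$. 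Writing $i=qn$, we get $f^{\Gamma}=x_{1}^{qn}\cdot u$ with $u\in A^{*}$, so the relation $z'^{q}=f^{\Gamma}$ together with $z''=z'/x_{1}^{n}$ gives
\begin{equation*}
z''^{q}=u \in A^{*}.
\end{equation*}

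Next, I would identify $A[z'']$ with $A[T]/(T^{q}-u)$. Since $K(C)/K(D)$ is a degree-$q$ Kummer extension and $x_{1}\in K(D)^{*}$, the element $z''=z'/x_{1}^{n}$ still generates $K(C)$ over $K(D)$. Its minimal polynomial divides $T^{q}-u$ and has degree equal to $[K(C):K(D)]=q$, so the minimal polynomial equals $T^{q}-u$. Thus $A[z'']\simeq A[T]/(T^{q}-u)$ as $A$-subalgebras of $K(C)$, which is free of rank $q$ over $A$, in particular finite and flat.

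Finally, \'{e}taleness follows from the Jacobian criterion applied to $g(T):=T^{q}-u$: one checks that $g'(z'')=q\cdot z''^{\,q-1}$ is a unit in $A[z'']$. Here $q\in A^{*}$ by our standing assumption on the residue characteristic, and $z''^{\,q-1}$ is a unit because $z''^{q}=u\in A^{*}$ already forces $z''\in (A[z''])^{*}$. Hence $A\to A[z'']$ is standard \'{e}tale, and being both finite and \'{e}tale proves the corollary. The only step that requires genuine input beyond bookkeeping is the divisibility $q\mid i$, which is precisely where the hypothesis $e\in U(\Sigma(\mathcal{D}))$ enters through Proposition~\ref{PropositionCoveringData} (equivalently, Poincar\'{e}--Lelong on the unramified part of the graph); everything else is formal.
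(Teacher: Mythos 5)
Your argument is correct and follows essentially the same route as the paper: rewrite $f^{\Gamma}=x_{1}^{qn}u$ with $u\in A^{*}$ via Lemma~\ref{LabelOrdDouble} and Poincar\'e--Lelong, deduce $z''^{q}=u$, and conclude finite \'etaleness from the Jacobian criterion for a Kummer equation with unit right-hand side. You spell out two points the paper leaves implicit, namely the identification $A[z'']\simeq A[T]/(T^{q}-u)$ via the minimal polynomial and the observation that $z''$ is already a unit because $z''^{q}=u\in A^{*}$, but the underlying argument is the same.
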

\begin{proof}
We have $z''^q=f'$, 
so it is finite. It is standard \'{e}tale by $f'\in{A^{*}}$, and thus also \'{e}tale.
\end{proof}

By Lemma \ref{FactorizationVertexSplit}, we can now factorize $f^{\Gamma}$ and even $f'$ as a $q$-th power in $k(\Gamma_{1})$ and $k(\Gamma_{2})$. To avoid confusion, we will write $\text{red}(f',\mathfrak{p}_{i})$ for the image of $f'$ in $\text{Frac}(A/\mathfrak{p}_{i})=k(\mathfrak{p}_{i})$. We then have 
\begin{align*}
\text{red}(f',\mathfrak{p}_{1})&=\overline{g_{1}}^q,\\
\text{red}(f',\mathfrak{p}_{2})&=\overline{g_{2}}^q.
\end{align*}
Note that we can evaluate both $g_{1}$ and $g_{2}$ at the point $P$. 
The components lying above $\Gamma_{1}$ and $\Gamma_{2}$ are now given by the prime ideals
\begin{align*}
\mathfrak{q}_{1,i}&=\mathfrak{p}_{1}+(z''-\zeta^{i}g_{1}),\\
\mathfrak{q}_{2,i}&=\mathfrak{p}_{2}+(z''-\zeta^{i}g_{2}).
\end{align*}
We denote the corresponding components by $\Gamma_{1,i}$ and $\Gamma_{2,i}$ for $i\in\{0,1,2,...,q-1\}$.
Now consider the component $\Gamma_{1,0}$ labeled by $z''=\overline{g_{1}}$. Evaluating $f'$ at $P$, we obtain
\begin{equation}
g_{1}(P)^q=g_{2}(P)^q.
\end{equation}
In other words, there exists a $j\in\{0,1,2,...,q-1\}$ such that $g_{1}(P)=\zeta^{j}g_{2}(P)$. This implies that $\Gamma_{1,0}$ is connected to $\Gamma_{2,i}$ and by the cyclic $\mathbb{Z}/q\mathbb{Z}$-action, this also determines the rest of the edges. We now give a summary of the procedure: 

\begin{algo}\label{Linkingcomponents}
\begin{center}
{\bf{[Algorithm for linking components]}}
\end{center}
\begin{enumerate}
\item Find $g_{1}(P)$ and $g_{2}(P)$ using an explicit representation of the function field of $\Gamma_{1}$.
\item There exists an $j$ such that 
\begin{equation}
g_{1}(P)=\zeta^{j}\cdot{}g_{2}(P).
\end{equation} 
\item Connect the vertex labeled by $z''=g_{{1}}$ to the vertex labeled by $z''=\zeta^{j}g_{2}$.
\item The other vertices and connecting edges lying above $\Gamma_{1}$ are now completely determined by the cyclic $\mathbb{Z}/q\mathbb{Z}$-action. 
\end{enumerate}
\end{algo}
 
 By considering these functions $g_{1,P}$ for various intersection points $P$, we now obtain a $2$-cocycle on the intersection graph as follows: we define
 \begin{equation}
 \alpha(e_{1},e_{2})=\dfrac{g_{1,P_{1}}(P_{1})}{g_{1,P_{2}}(P_{2})},
 \end{equation}
 where $P_{i}$ is the intersection point corresponding to the edge $e_{i}$. This can be seen as a generalization of the usual $2$-cocycle one obtains when studying the Picard group of a reduced (possibly reducible) curve with ordinary singularities over a field $k$, see Section \ref{TorExtJac1}.

\section{An algorithm for abelian coverings}\label{Algorithmabeliancoverings}

In this section, we use the twisting data obtained in the previous sections to obtain an algorithm that reconstructs the Berkovich skeleton of a curve $C$ that admits a cyclic abelian covering $C\rightarrow{D}$, where the Berkovich skeleton of $D$ is known.

\begin{algo}\label{AlgorithmAbelian}

\begin{center}
{\bf{[Reconstructing Berkovich skeleta using abelian coverings]}}
\end{center}
\begin{flushleft}
Input: A cyclic abelian covering $C\rightarrow{D}$, a semistable model $\mathcal{D}$ with intersection graph $\Sigma(\mathcal{D})$.
\end{flushleft}
\begin{enumerate}
\item Determine the covering data of $\Sigma(\mathcal{C})\rightarrow{\Sigma(\mathcal{D})}$ using Propositions \ref{PropositionCoveringData}, \ref{DecompositionVertexProposition} and Theorem \ref{DecompVert}. 
\item Determine the unramified part $U(\Sigma(\mathcal{D}))$ of $\Sigma(\mathcal{D})$, as in Definition \ref{UnramifiedPartGraph}.
\item For every $v\in{U(\Sigma(\mathcal{D}))}$ and every adjacent edge $e$, determine $g_{1}(e)$ and $g_{2}(e)$, as in Section \ref{ReconstructingTwisting}. 
\item Use the procedure in Algorithm \ref{Linkingcomponents} to link together the components. 
\end{enumerate}
Output: The Berkovich skeleton of $C$.
\end{algo}
\begin{proof}
({\it{Correctness of the algorithm}})
The covering data and the twisting data are correct by Chapter \ref{Coveringdata} and Section \ref{ReconstructingTwisting}. The algorithm terminates because there are only finitely many vertices in $\Sigma(\mathcal{D})$ and finitely many options for linking together the covering vertices and edges.
\end{proof}


\subsection{An algorithm for solvable Galois coverings}


In this section we iterate the algorithm obtained in Section \ref{Algorithmabeliancoverings} for solvable Galois coverings. 
We first describe several generalities regarding solvable groups and then give an algorithm that gives the Berkovich skeleton of $C$, where $C\rightarrow{D}$ is a solvable Galois covering to a curve $D$ with a known Berkovich skeleton. 

Let $G$ be a finite group. Then $G$ is said to be {\it{solvable}} if there exists a finite chain of subgroups of $G$:
\begin{equation*}
G_{0}=(1)\subseteq{G_{1}}\subseteq{...}\subseteq{G_{n}=G},
\end{equation*}
where $G_{i}$ is normal in $G_{i+1}$ and $G_{i+1}/G_{i}$ is abelian. An equivalent definition is that $G$ admits a {\it{composition series}} such that every factor is cyclic of prime order. Recall that a composition series is a series
\begin{equation*}
(1)=H_{0}\vartriangleleft{H_{1}}\vartriangleleft{...}\vartriangleleft{H_{r}=G}
\end{equation*}   
such that each $H_{i}$ is normal in $H_{i+1}$ and $H_{i+1}/H_{i}$ is simple. \\
We will now return to our usual setting of coverings. Suppose that $C\longrightarrow{D}$ is Galois with solvable Galois group. Then there exists a composition series
\begin{equation*}
(1)=H_{0}\vartriangleleft{H_{1}}\vartriangleleft{...}\vartriangleleft{H_{r}=G}
\end{equation*}
with corresponding inclusions of function fields
\begin{equation*}
K(C)\longleftarrow{(K(C))^{H_{1}}}\longleftarrow{...}\longleftarrow{(K(C))^{H_{r}}=K(D)}.
\end{equation*}
We define $K(C_{i}):=(K(C))^{H_{i}}$. Then for every inclusion $H_{i}\vartriangleleft{H_{i+1}}$ we have a Galois extension
\begin{equation*}
K(C_{i+1})\longrightarrow{K(C_{i})}
\end{equation*} 
that is cyclic of degree $n_{i}:=[H_{i+1}:H_{i}]$. If our field $K$ contains $\zeta_{n}$ for every $n$ dividing $|G|$ (so it contains $\zeta_{n_{i}}$ in particular), 
then this field extension can be described by an extension of the form
\begin{equation*}
K(C_{i+1})\longrightarrow{K(C_{i+1})[z]/(z^{q}-f_{i}})=K(C_{i})
\end{equation*}
by Kummer theory.

We can now state the algorithm for solvable coverings $C\rightarrow{D}$. 
\begin{algo}\label{AlgorithmSolvable}
\begin{center}
{\bf{[Reconstructing Berkovich skeleta using solvable coverings]}}
\end{center}
\begin{flushleft}
Input: A solvable Galois covering $C\rightarrow{D}$ with Galois group $G$, a semistable model $\mathcal{D}$ with intersection graph $\Sigma(\mathcal{D})$.
\end{flushleft}
\begin{enumerate}
\item Find the subextensions $K(C_{i})$ of $K(C)$ corresponding to a composition series of $G$. 
\item Use Algorithm \ref{AlgorithmAbelian} on the morphisms $C_{i}\longrightarrow{C_{i+1}}$ to calculate the intersection graph of $C_{i}$, starting with $C_{r}=D$. 
\end{enumerate}
Output: The Berkovich skeleton of $C$. 
\end{algo}
\begin{proof}
({\it{Correctness of the algorithm}}) The morphisms $C_{i}\rightarrow{C_{i+1}}$ are cyclic abelian by assumption, so Algorithm \ref{AlgorithmAbelian} is applicable. The group $G$ is finite, so there are only finitely many morphisms $C_{i}\rightarrow{C_{i+1}}$ for which Algorithm \ref{AlgorithmAbelian} has to be run.  Since that algorithm terminates in finite time for each morphism $C_{i}\rightarrow{C_{i+1}}$, we find that this algorithm also terminates. 
\end{proof}
\chapter{Cyclic abelian coverings of the projective line}\label{Abelian}

In this chapter, we will use the techniques obtained in the previous chapters to study cyclic abelian coverings of the projective line. In this case, the twisting data developed in Chapter \ref{Twistingdata} are not required, since these lead to a $2$-cocycle on a tree, which is trivial. We start with hyperelliptic coverings, which has been a known case for quite some time, see \cite{trophyp}. After that we will give some examples of abelian coverings of higher degree.
We will also present several results obtained in \cite{supertrop}, where a general algorithm for these coverings was given. In this paper, it was shown that any cyclic abelian covering of metrized complexes (in our language) $\Sigma\rightarrow{T}$, where $T$ is a tree, arises from some algebraic covering $C\rightarrow{\mathbb{P}^{1}}$. The procedure given there is explicit, in the sense that a polynomial $f(x)$ is given such that the curve $z^n=f(x)$ with covering $(x,z)\mapsto{x}$ has the desired properties.


\section{Hyperelliptic curves}


Let $C\longrightarrow{\mathbb{P}^{1}}$ be a hyperelliptic covering. Since $\text{char}(K)=0\neq{0}$, we can use Kummer Theory to see that the function field extension is given by an equation
\begin{equation*}
y^2=f(x),
\end{equation*}
where $f(x)$ is a polynomial of a certain degree over the field $K$. Over a finite extension of $K$, we can now write $f(x)$ as
\begin{equation*}
f(x)=\prod_{i=1}^{r}(x-\alpha_{i})
\end{equation*}
for certain elements $\alpha_{i}\in\overline{K}$. We assume that we have made the finite extension already and that $\alpha_{i}\in{{K}}$. For simplicity, we will now assume that $v(\alpha_{i})\geq{0}$ for every $i$. We will also assume that $f(x)$ is squarefree.

\begin{exa}\label{ExaHyp1}
We take the curve $C$ defined by
\begin{equation}\label{EqnExaHyp1}
y^2=x(x-\pi)g(x),
\end{equation}
where $\pi$ is a uniformizer and $g(x)$ is a polynomial of odd degree $c=2k+1$ (the case of a polynomial with even degree is similar but with two points at infinity). We assume that the roots of $g$ reduce to distinct points not equal to $0$. We have
\begin{equation*}
g(C)=k+1.
\end{equation*}
Since the points $(0)$ and $(\pi)$ are not disjoint in the special fiber, we will want to create a semistable model for $\mathbb{P}^{1}$ that makes them disjoint. We take:
\begin{equation*}
\text{Proj}R[X,T,W]/(XT-\pi{W}^2)
\end{equation*}
with affine model
\begin{equation*}
\text{Spec}R[x,t]/(xt-\pi).
\end{equation*}
We have that the point $(0)$ on the generic fiber is now transferred to the affine part 
\begin{equation*}
\text{Spec}R[x',w]/(x'-\pi{w^2}),
\end{equation*}
where $x'=\dfrac{X}{T}$ and $w=\dfrac{W}{T}$. Indeed, the corresponding prime ideal is  $(x',w)$. The point $(\pi)$ now corresponds to the prime ideal $(x-\pi,t-1)$ lying on the generic fiber. We see that the reductions of $(0)$ and $(\pi)$ now lie on the same component $(x)$, but they have distinct $t$-coordinates:  one has $t=1$ and the other has "$t=\infty$".\\
We can thus use Theorem \ref{MaintheoremSemSta} and calculate the normalization of this scheme in the finite extension defined by Equation \ref{EqnExaHyp1}. We'll first take a different route however, using only our knowledge of the divisors involved. Consider the divisor
\begin{equation*}
\text{div}_{\eta}(f)=(0)+(\pi)+Z(g)-(2+c)\cdot{\infty}.
\end{equation*}
We calculate
\begin{equation*}
\rho(\text{div}_{\eta}(f))=2\cdot(\Gamma_{1})-2\cdot({\Gamma_{2}}).
\end{equation*}

This means that the corresponding Laplacian function has slope $\pm{2}$ between $\Gamma_{1}$ and $\Gamma_{2}$. The Laplacian can also be found in Figure \ref{1eplaatjeExtra}.
\begin{figure}[h!]
\centering
\includegraphics[scale=0.45]{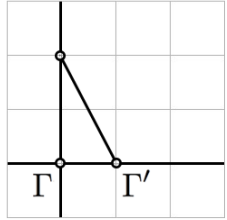}
\caption{\label{1eplaatjeExtra} {\it{The Laplacian of $f$ in Example \ref{ExaHyp1}.}}}
\end{figure} For the edge corresponding to the intersection $\mathfrak{m}=(x,t,\pi)$, we thus obtain two edges in the pre-image.\\
We now calculate $f^{\Gamma_{1}}$. We obtain 
\begin{equation*}
(f^{\Gamma_{1}})=(x',w,\pi)+(x,t-1,\pi)-2\cdot({\Gamma_{1}\cap{\Gamma_{2}}}).
\end{equation*}
If we thus consider the local equation
\begin{equation*}
y^2=f^{\Gamma_{1}},
\end{equation*}
it will ramify at 2 points. Thus the genus of the corresponding component above is $0$. For $\Gamma_{2}$ we have
\begin{equation*}
(f^{\Gamma_{2}})=Z(g)+2\cdot({\Gamma_{1}\cap\Gamma_{2}})-(c+2)(\infty).
\end{equation*}
Thus the equation $y^2=f^{\Gamma_{2}}$ ramifies in the points defined by $Z(g)$ and $\infty$. There are $c+1$ of these, thus we can use the Riemann-Hurwitz formula to obtain
\begin{equation*}
2g_{\Gamma'_{2}}-2=2(-2)+c+1
\end{equation*}
and thus
\begin{equation*}
g_{\Gamma'_{2}}=k.
\end{equation*}
\begin{figure}[h!]
\centering
\includegraphics[scale=0.6]{{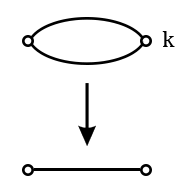}}
\caption{\label{10eplaatje} {\it{The covering in Example \ref{ExaHyp1}.}}}
\end{figure}
Thus the reduction graph consists of two vertices with two edges meeting them. The first component has genus $0$ and the second component has genus $k$. The covering of graphs can be found in Figure \ref{10eplaatje}.

We could have also calculated the normalization directly:
\begin{eqnarray*}
z^2=(1-t)g(x),
\end{eqnarray*}
where $z=y/x$. Plugging in $t=0$ and $x=0$ then yields the same reduction graph.
\end{exa}

\begin{exa}\label{ExaHyp2}
Let us take a slightly more involved example. We take
\begin{equation*}
z^2=x(x-\pi)(x-\pi^2)g(x),
\end{equation*}
where $g(x)$ is a polynomial of even degree $c=2k$. Then $g(C)=k+1$. If we now consider the open affine defined by
\begin{equation*}
R[x,y]/(xy-\pi^2),
\end{equation*}
then $\pi$ does not reduce to a regular point. When we blow this point up, we obtain a new component where $\pi$ does reduce to a regular point.
\begin{figure}[h!]
\centering
\includegraphics[scale=0.5]{{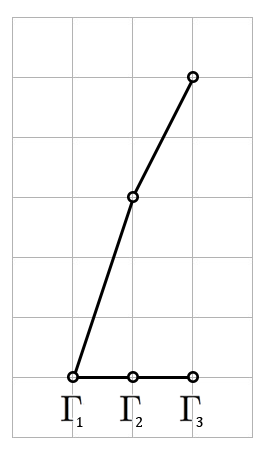}}
\caption{\label{11eplaatje} {\it{The Laplacian $\phi$ of $f$ in Example \ref{ExaHyp2}.}}}
\end{figure} The blow-up is given by the local charts
\begin{eqnarray*}
U_{1}=\text{Spec}(R[x,t_{2}]),\\
U_{2}=\text{Spec}(R[y,t_{3}]),
\end{eqnarray*}
with relations
\begin{eqnarray}
xt_{2}&=&\pi,\\
yt_{3}&=&\pi,
\end{eqnarray}
and the "obvious" local isomorphisms. We label the components $Z(t_{2})=\Gamma_{1}$, $Z(x,y)=\Gamma_{2}$ and $Z(t_{3})=\Gamma_{3}$. Here $\Gamma_{i}$ intersects $\Gamma_{i+1}$.\\

We have
\begin{equation*}
\text{div}_{\eta}(f(x))=(0)+(\pi)+(\pi^2)+Z(g)-(c+3)(\infty),
\end{equation*}
where $(0)$ and $(\pi^2)$ reduce to $\Gamma_{3}$ (the component with "$v(x)\geq{2}$"), $(\pi)$ reduces to $\Gamma_{2}$ and $Z(g)$ and $\infty$ reduce to $\Gamma_{1}$. Furthermore
\begin{equation*}
\rho(\text{div}_{\eta}(f))=2(\Gamma_{3})+(\Gamma_{2})-3(\Gamma_{1}),
\end{equation*}
whose Laplacian is depicted by a slope of $2$ between $\Gamma_{3}$ and $\Gamma_{2}$ and a slope of $3$ between $\Gamma_{2}$ and $\Gamma_{1}$, as in Figure \ref{11eplaatje}.
\begin{figure}[h!]
\centering
\includegraphics[scale=0.6]{{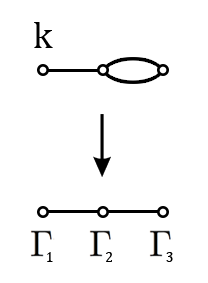}}
\caption{\label{12eplaatje} {\it{The covering of graphs in Example \ref{ExaHyp2}.}}}
\end{figure}
 Correspondingly, the edge $e_{2,3}$ has two pre-images in $\mathcal{C}$ and the edge $e_{1,2}$ has one pre-image in $\mathcal{C}$. One finds that $f^{\Gamma_{1}}$ has $c+1$ ramification points and thus $\Gamma'_{1}$ has genus $k$ (Check this with the Riemann-Hurwitz formula). Similarly, $f^{\Gamma_{2}}$ and $f^{\Gamma_{3}}$ have two ramification points and as such they have genus $0$. Thus the reduction graph consists of three vertices $v_{1},v_{2},v_{3}$ where $v_{1}$ and $v_{2}$ intersect once and $v_{2}$ and $v_{3}$ intersect twice. The covering of graphs can be found in Figure \ref{12eplaatje}.\\
We also give the normalizations for completeness. For the first chart $U_{1}$ they are given by
\begin{eqnarray*}
z_{1}^2&=&x(1-t_{2})(1-t_{2}\pi)g(x),\\
z_{2}^2&=&t_{2}(1-t_{2})(1-t_{2}\pi)g(x),\\
z_{1}\cdot{z_{2}}&=&\pi^{1/2}(1-t_{2})(1-t_{2}\pi)g(x).
\end{eqnarray*}
where $z_{1}=\dfrac{z}{x}$ and $z_{2}=\dfrac{t_{2}z}{\pi^{1/2}x}$.
For the second chart $U_{2}$ we have a single algebra given by
\begin{eqnarray*}
z_{3}^2=(t_{3}-1)(1-y)g(t_{3}\cdot{\pi}),
\end{eqnarray*}
where $z_{3}=\dfrac{z}{\pi^{1/2}t_{3}}$. Note that in both charts we clearly see the need for the ramified extension of degree $2$ given by $K\subseteq{K(\pi^{1/2})}$. 
\end{exa}

\section{Cyclic abelian coverings of $\mathbb{P}^{1}$}

In the previous section, we saw that we can quite easily determine the reduction graph of a lot of hyperelliptic curves quite easily using divisors and their reductions. We will now state the process more generally for abelian covers of $\mathbb{P}^{1}$. The key step in this process will be the determination of the Laplacian of a certain defining function $f$. This Laplacian will determine most of the overlying reduction graph. We also refer the reader to \cite[Algorithm 4.2]{supertrop}, where the algorithm first appeared.  

So suppose we are given an abelian cover $C\longrightarrow{\mathbb{P}^{1}}$ of degree $n$ over $K$. By Kummer Theory, we have that it is given by
\begin{equation*}
z^{n}=f(x),
\end{equation*}
where $f(x)$ possibly has multiple factors. 

\begin{algo}\label{AbelianAlgorithm}

\begin{center}
{\bf{[Algorithm for semistable graphs of abelian coverings of $\mathbb{P}^{1}$]}}
\end{center}
\begin{flushleft}
Input: A polynomial $f\in{K[x]}$. 
\end{flushleft}
\begin{enumerate}
\item Let $S=Z(f)\cup{\{\infty\}}$. Construct the tropical separating $T_{S}$ and its corresponding semistable model $\mathcal{D}_{S}$, as in Chapter \ref{Appendix2}.    
\item Determine $\rho(P)$ for any $P\in\text{Supp}(f)$. 
\item Determine the Laplacian function $\Delta(f)$. 
\item Determine $|I_{e}|$ for every edge using Proposition \ref{PropositionCoveringData}. This determines the edge length for any edge $e'$ lying above $e$ by \ref{InertiagroupIntersectionPoint1}.  
\item Determine the genera of the vertices $v'$ lying above every vertex $v\in{T_{S}}$ using the Riemann-Hurwitz formula (see Theorem  \ref{RiemannHurwitz}). 
\item Determine the covering graph $\Sigma(\mathcal{C})$ using the covering data obtained above.
\end{enumerate}
Output: The Berkovich skeleton of the curve $C$ defined by $z^{n}=f$.
\end{algo}
\begin{proof}
({\it{Correctness of the algorithm}})
The covering data obtained during the algorithm are correct by the cited propositions and theorems. There is only one covering graph up to the cyclic action of $\mathbb{Z}/n\mathbb{Z}$ for the given covering data, since it corresponds to a $2$-cocycle on $T_{S}$, which must be trivial. We note that a more elementary argument explicitly determining the covering graphs is also possible here. 
\end{proof}

\begin{exa}\label{ExaHyp3}
Let us do an example where our curve is given by a degree $3$ covering of $\mathbb{P}^{1}$. Let's take the curve $C$ given by the equation
\begin{equation*}
z^3=f(x):=x(x-\pi)g(x),
\end{equation*}
with $c:=\text{deg}(g(x))$ and $g(x)$ separable. We take $g(x)$ such that $\text{deg}(g)+2\neq{0}\mod{3}$. Then $C$ has exactly one point at infinity and $C$ ramifies above that point. We calculate
\begin{equation*}
2g-2=-6+2\cdot{}\text{Card}(\mathcal{R})
\end{equation*}
and 
\begin{equation*}
\text{Card}(\mathcal{R})=c+3,
\end{equation*}
so that 
\begin{equation*}
g(C)=c+1.
\end{equation*}
We'll take the usual model with chart
\begin{equation*}
R[x,t]/(xt-\pi).
\end{equation*}
As before, we have
\begin{equation*}
\text{div}_{\eta}(f)=(0)+(\pi)+Z(g)-(2+c)\cdot({\infty})
\end{equation*}
and
\begin{equation*}
\rho(\text{div}_{\eta}(f))=2(\Gamma_{1})-2(\Gamma_{2}).
\end{equation*}
We thus see that the edge $e=\Gamma_{1}\cap\Gamma_{2}$ is preserved in $\mathcal{C}_{s}$, meaning that $g_{e}=1$. We find that $f^{\Gamma_{1}}$ has 3 ramification points and $f^{\Gamma_{2}}$ has $c+2$. Using the formula
\begin{equation*}
g=-2+\#{\mathcal{R}},
\end{equation*}
we find that $\Gamma'_{1}$ has genus 1 and $\Gamma'_{2}$ genus $c$. Thus the reduction graph consists of two vertices intersecting once, with weights $1$ and $c$. The covering of graphs can be found in Figure \ref{14eplaatje}.
\begin{figure}[h!]
\centering
\includegraphics[scale=0.3]{{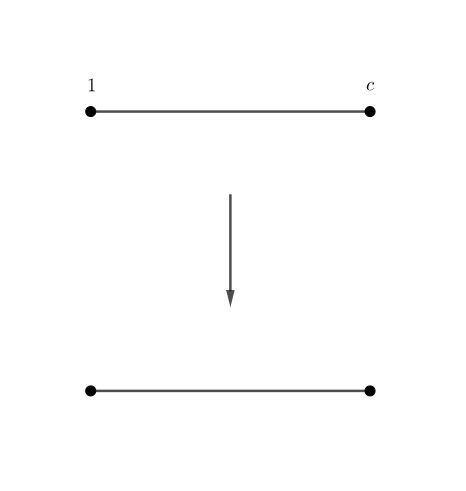}}
\caption{\label{14eplaatje} {\it{The covering of graphs in Example \ref{ExaHyp3}.}}}
\end{figure}
\end{exa}
\begin{exa}\label{TripEllExa1}
Now take
\begin{equation*}
z^3=x(x-\pi)(x-2\pi)g(x),
\end{equation*}
where $c:=\text{deg}(g(x))$ is such that $c+3\neq{0}\mod{3}$. We then have that $g(C)=c+4-2=c+2$. We take the model 
\begin{equation*}
R[x,t]/(xt-\pi).
\end{equation*}
with components $Z(x)=\Gamma_{1}$ and $Z(t)=\Gamma_{2}$. Note that $\Gamma_{1}$ corresponds to points with $v(x)\geq{1}$ and $\Gamma_{2}$ to points with $v(x)\leq{1}$. We thus find that $(0),(\pi),(2\pi)\longmapsto\Gamma_{1}$ and $Z(g),(\infty)\longmapsto{\Gamma_{2}}$. We have
\begin{equation*}
\text{div}_{\eta}(f)=(0)+(\pi)+(2\pi)+Z(g)-(c+3)(\infty)
\end{equation*}
with
\begin{equation*}
\rho(\text{div}_{\eta}(f))=3(\Gamma_{1})-3(\Gamma_{2})
\end{equation*}
and thus $g_{e}=3$ for $e=\Gamma_{1}\cap\Gamma_{2}$. The Laplacian can also be found in Figure \ref{15eplaatje}.
\begin{figure}[h!]
\centering
\includegraphics[scale=0.5]{{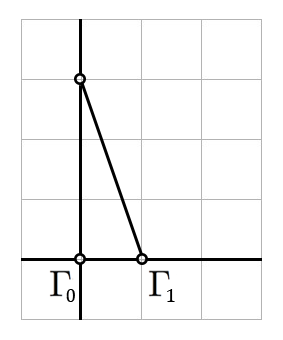}}
\caption{\label{15eplaatje} {\it{The Laplacian $\phi$ of $f$ in Example \ref{TripEllExa1}.}}}
\end{figure}
\begin{figure}[h!]
\centering
\includegraphics[scale=0.7]{{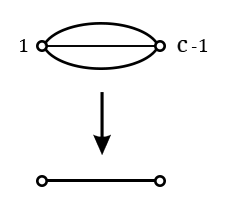}}
\caption{\label{14e2plaatje} {\it{The covering of graphs in Example \ref{TripEllExa1}.}}}
\end{figure}
We see that
\begin{equation*}
(f^{\Gamma_{1}})=(\overline{0})+(x,t-1)+(x,t-2)-3(\Gamma_{1}\cap\Gamma_{2})
\end{equation*}
and thus that there are 3 ramification points. Thus $g(\Gamma'_{1})=1$. For $\Gamma_{2}$ we have
\begin{equation*}
(f^{\Gamma_{2}})=Z(g)-(3+c)(\infty)+3(\Gamma_{1}\cap\Gamma_{2})
\end{equation*}
and thus $g(\Gamma'_{2})=c-1$. All in all, we can see that our reduction graph consists of 2 vertices with 3 edges between them. The corresponding weights on the vertices are $c-1$ and $1$. The corresponding covering of graphs can be found in Figure \ref{14e2plaatje}.
\end{exa}

Now, let us suppose that we are only interested in the reduction graph {\it{without}} the weights of the genera of the components. We'll do an example where we only address this problem.


\begin{exa}\label{TripHypExp1}
Suppose we take something like
\begin{equation*}
z^3=x(x-\pi)(x-2\pi)(x-\pi^2)(x-2\pi^2)(x-\pi^3)g(x),
\end{equation*}
where $c:=\text{deg}(g)$ is such that $6+c\neq{0}\mod{3}$.
We are now only interested in the {\it{unweighted}} graph corresponding to this curve. We create a semistable model with $R[x,t]/(xt-\pi^3)$ blown up two times. We have $4$ components $\Gamma_{0}$, $\Gamma_{1}$, $\Gamma_{2}$, $\Gamma_{3}$ where $\Gamma_{0}$ corresponds to $v(x)\leq{0}$, $\Gamma_{1}$ to $v(x)=1$, $\Gamma_{2}$ to $v(x)=2$ and $\Gamma_{3}$ to $v(x)\geq{3}$. We see that

\begin{eqnarray*}
0,(\pi^3)&\longmapsto&\Gamma_{3},\\
(\pi^2),(2\pi^2)&\longmapsto&\Gamma_{2},\\
(\pi),(2\pi)&\longmapsto&\Gamma_{1},\\
Z(g),(\infty)&\longmapsto&\Gamma_{0}.
\end{eqnarray*}
Our Laplacian is then
\begin{equation*}
\rho(\text{div}_{\eta}(f))=2(\Gamma_{3})+2(\Gamma_{2})+2(\Gamma_{1})-6(\Gamma_{0}).
\end{equation*}

The corresponding function has slope $6$ from $\Gamma_{0}$ to $\Gamma_{1}$, slope $-4$ from $\Gamma_{1}$ to $\Gamma_{2}$, slope $-2$ from $\Gamma_{2}$ to $\Gamma_{3}$, as in Figure \ref{16eplaatje}. We thus see that 
\begin{eqnarray*}
g_{\Gamma_{0}\cap\Gamma_{1}}&=&3,\\
g_{\Gamma_{1}\cap\Gamma_{2}}&=&1,\\
g_{\Gamma_{2}\cap\Gamma_{3}}&=&1,\\
\end{eqnarray*}
which determines the graph. It is a graph with 4 neighbouring vertices, two of which have 3 edges between them. The covering of graphs can be found in Figure \ref{17eplaatje}.
\begin{figure}[h!]
\centering
\includegraphics[scale=0.5]{{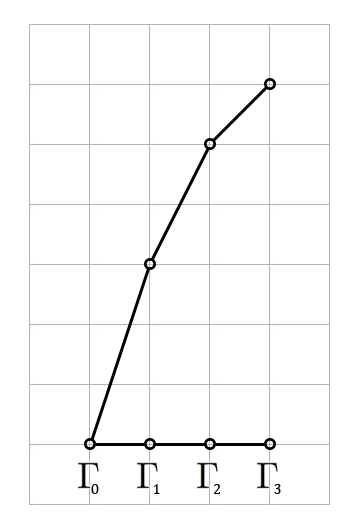}}
\caption{\label{16eplaatje} {\it{The Laplacian $\phi$ of $f$ in Example \ref{TripHypExp1}.}}}
\end{figure}
\begin{figure}[h!]
\centering
\includegraphics[scale=0.6]{{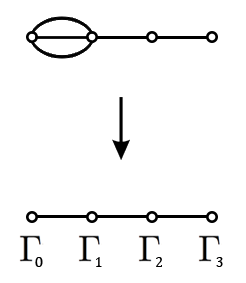}}
\caption{\label{17eplaatje} {\it{The covering of graphs in Example \ref{TripHypExp1}.}}}
\end{figure}
\end{exa}

\section{Tropical superelliptic coverings}

In this section, we give a short review of the results obtained in \cite{supertrop}. Here, a cyclic abelian covering $C\rightarrow{\mathbb{P}^{1}}$ with Galois group $G=\mathbb{Z}/n\mathbb{Z}$ is called a \emph{superelliptic covering}. 
\begin{theorem}
\label{realizabilitythm}
Let $p$ be a prime number. A covering $\phi_\Sigma:\Sigma \rightarrow T$ is a superelliptic covering of degree $p$ of weighted metric graphs if and only if there exists a superelliptic covering $\phi:C\rightarrow \mathbb{P}^1$ of degree $p$ tropicalizing to it. 
\end{theorem}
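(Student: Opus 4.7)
The plan is to prove both directions using the machinery of disjointly branched morphisms and Kummer theory.

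For the forward direction, I would start with a superelliptic covering $\phi:C\rightarrow\mathbb{P}^1$ given on function fields by $K(x)\rightarrow K(x)[z]/(z^p-f)$ by Kummer theory (Proposition \ref{AbelExt1}). Taking $S$ to be the branch locus together with $\infty$, the separating semistable model $\mathcal{D}_S$ from Chapter \ref{Appendix2} yields, after a tame base change, a disjointly branched morphism $\mathcal{C}\rightarrow\mathcal{D}_S$ by Theorem \ref{MaintheoremSemSta}. The induced morphism $\Sigma(\mathcal{C})\rightarrow\Sigma(\mathcal{D}_S)=T_S$ is then a Galois morphism of weighted metric graphs with group $\mathbb{Z}/p\mathbb{Z}$; the vertex weights are obtained by Riemann-Hurwitz (Theorem \ref{RiemannHurwitz}) applied to the component coverings $\Gamma'\rightarrow\Gamma$, and the edge lengths by Proposition \ref{InertiagroupIntersectionPoint1}. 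This gives the tropicalization.

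For the realizability direction, the strategy is explicit construction. Given a combinatorial superelliptic covering $\phi_\Sigma:\Sigma\rightarrow T$, I would first extract from it the discrete data $(|I_e|, |D_v|, w(v'), l(e'))$ for every vertex and edge. Since $p$ is prime, Proposition \ref{PropositionCoveringData} tells us that on each edge of $T$ the inertia group is either trivial or all of $\mathbb{Z}/p\mathbb{Z}$, and in the latter case $l(e)=p\cdot l(e')$. I would then build the branch set $S\subset K$ so that its tropical separating tree (Definition \ref{TropSep}) equals $T$ up to refinement, placing sufficiently many branch points with carefully chosen multiplicities reducing to each vertex $v$ of $T$.

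The key tool is the Poincar\'{e}-Lelong formula (Theorem \ref{ValCor1}): if $f=\prod(x-\alpha_i)^{c_i}$ has the prescribed Laplacian, then its slope along each edge $e$ realizes the prescribed inertia via Proposition \ref{PropositionCoveringData}, and the number of branch points on each component $\Gamma_v\simeq\mathbb{P}^1_k$ determines $|D_{v'}|$ via Theorem \ref{DecompVert}. Concretely, at each vertex $v$ I choose $c_i\in\{1,\ldots,p-1\}$ for each point in $\Gamma_v$ onto which a branch point reduces, so that the resulting edge-slopes of $\phi_f$ match the required inertia indices; this is always solvable because it is a linear system on a tree with one degree of freedom per vertex, and the constraint $\sum c_i \equiv 0\pmod p$ at infinity can be absorbed by a free branch point. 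The genera of the components $\Gamma_{v'}$ above $v$ are then forced to equal the prescribed $w(v')$ by the Riemann-Hurwitz formula applied to $\Gamma_{v'}\rightarrow\Gamma_v$, once the number of ramification points on $\Gamma_v$ (edges plus generic branch points reducing to $v$) has been fixed.

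The main obstacle will be the bookkeeping at each vertex: for every $v\in T$ one must simultaneously satisfy the correct inertia slopes on all adjacent edges, the correct vertex decomposition order, and the correct genus, while respecting the single global constraint that $\deg(\mathrm{div}(f))=0$ on $\mathbb{P}^1$. The fact that $T$ is a tree is crucial here because it forces the twisting $2$-cocycle of Chapter \ref{Twistingdata} to be a coboundary, so no additional obstruction arises in linking the components above adjacent vertices; once the local data is realized, the global covering is determined up to the cyclic $\mathbb{Z}/p\mathbb{Z}$-action and hence unique, which is exactly what lets Algorithm \ref{AbelianAlgorithm} conclude that $z^p=f$ tropicalizes to $\phi_\Sigma$.
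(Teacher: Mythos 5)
Your proposal is correct in outline and takes essentially the same route as the paper: the forward direction is the observation that a disjointly branched model gives a superelliptic covering of weighted metric graphs, and the converse is the explicit construction of $f$ by reducing the problem to a system of (near-)linear congruences on the slopes of the Laplacian, as the paper does via Proposition \ref{PropositionCoveringData}. The one place where your sketch is more optimistic than the paper is the solvability claim: you assert ``one degree of freedom per vertex'' on the tree makes the system trivially solvable, but the divisor $\rho(\mathrm{div}_\eta(f))$ is determined exactly (not just mod $p$) by the slopes you choose, so you must also verify that the resulting multiplicities $d_v$ are realizable as nonnegative sums of exponents in $\{1,\dots,p-1\}$ with the right count of branch points, simultaneously with the genus and decomposition-order constraints --- this is the ``almost linear'' subtlety the paper flags and defers to the cited reference, rather than something that falls out of counting degrees of freedom on a tree.
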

\begin{proof}
We give a sketch of the proof and direct the reader to \cite[Theorem 5.4]{supertrop}. If $C\rightarrow{\mathbb{P}^{1}}$ is superelliptic, then any superelliptic disjointly branched morphism $\mathcal{C}\rightarrow{\mathcal{D}}$ gives a superelliptic covering of weighted metric graphs. The converse is somewhat harder. The idea is to explicitly write down the covering equations coming from Proposition \ref{PropositionCoveringData} for a general $f=\prod_{i=0}^{r}(x-\alpha_{i})$ in terms of the reductions of the $\alpha_{i}$. One then shows that these almost linear equations have at least one solution. Any such solution then gives the desired $f$, providing us with the curve $C$ defined by $z^p=f$ and the superelliptic covering $(x,z)\mapsto{x}$.
\end{proof}
A similar result was proved for degree $d$ admissible coverings in \cite{admcov}: for every degree $d$ admissible covering of metric graphs $C_\Sigma \rightarrow T$, there exists an algebraic covering $C\rightarrow\mathbb{P}^{1}$ tropicalizing to $C_\Sigma \rightarrow T$. We note however that the covering obtained by this theorem is not necessarily {\it{Galois}}, whereas the obtained covering in Theorem \ref{realizabilitythm} is manifestly superelliptic. 
Unlike in \cite{admcov}, the approach given here is constructive; the proof of the realizability theorem presents a method for finding the defining equation of a curve $C$ with a superelliptic covering $C\rightarrow{\mathbb{P}^{1}}$. 

\begin{rem}
As noted in \cite{supertrop}, to obtain a certain superelliptic covering of graphs algebraically, one needs to consider $f$ where the roots $\alpha_{i}$ coincide. That is, we need to allow polynomials with multiple factors. It would be interesting to see what kind of further restrictions these graphs give in terms of embeddings into $\mathbb{P}^{2}$. 
\end{rem}

\begin{rem}
In the proof of Theorem \ref{realizabilitythm} one is confronted with the following convenient fact: there are too many solutions to the covering equations. The following problem would now be interesting to study: how many configurations for the divisor of the reduction $\rho((f))$ give rise to a certain covering? What are the asymptotics? A good place to start here would be to optimize the proof of Theorem  \ref{realizabilitythm} to give an explicit lower bound. 
\end{rem}

\chapter{$S_{3}$-coverings of the projective line}\label{Solvable}
In this chapter, we will use our methods to give an algorithm for finding the Berkovich skeleton of a curve $C$ admitting a degree three morphism to the projective line. These morphisms come in two flavors: they are either Galois or they are not. In the first case, Algorithm \ref{AbelianAlgorithm} can be used. If the morphism is not Galois, we then take the Galois closure of this morphism. The normalization $\overline{C}$ of $C$ in this field extension can be geometrically reducible and in that case we find that the curve admits a degree three abelian morphism over a quadratic extension of $K$. In this case, Algorithm \ref{AbelianAlgorithm} is again applicable.

We will be mostly interested in the case where $\overline{C}$ is geometrically irreducible. In that case, the morphism $\overline{C}\rightarrow{\mathbb{P}^{1}}$ is a Galois covering with Galois group $S_{3}$. In this chapter we will first find the Berkovich skeleton of this curve $\overline{C}$. Taking the quotient under a subgroup of order two then yields an intersection graph for the original curve, which gives the Berkovich skeleton of $C$ after deleting the leaves.

To illustrate these $S_{3}$-coverings, we will apply them to a natural degree three morphism on elliptic curves in a Weierstrass equation $y^2=x^3+Ax+B$. Here, the morphism is given by $(x,y)\mapsto{y}$. Using this morphism, we can give a new proof of the criterion:
\begin{equation}
E\text{ has potential good reduction if and only if }v(j)>0,
\end{equation}
where $j$ is the $j$-invariant of $E$.

After this, we will give an algorithm for finding the Berkovich skeleton of any genus three curve $C$. The techniques developed in this thesis in fact work for all curves up to genus seven, which we show in the last section.  

\section{Preliminaries}
We will assume in this chapter that the characteristic of the residue field is coprime to six. 

Let $C$ be a smooth, projective, geometrically irreducible curve over $K$ with a degree three covering  $\phi:C\longrightarrow{\mathbb{P}^{1}}$. This means that the injection of function fields 
\begin{equation}
K(\mathbb{P}^{1})\longrightarrow{K(C)}
\end{equation}
has degree three. We will write $K(\mathbb{P}^{1})=K(x)$ and $K(C)=L$ from now on. We can then find an element $z\in{L\backslash{K(x)}}$ 
that satisfies 
\begin{equation}
z^3+pz+q=0
\end{equation}
for $p,q\in{K(x)}$. We now assume that the corresponding {\it{Galois closure}} $\overline{L}$ has Galois group $S_{3}$ over $K(x)$. 
Let $\overline{C}$ be the normalization of $C$ in $\overline{L}$. We then have
\begin{lemma}\label{GeomIrr}
$\overline{C}$ is geometrically irreducible if and only $\Delta=4p^3+27q^2\notin{K}$.
\end{lemma}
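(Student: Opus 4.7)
The plan is to invoke Lemma \ref{GeometricallyIrreducible}, which rephrases geometric irreducibility of $\overline{C}$ as the condition $\overline{L}\cap K^{s}=K$. Before starting, I would establish the structural fact that $\overline{L}=L(\sqrt{\Delta})$ with $[\overline{L}:L]=2$. Since the Galois group is $S_{3}$ by assumption, $\Delta$ is not a square in $K(x)$, and the unique index-two subgroup $A_{3}\triangleleft S_{3}$ corresponds to the quadratic subfield $K(x)(\sqrt{\Delta})$; the Galois closure is then the compositum $L\cdot K(x)(\sqrt{\Delta})=L(\sqrt{\Delta})$.

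For the ``only if'' direction, suppose $\Delta\in K$. Because $\Delta\notin K(x)^{*2}$, a fortiori $\Delta\notin (K^{*})^{2}$, so $\sqrt{\Delta}$ generates a nontrivial separable quadratic extension of $K$ sitting inside $\overline{L}$. This forces $\overline{L}\cap K^{s}\neq K$, and hence $\overline{C}$ is not geometrically irreducible.

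For the converse, assume $\overline{L}\cap K^{s}\supsetneq K$ and pick $\alpha$ in the difference. Using the decomposition $\overline{L}=L\oplus L\sqrt{\Delta}$, I write $\alpha=u+v\sqrt{\Delta}$ with $u,v\in L$. Applying the nontrivial element $\sigma$ of $\operatorname{Gal}(\overline{L}/L)$ produces a conjugate $\bar\alpha=u-v\sqrt{\Delta}$, which still lies in $\overline{L}\cap K^{s}$ since elements of $\operatorname{Gal}(\overline{L}/K(x))$ carry $K$-algebraic elements to $K$-algebraic ones (the minimal polynomial over $K$ is preserved). The trace and norm give $2u=\alpha+\bar\alpha$ and $u^{2}-v^{2}\Delta=\alpha\bar\alpha$, both in $L\cap K^{s}$; by Lemma \ref{GeometricallyIrreducible} applied to the geometrically irreducible $C$, this intersection is $K$. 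Hence $u\in K$ and $v^{2}\Delta\in K$, and since $\alpha\notin K$ we have $v\neq 0$. From $\Delta\in K(x)^{*}$ it follows that $v^{2}\in K(x)$. The key observation is now that $[L:K(x)]=3$ is odd, so $L$ contains no proper quadratic subextension of $K(x)$; therefore $v\in K(x)$ and $\Delta\in K\cdot K(x)^{*2}$, i.e.\ $\Delta$ agrees with a constant modulo squares in $K(x)$, which is the content of ``$\Delta\in K$'' in the statement.

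The main obstacle is the last step: reading the hypothesis $\Delta\notin K$ in the sharp form $\Delta\notin K\cdot K(x)^{*2}$. A completely literal reading would break one direction (if $\Delta=c\cdot f^{2}$ with $c\in K$ and $f\in K(x)\setminus K$, then $K(x)(\sqrt{\Delta})=K(\sqrt{c})(x)$ still introduces constants, and $\overline{C}$ fails to be geometrically irreducible even though $\Delta\notin K$ literally). Apart from this convention, the only nontrivial algebraic input is the parity argument $[L:K(x)]=3$, which prevents $v$ from generating a quadratic subextension and thereby forces $v\in K(x)$.
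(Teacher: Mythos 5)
Your proof is correct and follows the same broad outline as the paper's: both reduce geometric irreducibility to $\overline{L}\cap K^{s}=K$ via Lemma \ref{GeometricallyIrreducible}, and both exploit $K(x)(y)=K(x)(\sqrt{\Delta})$ being the unique quadratic subextension of $\overline{L}/K(x)$. The executions of the hard direction differ. The paper picks a constant $z_{0}\in\overline{L}\cap\overline{K}\setminus K$ and splits on whether $[K(z_{0}):K]$ is $2$ or $3$, matching $K(z_{0})(x)$ with $K(x)(y)$ in the first case and invoking the transitive $S_{3}$-action on the degree-three subfields against $L\cap\overline{K}=K$ in the second. You instead expand $\alpha\in\overline{L}\cap K^{s}\setminus K$ as $u+v\sqrt{\Delta}$ with $u,v\in L$, use the trace and norm from $\text{Gal}(\overline{L}/L)$ together with $L\cap K^{s}=K$ to get $u\in K$ and $v^{2}\Delta\in K$, and then use $2\nmid[L:K(x)]=3$ to force $v\in K(x)$. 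This avoids the case split and is tidier.

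Your closing caveat is a genuine issue, not merely a matter of taste. Both your argument and a careful reading of the paper's degree-two case produce only $\Delta\in K\cdot K(x)^{*2}$, not $\Delta\in K$. The paper's step ``for some $a,b\in K$, we have $a+bz_{0}=y$'' is precisely the gap: the identification $K(z_{0})(x)=K(x)(y)$ only gives $a,b\in K(x)$, from which one deduces $y=\beta w$ with $\beta\in K(x)$ and $w\in\overline{K}$ satisfying $w^{2}\in K$, hence $\Delta=\beta^{2}w^{2}\in K\cdot K(x)^{*2}$. Indeed $\Delta$ is only defined up to sixth powers (replacing $z$ by $fz$ with $f\in K(x)^{*}$ multiplies $\Delta$ by $f^{6}$), so the literal condition $\Delta\in K$ cannot be intrinsic to the covering. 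Your example $\Delta=cf^{2}$ with $c\in K$ a nonsquare and $f\in K(x)\setminus K$ is a real counterexample to the ``if'' direction as stated: there $\overline{C}$ is geometrically reducible yet $\Delta\notin K$. The correct invariant formulation is $\Delta\notin K\cdot K(x)^{*2}$, equivalently $K(x)(\sqrt{\Delta})\cap\overline{K}=K$.
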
 
\begin{proof}
Note that being geometrically irreducible is equivalent to $\overline{L}\cap{\overline{K}}=K$ by \cite[Chapter 3, Corollary 2.14]{liu2}, where $\overline{K}$ is the algebraic closure of $K$. Note that $\overline{L}$ naturally contains the {\it{field}} (by assumption on the Galois group) $K(x)[y]/(y^2-\Delta)$. Suppose that $\overline{C}$ is geometrically reducible. Then there exists a $z_{0}\in\overline{L}$ such that $K(z_{0})\supset{K}$ is finite of degree $\neq{1}$. If $K(z_{0})/K$ is of degree $2$, we  reason as follows. For some $a,b\in{K}$, we have $a+bz_{0}=y$ (since there is only one subfield of degree two) and thus $y^2\in\overline{K}$. We then find $y^2\in{K(x)\cap{\overline{K}}}=K$, a contradiction. Now suppose that $K(z_{0})/K$ is of degree $3$. Then some conjugate $\sigma(z_{0})$ of $z_{0}$ belongs to $L$. But then $\sigma(z_{0})\in{L\cap{\overline{K}}}=K$, a contradiction. 

For the other direction, suppose that $\Delta\in{K}$. Then $y$ is an element of $(\overline{L}\cap{\overline{K}})\backslash{K}$. 
This contradicts our assumption on $\overline{L}$, finishing the proof.  
\end{proof}
For the remainder of the thesis, we assume that $\overline{C}$ is geometrically irreducible, which is quite an easy condition to check by Lemma \ref{GeomIrr}. 
The Galois closure $\overline{L}$ can now be described by the two equations
\begin{equation}
w^3=y-\sqrt{27}q
\end{equation}
and
\begin{equation}
y^2=\Delta.
\end{equation}
See Appendix \ref{Appendix1} for the details. These equations first arose in the famous Cardano formulas, where they are used to express $z$ in terms of the above radicals. We will not use these formulas in this paper, since the above equations are enough to derive all the information we need.

\section{Tame $S_{3}$-coverings of discrete valuation rings}

Let $R$ be a discrete valuation ring with quotient field $K$, residue field $k$, uniformizer $\pi$ and valuation $v$. Note that the residue field $k$ is \emph{not} assumed to be algebraically closed for this section, since we'll be using the results here for valuations corresponding to components in the special fiber of a semistable model. We will denote the maximal ideal $(\pi)$ in $R$ by $\mathfrak{p}$. We will assume that $\text{char}(K)=0$, $\text{char}(k)>3$ . Furthermore, we will assume that $K$ contains a primitive third root 
of unity $\zeta_{3}$ and a primitive fourth root of unity $\zeta_{4}$. The fourth root of unity is used to remove a minus sign in the formula for the discriminant (see Appendix \ref{Appendix1}), which is strictly speaking not necessary, but it makes the formulas somewhat nicer. 
The third root of unity will allow us to use Kummer theory for abelian coverings of degree $3$. 

Let $L$ be a degree $3$ extension of $K$ such that $\overline{L}/K$ has Galois group $S_{3}$. This is equivalent to the discriminant of $L/K$ not being a square in $K$.
After a translation, $L$ is given by an equation of the form
\begin{equation}\label{MainEq1}
z^3+p\cdot{z}+q=0,
\end{equation}
where $p,q\in{K}$.
 Let $B$ be the normalization of $R$ in $\overline{L}$ and let $\mathfrak{q}$ be any prime lying above $\mathfrak{p}$. 
We would now like to know the \emph{inertia group} of $\mathfrak{q}$. 
We will content ourselves with knowing $|I_{\mathfrak{q}}|$. 

Let us state the relevant results here and defer the actual proofs and computations to Appendix \ref{Normalizations}.  

\begin{pro}\label{InertS3}
\begin{enumerate}
\item Suppose that $3v(p)>2v(q)$. Then
\begin{eqnarray*}
|I_{\mathfrak{q}}|=3 &\iff & 3\nmid{v(q)},\\
|I_{\mathfrak{q}}|=1 & \iff & 3\mid{v(q)}.
\end{eqnarray*} 
\item Suppose that $3v(p)<2v(q)$. Then
\begin{eqnarray*}
|I_{\mathfrak{q}}|=2 &\iff & 2\nmid{v(p)},\\
|I_{\mathfrak{q}}|=1 & \iff & 2\mid{v(p)}.
\end{eqnarray*} 
\item Suppose that $3v(p)=2v(q)$. Then 
\begin{eqnarray*}
|I_{\mathfrak{q}}|=2 &\iff & 2\nmid{v(\Delta)},\\
|I_{\mathfrak{q}}|=1 & \iff & 2\mid{v(\Delta)}.
\end{eqnarray*} 
\end{enumerate}
\end{pro}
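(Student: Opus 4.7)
The approach will be to analyze ramification in the tower $K \subset K(y) \subset K(y)(w) = \overline{L}$, where $y^2 = \Delta$ and $w^3 = y - \sqrt{27}q$, using the Kummer description of $\overline{L}$ recalled just before the statement. Since $\mathrm{char}(k) > 3$ and the Galois group is $S_3$, the inertia group $I_\mathfrak{q}$ is cyclic of order in $\{1,2,3\}$, and the total ramification index at $\mathfrak{q}$ factors as the product of the ramification index $e_1$ of $K(y)/K$ at some prime $\mathfrak{p}'$ above $\mathfrak{p}$ and the ramification index $e_2$ of $K(y)(w)/K(y)$ at a prime $\mathfrak{q}$ above $\mathfrak{p}'$. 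The index $e_1$ will be read off from the parity of $v(\Delta)$: writing $\Delta = \pi^{v(\Delta)} u$ and setting $y' = y/\pi^{\lfloor v(\Delta)/2\rfloor}$, we get $e_1 = 2$ if $v(\Delta)$ is odd and $e_1 = 1$ otherwise. The index $e_2$ will be computed by Proposition \ref{UnrAbelExt1} applied to the Kummer extension $w^3 = y - \sqrt{27}q$: it is $3/\gcd(3, v'_0(y - \sqrt{27}q))$, where $v'_0$ is normalized so that a uniformizer of $K(y)_{\mathfrak{p}'}$ has value $1$. The engine for computing $v'_0(y - \sqrt{27}q)$ will be the identity
\begin{equation*}
(y - \sqrt{27}q)(y + \sqrt{27}q) \;=\; y^2 - 27q^2 \;=\; 4p^3,
\end{equation*}
combined with the estimates $v'_0(y) = e_1\cdot v(\Delta)/2$ and $v'_0(\sqrt{27}q) = e_1\cdot v(q)$.

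The three cases of the Proposition will then split along exactly the Newton-polygon cases of $z^3 + pz + q$, governed by which of $v(4p^3)$ and $v(27q^2)$ dominates in $\Delta$. In Case (1), $3v(p)>2v(q)$ forces $v(\Delta)=2v(q)$ (even), so $e_1=1$ and $v'_0(y)=v(q)=v'_0(\sqrt{27}q)$; here the leading terms either cancel or do not at a given prime $\mathfrak{p}'$, and the product identity pins down $v'_0(y-\sqrt{27}q)\in\{v(q),\,3v(p)-v(q)\}$, both $\equiv -v(q)\pmod 3$, giving $e_2=3$ precisely when $3\nmid v(q)$. In Case (2), $3v(p)<2v(q)$ forces $v(\Delta)=3v(p)$, so $e_1$ equals the parity of $v(p)$; since $v'_0(y) > v'_0(\sqrt{27}q)$ after rescaling, we get $v'_0(y-\sqrt{27}q) = e_1\cdot v(q)$ when $e_1=2$ (divisible by 3), while in the unramified subcase $v'_0(y-\sqrt{27}q)= 3v(p)/2$ is again divisible by 3 since $v(p)$ is even; so $e_2=1$ throughout, and $|I_\mathfrak{q}|=e_1$.

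Case (3), with $3v(p)=2v(q)$, is the genuinely delicate one and I expect it to be the main obstacle: here $v(\Delta)\geq 3v(p)=6a$ with possible strict inequality from cancellation of the leading terms of $4p^3$ and $27q^2$. The subcase $v(\Delta)=6a$ requires a careful argument using the product identity to show that at \emph{both} primes $\mathfrak{p}'$ above $\mathfrak{p}$ the valuations $v'_0(y\pm\sqrt{27}q)$ must equal $3a$ exactly — neither can exceed $3a$ because the companion factor would then fall below $3a$, contradicting $v'_0(y),v'_0(\sqrt{27}q)\geq 3a$. In the subcases with $v(\Delta)>6a$, we instead obtain $v'_0(y)>v'_0(\sqrt{27}q)$ so $v'_0(y-\sqrt{27}q)=e_1\cdot v(q)=6a$, again divisible by 3. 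In every instance of Case (3) we thus get $e_2=1$, and the total ramification is $e_1$, which is $2$ or $1$ according to the parity of $v(\Delta)$, matching the proposition.

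Finally, I would relegate the explicit normalization computations (which amount to bookkeeping with Newton polygons of $z^3+pz+q$ and of the Kummer polynomials $y^2-\Delta$ and $w^3-(y-\sqrt{27}q)$) to the appendix referenced in the text, as the conceptual content is entirely contained in the steps above. The assumption $\zeta_3,\zeta_4\in K$ guarantees that each Kummer step is genuinely a Galois extension and that ramification indices in the tower multiply cleanly, which is what makes the argument go through.
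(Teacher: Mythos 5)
Your overall strategy is the same as the paper's: decompose the tower $K \subset K(y) \subset K(w)$, read off the quadratic ramification $e_1$ from the parity of $v(\Delta)$, read off the cubic ramification $e_2$ from $v'_0(y-\sqrt{27}q) \bmod 3$ via Kummer theory, and compute that valuation using the identity $(y-\sqrt{27}q)(y+\sqrt{27}q)=4p^3$ together with Newton-polygon bookkeeping. That is precisely how the paper's Appendix argues, with the same case split along $3v(p)$ versus $2v(q)$. However, your treatment of Case (2) contains a genuine error. With $v'_0$ normalized on $K(y)_{\mathfrak{p}'}$ and $v(\Delta)=3v(p)$, one has $v'_0(y)=e_1\cdot v(\Delta)/2=e_1\cdot 3v(p)/2$ and $v'_0(\sqrt{27}q)=e_1 v(q)$, and the hypothesis $3v(p)<2v(q)$ gives $v'_0(y)<v'_0(\sqrt{27}q)$ — the \emph{opposite} of what you assert. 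Consequently $v'_0(y-\sqrt{27}q)=v'_0(y)=e_1\cdot 3v(p)/2$, not $e_1\cdot v(q)$. This matters: when $e_1=2$ your claimed value $e_1 v(q)=2v(q)$ is not divisible by $3$ in general (take $v(p)=1$, $v(q)=2$), so your argument that $e_2=1$ would fail as written. The correct value $e_1\cdot 3v(p)/2$ (equal to $3v(p)$ when $e_1=2$, and to $3(v(p)/2)$ when $e_1=1$ with $v(p)$ even) is visibly divisible by $3$, which is what actually forces $e_2=1$ and hence $|I_\mathfrak{q}|=e_1$. The paper's Lemmas in Case II compute exactly $v_{\mathfrak{q}'}(y\pm\sqrt{27}q)=3v(p)$ (ramified subcase) and $3v(p)/2$ (unramified subcase).

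There is also a small inaccuracy in your Case (1): the two candidate valuations $v(q)$ and $3v(p)-v(q)$ are not \emph{both} $\equiv -v(q)\pmod 3$; rather one is $\equiv v(q)$ and the other $\equiv -v(q)$. The conclusion is unaffected because $3\mid v(q)\iff 3\mid -v(q)$, but the statement should be adjusted. Your Case (3) argument is sound and in fact slightly more refined than the paper's (you distinguish $v(\Delta)=6a$ from $v(\Delta)>6a$ before reuniting them, whereas the paper handles both at once by observing that the RHS $4p^3/\pi^{2v(q)}$ of the rescaled product identity is a unit). With the Case (2) inequality and valuation corrected, the proposal matches the paper's proof in substance.
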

\begin{proof}
See Appendix \ref{Normalizations}. 
\end{proof}

\section{Covering data using continuity of inertia groups}\label{InertTechnique}

In this section, we give the covering data for the morphism of intersection graphs $\Sigma(\overline{\mathcal{C}})\rightarrow\Sigma({\mathcal{D}_{S}})$ associated to a disjointly branched morphism $\overline{\mathcal{C}}\rightarrow{\mathcal{D}_{S}}$ for the $S_{3}$-covering $\phi: \overline{C}\rightarrow{\mathbb{P}^{1}}$. Here $S$ is a subset of $\mathbb{P}^{1}(K)$ that contains the branch locus of $\overline{\phi}$, as in Chapter \ref{Appendix2}. Before we give the covering data, we will first give an \emph{explicit} $S\subset{\mathbb{P}^{1}(K)}$ that contains the branch locus, giving rise to a separating model $\mathcal{D}_{S}$ as explained in Appendix \ref{Appendix2}. After that, we will use Proposition \ref{InertS3} and Theorem \ref{InertProp2} to give the {\it{covering data}}. That is, we will give $|D_{x}|$, where $x$ corresponds to an edge or vertex in $\Sigma(\overline{\mathcal{C}})$.

For any $z\in\mathbb{P}^{1}$, let $v_{z}$ be the corresponding valuation of the function field $K(x)$. Consider the set $S:=\text{Supp}(p,q,\Delta)\subset{\mathbb{P}^{1}}$.  In terms of valuations, we then find that $z\in{S}$ if and only $v_{z}$ is nontrivial on $p,q$ or $\Delta$. 
From now on, we assume that  
$S\subset{\mathbb{P}^{1}(K)}$ (otherwise, we take a finite extension $K'$ of $K$ and set $K:=K'$). Let $B_{\overline{\phi}}$ be the branch locus of the morphism $\overline{\phi}:\overline{C}\rightarrow{\mathbb{P}^{1}}$. We then have
\begin{lemma}\label{BranchLocus1}
\begin{equation}
B_{\overline{\phi}}\subseteq{S}. 
\end{equation}
\end{lemma}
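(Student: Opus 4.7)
The plan is to show contrapositively that if $z \in \mathbb{P}^{1}(K)$ lies outside $S$, then the $S_{3}$-covering $\overline{\phi}$ is unramified over $z$, i.e.\ every prime $\mathfrak{q}$ of the normalization lying above the valuation $v_{z}$ has trivial inertia $|I_{\mathfrak{q}}|=1$. Since $z \notin S$ by assumption, we have $v_{z}(p)=v_{z}(q)=v_{z}(\Delta)=0$, and I will check that this data lands squarely in the ``unramified'' branch of Proposition~\ref{InertS3}.

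First I would observe that with $v_{z}(p)=v_{z}(q)=0$ we are neither in case (1) nor in case (2) of Proposition~\ref{InertS3}, because those require the strict inequalities $3v(p)>2v(q)$ and $3v(p)<2v(q)$ respectively, whereas here both sides equal $0$. Consequently we are in case (3), where $3v(p)=2v(q)$. The criterion there says that $|I_{\mathfrak{q}}|=1$ precisely when $v(\Delta)$ is even; but $v_{z}(\Delta)=0$ is trivially even, so $|I_{\mathfrak{q}}|=1$ as desired. This shows $z \notin B_{\overline{\phi}}$, which is exactly the contrapositive we wanted.

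As a sanity check and an alternative route, one can argue directly from the tower
\[
K(x) \subset K(x)[y]/(y^{2}-\Delta) \subset \overline{L}=K(x,y)[w]/(w^{3}-(y-\sqrt{27}q)).
\]
The degree two sub-extension is unramified at $v_{z}$ by Corollary~\ref{AbelExt2} because $v_{z}(\Delta)=0$. For the degree three step, note that the identity $y^{2}=\Delta=4p^{3}+27q^{2}$ together with $v_{z}(p)=0$ forces $y^{2}-27q^{2}=4p^{3}$ to be a unit at any prime of $D$ above $z$, so $y-\sqrt{27}q$ is also a unit there; another application of Corollary~\ref{AbelExt2} shows the Kummer extension $w^{3}=y-\sqrt{27}q$ is unramified above such a prime. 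Composing, $\overline{\phi}$ is unramified above $z$.

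The only mildly subtle point is that case (3) of Proposition~\ref{InertS3} is exactly the ``equality'' case $3v(p)=2v(q)$, which one might be tempted to read as a degenerate or boundary configuration; I do not expect any genuine obstacle, but one should verify that the hypothesis $v_{z}(p)=v_{z}(q)=0$ really satisfies the standing assumptions under which the proposition is proved (in particular that $p,q$ are allowed to be units). Once this is confirmed, the argument reduces to the one-line computation above.
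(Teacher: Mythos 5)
Your proof is correct and is essentially the paper's intended argument: the paper's own proof simply cites Proposition~\ref{InertS3} together with the characterization $z\in S\iff v_z$ is nontrivial on $p,q,\Delta$, and your contrapositive walk-through (with $v_z(p)=v_z(q)=v_z(\Delta)=0$ forcing case~(3) of that proposition and hence $|I_{\mathfrak{q}}|=1$) is exactly the detail being suppressed there. Your secondary check via the tower $K(x)\subset K(D)\subset\overline{L}$ is a sound unwinding of the same computation and your closing worry is not an issue, since Proposition~\ref{InertS3} places no restriction ruling out $p,q$ being units.
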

\begin{proof}
This follows from Proposition \ref{InertS3} and the characterization of $S$ in terms of valuations given before the lemma. 
\end{proof}

For $S=\text{Supp}(p,q,\Delta)$, we now take a model $\mathcal{D}_{S}$ of $\mathbb{P}^{1}$ such that the closure of $S$ in $\mathcal{D}_{S}$ consists of disjoint smooth sections over $R$. See Chapter \ref{Appendix2} for the construction of $\mathcal{D}_{S}$ and its corresponding intersection graph (which is also known as the {\it{tropical separating tree}}). Note that the morphism $\overline{\mathcal{C}}_{0}\rightarrow{\mathcal{D}_{S}}$ obtained by normalizing might not be disjointly branched, as the generic points of components in the special fiber of $\mathcal{D}_{S}$ can ramify. By Proposition \ref{InertS3}, we see that as soon as we know the valuations $v_{\Gamma}(p), v_{\Gamma}(q),v_{\Gamma}(\Delta)$ for a component $\Gamma$, we know what tamely ramified extension we have to take to make this morphism disjointly branched. 
We then obtain a disjointly branched morphism $\overline{\mathcal{C}}\rightarrow{\mathcal{D}'_{S}}$, where $\mathcal{D}'_{S}=\mathcal{D}_{S}\times{\text{Spec}(R')}$ for $R'$ a discrete valuation ring in $K'$ dominating $R$.

We now take a regular subdivision $\mathcal{D}''_{S}$ of $\mathcal{D}'_{S}$ in any edge $e$ and obtain a subdivision of the intersection graph $\Sigma(\mathcal{D}'_{S})$. As before, we note that the corresponding normalization of this model $\mathcal{D}''_{S}$ in $K(\overline{C})$ can then be vertically ramified over $\mathcal{D}''_{S}$. 
The good news here is that the inertia groups of the new components are directly related to the inertia group of the original edge $e$ by Theorem \ref{InertProp2}.   
We thus see by Proposition \ref{InertS3} that if we know the set $v_{\Gamma}(p), v_{\Gamma}(q),v_{\Gamma}(\Delta)$ for \emph{any} component in \emph{any} subdivision of our original intersection graph, then we know the inertia group of the original edge.   


We can find these valuations $v_{\Gamma}(p), v_{\Gamma}(q),v_{\Gamma}(\Delta)$ directly using the Laplacian operator and Theorem \ref{MainThmVert}. 

\begin{rem}\label{Remark1}
The valuation of $f$ at a component $\Gamma$ is exactly the coefficient in the vertical divisor corresponding to $\text{div}(f)$. We thus see that the above theorem gives the valuation, as soon as we know the valuation of $f$ at a single component. For $K(\mathcal{D})=K(x)$, this is quite easy: we take the valuation $v_{\Gamma_{0}}$ corresponding to the prime ideal $\mathfrak{p}=(\pi)\subset{R[x]}$. To be explicit, we write $f=\pi^{k}g$ (with $g\notin\mathfrak{p}$) and find $v_{\Gamma_{0}}(f)=k$. In other words, this valuation just measures the power of $\pi$ in $f$. 
\end{rem}
\begin{rem}
If we take any base change of the form $K\subset{K(\pi^{1/n})}$, then the corresponding Laplacians for $p,q$ and $\Delta$ are scaled by a factor $n$ (at least, if we normalize our valuation such that $v(\pi^{1/n})=1$).  
\end{rem}

We now summarize the above method for finding the covering data for a disjointly branched $S_{3}$-covering $\mathcal{C}\rightarrow{\mathcal{D}_{S}}$ corresponding to the $S_{3}$-covering $\overline{C}\rightarrow{\mathbb{P}^{1}}$. 
\begin{algo}
\begin{center}
{\bf{[Algorithm for the covering data using continuity of inertia groups]}}
\end{center}
\begin{flushleft}
Input: The polynomials $p,q,\Delta\in{K[x]}$. 
\end{flushleft}
\begin{enumerate}
\item Construct the tropical separating tree for the set $S=\text{Supp}(p,q,\Delta)\subset{\mathbb{P}^{1}(K)}$. 
\item For every root (and pole) $\alpha$ of $p$, $q$ and $\Delta$, determine $v_{\alpha}(p)$, $v_{\alpha}(q)$ and $v_{\alpha}(\Delta)$.
\item Find $v_{\Gamma_{0}}(p)$, $v_{\Gamma_{0}}(q)$ and $v_{\Gamma_{0}}(\Delta)$, as explained in Remark \ref{Remark1}. 
\item Determine the corresponding Laplacians of $p$, $q$ and $\Delta$. 
\item Use Theorems \ref{InertProp2}, \ref{DecompVert} and Proposition \ref{InertS3} to determine the covering data.
\end{enumerate}
Output: The covering data for the covering $\overline{C}\rightarrow{\mathbb{P}^{1}}$. 
\end{algo}

\begin{exa}\label{Examplecurve}
Let $C$ be the curve given by the equation
\begin{equation}
f(z)=z^3+p\cdot{z}+q=0
\end{equation}
for $p=x^3$ and $q=x^3+\pi^3$.
That is, we consider the field extension
\begin{equation}
K(x)\subset{K(x)[z]/(f(z))}
\end{equation}
and let $C\longrightarrow{\mathbb{P}^{1}}$ be the corresponding morphism of smooth curves. 
Let us find the divisors of $p$, $q$ and $\Delta=4p^3+27q^2=4x^9+27(x^3+\pi^3)^2$. 

\begin{figure}[h!]
\centering
\includegraphics[scale=0.3]{{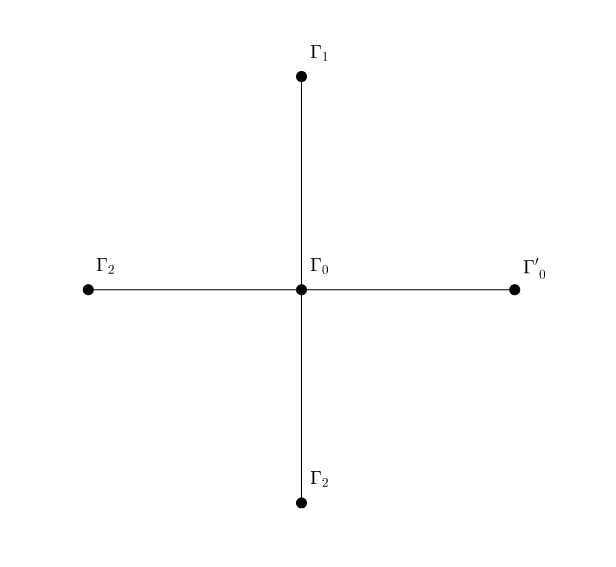}}
\caption{\label{1eplaatje} {\it{The tropical separating tree in Example \ref{Examplecurve}.}}}
\end{figure}

Let
\begin{align*}
P_{0}&=(0),\\
P_{i}&=(-\zeta^{i}_{3}\cdot{\pi}),
\end{align*}
for $i\in\{1,2,3\}$ and $\zeta_{3}$ a primitive third root of unity. Then $(p)=3P_{0}-3(\infty)$ and $(q)=P_{1}+P_{2}+P_{3}-3(\infty)$. 
We now take the tropical separating tree with five vertices, marked as in Figure \ref{1eplaatje}. 


We then have the following \emph{tropical} divisors:
\begin{align}
\rho((p))&=3\Gamma_{0}-3\Gamma'_{0},\\
\rho((q))&=\Gamma_{1}+\Gamma_{2}+\Gamma_{3}-3\Gamma'_{0}.
\end{align}
We quickly see that $p$ and $q$ contain no factors of $\pi$, so $v_{\Gamma'_{0}}(p)=v_{\Gamma'_{0}}(q)=0$. The Laplacians are then given by Figure \ref{Laplacianen1}. Note that the Laplacian $\phi_{p}$ is the same on every segment $e_{i}:=\Gamma_{i}\Gamma_{0}$ and likewise for $\phi_{q}$. We see that $\phi_{p}$ has slope zero on the $e_{i}$ and slope $3$ on $\Gamma_{0}\Gamma'_{0}$. Furthermore, we see that $\phi_{q}$ has slope $1$ on every $e_{i}$ and slope $3$ on $\Gamma_{0}\Gamma'_{0}$.  


\begin{figure}
\begin{subfigure}[b]{.45\textwidth}
  \centering
  \includegraphics[width=.5\linewidth, height=5cm]{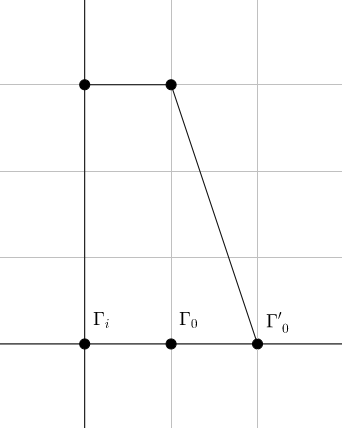}
  \caption{The Laplacian $\phi_{p}$ of $p$.}
  \label{2eplaatje}
\end{subfigure}%
\begin{subfigure}[b]{.45\textwidth}
  \centering
  \includegraphics[width=.7\linewidth, height=5cm]{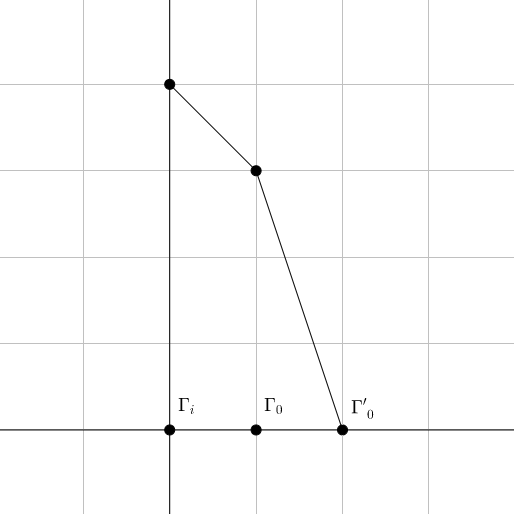}
  \caption{The Laplacian $\phi_{q}$ of $q$.}
  \label{3eplaatje}
\end{subfigure}
\caption{The Laplacians for Example \ref{Examplecurve}.}
\label{Laplacianen1}
\end{figure}

\begin{figure}[h!]
\centering
\includegraphics[scale=0.3]{{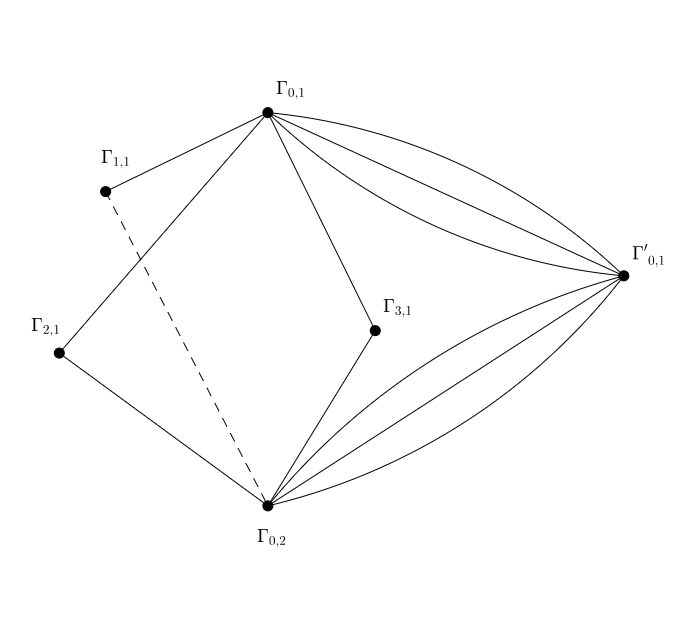}}
\caption{\label{5eplaatje} The intersection graph of the Galois closure in Example \ref{Examplecurve}.}
\end{figure}

For every vertex, we then have $3v_{\Gamma}(p)>2v_{\Gamma}(q)$, so we are in Case (I) of Theorem \ref{InertS3}. We then see that $\Gamma_{0}$ and $\Gamma'_{0}$ are unramified and every $\Gamma_{i}$ is ramified of order $3$. So we first take the tamely ramified extension of order three: $K\subset{K(\pi^{1/3})}$. If we now take a regular model after the base change, the Laplacians of both $p$ and $q$ will be scaled by a factor three (if we assume that our new valuation is normalized such that $v(\pi^{1/3})=1$). Furthermore, for this regular model we see that every edge gives rise to two new components. 
By Theorem \ref{InertProp2}, we have that the inertia groups of the new components on the subdivisions in fact give the inertia groups of the original edges.

Let us illustrate this in more detail. For instance, if we take the edge $e_{i}$, then after taking the base change we obtain four components: $\Gamma_{i}, v_{i,1},v_{i,2}$ and $\Gamma_{0}$, where $v_{i,1}$ and $v_{i,2}$ are new. We then find that the new Laplacian $\tilde{\phi}_{q}$ has 
\begin{align*}
\tilde{\phi}_{q}(\Gamma_{0})&=9, \\
\tilde{\phi}_{q}(v_{i,2})&=10, \\
\tilde{\phi}_{q}(v_{i,1})&=11,\\
\tilde{\phi}_{q}(\Gamma_{i})&=12.
\end{align*}

Again, using Theorem \ref{InertS3}, we find that $|I_{v_{i,2}}|=3$. By Theorem \ref{InertProp2}, we see that $|I_{e_{i}}|=3$. In other words, there are two edges lying above every $e_{i}$.
For $e_{0}=\Gamma_{0}\Gamma'_{0}$, using the same procedure as before, we see that there are six edges lying above $e_{0}$. Using Theorem \ref{DecompVert}, we see that there are two vertices lying above $\Gamma_{0}$. One then quickly finds that there is only one covering graph $\Sigma(\overline{\mathcal{C}})$ satisfying these conditions. It is given by Figure \ref{5eplaatje}.

We note that the genera of $\Gamma_{0,1}$, $\Gamma_{0,2}$ and $\Gamma'_{0,1}$ are one, whereas the genera of the other components are zero. This can be found using the Riemann-Hurwitz formula. 
\begin{figure}[h!]
\centering
\includegraphics[scale=0.3]{{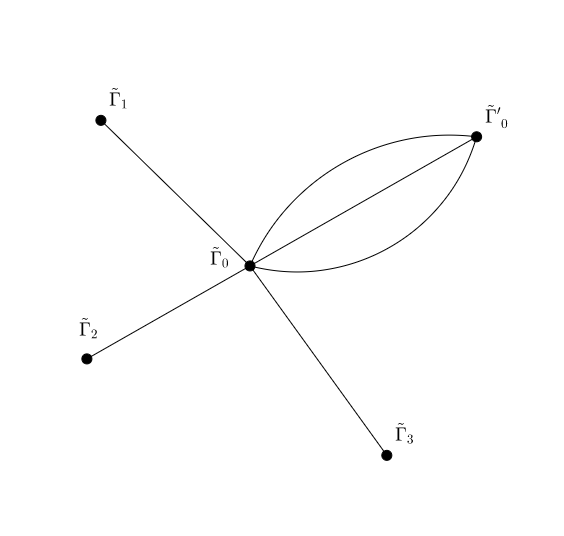}}
\caption{\label{6eplaatje} The intersection graph of the quotient under the subgroup of order two in Example \ref{Examplecurve}.}
\end{figure}

Taking the quotient under the subgroup of order two corresponding to the curve $C$, we then obtain the intersection graph of $\mathcal{C}$. It is given in Figure \ref{6eplaatje}. Note that the component labeled by $\tilde{\Gamma}_{0}$ has genus $1$, whereas $\tilde{\Gamma}'_{0}$ has genus $0$. The other three components don't contribute to the Berkovich skeleton. The entire Galois lattice, including all the intermediate intersection graphs but excluding the leaves, can now be found in Figure \ref{21eplaatje}. 

\begin{figure}[h!]
\centering
\includegraphics[scale=0.45]{{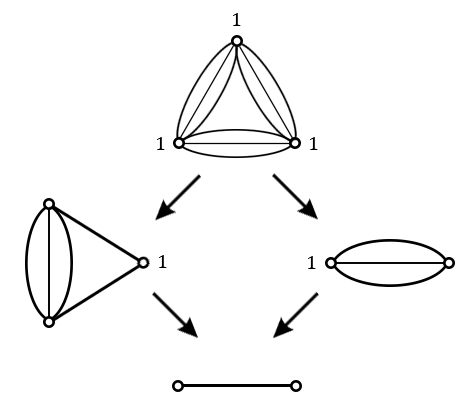}}
\caption{\label{21eplaatje} The full Galois lattice of intersection graphs in Example \ref{Examplecurve}. The leaves are omitted. }
\end{figure}

\end{exa} 

In Example \ref{Gen3NonAb1}, we will do the same example with a different technique. This technique will be presented in the next section. 

\section{Covering data using the quadratic subfield}\label{QuadraticSubfieldTechnique}


The degree three covering $\phi:C\longrightarrow{\mathbb{P}^{1}}$ can be represented on the level of function fields as
\begin{equation}
z^3+p\cdot{}z+q=0,
\end{equation}
where $p$ and $q$ are polynomials over $K$. By our initial assumption on $\phi$, we find that the Galois closure 
contains a quadratic subfield $K(D)$, corresponding to a smooth curve $D$. On the level of function fields, this is given as
\begin{equation}
K(x)\subset{}K(x)[y]/(y^2-\Delta),
\end{equation}
where $\Delta=4p^3+27q^2$.  

The corresponding degree three morphism $\overline{C}\rightarrow{D}$ can then be represented by
\begin{equation}
K(D)\subset{K(D)[w]/(w^3-(y-\sqrt{27}q))}.
\end{equation}

We thus see that the field extension $K(\overline{C})\supset{K(\mathbb{P}^{1})}$ has been subdivided into two abelian parts: $K(\overline{C})\supset{K(D)}$ of degree $3$ and $K(D)\supset{K(\mathbb{P}^{1})}$ of degree $2$. See Appendix \ref{Appendix1} for some background material regarding these equations.

Consider a model $\mathcal{D}_{S}$ of $\mathbb{P}^{1}$ such that the closure of $S:=\text{Supp}(p,q,\Delta)$ 
 is separated in $\mathcal{D}_{S}$. 
 We find by Lemma \ref{BranchLocus1} that the branch locus of $\overline{\phi}$ is contained in $S$. Over some finite extension $K'\supset{K}$, we thus obtain disjointly branched morphisms $\overline{\mathcal{C}}\rightarrow{\mathcal{D}}\rightarrow{\mathcal{D}_{S}}$  such that the base change to the generic fiber is $\overline{C}\rightarrow{D}\rightarrow{\mathbb{P}^{1}}$.   
 We won't worry about this finite extension in this section and just take $K:=K'$.   
 We first calculate the intersection graph of the intermediate model $\mathcal{D}$. 
 The covering $D\rightarrow{\mathbb{P}^{1}}$ is hyperelliptic, so we can apply Algorithm \ref{AbelianAlgorithm}. 
 We note that this step does not require any twisting data, since we are dealing with an abelian covering of a tree.

We now consider the divisor of the function $y-\sqrt{27}q\in{K(D)}$. This can be given explicitly in terms of the zero divisors of $p,q$ and $\Delta$. Since calculating divisors is a matter of normalizing, the reader will probably not be surprised that there are again three cases. The result is as follows, where we again defer the proof to Appendix \ref{Normalizations}.
\begin{pro}\label{DivisorDegree3}
Let $y-\sqrt{27}q\in{K(D)}$ be as above. Let $z\in\mathbb{P}^{1}(K)$ and denote by $v_{z}$ the corresponding valuation of $K(x)$.
\begin{enumerate}
\item Suppose that $3v_{z}(p)>2v_{z}(q)$. There are then two points $Q_{1}$ and $Q_{2}$ lying above $P$ in $D$. We then have
\begin{align*}
v_{Q_{1}}(y-\sqrt{27}q)&=3v_{z}(p)-v_{z}(q),\\
v_{Q_{2}}(y-\sqrt{27}q)&=v_{z}(q).
\end{align*}
\item Suppose that $3v_{z}(p)<2v_{z}(q)$. If $2|v_{z}(p)$, then there are two points $Q_{1}$ and $Q_{2}$ lying above $P$ in $D$. We have
\begin{equation}
v_{Q_{i}}(y-\sqrt{27}q)=3v_{z}(p)/2.
\end{equation}
If $2\nmid{v_{z}(p)}$, then there is only one point $Q$ in $D$ lying above $P$. We then have
\begin{equation}
v_{Q}(y-\sqrt{27}q)=3v_{z}(p).
\end{equation}
\item Suppose that $3v_{z}(p)=2v_{z}(q)$. If $2|v_{P}(\Delta)$, then there are two points $Q_{1}$ and $Q_{2}$ lying above $P$. We have
 \begin{equation}
 v_{Q_{i}}(y-\sqrt{27}q)=v_{z}(q).
 \end{equation}
 If $2\nmid{v_{z}(\Delta)}$, then there is only one point $Q$ lying above $P$. We have
 \begin{equation}
 v_{Q}(y-\sqrt{27}q)=2v_{z}(q).
 \end{equation}
\end{enumerate}
\end{pro}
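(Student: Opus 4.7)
The key observation is the factorization identity
\begin{equation*}
(y - \sqrt{27}q)(y + \sqrt{27}q) \;=\; y^2 - 27 q^2 \;=\; 4p^3,
\end{equation*}
which holds in $K(D)$. For any place $Q$ of $D$ lying above $z$ with ramification index $e_{Q/z}\in\{1,2\}$, this gives
\begin{equation*}
v_Q(y - \sqrt{27}q) + v_Q(y + \sqrt{27}q) \;=\; 3\, e_{Q/z}\, v_z(p).
\end{equation*}
My plan is to combine this with the ultrametric inequality and with explicit knowledge of $v_Q(y) = v_Q(\sqrt{\Delta}) = \tfrac12 e_{Q/z}\,v_z(\Delta)$, determining the splitting behaviour of the hyperelliptic cover $D \to \mathbb{P}^1$ (equivalently the parity of $v_z(\Delta)$) via the Kummer/Newton-polygon analysis already used in Chapter~\ref{Abelian} (Algorithm~\ref{AbelianAlgorithm}).

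In Case 1, where $3v_z(p) > 2v_z(q)$, one reads off $v_z(\Delta) = 2v_z(q)$ (even), so the cover splits into two points $Q_1,Q_2$. I would write $y^2 = 27q^2(1+u)$ with $u = 4p^3/(27q^2)$ of positive $v_z$-valuation, and apply Hensel's lemma (permitted since $\operatorname{char}(k)\neq 2$) to extract $s=\sqrt{1+u}$ locally; the two points correspond to $y\mapsto \pm\sqrt{27}q\,s$. Then $y - \sqrt{27}q = \sqrt{27}q(s\mp 1)$, and the two sub-computations $v_{Q_1}(s-1)=v_z(u)=3v_z(p)-2v_z(q)$ and $v_{Q_2}(s+1)=0$ give the stated valuations directly. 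Case 2 is analogous: I would rewrite $y^2 = 4p^3(1+u')$ with $u'=27q^2/(4p^3)$ of positive valuation; when $v_z(p)$ is even the substitution $\tilde{y}=y/\pi_z^{3v_z(p)/2}$ produces an unramified Hensel factorization yielding two points with $v_{Q_i}(y) = 3v_z(p)/2 < v_z(q)$, while if $v_z(p)$ is odd the cover ramifies with $e_{Q/z}=2$, forcing $v_Q(y)=3v_z(p) < 2v_z(q)=v_Q(\sqrt{27}q)$; in both situations the ultrametric inequality gives $v_Q(y-\sqrt{27}q)=v_Q(y)$.

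Case 3 ($3v_z(p)=2v_z(q)$) is the delicate one and will be the main obstacle, since $v_z(\Delta)$ can exceed the common value $2v_z(q)$ due to cancellation in $\Delta = 4p^3 + 27q^2$. Writing $v := 1 + 4p^3/(27q^2)$ so that $\Delta = 27q^2 v$, the splitting of $D\to\mathbb{P}^1$ is governed by the parity of $v_z(v)$. In sub-case~(3a) with $2\mid v_z(\Delta)$ (equivalently $2\mid v_z(v)$), I would distinguish $v_z(v)=0$ from $v_z(v)>0$: when $v_z(v)>0$ we have $v_{Q_i}(y) > v_z(q) = v_{Q_i}(\sqrt{27}q)$ and the ultrametric gives the answer $v_z(q)$ immediately; when $v_z(v)=0$ the crucial point is that the reduction $\bar v = 1 + \bar u$ cannot equal $1$, because $\bar u\neq 0$ (here I use $v_z(u)=0$, which is exactly the hypothesis $3v_z(p)=2v_z(q)$), so the Hensel square roots $\pm\sqrt{\bar v}$ differ from $\pm 1$, and an explicit computation $y\mp\sqrt{27}q = \sqrt{27}q(\sqrt v \mp 1)$ at each point gives valuation $v_z(q)$. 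Sub-case~(3b) with $2\nmid v_z(\Delta)$ is then straightforward: the cover is ramified, $v_Q(y) = 2v_z(q) + v_z(v) > 2v_z(q) = v_Q(\sqrt{27}q)$ since $v_z(v)\geq 1$, and ultrametricity closes the argument.

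The main technical hurdle is thus the Case 3 bookkeeping: one must verify that $\bar u \neq 0$ under the assumption $3v_z(p)=2v_z(q)$ (which is immediate once the identity $u = 4p^3/(27q^2)$ is recorded), and then handle the $\bar u = -1$ degeneration by passing to the product identity $v_Q(y-\sqrt{27}q) + v_Q(y+\sqrt{27}q) = 2v_z(q)$ to rule out the apparent ambiguity. All other steps reduce to Newton-polygon and Hensel arguments that are entirely analogous to the superelliptic case treated in Chapter~\ref{Abelian}.
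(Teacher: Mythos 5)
Your proposal takes essentially the same route as the paper's (Appendix~\ref{Normalizations}): the factorization $(y-\sqrt{27}q)(y+\sqrt{27}q)=4p^{3}$ is the engine, the same trichotomy on $3v_{z}(p)$ vs.\ $2v_{z}(q)$ organizes the cases, and the individual valuations are extracted by normalizing $y$, $p$, $q$ by suitable powers of the uniformizer and then reading off the reduction. The one place your bookkeeping is heavier than the paper's is Case~3 with $2\mid v_{z}(\Delta)$: you split into the sub-cases $v_{z}(v)=0$ and $v_{z}(v)>0$ and then worry about whether $\bar{v}=1$ or $\bar{u}=-1$, resolving the latter with an extra appeal to the product identity. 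The paper instead notes at once that $\bigl(y'-\sqrt{27}q/\pi^{v(q)}\bigr)\bigl(y'+\sqrt{27}q/\pi^{v(q)}\bigr)=4p^{3}/\pi^{2v(q)}$ is a unit, and since both factors are already integral, each must individually be a unit; this gives $v_{Q_{i}}(y\pm\sqrt{27}q)=v_{z}(q)$ without any sub-cases and makes the $\bar{u}=-1$ discussion unnecessary. Your argument is correct, just more circuitous at that spot.
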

\begin{proof}
See Appendix \ref{Normalizations}. 
\end{proof}

By calculating the reduction of every $Q_{i}$ in $\mathcal{D}$, one then obtains the tropical divisor $\rho(\text{div}(y-\sqrt{27}q))$. This is a principal divisor in $\text{Div}^{0}(\Sigma(\mathcal{D}))$, so we can write 
\begin{equation}
\Delta(\phi)=\rho(\text{div}(y-\sqrt{27}q))
\end{equation}
for some $\phi$ in $\mathcal{M}(\Sigma(\mathcal{D}))$. The covering data for an edge $e\in\Sigma(\mathcal{D})$ is then obtained as follows:
\begin{pro}\label{QuadraticSubfield}
Let $\Sigma(\mathcal{D})$ be the intersection graph of $\mathcal{D}$ and let $\phi$ be such that 
\begin{equation}
\Delta(\phi)=\rho(\text{div}(y-\sqrt{27}q)).
\end{equation} 
Let $e$ be an edge in $\Sigma(\mathcal{D})$ and let $\delta_{e}(\phi)$ be the absolute value of the slope of $\phi$ along $e$. 
Then the following hold:
\begin{enumerate}
\item There are three edges above $e$ if and only if $3|\delta_{e}(\phi)$. 
\item There is one edge above $e$ if and only if $3\nmid{\delta_{e}(\phi)}$.
\end{enumerate}
Furthermore, there are three vertices above a vertex $v$ with corresponding component $\Gamma$ if and only if the reduction of 
$(y-\sqrt{27}q)^{\Gamma}$ is a cube. 
\end{pro}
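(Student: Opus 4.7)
The plan is to view this proposition as the specialization of the general covering data results for cyclic abelian coverings (Proposition \ref{PropositionCoveringData} and Proposition \ref{DecompositionVertexProposition}) to the particular Kummer extension $K(D)[w]/(w^{3}-(y-\sqrt{27}q))$ that describes $\overline{C}\to D$. Since by Lemma \ref{BranchLocus1} the branch locus of this degree-three cover lies in $S$, and since the special fibre of $\mathcal{D}$ was obtained (after a finite extension of $K$) from the separating model $\mathcal{D}_{S}$ in a way that makes $\overline{\mathcal{C}}\to\mathcal{D}$ disjointly branched, the hypotheses of Chapter \ref{Coveringdata} apply with $f=y-\sqrt{27}q$ and $n=3$.

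For the edge statement I would proceed as follows. Fix an edge $e\in\Sigma(\mathcal{D})$ and let $e'$ be any edge of $\Sigma(\overline{\mathcal{C}})$ lying above $e$. Proposition \ref{PropositionCoveringData} gives
\begin{equation*}
|I_{e'}|\;=\;\frac{3}{\gcd(3,\delta_{e}(\phi))}.
\end{equation*}
By Lemma \ref{Orbitstabilizer} the number of edges above $e$ equals $|G|/|D_{e'}|=3/|I_{e'}|$, where the equality $D_{e'}=I_{e'}$ comes from our blanket assumption that $k$ is algebraically closed. Splitting into the two cases $3\mid\delta_{e}(\phi)$ and $3\nmid\delta_{e}(\phi)$ then immediately yields parts (1) and (2). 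The only thing that requires care is that $\phi$ in the statement is the Laplacian coming from $\rho(\mathrm{div}(y-\sqrt{27}q))$ computed via Proposition \ref{DivisorDegree3}, and this is exactly the function whose slope controls the ramification index in the Kummer extension by the Poincar\'{e}--Lelong formula (Theorem \ref{ValCor1}); I would quote this identification rather than re-derive it.

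For the vertex statement the input is again the Kummer description $w^{3}=f^{\Gamma}$ read inside the residue field $k(\Gamma)$, where $f^{\Gamma}=(y-\sqrt{27}q)^{\Gamma}$ is the $\Gamma$-modified form, so that $v_{\Gamma}(f^{\Gamma})=0$ and $\overline{f^{\Gamma}}\in k(\Gamma)^{*}$. The extension $\mathcal{O}_{\mathcal{D},y_{\Gamma}}\to\mathcal{O}_{\mathcal{D},y_{\Gamma}}[w]/(w^{3}-f^{\Gamma})$ is \'{e}tale, so by the exact argument used in the proof of Proposition \ref{DecompositionVertexProposition}, the number of primes above the generic point of $\Gamma$ equals the number of irreducible factors of $w^{3}-\overline{f^{\Gamma}}$ in $k(\Gamma)[w]$. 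Since $K$ (and hence $k$) contains a primitive third root of unity $\zeta_{3}$, this polynomial either splits completely as $\prod_{i=0}^{2}(w-\zeta_{3}^{i}h)$ when $\overline{f^{\Gamma}}=h^{3}$, or is irreducible otherwise. These two cases correspond respectively to $|D_{v'}|=1$ (three vertices above $v$) and $|D_{v'}|=3$ (one vertex above $v$), which is exactly the "furthermore" claim.

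The main obstacle, and the only real subtlety, is verifying that we may apply Proposition \ref{DecompositionVertexProposition} independently of whether $g(\Gamma)$ is zero or positive: the statement there is phrased for $g(\Gamma)>0$, whereas here $\Gamma$ could have genus zero (for instance when $\Gamma$ is a new component produced by the separation procedure on $\mathbb{P}^{1}$). I would handle this by observing that the factorization argument above only uses \'{e}taleness at the generic point of $\Gamma$ together with the presence of $\zeta_{3}$, neither of which depends on the genus; equivalently, one can invoke Theorem \ref{DecompVert} in the genus-zero case and note that the inertia groups of the points where $\overline{f^{\Gamma}}$ has nontrivial valuation are precisely captured by whether the Newton-polygon slope at each such point is divisible by $3$, which is again controlled by $\delta_{e}(\phi)$ on the adjacent edges. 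Once this uniform formulation is in place, the proof is a direct combination of Proposition \ref{PropositionCoveringData}, Lemma \ref{Orbitstabilizer}, and the Kummer factorization in $k(\Gamma)[w]$.
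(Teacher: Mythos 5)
Your proof takes essentially the same route as the paper: the edge statement is read off from Proposition~\ref{PropositionCoveringData} combined with the orbit--stabilizer count, and the vertex statement from Proposition~\ref{DecompositionVertexProposition} specialized to $n=3$. The paper's own proof is exactly this two-line citation, so your argument is aligned with it.

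The one point where you add something is the worry about applying Proposition~\ref{DecompositionVertexProposition} when $g(\Gamma)=0$, since the paper states that proposition only for $g(\Gamma_v)>0$. This is a legitimate observation, and your resolution is the right one: the proof of Proposition~\ref{DecompositionVertexProposition} never uses the genus hypothesis --- it only uses \'etaleness of $\mathcal{O}_{\mathcal{D},y}\to\mathcal{O}_{\mathcal{D},y}[z]/(z^{n}-f^{\Gamma})$ and the presence of $\zeta_{n}$ in $k$, so the factorization criterion holds for any genus. The restriction $g>0$ in its statement is only there because the paper treats the genus-zero case separately in Theorem~\ref{DecompVert}; the two statements are consistent (for $g(\Gamma)=0$ the fact that $k(\Gamma)$ is rational means a degree-zero divisor on $\Gamma$ is principal, so $\overline{f^{\Gamma}}$ being an $n$-th power in $k(\Gamma)^{*}$ is equivalent to its divisor being divisible by $n$, which is equivalent to the product of adjacent edge-inertia groups being trivial). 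So there is no gap, and your slightly more careful phrasing is, if anything, an improvement over the paper's proof.
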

\begin{proof}
The first part is Proposition \ref{PropositionCoveringData}. The second part about the vertices is recorded in Proposition \ref{DecompositionVertexProposition}. 

\end{proof}


We now summarize the above method for finding the covering data for a tame $S_{3}$-covering $\mathcal{C}\rightarrow{\mathcal{D}_{\mathbb{P}^{1}}}$ using the quadratic subfield $K(D)$. 
\begin{algo}
\begin{center}
{\bf{[Algorithm for the covering data using the quadratic subfield]}}
\end{center}
\begin{flushleft}
Input: The polynomials $p,q,\Delta\in{K[x]}$. 
\end{flushleft}
\begin{enumerate}
\item Construct the tropical separating tree corresponding to $S=\text{Supp}(p,q,\Delta)$. 
\item Calculate the intersection graph $\Sigma(D)$ using the disjointly branched morphism $\mathcal{D}\rightarrow{\mathcal{D}_{\mathbb{P}^{1},S}}$.
\item Calculate the Laplacian of $y-\sqrt{27}q$ on $\Sigma(D)$ using Proposition \ref{DivisorDegree3}.
\item Calculate the covering data for the edges using Proposition \ref{QuadraticSubfield}.
\item If $\text{div}(\overline{f^{\Gamma}})\equiv{0}\mod{3}$, determine if it is a cube in $k(\Gamma)$. (This requires additional computations on the residue fields of the components of $\mathcal{D}_{s}$, these are given in Section \ref{Appendix3}). 
\item If $\overline{f^{\Gamma}}$ is a cube, there are three components lying above $\Gamma$. Otherwise, there is only one component lying above $\Gamma$.
\end{enumerate}
Output: The covering data for the covering $\overline{C}\rightarrow{\mathbb{P}^{1}}$. 
\end{algo}

\section{Tropicalizing degree three morphisms to the projective line: an algorithm}

In this section, we assemble the pieces from the previous sections into an algorithm for calculating the Berkovich skeleton of a curve with a degree three covering to the projective line. There are actually two algorithms for the covering data, so the reader can choose whichever method he prefers. The author is under the impression that the method presented in Section \ref{InertTechnique} (using inertia groups) is faster than the one in Section \ref{QuadraticSubfieldTechnique} (using the quadratic subfield), since it doesn't require any Laplacian computations on nontrivial graphs.  
\begin{algo}\label{Algorithm}
{}
\begin{center}
{\bf{[The Berkovich skeleton of a curve with a degree three covering to the projective line]}}
\end{center}
\begin{flushleft}
Input: $p,q\in{K[x]}$.
\end{flushleft}
\begin{enumerate}
\item Let $C$ be given by the equation $z^3+pz+q=0$. If the equation is reducible, then the covering does not have degree three. 
\item If $\Delta=4p^3+27q^2\in{K}$, then $C\rightarrow{\mathbb{P}^{1}}$ is superelliptic over a quadratic extension (namely $K(\sqrt{\Delta})$) of $K$. 
Use Algorithm \ref{AbelianAlgorithm} to determine the Berkovich skeleton. 
Otherwise, the Galois closure $\overline{C}$ is geometrically irreducible. It is described by the equation $w^6+2\sqrt{27}qw^3-4p^3=0$.
\item Construct the tropical separating tree $\Sigma(\mathcal{D}_{S})$ for the semistable model $\mathcal{D}_{S}$ as described in Chapter \ref{Appendix2}. Here $S=\text{Supp}(p,q,\Delta)$.
\item Determine the covering data for $\overline{C}\rightarrow{\mathbb{P}^{1}}$ using Section \ref{InertTechnique} or \ref{QuadraticSubfieldTechnique}.
\item Determine the twisting data using Algorithm \ref{Linkingcomponents} and use this to determine the intersection graph of $\overline{\mathcal{C}}$.
\item Calculate the genera of the vertices in $\Sigma(\overline{\mathcal{C}})$ using the Riemann-Hurwitz formula, see Proposition \ref{RiemannHurwitz}.   
\item Take the quotient of $\Sigma(\overline{\mathcal{C}})$ under the subgroup of order two corresponding to $C$ by Galois theory. The resulting graph $\Sigma(C)$ is the intersection graph of $\mathcal{C}$ by Lemma \ref{MainQuotientLemma1}.
\item Calculate the genera of the vertices in $\Sigma(\mathcal{C})$ using the Riemann-Hurwitz formula \ref{RiemannHurwitz}. 
\item Calculate the lengths of the edges in $\Sigma(\mathcal{C})$ using Proposition \ref{InertiagroupIntersectionPoint1}. 
\item Contract any "leaves" to obtain the graph $\Sigma'(\mathcal{C})$.
\end{enumerate}
\begin{flushleft}
Output: The Berkovich skeleton $\Sigma'(\mathcal{C})$.
\end{flushleft}
\end{algo}
\begin{proof}
({\it{Correctness of the algorithm}})
For the covering data and the twisting data, we refer the reader to Sections \ref{Coveringdata} and \ref{Linkingcomponents}. The fact that the quotient graph is equal to the intersection graph of the quotient is Lemma \ref{MainQuotientLemma1}. 
 Contracting any leaves then automatically yields the Berkovich skeleton.  
\end{proof}

\begin{figure}[h!]
\centering
\includegraphics[scale=0.3]{{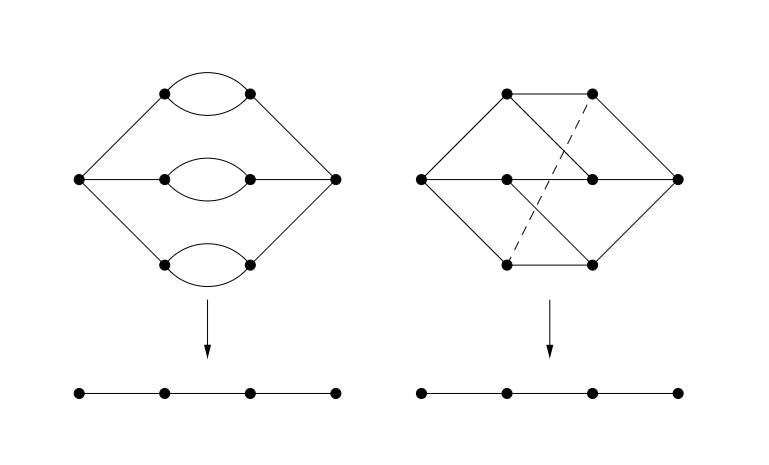}}
\caption{\label{83eplaatje} {\it{Two different $S_{3}$-coverings with the same covering data.}}}
\end{figure}

\begin{rem}
We would like to point out a difference here between abelian coverings of the projective line and nonabelian coverings of the projective line. First of all, the twisting data is not needed for abelian covers of the projective line. It is only necessary for coverings $\mathcal{C}\rightarrow{\mathcal{D}}$ where $\Sigma(\mathcal{D})$ has nonzero Betti number and the covering is completely decomposable, as in Section \ref{CompletelyDecomposable}. We note however that the covering data for an $S_{3}$-covering $\Sigma(\mathcal{C})\rightarrow{T}$ of a tree $T$ does \emph{not} fix $\Sigma(\mathcal{C})$. See Figure \ref{83eplaatje} for an example. Luckily, the nontrivial twisting can be detected on the quadratic subcover.  
\end{rem}

\section{Semistability of elliptic curves using a degree three covering}

As an application of the methods presented in the previous sections, we reprove the criterion:
\begin{equation}
"v(j)>0 \text{ if and only if }E\text{ has potential good reduction}".
\end{equation}
Here $j$ is the $j$-invariant of $E$. For the definitions of (potential) good reduction and (split) multiplicative reduction, we refer the reader to \cite[Chapter 10, Definition 2.27]{liu2}.  

Let us take an elliptic curve $E/K$. Over an extension $K'$ of $K$, one can then find an equation of the form
\begin{equation}\label{EquationElliptic}
x^3+Ax+B+y^2=0
\end{equation}
 for some $A$ and $B$ in $K'$. Just as in \cite[Chapter VII, Section 1]{Silv1}, we can assume that $v(A),v(B)\geq{0}$. In fact, we can assume that the equation has been scaled such that either $v(A)=0$ or $v(B)=0$, which again often requires a finite extension. We will assume that all these extensions have been made and the resulting field will be denoted by $K$. 
To prove semistability of the curve, one usually considers the $2:1$ covering given by
\begin{equation*}
\phi(x,y)=x
\end{equation*}
and then uses the branch points to explicitly create the semistable model. We will make life hard for us now and consider a different covering:
\begin{equation*}
\phi(x,y)=y.
\end{equation*}
This gives a degree three morphism $E\longrightarrow{\mathbb{P}^{1}}$ with corresponding extension of function fields $K(y)\subset{K(E)}$. We will use the quadratic subfield of the Galois closure and Algorithm \ref{AbelianAlgorithm} for the degree three abelian extension $\overline{E}\rightarrow{E'}$. 
The twisting data studied in Section \ref{TwistingDataFinal} 
will not be needed, as we will see that the covering data obtained here determines the covering graph uniquely. 

We note that the curve in Equation \ref{EquationElliptic} is in our normal form with $p=A$ and $q=B+y^2$. For psychological reasons, the author chose to revert the minus sign coming from the usual Weierstrass equation (given by $x^3+Ax+B-y^2=0$) to a plus sign. 

Consider the $K(A,B)[y]$-algebra $K(A,B)[y][x]/(x^3+Ax+B+y^2)$. We first calculate the discriminant of this algebra. It is given by
\begin{equation*}
\Delta=4A^3+27(B+y^2)^2.
\end{equation*}
We would like to determine whether this is a square or not. To that end, we calculate the discriminant of
\begin{equation*}
\Delta'(y_{1})=4A^3+27(B+y_{1})^2
\end{equation*}
and see that 
\begin{equation*}
\Delta(\Delta'(y_{1}))=(2\cdot{27}\cdot{B})^2-4\cdot{(27)}\cdot{(27B^2+4A^3)}=-(4\cdot{27})^2\cdot{A}^3.
\end{equation*}
Here $y_{1}=y^2$. 
We therefore see that the discriminant $\Delta$ is a square if and only if either $A=0$ or $y=0$ is a zero of $\Delta$. In the latter case we see directly that we must have $4A^3+27B^2=0$, which contradicts the assumption that $E$ is nonsingular. The case $A=0$ is a separate case, where one can easily see that $E$ has potential good reduction. 

So let us assume that $A\neq{0}$. Then the discriminant is not a square and we obtain a bonafide extension of degree two given by
\begin{equation*}
z^2=4A^3+27(B+y^2)^2.
\end{equation*}
This is again a curve of genus $1$, which we denote by $E'$. We would like to know the reduction type of this curve. We will do this in terms of the discriminant $\Delta(E)=4A^3+27B^2$. Note that the $E$'s equation has been scaled such that either $v(A)=0$ or $v(B)=0$. 

We now consider the following possible scenarios for $A,B$ and $\Delta(E)$:
\begin{enumerate}\label{ScenariosEllipticCurve1}
\item $v(A)=v(B)=0$ and $v(\Delta(E))>0$.
\item $v(A)=0$, $v(B)\geq{0}$ and $v(\Delta(E))=0$.
\item $v(A)>0$, $v(B)=0$ and $v(\Delta(E))=0$.
\end{enumerate}
\begin{lemma}\label{Onecase1}
Every elliptic curve $E/K$ belongs to exactly one of the three cases described above.
\end{lemma}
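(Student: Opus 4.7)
The plan is to verify the two claims separately: pairwise disjointness of the three cases, and joint exhaustiveness. Throughout, I will use that $\mathrm{char}(k)>3$ (so $4,27\in R^*$) and that the Weierstrass equation has been normalized so that $v(A)=0$ or $v(B)=0$. Also, since $E$ is an elliptic curve it is nonsingular, hence $\Delta(E)\neq 0$, so $v(\Delta(E))$ is a well-defined nonnegative integer (using $A,B\in R$).

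First I would dispose of disjointness, which is almost formal. Case~1 requires $v(\Delta(E))>0$ while Cases~2 and~3 require $v(\Delta(E))=0$, so Case~1 is disjoint from the other two. Cases~2 and~3 are distinguished by $v(A)=0$ versus $v(A)>0$, hence are disjoint from one another.

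For exhaustiveness, I would split on the sign of $v(A)$. If $v(A)=0$, then $v(4A^3)=0$, and since $v(B)\geq 0$ we have $v(27B^2)=2v(B)\geq 0$. The ultrametric inequality then gives $v(\Delta(E))\geq 0$, with two possibilities. If $v(\Delta(E))=0$, we land in Case~2 (no condition on $v(B)$ beyond $v(B)\geq 0$ is required). If $v(\Delta(E))>0$, then writing $27B^2=\Delta(E)-4A^3$ and taking valuations forces $v(27B^2)=0$ (a unit plus something of positive valuation is a unit), so $v(B)=0$, placing us in Case~1. If instead $v(A)>0$, the normalization assumption forces $v(B)=0$; then $v(4A^3)>0$ and $v(27B^2)=0$, so the ultrametric inequality gives $v(\Delta(E))=0$, placing us in Case~3.

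The main (and really only) obstacle is being careful not to lose the nondegeneracy: one must invoke $\Delta(E)\neq 0$ to know $v(\Delta(E))$ is finite, and one must use $\mathrm{char}(k)>3$ to conclude that $4$ and $27$ contribute nothing to the valuations. Once these observations are in place, the verification reduces to a routine case split using the ultrametric inequality, and no further work is needed.
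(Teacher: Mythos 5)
Your proof is correct and takes essentially the same approach as the paper: split on $v(A)$, invoke the normalization $v(A)=0$ or $v(B)=0$, and use the ultrametric inequality on $\Delta(E)=4A^3+27B^2$ together with $\mathrm{char}(k)>3$. The only superficial difference is that after fixing $v(A)=0$ you split on $v(\Delta(E))$ and deduce $v(B)$, whereas the paper splits on $v(B)$ and deduces $v(\Delta(E))$; the content is identical.
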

\begin{proof}
Let $E$ be given as in Equation \ref{EquationElliptic}. If $v(A)>0$, then by assumption we must have $v(B)=0$ and thus $v(\Delta)=0$. This means that we are in Case 3. Suppose that $v(A)=0$. If $v(B)>0$, then $v(\Delta)=0$ and we are in Case 2. If $v(B)=0$, then there are two possibilities: either $v(\Delta)>0$ or $v(\Delta)=0$. These are cases 1 and 2 respectively. It is now clear from the nature of these cases that they are mutually exclusive. This finishes the proof.  
\end{proof}


\begin{theorem}\label{ScenariosEllipticCurve2}
Let $\overline{E}$ be the Galois closure of the morphism $\phi$. 
For every type of $\{v(A),v(B),v(\Delta)\}$ as described above, there exists a disjointly branched morphism  $\overline{\mathcal{E}}\rightarrow{\mathcal{D}_{\mathbb{P}^{1}}}$ giving the following intersection graphs:  
\begin{enumerate}
\item Suppose that $v(A)=v(B)=0$ and $v(\Delta(E))>0$. Then $E'$ has multiplicative reduction with intersection graph $\Sigma(\mathcal{E}')$ consisting of two components intersecting in two points. $\Sigma(\overline{\mathcal{E}})$ consists of 3 copies of $\Sigma(\mathcal{E}')$ meeting in one vertex.
 The corresponding intersection graph $\Sigma(\mathcal{E})$ consists of three vertices, connected as in Figure \ref{101eplaatje}. The curve $E$ has multiplicative reduction. 
\item Suppose that $v(A)=0$, $v(B)\geq{0}$ and $v(\Delta(E))=0$. Then all curves involved are nonsingular and the corresponding models have the trivial intersection graph. 
\item Suppose that $v(A)>0$, $v(B)=0$ and $v(\Delta(E))=0$. Then $E'$ has multiplicative reduction with intersection graph $\Sigma(\mathcal{E}')$ consisting of two components intersecting in two points.  $\Sigma(\overline{\mathcal{E}})$ consists of two elliptic curves meeting twice. $E$ has good reduction, with intersection graph $\Sigma(\mathcal{E})$ as described in Figure \ref{92eplaatje}. 
\end{enumerate}
\end{theorem}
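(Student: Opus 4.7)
The plan is to apply Algorithm \ref{Algorithm} to each of the three cases in turn, treating the covering $(x,y)\mapsto y$ in the normal form $z^3 + pz + q = 0$ with $p = A$, $q = B + y^2$, discriminant $\Delta(y) = 4A^3 + 27(B+y^2)^2$, and quadratic subcover $E'$ given by $z^2 = \Delta(y)$. By Lemma \ref{GeomIrr} applied to $\Delta(y) \in K(y)$, the Galois closure is geometrically irreducible as soon as $A \neq 0$, so $\overline{E}\to\mathbb{P}^1$ is genuinely $S_3$-Galois in Cases 1 and 3, and by Lemma \ref{MainQuotientLemma1} we can recover $\Sigma(\mathcal{E})$ as the quotient $\Sigma(\overline{\mathcal{E}})/(\mathbb{Z}/2\mathbb{Z})$.

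First I would dispense with Case 2. When $v(A) = 0$, $v(B) \geq 0$ and $v(\Delta(E)) = 0$, the polynomial $\Delta(y)$ has unit leading coefficient and reduces to a separable polynomial, so its roots reduce to distinct smooth points on the standard model $\mathbb{P}^1_R$. Hence $S$ is already separated on the one-vertex tropical separating tree $\Gamma_0$, and at $\Gamma_0$ we have $v_{\Gamma_0}(p) = v_{\Gamma_0}(q) = 0$ with $v_{\Gamma_0}(\Delta) = 0$ even, placing us in case (III) of Proposition \ref{InertS3} with trivial inertia. All graphs in the Galois tower are therefore trivial after deleting leaves, as claimed.

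For Case 1, I would locate the branch locus by solving $27(B+y^2)^2 = -4A^3$, that is, $y^2 = -B \pm C$ with $C^2 = -4A^3/27$. The hypothesis $v(\Delta(E)) > 0$ is equivalent to $C^2 \equiv B^2 \pmod{\pi}$, so one sign produces $y^2$ of positive valuation (two branch points reducing to $y=0$) while the other yields $y^2 \equiv -2B \pmod \pi$ with $v(B) = 0$ (two branch points reducing to distinct nonzero points). A single blow-up of $\mathbb{P}^1_R$ at $y=0$ separates $S$; computing $v_{\Gamma_i}(p), v_{\Gamma_i}(q)$ on the two components of the separating tree and applying Proposition \ref{InertS3} case by case shows that the edge between them carries trivial inertia for the cubic and order-two inertia for the quadratic. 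Algorithm \ref{AbelianAlgorithm} then gives $\Sigma(\mathcal{E}')$ as two components meeting in two points (multiplicative reduction); the cubic on top is completely decomposable by Proposition \ref{DecompositionVertexProposition} once one verifies that the reduction of $(y-\sqrt{27}q)^\Gamma$ is a cube on each component, so $\Sigma(\overline{\mathcal{E}})$ is three copies of $\Sigma(\mathcal{E}')$ glued at a common vertex. Taking the $\mathbb{Z}/2\mathbb{Z}$-quotient yields the graph in Figure \ref{101eplaatje}; since it has positive Betti number, $E$ has multiplicative reduction. Case 3 is the mirror image: the branch points pair up as $y \approx \pm\sqrt{-B}$, requiring two blow-ups; on the central component $\Gamma_0$ one has $v_{\Gamma_0}(p) = v(A) > 0$ and $v_{\Gamma_0}(q) = 0 \equiv 0 \pmod 3$, so case (I) of Proposition \ref{InertS3} gives trivial cubic inertia there, while the ramification of the cubic is pushed out to the two new components where $v_{\Gamma_i}(q)$ becomes positive. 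The quadratic subcover again gives multiplicative reduction for $\mathcal{E}'$, but after assembling the cubic on top and quotienting by the hyperelliptic involution, all newly created components collapse back onto one central component, which inherits genus one by Riemann--Hurwitz (Theorem \ref{RiemannHurwitz}); pruning leaves yields Figure \ref{92eplaatje}, i.e.\ good reduction.

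The main obstacle will be the bookkeeping of valuations of $p, q, \Delta$ at each component of the separating tree after the tamely ramified base extensions required to achieve the disjointly branched condition of Definition \ref{disbran}, and the verification in Case 1 that the cubic étale cover above $\Sigma(\mathcal{E}')$ is completely decomposable so that no twisting data (in the sense of Section \ref{TwistingDataFinal}) enters the reconstruction; this is where the explicit cube-factorization computation of Algorithm \ref{Linkingcomponents}, carried out on the residue fields described in Section \ref{Appendix3}, is genuinely needed.
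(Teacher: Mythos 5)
Your high-level strategy is the same as the paper's: apply Algorithm \ref{Algorithm}, treat the three cases, compute the quadratic subcover $E'$ first, then the cubic on top, then quotient. Case 2 is fine, and Case 1 is essentially right (though "completely decomposable by Proposition \ref{DecompositionVertexProposition}" is slightly off, since the cubic $\overline{E}\to E'$ \emph{is} ramified at the two points at infinity, which both reduce to the component $\Gamma'_0$; the resulting vertex therefore has $|D_{v'}|=3$ and is exactly what glues the three copies of $\Sigma(\mathcal{E}')$ together, so your picture is correct even if the justification is worded as if everything were \'etale).

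There is, however, a genuine gap in Case 3. You claim that ``the ramification of the cubic is pushed out to the two new components where $v_{\Gamma_i}(q)$ becomes positive,'' implicitly reading this off Case (I) of Proposition \ref{InertS3}. But on those components one has $v_{\Gamma_i}(p)=v(A)$ (since $p=A$ is a constant) while a Newton-polygon computation gives $v_{\Gamma_i}(q)=3v(A)/2$ (the root of $q$ at $y=\pm\sqrt{-B}$ sits at the same depth as the cluster of roots of $\Delta$). Hence $3v_{\Gamma_i}(p)=3v(A)=2v_{\Gamma_i}(q)$: you are in Case (III) of Proposition \ref{InertS3}, not Case (I), and there $|I_{\mathfrak{q}}|$ is governed by the parity of $v_{\Gamma_i}(\Delta)=3v(A)$, which is even after the tamely ramified base change needed to make the morphism disjointly branched. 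So the $S_3$-inertia at the generic points of $\Gamma_1,\Gamma_2$ is \emph{trivial}, and the cubic does not ramify there. The actual ramification locus of the cubic $\overline{E}\to E'$ consists of the two points at infinity (which reduce to the components $\Gamma'_{0,\pm}$ lying over $\Gamma_0$, not over $\Gamma_1,\Gamma_2$) together with the edges, where the Laplacian of $y-\sqrt{27}q$ has slope $\pm1$ (not divisible by $3$, hence $|I_e|=3$). This edge analysis is entirely absent from your argument, and without it your claim that ``all newly created components collapse back onto one central component, which inherits genus one by Riemann--Hurwitz'' is not substantiated: Riemann--Hurwitz on $\Gamma''_{0,i}\to\Gamma'_{0,i}$ requires knowing precisely that there are three ramification points (the reduction of $\infty_i$ plus the two edge endpoints), which is exactly the computation you skipped.
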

\begin{proof}
We subdivide the proof into three parts, according to the cases given in the statement of the proposition.
\begin{enumerate}
\item 
We write 
\begin{equation*}
z^2=4A^3+27B^2+2\cdot{27}By^2+27y^4=\Delta(E)+2\cdot{27}By^2+27y^4=\Delta.
\end{equation*}
We label the roots of $\Delta$ by $\alpha_{i}$ for $i\in\{1,2,3,4\}$. 
Since $v(A)=v(B)=0$ and $v(\Delta(E))>0$, we find that two of the roots of $\Delta$ coincide. Let them be $\alpha_{1}$ and $\alpha_{2}$. A Newton polygon computation then shows that  $v(\alpha_{1})=v(\alpha_{2})=v(\Delta)/2$. We therefore construct a tropical separating tree $\Sigma(\mathcal{D}_{S})$ with two vertices and one edge, which has length $v(\Delta)/2$. The reduction graph of $E'$ is then as shown in Figure \ref{101eplaatje} (which contains some spoilers regarding the final product). Indeed, the Laplacian of $\Delta(E)+2\cdot{27}By^2+27y^4$ has slope $\pm{2}$ on $e$, so we obtain two edges. Furthermore, there is only one vertex lying above each vertex of $\Sigma(\mathcal{D}_{S})$, since the roots of $\Delta$ are branch points on these components. We label these components by $\Gamma_{1}$ and $\Gamma_{2}$.  

We now consider the degree three covering $\overline{E}\rightarrow{E'}$. We'll use the formulas in Proposition \ref{DivisorDegree3}. Let $f=z-\sqrt{27}q$. We then easily see that
\begin{equation*}
\text{div}_{\eta}(f)=2\cdot({\infty_{1}})-2\cdot({\infty_{2}}),
\end{equation*}
where the $\infty_{i}$ are the two points at infinity. These points both reduce to smooth points on $\Gamma_{1}$. The corresponding Laplacian is thus trivial on $\Sigma(E')$. Using Proposition \ref{PropositionCoveringData}, 
we see that there are three edges lying above each of the two in $\Sigma(E')$.  

We now turn to the vertices. The reduced divisor of $f$ on $\Gamma_{2}$ is trivial, so there are three vertices lying above it. For $\Gamma_{1}$, the covering is ramified and thus there is only vertex lying above it. By the Riemann-Hurwitz formula, the covering vertex has genus zero. We thus directly see that the intersection graph must be as in Figure \ref{101eplaatje}. The corresponding quotient and the rest of the lattice is depicted there as well. We see that the Betti number of the quotient is one, implying that $E$ has multiplicative reduction. 

Note that the length of the cycle in $\Sigma(\mathcal{E})$ is the same as the length of the cycle in $\Sigma(\mathcal{E}')$ by inspecting the corresponding inertia groups. From the construction of the tropical separating tree, we then find that the cycle in $\Sigma(\mathcal{E}')$ has length $v(\Delta)/2+v(\Delta)/2=v(\Delta)=-v(j)$ and thus the cycle in $\Sigma(\mathcal{E})$ has the same length. This is another well-known feature of elliptic curves with split multiplicative reduction.  

\begin{figure}[h!]
\centering
\includegraphics[scale=0.3]{{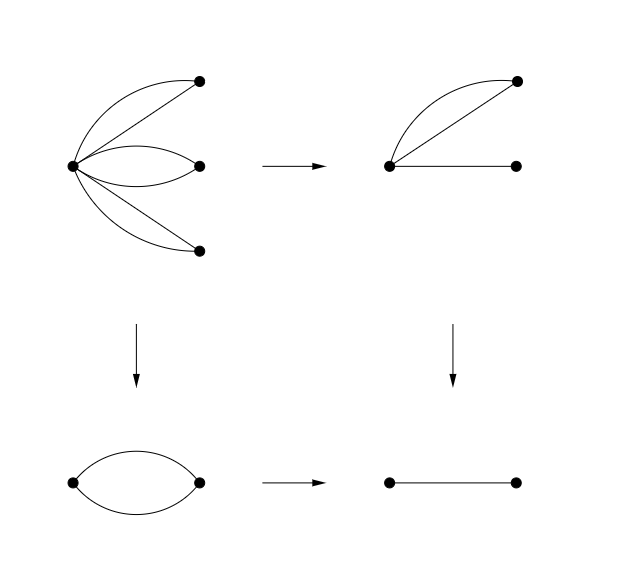}}
\caption{\label{101eplaatje} The Galois closure of graphs in Case I.}
\end{figure}

\item A quick calculation shows that all curves in sight are nonsingular. We thus obtain trivial graphs with weights $1,3,1$. Hence $E$ has good reduction. 

\item Suppose that $v(A)>0$, $v(B)=0$ and $v(\Delta(E))=0$. ({\footnote{The author has to confess that this feels like we're using too much machinery, because we already know that $E$ has good reduction from the fact that the reduced discriminant is nonzero. Nonetheless, calculating the entire Galois closure shows some interesting features.}})

\begin{figure}[h!]
\centering
\includegraphics[scale=0.3]{{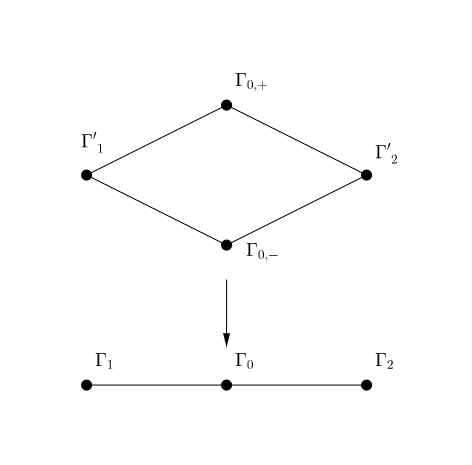}}
\caption{\label{91eplaatje} The hyperelliptic covering in Case III.}
\end{figure}

We label the roots of $\Delta$ by $\alpha_{i}$ for $i\in\{1,2,3,4\}$. We quickly find that the roots reduce to roots of the equation $y^2+B=0$. We have two roots (say $\alpha_{1}$ and $\alpha_{2}$) reducing to $y=\sqrt{-B}$ and two other roots ($\alpha_{3}$ and $\alpha_{4}$) reducing to $y=-\sqrt{-B}$. A Newton polygon calculation then shows that $v(\alpha_{1}-\alpha_{2})=3v(A)/2$ and $v(\alpha_{3}-\alpha_{4})=3v(A)/2$. We therefore construct a tropical separating tree with three vertices $\Gamma_{0}$, $\Gamma_{1}$ and $\Gamma_{2}$ as in Figure \ref{91eplaatje}. This figure also contains the corresponding degree two covering of intersection graphs. Note that the edges $\Gamma_{1}\Gamma_{0}$ and $\Gamma_{0}\Gamma_{2}$ both have length $3v(A)/2$. Since the morphism of intersection graphs is \'{e}tale above these edges, we find that the edges lying above them also have length $3v(A)/2$.

\begin{figure}[h!]
\centering
\includegraphics[scale=0.3]{{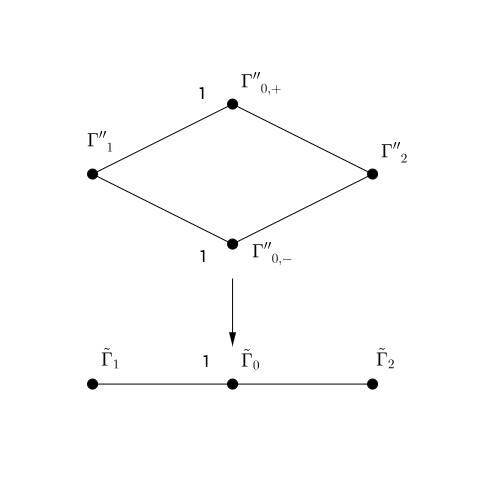}}
\caption{\label{92eplaatje} The Galois closure and its quotient in Case III. }
\end{figure}


Since $\Sigma(\mathcal{D})$ has Betti number one, we see that $E'$ has multiplicative reduction. Note that there are two points at infinity $\infty_{i}$ and that they reduce to different components: $\infty_{1}\mapsto{\Gamma_{0,+}}$ and $\infty_{2}\mapsto{\Gamma_{0,-}}$. Let $f=y-\sqrt{27}q$. We now find that $\text{div}_{\eta}(f)=2(\infty_{1})-2(\infty_{2})$. The reduction of the divisor of $f$ is then given by 
\begin{equation*}
\rho(\text{div}_{\eta}(f))=2(\Gamma_{1})-2(\Gamma_{2}).
\end{equation*}
We then find that the Laplacian corresponding to $f$ has slope $\pm{1}$ on every edge. It is not divisible by three, so there is only edge lying above every edge, again by Proposition \ref{PropositionCoveringData}. 
The Galois closure can now be found in Figure \ref{92eplaatje}. The components $\Gamma''_{0,i}$ both have genus one. Indeed, the morphisms $\Gamma''_{0,i}\rightarrow{\Gamma'_{0,i}}$ are ramified above the edges and $\infty_{i}$, giving a total of three ramification points per component. The Riemann-Hurwitz formula then gives the desired genus. We now see that the quotient consists of three vertices, where the middle vertex has genus $1$. In other words, $E$ has good reduction. 

\end{enumerate}
\end{proof}

\begin{cor}
An elliptic curve $E$ has potential good reduction if and only if $v(j)>0$. 
\end{cor}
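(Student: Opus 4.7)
The plan is to deduce the corollary from Theorem \ref{ScenariosEllipticCurve2} and Lemma \ref{Onecase1} by an elementary valuation computation. For the affine model $y^2 = x^3 + Ax + B$ adopted in Equation \ref{EquationElliptic}, the $j$-invariant is
$$j \;=\; \frac{6912\,A^3}{4A^3 + 27B^2} \;=\; \frac{6912\,A^3}{\Delta(E)},$$
and because $\operatorname{char}(k)>3$ the numerical factor $6912 = 2^{8}\cdot 3^{3}$ has zero valuation, so
$$v(j) \;=\; 3v(A) \,-\, v(\Delta(E)).$$
The rest of the proof is pure case-checking against the three exhaustive cases of Lemma \ref{Onecase1}, which have already been matched to explicit disjointly branched coverings $\overline{\mathcal{E}}\to\mathcal{D}_{\mathbb{P}^{1}}$ in Theorem \ref{ScenariosEllipticCurve2}.

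Next I would traverse those three cases. In Case~1 ($v(A) = v(B) = 0$, $v(\Delta(E)) > 0$) we read off $v(j) = -v(\Delta(E)) < 0$; Theorem \ref{ScenariosEllipticCurve2}(1) supplies an intersection graph with a nontrivial cycle of length $v(\Delta(E))$, i.e.\ multiplicative reduction. By Proposition \ref{InertiagroupIntersectionPoint1} any further tamely ramified base extension only rescales the edge lengths of the minimal skeleton by the ramification index, so the cycle can never be contracted and $E$ does not acquire good reduction over any finite extension of $K$. In Case~3 ($v(A) > 0$, $v(B) = v(\Delta(E)) = 0$) we obtain $v(j) = 3v(A) > 0$, and Theorem \ref{ScenariosEllipticCurve2}(3) realises a good-reduction model already over $K$, so $E$ has (potential) good reduction.

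The remaining boundary Case~2 ($v(A)=0=v(\Delta(E))$) gives $v(j)=0$ together with good reduction by Theorem \ref{ScenariosEllipticCurve2}(2). Reading the corollary's ``$v(j)>0$'' in the customary sense $v(j)\geq 0$ (equivalently $j\in R$, as in the classical criterion of \cite[Chapter VII]{Silv1}), Case~2 joins Case~3 on the potential-good-reduction side, and Lemma \ref{Onecase1} certifies that the three cases exhaust every elliptic curve over $K$, closing the biconditional. The only tedious bit is the bookkeeping around the preliminary base change used to normalise the Weierstrass coefficients to $v(A)=0$ or $v(B)=0$: neither side of the biconditional is affected, since $v(j)$ is an isomorphism invariant of $E_{\overline{K}}$ and potential good reduction is by definition stable under finite extensions of $K$. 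The main obstacle is therefore not a conceptual one but rather the care needed to align the sign and normalisation conventions between Equation \ref{EquationElliptic}, the classical $j$-invariant formula, and the edge-length convention of Chapter \ref{Inertiagroups}; once these are lined up the corollary reduces to the two-line valuation calculation above.
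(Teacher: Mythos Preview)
Your argument is correct and follows essentially the same route as the paper's proof: both invoke Lemma \ref{Onecase1} to exhaust all elliptic curves by the three scenarios, compute $v(j)=3v(A)-v(\Delta(E))$ in each, and read off the reduction type from Theorem \ref{ScenariosEllipticCurve2}. You are also right to flag that Case~2 yields $v(j)=0$ with good reduction, so the biconditional should really read $v(j)\geq 0$; the paper's own proof silently switches to $v(j)\geq 0$ in its final sentence.
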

\begin{proof}
Every elliptic curve can be put in exactly one of the three scenarios considered in Proposition \ref{ScenariosEllipticCurve2} by Lemma \ref{Onecase1}. 
Case 1 corresponds exactly to $v(j)<0$ by the calculation $v(j)=v(1728\cdot\dfrac{4A^3}{\Delta})=-v(\Delta)<0$. Cases 2 and 3 then naturally correspond to $v(j)\geq{0}$, giving the Corollary.  
\end{proof}

\section{Genus three curves}

We now turn to genus 3 curves. For genus 3 curves, we have that the moduli space of isomorphism classes has dimension $6$ (in general, $\mathcal{M}_{g}$ is irreducible of genus $3g-3$).  If we look at the subspace of all hyperelliptic curves of genus $3$, one quickly finds that this space has dimension $5$. The idea is that for $\text{char}{K}\neq{2}$, one can locally write such a curve as
\begin{equation*}
y^2=x(x-1)(x-\alpha_{1})(x-\alpha_{2})(x-\alpha_{3})(x-\alpha_{4})(x-\alpha_{5})
\end{equation*}
by putting three of the ramification points of the hyperelliptic involution at $\{0,1,\infty\}$. Since the dimension of $\mathcal{M}_{3}$ is strictly bigger than the dimension of the hyperelliptic locus, we find that not all curves of genus 3 have a hyperelliptic involution. So a different strategy is needed here. We will soon find out that one can in fact find a morphism of degree $3$ to $\mathbb{P}^{1}$ for curves that are not hyperelliptic. Such a morphism need not be Galois however, so we take the Galois closure of this morphism. 
We will see quite quickly that the Galois subextension of degree $3$ is often {\it{unramified}}. This means that it comes from a $3$-torsion point in the Jacobian. 

\subsection{From quartics to degree three morphisms}

Suppose we take a nonhyperelliptic curve of genus $3$. By \cite[Chapter IV, Proposition 5.2]{Hart1}, 
we find that the canonical divisor on $C$ defines a closed embedding
\begin{equation*}
C\longrightarrow{\mathbb{P}^{2}},
\end{equation*}
which has degree $4$, meaning that it is a nonsingular quartic. Conversely, every nonsingular quartic defines a nonhyperelliptic curve of genus $3$. We now take a point $P$ on $C$ (which might need a finite extension of $K$). Consider the space of all lines intersecting $P$. This is isomorphic to $\mathbb{P}^{1}$. If we now take any other point $Q\in{C}(\overline{K})$, we have that there is a line intersecting $Q$ and $P$. We define
\begin{equation*}
\phi(Q)=L_{P,Q},
\end{equation*} 
where $L_{P,Q}$ is the line connecting the two points. This defines a morphism $\phi:C\longrightarrow{\mathbb{P}^{1}}$ of degree $3$, since any hyperplane section intersects $C$ in four points and we already have $P$ as an intersection point.
\begin{exa}
As an example, take the plane curve defined by
\begin{equation*}
x^4+y^4-1=0.
\end{equation*}
It has the rational point $P=(1,0)$. Consider all lines of the form
\begin{equation*}
y=t(x-1).
\end{equation*}
By plugging this in, we obtain
\begin{equation*}
x^4-1+t^4(x-1)^4=0
\end{equation*}
and thus
\begin{equation*}
(x-1)(x^3+x^2+x+1+t^4(x-1)^3)=0.
\end{equation*}
We cancel out $x-1$ (the obvious intersection point), to obtain
\begin{equation*}
x^3+x^2+x+1+t^4(x-1)^3=0.
\end{equation*}
This curve has an obvious degree $3$ morphism to $\mathbb{P}^{1}$, given locally by
\begin{equation*}
(x,t)\longmapsto{t}.\\
\end{equation*}
\end{exa}

Let us now return to the general case. We have a morphism of degree $3$ 
\begin{equation*}
\phi:C\longrightarrow{\mathbb{P}^{1}}.
\end{equation*} 

We now wish to arrive at some kind of "normal form". We take any quartic
\begin{equation*}
f(x,y)=\sum_{i,j}c_{i,j}x^{i}y^{j}
\end{equation*}
and assume by translating that $P=(0,0)$ lies on $C$. Thus $c_{0,0}=0$. Write
\begin{equation*}
y=tx.
\end{equation*}
We then obtain
\begin{equation*}
f(x,tx)=\sum_{i,j}c_{i,j}x^{i}(tx)^{j}.
\end{equation*}
Canceling $x$, we thus obtain an equation of the form
\begin{equation*}
f'(x,t)=\sum_{j=0}^{j=3}a_{j}(t)x^{j}=0,
\end{equation*}
where $\text{deg}(a_{j}(t))\leq{j+1}$. 
We find that
\begin{equation*}
f'(x-\dfrac{a_{2}(t)}{3a_{3}(t)},t)=a_{3}x^3+((-3a_{2}^2 + 9a_{1}a_{3})/(9a_{3}))x + (2/3\cdot{}a_{2}^3 - 3a_{1}a_{2}a_{3} + 9a_{0}a_{3}^2)/(9a_{3}^2).
\end{equation*}  
Multiplying by $a_{3}^2$ and taking $x'=a_{3}\cdot{x}$, we obtain
\begin{equation*}
f''(x',t)=x^3+((-3a_{2}^2 + 9a_{1}a_{3})/(9))x + (2/3\cdot{}a_{2}^3 - 3a_{1}a_{2}a_{3} + 9a_{0}a_{3}^2)/(9).
\end{equation*}
We define
\begin{eqnarray*}
p(t)&=&((-3a_{2}^2 + 9a_{1}a_{3})/(9)),\\
q(t)&=&(2/3\cdot{}a_{2}^3 - 3a_{1}a_{2}a_{3} + 9a_{0}a_{3}^2)/(9),
\end{eqnarray*}
and see that
\begin{equation*}
\Delta:=(4p^3+27q^2)/(a_{3})^2=-a_{1}^2a_{2}^2 + 4a_{2}^3a_{0} + 4a_{1}^3a_{3}^1 - 18a_{1}a_{2}a_{0}a_{3}^1 + 27a_{0}^2a_{3}^2.
\end{equation*}
\begin{lemma}\label{DegDiscrim1}
For each monomial $m$ in $\Delta$, we have that $\text{deg}(m)\leq{10}$.
\end{lemma}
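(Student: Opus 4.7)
The plan is simply to verify each of the five monomials in $\Delta$ directly, using the degree bound $\deg(a_j(t)) \leq j+1$ that was noted immediately before the lemma. So I would begin by recording the bounds $\deg(a_0) \leq 1$, $\deg(a_1) \leq 2$, $\deg(a_2) \leq 3$, $\deg(a_3) \leq 4$, and then consider each term in
\[
\Delta = -a_1^2 a_2^2 + 4 a_2^3 a_0 + 4 a_1^3 a_3 - 18 a_1 a_2 a_0 a_3 + 27 a_0^2 a_3^2
\]
in turn.

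For the first monomial, $\deg(a_1^2 a_2^2) \leq 2\cdot 2 + 2\cdot 3 = 10$. For the second, $\deg(a_2^3 a_0) \leq 3\cdot 3 + 1 = 10$. For the third, $\deg(a_1^3 a_3) \leq 3\cdot 2 + 4 = 10$. For the fourth, $\deg(a_1 a_2 a_0 a_3) \leq 2 + 3 + 1 + 4 = 10$. Finally, $\deg(a_0^2 a_3^2) \leq 2\cdot 1 + 2\cdot 4 = 10$. In each case the bound is exactly $10$, giving the claimed estimate.

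There is no real obstacle here; the statement is essentially a combinatorial consistency check showing that the weights of the monomials in the discriminant of a depressed cubic are balanced with respect to the grading $\deg(a_j) = j+1$. The uniformity of the bound (each monomial achieving exactly $10$) is in fact the reflection of $\Delta$ being a weighted-homogeneous polynomial of weight $10$ in the $a_j$ under the weighting $w(a_j) = j+1$, which is a standard fact about the discriminant of a cubic. One could phrase the proof that way, but for the purposes of the lemma it is cleaner just to read off the five bounds directly as above.
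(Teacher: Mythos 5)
Your proof is correct and is exactly the computation the paper alludes to when it says the lemma "follows quite easily from $\deg(a_j)\leq j+1$ and some easy calculations on $\Delta$"; you have simply carried out those calculations explicitly. The weighted-homogeneity observation is a nice conceptual gloss but adds nothing needed for the bound.
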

\begin{proof}
This follows quite easily from $\text{deg}(a_{j})\leq{j+1}$ and some easy calculations on $\Delta$. 
\end{proof}


We can now explicitly describe the curves in the Galois closure. For the quadratic subfield, we have that it is given by
\begin{equation*}
y^2=\Delta.
\end{equation*}
The cubic extension of this field is then given by
\begin{equation*}
w^3=y-\sqrt{27}q.
\end{equation*}
See Appendix \ref{Appendix1} for the details. 
From now on, we {assume} that $p$ and $q$ have {\it{no common factors}}. 
Otherwise, the formulas have to be adjusted.

\begin{lemma}
Suppose that $\text{gcd}(p,q)=1$. We then have that
\begin{eqnarray*}
g(\overline{C})\leq{12},\\
g(D)\leq{4}.
\end{eqnarray*}
\end{lemma}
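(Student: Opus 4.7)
The plan is to apply Riemann--Hurwitz successively to the two intermediate coverings in the tower $\overline{C} \to D \to \mathbb{P}^1$, using Lemma \ref{DegDiscrim1} and Proposition \ref{DivisorDegree3}.

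For the bound $g(D) \leq 4$: the curve $D$ is defined by $y^2 = \Delta$. Writing $\Delta = f \cdot h^2$ with $f$ squarefree, the function field of $D$ agrees with that of $y'^2 = f$, where $\deg f \leq \deg \Delta \leq 10$ by Lemma \ref{DegDiscrim1}. The hyperelliptic cover $y'^2 = f \to \mathbb{P}^1$ has at most $10$ branch points (either the $\deg f$ roots of $f$ when $\deg f$ is even, or the $\deg f$ roots of $f$ together with $\infty$ when $\deg f$ is odd), so Riemann--Hurwitz gives $2g(D) - 2 \leq -4 + 10 = 6$, i.e.\ $g(D) \leq 4$.

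For the bound $g(\overline{C}) \leq 12$: I would apply Riemann--Hurwitz to the degree $3$ cyclic Kummer cover $\overline{C} \to D$ given by $w^3 = y - \sqrt{27}q$. By Corollary \ref{AbelExt2}, this cover is ramified at a point $Q \in D$ if and only if $v_Q(y - \sqrt{27}q) \not\equiv 0 \pmod{3}$. The key point is that the assumption $\gcd(p,q) = 1$ forces $v_\alpha(p)$ and $v_\alpha(q)$ not to be simultaneously positive at any finite $\alpha \in \mathbb{P}^1$, which lets us run through the three cases of Proposition \ref{DivisorDegree3} and verify that no finite point produces ramification. Explicitly: at a root $\alpha$ of $p$ of multiplicity $m$ we have $v_\alpha(q) = 0$, so Case (1) applies and $v_{Q_i}(y - \sqrt{27}q) \in \{3m, 0\}$; at a root of $q$ we have $v_\alpha(p) = 0$, so Case (2) gives $v_{Q_i}(y - \sqrt{27}q) = 0$; at a root of $\Delta$ which is not a root of $p$ or $q$ (and at every other finite point), Case (3) applies with $v_\alpha(q) = 0$ and gives $v_{Q_i}(y - \sqrt{27}q) = 0$. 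Hence all ramification for $\overline{C} \to D$ lies above $\infty \in \mathbb{P}^1$.

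Since $D \to \mathbb{P}^1$ has degree $2$, there are at most two points of $D$ above $\infty$. For each such point $Q$, the local contribution to the ramification divisor of the degree $3$ cover $\overline{C} \to D$ is $\sum_{P \mapsto Q}(e_P - 1) \leq 3 - 1 = 2$. So the total ramification divisor on $\overline{C}$ has degree at most $4$, and Riemann--Hurwitz yields
\begin{equation*}
2g(\overline{C}) - 2 \leq 3\bigl(2g(D) - 2\bigr) + 4 \leq 3 \cdot 6 + 4 = 22,
\end{equation*}
giving $g(\overline{C}) \leq 12$. The only real obstacle is the bookkeeping at finite points via Proposition \ref{DivisorDegree3}; once $\gcd(p,q) = 1$ is invoked, each of the three cases collapses to a valuation that is divisible by $3$, so no ramification can occur there and the bound above $\infty$ suffices.
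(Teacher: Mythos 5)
Your proof is correct and follows essentially the same route as the paper's: bound $g(D)$ by $\deg\Delta\leq 10$ via Lemma~\ref{DegDiscrim1}, use the hypothesis $\gcd(p,q)=1$ to show $\overline{C}\to D$ is unramified above every finite point, and apply Riemann--Hurwitz with at most two ramification points above $\infty$. The only cosmetic difference is that you invoke Proposition~\ref{DivisorDegree3} together with Corollary~\ref{AbelExt2} to rule out finite ramification, while the paper cites Proposition~\ref{InertS3} directly; these are equivalent formulations of the same local computation.
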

\begin{proof}
From Lemma \ref{DegDiscrim1} we see that the degree of $\Delta$ is at most 10, and as such we see that the genus of the corresponding curve $y^2=\Delta$ can be at most 4.
For $\overline{C}$, we use Riemann-Hurwitz on the covering $\phi_{3}:\overline{C}\longrightarrow{D}$. Let us first describe the ramification of the degree $3$ morphism $\phi_{3}$. Recall that it is given by
\begin{equation*}
w^3=y-\sqrt{27}q.
\end{equation*} Let $f:=y-\sqrt{27}q$. Using Proposition \ref{InertS3} and our assumption $\text{gcd}(p,q)=1$, we then see that $\overline{C}\rightarrow{D}$ is unramified above any point of $D$ lying above a point of $K[t]$.  
The only points of ramification are thus the point(s) at infinity. How many there are of these, depends on the degree of the squarefree part of $\Delta$. 
Indeed, if the squarefree part of $\Delta$ has {\it{even}} degree, then $D$ has two points at infinity and if it has {\it{odd}} degree, then it has exactly one point at infinity.

Rewriting the Riemann-Hurwitz formula for the covering $\phi_{3}$, we obtain
\begin{equation*}
g_{\overline{C}}-1=3(g_{D}-1)+\#(R).
\end{equation*}
The maximal occurring $g_{D}$ and $\#(R)$ are respectively $4$ and $2$, so we obtain that
\begin{equation*}
g_{\overline{C}}\leq{9}+2+1=12,
\end{equation*}
as desired.
\end{proof}
\begin{lemma}
Suppose that $\text{gcd}(p,q)=1$. There are then 8 options for $(g(D),\#{R}, g(\overline{C}), \#(R_{\overline{C}/C}))$. They are given by
\center
\begin{tabular}{ |c| c | c|c| }

\hline
$g(D)$ & $\#(R)$ & $g(\overline{C})$& $\#(R_{\overline{C}/C})$\\
\hline
2 & 1 & 5 & 0\\
2 & 2 & 6 & 2\\
3 & 0 & 7 & 4\\
3 & 1 & 8 & 6\\
3 & 2 & 9 & 8 \\
4 & 0 & 10 & 10\\
4 & 1 & 11 & 12 \\
4 & 2 & 12 & 14\\
\hline

\end{tabular}
\end{lemma}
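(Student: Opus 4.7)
The plan is to view the four quantities in the table as determined by the single pair $(g(D),\#R)$ through two applications of the Riemann--Hurwitz formula: one for the degree three cover $\phi_{3}\colon\overline{C}\rightarrow D$ and one for the degree two cover $\psi\colon\overline{C}\rightarrow C$. The values of $g(\overline{C})$ and $\#(R_{\overline{C}/C})$ are then forced by $(g(D),\#R)$, so the content of the lemma is really the statement that exactly eight pairs $(g(D),\#R)$ are admissible, and the work will be in pinning down which ones.

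First I would record the upper bounds already established: $g(D)\leq 4$ from Lemma~\ref{DegDiscrim1} (since $\deg \Delta \leq 10$), and $\#R\leq 2$ because the previous lemma shows that under $\gcd(p,q)=1$ the only possible ramification of $\phi_3$ is at the point(s) of $D$ lying over $\infty\in\mathbb{P}^{1}$, of which $D$ has at most two. Next I would produce the matching lower bounds. Since we are in the chapter's standing situation ($\overline{\phi}$ has Galois group $S_{3}$ and $\overline{C}$ is geometrically irreducible), the element $y=\sqrt{\Delta}$ is not in $K(x)$, so $D\neq \mathbb{P}^{1}$ and hence $g(D)\geq 1$. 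From Riemann--Hurwitz applied to the degree two cover $\psi$ together with $g(C)=3$, namely $2g(\overline{C})-2 = 2(2\cdot 3 - 2) + \#(R_{\overline{C}/C})$, one gets $g(\overline{C}) = 5 + \tfrac{1}{2}\#(R_{\overline{C}/C}) \geq 5$ and, by the same formula, $\#(R_{\overline{C}/C}) = 2g(\overline{C})-10$. This last identity will give the fourth column of the table as soon as $g(\overline{C})$ is known.

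Now I would insert the equality $g(\overline{C})-1 = 3(g(D)-1) + \#R$ from the proof of the preceding lemma and run the case analysis. The constraint $g(\overline{C})\geq 5$ forces $3(g(D)-1)+\#R \geq 4$: the case $g(D)=1$ gives $g(\overline{C})-1=\#R\leq 2$, contradicting $g(\overline{C})\geq 5$, and the case $g(D)=2,\#R=0$ gives $g(\overline{C})=4<5$, again a contradiction. Every other pair with $g(D)\in\{2,3,4\}$ and $\#R\in\{0,1,2\}$ satisfies $g(\overline{C})\geq 5$; there are exactly $3\cdot 3 - 1 = 8$ such pairs, which are precisely those appearing in the table. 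For each of the eight, one reads off $g(\overline{C}) = 1 + 3(g(D)-1) + \#R$ and then $\#(R_{\overline{C}/C}) = 2g(\overline{C}) - 10$, and a direct check reproduces the table entries.

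There is no serious obstacle to this proof; the only subtlety is remembering that in the convention used in the previous lemma the symbol $\#R$ counts totally ramified points of the degree three cover (each contributing $e_{P}-1=2$ to Riemann--Hurwitz), while $\#(R_{\overline{C}/C})$ counts ramification points of the degree two cover (each contributing $e_{P}-1=1$). Keeping the two normalizations straight is what makes the arithmetic $g(\overline{C}) - 1 = 3(g(D)-1)+\#R$ and $\#(R_{\overline{C}/C})=2g(\overline{C})-10$ consistent with the tabulated values.
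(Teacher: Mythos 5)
Your approach is the same as the paper's: apply Riemann--Hurwitz to $\overline{C}\rightarrow D$ and to $\overline{C}\rightarrow C$, then enumerate the admissible pairs $(g(D),\#R)$. The paper's own proof is a one-liner (``plugging in the possible values\ldots''), so your explicit justification of \emph{which} eight pairs occur --- via the constraint $g(\overline{C})\geq 5$ forced by $g(C)=3$ together with the bounds $g(D)\leq 4$ and $\#R\leq 2$ --- is a useful fleshing-out, and the arithmetic checks out.

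One small logical slip: ``$y=\sqrt{\Delta}\notin K(x)$, so $D\neq\mathbb{P}^{1}$ and hence $g(D)\geq 1$'' does not hold. The fact that $K(D)$ strictly contains $K(x)$ only says the covering $D\rightarrow\mathbb{P}^{1}$ has degree $>1$; it does not prevent $D$ itself from having genus $0$ (e.g.\ if $\Delta$ has degree $\leq 2$ after removing square factors, $D$ is a conic). Fortunately this misstep is harmless in your argument: the numerical constraint $g(\overline{C})\geq 5$ combined with $g(\overline{C})-1 = 3(g(D)-1)+\#R$ and $\#R\leq 2$ already rules out $g(D)\in\{0,1\}$ (they give $g(\overline{C})\leq 0$ and $g(\overline{C})\leq 3$ respectively), so you should simply drop the $D\neq\mathbb{P}^{1}$ remark and let the inequality do all the exclusions.
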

\begin{proof}
One uses the Riemann Hurwitz on both the covering $\overline{C}\longrightarrow{D}$ and $\overline{C}\longrightarrow{C}$. This leads to 
\begin{eqnarray*}
g_{\overline{C}}-1&=&3(g_{D}-1)+\#(R),\\
2g_{\overline{C}}-2&=&2(2g_{C}-2)+\#(R_{{\overline{C}/C}}),
\end{eqnarray*}
where $g_{C}=3$. Plugging in the possible values for $D$ and $\#(R)$ yields the above values. 
\end{proof}

Let us now find the intersection graph of a genus 3 curve by a nonabelian morphism $C\longrightarrow\mathbb{P}^{1}$ to illustrate the above. We note that this example was also done in Example \ref{Examplecurve}, using a different technique. 
\begin{exa}\label{Gen3NonAb1}
Let us consider the genus 3 curve defined by
\begin{equation*}
z^3+x^3{z}+(x^3+\pi^3)=0.
\end{equation*}
We can find the reduction type in two ways: via an abelian cover and a nonabelian cover. The abelian cover is given by
\begin{equation*}
(x,z)\longmapsto{z}
\end{equation*}
and the nonabelian one by
\begin{equation*}
(x,z)\longmapsto{x}.
\end{equation*}
We will only consider the nonabelian cover. Note that we have $p=x^3$ and $q=x^3+\pi^3$, so that
\begin{equation*}
\Delta=4p^3+27q^2=4x^9+27\cdot{(x^3+\pi^3)^2}.
\end{equation*}
The corresponding quadratic extension is then given by
\begin{equation*}
y^2=\Delta,
\end{equation*}
which gives a hyperelliptic genus 4 curve $D$.\\
Taking the model with
\begin{equation*}
xt=\pi,
\end{equation*}
we see that by normalizing we get a local model for $D$ given by
\begin{equation*}
(y/x^3)^2=4x^3+27(1+t^3)^2,
\end{equation*}
which has three components in the special fiber. Above $x=0$, we find two components $\Gamma_{-1}$ and $\Gamma_{1}$ given by the equations
\begin{equation*}
y'=\pm{\sqrt{27}(1+t^3)}.
\end{equation*}
Above $t=0$ we find an elliptic curve (labeled by $\Gamma'$) with corresponding equation
\begin{equation*}
y'^2=4x^3+27.
\end{equation*}
We have three edges between $\Gamma_{-1}$ and $\Gamma_{1}$ (given by $t^3+1=0$), one between $\Gamma_{-1}$ and $\Gamma'$ and another one between $\Gamma_{1}$ and $\Gamma'$. This gives the reduction graph of $D$. It can be found in Figure \ref{24eplaatje}.
\begin{figure}[h!]
\centering
\includegraphics[scale=0.6]{{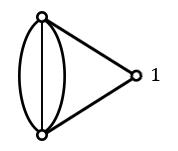}}
\caption{\label{24eplaatje} {\it{The intersection graph of the intermediate genus 4 curve in Example \ref{Gen3NonAb1}.}}} 
\end{figure}
We now examine the divisor $f=y-\sqrt{27}q$. Since $D$ has only one point at infinity, there is only 1 possible ramification point. We quite quickly see that the valuation of $f$ at infinity is divisible by $3$, so the covering $\overline{C}\longrightarrow{C}$ is unramified everywhere. It therefore comes from a 3-torsion point.\\
The support of $f$ is given by the points $P_{1}=(x,y-\sqrt{27}\pi^3)$ and $P_{2}=\infty$. We see that 
\begin{equation*}
\text{div}_{\eta}(f)=9(P_{1})-9(P_{2}).
\end{equation*}
We therefore actually have a 9-torsion point. The divisor we are interested in is $D=3P_{1}-3P_{2}$ (which gives the extension). We find
\begin{eqnarray*}
\rho(P_{1})=\Gamma_{1},\\
\rho(P_{2})=\Gamma'.
\end{eqnarray*}
We first want to clarify one thing: when writing down the reduction graph, one needs to keep in mind the lengths of the corresponding edges. For every edge between $\Gamma_{-1}$ and $\Gamma_{1}$ for instance we have that the edge has length $3$, which can be seen by the relation
\begin{equation*}
(y'-\sqrt{27}(1+t^3))(y'+\sqrt{27}(1+t^3))=4\pi^3/(t^3)
\end{equation*}
(and the fact that $t$ is invertible at these intersection points). The other two edges have length 1. We can now find a solution for the Laplacian. One of them is given by
\begin{eqnarray*}
\phi(\Gamma')=0,\\
\phi(\Gamma_{-})=3,\\
\phi(\Gamma_{+})=6.
\end{eqnarray*}
The corresponding graph of the Laplacian can be found in Figure \ref{22eplaatje}.
\begin{figure}[h!]
\centering
\includegraphics[scale=0.4]{{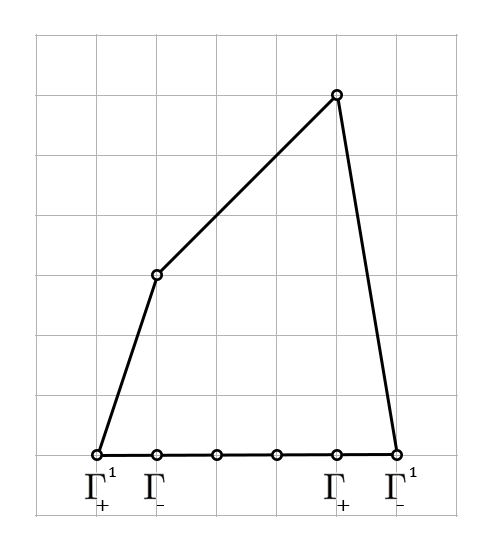}}
\caption{\label{22eplaatje} {\it{The Laplacian function $\phi$ corresponding to $f$ in Example \ref{Gen3NonAb1}. There are three edges between $\Gamma_{-}$ and $\Gamma_{+}$, but the Laplacian on each is the same. }}}
\end{figure}
Note that the increase of slope for every edge between $\Gamma_{-}$ and $\Gamma_{+}$ is taken to be $1$, so that the total increase from $\Gamma_{-}$ to $\Gamma_{+}$ is 3. \\
Let us now consider the extension
\begin{equation*}
w^3=f.
\end{equation*}
\begin{figure}[h!]
\centering
\includegraphics[scale=0.6]{{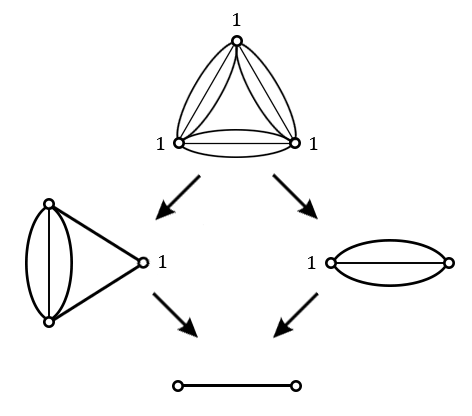}}
\caption{\label{21eplaatje} {\it{The Galois closure of graphs in Example \ref{Gen3NonAb1}.}}}
\end{figure}
The corresponding curve $\overline{C}$ has genus 10. For $f^{\Gamma_{+}}$ and $f^{\Gamma_{-}}$, we see that there are 3 ramification points, because the slope between them on the three edges is not divisible by $3$. The slope of $\phi$ between $\Gamma_{+}$ and $\Gamma'$ for instance is $-6$, so $f^{\Gamma_{+}}$ does not ramify at that intersection point. We therefore have that there are two components $\Gamma_{+,0}$ and $\Gamma_{-,0}$ above $\Gamma_{+}$ and $\Gamma_{-}$ respectively. The corresponding morphism on the special fiber is ramified at exactly 3 points and so these components are genus 1 curves. On the other 2 edges, we have that the slope of $\phi$ is divisible by 3, so there are 3 edges lying above them. On $\Gamma'$, it just defines an unramified extension of an elliptic curve, so we have one component which we call $\Gamma'_{0}$ with genus 1 again.\\
We obtain the following reduction graph. We have three vertices. Each of these vertices intersects the other vertex in exactly three edges. Furthermore, these vertices all have weights $1$. The intersection graph can be found in Figure \ref{21eplaatje}.\\
Let us now consider the Galois action of $\tau$ on this graph. Note that for the intersection graph of $D$, we have that $\tau$ is trivial on all the edges. In the quotient, we have that these edges become smooth points. This happens because the morphism we created from $D$ to $\mathbb{P}^{1}$ is not disjointly branched. 
We have that $\tau$ fixes $\Gamma'_{0}$ and switches the other two vertices $\Gamma_{+,0}$ and $\Gamma_{-,0}$.  One can see this using the fundamental equality
\begin{equation*}
n=e_{\mathfrak{p}}f_{\mathfrak{p}}g_{\mathfrak{p}}
\end{equation*} 
from Equation \ref{FundGalEq1} in Section \ref{SerLocFields}. 
This then gives that the decomposition group of $\Gamma'_{0}$ is $S_{3}$ and the decomposition group of $\Gamma_{+,0}$ and $\Gamma_{-,0}$ are both the normal subgroup $\mathbb{Z}/3\mathbb{Z}$. The reduction graph in the quotient is then a graph on two vertices, intersecting each other in 3 edges. One of these vertices has weight $1$ and is obtained as the quotient of $\Gamma_{+,0}$ and $\Gamma_{-,0}$. The corresponding Galois diagram can be found in Figure \ref{21eplaatje}.

\end{exa}

\begin{exa}\label{LaatsteVoorbeeld1}
Consider the curve $C$ defined by
\begin{equation*}
z^3+p(x)z+q(x)=0
\end{equation*}
with
\begin{eqnarray*}
p(x)&=&x^3,\\
q(x)&=&x^4+\pi^{4}.
\end{eqnarray*}
This is again a genus 3 curve with a nonabelian morphism
\begin{equation*}
\phi:(x,z)\longmapsto{x}
\end{equation*}
of degree 3. The intermediate curve $D$ defined by
\begin{equation*}
y^2=4p^3+27q^2=4x^9+27(x^{4}+\pi^{4})
\end{equation*} 
then has genus 4 as before. Taking the semistable model corresponding to $xt=\pi$, we obtain the equation
\begin{equation*}
(y')^2=4x+27(1+t^4)^{2}
\end{equation*} 
with $y'=y/x^4$. This has the following reduction graph: we have two vertices above $x=0$, corresponding to
\begin{equation*}
(y')=\pm{\sqrt{27}(1+t^4)}.
\end{equation*}
These components $\Gamma_{+}$ and $\Gamma_{-}$ intersect each other 4 times, corresponding to the roots of $1+t^{4}$. Above $t=0$ we have one component $\Gamma_{0}$ of genus 0.\\
We now check the divisor of $f=y-\sqrt{27}q$ as before. We again find
\begin{equation*}
\text{div}_{\eta}(f)=9(P_{1})-9(\infty),
\end{equation*}
where $P_{1}=(x,y-\sqrt{27}\pi^{4})$. Note that $P_{1}$ reduces to $\Gamma_{+}$. We then again have the divisor on graphs
\begin{equation*}
\rho(\text{div}_{\eta}(f))=9\Gamma_{+}-9\Gamma_{0}.
\end{equation*}
Note that the length of every edge in $\Sigma(\mathcal{D})$ is $1$ by the identity
\begin{equation*}
(y'-\sqrt{27}(1+t^{4}))(y'+\sqrt{27}(1+t^{4}))=4\pi/t
\end{equation*}
and the fact that $\phi_{\Sigma}$ is \'{e}tale above the edge corresponding to $x=t=0$. We then find the following Laplacian as a solution:
\begin{eqnarray*}
\phi(\Gamma_{0})&=&0,\\
\phi(\Gamma_{0})&=&4,\\
\phi(\Gamma_{0})&=&5.
\end{eqnarray*}
\begin{figure}[h!]
\centering
\includegraphics[scale=0.5]{{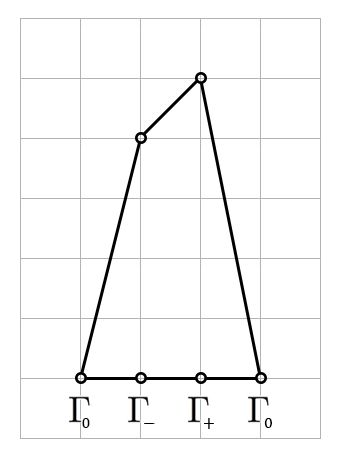}}
\caption{\label{28eplaatje} {\it{The Laplacian in Example \ref{LaatsteVoorbeeld1}.}}}
\end{figure}
See also Figure \ref{28eplaatje}.
Note that the slope between every pair of vertices is {\it{not}} divisible by 3. We therefore find that the reduction graph of the Galois closure has the same reduction graph as $\mathcal{D}$, but with different weights.\\
For $\Gamma_{-}$ and $\Gamma_{+}$, we find that they both have 5 branch points (corresponding to the intersection points), so that their genera are $3$. For $\Gamma_{0}$, we find that $g(\Gamma_{0})=0$. This determines the reduction graph.\\
\begin{figure}[h!]
\centering
\includegraphics[scale=0.6]{{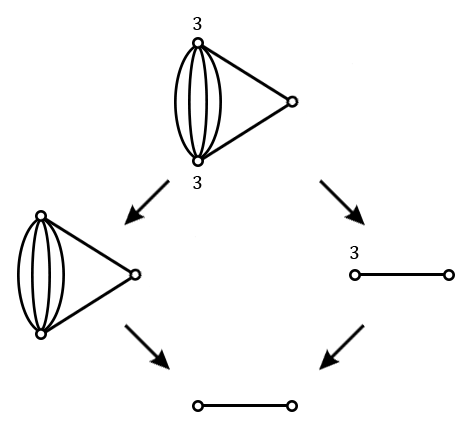}}
\caption{\label{29eplaatje} {\it{The Galois closure of graphs in Example \ref{LaatsteVoorbeeld1}.}}}
\end{figure}
If we now take the invariants under the automorphism of order $\tau$ corresponding to $C$, we then obtain the graph consisting of two vertices, with one vertex having genus $3$ and the other having genus $0$. We thus see that $C$ has potential good reduction. For the Galois diagram of graphs, see Figure \ref{29eplaatje}. 
\end{exa}

\section{Higher genus}

Let us now quickly say something about curves of higher genus. We will adopt the notation from \cite{Hart1}. We say that a curve has a $g^{1}_{d}$ if there exists a linear system of degree $d$ and dimension $1$ on $C$. This then automatically gives a morphism
\begin{equation*}
\phi:C\longrightarrow\mathbb{P}^{1}
\end{equation*}
of degree $d$. We will state the result in \cite[Page 345, Remark 5.5.1]{Hart1} again.
\begin{pro}
For any $d\geq{\dfrac{1}{2}g+1}$, any curve of genus $g$ has a $g^{1}_{d}$. For $d<\dfrac{1}{2}g+1$, there exist curves of genus $g$ having no $g^{1}_{d}$.
\end{pro}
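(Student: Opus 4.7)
The plan is to prove both halves via the Brill--Noether number
\[
\rho(g,r,d) := g - (r+1)(g-d+r),
\]
which for $r=1$ specializes to $\rho(g,1,d) = 2d - g - 2$. The condition $\rho \geq 0$ is precisely $d \geq \tfrac{1}{2}g + 1$, so the statement is naturally a $\rho$-dichotomy. Recall that a $g^1_d$ corresponds to (a $2$-dimensional subspace of the sections of) a line bundle $L$ of degree $d$ with $h^0(L) \geq 2$, i.e.\ a point of the Brill--Noether locus $W^1_d(C)$.

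For the forward direction, I would invoke the Brill--Noether existence theorem of Kempf and Kleiman--Laksov: for every smooth projective curve $C$ of genus $g$, the scheme $W^r_d(C) \subset \operatorname{Pic}^d(C)$ is non-empty whenever $\rho(g,r,d) \geq 0$, and in fact every component of $W^r_d(C)$ has dimension $\geq \rho$. Applied with $r=1$ and $d \geq \tfrac{1}{2}g+1$, this produces an $L$ with $h^0(L) \geq 2$; any pencil inside $H^0(C,L)$ is the desired $g^1_d$. As a sanity check, the much cruder bound $d \geq g+1$ already suffices by Riemann--Roch alone: for any effective $D$ of degree $d$, $\ell(D) = d - g + 1 + \ell(K-D) \geq 2$. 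The content of the theorem is the sharp bound $\tfrac{1}{2}g+1$, which genuinely requires the existence theorem (proved by realizing $W^r_d$ as the degeneracy locus of a map of vector bundles on $\operatorname{Pic}^d(C)$ and computing its Chern class, which turns out to be non-zero when $\rho \geq 0$).

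For the converse, I would cite (or sketch) the non-existence half of Brill--Noether, due to Griffiths--Harris, which says that for a general curve $[C] \in \mathcal{M}_g$, one has $W^r_d(C) = \emptyset$ whenever $\rho(g,r,d) < 0$. The strategy of the proof is by degeneration: exhibit at least one curve of genus $g$ (for instance a suitable chain of elliptic curves, or a general $g$-cuspidal rational curve as in Eisenbud--Harris limit linear series) on which non-existence can be checked directly, and then use semicontinuity of $W^1_d$ in families together with the irreducibility of $\mathcal{M}_g$ to propagate non-existence to a dense open locus in $\mathcal{M}_g$. Specializing to $r=1$ and $d < \tfrac{1}{2}g+1$ yields a (necessarily non-hyperelliptic, and more generally non-$k$-gonal for small $k$) curve of genus $g$ carrying no $g^1_d$.

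The main obstacle is genuinely the Brill--Noether existence theorem used in the forward direction: it is not a Riemann--Roch one-liner, and a self-contained proof would require setting up the universal divisor on $C \times \operatorname{Pic}^d(C)$, the Poincar\'{e} line bundle, and a Porteous-style Chern class computation. The non-existence half is comparably deep but, in this short proposal, I would be content to defer to Eisenbud--Harris limit linear series for the explicit degenerate curve; once one such curve is produced, the spread-out argument is formal. Everything else --- the translation between $g^1_d$'s and points of $W^1_d(C)$, and the numerics $\rho(g,1,d) = 2d - g - 2$ --- is routine.
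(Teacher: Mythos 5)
Your proposal is correct and follows the standard Brill--Noether route (Kempf, Kleiman--Laksov for existence; Griffiths--Harris for non-existence on a general curve), which is precisely the theory that Hartshorne's Remark IV.5.5.1 --- the reference the paper cites in lieu of a proof --- is summarizing. The paper itself gives no argument beyond that citation, so there is nothing in the paper to diverge from; your proposal supplies the content that the cited remark packages up, with the correct specialization $\rho(g,1,d) = 2d - g - 2$ and the correct observation that $\rho \geq 0 \iff d \geq \tfrac{1}{2}g+1$. One minor point worth making explicit: Hartshorne's 1977 remark predates the Griffiths--Harris proof of the non-existence half, so strictly speaking the reference is to a statement that was partly conjectural at the time; your sketch of the degeneration argument (Eisenbud--Harris limit linear series, or Griffiths--Harris directly) is what actually closes that gap.
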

Let us now set $d=4$. We will explain why in a moment. At any rate, we then find that any curve of genus $4,5,6$ admits a $g^{1}_{4}$. Furthermore, for higher genus there exist curves having no $g^{1}_{4}$.\\
For any curve of genus $4,5$ or $6$, we thus have a morphism of degree $4$
\begin{equation*}
\phi:C\longrightarrow{\mathbb{P}^{1}}.
\end{equation*}
The Galois group of this morphism is then a subgroup of $S_{4}$. Since $S_{4}$ is solvable, we can again use our techniques to find the reduction type of any curve of genus $4,5$ or $6$.\\
Problems arise for our method for Galois morphisms
\begin{equation*}
C\longrightarrow{\mathbb{P}^{1}}
\end{equation*}
that have $A_{5}$ as its Galois group. The techniques developed in this thesis are then no longer applicable, since $A_{5}$ is not solvable. If one can find a different morphism that has a solvable Galois group, then one can still find the reduction type of $C$.

\chapter{Conclusion}\label{Conclusion}
In Chapter \ref{Introduction}, we posed the following questions:

\begin{enumerate}
\item There exist criteria for the Berkovich skeleta of elliptic curves and genus two curves in terms of coordinates on their coarse moduli spaces, see \cite[Chapter VII]{Silv1} and \cite{Igusa}. Can they be generalized to curves of higher genus? 
\item Is there a fast algorithm for finding the Berkovich skeleton of a genus three curve?
\item Are there fast algorithms for finding the Berkovich skeleton of other types of curves?  
\end{enumerate}

In a joint paper \cite{supertrop} with Madeline Brandt, the author studied the moduli space of tropical superelliptic curves. We found that this tropical moduli space is a stacky polyhedral fan, as is the case for the usual moduli space of tropical curves of genus $g$, see \cite{trophyp}. This description is given in terms of the branch locus of the associated morphism $C\rightarrow{\mathbb{P}^{1}}$. The author however believes that by symmetrizing as in the genus one and two cases, one can obtain criteria similar to the ones in \cite[Chapter VII]{Silv1} and \cite{Igusa}.

Regarding the second and third question: we have found algorithms for the Berkovich skeleton of curves $C$ that admit a solvable covering $C\rightarrow{\mathbb{P}^{1}}$, see Algorithms \ref{AlgorithmAbelian} and \ref{AlgorithmSolvable}. Every curve of genus three admits either a degree two or a degree three covering to the projective line and the Galois closures of these give solvable Galois groups (namely $S_{2}$ and $S_{3}$). In other words, our algorithm is applicable to all genus three curves. 
Although we haven't done any analyses on the running time of the algorithm, we believe that this algorithm is much faster than the ones alluded to in the introduction, where costly Jacobian computations have to be done. It would be very interesting to obtain a precise statement on the running time of the algorithm.

There are also some questions left that were avoided on purpose throughout this thesis. One of the big assumptions on our coverings is that they are \emph{tame}, in the sense that the covering degree is prime to the characteristic of the residue field. It would be very interesting to see what kind of tropical criteria one could obtain for these "bad" characteristics.  A good place to start would be "wildly ramified" superelliptic curves, i.e. curves with a cyclic abelian covering $C\rightarrow{\mathbb{P}^{1}}$ of degree not coprime to $\text{char}(k)$.

Another thing the reader might have noticed is that the algorithm only works for solvable coverings. The covering data, as given in this thesis, work even for nonsolvable groups, but the twisting data are somewhat harder to produce for nonsolvable coverings. That is, we cannot give a direct description of the components for a "decomposable" nonsolvable covering. For solvable coverings, we can use the machinery of $2$-cocycles to give a nice and concise description, but for nonsolvable coverings, we can only say that one has to "link intersection points according to the components that one obtains from normalizing". That is to say, in this thesis we restricted ourselves to $2$-cocycles and cohomology groups and a general solution in terms of fundamental groups of punctured graphs is probably not far away.  





\appendix

\chapter{Normalizations for $S_{3}$-coverings}\label{Appendix1}

In this section, 
we will give the proofs for Propositions \ref{InertS3} and \ref{DivisorDegree3}. 
We first give a short review of the equations defining the Galois closure of a degree three separable field extension with Galois group $S_{3}$, after which we turn to normalizing discrete valuation rings in these extensions. 

The set-up is as follows. Let $R$ be a discrete valuation ring with quotient field $K$, residue field $k$, uniformizer $\pi$ and valuation $v$. Note that 
the residue field $k$ is not assumed to be algebraically closed, since we want to use these results for discrete valuation rings coming from irreducible components in a semistable model $\mathcal{C}$. 
We will denote the maximal ideal $(\pi)$ in $R$ by $\mathfrak{p}$. We will assume that the characteristic of $K$ is zero 
and that the characteristic of the residue field is coprime to six. Furthermore, we will assume that $K$ contains a primitive third root 
of unity $\zeta_{3}$ and a primitive fourth root of unity $\zeta_{4}$. 
The third root of unity allows us to use Kummer theory for abelian coverings of degree $3$. The fourth root of unity allows us to change the sign in the discriminant of a cubic equation, see the equations below. Note that it also implies that $\sqrt{3}\in{K}$ and thus $\sqrt{27}\in{K}$. 


\section{The Galois closure of an irreducible degree three extension}


Let $K\subset{L}$ be a field extension of degree $3$. Let $z$ be any element in $L\backslash{K}$. After a translation, its minimal polynomial in $K[x]$ is given by $f(x):=x^3+p\cdot{}x+q$, leading to the equation  
\begin{equation}\label{MainEq2}
z^3+p\cdot{z}+q=0,
\end{equation}
where $p,q\in{K}$. Its discriminant is then given by
\begin{equation}
\Delta_{f}:=-(4p^3+27q^2).
\end{equation} 
We first have the following
\begin{lemma}
Let $\overline{L}$ be the Galois closure of $L/K$. Then $\text{Gal}(\overline{L}/K)=S_{3}$ if and only if the discriminant $\Delta_{f}$ is not a square in $K$.
\end{lemma}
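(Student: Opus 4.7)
The plan is to invoke the classical fact that the discriminant detects whether the Galois group of an irreducible separable cubic is contained in the alternating group. I would proceed as follows.

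First, I would observe that since $z\in L\setminus K$ and $[L:K]=3$, the polynomial $f(x)=x^3+px+q$ is the minimal polynomial of $z$ over $K$, hence irreducible. Because $\mathrm{char}(K)=0$, $f$ is automatically separable, so it has three distinct roots $\alpha_1,\alpha_2,\alpha_3\in\overline{L}$, and $\overline{L}=K(\alpha_1,\alpha_2,\alpha_3)$ is the splitting field of $f$. The group $G:=\mathrm{Gal}(\overline{L}/K)$ then embeds into $S_3$ via its action on $\{\alpha_1,\alpha_2,\alpha_3\}$, and this action is transitive since $f$ is irreducible. As the only transitive subgroups of $S_3$ are $S_3$ itself and $A_3=\mathbb{Z}/3\mathbb{Z}$, we have $G\in\{A_3,S_3\}$.

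Second, I would introduce the element
\begin{equation*}
\delta=(\alpha_1-\alpha_2)(\alpha_1-\alpha_3)(\alpha_2-\alpha_3)\in\overline{L},
\end{equation*}
so that $\delta^2=\prod_{i<j}(\alpha_i-\alpha_j)^2$. A short calculation (or a direct appeal to the standard formula) shows that for the depressed cubic $x^3+px+q$ this equals $-(4p^3+27q^2)=\Delta_f$. Since $f$ is separable, $\delta\neq 0$.

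Third, I would verify the key equivariance property: for any transposition $\tau\in S_3$ one has $\tau(\delta)=-\delta$, while every $3$-cycle fixes $\delta$. Hence the stabilizer of $\delta$ inside $S_3$ is exactly $A_3$. Because $\mathrm{char}(K)\neq 2$ we have $\delta\neq-\delta$, so $\delta\in K$ if and only if $G$ fixes $\delta$, if and only if $G\subseteq A_3$, if and only if $G=A_3$.

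Finally, I would conclude: $G=S_3$ iff $\delta\notin K$ iff $\Delta_f=\delta^2$ is not a square in $K$. The only mild obstacle is the bookkeeping around characteristic (needing $\mathrm{char}(K)\neq 2$ so $\delta\neq -\delta$, and $\mathrm{char}(K)\neq 3$ so that the depressed form and the discriminant formula are valid), both of which are guaranteed by the standing assumptions $\mathrm{char}(K)=0$ and $\mathrm{char}(k)\nmid 6$.
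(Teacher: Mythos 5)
Your proof is correct and complete. The paper itself supplies no argument for this lemma — it simply cites Stewart's \emph{Galois Theory}, Proposition~22.4 — so your write-up is precisely the standard argument that the cited reference would contain: irreducibility plus separability gives a transitive subgroup of $S_3$, the square root $\delta$ of the discriminant is fixed exactly by $A_3$, and since $\mathrm{char}(K)\neq 2$ forces $\delta\neq-\delta$, the Galois group lands in $A_3$ if and only if $\delta\in K$, i.e.\ if and only if $\Delta_f$ is a square. The sign bookkeeping also matches the paper's convention $\Delta_f=-(4p^3+27q^2)=\prod_{i<j}(\alpha_i-\alpha_j)^2$. No gaps.
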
 
\begin{proof}
See \cite[Proposition 22.4]{stewartgalois}. 
\end{proof}
We now assume that the Galois group of $\overline{L}/K$ is $S_{3}$. We have the following chain of subgroups
\begin{equation}
(1)\vartriangleleft{H}\vartriangleleft{S_{3}},
\end{equation} 
where $H$ has order $3$ and index $2$. In other words, $S_{3}$ is solvable. Using this fact, the famous Cardano formulas then express the roots of $f(x)$ in terms of radicals. 
 Let us quickly summarize the procedure. One considers the following equation: 
\begin{equation}\label{EquationW}
w^2-3wz-p=0.
\end{equation}  
If it has a root in $L$, we take that root and call it $w$. Otherwise, we take a quadratic extension to obtain the desired $w$. It will later turn out that this is exactly the extension to the Galois closure. Note that $w\neq{0}$. Indeed, otherwise we would have $p=0$ and this would imply that $L/K$ is abelian, a contradiction. One can also assume that $w\neq{0}$ in the abelian case, because at least one of the roots of Equation \ref{EquationW} has to be nonzero. 

  At any rate, this $w$ then satisfies the (probably more familiar) equation
\begin{equation}
z=w-\dfrac{p}{3w}.
\end{equation}
This is also known as {\it{Vieta's substitution}}. Plugging this into Equation \ref{MainEq2} then quickly leads to to a sextic equation
\begin{equation}\label{EquationFull}
w^6+qw^3-\dfrac{p^3}{27}=0,
\end{equation}
which is quadratic in $w^3$. We can now consider the element
\begin{equation}
\tau(w):=\dfrac{p}{3w}.
\end{equation}
The reader can immediately check that if $w$ satisfies Equation \ref{EquationFull}, then $\tau(w)$ also satisfies the same equation. 
 We now have
\begin{lemma}
$\tau(w)\neq{w}$.
\end{lemma}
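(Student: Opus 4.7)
The plan is to derive a contradiction from the assumption $\tau(w)=w$ using Vieta's substitution, which I would then cross-check against the sextic equation \ref{EquationFull} as a sanity test.

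First, I would recall that the key relation between $w$ and $z$ is precisely
\[ z \;=\; w - \frac{p}{3w} \;=\; w - \tau(w). \]
So the statement $\tau(w)=w$ is literally equivalent to $z=0$. The proof then reduces to observing that $z$ cannot be zero: we chose $z\in L\setminus K$, and $0\in K$, so $z\neq 0$. This is the entire argument in one line, and it is the route I would take.

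As a consistency check, I would redo the computation purely at the level of Equation \ref{EquationFull}. If $\tau(w)=w$, then $w^2=p/3$, hence $w^6=p^3/27$. Substituting into $w^6+qw^3-p^3/27=0$ yields $qw^3=0$. Since $w\neq 0$ (already noted, because otherwise $p=0$ forces $L/K$ abelian), this forces $q=0$. But then the minimal polynomial of $z$ over $K$ is $x^3+px = x(x^2+p)$, which is reducible; this contradicts $[K(z):K]=3$. So both viewpoints give the same contradiction, confirming the first argument.

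The main (and only) obstacle here is really bookkeeping: making sure the hypothesis $z\in L\setminus K$ is used correctly and that the exclusion $w\neq 0$ from the paragraph preceding the lemma is carried over. There is no genuine difficulty; the lemma is essentially the observation that Vieta's substitution produces the nonzero element $z$ as the difference $w-\tau(w)$, and it is this difference structure that will later be exploited to recognize $w$ and $\tau(w)$ as the two Kummer-theoretic roots governing the cubic subextension of $\overline{L}/K(\sqrt{\Delta_f})$.
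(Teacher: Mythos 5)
Your proposal is correct, and your primary argument is genuinely different from — and shorter than — the paper's. The paper substitutes $w^2=p/3$ into the sextic \eqref{EquationFull} to deduce $qw^3=0$ and then splits into the two cases $q=0$ (forcing $z^3+pz=z(z^2+p)$, reducible) and $w=0$ (already excluded). You instead observe directly that Vieta's substitution $z=w-\tfrac{p}{3w}=w-\tau(w)$ makes the statement $\tau(w)=w$ literally equivalent to $z=0$, which is impossible since $z\in L\setminus K$ and $0\in K$. This bypasses the sextic and the case split entirely; it also makes the reason for the lemma transparent (it is precisely the nonvanishing of $z$). Your ``consistency check'' then reproduces the paper's argument essentially verbatim, including the exclusion $w\neq 0$ carried over from the preceding discussion. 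Both routes are sound; yours is the more economical, while the paper's has the minor advantage of relying only on \eqref{EquationFull} (useful if one wanted to argue purely inside the $w$-algebra without invoking the inverse substitution). One small note on the source rather than on you: the displayed quadratic \eqref{EquationW} reads $w^2-3wz-p=0$, which yields $z=\tfrac{w}{3}-\tfrac{p}{3w}$ rather than the Vieta formula $z=w-\tfrac{p}{3w}$ used afterwards (and which you correctly use); the latter is the one consistent with \eqref{EquationFull}, so \eqref{EquationW} almost certainly has a typo ($3w^2-3wz-p=0$ is what is meant), but this does not affect your argument.
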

\begin{proof}
Suppose that $\tau(w)=w$. Then $w^2=\dfrac{p}{3}$ and $w^6=\dfrac{p^3}{27}$. Substituting this into Equation \ref{EquationFull}, we find
\begin{equation}
\dfrac{p^3}{27}+qw^3-\dfrac{p^3}{27}=qw^3=0.
\end{equation}
In other words, either $q=0$ or $w=0$. If $q=0$, then Equation \ref{MainEq2} is reducible, a contradiction. We already saw that $w=0$ is impossible, so we arrive at the desired result. 
\end{proof}
Completing the square in Equation \ref{EquationFull}, we now obtain: 
\begin{equation}
(w^3+q/2)^2=\dfrac{p^3}{27}+\dfrac{q^2}{4}=\Delta':=\dfrac{\Delta}{4\cdot{27}},
\end{equation}
where $\Delta=4p^3+27q^2$. We thus see that the quadratic subfield $K(y):=K[y]/(y^2-\Delta')$ is contained in $K(w,z)=K(w)$ (where $w$ is a root of Equation \ref{EquationW}). Note that $K(y)$ is indeed a field, since $\Delta'=\dfrac{\Delta}{4\cdot{27}}$ is minus the discriminant, which is not a square in $K$ by assumption on the Galois group. Using the fact that field degrees are multiplicative, we then see that $K(w)\supset{K(y)}$ has degree three. This then implies that $K(w)$ has degree six over $K$, which also gives the irreducibility of Equation \ref{EquationFull}.
We now have
\begin{lemma}
The field extension $K(w)\supset{K(z)}\supset{K}$ is Galois of order six. As such, it is the Galois closure of $K(z)/K$. The two automorphisms given by
\begin{align*}
\sigma(w)&=\zeta_{3}\cdot{w},\\
\tau(w)&=\dfrac{p}{3w}
\end{align*}    
generate the Galois group. 
Here $\sigma$ has order three and $\tau$ has order two.
\end{lemma}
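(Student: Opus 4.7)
The plan is to prove the three claims (degree, Galois property with identification, and structure of the Galois group) in sequence, mostly by bookkeeping on the tower $K \subset K(y) \subset K(w)$.

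For the degree, I would first show $[K(w):K] = 6$. The intermediate field $K(y)$ has degree $2$ over $K$: by the previous lemma our $S_3$-assumption forces $\Delta_f = -\Delta$ to be a non-square in $K$, and the hypotheses $\zeta_4, \sqrt{3} \in K$ make $-1$ and $108 = 4 \cdot 27$ squares, so $\Delta$ is a non-square iff $\Delta' = \Delta/108$ is a non-square. Next, $w$ satisfies $T^3 - (y - q/2) = 0$ over $K(y)$, so $[K(w):K(y)] \le 3$. On the other hand, the relation $z = w - p/(3w)$ shows $K(z) \subset K(w)$, so $3 = [K(z):K]$ divides $[K(w):K]$, and $2$ divides it as well; combined with $[K(w):K] \le 6$ this forces equality.

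For the Galois property, I would argue that $K(w)$ is contained in the Galois closure $\overline{L}$ (which has degree $6$ over $K$ by assumption) and therefore equals it. The containment is clear because $z \in \overline{L}$ by definition and $y \in \overline{L}$ since $\Delta_f$ becomes a square in any splitting field of $f$; then $w$, being a root of the quadratic $w^2 - zw - p/3 = 0$ in terms of $z$ and lying in a Kummer extension of $K(y)$ by $\zeta_3$-stable data, also lies in $\overline{L}$. Matching degrees gives $K(w) = \overline{L}$, so in particular $K(w)/K$ is Galois.

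Finally, for the Galois group structure, I would exhibit $\sigma$ and $\tau$ and verify the $S_3$-relations. Since $\zeta_3 \in K$ and $w^3 = y - q/2 \in K(y)$, Kummer theory produces an order-$3$ automorphism $\sigma$ of $K(w)/K(y)$ with $\sigma(w) = \zeta_3 w$. To build $\tau$, I would lift the nontrivial automorphism $y \mapsto -y$ of $K(y)/K$ to $K(w)$: any such lift must send $w$ to a cube root of $-y - q/2$ inside $K(w)$. Using the minimal polynomial identity $w^3 (w^3 + q) = p^3/27$ from Equation \ref{EquationFull}, one sees that $\pm p/(3w)$ is exactly such a cube root, which yields the stated formula for $\tau(w)$. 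A short calculation then gives $\tau^2 = \mathrm{id}$ and $\tau \sigma \tau^{-1} = \sigma^{-1}$, so $\langle \sigma, \tau \rangle$ is a nonabelian group of order $6$ and hence all of $\mathrm{Gal}(K(w)/K) \cong S_3$.

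The main obstacle is the bookkeeping for $\tau$: one has to verify using $w^3 (w^3 + q) = p^3/27$ that $p/(3w)$ is the correct cube root (up to sign and choice of primitive root) and then check that the resulting map really is a field automorphism of $K(w)$, i.e., that it is compatible with the minimal polynomial relations satisfied by $w$ over $K$. The rest is a standard tower-of-extensions and Kummer-theory argument that proceeds essentially by degree counting.
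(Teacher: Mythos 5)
Your proof takes a genuinely different route than the paper's. The paper first shows $[K(w):K]=6$ in the paragraph preceding the lemma, then defines $\sigma$ and $\tau$, checks they are automorphisms of $K(w)$ of orders $3$ and $2$ respectively fixing $K$, notes that $\langle\sigma,\tau\rangle$ therefore has order divisible by $6$ and hence exactly $6$, and concludes that $K(w)/K$ is Galois with group $\langle\sigma,\tau\rangle$; the identification with the Galois closure then falls out because $K(w)\supseteq K(z)$ and $K(w)/K$ is Galois of the right degree. You instead try to identify $K(w)$ with $\overline{L}$ directly, and only afterwards exhibit $\sigma$ and $\tau$. Both strategies can work, but the automorphism-counting route is cleaner because it avoids the one genuine gap in your write-up.

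That gap is the claim that $w\in\overline{L}$. Your justification, that $w$ lies ``in a Kummer extension of $K(y)$ by $\zeta_3$-stable data,'' is not an argument: a cube root of an element of $\overline{L}$ need not lie in $\overline{L}$. To make this inclusion work one must either identify $w$ (up to a scalar in $K$) with a Lagrange resolvent $z_1+\zeta_3 z_2+\zeta_3^2 z_3$, or compute the discriminant of the quadratic $w^2-zw-p/3$ over $K(z)$ and observe that $z^2+\tfrac{4}{3}p=-\tfrac{1}{3}(z_2-z_3)^2$ is a square in the splitting field since $\sqrt{-3}\in K$. Far easier, and recommended, is the \emph{reverse} containment: from $y=w^3+q/2$ and $z=w-p/(3w)$ one has $K(y),K(z)\subseteq K(w)$, so $K(y,z)\subseteq K(w)$; since $[K(y,z):K]$ is divisible by both $2$ and $3$ and at most $6$, and $K(y,z)\subseteq\overline{L}$ with $[\overline{L}:K]=6$, all three fields have degree $6$ over $K$, and the bound $[K(w):K]\le 6$ from the quadratic forces $K(w)=K(y,z)=\overline{L}$. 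Replacing your $w\in\overline{L}$ paragraph by this removes the handwave.

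One further point you flag but defer: you say you would ``verify using $w^3(w^3+q)=p^3/27$ that $p/(3w)$ is the correct cube root (up to sign).'' Carrying that check out actually reveals that $p/(3w)$ does \emph{not} satisfy $w^6+qw^3-p^3/27=0$ unless $q=0$, whereas $-p/(3w)$ does; consistently, your quadratic $w^2-zw-p/3=0$ (which is the one implied by $z=w-p/(3w)$, unlike the paper's Equation \ref{EquationW}) has the other root $-p/(3w)$. This sign discrepancy originates in the paper's equations, but since your proposal is exactly the one that would expose it, it is worth noting that the bookkeeping you postponed is not merely routine.
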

\begin{proof}
By basic field theory, $\tau$ defines an automorphism of order two on $K(w)$. One then also easily finds that $\sigma$ is an automorphism of order three. Note that they both fix the underlying field $K$. We now have two automorphisms that generate a group $<\sigma,\tau>=:H\subset{\text{Aut}(K(w))}$ with order equal to the degree of the field extension (namely six). This implies that $K(w)/K$ is Galois with Galois group $H$, as desired. 
\end{proof}

Let us now perform some cosmetic changes that remove the fractions from the equations. We scale Equation \ref{EquationFull} slightly using the variable change
\begin{equation}
w'=\dfrac{w}{c\sqrt{3}},
\end{equation}
where $c^3=2$. Writing $w$ for $w'$,  
this then leads to the equation
\begin{equation}
w^6+2\sqrt{27}qw^3-4p^3=0.
\end{equation} 
Completing the square and taking the extension $K\subset{K[y]/(y^2-\Delta)}$ with $\Delta=4p^3+27q^2$, we find that
\begin{equation}
w^3=\pm{}y-\sqrt{27}q.
\end{equation}
Throughout this thesis, we in fact take the extension
\begin{equation}
w^3=y-\sqrt{27}q.
\end{equation}
The other extension (namely $w^3=-y-\sqrt{27}q$) is just the extension corresponding to $\tau(w)$, where $\tau$ is now given by $\tau(w)=\dfrac{cp}{w}$.
\begin{cor}\label{GalClos1}
The Galois closure of $L\supset{K}$ is given by the two extensions
\begin{equation}
K\subset{K(y)}\subset{K(w)},
\end{equation}
where
\begin{equation}
w^3=y-\sqrt{27}q
\end{equation}
and
\begin{equation}
y^2=\Delta.
\end{equation}
\end{cor}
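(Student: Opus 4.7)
The plan is to assemble the pieces already developed in the preceding lemmas into a single tower of fields, and then to verify that each layer has the asserted degree. Essentially all of the algebraic work has been carried out in the paragraphs preceding the statement; what remains is to package it cleanly.

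First, I would start from the rescaled sextic $w^6 + 2\sqrt{27}\,q\,w^3 - 4p^3 = 0$ derived via Vieta's substitution, and complete the square in $w^3$. Since $\bigl(w^3 + \sqrt{27}\,q\bigr)^2 = 4p^3 + 27q^2 = \Delta$, setting $y = w^3 + \sqrt{27}\,q$ gives an element satisfying $y^2 = \Delta$, and then by construction $w^3 = y - \sqrt{27}\,q$. This produces the tower $K \subset K(y) \subset K(w)$ on the nose.

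Next, I would check the degrees. The bottom extension $K(y)/K$ has degree $2$ because $\Delta$ is not a square in $K$ by the standing hypothesis that $\mathrm{Gal}(\overline{L}/K) = S_{3}$ (invoking the lemma recalled from \cite{stewartgalois}). The previous lemma established that $[K(w):K] = 6$, so by the multiplicativity of field degrees $[K(w):K(y)] = 3$; in particular the polynomial $T^3 - (y-\sqrt{27}\,q)$ is irreducible over $K(y)$, so this layer is a genuine Kummer extension (using that $\zeta_3 \in K$).

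Finally, I would identify $K(w)$ with the Galois closure $\overline{L}$. By the lemma preceding the corollary, $K(w)/K$ is Galois of degree $6$ and contains $z = w - p/(3w) \in L$, hence contains $L$; since $[K(w):K] = 6 = |S_3| = [\overline{L}:K]$, we must have $K(w) = \overline{L}$. The only mild subtlety --- and the main thing to write carefully --- is the choice of sign in $w^3 = \pm y - \sqrt{27}\,q$; as noted in the text, the two choices correspond to replacing $w$ by $\tau(w) = cp/w$, so they yield the same field, and we may fix the ``$+$'' sign without loss of generality. No hard step is expected here; this is a bookkeeping corollary of the preceding construction.
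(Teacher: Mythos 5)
Your proposal is correct and follows essentially the same route the paper intends: complete the square in the rescaled sextic to extract $y$ with $y^2 = \Delta$ and $w^3 = y-\sqrt{27}q$, invoke the preceding lemma for $[K(w):K]=6$ and the $S_3$-structure, and resolve the sign ambiguity via $\tau$. One small point to be aware of, which the paper itself glides over: the passage from the original $w$ (with $w^6+qw^3-p^3/27=0$) to the rescaled $w$ uses a factor $c\sqrt{3}$ with $c^3=2$, so the equality $K(w_{\text{new}})=K(w_{\text{old}})=\overline{L}$ — and hence the degree-$6$ input you borrow from the preceding lemma and the relation $z=w-p/(3w)$ — tacitly requires $\sqrt[3]{2}\in K$; this holds in the thesis's actual setting but is not literally among the appendix's stated hypotheses.
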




\section{Normalizations}\label{Normalizations}

Now let $R\subset{K}$ be as in the beginning of the Appendix. Let $L\supset{K}$ be a degree three field extension and $z\in{L}\backslash{K}$. After a translation, $z$ satisfies
\begin{equation}
z^3+p\cdot{z}+q=0,
\end{equation}
where $p$ and $q$ are in $K$. By scaling, we can even assume that $p,q\in{R}$. 
Let $K'\subset{\overline{L}}$ be the quadratic subfield, $A$ the integral closure of $R$ in $K'$ and $B$ the integral closure of $R$ in $\overline{L}$. Let $\mathfrak{q}$ be any prime in $B$ lying above $\mathfrak{p}=(\pi)$. We will give explicit equations for the ring $B$ and use those to give formulas for $|I_{\mathfrak{q}}|$, the inertia group of $\mathfrak{q}$.

We consider three cases:

\begin{enumerate}
\item $3v(p)>2v(q)$,
\item $3v(p)<2v(q)$,
\item $3v(p)=2v(q)$.
\end{enumerate}

In every case, we start with a computation of the integral closure $A$ and then deduce $B$ from $A$. From Corollary \ref{GalClos1}, we see that the extension $K\subset{K'}$ is given by
\begin{equation}
y^2=\Delta=4p^3+27q^2.
\end{equation}

\subsection{Case I}
Suppose that $3v(p)>2v(q)$. We let $p=\pi^{k_{1}}u_{1}$ and $q=\pi^{k_{2}}u_{2}$ for units $u_{i}$. We then find the integral equation
\begin{equation}\label{EquationCase1}
(\dfrac{y}{\pi^{k_{2}}})^2=4\pi^{3k_{1}-2k_{2}}u_{1}^3+27u_{2}^2.
\end{equation}
Let $y'=\dfrac{y}{\pi^{k_{2}}}$.
Reducing Equation \ref{EquationCase1} modulo $\mathfrak{p}$ yields the equation
\begin{equation*}
\overline{(y')^2}=\overline{27u_{2}^2}.
\end{equation*}
Or in other words: $\overline{y'}=\pm{\sqrt{27}u_{2}}$. In other words: $A$ is completely split over $R$. The primes are then given by:
\begin{eqnarray}
\mathfrak{q}_{1}&=&(\pi,\sqrt{27}u_{2})\\
\mathfrak{q}_{2}&=&(\pi,-\sqrt{27}u_{2}).
\end{eqnarray}
Note that this implies that $\pi$ is again a uniformizer of $A_{\mathfrak{q}_{i}}$ for both $i$.  
We then have the following Lemma:
\begin{lemma}\label{caseone}
Let $3v(p)>{2v(q)}$. Then
\begin{eqnarray*}
v_{\mathfrak{q}_{1}}(y-\sqrt{27}q)&=&3v(p)-v(q)\\
v_{\mathfrak{q}_{1}}(y+\sqrt{27}q)&=&v(q).\\
v_{\mathfrak{q}_{2}}(y+\sqrt{27}q)&=&3v(p)-v(q)\\
v_{\mathfrak{q}_{2}}(y-\sqrt{27}q)&=&v(q).
\end{eqnarray*}
\end{lemma}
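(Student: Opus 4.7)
The plan is to derive the lemma from a single algebraic identity together with a direct evaluation modulo $\pi$. The identity is
\[
(y - \sqrt{27}q)(y + \sqrt{27}q) \;=\; y^2 - 27q^2 \;=\; 4p^3,
\]
which holds inside $A$ because of the defining equation $y^2 = \Delta = 4p^3 + 27q^2$. Since $\pi$ is again a uniformizer of the localization $A_{\mathfrak{q}_i}$ for each $i$ (as noted just before the lemma), and since $4$ is a unit in $R$ by the assumption $\operatorname{char}(k)\nmid 6$, taking $v_{\mathfrak{q}_i}$ of both sides gives
\[
v_{\mathfrak{q}_i}(y - \sqrt{27}q) + v_{\mathfrak{q}_i}(y + \sqrt{27}q) \;=\; 3\,v(p).
\]
So it suffices to compute one of the two valuations at each prime, and the other is then forced.

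Next I would compute $v_{\mathfrak{q}_1}(y + \sqrt{27}q)$ directly. Writing $y = \pi^{k_2} y'$ and $q = \pi^{k_2} u_2$, we have
\[
y + \sqrt{27}q \;=\; \pi^{k_2}\bigl(y' + \sqrt{27}u_2\bigr).
\]
At the prime $\mathfrak{q}_1$, the residue of $y'$ is $+\sqrt{27}u_2$ (this is precisely the content of the factorization of the reduced equation $(y')^2 \equiv 27u_2^2 \pmod{\pi}$ that produced the two primes $\mathfrak{q}_1,\mathfrak{q}_2$). Therefore $y' + \sqrt{27}u_2$ reduces to $2\sqrt{27}u_2$ modulo $\mathfrak{q}_1$, which is a unit of the residue field because $\operatorname{char}(k)\notin\{2,3\}$ and $u_2$ is a unit. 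Hence $v_{\mathfrak{q}_1}(y + \sqrt{27}q) = k_2 = v(q)$, and the product identity gives $v_{\mathfrak{q}_1}(y - \sqrt{27}q) = 3v(p) - v(q)$. The hypothesis $3v(p) > 2v(q)$ is what guarantees that the latter is strictly larger than $v(q)$, so there is no contradiction and both valuations are nonnegative as they must be for integral elements.

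The computation at $\mathfrak{q}_2$ is entirely symmetric: the residue of $y'$ mod $\mathfrak{q}_2$ is $-\sqrt{27}u_2$, so now $y' - \sqrt{27}u_2$ reduces to $-2\sqrt{27}u_2$, still a unit, giving $v_{\mathfrak{q}_2}(y - \sqrt{27}q) = v(q)$ and then $v_{\mathfrak{q}_2}(y + \sqrt{27}q) = 3v(p) - v(q)$. This completes the lemma.

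The only real subtlety is making sure that $2\sqrt{27}u_2$ is genuinely a unit in the residue field; this is where the standing hypothesis $\operatorname{char}(k)\nmid 6$ is essential, together with the assumption $\sqrt{27}\in K$ (so it already reduces to an element of $k^*$). Once one has pinned down that this factor is a unit at each $\mathfrak{q}_i$, the rest is a formal consequence of the factorization $(y-\sqrt{27}q)(y+\sqrt{27}q) = 4p^3$. I would not expect any serious obstacle; the computation is an instance of the general principle that for a quadratic extension of a discrete valuation ring that splits, the valuations of conjugate elements at the two primes are read off from their reductions modulo the uniformizer.
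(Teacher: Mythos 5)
Your proof is correct and is essentially the same argument as the paper's: both exploit the factorization $(y-\sqrt{27}q)(y+\sqrt{27}q)=4p^3$ (equivalently, after dividing out $\pi^{2k_2}$, the paper's relation $(y'-\sqrt{27}u_2)(y'+\sqrt{27}u_2)=4\pi^{3k_1-2k_2}u_1^3$), together with the observation that exactly one of the two factors is a unit at each $\mathfrak{q}_i$. You simply organize it by computing the unit factor's valuation first and then reading off the other from the product formula, whereas the paper computes both valuations directly; the content is identical.
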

\begin{proof}
We write $y-\sqrt{27}q=\pi^{k_{2}}\cdot{(y'-\sqrt{27}u_{2})}$ and use the relation
\begin{equation}
(y'-\sqrt{27}u_{2})(y'+\sqrt{27}u_{2})=4\pi^{3k_{1}-2k_{2}}u_{1}^3.
\end{equation}
Note that $y'-\sqrt{27}u_{2}$ and $y'+\sqrt{27}u_{2}$ are coprime, so that $y'+\sqrt{27}u_{2}$ is invertible at $\mathfrak{q}_{1}$.  We then see that the desired valuation is given by
\begin{equation}
v_{\mathfrak{q}_{1}}(y-\sqrt{27}q)=v_{\mathfrak{q}_{1}}(\pi^{k_{2}})+v_{\mathfrak{q}_{1}}(4\pi^{3k_{1}-2k_{2}}u_{1}^3)=k_{2}+3{k_{1}}-2k_{2}=3{k_{1}}-k_{2}.
\end{equation}
Using again that $y'+\sqrt{27}u_{2}$ is invertible at $\mathfrak{q}_{1}$, we obtain that $v_{\mathfrak{q}_{1}}(y+\sqrt{27}q)=v(\pi^{k_{2}})=k_{2}$, as desired.

The other two cases for $\mathfrak{q}_{2}$ follow in a completely analogous way and are left to the reader.  
\end{proof}

We can now give the order of the inertia groups for primes $\mathfrak{q}$ in $B$. 
\begin{lemma}
Let $3v(p)>2v(q)$. Then
\begin{enumerate}
\item $|I_{\mathfrak{q}}|=3$ $\iff$ $3\nmid{v(q)}$.
\item  $|I_{\mathfrak{q}}|=1$ $\iff$ $3| v(q)$. 
\end{enumerate}
\end{lemma}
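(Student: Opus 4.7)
The plan is to combine the splitting information already obtained for the quadratic subextension $A/R$ with the Kummer theory of the cubic extension $B/A$ given by $w^{3}=y-\sqrt{27}q$. Since we are in the regime $3v(p)>2v(q)$, the computation preceding the statement shows that $A$ is split over $R$: there are two primes $\mathfrak{q}_{1},\mathfrak{q}_{2}$ in $A$ above $\mathfrak{p}$, both with $\pi$ as a uniformizer, and hence the quadratic extension $R\subset A$ is unramified at $\mathfrak{p}$. Consequently, for any prime $\mathfrak{q}$ of $B$ lying above $\mathfrak{p}$, the order of the inertia group $I_{\mathfrak{q}}$ for $R\subset B$ agrees with the order of the inertia group of $\mathfrak{q}$ in the cubic extension $A\subset B$.

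Next I would reduce to a purely local Kummer computation on the cubic extension. The extension of fraction fields $\mathrm{Frac}(A)\subset\mathrm{Frac}(B)$ is exactly $K'\subset K'[w]/(w^{3}-f)$ with $f=y-\sqrt{27}q$, so by Proposition \ref{UnrAbelExt1} the inertia index of any prime of $B$ lying above $\mathfrak{q}_{i}$ is
\[
|I_{\mathfrak{q}}|=\frac{3}{\gcd\!\bigl(3,v_{\mathfrak{q}_{i}}(f)\bigr)}.
\]
Here I use that $K$ contains the primitive third root of unity $\zeta_{3}$ and that $3$ is coprime to the residue characteristic, so Proposition \ref{UnrAbelExt1} applies.

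The main remaining step is just to plug in the valuations of $f=y-\sqrt{27}q$ at $\mathfrak{q}_{1}$ and $\mathfrak{q}_{2}$ that were already computed in Lemma \ref{caseone}: namely $v_{\mathfrak{q}_{1}}(f)=3v(p)-v(q)$ and $v_{\mathfrak{q}_{2}}(f)=v(q)$. Since $3v(p)-v(q)\equiv -v(q)\pmod{3}$, we obtain in both cases
\[
\gcd\!\bigl(3,v_{\mathfrak{q}_{i}}(f)\bigr)=\gcd(3,v(q)),
\]
so $|I_{\mathfrak{q}}|=3/\gcd(3,v(q))$, which is $3$ when $3\nmid v(q)$ and $1$ when $3\mid v(q)$. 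This yields the two biconditionals of the lemma.

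I do not anticipate any serious obstacle here: the quadratic part has already been analysed (it is split, hence unramified, in Case I), the cubic part is a textbook Kummer computation controlled by a single Newton-polygon slope, and the relevant valuations $v_{\mathfrak{q}_{i}}(y-\sqrt{27}q)$ are supplied verbatim by Lemma \ref{caseone}. The only mild point to be careful about is that one must take $\mathfrak{q}$ to be a prime of $B$, not of $A$, and observe that the overall inertia equals the inertia of the cubic step because the quadratic step is unramified at $\mathfrak{p}$ in this case.
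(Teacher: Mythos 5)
Your proof is correct and follows exactly the route the paper intends, merely filling in the details behind its one-line citation of Lemma \ref{caseone}: the quadratic step is split (hence unramified) in this case, so the inertia is entirely concentrated in the cubic Kummer step, and the index $3/\gcd(3,v_{\mathfrak{q}_i}(y-\sqrt{27}q))$ evaluates to $3/\gcd(3,v(q))$ at both primes of $A$ since $3v(p)-v(q)\equiv -v(q)\pmod{3}$. The observation that the two choices of prime above $\mathfrak{p}$ give the same gcd (even though the raw valuations $3v(p)-v(q)$ and $v(q)$ differ) is the one nontrivial point the paper leaves implicit, and you handle it correctly.
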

\begin{proof}
This follows from Lemma \ref{caseone}.
\end{proof}

\subsection{Case II}
Suppose that $3v(p)<2v(q)$ and let $p=\pi^{k_{1}}u_{1}$ and $q=\pi^{k_{2}}u_{2}$ for units $u_{i}$, as before. We subdivide this case into two subcases:
\begin{enumerate}
\item [{\bf{(A)}}] $v(p)$ is divisible by $2$,
\item [{\bf{(B)}}] $v(p)$ is not divisible by $2$.
\end{enumerate}

Suppose that $v(p)$ is divisible by $2$. We start with the equation $(y-\sqrt{27}q)(y+\sqrt{27}q)=4p^3$. Dividing by $\pi^{3k_{1}}$, we obtain
\begin{equation}
(\dfrac{y-\sqrt{27}q}{\pi^{3k_{1}/2}})(\dfrac{y+\sqrt{27}q}{\pi^{3k_{1}/2}})=4u_{1}^3.
\end{equation}
We thus see that $\dfrac{y-\sqrt{27}q}{\pi^{3k_{1}/2}}$ and $\dfrac{y+\sqrt{27}q}{\pi^{3k_{1}/2}}$ are invertible. Note that the reduced equation is
\begin{equation}
\overline{y'^2}=\overline{4u_{1}^3}, 
\end{equation}
which might be reducible or irreducible, depending on whether $u_{1}$ is a square in the residue field $k$. In either case, we have the following
\begin{lemma}\label{Case2A}
Let $3v(p)<2v(q)$ and suppose that $v(p)$ is divisible by $2$. Then $|I_{\mathfrak{q}}|=1$. 
\end{lemma}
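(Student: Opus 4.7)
The plan is to chase the Kummer extension all the way up from $R$ to $B$ and verify that neither step contributes ramification above $\mathfrak{p}$.

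First I would record what the preceding paragraph already gives us under the hypotheses $3v(p)<2v(q)$ and $2\mid v(p)$. Writing $v(p)=2m$, the identity
\[
\Bigl(\tfrac{y-\sqrt{27}q}{\pi^{3m}}\Bigr)\Bigl(\tfrac{y+\sqrt{27}q}{\pi^{3m}}\Bigr)=4u_{1}^{3}
\]
shows that each factor on the left lies in $A$ and is a unit at every prime $\mathfrak q$ of $A$ above $\mathfrak p$. In particular the reduced equation $\overline{y'}^{\,2}=\overline{4u_{1}^{3}}$ has no repeated roots (since $\mathrm{char}\,k>3$ and $u_{1}$ is a unit), so $A\supset R$ is \'etale at $\mathfrak p$; equivalently, $e_{\mathfrak q/\mathfrak p}=1$ regardless of whether $\overline{4u_{1}^{3}}$ is a square in $k$.

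Next I would handle the cubic step $A\subset B$ using the Kummer description $w^{3}=y-\sqrt{27}q$ from Corollary \ref{GalClos1}. Set $w'=w/\pi^{m}$. Then $w'$ is integral over $A$ because
\[
(w')^{3}=\frac{y-\sqrt{27}q}{\pi^{3m}}\;\in\;A_{\mathfrak q}^{\,*}.
\]
Hence $A_{\mathfrak q}[w']=A_{\mathfrak q}[T]/(T^{3}-u)$ with $u\in A_{\mathfrak q}^{\,*}$, and the derivative $3T^{2}$ is invertible in this ring because $3\in R^{*}$ and $T=w'$ is a unit. Thus $A_{\mathfrak q}\to A_{\mathfrak q}[w']$ is standard \'etale; being finite and normal at $\mathfrak q$, it already coincides with the localization of $B$ at any prime $\mathfrak q'$ above $\mathfrak q$. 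In particular $e_{\mathfrak q'/\mathfrak q}=1$.

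Combining the two steps via multiplicativity of ramification indices gives $e_{\mathfrak q'/\mathfrak p}=e_{\mathfrak q'/\mathfrak q}\cdot e_{\mathfrak q/\mathfrak p}=1$. Since our residue characteristic is coprime to $6=|S_{3}|$, the extension is tame, so $|I_{\mathfrak q'}|=e_{\mathfrak q'/\mathfrak p}=1$, which is the claim. I do not anticipate any real obstacle here, because the substitution $w'=w/\pi^{m}$ — which is only available precisely because $v(p)$ is even — reduces everything to a Kummer extension by a unit, and the ``hard part'' (namely constructing the primes of $A$ above $\mathfrak p$ and showing $\pi$ is still a uniformizer there) was already taken care of in the preceding discussion of Case II(A).
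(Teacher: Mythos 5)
Your proof is correct and follows essentially the same route as the paper: identify the even power of $\pi$ to divide out, observe that $\bigl(\tfrac{y\pm\sqrt{27}q}{\pi^{3m}}\bigr)$ become units so the quadratic step is \'etale, and then rescale $w$ to turn the cubic step into a Kummer extension by a unit. The one thing you do that the paper's text does not is spell out the cubic step explicitly — the paper states the lemma and then moves on to compute $v_{\mathfrak{q}_i}(y\pm\sqrt{27}q)$, leaving the reader to invoke Corollary \ref{AbelExt2} (or Proposition \ref{UnrAbelExt1}) on the fact that this valuation $3v(p)/2$ is divisible by $3$; your substitution $w'=w/\pi^{m}$ and the standard-\'etale verification make that step self-contained.
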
 

This concludes the determination of the inertia groups for the first case. We would now also like to give the valuation of $y-\sqrt{27}q$ at a prime in $\text{Spec}(A)$ lying above $\mathfrak{p}$. As noted above, there are two cases to consider: the case where $\mathfrak{p}$ is split in $A$ and the case where $\mathfrak{p}$ is not split in $A$. We saw that being split in $A$ is equivalent to $\overline{u}_{1}$ being a square in $k$.

Let us consider the case where $\mathfrak{p}$ is split in $A$. We can then write $\overline{u_{1}}=h^2$, where $\overline{h}\in{k}$. Let $h$ be a lift of $h$ to $A$. Then there are two primes lying above $\mathfrak{p}$:
\begin{align*}
\mathfrak{q}_{1}&=(y'-2h^3,\pi)\\
\mathfrak{q}_{2}&=(y'+2h^3,\pi).
\end{align*}
We can now give $v_{\mathfrak{q}_{i}}(y\pm\sqrt{27}q)$:
\begin{lemma}
Suppose that $\mathfrak{p}$ is split in $A$. Let $\mathfrak{q}_{i}$ be the primes in $\text{Spec}(A)$ lying above $\mathfrak{p}$. Then
\begin{eqnarray*}
v_{\mathfrak{q}_{i}}(y\pm\sqrt{27}q)&=&3v_{}(p)/2.
\end{eqnarray*}
\end{lemma}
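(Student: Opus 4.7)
The plan is to exploit the relation $(y-\sqrt{27}q)(y+\sqrt{27}q) = -4p^{3}$ together with the normalization $y' = y/\pi^{3k_{1}/2}$ used in the preceding discussion. Since $3v(p) < 2v(q)$ and $v(p) = k_{1}$ is even, $y'$ is integral and satisfies
\begin{equation*}
y'^{2} = 4u_{1}^{3} + 27\pi^{2k_{2}-3k_{1}}u_{2}^{2},
\end{equation*}
where the exponent $2k_{2}-3k_{1}$ is strictly positive. In particular, $\pi$ remains a uniformizer at each $\mathfrak{q}_{i}$ (by Lemma \ref{Case2A} the extension $R \subset A$ is unramified at $\mathfrak{q}_{i}$), which is the key fact used to convert $\pi$-valuations into $\mathfrak{q}_{i}$-valuations.

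The first step is to write
\begin{equation*}
y \pm \sqrt{27}q = \pi^{3k_{1}/2}\bigl(y' \pm \sqrt{27}q\,\pi^{-3k_{1}/2}\bigr).
\end{equation*}
Because $v(q) > 3k_{1}/2$, the element $\sqrt{27}q\,\pi^{-3k_{1}/2}$ lies in $\mathfrak{p}A$, so the bracketed expression reduces modulo $\pi$ to $\overline{y'}$. At $\mathfrak{q}_{1} = (y'-2h^{3},\pi)$ we have $\overline{y'} \equiv 2h^{3} \pmod{\mathfrak{q}_{1}}$, and at $\mathfrak{q}_{2}$ we have $\overline{y'} \equiv -2h^{3}$; in both cases this is a unit (since $\overline{u_{1}} = h^{2}$ is a unit, as $v(u_{1})=0$, and $\mathrm{char}(k)\neq 2$). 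Hence the bracketed factor is a unit at $\mathfrak{q}_{i}$ for either choice of sign and either choice of $i$.

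The second step is to read off the valuation. Since $\pi$ is a uniformizer at $\mathfrak{q}_{i}$, we obtain
\begin{equation*}
v_{\mathfrak{q}_{i}}(y \pm \sqrt{27}q) = v_{\mathfrak{q}_{i}}(\pi^{3k_{1}/2}) + 0 = \tfrac{3k_{1}}{2} = \tfrac{3v(p)}{2},
\end{equation*}
as claimed. The only subtle point is ensuring that $2h^{3}$ is a nonzero element of the residue field at $\mathfrak{q}_{i}$, but this follows from $h$ being a lift of a square root of the unit $\overline{u_{1}}$ together with $\mathrm{char}(k)>3$.

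I expect no serious obstacle here: the argument is a direct valuation computation once the factorization $y \pm \sqrt{27}q = \pi^{3k_{1}/2}(\cdots)$ is written down. The mildly delicate point, and the reason the statement is split into subcases (A) and (B), is precisely that one needs $3k_{1}/2 \in \mathbb{Z}$ in order to perform this normalization inside $A$; this is exactly the hypothesis $2 \mid v(p)$. The companion case (B), where $v(p)$ is odd, will instead force a ramified extension with a single prime above $\mathfrak{p}$ and valuation $3v(p)$, and is handled separately in the paper.
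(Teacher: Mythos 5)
Your proof is correct and follows the same strategy as the paper's: normalize by $\pi^{3k_{1}/2}$, show the resulting factor is a unit in $A_{\mathfrak{q}_{i}}$, and use that $\pi$ remains a uniformizer there (since $R\to A_{\mathfrak{q}_{i}}$ is \'{e}tale). The only difference is cosmetic — the paper deduces invertibility of both normalized factors at once from the product identity $\bigl(\tfrac{y-\sqrt{27}q}{\pi^{3k_{1}/2}}\bigr)\bigl(\tfrac{y+\sqrt{27}q}{\pi^{3k_{1}/2}}\bigr)=4u_{1}^{3}$, whereas you compute the reduction of each factor modulo $\mathfrak{q}_{i}$ directly using the explicit generators; both are valid one-step observations.
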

\begin{proof}
Using 
\begin{equation}\label{EquationPDivisible}
(\dfrac{y-\sqrt{27}q}{\pi^{3k_{1}/2}})(\dfrac{y+\sqrt{27}q}{\pi^{3k_{1}/2}})=4u_{1}^3,
\end{equation}
we see that $\dfrac{y-\sqrt{27}q}{\pi^{3k_{1}/2}})$ and $(\dfrac{y+\sqrt{27}q}{\pi^{3k_{1}/2}})$ are invertible and thus $v_{\mathfrak{q}_{i}}(\dfrac{y\pm\sqrt{27}q}{\pi^{3k_{1}/2}})=0$. 
Since $R\rightarrow{A_{\mathfrak{q}_{i}}}$ is \'{e}tale for both $i$, we obtain that $\pi$ is again a uniformizer. This quickly gives the lemma.
\end{proof}

Suppose now that $\mathfrak{p}$ is not split in $A$. There is one prime lying above $\mathfrak{p}$, namely
\begin{equation}
\mathfrak{q}=(y'^2-4u_{1}^3,\pi).
\end{equation}
We then have
\begin{lemma}
Let $\mathfrak{q}$ be the only prime lying above $\mathfrak{p}$. Then
\begin{equation}
v_{\mathfrak{q}}(y\pm\sqrt{27}q)=3v(p)/2.
\end{equation}
\end{lemma}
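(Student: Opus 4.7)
The plan is to mirror the split-case argument almost verbatim, exploiting the fact that in this subcase the only complication over the split case is the reducibility of $y'^2-4u_1^3$ in the residue field, which turns out to be irrelevant for computing valuations. First I would recall the defining identity
\begin{equation*}
(y-\sqrt{27}q)(y+\sqrt{27}q)=4p^3=4\pi^{3k_1}u_1^3,
\end{equation*}
and, since $k_1=v(p)$ is even by hypothesis, divide both sides by $\pi^{3k_1}$ to obtain
\begin{equation*}
\Bigl(\tfrac{y-\sqrt{27}q}{\pi^{3k_1/2}}\Bigr)\Bigl(\tfrac{y+\sqrt{27}q}{\pi^{3k_1/2}}\Bigr)=4u_1^3\in R^*.
\end{equation*}
In particular both factors on the left lie in $B$, and since their product is a unit, each of them is a unit in $B$, hence has valuation zero at every prime of $B$ lying over $\mathfrak{p}$, and in particular at $\mathfrak{q}$.

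Next I would use the already-established Lemma in Case 2A that $|I_\mathfrak{q}|=1$, so that the extension $R\subseteq A_\mathfrak{q}$ (and hence $R\subseteq B_\mathfrak{q}$) is unramified. This means $\pi$ remains a uniformizer in $A_\mathfrak{q}$, so $v_\mathfrak{q}(\pi)=1$. Combining this with the previous step gives
\begin{equation*}
v_\mathfrak{q}(y\pm\sqrt{27}q)=v_\mathfrak{q}(\pi^{3k_1/2})+v_\mathfrak{q}\!\left(\tfrac{y\pm\sqrt{27}q}{\pi^{3k_1/2}}\right)=\tfrac{3k_1}{2}+0=\tfrac{3v(p)}{2},
\end{equation*}
which is the desired formula.

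The main (and only) thing that needs to be checked carefully is that $\pi^{3k_1/2}$ makes sense as an element of $R$ (this uses precisely the hypothesis that $v(p)$ is even) and that the two factors $\tfrac{y\pm\sqrt{27}q}{\pi^{3k_1/2}}$ really lie in the integral closure $A\subseteq B$, not merely in the fraction field; this follows because they are roots of the monic integral equation $T^2-(2y/\pi^{3k_1/2})T+4u_1^3=0$, whose coefficients lie in $A$ by the definition of $y'$ in the preceding paragraph of the text. Once this is secured, the non-split nature of $\mathfrak{p}$ in $A$ plays no role in the valuation computation — it only changes how many primes one distributes the identity among — so the result follows by exactly the same bookkeeping as in the split case.
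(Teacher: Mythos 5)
Your proof is correct and follows essentially the same route as the paper's: you normalize both factors by $\pi^{3k_1/2}$ (permissible since $k_1=v(p)$ is even), observe that their product $4u_1^3$ is a unit so each factor has valuation zero, and then use unramifiedness of $R\subseteq A_{\mathfrak{q}}$ to conclude that $\pi$ remains a uniformizer, giving $v_{\mathfrak{q}}(y\pm\sqrt{27}q)=3k_1/2$. Your extra check that the normalized factors are genuinely integral (via the monic equation $T^2-2y'T+4u_1^3=0$) is a useful elaboration that the paper leaves implicit; the only minor quibble is that the logical order in your appeal to the earlier inertia lemma runs from $A$ to $B$ rather than from $B$ to $A$, but the conclusion that $R\to A_{\mathfrak{q}}$ is \'etale is nonetheless correct (it also follows directly from $v(\Delta)=3k_1$ being even).
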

\begin{proof}
Using Equation \ref{EquationPDivisible} again, we see that $v_{\mathfrak{q}}(\dfrac{y\pm\sqrt{27}q}{\pi^{3k_{1}/2}})=0$. Since $R\rightarrow{A_{\mathfrak{q}}}$ is \'{e}tale, we have that $\pi$ is again a uniformizer and the result follows. 
\end{proof}

This concludes the case where $v(p)$ is divisible by $2$.
Now suppose that $v(p)$ is not divisible by $2$. We claim that $|I_{\mathfrak{q}}|=2$. We write $3v(p)=2k+1$ and find the equation
\begin{equation}\label{EquationRamifiedP}
(\dfrac{y-\sqrt{27}q}{\pi^{k}})(\dfrac{y+\sqrt{27}q}{\pi^{k}})=4\pi\cdot{}u_{1}^3.
\end{equation}
Writing $y'=\dfrac{y}{\pi^{k}}$ and $q'=\dfrac{q}{\pi^{k}}$, we see that there is only one prime lying above $\mathfrak{p}$ in this algebra, namely
\begin{equation}
\mathfrak{q}'=(y'-\sqrt{27}q', \pi)=(y'-\sqrt{27}q').
\end{equation}
Note that $\mathfrak{q}'$ is principal and thus $A$ is normal. The fiber over $\pi$ is of the form $\overline{y'^2}=\overline{0}$, showing that the extension is ramified.
Since the inertia group is cyclic inside $S_{3}$ and we already know that $|I_{\mathfrak{q}}|$ is greater than or equal to $2$, we find that $|I_{\mathfrak{q}}|=2$. We summarize this in a lemma:
\begin{lemma}\label{Case2B}
Let $3v(p)<2v(q)$ and suppose that $v(p)$ is not divisible by $2$. Then $|I_{\mathfrak{q}}|=2$.
\end{lemma}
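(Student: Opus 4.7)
The plan is to determine $|I_{\mathfrak{q}}|$ by first analyzing the ramification in the quadratic subextension $K \subset K'$ and then lifting the information to $\overline{L}$ via Galois theory. The advantage is that the quadratic subextension is explicitly described by $y^2 = \Delta = 4p^3 + 27q^2$, and the hypotheses on $v(p)$ and $v(q)$ give good control over the valuation of $\Delta$.

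First I would compute $v(\Delta)$. Writing $p = \pi^{k_1}u_1$ and $q = \pi^{k_2}u_2$ with $u_i$ units and using $3k_1 < 2k_2$, we have $v(4p^3) = 3k_1 < 2k_2 = v(27q^2)$, so $v(\Delta) = 3v(p) = 3k_1$. The assumption that $v(p)$ is odd then forces $v(\Delta)$ to be odd. Setting $3v(p) = 2k+1$ and $y' = y/\pi^{k}$, the defining equation of $A$ becomes
\[
(y' - \sqrt{27}q')(y' + \sqrt{27}q') = 4\pi u_1^3
\]
where $q' = q/\pi^k \in R$ still has positive valuation (since $v(q) > 3v(p)/2 \geq k$). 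Reducing modulo $\pi$ gives $\overline{y'}^2 = 0$, so there is a unique prime $\mathfrak{q}' \subset A$ above $\mathfrak{p}$, generated by $y' - \sqrt{27}q'$, which shows that $A_{\mathfrak{q}'}$ is a discrete valuation ring (hence $A$ is normal at $\mathfrak{p}$) with ramification index $2$ over $R$.

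Next I would pass to $B$. By Proposition \ref{SerLocFields} applied to the Galois subextension $K \subset K'$, the restriction $I_{\mathfrak{q}} \to I_{\mathfrak{q}'}$ is surjective; since the right-hand side has order $2$, this gives the lower bound $|I_{\mathfrak{q}}| \geq 2$. For the upper bound, the extension is tame by the standing assumption that $\mathrm{char}(k)$ is coprime to $6 = |S_3|$, so $I_{\mathfrak{q}}$ is cyclic (Proposition \ref{SerExtGal1} together with the tame inertia structure for Dedekind extensions). The cyclic subgroups of $S_3$ have orders $1$, $2$ or $3$. The value $3$ is incompatible with $|I_{\mathfrak{q}}|$ being divisible by $2$, leaving $|I_{\mathfrak{q}}| = 2$, as claimed.

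The only mildly delicate step is the first one, verifying that $\mathfrak{q}'$ is principal and hence that $A$ is really normal at $\mathfrak{p}$ (so that valuations make sense directly). Everything else is bookkeeping once $v(\Delta)$ has been pinned down as odd, and the Galois-theoretic squeeze $2 \leq |I_{\mathfrak{q}}| \leq 3$ is completely formal from the tameness hypothesis.
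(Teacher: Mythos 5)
Your proof is correct and follows essentially the same route as the paper's: you write down the same normalized equation $(y'-\sqrt{27}q')(y'+\sqrt{27}q')=4\pi u_1^3$ for the quadratic subextension, observe that there is a unique, principal prime $\mathfrak{q}'$ above $\mathfrak{p}$ with ramification index $2$, and then squeeze $|I_{\mathfrak{q}}|$ using cyclicity of tame inertia in $S_3$. The one place where you are slightly more careful than the paper is the final step: the paper concludes from ``cyclic and $\geq 2$,'' which by itself leaves the possibility $|I_{\mathfrak{q}}|=3$, whereas you correctly note that surjectivity of $I_{\mathfrak{q}}\to I_{\mathfrak{q}'}$ (equivalently, multiplicativity of ramification indices in the tower $R\subset A\subset B$) forces $2\mid |I_{\mathfrak{q}}|$, which is the divisibility actually needed to rule out order $3$. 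The preliminary computation of $v(\Delta)=3v(p)$ is extra bookkeeping the paper skips, but it does no harm and makes the oddness hypothesis transparent.
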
 

Note now that there are only two options for $v(p)$: it is either divisible by two or it is not. Using this observation, we then also obtain the reverse statement of Lemmas \ref{Case2A} and \ref{Case2B}. We could also obtain this from the following lemma:
\begin{lemma}
Let $\mathfrak{q}'$ be the only prime lying above $\mathfrak{p}\in\text{Spec}(A)$. Then
\begin{eqnarray*}
v_{\mathfrak{q}'}(y\pm\sqrt{27}q)&=&3v_{}(p).
\end{eqnarray*}
\end{lemma}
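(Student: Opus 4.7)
The plan is to exploit the explicit description of the ramified extension $R \to A$ already established just above the statement. Recall that in this subcase we have $3v(p) = 2k+1$ odd, and after setting $y' = y/\pi^k$ and $q' = q/\pi^k$ we obtained the relation
\begin{equation*}
(y' - \sqrt{27}q')(y' + \sqrt{27}q') = 4\pi \cdot u_1^3,
\end{equation*}
together with the fact that the unique prime $\mathfrak{q}'$ of $A$ above $\mathfrak{p}$ is the principal ideal $(y' - \sqrt{27}q')$. Since the inertia group has order two, the ramification index of $A_{\mathfrak{q}'}/R_{\mathfrak{p}}$ is $2$, hence $v_{\mathfrak{q}'}(\pi) = 2$.

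First I would handle $y - \sqrt{27}q$. By construction, $y' - \sqrt{27}q'$ is a uniformizer of $A_{\mathfrak{q}'}$, so it has valuation $1$. Therefore
\begin{equation*}
v_{\mathfrak{q}'}(y - \sqrt{27}q) = v_{\mathfrak{q}'}(\pi^k) + v_{\mathfrak{q}'}(y' - \sqrt{27}q') = 2k + 1 = 3v(p),
\end{equation*}
as desired.

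For $y + \sqrt{27}q$ the key observation is the product relation displayed above. Since $u_1$ is a unit in $R$ (and hence in $A_{\mathfrak{q}'}$) and $v_{\mathfrak{q}'}(4\pi u_1^3) = v_{\mathfrak{q}'}(\pi) = 2$ (using that $\mathrm{char}(k) \neq 2$), we deduce
\begin{equation*}
v_{\mathfrak{q}'}(y' + \sqrt{27}q') = v_{\mathfrak{q}'}(4\pi u_1^3) - v_{\mathfrak{q}'}(y' - \sqrt{27}q') = 2 - 1 = 1.
\end{equation*}
Multiplying back by $\pi^k$ yields $v_{\mathfrak{q}'}(y + \sqrt{27}q) = 2k + 1 = 3v(p)$, matching the first case.

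There is essentially no obstacle here: everything follows from the principality of $\mathfrak{q}'$ and the product identity already derived. The only small point that needs care is checking that $y'+\sqrt{27}q'$ does not lie in a higher power of $\mathfrak{q}'$, which is automatic from the product computation together with the fact that $\pi$ has valuation exactly $2$ (so both factors must have positive valuation, forcing each to equal $1$).
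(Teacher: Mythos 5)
Your proof is correct and follows the same approach the paper uses: read off $v_{\mathfrak{q}'}(\pi)=2$ and $v_{\mathfrak{q}'}(y'-\sqrt{27}q')=1$ from the principality of $\mathfrak{q}'$, then use the product relation to get $v_{\mathfrak{q}'}(y'+\sqrt{27}q')=1$ and scale back by $\pi^k$. You have merely spelled out the short computation that the paper compresses into a one-line remark (which, incidentally, has a small typo — it writes $v_{\mathfrak{q}'}(y\pm\sqrt{27}q)=1$ where it means $v_{\mathfrak{q}'}(y'\pm\sqrt{27}q')=1$).
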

\begin{proof}
This follows from Equation \ref{EquationRamifiedP}, noting that $v_{\mathfrak{q}'}(\pi)=2$ and $v_{\mathfrak{q}'}(y\pm\sqrt{27}q)=1$.
\end{proof}

\subsection{Case III}
Suppose that $3v(p)=2v(q)$. Let $\Delta:=4p^3+27q^2$.  We again consider two cases:
\begin{enumerate}
\item [{\bf{(A)}}] $v(\Delta)$ is divisible by $2$,
\item [{\bf{(B)}}] $v(\Delta)$ is not divisible by $2$.
\end{enumerate}

Suppose first that $v(\Delta)$ is divisible by $2$. We then see that the extension is unramified in the quadratic subfield.
\begin{lemma}
Suppose that $3v(p)=2v(q)$ and that $v(\Delta)$ is divisible by $2$. Then $|I_{\mathfrak{q}}|=1$.
\end{lemma}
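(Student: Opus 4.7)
Write $p = \pi^{k_1} u_1$ and $q = \pi^{k_2} u_2$ with $u_1, u_2 \in R^*$. The hypothesis $3v(p) = 2v(q)$ means $3k_1 = 2k_2$, so there exists $m \in \mathbb{N}$ with $k_1 = 2m$, $k_2 = 3m$. Then
\begin{equation*}
\Delta = 4p^3 + 27q^2 = \pi^{6m}(4u_1^3 + 27u_2^2),
\end{equation*}
so $v(\Delta) = 6m + v(4u_1^3 + 27u_2^2)$. The assumption $2 \mid v(\Delta)$ forces $v(4u_1^3 + 27u_2^2) = 2\ell$ for some $\ell \geq 0$, and we may write $4u_1^3 + 27u_2^2 = \pi^{2\ell} u_3$ with $u_3 \in R^*$. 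The strategy is to factor the extension $K \subset \overline{L}$ through the quadratic subfield $K \subset K(y)$ and the cubic subfield $K(y) \subset K(w)$, and to show both steps are unramified above $\mathfrak{p}$.

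\textbf{Step 1 (quadratic step is unramified).} Setting $\tilde{y} = y/\pi^{3m+\ell}$, the defining relation $y^2 = \Delta$ becomes $\tilde{y}^2 = u_3$. Since $u_3 \in R^*$ and $\mathrm{char}(k) \neq 2$, the algebra $R[\tilde{y}]/(\tilde{y}^2 - u_3)$ is standard \'etale over $R$. Hence the integral closure $A$ of $R$ in $K(y)$ is unramified above $\mathfrak{p}$: for every prime $\mathfrak{q}' \in \mathrm{Spec}(A)$ lying above $\mathfrak{p}$, the uniformizer $\pi$ remains a uniformizer of $A_{\mathfrak{q}'}$.

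\textbf{Step 2 (valuation of the Kummer element).} Using the identity
\begin{equation*}
(y - \sqrt{27}\,q)(y + \sqrt{27}\,q) = y^2 - 27 q^2 = 4p^3 = 4\pi^{6m} u_1^3,
\end{equation*}
I compute $v_{\mathfrak{q}'}(y - \sqrt{27}\,q)$. Writing $y = \pi^{3m+\ell} \tilde{y}$ and $q = \pi^{3m} u_2$, I factor
\begin{equation*}
y - \sqrt{27}\,q = \pi^{3m}\bigl(\pi^{\ell}\tilde{y} - \sqrt{27}\,u_2\bigr).
\end{equation*}
The remaining factor $\pi^{\ell}\tilde{y} - \sqrt{27}\,u_2$ is a unit in $A_{\mathfrak{q}'}$: if $\ell > 0$ this is immediate, since it reduces to $-\sqrt{27}\,\overline{u_2} \neq 0$ in the residue field; if $\ell = 0$ then $\tilde{y}^2 = u_3$ together with $4u_1^3 + 27 u_2^2 = u_3$ gives
\[(\tilde{y} - \sqrt{27}\,u_2)(\tilde{y} + \sqrt{27}\,u_2) = \tilde{y}^2 - 27 u_2^2 = 4u_1^3 \in R^*,\]
so both factors are units in $A_{\mathfrak{q}'}$ (this uses Step 1 to conclude $\pi$ is still a uniformizer). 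In either case $v_{\mathfrak{q}'}(y - \sqrt{27}\,q) = 3m$, which is divisible by $3$.

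\textbf{Step 3 (cubic step is unramified).} The Galois closure is obtained from $K(y)$ by adjoining $w$ with $w^3 = y - \sqrt{27}\,q$, a Kummer extension of degree dividing $3$. By Corollary \ref{AbelExt2}, this extension is unramified above $\mathfrak{q}'$ precisely when $v_{\mathfrak{q}'}(y - \sqrt{27}\,q) \equiv 0 \pmod 3$, which was just established. Combining Steps 1 and 3, every prime $\mathfrak{q} \in \mathrm{Spec}(B)$ above $\mathfrak{p}$ is unramified in the full Galois extension $K \subset \overline{L}$, hence $|I_{\mathfrak{q}}| = 1$.

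\textbf{Main obstacle.} The only delicate point is the case $\ell = 0$, where $\Delta$ itself has the minimal possible valuation $6m$: here one must verify that the two conjugate factors $\tilde{y} \pm \sqrt{27}\,u_2$ do not acquire unexpected valuation at the primes of $A$, which is handled by noting that their product $4u_1^3$ is a unit. Everything else is a direct Newton-polygon / Kummer-theoretic computation.
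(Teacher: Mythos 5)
Your proof is correct and takes essentially the same route as the paper's: normalize the quadratic equation to exhibit it as \'etale, conclude $\pi$ remains a uniformizer above $\mathfrak{p}$, compute $v_{\mathfrak{q}'}(y-\sqrt{27}q) = v(q)$ using the factorization of $y^2 - 27q^2 = 4p^3$, and invoke Kummer theory (Corollary \ref{AbelExt2}) since $3 \mid v(q)$. One small stylistic remark: the case split on $\ell$ in your Step 2 is unnecessary — the identity $(\pi^{\ell}\tilde{y} - \sqrt{27}u_2)(\pi^{\ell}\tilde{y} + \sqrt{27}u_2) = 4u_1^3 \in R^*$ holds for all $\ell \geq 0$, so both factors are units uniformly; this is exactly the device the paper uses with $y' = y/\pi^{v(q)} = \pi^{\ell}\tilde{y}$. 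Your phrasing ``for every prime $\mathfrak{q}'$'' is also slightly more careful than the paper's ``the two points'' (which tacitly assumes $\mathfrak{p}$ splits in $A$, a fact not needed for triviality of inertia and not automatic since $k$ is not assumed algebraically closed in this appendix).
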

\begin{proof}
 We consider the equation
\begin{equation*}
(y'-\dfrac{\sqrt{27}q}{\pi^{v(q)}})(y'+\dfrac{\sqrt{27}q}{\pi^{v(q)}})=\dfrac{4p^3}{\pi^{2v(q)}}.
\end{equation*}
Note that the righthand side is invertible, implying that the elements on the lefthand side are also invertible. 
From $v(\Delta)\equiv{0}\mod{2}$, we obtain that $\pi$ is again a uniformizer at the two points lying above it. We denote them by $\mathfrak{q}_{i}$. 
This gives
\begin{lemma}
 Let the $\mathfrak{q}_{i}$ be the two primes lying above $\mathfrak{p}$. 
Then
\begin{eqnarray*}
v_{\mathfrak{q}_{i}}(y\pm\sqrt{27}q)&=&v(q).
\end{eqnarray*}
\end{lemma} 
\begin{proof}
This follows as before, noting that $(y'-\dfrac{\sqrt{27}q}{\pi^{v(q)}})$ and $(y'+\dfrac{\sqrt{27}q}{\pi^{v(q)}})$ are invertible and that $v_{\mathfrak{q}_{i}}(\pi)=1$.
\end{proof}
This then also quickly gives the rest of the lemma: since $3|3v(p)$, we find that $3|2v(q)$, implying that $3|v(q)=v_{\mathfrak{q}_{i}}(y\pm\sqrt{27}q)$. As always, this implies that the abelian extension $K'\subset{\overline{L}}$ is unramified above the $\mathfrak{q}_{i}$. 
\end{proof}
Now for the second case with $v(\Delta)\equiv{1}\mod{2}$. We find that there is only one point (denoted by $\mathfrak{q}$) that lies above $\mathfrak{p}$. The valuation of $\pi$ in $\mathfrak{q}$ is then $2$. This then gives
\begin{lemma}
Suppose that $3v(p)=2v(q)$, $v(\Delta)\equiv{1}\mod{2}$ and let $\mathfrak{q}$ be the only prime lying above $\mathfrak{p}$. 
\begin{equation}
v_{\mathfrak{q}}(y\pm\sqrt{27}q)=2v(q)=3v(p).
\end{equation}
\end{lemma}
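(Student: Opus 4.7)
The plan is to exploit the factorization $(y-\sqrt{27}q)(y+\sqrt{27}q)=4p^{3}$ together with a careful comparison of $v_{\mathfrak q}(y)$ and $v_{\mathfrak q}(\sqrt{27}q)$ via the strict triangle inequality. The hypothesis $3v(p)=2v(q)$ with $v(\Delta)$ odd will force strict inequality between these two valuations, which pins down the valuations of the factors individually.

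First I would record the two easy valuation inputs. Since there is only one prime $\mathfrak q$ above $\mathfrak p$, the extension $R\subset A$ must be ramified of index $2$, so $v_{\mathfrak q}(\pi)=2$ and hence $v_{\mathfrak q}(a)=2v(a)$ for every $a\in R$. From $y^{2}=\Delta$ one gets $v_{\mathfrak q}(y)=v_{\mathfrak q}(\Delta)/2=v(\Delta)$, which is therefore odd. Next, I would observe that the two summands of $\Delta=4p^{3}+27q^{2}$ have equal $v$-valuation $3v(p)=2v(q)$; since their sum has odd valuation $v(\Delta)$ while $2v(q)$ is even, we must be in the cancelling regime
\[
v(\Delta)>2v(q).
\]

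With this in hand, the main step is to compare $v_{\mathfrak q}(y)$ and $v_{\mathfrak q}(\sqrt{27}q)$. We have $v_{\mathfrak q}(\sqrt{27}q)=v_{\mathfrak q}(q)=2v(q)$, whereas $v_{\mathfrak q}(y)=v(\Delta)>2v(q)$ by the previous paragraph. The strict triangle inequality for the discrete valuation $v_{\mathfrak q}$ then yields
\[
v_{\mathfrak q}(y\pm\sqrt{27}q)=\min\bigl(v_{\mathfrak q}(y),\,v_{\mathfrak q}(\sqrt{27}q)\bigr)=2v(q),
\]
which is the asserted formula (and equals $3v(p)$ by the case hypothesis). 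As a consistency check, one can verify that the valuations of the two factors sum correctly: $v_{\mathfrak q}((y-\sqrt{27}q)(y+\sqrt{27}q))=4v(q)=6v(p)=v_{\mathfrak q}(4p^{3})$, matching the defining identity.

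The main obstacle here is really just conceptual bookkeeping rather than technical: one must be careful that the case hypothesis $3v(p)=2v(q)$ combined with $v(\Delta)\equiv 1\pmod 2$ genuinely implies strict inequality $v(\Delta)>2v(q)$ (as opposed to equality, which would happen if the leading coefficients did not cancel mod $\pi$). Once that parity argument is made precise, the strict triangle inequality does the rest. No normalization or Newton polygon computation is required beyond what has already been established for the earlier subcases.
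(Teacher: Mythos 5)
Your proof is correct in substance and takes a genuinely different route from the paper's. The paper works with the normalized identity
\[
\left(y'-\tfrac{\sqrt{27}q}{\pi^{v(q)}}\right)\left(y'+\tfrac{\sqrt{27}q}{\pi^{v(q)}}\right)=\frac{4p^{3}}{\pi^{2v(q)}},
\]
observes that the right-hand side is a unit under the Case III hypothesis $3v(p)=2v(q)$, concludes that each factor on the left is a unit, and scales back by $v_{\mathfrak q}(\pi^{v(q)})=2v(q)$. You instead compute $v_{\mathfrak q}(y)=v(\Delta)$ and $v_{\mathfrak q}(\sqrt{27}q)=2v(q)$ separately, derive the strict inequality $v(\Delta)>2v(q)$ from the parity hypothesis, and apply the ultrametric strict triangle inequality. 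Both get the job done; yours has the advantage of making the \emph{reason} visible (cancellation of leading terms in $\Delta$ forces $y$ to be a higher-order infinitesimal than $\sqrt{27}q$), whereas the paper's product argument reads off the answer without exposing this.

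One step needs repair, however. You justify $v_{\mathfrak q}(\pi)=2$ by ``Since there is only one prime $\mathfrak q$ above $\mathfrak p$, the extension $R\subset A$ must be ramified of index $2$.'' That inference is not valid here: in this appendix the residue field $k$ is explicitly \emph{not} assumed algebraically closed, so a unique prime above $\mathfrak p$ is equally compatible with the inert case $e=1$, $f=2$. The correct reason $e=2$ holds is the parity of $v(\Delta)$: if $e$ were $1$, then $v_{\mathfrak q}$ would restrict to $v$ on $R$ and $2v_{\mathfrak q}(y)=v_{\mathfrak q}(\Delta)=v(\Delta)$ would force $v(\Delta)$ to be even, contradicting the hypothesis $v(\Delta)\equiv 1\pmod 2$. (Equivalently, the Newton polygon of $y^{2}-\Delta$ has the half-integer slope $-v(\Delta)/2$, so the extension is totally ramified.) Once you replace the ``unique prime $\Rightarrow$ ramified'' justification with this parity argument, the rest of your proof is airtight.
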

\begin{proof}
This follows exactly as before: we have the equation
\begin{equation}
(y'-\dfrac{\sqrt{27}q}{\pi^{v(q)}})(y'+\dfrac{\sqrt{27}q}{\pi^{v(q)}})=\dfrac{4p^3}{\pi^{2v(q)}},
\end{equation}
implying that the lefthand side is invertible. Since $v_{\mathfrak{q}}(\pi)=2$, we obtain the lemma by a simple calculation.
\end{proof}

\bibliographystyle{alpha}
\bibliography{bibfiles}{}

\end{document}